\documentclass[letterpaper, 11pt]{amsart}
\usepackage[margin=1.25in]{geometry}
\usepackage{mathtools}
\usepackage{amsmath}
\usepackage{amssymb}

\usepackage{amsthm}
\usepackage{graphicx}
\usepackage{mathrsfs}
\usepackage{xcolor}
\usepackage{tikz-cd}
\usepackage{tikz}
\usetikzlibrary{patterns}
\usepackage{hyperref}
\usepackage{cleveref}

\usepackage[utf8]{inputenc}
\usepackage[T1]{fontenc}
\allowdisplaybreaks

\usepackage{verbatim}
\usepackage{float}

\usepackage{enumitem}
\newlist{condenum}{enumerate}{1}
\setlist[condenum]{label=(R\arabic*), ref=(R\arabic*), leftmargin=*}

\DeclareMathAlphabet{\mathpzc}{OT1}{pzc}{m}{it}

\usepackage{thmtools}
\usepackage{thm-restate}

\usepackage{caption}

\newtheorem{theorem}{Theorem}[section]

\newtheorem*{claim*}{Claim}
\newtheorem{step}{Step}

\newtheorem{lemma}[theorem]{Lemma}

\newtheorem{corollary}[theorem]{Corollary}

\newtheorem{proposition}[theorem]{Proposition}

\theoremstyle{definition}
\newtheorem{definition}[theorem]{Definition}
\newtheorem{Def}[theorem]{Definition}

\newtheorem{claim}{Claim}

\theoremstyle{remark}
\newtheorem{remark}[theorem]{Remark}
\newtheorem*{remark*}{Remark}

\numberwithin{equation}{section}

\newcommand{\abs}[1]{\lvert#1\rvert}
\newcommand{\norm}[1]{\lVert#1\rVert}

\newcommand{\op}{\operatorname}

\newcommand{\be}{\begin{equation}}
\newcommand{\ee}{\end{equation}}
\newcommand{\Ga}{\Gamma}

\newcommand{\R}{\mathbb R}

\newcommand{\N}{\mathbb N}
\newcommand{\ga}{\gamma}

\newcommand{\la}{\lambda}
\newcommand{\La}{\Lambda}
\newcommand{\inte}{\op{int}}
\newcommand{\ba}{\backslash}

\newcommand{\cal}{\mathcal}
\newcommand{\br}{\mathbb R}
\newcommand{\SO}{\op{SO}}

\newcommand{\PSL}{\op{PSL}}
\newcommand{\F}{\cal F}

\newcommand{\bH}{\mathbb H}

\newcommand{\diam}{\op{diam}}

\newcommand{\T}{\op{T}}
\renewcommand{\frak}{\mathfrak}

\newcommand{\e}{\varepsilon}
\newcommand{\BR}{\op{BR}}

\newcommand{\BMS}{\op{BMS}}
\renewcommand{\L}{\mathcal L}
\newcommand{\fa}{\mathfrak a}

\renewcommand{\i}{\op{i}}

\newcommand{\C}{\cal C}

\newcommand{\rank}{\op{rank}}

\newcommand{\Leb}{\op{Leb}}
\newcommand{\supp}{\op{supp}}

\renewcommand{\epsilon}{\e}

\newcommand{\SL}{\op{SL}}

\newcommand{\ww}{\mathtt{w}}

\newcommand{\uu}{\mathtt{u}_{\psi}}
\newcommand{\pr}{\mathrm{Pr}}

\newcommand{\ut}{\mathtt{u}}

\newcommand{\Wd}{\fa}

\newcommand{\BP}{\mathrm{BP}}

\newcommand{\sH}{\langle \cal H\rangle_+}

\newcommand{\Fn}{F_2^{\sharp}}
\newcommand{\paral}{\partial^{\alpha}_{\mathfrak{a}}}
\newcommand{\Log}{\mathrm{Log}}
\newcommand{\Arg}{\mathrm{Arg}}

\author{Dongryul M. Kim}
\address{School of Mathematics, Institute for Advanced Study,  Princeton, NJ 08540 (Current)
\newline \indent Simons Laufer Mathematical Sciences Institute, Berkeley, CA 94720
\newline \indent Department of Mathematics, Yale University, New Haven, CT 06511}
\email{dongryul.kim97@gmail.com}

\author{Hee Oh}
\address{Department of Mathematics, Yale University, New Haven, CT 06511 }
\email{hee.oh@yale.edu}

\author{Wenyu Pan}
\address{Department of Mathematics, University of Toronto, Toronto, ON, M5S 2E4}
\email{wenyup.pan@utoronto.ca}

\begin{document}

\title[Higher-rank local mixing]{Higher-rank local mixing for~cusped~positive~representations}

\begin{abstract} 
Let $\Gamma<\operatorname{PSL}_2(\mathbb R)$ be a torsion-free,
non-elementary, finitely generated Fuchsian group with
parabolic elements, and let $\rho:\Gamma\to G$ be a
type-preserving, Zariski dense, positive representation
into a connected split semisimple real algebraic group.
We prove quantitative local mixing for
Bowen--Margulis--Sullivan measures on
$\rho(\Gamma)\backslash G/M$, where $M$ is the centralizer
of a maximal split torus in a compatible maximal compact
subgroup.

For every $u$ in the interior of the limit cone of $\rho(\Gamma)$,
correlations of smooth compactly supported functions
along $\exp(tu)$, normalized by
$t^{(\operatorname{rank}_{\mathbb R}G-1)/2}$, admit a
complete asymptotic expansion in integer powers of $t^{-1}$.
We also obtain a uniform Gaussian asymptotic for transverse
displacements, with an integrable remainder.
We deduce asymptotics for matrix coefficients of the
quasi-regular representation on $L^2(\rho(\Gamma)\backslash G)$
and for orbital counting in Riemannian balls.

More generally, these conclusions hold for Zariski dense,
relatively Borel Anosov representations of $\Gamma$
satisfying a peripheral root-balance condition.
This includes relatively Borel Anosov representations
into split simple groups of rank at most three and
self-joinings of type-preserving geometrically finite
real hyperbolic representations.

To prove these results,
we develop an intrinsic cusp geometry on the limit
set of $\rho(\Gamma)$ in the Furstenberg boundary and use it to construct a countable Markov coding. We establish exponential moment bounds and uniform distortion
estimates for the unbounded vector-valued return cocycle.
Using peripheral Cartan growth and Zariski density, we
establish the quantitative non-integrability needed for
the spectral analysis. 
\end{abstract}

\maketitle

\tableofcontents

\section{Introduction}
We establish local mixing with quantitative error terms for
higher-rank diagonal flows associated with cusped positive
representations and two other classes of representations of finitely generated
Fuchsian groups. Our main theorem gives a complete asymptotic
expansion of correlations along each direction in the interior
of the limit cone, together with a Gaussian asymptotic for
transverse displacements and a uniform, integrable remainder.
It also yields  asymptotics for matrix coefficients of the
quasi-regular representation and orbital counting in
Riemannian balls.

In rank one, the Bowen--Margulis--Sullivan (BMS) measure of a
non-elementary geometrically finite locally symmetric manifold
is finite \cite{Sullivan1984entropy}. Babillot
\cite{Babillot_mixing} proved mixing of the geodesic flow for
this measure, and Winter \cite{Winter_mixing} established
mixing of the frame flow for Zariski dense groups. These
results underlie orbital counting and equidistribution
theorems; see \cite{margulis_anosov,Roblin2003ergodicite,Oh_Shah,OS_inv}.
For finite-volume locally symmetric manifolds, mixing of
regular diagonal flows with respect to Haar measure follows
from the Howe--Moore theorem \cite{HM}.

The higher-rank, infinite-volume setting is different.
For Zariski dense discrete subgroups of simple real algebraic
groups of higher rank, there is an infinite family of
non-proportional BMS measures \cite{Benoist1997proprietes,Quint2002Mesures}, all of which are
infinite \cite{FL_BMS}. Local mixing describes the decay of
correlations after a suitable normalization:
 for a flow
$\{a_t\}$ on $X$ preserving a Radon measure $m$, one seeks
$\mathcal N(t)>0$ and nonzero Radon measures $m_1,m_2$ on $X$ such that, as $t\to +\infty$,
$$
\mathcal N(t)\int_X f_1(xa_t)f_2(x)\,dm(x)
\to m_1(f_1)m_2(f_2)
\qquad(f_1,f_2\in\mathcal C_c(X)),
$$
where $m_i(f):=\int_X f\,dm_i$ for $i=1,2$.
Oh--Pan \cite{OP_local} introduced this terminology and
proved local mixing for geodesic flows on $\mathbb Z^d$-covers
of convex cocompact rank-one locally symmetric manifolds,
with normalization proportional to $t^{d/2}$. For a recent survey of local mixing in infinite volume,
see \cite[Sections~7 and~8]{Oh}.

The main new ingredient in this paper is a geometric
construction of a countable coding that controls the full vector-valued return
cocycle in the presence of arbitrarily long cusp excursions.
We develop an intrinsic cusp geometry on the flag limit
set and derive quantitative non-integrability in every
Cartan direction from peripheral Cartan growth and
Zariski density. The required root-balance condition is
automatic in the three classes below. These estimates
yield complete correlation expansions and uniform
transverse asymptotics for every representation in these
classes.

\subsection*{Setting}
Let $\Gamma<\PSL_2(\mathbb R)$ be a torsion-free,
non-elementary, finitely generated Fuchsian group with
parabolic elements, and let $G$ be a connected semisimple
real linear algebraic group.
Let $\rho:\Gamma\to G$ be a Zariski dense representation.
Assume that one of the following holds:
\begin{itemize}
\item $G$ is $\br$-split, and $\rho$ is
type-preserving and positive in the sense of Fock--Goncharov
\cite{FockGoncharov_higher}; see
\Cref{strongdef,def.positiveframing}.

\item $G$ is $\br$-split and simple with $\operatorname{rank}_{\br} G\leq3$,
and $\rho$ is relatively Borel Anosov.

\item $G=\prod_{i=1}^k\SO(n_i,1)^\circ$, with $n_i\geq2$,
and $\rho$ is type-preserving and each projection
$\rho_i$ is  geometrically finite.
\end{itemize}
In all three cases, $\rho$ is relatively Borel Anosov,
and hence discrete and faithful; see
\Cref{thm.positive_boundary} and \Cref{prop.cusp_adapted_morse}. More generally, our results hold for Zariski dense, relatively
Borel Anosov representations of $\Gamma$ satisfying the
peripheral root-balance condition in
\Cref{def:peripheral-root-balance}; see
\Cref{thm:local-mixing-morse}.

Fix a maximal $\mathbb R$-split torus $A<G$, a compatible
maximal compact subgroup $K<G$, and a closed positive Weyl
chamber $A^+\subset A$, so that the Cartan decomposition $G=KA^+K$ holds. 
Set
$$
\mathfrak a:=\operatorname{Lie}A,\qquad
\mathfrak a^+:=\log A^+,\qquad
r:=\rank_{\br} G=\dim\mathfrak a,\qquad
M:=Z_K(A),
$$
where $Z_K(A)$ denotes the centralizer of $A$ in $K$.

Write $\Gamma_\rho:=\rho(\Gamma)<G$ and let $\mathcal L_\rho\subset\mathfrak a^+$ and $\psi_\rho$
denote the limit cone and growth indicator of $\Gamma_\rho$, respectively.
The limit cone has nonempty interior \cite{Benoist1997proprietes}.
For each $u\in\operatorname{int}\mathcal L_\rho$, there is a
unique linear form $\psi_u\in \frak a^*$ tangent to $\psi_\rho$ at $u$,
meaning that $\psi_u\geq\psi_\rho$ and
$\psi_u(u)=\psi_\rho(u)$ \cite{CZZ_transverse,CZZ_relative}.
Set $V_u:=\ker\psi_u$. Set $X_\rho:=\Gamma_\rho\backslash G/M$, and let
$m_u^{\BMS}$ be the BMS measure on $X_\rho$  associated with $\psi_u$, as defined in \eqref{bmsdef}.
Write
$a_v:=\exp(v)$ for $v\in\mathfrak a$.

\subsection*{BMS local mixing}
The following is our main result. In this higher-rank
cusped setting, the leading local mixing asymptotic
is itself new. We also obtain a complete correlation
expansion and uniform transverse estimates with an
integrable remainder.

\begin{theorem}\label{bmslocal_intro}
Let $u\in\operatorname{int}\mathcal L_\rho$.
\begin{enumerate}
\item There exist $c_u>0$ and continuous bilinear forms $$\mathfrak c_{j,u}: \C_c^\infty(X_\rho)\times
\C_c^\infty(X_\rho)\to \mathbb R,\qquad j\ge 0$$ such that, for every integer $N\geq0$ and all
$f_1,f_2\in \C_c^\infty(X_\rho)$,
$$
\begin{aligned}
&t^{\frac{r-1}2}\int_{X_\rho}
f_1(xa_{tu})f_2(x)\,dm_u^{\BMS}(x)=
\sum_{j=0}^{N}\mathfrak c_{j,u}(f_1,f_2)t^{-j}
+O_{u,f_1,f_2,N}(t^{-N-1})
\end{aligned}
$$
as $t\to +\infty$, where $\mathfrak c_{0,u}(f_1,f_2)
=c_u\,m_u^{\BMS}(f_1)m_u^{\BMS}(f_2)$.

\item There exist a positive definite quadratic form $Q_u$ on $V_u$ and $a>0$ such that, for every $w\in V_u$ and  all
$f_1,f_2\in \C_c^\infty(X_\rho)$,
$$
\begin{aligned}
&\qquad \qquad \qquad t^{\frac{r-1}2}\int_{X_\rho}
f_1(xa_{tu+\sqrt t\,w})f_2(x)\,dm_u^{\BMS}(x)\\
&\qquad=
c_u e^{-Q_u(w)/2}
m_u^{\BMS}(f_1)m_u^{\BMS}(f_2)+O_{u,f_1,f_2}\!\left(
t^{-\frac 12}e^{-a\min\{\|w\|^2,\|w\|\}}\right)
\end{aligned}
$$
as $t\to+\infty$, uniformly for $w\in V_u$, where $\|\cdot\|$ is any fixed norm on $\mathfrak a$.
\end{enumerate}
\end{theorem}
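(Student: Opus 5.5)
The plan is the standard symbolic route to local mixing: reduce the $A$-action on $X_\rho$ to a suspension over a countable Markov shift with a vector-valued roof, then run a local limit theorem with complex perturbations for the associated transfer operators.

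\textbf{Step 1: suspension model.} Using the intrinsic cusp geometry on the flag limit set, build a countable Markov coding $(\Sigma,\sigma)$ of the limit set in the Furstenberg boundary. Each cylinder is a cusp-excursion block (parabolic power) or a block of bounded geometry. Let $\tau:\Sigma\to\mathfrak a$ be the return cocycle, namely the Busemann/Iwasawa cocycle of the inverse branch. Via the Hopf parametrization, $(X_\rho,m_u^{\BMS})$ restricted to $\mathfrak a$-orbits through the coded section becomes, up to a set of measure zero and the compact fiber $M$-direction, the $\mathfrak a$-suspension $\Sigma\times\mathfrak a/\!\sim$ by $\tau$. In this model $m_u^{\BMS}$ corresponds to $\nu_u\otimes\Leb$, where $\nu_u$ is the equilibrium state of $-\psi_u\circ\tau$. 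The normalization gives pressure $P(-\psi_u\circ\tau)=0$.

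\textbf{Step 2: transfer operator estimates.} For $\xi\in\mathfrak a^*_{\mathbb C}$ near $\psi_u$, consider the twisted transfer operators $\mathcal L_\xi$ on a Lipschitz space. The inputs are:
\begin{itemize}
\item exponential moment bounds for $\tau$ on cusp cylinders, which come from peripheral Cartan growth together with the root-balance condition giving $\psi_u(\tau)\gtrsim\log n$ with $\|\tau\|\lesssim\log n$ on the $n$-th parabolic block;
\item uniform distortion estimates.
\end{itemize}
Together these give a spectral gap and analytic perturbation of the leading eigenvalue $\lambda(\xi)$ on a strip $\operatorname{Re}\xi\in\psi_u+U$.

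Restricted to $\operatorname{Re}\xi$ on $\{\psi:P(-\psi\circ\tau)=0\}$, this is the usual setup. The covariance $\nabla^2\log\lambda$ gives the quadratic form $Q_u$ on $V_u$; it is nondegenerate because $\tau$ is not cohomologous to a cocycle valued in a proper subspace, which follows from the limit cone having nonempty interior.

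For imaginary twists $\psi_u+i\theta$ with $\theta$ away from $0$, we need the spectral radius $<1$ and, uniformly for large $\theta$, a Dolgopyat-type bound. Since we only need local mixing (not exponential mixing), it suffices to have aperiodicity plus a polynomial-in-$\theta$ resolvent bound. That in turn follows from the quantitative non-integrability derived from Zariski density (nonarithmeticity of the Jordan projections) and peripheral Cartan growth. The $M$-component is handled by additionally twisting by irreducible representations of $M$, using Zariski density (Guivarc'h--Raugi/Benoist) to exclude eigenfunctions.

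\textbf{Step 3: Fourier inversion.} Write the correlation as
$$\sum_n\int_\Sigma F_1(\sigma^n x,\,tu+\sqrt t w-\tau_n(x))\,F_2(x,\cdot)\,d\nu_u.$$
Apply Fourier transform in the $\mathfrak a$-variable, restricted to $V_u$ after integrating the flow direction; this is a renewal sum in $n$ along direction $u$. Then split the $\theta$-integral.
\begin{itemize}
\item Near $\theta=0$, Taylor-expand $\lambda(\psi_u+i\theta)^n$ and the eigenprojection. Stationary phase/Edgeworth expansion produces the $t^{-(r-1)/2}$ Gaussian factor $e^{-Q_u(w)/2}$. Full asymptotic expansion in $t^{-1}$ follows because $f_i$ are smooth, so the Fourier transforms decay rapidly and all terms are analytic. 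Odd half-integer powers cancel at $w=0$ by parity of the Gaussian moments, giving integer powers.
\item Away from $0$, the bounds of Step 2 with smoothness of $f_i$ give $O(t^{-\infty})$.
\end{itemize}
For the uniform remainder in (2), shift the contour of $\operatorname{Re}\xi$ in the direction of $w$ by $\min(\|w\|,1)/\sqrt t$-scale. This large-deviation tilt produces the factor $e^{-a\min\{\|w\|^2,\|w\|\}}$.

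\textbf{Main obstacle.} The hardest step is the uniform non-integrability / Dolgopyat estimate for the unbounded cocycle across cusp excursions, in every Cartan direction. I would first establish it on a finite subsystem of bounded-geometry cylinders, where Zariski density gives non-concentration of $\tau$. I would then transfer it to the full shift using the exponential tails from Step 2, absorbing cusp cylinders as a small perturbation.
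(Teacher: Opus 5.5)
\textbf{Main gap: quantitative non-integrability.} The genuine gap is in the step you flag as the main obstacle, and the difficulty is not where you locate it. The high-frequency analysis needs a contraction of $\mathcal L_\xi^n$ that is polynomial in $\|\xi\|$. Non-arithmeticity of the Jordan projections is qualitative: it only gives $\rho(\mathcal L_\xi)<1$ for each fixed $\xi\neq0$. The paper uses it for aperiodicity and for positive definiteness of the Hessian, not for rates.

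The rate comes from a lower bound, in every direction $v_0\in\mathfrak a$, on the variation of a difference of two branch cocycle sums. The natural way to get it is to Taylor-expand this temporal-distance map in Bruhat coordinates $Y\in\check{\mathfrak n}$ at the attracting flag of a loxodromic $\gamma^*\in\langle\mathcal H\rangle_+$. The useful linear term pairs with the $\mathfrak g_{-\alpha}$-component of $Y$, for a simple root $\alpha$ with $\alpha(v_0)\neq0$. Zariski density of $\Lambda_+$ gives points where this term is nonzero at a fixed scale. Transporting them to small scales by $(\gamma^*)^k$ shrinks that component like $e^{-k\alpha(\lambda(\gamma^*))}$. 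Meanwhile $\|Y\|$, which controls the quadratic remainder $O(\|Y\|^2)$, shrinks only like $e^{-k\min_{\beta\in\Phi^+}\beta(\lambda(\gamma^*))}$. So the linear term wins at all small scales only if $\beta(\lambda(\gamma^*))\geq s_0\,\alpha(\lambda(\gamma^*))$ for all $\beta\in\Phi^+$, $\alpha\in\Pi$ and some $s_0>\frac12$.

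This is exactly where the peripheral root-balance hypothesis enters. For $g\in\mathcal H$, $u\in\mathsf P_0\smallsetminus\{e\}$ and all large $m$, one has $gu^m\in\mathcal H$. Hence $v_u$ lies in the limit cone of $\langle\mathcal H\rangle_+$, with root ratio $>\frac12$. Since that cone is the closure of the rays through $\lambda(\langle\mathcal H\rangle_+)$, this produces such a $\gamma^*$. Your plan (Zariski-density non-concentration on a finite subsystem) contains no such element. As written, it does not give the estimate when the simple roots contract at different rates.

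\textbf{Related misplacements and omissions.} You spend root balance in the wrong place. The exponential moments need only $\mu(u^n)=(\log n)v_u+O(1)$ with $v_u\in\operatorname{int}\mathfrak a^+$, together with the entropy drop: the $n$-th parabolic tile has $\nu_\psi$-mass $\asymp n^{-2\kappa_i}$ with $2\kappa_i=\delta_\psi(\mathsf P_i)^{-1}>1$. For the paper's coding one also needs exponential decay of the residual Patterson--Sullivan mass.

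Conversely, cusp excursions play no role in the non-integrability itself. The two branches lie in a fixed finite set $\mathcal H_C$. The local cancellation is propagated through the full operator via $|\mathcal L_\xi^nH|\leq\mathcal L_0^n|H|$ and the spectral gap of $\mathcal L_0$. The loss is only polynomial because the selected words have weight $\geq e^{-Cn_0}$ with $n_0\asymp\log\|\xi\|$. Treating cusp cylinders as a small perturbation would fail: they carry a fixed proportion of mass, while the gain is only $1-c\|\xi\|^{-a}$.

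Your Step 3 also omits that the pullbacks of $f_i$ are not uniformly compactly supported in the $\mathfrak a$-variable. Handling this requires several ingredients:
\begin{itemize}
\item the support bound $\|s\mathtt u+v\|\ll R(x)+1$ on preimages of compact sets;
\item the exponential moment of $R$;
\item a reduction from the two-sided model $\Lambda_+\times\Lambda_-$ via stable projections;
\item one extra application of $\mathcal L_\xi$ to obtain bounded H\"older inputs.
\end{itemize}
The $M$-twists are unnecessary on $X_\rho=\Gamma_\rho\backslash G/M$. Your contour shift for (2) is a viable alternative to the paper's route, which interpolates by a geometric mean between a rough bound $t^{(r-1)/2+1}e^{-a_0\min\{\|w\|^2,\|w\|\}}$ and an $O(t^{-(L+1)})$ expansion. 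It works provided all spectral and high-frequency estimates are made uniform for complex twists of size $O(t^{-1/2})$.
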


See \Cref{bmslocal} for bounds on the implied constants.
 By smooth approximation, the leading asymptotics in
parts~(1) and~(2) extend to all $f_1,f_2\in\C_c(X_\rho)$,
with an $o(1)$ remainder, uniformly for $w\in V_u$
in part~(2).
\begin{remark}
Deforming principal Fuchsian representations while fixing
the conjugacy classes of all boundary holonomies gives
connected components consisting entirely of type-preserving
positive representations; see \Cref{Ex_postive}.
\Cref{bmslocal_intro} applies to their Zariski dense members.
When $\Gamma$ is a lattice, only the cusp conjugacy classes
need to be fixed. This includes groups uniformizing a
thrice-punctured sphere or a once-punctured torus.
\end{remark}

For Anosov subgroups, the leading local mixing asymptotic
was established by Sambarino
\cite{sambarino_orbit,sambarino2022report} and Chow--Sarkar
\cite{CS_local}, following work of Thirion in special cases
\cite{Thirion_mixing}. More recently, Guedes
Bonthonneau--Lefeuvre--Weich \cite{GLW_spectrum} obtained a
complete asymptotic expansion under a Diophantine hypothesis
on the representation. Our theorem treats representations
with parabolic elements and obtains the quantitative
estimates from the peripheral geometry.

The rank-one theory provides important precedents for these
estimates. Mohammadi--Oh \cite{Mohammadi_Oh} and Edwards--Oh
\cite{Edwards_Oh_Duke} proved matrix-coefficient asymptotics
with exponential errors for large critical exponents.
Sarkar--Winter \cite{SarkarWinter_exp} proved exponential
mixing of BMS frame flows on convex cocompact real hyperbolic
manifolds, and Chow--Sarkar \cite{ChowSarkar_rankone} extended
this to rank-one locally symmetric spaces. In the presence
of cusps, Li--Pan \cite{LP} proved exponential mixing of
geodesic flows on geometrically finite real hyperbolic
manifolds, and Li--Pan--Sarkar \cite{LiPanSarkar_exp}
established the frame-flow version. Khalil \cite{Khalil_exp}
proved exponential mixing of geodesic flows on all geometrically
finite  rank-one locally symmetric spaces.
For infinite invariant measures, Dolgopyat--N\'andori--P\`ene
\cite{DNP} obtained expansions to arbitrary order on
$\mathbb Z^d$-covers of compact negatively curved manifolds.
\subsection*{Orbital counting and Matrix coefficient asymptotics}
The local mixing theorem supplies the input for the counting
and equidistribution arguments of Edwards--Lee--Oh
\cite{ELO_anosov}.
Let $d$ be the Riemannian distance on $G/K$ induced by the
Killing form, and let $\delta_\rho$ be the critical exponent
of $\sum_{\gamma\in\Gamma_\rho}e^{-s d(o,\gamma o)}$,
which is independent of $o\in G/K$.

\begin{corollary}
\label{thm.counting_intro}
For every
$o\in G/K$, there exists  $c(\rho,o)>0$ such that
$$
\#\{x\in\Gamma_\rho o:d(o,x)<T\}
\sim c(\rho,o)e^{\delta_\rho T}
\qquad(T\to\infty).
$$
\end{corollary}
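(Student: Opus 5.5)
We plan to deduce the counting asymptotic from \Cref{bmslocal_intro} by following the Edwards--Lee--Oh scheme \cite{ELO_anosov} for Riemannian balls. We first reduce the count to a sum over Cartan projections. Writing $\gamma o$ with $o=[K]$ and $\gamma=k_1\exp(\mu(\gamma))k_2$ gives $d(o,\gamma o)=\|\mu(\gamma)\|$, where $\|\cdot\|$ is the Killing-form norm on $\mathfrak a$. The ball $\{v\in\mathfrak a^+:\|v\|<T\}$ meets the limit cone, and the exponential growth rate is
$$\delta_\rho=\max_{\|v\|=1}\psi_\rho(v),$$
a standard fact for growth indicators. Since $\psi_\rho$ is strictly concave and upper semicontinuous on $\mathcal L_\rho$, with $\psi_\rho=-\infty$ outside it, this maximum is attained at a unique unit vector $u_0$. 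We must check that $u_0\in\operatorname{int}\mathcal L_\rho$. We expect this from the vertical tangency of $\psi_\rho$ at the boundary of the limit cone, which holds for relatively Anosov subgroups; we will take it from the cited results of \cite{CZZ_relative}. The tangent form then satisfies $\psi_{u_0}=\delta_\rho\langle u_0,\cdot\rangle$, so $V_{u_0}=u_0^\perp$.

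Next we pass from correlations to Haar matrix coefficients on $\Gamma_\rho\backslash G$. In the standard way, one writes
$$\int_{\Gamma_\rho\backslash G}\phi_1(xa_v)\phi_2(x)\,dx\approx \sum_{\text{BMS-type decompositions}} e^{(2\rho_G-\psi_{u})(v)}\cdot(\text{BMS correlation at }v),$$
where $\rho_G$ is the half-sum of positive roots. This uses the product structure of BMS and Burger--Roblin measures together with the transversality argument of Roblin and Oh--Shah, thickening in the horospherical directions. Applying \Cref{bmslocal_intro}(2) with $u=u_0$ gives an asymptotic for the matrix coefficient at $v=tu_0+\sqrt t w$. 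It has the form
$$t^{-\frac{r-1}2}\,e^{(2\rho_G-\psi_{u_0})(v)}\,c\,e^{-Q(w)/2}\,m^{\BR}(\phi_1)m^{\BR_*}(\phi_2),$$
with a remainder that is integrable in $w$.

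We then follow the ELO counting argument. Let $F_T(g)=\sum_{\gamma}\mathbf 1_{B_T}(\gamma g o)$, and pair it with bump functions near the identity coset. Unfolding expresses $\langle F_T,\phi\otimes\phi\rangle$ as
$$\int_{\{\|v\|<T\}} \langle a_v\phi,\phi\rangle\,\xi(v)\,dv,$$
where $\xi(v)\asymp e^{2\rho_G(v)}$ is the Haar density in Cartan coordinates. Inserting the asymptotic cancels the $2\rho_G$ factors, leaving the integral of $e^{\psi_{u_0}(v)}t^{-(r-1)/2}e^{-Q(w)/2}$ over the ball. Near $u_0$ the ball is locally a half-space, with curvature correction $|w|^2/2$ in the $t$ direction. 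The Gaussian integral over $w\in V_{u_0}$ of $\sqrt t$-scaled transverse coordinates contributes $t^{(r-1)/2}$, which exactly cancels the normalization. The $t$-integral of $e^{\delta_\rho t}$ gives $e^{\delta_\rho T}/\delta_\rho$. The curvature term shifts $t$ by $O(|w|^2)$, so it changes only the constant. Well-roundedness of Riemannian balls then removes the smoothing and gives $c(\rho,o)e^{\delta_\rho T}$. For a general basepoint $o$, we conjugate and absorb the change into the constant.

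We expect the main obstacle to be justifying the interchange of the limit with the integral over the whole ball, including directions $v$ far from $u_0$. Near $u_0$, the uniform integrable remainder $t^{-1/2}e^{-a\min\{\|w\|^2,\|w\|\}}$ in \Cref{bmslocal_intro}(2) handles this. Far from $u_0$, we will use the a priori bound $\langle a_v\phi,\phi\rangle\ll e^{(2\rho_G-\psi_\rho)(v)+\epsilon\|v\|}$. Combined with $\psi_\rho(v)<\delta_\rho\|v\|-c\|v\|$ outside a cone around $u_0$, this shows those regions contribute $o(e^{\delta_\rho T})$. The uniformity of the local mixing constants in $u$ near $u_0$, from \Cref{bmslocal}, will also be needed to patch the two regimes.
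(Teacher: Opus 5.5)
Your plan is the paper's route: the paper takes $Q_1=Q_2=K$ in the bisector count (\Cref{cor.countingbisector}), which is the Edwards--Lee--Oh counting argument fed with the Haar local mixing of \Cref{m1} in the direction $u_\rho$, and then conjugates to move the basepoint. Two small corrections. First, matrix coefficients decay like $e^{(\psi-2\rho_G)(v)}$; your displayed exponents, including the a priori bound, have the sign reversed. Second, no uniformity in the direction $u$ is needed, and \Cref{bmslocal} does not provide it: after writing $v=tu_\rho+\sqrt t\,w$, the bound in \Cref{m1}, uniform over all $w\in V_{u_\rho}$ with $tu_\rho+\sqrt t\,w\in\mathfrak a^+$, already dominates the integral over the whole ball.
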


We also obtain bisector counting asymptotics
(\Cref{cor.countingbisector}). 
Canary--Zhang--Zhu--Zimmer \cite{CZZZ_Orbital} established
counting asymptotics for relatively Anosov groups with
cutoffs defined by linear functionals of the Cartan
projection. Linearity reduces counting to equidistribution for a reparametrized flow with finite BMS measure, without requiring
higher-rank local mixing. 
Riemannian balls impose a nonlinear cutoff on the full
Cartan projection, so determining the leading constant
requires information about the transverse distribution
of orbit points. Our uniform transverse local mixing
estimates provide this information.

We also obtain local mixing with respect to Haar measure,
expressed as asymptotics for matrix coefficients of
compactly supported $M$-invariant vectors in
$L^2(\Gamma_\rho\backslash G)$.
Let $\varrho\in\mathfrak a^*$
be the sum of the positive roots, counted with multiplicity,
let $dx$ be induced by a fixed Haar measure on $G$, and
let $m_u^{\BR}$ and $m_u^{\BR_*}$ be the Burger--Roblin
measures defined in \Cref{subsec.BR}. Write $\i$ for the
opposition involution. The transfer argument of
Edwards--Lee--Oh \cite[Proposition~6.8]{ELO_anosov}, extending
Roblin's rank-one result \cite{Roblin2003ergodicite}, gives
the following limit. 
\begin{theorem}\label{m1}
Let $u\in\operatorname{int}\mathcal L_\rho$.
There exists $\kappa_u>0$ such that, for every $w\in V_u$ and all
$f_1,f_2\in \C_c(X_\rho)$,
$$
\begin{aligned}
&\lim_{t\to + \infty}
t^{\frac{r-1}{2}}
e^{(\varrho-\psi_u)(tu+\sqrt t\,w)}
\int_{X_\rho}
f_1(xa_{tu+\sqrt t\,w})f_2(x)\,dx=
\kappa_u c_u e^{-Q_u(w)/2}
m_{\i(u)}^{\BR}(f_1)m_u^{\BR_*}(f_2).
\end{aligned}
$$
 The convergence is locally uniform in $w$.

Moreover, there exists $b>0$ such that, for each compact
$\mathcal K\subset X_\rho$ and all $f_1,f_2\in\mathcal C_c(X_\rho)$
supported in $\mathcal K$, the normalized absolute correlation
on the left is
$$
O_{u,\mathcal K}\!\left(\|f_1\|_\infty\|f_2\|_\infty
e^{-b\min\{Q_u(w),\sqrt{Q_u(w)}\}}\right)
$$
as $t\to\infty$,
uniformly for $w\in V_u$ with $tu+\sqrt t\,w\in\mathfrak a^+$.
\end{theorem}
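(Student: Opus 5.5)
Theorem~\ref{m1} will follow from Theorem~\ref{bmslocal_intro}(2) by the transfer argument of Edwards--Lee--Oh \cite[Proposition~6.8]{ELO_anosov}, which goes back to Roblin. The one new point is to carry the transverse parameter $w$ and the uniform integrable remainder through that argument.

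\textbf{Reduction to local test functions.} By a partition of unity, it suffices to treat $f_1,f_2$ supported in small flow boxes. Near a point $x_0=[g_0]$, we use Hopf coordinates: $g=g_0 n^+ n^- a_v$ with $n^\pm\in N^\pm$ small and $v\in\mathfrak a$ small. In these coordinates Haar measure is $dn^+\,dn^-\,dv$ up to the factor $e^{\varrho(v)}$-type densities. The BMS measure is $d\mu_{\i(u)}(n^+)\,d\mu_u(n^-)\,dv$, where $\mu$ are the $(\Gamma_\rho,\psi_u)$-conformal densities with their $e^{\psi_u(v)}$ weights. The Burger--Roblin measures mix these: Lebesgue in one horospherical direction and Patterson--Sullivan in the other.

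\textbf{Thickening and Haar--BMS comparison.} Following Roblin, we use mixing along $a_{tu+\sqrt t w}$. Writing $\tau=tu+\sqrt t\,w$, the expanding horospherical piece of $\operatorname{supp}f_2$ is pushed by $a_\tau$ to a large $N^+$-piece. The factor $e^{(\varrho-\psi_u)(\tau)}$ is exactly the discrepancy between how $a_\tau$ scales Lebesgue measure on $N^\pm$ ($e^{\pm\varrho(\tau)}$) and how it scales the conformal measures ($e^{\pm\psi_u(\tau)}$). Concretely, we approximate $f_1,f_2$ from above and below by functions $\rho$-thickened in the contracting and expanding directions, at scale $\e$. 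We then disintegrate Haar measure along the unstable and stable foliations and replace the Lebesgue leaf measures by the PS leaf measures, via the local Radon--Nikodym densities. This rewrites the Haar correlation, after normalization, as a BMS correlation of modified test functions $\tilde f_1,\tilde f_2$. Here $\tilde f_1$ (respectively $\tilde f_2$) integrates $f_1$ (respectively $f_2$) against Lebesgue measure along $N^-$ (respectively $N^+$) and divides by PS mass. The BMS integrals of $\tilde f_i$ then equal $m^{\BR}_{\i(u)}(f_1)$ and $m^{\BR_*}_u(f_2)$, up to a constant $\kappa_u$ coming from the normalization of $dx$ and of $m^{\BMS}_u$.

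\textbf{Conclusion and main obstacle.} Applying Theorem~\ref{bmslocal_intro}(2) to $\tilde f_1,\tilde f_2$, extended to continuous functions by the smooth approximation remark, gives the limit $\kappa_u c_u e^{-Q_u(w)/2}m^{\BR}_{\i(u)}(f_1)m^{\BR_*}_u(f_2)$ up to an $O(\e)$ error. Letting $\e\to0$ yields the first claim. Local uniformity in $w$ comes from the uniformity in Theorem~\ref{bmslocal_intro}(2), together with the fact that for $w$ in a compact set the displacement $\sqrt t\,w$ is negligible for the thickening: $\tau/t\to u$, so $\tau$ stays in a fixed cone inside $\operatorname{int}\mathfrak a^+$ where the horospherical contraction rates are uniform.

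The main difficulty is the uniform bound. The pointwise estimate comes from the remainder $O(t^{-1/2}e^{-a\min\{\|w\|^2,\|w\|\}})$ and the Gaussian term, both bounded by $e^{-b\min\{Q_u(w),\sqrt{Q_u(w)}\}}$ since $Q_u$ is comparable to $\|\cdot\|^2$. But the implied constants in Theorem~\ref{bmslocal_intro} depend on $f_1,f_2$, and the thickened functions involve horospherical pieces. I would therefore apply the theorem to a fixed finite family of nonnegative majorants of $\mathbf 1_{\mathcal K}$, with thickening scale $\e=1$, and bound $|f_i|\le\|f_i\|_\infty\cdot(\text{majorant})$. This needs only an upper bound, and the Haar-to-BMS comparison for upper bounds is uniform.

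The remaining concern is $w$ large, where $\tau$ approaches a wall of $\mathfrak a^+$ and the contraction rates degenerate. I would handle this by noting that the comparison only needs $a_\tau$ not to expand the stable box, which holds for all $\tau\in\mathfrak a^+$. I would also use the positive distance of $u$ from $\partial\mathcal L_\rho$: when $\tau$ lies outside a neighborhood of $\mathcal L_\rho$, the correlation is dominated by superexponentially small counts, still bounded by the claimed Gaussian-type weight.
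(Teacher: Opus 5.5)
Your proposal is correct and follows the paper's proof. Theorem~\ref{m1} comes from the Edwards--Lee--Oh transfer \cite[Propositions~5.3, 6.1, 6.8]{ELO_anosov} applied to Theorem~\ref{bmslocal_intro}(2); local uniformity comes from $\inf_{w\in E}\alpha(tu+\sqrt t\,w)\ge t\alpha(u)-O_E(\sqrt t)$ on compact $E\subset V_u$, and the transverse bound comes from the upper comparisons (valid whenever $tu+\sqrt t\,w\in\mathfrak a^+$) applied to fixed smooth majorants. Your extra treatment of $\tau$ far from $\mathcal L_\rho$ is unnecessary, since Theorem~\ref{bmslocal_intro}(2) is already uniform in all $w\in V_u$; also, your $N^\pm$ labels are used inconsistently, since $f_1$ should be averaged along the expanding $\check N$-leaves and $f_2$ along the contracting $N$-leaves.
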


In rank one, matrix-coefficient asymptotics can also be
obtained from a positive $L^2$ Laplace eigenfunction and
a spectral gap \cite{Mohammadi_Oh,Edwards_Oh_Duke}.
For infinite-volume quotients with no rank-one factors,
such an eigenfunction does not exist
\cite[Theorem~1.1]{EFLO}.
The local mixing approach also gives equidistribution of
expanding horospheres; the precise statement and measure
normalizations appear in
\Cref{cor:horospherical-equidistribution}.

Finally, the Hopf--Tsuji--Sullivan criterion of
Burger--Landesberg--Lee--Oh \cite{BLLO} gives the same
directional ergodicity threshold as for Anosov subgroups.

\begin{corollary}
\label{cor.ergodicdichotomy}
For every $u\in\operatorname{int}\mathcal L_\rho$,
the flow $\{a_{tu}:t\in\mathbb R\}$ on
$(X_\rho,m_u^{\BMS})$ is ergodic if and only if
$\rank_{\br} G\leq3$.
\end{corollary}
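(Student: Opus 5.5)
The plan is to use the Hopf--Tsuji--Sullivan dichotomy of Burger--Landesberg--Lee--Oh \cite{BLLO}. For a Zariski dense discrete subgroup whose limit set carries a $(\Gamma_\rho,\psi_u)$-conformal measure of the relevant type, it says that ergodicity of $\{a_{tu}\}$ on $(X_\rho,m_u^{\BMS})$ is equivalent to each of the following: conservativity of the directional flow, full measure of the $u$-directional conical set, and divergence of the directional Poincaré series
$$\sum_{\gamma\in\Gamma_\rho,\ \|\mu(\gamma)-\mathbb R u\|<R} e^{-\psi_u(\mu(\gamma))}=\infty$$
for some (equivalently any) large $R$. Here $\mu$ denotes the Cartan projection. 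So I only need to decide whether this series diverges, and I would show that it does exactly when $r\le 3$.

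The input is the local mixing asymptotic of \Cref{bmslocal_intro}, or equivalently its transverse Gaussian form in part~(2). In the Anosov case, Burger--Landesberg--Lee--Oh derive the rank threshold from a local limit theorem for the Cartan projection near the ray $\mathbb R u$. Their argument shows that the number of $\gamma$ with $\mu(\gamma)$ in a tube of width $R$ around $\mathbb R u$, at height $\psi_u(\mu(\gamma))\in[T,T+1]$, is comparable to $e^{T}T^{-(r-1)/2}$. I would obtain the same estimate here directly from \Cref{bmslocal_intro}(1), by integrating the correlation of bump functions against the BMS measure. Concretely:
\begin{itemize}
\item Choose $f_1=f_2$ supported in a small box around a point of the support of $m_u^{\BMS}$.
\item Unfold the correlation over $\Gamma_\rho$, using the product structure of $m_u^{\BMS}$ in Hopf coordinates.
\item Note that the measure of $\{x: xa_{tu}\in\mathrm{supp} f_1,\ x\in\mathrm{supp} f_2\}$ is comparable to $\sum e^{-\psi_u(\mu(\gamma))}$ over $\gamma$ with $\mu(\gamma)$ within bounded distance of $tu$.
\item Conclude that this directional sum over a unit window is $\asymp t^{-(r-1)/2}$. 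The upper bound comes from the shadow lemma, the lower bound from positivity of $c_u m_u^{\BMS}(f_1)m_u^{\BMS}(f_2)$.
\end{itemize}
The shadow lemma and the comparison between the $\Gamma_\rho$-sum and the correlation integral are standard once the limit set is conical up to bounded parabolic excursions. In the cusped relatively Anosov setting they should follow from the cusp geometry developed in the paper, giving a shadow lemma with constants uniform on compact parts.

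Summing over $t\in\mathbb N$ then gives the equivalence: the directional series diverges iff $\sum_t t^{-(r-1)/2}=\infty$, that is iff $(r-1)/2\le 1$, i.e.\ $r\le3$. The HTS criterion then yields ergodicity for $r\le 3$ and non-ergodicity, in fact complete dissipativity, for $r\ge4$.

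The main obstacle is to justify that the BLLO criterion applies. I also need to control the contribution of orbit points deep in the cusps, which is where the compact-support test functions see nothing. Specifically, the divergence/convergence dichotomy must be detected by the bounded-window count near $tu$ uniformly. For the lower bound, compactly supported test functions suffice, because divergence follows from a sub-sum. For the upper bound when $r\ge4$, I would show that the conical $u$-directional set has measure zero by a Borel--Cantelli argument. That argument uses the same tube counts applied with a fixed compact set $\mathcal K$ of returns, together with the standard reduction in \cite{BLLO} that dissipativity is detected by return times to a single compact set of positive measure. No control of cusp excursions beyond this is then needed.
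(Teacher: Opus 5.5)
Your route is the paper's route: its proof is just that the BMS local mixing asymptotic makes $\Gamma_\rho$ $u$-balanced in the sense of \cite{BLLO}, after which \cite[Theorem~1.4]{BLLO} gives the dichotomy. What your proposal is missing is that first step. You quote the Burger--Landesberg--Lee--Oh criterion as an unconditional equivalence between ergodicity and divergence of $\sum_{\gamma\in\Gamma_\rho,\ \|\mu(\gamma)-\mathbb R u\|<R}e^{-\psi_u(\mu(\gamma))}$. The theorem the paper applies instead carries $u$-balancedness as a hypothesis: a bounded-ratio condition on time-integrated correlations $\int_0^T\int f_1(xa_{tu})f_2(x)\,dm_u^{\BMS}\,dt$ for different pairs of test functions. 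It enters exactly in the implication you need for $r\le3$, from divergence of the tube series to conservativity and ergodicity. That step uses a second-moment (quasi-independence) estimate, which forces a comparison of time-integrated correlations, equivalently tube sums, for a compact set and a larger one. Balancedness is what makes these comparable. The obstacle you single out, cusp excursions, is not the hypothesis that has to be checked. The convergent half for $r\ge4$, which you handle by Borel--Cantelli, is unconditional and your plan there is fine.

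The fix follows directly from \Cref{bmslocal_intro}(1), extended to $\mathcal C_c(X_\rho)$. Take nonnegative $f_1,\dots,f_4\in\mathcal C_c(X_\rho)$ with $m_u^{\BMS}(f_3)m_u^{\BMS}(f_4)>0$. The correlation of $(f_i,f_j)$ at time $t$ is $c_u t^{-(r-1)/2}m_u^{\BMS}(f_i)m_u^{\BMS}(f_j)+o(t^{-(r-1)/2})$.
\begin{itemize}
\item If $r\le3$, then $\int_1^T t^{-(r-1)/2}\,dt\to\infty$. So the ratio of the integrated correlations of $(f_1,f_2)$ and $(f_3,f_4)$ tends to $m_u^{\BMS}(f_1)m_u^{\BMS}(f_2)/\bigl(m_u^{\BMS}(f_3)m_u^{\BMS}(f_4)\bigr)$.
\item If $r\ge4$, both integrals converge to finite limits, and the denominator's limit is positive.
\end{itemize}
Your own estimate, that the tube sum over a unit window near $tu$ is $\asymp t^{-(r-1)/2}$ for every fixed tube width, carries the same information. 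You just have to feed it into the theorem as the balancedness hypothesis. With that done, two things complete the argument: your unfolding comparison between correlations and tube sums, and the fact that $\sum_t t^{-(r-1)/2}$ diverges exactly when $r\le3$. One minor correction: each inequality in your two-sided tube estimate uses both the two-sided shadow bounds and the two-sided local mixing asymptotic, not one ingredient each.
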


\subsection*{Main ideas of the proof}
The support of $m_u^{\BMS}$ is a trivial $V_u$-bundle
over a non-compact flow space \cite{KOW_indicators} and
the base flow carries a finite invariant measure
\cite{KO_entropy,BCZZ}.
For Anosov representations, the corresponding base flow
has a compact phase space and admits a finite Markov coding
\cite{BCLS_gafa}. In our setting, parabolic elements of $\Gamma$ give rise to arbitrarily long
cusp excursions, leading to countably many return branches
and an unbounded $\mathfrak a$-valued return cocycle.
We construct a countable coding for the right translation by
$a_{tu}=\exp(tu)$ on $(X_\rho,m_u^{\BMS})$ and establish
 uniform distortion estimates
and exponential moment bounds for this cocycle.

Our construction builds on the countable coding of
Li--Pan \cite{LP} for geometrically finite real hyperbolic
manifolds. They use Euclidean boundary coordinates,
cusp heights in the upper half-space model of hyperbolic space, and conformality
of the boundary action.

On the limit set of $\Gamma_\rho$ in the Furstenberg boundary,
we develop an intrinsic cusp geometry
using a blown-up premetric and a compatible height theory.
Separation estimates for parabolic limit points and
Patterson--Sullivan mass estimates, together with recurrence
of the reparametrized flow, give exponential decay of the
residual mass in the coding construction.
To control distortion of the full return cocycle, we combine
local Busemann estimates on boundary shadows
\cite{CK_Horospherical} with contraction of the inverse branches.

\subsubsection*{The geometry of the cusp coding} We begin with the unique linear form $\psi\in\mathfrak a^*$
tangent to $\psi_\rho$ at $u$. Rescaling $u$, we may assume that $\psi(u)=1$. Set
$\bar\psi=(\psi+\psi\circ\i)/2$.
The symmetric form $\bar\psi$ defines the cusp geometry, while $\psi$
defines the Patterson--Sullivan measure and the return times.
We use a $\rho$-equivariant continuous
Morse embedding $f:\operatorname{hull}\Lambda(\Gamma)\to G/K$, whose
asymptotic Cartan cone equals $\mathcal L_\rho$.

 Let $\mathcal F := G/P$ be the Furstenberg boundary, where
$P$ is the minimal parabolic subgroup determined by $A^+$. Let
$\Lambda_\rho\subset\mathcal F$ be the limit set of $\Gamma_\rho$, and
let $\nu_\psi$ be the  Patterson--Sullivan measure on $\La_\rho$ associated with $\psi$.  Recall the visual premetric on $\La_\rho$
\cite{LO_invariant,DKO_AR}:
$$
d_{\bar\psi}(\xi,\eta):= e^{-\bar\psi(\mathcal G(\xi,\eta))}
\qquad(\xi\neq\eta\in\Lambda_\rho),
$$
where $\mathcal G$ is the $\mathfrak a$-valued Gromov
product defined in \eqref{eqn.Gromovproductdef}. Fixing a parabolic limit point $p_0\in\Lambda_\rho$,
we introduce the blown-up premetric on
$\Lambda_\rho\smallsetminus\{p_0\}$:
$$
D_{\bar\psi}(\xi,\eta):=
\frac{d_{\bar\psi}(\xi,\eta)}
{d_{\bar\psi}(\xi,p_0)d_{\bar\psi}(\eta,p_0)} \qquad(\xi\neq\eta\in\Lambda_\rho \smallsetminus\{p_0\}).
$$
In the upper half-space model of $\bH^n$, with $p_0=\infty$,
this is, up to scaling, a positive power of the Euclidean
metric on $\partial\mathbb H^n\smallsetminus\{\infty\}$ (\Cref{EucRe}).
We prove a uniform quasi-triangle inequality for $D_{\bar\psi}$ (\Cref{lem.eucdistancetriangle}) and establish
 geometric and measure estimates that allow it to play
the role of the Euclidean metric in the cusp geometry
of $\Lambda_\rho$.

We define heights $h(p)$ of parabolic limit points (\Cref{hd}), relate them
with Cartan projections of suitably chosen group representatives, (\Cref{lem.heightondelta}), and prove the uniform separation estimate
$$
D_{\bar\psi}(p,q)\ge C \sqrt{h(p)h(q)}
\qquad(p\neq q \in\Lambda_{\mathrm p}(\Gamma_\rho)\smallsetminus\{p_0\})
$$
for a uniform $C> 0$; see \Cref{lem.LP6.16}.
Using the global shadow lemma
\cite{KO_globalshadow} and \Cref{lem.regularunipotent_cartan}, we prove doubling
and reverse doubling for $\nu_\psi$
(\Cref{prop.LP5.14}) and obtain estimates for $D_{\bar\psi}$-balls
centered at parabolic limit points
(\Cref{eqn.LP5.16}).

Choose a compact fundamental domain
$\Delta_0\subset\Lambda_\rho\smallsetminus\{p_0\}$
for the stabilizer of $p_0$ in $\Gamma_\rho$.
Starting from $\Delta_0$, we group parabolic limit points
into height layers and successively remove the corresponding
$D_{\bar\psi}$-balls. 
The central geometric step combines annular mass estimates
with recurrence of the reparametrized flow: there exist
$N\geq1$ and $\varepsilon>0$ such that, for every sufficiently large
$n$, stages $n+1,\ldots,n+N$ remove at least an $\varepsilon$-fraction
of the Patterson--Sullivan mass remaining at stage $n$. Thus the residual mass
decays exponentially; see \Cref{sec.expdecayomega}.

The geometric partition defines an expanding map on a
conull invariant subset of the union of the cusp domains.
Taking first returns to $\Delta_0$ gives a full-branch
expanding map, and we prove that the induced branches
retain exponential summability. Finally, we group a fixed
number of successive returns so that the resulting scalar
roof is uniformly bounded away from zero;
see \Cref{sec.expandingmap,sec.exptail}.

The following theorem summarizes the construction and the
estimates needed for the spectral analysis.

\begin{theorem}[Countable Markov coding]
\label{thm:intro_coding}
There exist $\Lambda_+\subset\Delta_0$ with
$\nu_\psi(\Delta_0\smallsetminus\Lambda_+)=0$, a countable family
$\mathcal H\subset\Gamma_\rho$, and a map $T:\Lambda_+\to\Lambda_+$
such that
$$
\Lambda_+=\bigsqcup_{\gamma\in\mathcal H}\gamma\Lambda_+,
\qquad T|_{\gamma\Lambda_+}=\gamma^{-1}.
$$
Moreover, the family $\mathcal H$ freely generates a Zariski dense subsemigroup,
and the coding map
$\Psi:\mathcal H^{\mathbb N}\to\Lambda_+$ is a H\"older
homeomorphism conjugating the one-sided shift to $T$, for the standard
symbolic metric and the premetric $D_{\bar\psi}$.

For $x\in\gamma\Lambda_+$,  define the $\mathfrak a$-valued
return cocycle $K$ and the scalar roof function $R$ by
$$
K(x):=\beta_x(e,\gamma)
+2\mathcal G(Tx,p_0)-2\mathcal G(x,p_0),
\qquad R(x):=\psi(K(x)),
$$
where $\beta$ is the $\mathfrak a$-valued Busemann map.
Then $K\circ\Psi$ is locally H\"older, $R\geq1$ on $\La_+$, and there exists
$\varepsilon_0>0$ such that
$$
\sum_{\gamma\in\mathcal H}\sup_{x\in\Lambda_+}
e^{-(1-\varepsilon_0)R(\gamma x)}<\infty,
\qquad
\int_{\Lambda_+}e^{\varepsilon_0\|K\|}\,d\nu_\psi<\infty.
$$
\end{theorem}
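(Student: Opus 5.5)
The plan follows the outline sketched before the statement, adapting the Li--Pan scheme \cite{LP} to the premetric $D_{\bar\psi}$. Everything is normalized so that $\psi(u)=1$.

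\textbf{Step 1: cusp decomposition and the expanding map.} Start from the compact fundamental domain $\Delta_0\subset\Lambda_\rho\smallsetminus\{p_0\}$ for $\mathrm{Stab}_{\Gamma_\rho}(p_0)$.
\begin{enumerate}
\item Enumerate the parabolic points $p\in\Lambda_{\mathrm p}(\Gamma_\rho)\cap\Delta_0$ by height layers $h(p)\in(e^{-n-1},e^{-n}]$, using \Cref{hd} and \Cref{lem.heightondelta}.
\item At stage $n$, remove the $D_{\bar\psi}$-balls $B(p,c\,h(p))$ around layer-$n$ points. By the separation estimate \Cref{lem.LP6.16}, for a small constant $c$ these balls are pairwise disjoint and disjoint from earlier balls. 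The quasi-triangle inequality \Cref{lem.eucdistancetriangle} handles the constants.
\item Each removed ball is mapped by the representative $\gamma_p$ of \Cref{lem.heightondelta} onto a cusp neighbourhood of $p_0$. Composing with the parabolic stabilizer returns it to $\Delta_0$ in countably many pieces.
\end{enumerate}
This defines a map $T$ on the set of points that are eventually removed, with $T=\gamma^{-1}$ on each piece. We then pass to the first-return map to $\Delta_0$, which is full-branch, and group $k$ successive returns. This produces $\mathcal H$, the countable set of products of branch elements, with $\Lambda_+=\bigsqcup_{\gamma\in\mathcal H}\gamma\Lambda_+$.

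\textbf{Step 2: full measure and exponential tails.} To show $\nu_\psi(\Delta_0\smallsetminus\Lambda_+)=0$ with exponential decay of the residual mass, we use:
\begin{itemize}
\item doubling and reverse doubling (\Cref{prop.LP5.14});
\item the ball estimates at parabolic points (\Cref{eqn.LP5.16});
\item recurrence of the reparametrized flow, which has finite BMS measure \cite{KO_entropy,BCZZ}.
\end{itemize}
Together these show that any residual point at stage $n$ lies, within $N$ further stages, in a removable ball carrying a fixed fraction of the mass of an annulus around it. Hence the residual mass satisfies $\nu_\psi(\Omega_n)\le C\theta^n$ with $\theta<1$.

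The roof is handled through the cocycle. For $x\in\gamma\Lambda_+$, conformality of $\nu_\psi$ gives
\[
R(x)=\psi(K(x))=-\log\frac{d\gamma_*^{-1}\nu_\psi}{d\nu_\psi}
\]
up to the correction terms $2\mathcal G(\cdot,p_0)$, which convert the visual metric into the blown-up one. Therefore
\[
e^{-R(\gamma x)}\asymp \nu_\psi(\gamma\Lambda_+)\quad\text{up to bounded distortion.}
\]
The residual-mass bound thus converts into the summability
\[
\sum_{\gamma\in\mathcal H}\sup_{x\in\Lambda_+}e^{-(1-\varepsilon_0)R(\gamma x)}<\infty.
\]
Since the mass of the branches with $R\approx n$ decays exponentially, this also gives the moment bound for $R$. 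The moment bound for $\|K\|$ follows from the regularity estimates for Cartan projections of peripheral elements (\Cref{lem.regularunipotent_cartan}). These estimates bound $\|K\|$ linearly by $R$ plus a constant, because $u$ lies in the interior of the limit cone. The $k$-fold grouping makes $R\ge1$, by uniform expansion of the first-return map.

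\textbf{Step 3: coding, regularity and Zariski density.} Contraction of inverse branches in $D_{\bar\psi}$ follows from the local Busemann estimates on shadows \cite{CK_Horospherical}. Uniform contraction gives that $\Psi$ is a H\"older homeomorphism and that $K\circ\Psi$ is locally H\"older, with bounded distortion. Freeness of the semigroup generated by $\mathcal H$ follows from the disjointness of the sets $\gamma\Lambda_+$ via a ping-pong argument.

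For Zariski density, the semigroup contains conjugates of every parabolic element together with hyperbolic elements whose attracting points fill a set of positive measure. Its Zariski closure therefore contains a Zariski dense subgroup of $\Gamma_\rho$. We conclude using that the Zariski closure of a semigroup is a group.

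\textbf{Main obstacle.} I expect the hardest step to be the exponential decay of the residual mass in Step 2. It requires making the annular mass estimates uniform across arbitrarily deep cusps, using only the premetric $D_{\bar\psi}$ and without conformality of the action.
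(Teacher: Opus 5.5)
Your roadmap matches the paper's architecture. However, Steps~1--2 and the Zariski density step would not go through as written.

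\textbf{The partition and return map.} For a parabolic point $p=\gamma_pp_i$ of type $i\neq0$, the element $\gamma_p^{-1}$ sends the removed set to a neighbourhood of $p_i$. That neighbourhood is tiled by $\mathsf P_i$-translates of $\Delta_i$, not of $\Delta_0$. Also, a $D_{\bar\psi}$-ball is not a union of whole tiles, so its boundary pieces would not map onto full domains. The paper therefore:
\begin{itemize}
\item removes the sets $J_p=\gamma_pg_i^{-1}\overline{R_{i,\eta}^{\,c}}$, which are exact unions of tiles $\gamma_p\gamma_1\Delta_i$ and are comparable to balls;
\item runs the construction on every $\Delta_i$;
\item reaches $\Delta_0$ only after eliminating the other indices one at a time by first returns along a strongly connected transition graph (\Cref{lem.returngraphirreducible}), re-proving at each step that returns are a.e.\ finite and that exponential summability survives (\Cref{prop.hqexpsumdecay,prop.expsumh1finite}).
\end{itemize}

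Your disjointness claim is false. Every removed piece contains parabolic points of arbitrarily deep layers, and layer-$n$ points near its boundary have balls straddling it. You may only remove those $p$ whose $h_{n-1}/(4\eta)$-neighbourhood lies in the residual set. With that rule, your assertion that every residual point is captured within $N$ stages fails for two kinds of points:
\begin{itemize}
\item points within $O(h_n)$ of the boundary pieces $\partial\Delta_0$, $\partial J_q$;
\item points whose geodesic is deep in a cusp at time $n\log\mathtt H$, where the recurrence lemma does not apply.
\end{itemize}
These are handled in the paper by the equivalence classes $C(p)$, by the sets $B_n$ whose mass passes to $A_{n+1}$ at a fixed rate (\Cref{lem.LP6.46}), and by non-concentration of $\nu_\psi$ near boundary pieces (\Cref{lem.LP5.29,lem.LP6.9,lem.LP6.50}). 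Recurrence is used only on $A_n\smallsetminus\mathcal Z_n'$ (\Cref{lem.LP6.53}).

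\textbf{Summability.} The comparison $e^{-R(\gamma x)}\asymp\nu_\psi(\gamma\Lambda_+)$ only yields the exponent-one bound $\sum_\gamma\nu_\psi(\gamma\Lambda_+)\le1$. Inside $J_p$ the parabolic factor $\gamma_1$ makes $R$ arbitrarily large, independently of the stage. Summing $e^{\varepsilon R}$ over these tiles needs the peripheral gap $\delta_\psi(\mathsf P_i)<1$ (\Cref{thm.entropydrop}). The stage index enters only through $\psi(\mu(\gamma_p))\le2n\log\mathtt H+O(1)$, which the decay of $\nu_\psi(\mathcal Z_{n-1})$ must beat. Separately, $\|K\|\ll R$ comes from $K\in\mathcal C_0$ and positivity of $\psi$ on $\mathcal L_\rho\smallsetminus\{0\}$, not from the peripheral Cartan estimates.

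\textbf{Zariski density.} This step fails as stated. $\sH$ contains no parabolic elements: every nontrivial element is loxodromic, since its periodic point gives $\lambda(\gamma)=K_n(y_\gamma)\in\mathcal C_0$ (\Cref{lem.proximal}). More importantly, passing from ``attracting points fill a set of positive measure'' to ``the Zariski closure contains a Zariski dense subgroup of $\Gamma_\rho$'' is the whole content of the claim, and nothing you cite gives it. The paper needs three ingredients:
\begin{itemize}
\item[(i)] proper subvarieties of $\mathcal F$ are $\nu_\psi$-null, by divergence type and \cite{KO_NC}, so $\Lambda_+$, and hence the dense set of attracting flags $y_\gamma$ with $\gamma\in\sH$, is Zariski dense in $\mathcal F$ (\Cref{Zd});
\item[(ii)] the Zariski closure $L$ of $\sH$ is a group, and it is reductive because no finite-index subgroup of $\Gamma_{\mathcal H}$ lies in a proper parabolic subgroup (\Cref{lem.noglobalfixedpt}, which uses that hyperbolic elements of a Fuchsian group sharing one fixed point share both);
\item[(iii)] a converse to Benoist's theorem (\Cref{Zd2}): via a Cartan decomposition of $L^\circ$, the attracting flags of loxodromic elements of $L^\circ$ lie in finitely many compact orbits $K_L\cdot wP$, so their Zariski density forces $K_L$ to act transitively on $K/M$, whence $\mathfrak p\subset\mathfrak l$ and $L=G$.
\end{itemize}
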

We also prove quantitative non-integrability of the return
cocycle $K$: in every unit direction in $\mathfrak a$, we
obtain quantitative lower bounds on the variation of
differences of projected cocycle sums along suitable inverse
branches (\Cref{thm:non-integrability}).
The proof combines non-concentration of $\Lambda_\rho$
at scales adapted to the contraction rates in the simple-root
directions (\Cref{prop: non-concentration self-joining})
with Zariski density of the semigroup
$\langle\mathcal H\rangle_+$ (\Cref{cor.Zdensesemigroup})
and the peripheral root-balance condition
\eqref{eqn:peripheral-root-balance}.

\subsubsection*{The symbolic model}
Let $\mathsf m_\psi:=m_u^{\BMS}$ be the BMS measure on $X_\rho$ associated with $\psi$.
The local H\"older regularity of $R\circ\Psi$ and
exponential branch summability (\Cref{thm:intro_coding}) allow us to apply
the Ruelle--Perron--Frobenius  theory for countable shifts
\cite{MU03,Sarig_Countable}, obtaining a unique
$T$-invariant Gibbs probability measure $\nu_+$ on $\La_+$ for the
potential $-R$. This measure is equivalent to
$\nu_\psi|_{\Lambda_+}$, with density bounded above and below
by positive constants.  Using the Gibbs property and the ergodic theorem,
we show that 
$$
u=\frac{\int_{\Lambda_+}K\,d\nu_+}
{\int_{\Lambda_+}R\,d\nu_+};
$$
see \Cref{thm.mixingdirectiontangent}.
We use the coding to construct a suspension model
for the flow $\exp(tu)$ on $(X_\rho,\mathsf m_\psi)$
and establish local mixing in this direction.

Let $\Lambda_-\subset\Lambda_\rho$ be a compact neighborhood
of $p_0$, disjoint from $\Delta_0$, chosen so that for every $\gamma\in\mathcal H$,
$\gamma^{-1}$ preserves $\Lambda_-$
and contracts $d_{\bar\psi}$ uniformly; see
\Cref{lem:shrinking fibers}.
Define $\widehat T$ on $\La_+\times \La_-$ by
$$
\widehat T(x,x_-):=(Tx,\gamma^{-1}x_-)
\qquad(x\in\gamma\Lambda_+, \, \ga \in \cal H),
$$
and let $\nu^\diamond$ be its invariant probability measure
projecting onto $\nu_+$ on $\La_+$.  Set 
$$
K^{\circ}(x) := K(x) - R(x)u \in \ker\psi\qquad (x\in \La_+),
$$
so that
$\int_{\Lambda_+}K^\circ\,d\nu_+=0$. The suspension has coordinates $(x,x_-,s,v)$ with
$0\leq s\leq R(x)$ and $v\in\ker\psi$, subject to
the boundary identification
$$
(x,x_-,R(x),v)
\sim
\bigl(\widehat T(x,x_-),0,v-K^\circ(x)\bigr).
$$
The suspension flow increases $s$ at unit speed.
We construct a  factor map $\Phi$ from this
suspension to the $\exp (tu)$-flow on $(X_\rho,\mathsf m_\psi)$ that
intertwines translations in the transverse
coordinates; see \Cref{sec.localmixingsymbolic}.
Although $\Phi$ need not be proper, we prove that for every
compact set $Q\subset X_\rho$, there exists $C_Q>0$ such that
$$\Phi^{-1}(Q)\subset
\bigl\{(x,x_-,s,v):
0\leq s<R(x),\ 
\|su+v\|\leq C_Q(R(x)+1)\bigr\};$$
see \Cref{cor:factor map}. This support bound controls the growth of the Fourier
transforms of pulled-back functions and their frequency
derivatives in terms of $R$. The exponential moment of
$R$ makes these bounds integrable.
Contraction of the stable fibers reduces the analysis
to the expanding system $(\Lambda_+,T)$.

\subsubsection*{The correlation expansion} 
Fix a sufficiently small H\"older exponent $c>0$.
For $\xi\in\mathfrak a$, the normalized twisted transfer operator
$\mathcal L_\xi$ acts on $\mathcal F_c(\Lambda_+,\mathbb C)$, the space of
bounded locally $c$-H\"older functions, by
$$
\mathcal L_\xi H(x)
=\sum_{\gamma\in\mathcal H} e^{-R(\gamma x)}\frac{h(\gamma x)}{h(x)} e^{i\langle\xi,K(\gamma x)\rangle}H(\gamma x),
\qquad x\in\Lambda_+,
$$
where $h>0$ is an eigenfunction with eigenvalue $1$
of the transfer operator associated with the potential $-R$.

 For $\xi$ near zero, $\mathcal L_\xi$ has a simple leading
eigenvalue $\kappa_\xi$. We prove that $\xi \mapsto |\kappa_\xi|$ has a negative-definite Hessian at $\xi=0$; see
\Cref{cor:expansion of complex eigenvalue around 0}.
This Hessian determines the Gaussian dependence of the
leading term on the transverse displacement $w\in \ker\psi$.

To analyze correlations, we adapt the spectral and Fourier methods of
Dolgopyat--N\'andori--P\`ene \cite{DNP}. Fourier inversion expresses the BMS correlation as a sum
over returns $n$ of frequency integrals, which we analyze
using stable projections and $\mathcal L_\xi^n$. Outside a fixed small neighborhood of zero, the decay
of the Fourier transforms in $\xi$ and of
$\mathcal L_\xi^n$ in $n$ controls the contribution;
see \Cref{prop:norm of transfer operator}.
The high-frequency bounds rely on quantitative
non-integrability of $K$ (\Cref{sec.nonintegrability}),
which is therefore needed already in our proof of leading
local mixing asymptotic.
Near $\xi=0$, we normalize the transformed correlation
associated with $n$ returns by $\kappa_\xi^n$.
Spectral estimates show that the Taylor coefficients
at $\xi=0$ of this normalized expression converge
faster than any inverse power of $n$;
see \Cref{lem:ANF1F2}.
Combining these limits with the expansion of
$\kappa_\xi^n$, Fourier inversion gives sums over $n$
of derivatives of a Gaussian determined by the
quadratic term of the centered leading eigenvalue.
Evaluating these sums as in \cite{DNP} gives the
expansion in \Cref{bmslocal_intro}(1).
Uniform estimates for the sums, combined with
exponential-moment bounds for the transverse tails,
give the uniform remainder, integrable in $w\in \ker \psi$, in
\Cref{bmslocal_intro}(2).

\begin{remark}In the Anosov setting, exponential mixing of the translation flow on the compact based space
was established by Delarue--Monclair--Sanders
\cite{DelarueMonclairSanders} for irreducible projective
Anosov subgroups, and by Chow--Sarkar \cite{ChowSarkar_exp}
for general Zariski dense Anosov subgroups outside an
exceptional cone of parameters.
We expect an analogous result for the non-compact base flow considered here.
The symbolic, geometric, and spectral tools developed in this
paper provide key ingredients toward such an extension.
\end{remark}

\subsection*{Organization of the paper}
\Cref{sec.Fuchsian,sec.positive} give the Fuchsian and higher-rank
preliminaries. \Cref{sec.general-hypotheses} states the general
theorem and verifies its hypotheses for the three classes
above.  The remaining sections form four parts.
\begin{itemize}
\item Part~\ref{part.coding} develops the cusp geometry and measure
estimates, constructs the coding, and proves exponential decay and expansion.
\item Part~\ref{part.return} establishes the symbolic conjugacy,
exponential moments, H\"older regularity, non-arithmeticity, and
non-integrability of the return cocycle.
\item 
Part~\ref{part.thermo} develops the spectral theory and suspension
model, proves BMS local mixing, and deduces Haar local mixing,
horospherical equidistribution, and orbital counting.
\end{itemize}

\subsection*{Notation}

For nonnegative functions $f$ and $g$, we write $f\ll g$ if $f\leq Cg$
for some constant $C>0$, and $f\asymp g$ if both $f\ll g$ and $g\ll f$.
For vector-valued functions, $f\approx g$ means that $f-g$ is uniformly
bounded. Dependence of an implied constant is indicated by a subscript
when needed.

\subsection*{Acknowledgments}
We would like to thank Jialun Li, Fran\c{c}oise P\`{e}ne, and Andrew Zimmer for useful discussions. 
Our original proof of \Cref{Zd2} used spherical pairs. We would like to thank Pratyush Sarkar for helpful discussions that led to a simpler proof.
Part of this research was conducted while the authors were visiting SLMath, supported by NSF grant DMS-2424139.  Kim is supported by an anonymous donor for his position at the Institute for Advanced Study. Oh is partially supported by the NSF grant DMS-2450703. Pan is supported by NSERC Discovery Grant RGPIN-2025-06706.

This project began in spring of 2024. During the final preparation
of the manuscript, the authors used AI tools to assist with proof checking, simplification of some arguments, and editing. The main mathematical ideas and methods are due to the authors,
who take full responsibility for the final manuscript.

\section{Fuchsian groups}
\label{sec.Fuchsian}

We use the upper half-plane model
$\mathbb H^2=\{x+iy\in\mathbb C:y>0\}$ with metric
$ds^2=(dx^2+dy^2)/y^2$, and denote the associated distance
by $d$. Its ideal boundary is
$\partial\mathbb H^2=\widehat{\mathbb R}
:=\mathbb R\cup\{\infty\}$.
The orientation-preserving isometry group
$\PSL_2(\mathbb R)$ acts on
$\mathbb H^2\cup\widehat{\mathbb R}$ by M\"obius
transformations.

\subsection{The hyperbolic plane and its isometries}
\label{ss:hyp}

Set $o:=i\in\mathbb H^2$.
The stabilizer of $o$ is $\op{PSO}(2)$, while the
stabilizer of $\infty\in\widehat{\mathbb R}$ is the
upper triangular subgroup $P$. Thus
$\mathbb H^2\simeq\PSL_2(\mathbb R)/\op{PSO}(2)$ and
$\partial\mathbb H^2\simeq\PSL_2(\mathbb R)/P$.
The action on the upward-pointing unit tangent vector
at $o$ identifies $\PSL_2(\mathbb R)$ with the unit tangent bundle
$\mathrm T^1\mathbb H^2$. Under this identification,
the geodesic flow for time $t$ is right multiplication by
$a_t:=
\bigl(\begin{smallmatrix}
e^{t/2}&0\\
0&e^{-t/2}
\end{smallmatrix}\bigr)$.

An element $g\in\PSL_2(\mathbb R)$ is \emph{loxodromic}
if it fixes exactly two points of $\partial\mathbb H^2$.
One fixed point is attracting and the other is repelling;
we denote the attracting fixed point by $y_g$.
An element is \emph{parabolic} if it fixes exactly one
point of $\partial\mathbb H^2$.
Every infinite-order element of a discrete subgroup of
$\PSL_2(\mathbb R)$ is either loxodromic or parabolic.

For $\xi\in\partial\mathbb H^2$ and $x,y\in\mathbb H^2$,
the Busemann function is
$$
\beta_\xi(x,y)
:=
\lim_{z\to\xi}\bigl(d(x,z)-d(y,z)\bigr).
$$
It satisfies the cocycle identity $\beta_\xi(x,z)=\beta_\xi(x,y)+\beta_\xi(y,z)$.
A closed horoball based at $\xi$ is a set of the form
$$
\mathsf H(\xi,r)
:=\{x\in\mathbb H^2:\beta_\xi(o,x)\geq r\},
\qquad r\in\mathbb R.
$$

For distinct $\xi,\eta\in\mathbb H^2\cup\partial\mathbb H^2$,
we write $[\xi,\eta]\subset\mathbb H^2$ for the geodesic
segment, ray, or line joining them.
For distinct $\xi,\eta\in\partial\mathbb H^2$, their
Gromov product based at $o$ is
$$
\mathcal G(\xi,\eta)
:=
\frac12\bigl(\beta_\xi(o,x)+\beta_\eta(o,x)\bigr),
\qquad x\in[\xi,\eta],
$$
which is independent of the choice of $x$.
We set $\mathcal G(\xi,\xi)=+\infty$.
We use a visual metric on $\partial\mathbb H^2$ based
at $o$, which is bi-Lipschitz equivalent to
$(\xi,\eta)\mapsto e^{-\mathcal G(\xi,\eta)}$.

For $y\in\mathbb H^2$ and $r>0$, let
$B(y,r):=\{z\in\mathbb H^2:d(z,y)<r\}$.
For $x,y\in\mathbb H^2$, the shadow of $B(y,r)$
seen from $x$ is
$$
O_r(x,y)
:=
\{\xi\in\partial\mathbb H^2:
[x,\xi]\cap B(y,r)\neq\emptyset\}.
$$
The triangle inequality gives
$$
\sup_{\xi\in O_r(x,y)}
\left|\beta_\xi(x,y)-d(x,y)\right|
\leq 2r.
$$

The Hopf parametrization is the homeomorphism
$$
\PSL_2(\mathbb R)
\to 
\bigl((\partial\mathbb H^2\times\partial\mathbb H^2)
\smallsetminus\operatorname{diag}\bigr)\times\mathbb R,
\qquad
g\mapsto
\bigl(g^+,g^-,\beta_{g^+}(o,go)\bigr),
$$
where $g^+:=g(\infty)$ and $g^-:=g(0)$.
In these coordinates, left multiplication by
$h\in\PSL_2(\mathbb R)$ takes the form
$$
h(\xi,\eta,s)
=
\bigl(h\xi,h\eta,s+\beta_\xi(h^{-1}o,o)\bigr),
$$
and the geodesic flow is translation:
$(\xi,\eta,s)a_t=(\xi,\eta,s+t)$.

A discrete subgroup $\Gamma<\PSL_2(\mathbb R)$ is called
a Fuchsian group. Its limit set
$\Lambda(\Gamma)\subset\partial\mathbb H^2$ is the set
of accumulation points of $\Gamma o$.
In Hopf coordinates, the nonwandering set of the geodesic
flow on $\mathrm T^1(\Gamma\backslash\mathbb H^2)$
is identified with
\begin{equation}
\label{eqn.omegagarankone}
\Omega_\Gamma
:=
\Gamma\backslash\widetilde\Omega_\Gamma,
\qquad
\widetilde\Omega_\Gamma
:=
\bigl((\Lambda(\Gamma)\times\Lambda(\Gamma))
\smallsetminus\operatorname{diag}\bigr)\times\mathbb R \subset \T^1(\bH^2).
\end{equation}
\subsection{Finitely generated Fuchsian groups}

Throughout the paper, all Fuchsian groups are assumed
to be torsion-free.
Let $\Gamma<\PSL_2(\mathbb R)$ be a non-elementary
Fuchsian group.
A point $\xi\in\partial\mathbb H^2$ is a
\emph{conical limit point} of $\Gamma$ if there exist
$r>0$ and a sequence of distinct elements
$\gamma_n\in\Gamma$ such that
$\xi\in O_r(o,\gamma_n o)$ for every $n$.
A point $\xi\in\partial\mathbb H^2$ is a
\emph{parabolic limit point} if it is fixed by a
parabolic element of $\Gamma$.
We denote the sets of conical and parabolic limit
points by $\Lambda_{\mathrm c}(\Gamma)$ and $\Lambda_{\mathrm p}(\Gamma)$,
respectively.

The convex core of $\Gamma\backslash\mathbb H^2$ is
$\operatorname{core}(\Gamma\backslash\mathbb H^2)
:=\Gamma\backslash\operatorname{hull}\Lambda(\Gamma)$,
where $\operatorname{hull}\Lambda(\Gamma)\subset\mathbb H^2$
is the convex hull of $\Lambda(\Ga)$. We call $\Gamma$ \emph{geometrically finite} if
$\Lambda(\Gamma)
=
\Lambda_{\mathrm c}(\Gamma)\sqcup\Lambda_{\mathrm p}(\Gamma)$, or equivalently, its convex core
has finite area.
A Fuchsian group is geometrically finite if and only
if it is finitely generated; see, for example,
\cite{Maskit1988Kleinian,MT_oxford,Bowditch_GF}.

Suppose that $\Gamma$ is geometrically finite and
$\Lambda_{\mathrm p}(\Gamma)\neq\emptyset$.
There exists a $\Gamma$-invariant family $\mathscr H$
of pairwise disjoint closed horoballs, one based at
each parabolic limit point, such that $\Gamma$ acts
cocompactly on $\operatorname{hull}\Lambda(\Gamma)
\smallsetminus
\bigcup_{\mathsf H\in\mathscr H}\operatorname{int}(\mathsf H)$.
Its compact quotient is called the thick part of the
convex core; the complementary components are cusp
neighborhoods.

\section{Relatively Anosov representations and BMS flows}
\label{sec.positive}

\subsection{Symmetric spaces and boundary geometry}
Let $G$ be a connected semisimple real linear algebraic group.
Fix a Cartan involution $\theta$ of $G$, and use the same
notation for its differential on $\mathfrak g:=\operatorname{Lie}(G)$.
Let $K:=G^\theta$ be the corresponding maximal compact subgroup.
The Cartan decomposition is
$\mathfrak g=\mathfrak k\oplus\mathfrak p$, where
$\mathfrak k$ and $\mathfrak p$ are the $+1$ and $-1$
eigenspaces of $\theta$, respectively.

Choose a maximal abelian subspace $\mathfrak a\subset\mathfrak p$,
and set $A:=\exp\mathfrak a$.
Let $\langle\cdot,\cdot\rangle$ and $\|\cdot\|$ denote the
inner product and norm on $\mathfrak a$ induced by the Killing form.
Fix a closed positive Weyl chamber
$\mathfrak a^+\subset\mathfrak a$, and set $A^+:=\exp\mathfrak a^+$.
The Cartan decomposition $G=KA^+K$ defines the
Cartan projection $\mu:G\to\mathfrak a^+$ by
$g\in K(\exp\mu(g))K$.

Let $X:=G/K$ be the associated Riemannian symmetric space,
with basepoint $o:=K$.
We equip $X$ with the $G$-invariant Riemannian metric induced
by the Killing form. Its distance satisfies
$d(go,ho)=\|\mu(g^{-1}h)\|$ for $g,h\in G$.
For $x=go$ and $y=ho$, we also write
$\mu(x):=\mu(g)$ and $\mu(x^{-1}y):=\mu(g^{-1}h)$;
these are independent of the representatives by $K$-bi-invariance.

We will use the following estimate on the Cartan projection
throughout.
\begin{lemma}[{\cite[Lemma~4.6]{Benoist1997proprietes}}]
\label{lem.cptcartan}
For every compact subset $Q\subset G$, there exists $C=C(Q)>0$
such that, for all $g\in G$,
$$
\sup_{q_1,q_2\in Q}\|\mu(q_1gq_2)-\mu(g)\|\le C.
$$
\end{lemma}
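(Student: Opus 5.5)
The plan is to reduce the estimate to a comparison of operator norms and then use the submultiplicativity of norms. Concretely, I will use a representation-theoretic description of the Cartan projection. For each fundamental weight $\omega_\alpha$ (or each simple root $\alpha$) there is an irreducible representation $\tau_\alpha:G\to\op{GL}(V_\alpha)$ whose highest weight is a positive multiple $k_\alpha\omega_\alpha$ of $\omega_\alpha$. On each $V_\alpha$ I equip a $K$-invariant inner product for which $A$ acts by symmetric operators, so that $\tau_\alpha(\mathfrak p)$ consists of self-adjoint operators; such a product exists since $G$ is a real linear algebraic group. For $g=k_1 a_{\mu(g)}k_2$, the operator norm is then $\log\|\tau_\alpha(g)\|=k_\alpha\omega_\alpha(\mu(g))$, because $K$ acts by isometries and the largest eigenvalue of $\tau_\alpha(a_{\mu(g)})$ is $e^{k_\alpha\omega_\alpha(\mu(g))}$ (as $\mu(g)\in\mathfrak a^+$).

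The estimate itself follows from submultiplicativity. Set $C_\alpha:=\sup_{q\in Q}\max\{\log\|\tau_\alpha(q)\|,\log\|\tau_\alpha(q^{-1})\|\}$; this is finite by compactness of $Q$ and continuity of $\tau_\alpha$. Then
$$\log\|\tau_\alpha(q_1gq_2)\|\le \log\|\tau_\alpha(g)\|+2C_\alpha .$$
Writing $g=q_1^{-1}(q_1gq_2)q_2^{-1}$ gives the reverse inequality. Hence $|\omega_\alpha(\mu(q_1gq_2))-\omega_\alpha(\mu(g))|\le 2C_\alpha/k_\alpha$ for every simple root $\alpha$.

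To conclude I need the $\omega_\alpha$ to span $\mathfrak a^*$. If $G$ is semisimple with finite center this is immediate, since the fundamental weights restricted to $\mathfrak a$ form a basis of $\mathfrak a^*$. Bounds on all the $\omega_\alpha$ then yield a bound on $\|\mu(q_1gq_2)-\mu(g)\|$ after a fixed linear change of coordinates, with the constant depending only on $Q$.

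The main technical point is producing the representations $\tau_\alpha$ with the required properties. This is Tits' theorem on the existence of irreducible representations with highest weight a multiple of $\omega_\alpha$ for a real linear algebraic group, combined with the choice of a compatible inner product. These are standard inputs and I will cite them.

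Alternatively, one can bypass representations and work in the symmetric space $X=G/K$. Because $X$ is CAT(0), the vector-valued distance satisfies $\|\mu(g^{-1}h)-\mu(g'^{-1}h')\|\le d(go,g'o)+d(ho,h'o)$. This is the triangle inequality for the $\mathfrak a^+$-valued distance, a result of Kapovich–Leeb–Millson. Applying it with $\mu(q_1gq_2)=\mu(q_1 g q_2\,o$ from $q_1^{-1}o)$ gives $C=2\sup_{q\in Q}d(o,qo)$.
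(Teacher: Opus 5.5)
Your representation-theoretic argument is correct and complete, and it is the standard proof of this estimate; the paper gives no proof of its own and simply cites Benoist. Its inputs are the Tits representations $\tau_\alpha$, an inner product making $\tau_\alpha(K)$ orthogonal and $\tau_\alpha(\mathfrak a)$ symmetric (both also used later in the paper), and the fact that the restricted fundamental weights form a basis of $\mathfrak a^*$, which needs only semisimplicity of $\mathfrak g$, not finite center. Your alternative via the Kapovich--Leeb--Millson Lipschitz bound for the $\mathfrak a^+$-valued distance also works once you fix the identification: $\mu(q_1gq_2)$ is the vector distance from $q_1^{-1}o$ to $gq_2o$, not to $q_1gq_2o$, and comparing it with the vector distance from $o$ to $go$ gives $C=2\sup_{q\in Q}d(o,qo)$.
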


For $\alpha\in\mathfrak a^*$, define
$$
\mathfrak g_\alpha
:=
\{Z\in\mathfrak g:[H,Z]=\alpha(H)Z
\text{ for every }H\in\mathfrak a\}.
$$
The nonzero $\alpha$ for which $\mathfrak g_\alpha\neq\{0\}$
form the restricted root system $\Phi$.
Let $\Phi^+$ and $\Pi$ denote the positive
and simple restricted roots determined by $\mathfrak a^+$.
Set $$\mathfrak n:=\bigoplus_{\alpha\in\Phi^+}\mathfrak g_\alpha,\quad
N:=\exp\mathfrak n,\quad  M:=Z_K(A).$$
Then $P:=MAN$ is the corresponding minimal parabolic subgroup,
and $G=KAN$ is the Iwasawa decomposition.
The Furstenberg boundary is $\mathcal F:=G/P$, which we
identify with $K/M$. Let $W:=N_K(A)/M$ be the restricted Weyl group.
Choose a representative $w_0\in N_K(A)$ of its longest element.
Set $\check N:=w_0Nw_0^{-1}$,
$\check{\mathfrak n}:=\operatorname{Lie}(\check N)$, and
$\check P:=w_0Pw_0^{-1}=MA\check N$.
The opposition involution
$\i:=-\operatorname{Ad}_{w_0}|_{\mathfrak a}$ preserves
$\mathfrak a^+$ and satisfies $\mu(g^{-1})=\i\mu(g)$.
We also write $\i$ for the involution on $\mathfrak a^*$
defined by $\i\varphi:=\varphi\circ\i$.

Two flags $\xi,\eta\in\mathcal F$ are 
 transverse if $(\xi,\eta)\in G\cdot(P,w_0P)$, where
$G$ acts diagonally.
We denote the set of transverse pairs by
$\mathcal F^{(2)}:=G\cdot(P,w_0P)$.
For $g\in G$, write $g^+:=gP$ and $g^-:=gw_0P$.

Every $g\in G$ has a unique real Jordan decomposition
$g=g_hg_eg_u$ into commuting hyperbolic, elliptic, and
unipotent components. Here $g_h$, $g_e$, and $g_u$ are
conjugate to elements of $A^+$, $K$, and $N$, respectively.
The Jordan projection $\lambda(g)\in\mathfrak a^+$ is
the unique vector such that $g_h$ is conjugate to
$\exp\lambda(g)$. It satisfies
$\lambda(g)=\lim_{n\to\infty}\frac1n\mu(g^n)$.
In particular, $\lambda(g^{-1})=\i\lambda(g)$.

An element $g\in G$ is called loxodromic if
$\lambda(g)\in\operatorname{int}\mathfrak a^+$.
Equivalently, $g=h(am)h^{-1}$ for some
$a\in\operatorname{int}A^+$, $m\in M$, and $h\in G$.
In this case, $g^{-1}$ is also loxodromic, and the unique
attracting fixed flags of $g$ and $g^{-1}$ are
$y_g:=hP$ and $y_{g^{-1}}:=hw_0P$, respectively.

An element $g$ is unipotent if $g=g_u$. We call $g$ weakly unipotent if $g_h=e$ and $g_u\ne e$.

\begin{definition}
\label{strongdef} For a torsion-free discrete subgroup $\Gamma<\PSL_2(\br)$,
a representation $\rho:\Gamma\to G$ is
\emph{type-preserving} if it maps loxodromic elements
to loxodromic elements and parabolic elements to
weakly unipotent elements.
\end{definition}

For $g\in G$ and $k\in K$, define the Iwasawa cocycle
$\sigma(g,k)\in\mathfrak a$ by
$gk\in K(\exp\sigma(g,k))N$.
For $\xi=kP\in\mathcal F$ and $g,h\in G$, the
$\mathfrak a$-valued Busemann map is
\begin{equation}
\label{eqn.Busemannmapdef}
\beta_\xi(g,h)
:=
\sigma(g^{-1},k)-\sigma(h^{-1},k).
\end{equation}
This is independent of the choice of $k$, and satisfies
the cocycle identity and the equivariance relation
$\beta_{g_0 \xi}(g_0 g, g_0 h)=\beta_\xi(g,h)$ for  $g_0\in G$. It  descends to
$\mathcal F\times X\times X$, where we use the same notation.

\begin{lemma}
\label{lem.zerobuse}
If a weakly unipotent element $g\in G$ fixes
$\xi\in\mathcal F$, then $\beta_\xi(e,g)=0$.
\end{lemma}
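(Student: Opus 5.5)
Set $b:=\beta_\xi(e,g)\in\mathfrak a$. I will show that $b$ is bounded under iteration and that it grows linearly, so it must vanish. Since $g\xi=\xi$, the cocycle identity and equivariance give, for every $n\ge1$,
$$
\beta_\xi(e,g^n)=\sum_{k=0}^{n-1}\beta_\xi(g^k,g^{k+1})=\sum_{k=0}^{n-1}\beta_{g^{-k}\xi}(e,g)=n\,\beta_\xi(e,g).
$$
Thus the map $n\mapsto\beta_\xi(e,g^n)$ is additive along the cyclic group $\langle g\rangle$. It therefore suffices to show that $\beta_\xi(e,g^n)=o(n)$.

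For the sublinear bound I will use the standard estimate $\|\beta_\xi(e,h)\|\le\|\mu(h)\|$, valid for all $h\in G$ and $\xi\in\mathcal F$. This follows from the fact that the Iwasawa projection of $h^{-1}k$ lies in the convex hull of the Weyl orbit of $\mu(h^{-1})$ (Kostant convexity). Applying it to $h=g^n$ gives $\|nb\|\le\|\mu(g^n)\|$.

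It remains to show $\mu(g^n)=o(n)$. This uses $\lambda(g)=\lim_n\frac1n\mu(g^n)$ together with $\lambda(g)=0$, the latter holding because $g_h=e$ when $g$ is weakly unipotent. Hence $\|b\|\le\frac1n\|\mu(g^n)\|\to0$, and so $b=0$.

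An alternative and more concrete route is also available. Write $g=g_eg_u$ with commuting factors; after conjugating, $g_u\in N$ and $g_e\in K$ commutes with it. Then $\|\mu(g^n)\|=\|\mu(g_e^ng_u^n)\|\le\|\mu(g_u^n)\|+O(1)$ by \Cref{lem.cptcartan}, since $g_e^n$ ranges over a compact set. Moreover $\mu(g_u^n)=O(\log n)$, because the entries of $g_u^n$ grow polynomially in $n$.

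The only step needing care is the inequality $\|\beta_\xi(e,h)\|\le\|\mu(h)\|$. In the paper's normalization $\beta_\xi(e,h)=\sigma(e,k)-\sigma(h^{-1},k)=-\sigma(h^{-1},k)$, and I will check that Kostant's convexity theorem applies in this form. Even a weaker bound $\|\beta_\xi(e,h)\|\le C\|\mu(h)\|+C$ suffices, and this weaker form follows from $|\sigma(h^{-1},k)|$ being controlled by $\log$ of operator norms in representations. So I expect no real obstacle.
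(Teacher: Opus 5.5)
Your argument is correct, but it takes a different route from the paper.

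\emph{The paper's route.} It computes $\beta_\xi(e,g)$ exactly for the single element $g$. Writing $\xi=kP$, the conjugate $k^{-1}gk$ lies in $P=MAN$. The Levi projection $P\to MA$ preserves Jordan components, so weak unipotence forces the image to lie in $M$. Hence $k^{-1}gk\in MN$, so $g^{-1}k\in KN$ and $\sigma(g^{-1},k)=0$ exactly.

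\emph{Your route.} You use that $h\mapsto\beta_\xi(e,h)$ restricts to a homomorphism on $\operatorname{Stab}_G(\xi)$, so $\beta_\xi(e,g^n)=n\beta_\xi(e,g)$. You then kill this linear growth using two facts:
\begin{itemize}
\item the bound $\beta_\xi(e,h)\in\operatorname{conv}(W\cdot\mu(h))$ from Kostant convexity;
\item $\lambda(g)=\lim_n\mu(g^n)/n=0$.
\end{itemize}
Kostant convexity does apply in the form you need. Writing $h^{-1}=k_1\exp(\mu(h^{-1}))k_2$, the vector $\sigma(h^{-1},k)$ is the Iwasawa projection of $\exp(\mu(h^{-1}))k_2k$. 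Also $-\mu(h^{-1})=\operatorname{Ad}_{w_0}\mu(h)$, so negating lands back in $\operatorname{conv}(W\cdot\mu(h))$.

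\emph{What each buys.} Your argument avoids the structure of $P$ and the compatibility of Jordan decompositions with the Levi quotient. It also proves more: $\beta_\xi(e,g)\in\operatorname{conv}(W\cdot\lambda(g))$ for every $g$ fixing $\xi$, weakly unipotent or not. The price is two analytic inputs, namely the convexity (or linear) bound for the Busemann cocycle and the limit formula for the Jordan projection. The paper's proof is instead a one-line exact algebraic computation using only $P=MAN$ and the fact that algebraic homomorphisms preserve Jordan decompositions.
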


\begin{proof}
Choose $k\in K$ with $\xi=kP$ and set $v:=k^{-1}gk\in P$.
The Levi projection $P\to MA$ preserves Jordan components. Since $g$ is weakly unipotent,
 $v\in MN$, so $g^{-1}k=kv^{-1}\in KN$.
It follows that $\sigma(g^{-1},k)=0$ and
$\beta_\xi(e,g)=0$.
\end{proof}

For $(\xi,\eta)\in\mathcal F^{(2)}$, choose $g\in G$ with
$(\xi,\eta)=(g^+,g^-)$ and define the
$\mathfrak a$-valued Gromov product by
\begin{equation}
\label{eqn.Gromovproductdef}
\mathcal G(\xi,\eta)
:=
\frac12\bigl(\beta_\xi(e,g)+\i\beta_\eta(e,g)\bigr).
\end{equation}
This does not depend on the choice of $g$ and satisfies that
for every $g\in G$ and $(\xi,\eta)\in\mathcal F^{(2)}$,
\begin{equation}
\label{GPT}
2\mathcal G(g\xi,g\eta)
=
2\mathcal G(\xi,\eta)
+\beta_{g\xi}(e,g)
+\i\beta_{g\eta}(e,g).
\end{equation}

The Hopf parametrization identifies $G/M$ with
$\mathcal F^{(2)}\times\mathfrak a$ by
$$
gM\mapsto
\bigl(g^+,g^-,\beta_{g^+}(e,g)\bigr).
$$
In these coordinates, left multiplication by $h\in G$
takes the form
$$
h(\zeta,\eta,v)
=
\bigl(h\zeta,h\eta,v+\beta_\zeta(h^{-1},e)\bigr),
$$
while right multiplication by $\exp(w)$, $w\in\mathfrak a$,
sends $(\zeta,\eta,v)$ to $(\zeta,\eta,v+w)$.

\subsection{Limit sets and growth indicators}\label{subsec.GI}

Let $\Gamma<G$ be a discrete subgroup.
We say that a sequence $x_n\in X$ converges to
$\xi\in\mathcal F$ if $x_n=k_na_no$, where
$k_n\in K$, $a_n\in A^+$, $k_nP\to\xi$, and
$\alpha(\log a_n)\to+\infty$ for every $\alpha\in\Pi$.

\begin{definition}
\label{def.limitset}
The limit set $\Lambda(\Gamma)\subset\mathcal F$
consists of the points that arise as limits of sequences
in $\Gamma o$ in the above sense.
\end{definition}

The limit cone of $\Gamma$ is the asymptotic cone of
$\mu(\Gamma)$:
$$
\mathcal L_\Gamma
:=
\left\{
\lim_{n\to\infty}t_n\mu(\gamma_n)\in\mathfrak a^+:
\gamma_n\in\Gamma,\ t_n\to0^+
\right\}.
$$
If $\Gamma$ is Zariski dense, $\mathcal L_\Gamma$ is the
smallest closed cone containing $\lambda(\Gamma)$;
it is convex and has nonempty interior
\cite{Benoist1997proprietes}.

The growth indicator
$\psi_\Gamma:\mathfrak a\to[-\infty,\infty)$
is defined by $\psi_\Gamma(0)=0$ and, for $u\neq0$,
$$
\psi_\Gamma(u)
:=
\|u\|\inf_{\mathcal C\ni u}\tau_{\mathcal C},
$$
where $\mathcal C$ ranges over open cones containing $u$,
and $\tau_{\mathcal C}$ is the abscissa of convergence of
the series
$\sum_{\gamma\in\Gamma,\;\mu(\gamma)\in\mathcal C}
e^{-s\|\mu(\gamma)\|}$. Since $\mu(\ga^{-1})=\i (\mu(\ga))$, we have $\psi_\Ga \circ \i=\psi_\Ga$.
When $\Gamma$ is Zariski dense, Quint
\cite{Quint2002divergence,Quint_indicator} proved that
$\psi_\Gamma$ is upper semicontinuous and concave,
is positive on $\operatorname{int}\mathcal L_\Gamma$,
and satisfies
$\mathcal L_\Gamma
=
\{u\in\mathfrak a:\psi_\Gamma(u)\ge0\}$.

\subsection{Weakly unipotent elements and Cartan growth}
A unipotent element $u\in G$ is called
\emph{regular} if it is contained in a unique minimal
real parabolic subgroup of $G$, and \emph{algebraically regular}
if it is contained in a unique Borel subgroup of $G_{\mathbb C}$.
Every algebraically regular unipotent element is regular:
its unique complex Borel subgroup is defined over $\mathbb R$,
so $G$ is quasi-split and its minimal real parabolic subgroups
complexify to Borel subgroups.
When $G$ is split, the two notions coincide;
see \cite{Steinberg_RegularElements}.

An $\mathfrak{sl}_2$-triple in $\mathfrak g$ is a triple
$(E,H,F)$, with $E\neq0$, satisfying
$[H,E]=2E$, $[H,F]=-2F$, and $[E,F]=H$.

\begin{lemma}[Cartan growth of weakly unipotent elements]
\label{lem.regularunipotent_cartan}
Let $g\in G$ be weakly unipotent.
Then $\log g_u$ belongs to an $\mathfrak{sl}_2$-triple
$(\log g_u,H,F)$ in $\mathfrak g$ whose semisimple element
$H$ is $G$-conjugate to a vector $v_g\in\mathfrak a^+\smallsetminus\{0\}$.
The triple can be chosen so that the associated
homomorphism $\Phi:\SL_2(\mathbb R)\to G$ satisfies
$$
\Phi\begin{pmatrix}1&1\\0&1\end{pmatrix}=g_u,
\qquad
\Phi\begin{pmatrix}e^{t/2}&0\\0&e^{-t/2}\end{pmatrix}=e^{\frac t2H},
\qquad
g_e\Phi(h)=\Phi(h)g_e
\quad(h\in\SL_2(\mathbb R)).
$$
Moreover,
$\mu(\Phi(a_t))=\frac{t}{2} v_g+O_\Phi(1)$ for $t\geq0$,
and
\begin{equation}
\label{latercartan0}
\mu(g^n)=\mu(g_u^n)+O_g(1)
=(\log|n|)v_g+O_g(1)
\qquad(|n|\to\infty).
\end{equation}
The vector $v_g$ depends only on $g_u$ and satisfies
$\i(v_g)=v_g.$
If $g_u$ is regular, then
$v_g\in\operatorname{int}\mathfrak a^+$.
\end{lemma}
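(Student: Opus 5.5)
We build the $\mathfrak{sl}_2$-triple inside the centralizer of the elliptic part, and read off all Cartan estimates from the $KAK$ decomposition in $\SL_2(\mathbb R)$ together with \Cref{lem.cptcartan}.

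\textbf{Construction of the triple.} Since $g$ is weakly unipotent, $g_h=e$, so $g=g_eg_u$ with $g_e$ and $g_u$ commuting and $X:=\log g_u\neq 0$ nilpotent. Because $g_e$ is elliptic, its centralizer $Z_G(g_e)$ is a reductive real algebraic group. Its Lie algebra $\mathfrak z:=\mathfrak z_{\mathfrak g}(g_e)$ is reductive and contains $X$. Applying the Jacobson--Morozov theorem inside $\mathfrak z$ (to the semisimple part, which contains the nilpotent $X$) gives an $\mathfrak{sl}_2$-triple $(X,H,F)$ in $\mathfrak z$. This Lie algebra homomorphism $\mathfrak{sl}_2(\mathbb R)\to\mathfrak z\subset\mathfrak g$ integrates to an algebraic homomorphism $\Phi:\SL_2(\mathbb R)\to G$, since $\SL_2$ is algebraically simply connected. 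Then $\Phi\bigl(\begin{smallmatrix}1&1\\0&1\end{smallmatrix}\bigr)=\exp X=g_u$ and $\Phi(a_t)=\exp(\tfrac t2H)$. The group $\Phi(\SL_2(\mathbb R))$ is connected and generated by exponentials of elements of $\mathfrak z$, so it commutes with $g_e$.

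\textbf{The vector $v_g$.} The eigenvalues of $\operatorname{ad}H$ are integers, so $H$ is real semisimple (hyperbolic). Hence $H$ is $G$-conjugate into $\mathfrak a$, and after a Weyl group element into $\mathfrak a^+$: $H=\operatorname{Ad}(h)v_g$ with $v_g\in\mathfrak a^+\smallsetminus\{0\}$. That $v_g$ depends only on $g_u$ follows from Kostant's theorem in its real form: any two $\mathfrak{sl}_2$-triples with the same nilpositive element $X$ are conjugate under $Z_G(X)$. This is the step I would cite rather than reprove, and it is the main point requiring care.

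For $\i(v_g)=v_g$, note that $\Phi\bigl(\begin{smallmatrix}0&1\\-1&0\end{smallmatrix}\bigr)$ conjugates $H$ to $-H$. The $\mathfrak a^+$-representative of $-H$ is $-\operatorname{Ad}_{w_0}v_g=\i(v_g)$, and since $-H$ is conjugate to $H$, it is also $v_g$. Hence $\i(v_g)=v_g$.

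\textbf{Cartan estimates.} We have $\Phi(a_t)=h\exp(\tfrac t2 v_g)h^{-1}$, so \Cref{lem.cptcartan} with $Q=\{h,h^{-1}\}$ gives $\mu(\Phi(a_t))=\tfrac t2v_g+O_\Phi(1)$ for $t\ge0$. For $g_u^n$, write $\bigl(\begin{smallmatrix}1&n\\0&1\end{smallmatrix}\bigr)=k_1a_{t_n}k_2$ in $\SL_2(\mathbb R)$. Comparing operator norms gives $e^{t_n/2}\asymp|n|$, so $t_n=2\log|n|+O(1)$. Since $\Phi(\op{SO}(2))$ is compact, \Cref{lem.cptcartan} yields
$$\mu(g_u^n)=\mu(\Phi(a_{t_n}))+O(1)=(\log|n|)v_g+O_g(1).$$
Finally, $g^n=g_e^ng_u^n$, and $g_e^n$ lies in the compact closure of $\langle g_e\rangle$, since $g_e$ is conjugate into $K$. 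A further application of \Cref{lem.cptcartan} gives $\mu(g^n)=\mu(g_u^n)+O_g(1)$, which is \eqref{latercartan0}.

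\textbf{Regularity.} Suppose $g_u$ is regular. Let $Q_H$ be the parabolic subgroup whose Lie algebra is the sum of the nonnegative eigenspaces of $\operatorname{ad}H$. Then $X$ lies in the positive eigenspace, i.e. in the nilradical of $\operatorname{Lie}Q_H$, so $g_u$ lies in the unipotent radical $U_H$ of $Q_H$. Every minimal parabolic subgroup contained in $Q_H$ contains $U_H$, hence contains $g_u$. If $v_g$ lay on a wall of $\mathfrak a^+$, then $Q_H$ (conjugate to the parabolic determined by $v_g$) would be non-minimal. It would then contain infinitely many minimal parabolic subgroups, all containing $g_u$, contradicting regularity. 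Hence $v_g\in\operatorname{int}\mathfrak a^+$.
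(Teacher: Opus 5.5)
Your proof is correct and has the same overall structure as the paper's: Jacobson--Morozov inside $\mathfrak z_{\mathfrak g}(g_e)$, the $KAK$ decomposition of $\bigl(\begin{smallmatrix}1&n\\0&1\end{smallmatrix}\bigr)$ in $\SL_2(\mathbb R)$ combined with \Cref{lem.cptcartan}, and the parabolic $Q_H$ for the regular case. It handles two sub-claims differently.

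\emph{Independence of $v_g$.} You invoke the real form of Kostant's conjugacy theorem for $\mathfrak{sl}_2$-triples with a common nilpositive element. That theorem is true over $\mathbb R$, so the step is valid, but the paper avoids it. Your own estimate $\mu(g_u^n)=(\log|n|)v_g+O(1)$, proved for any fixed choice of triple, already gives $v_g=\lim_{n\to\infty}\mu(g_u^n)/\log n$, a quantity determined by $g_u$ alone. So the step you flagged as the main point requiring care comes for free once the Cartan asymptotics are in hand; it suffices to prove them first.

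\emph{The identity $\i(v_g)=v_g$.} The paper applies \eqref{latercartan0} to $g^{n}$ and $g^{-n}$ and uses $\mu(g^{-1})=\i\mu(g)$. You instead note that $\Phi\bigl(\begin{smallmatrix}0&1\\-1&0\end{smallmatrix}\bigr)$ conjugates $H$ to $-H$, and use uniqueness of the $\mathfrak a^+$-representative of a hyperbolic conjugacy class. Both arguments are valid; yours is purely Lie-theoretic, while the paper's reuses the asymptotic.

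\emph{A small omission.} You assert $v_g\neq 0$ without comment. It follows at once from $[H,X]=2X$ with $X\neq 0$.
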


\begin{proof}
Set $u:=g_u$ and $E:=\log u$.
Since $g_e$ is semisimple, its centralizer is reductive.
Moreover, $g_e$ commutes with $u$, so
$E\in\operatorname{Lie}Z_G(g_e)$.
Applying the real Jacobson--Morozov theorem in this
reductive Lie algebra
\cite[Chapter~VIII, Section~11, no.~2, Proposition~2
and its corollary]{Bourbaki_Lie78}
gives an $\mathfrak{sl}_2$-triple $(E,H,F)$.
Its associated homomorphism
$\Phi:\SL_2(\mathbb R)\to G$ has image in $Z_G(g_e)$
and satisfies
$\Phi\bigl(\begin{smallmatrix}1&1\\0&1\end{smallmatrix}\bigr)=u$.
Finite-dimensional $\mathfrak{sl}_2$ representation
theory, applied to the adjoint action on $\mathfrak g$,
shows that $\operatorname{ad}H$ is diagonalizable over
$\mathbb R$ with integer eigenvalues.
Thus $H$ is hyperbolic and is $G$-conjugate to a unique
vector $v_g\in\mathfrak a^+$. Since $E\neq0$ and $[H,E]=2E$, we have $H\neq0$,
and hence $v_g\neq0$.

Since
$a_t=\exp(\frac t2\operatorname{diag}(1,-1))$,
we have $\Phi(a_t)=\exp(\frac t2H)$.
Conjugating $H$ to $v_g$ and applying
\Cref{lem.cptcartan} gives
$$
\mu(\Phi(a_t))=\frac t2v_g+O_\Phi(1)
\qquad(t\geq0).
$$
For each integer $n\ne 0$, the Cartan decomposition
in $\SL_2(\mathbb R)$ gives
$$
\begin{pmatrix}1&n\\0&1\end{pmatrix}
=k_n a_{t_n}\ell_n,
\qquad
k_n,\ell_n\in\SO(2),
\qquad
t_n=2\log|n|+O(1).
$$
Since $\Phi(\SO(2))$ is compact,
\Cref{lem.cptcartan} yields
$$
\mu(u^n)
=\mu(\Phi(a_{t_n}))+O_\Phi(1)
=(\log|n|)v_g+O_\Phi(1).
$$
The powers of $g_e$ are relatively compact, and
$g^n=g_e^n u^n$.
Another application of \Cref{lem.cptcartan} therefore
gives \eqref{latercartan0}.
Consequently,
$$
v_g
=\lim_{n\to\infty}\frac{\mu(g^n)}{\log n}
=\lim_{n\to\infty}\frac{\mu(u^n)}{\log n},
$$
so $v_g$ depends only on $u$ and is independent of
the chosen triple.
Applying \eqref{latercartan0} to positive and negative
powers and using $\mu(g^{-n})=\i\mu(g^n)$ gives
$\i(v_g)=v_g$.

Finally, suppose that $u$ is regular.
Let $P_H$ be the real parabolic subgroup whose Lie
algebra is the sum of the nonnegative eigenspaces of
$\operatorname{ad}H$.
Since $[H,E]=2E$, the element $u=\exp E$ belongs to
the unipotent radical of $P_H$.
Every minimal real parabolic subgroup contained in
$P_H$ contains this radical and hence contains $u$.
If $P_H$ were not minimal, there would be more than
one such subgroup, contradicting the regularity of $u$.
Thus $P_H$ is minimal, which is equivalent to
$v_g\in\operatorname{int}\mathfrak a^+$.
\end{proof}

\subsection{Relatively Anosov representations and Morse embeddings}

Let $\Gamma<\PSL_2(\mathbb R)$ be a non-elementary,
finitely generated, Fuchsian group with
parabolic elements. 
For a representation $\rho:\Gamma\to G$, write
$\Gamma_\rho:=\rho(\Gamma)$,
$\Lambda_\rho:=\Lambda(\Gamma_\rho)$,
$\mathcal L_\rho:=\mathcal L_{\Gamma_\rho}$, and
$\psi_\rho:=\psi_{\Gamma_\rho}$.

\begin{definition}
A representation $\rho:\Gamma\to G$ is
\emph{relatively Borel Anosov} if (1)
$\alpha(\mu(\rho(\gamma)))\to+\infty$ for every
$\alpha\in\Pi$ as $\gamma$ leaves every finite subset
of $\Gamma$, (2) any two distinct points of $\Lambda_\rho$
are transverse, and (3) there exists a $\rho$-equivariant
homeomorphism $\Lambda(\Gamma)\to\Lambda_\rho$.
\end{definition}

This is the specialization to the present Fuchsian
setting of the definition in
\cite[Section~4.3]{CZZ_relative}.
The homeomorphism in (3) is unique and is called the
\emph{boundary map of $\rho$}. The condition (1) implies that $\rho$ is discrete
and faithful. 
Moreover, relative Anosovness implies that $\rho(\gamma)$
is weakly unipotent for every parabolic $\gamma\in \Ga$; see \cite[Theorems~1.1(3) and~B.5]{CZZ_cusped}.

For brevity, we refer to relatively Borel Anosov
representations simply as relatively Anosov representations
throughout the paper.

Set $Y:=\operatorname{hull}\Lambda(\Gamma)\subset\mathbb H^2$
and assume, after conjugating $\Gamma$ if necessary, that
$o=i\in Y$. We also write $o=K\in X$.
For $p=gK,q=hK\in X$, write
$\mu(p^{-1}q):=\mu(g^{-1}h)$.
This is well-defined by the $K$-bi-invariance of the Cartan
projection. For a map $f:Y\to X$, let $\mathcal L_f$ be
the asymptotic cone of
$\{\mu(f(x)^{-1}f(y)):x,y\in Y\}$.

\begin{definition}[Morse embedding]
\label{def.relatively_morse_cusp_adapted}
A map $f:Y\to X$ is a \emph{Morse embedding} if it is
a quasi-isometric embedding\footnote{
A map $f:(Z_1,d_1)\to(Z_2,d_2)$ is a
\emph{quasi-isometric embedding} if there exist
$C_1\geq1$ and $C_2\geq0$ such that
$C_1^{-1}d_1(z,z')-C_2
\leq d_2(f(z),f(z'))
\leq C_1d_1(z,z')+C_2
\;\; (z,z'\in Z_1)$.}  and
$\mathcal L_f\subset\operatorname{int}\mathfrak a^+\cup\{0\}$. We call $\mathcal L_f$ its \emph{Morse limit cone}.
\end{definition}
Equivalently, by the Kapovich--Leeb--Porti Morse lemma
\cite[Theorem~5.16 and Corollary~5.28]{KLP_2018},
the images of geodesic segments in $Y$ are
$\Pi$-Morse quasigeodesics with uniform constants;
see also \cite[Section~4]{DKO_AR}.

For $G=\SL_n(\mathbb R)$, the existence of an equivariant Morse embedding
follows from
\cite[Corollary~1.14]{ZZ_relatively2} and
\cite[Proposition~11.3]{ZZ_relatively}. We construct a continuous embedding for general $G$ and identify its Morse limit cone with $\L_\rho$.
\begin{proposition}[Equivariant Morse embedding]
\label{prop.cusp_adapted_morse0}
Let $\rho:\Gamma\to G$ be a Zariski dense, relatively
 Anosov representation. 
Then there exists a continuous $\rho$-equivariant Morse
embedding $f:Y\to X$ such that $f(o)=o$ and
$\mathcal L_f=\mathcal L_\rho$. Moreover, $f$ extends continuously to
$Y\cup\Lambda(\Gamma)$, and its restriction to
$\Lambda(\Gamma)$ is the boundary map of $\rho$.
\end{proposition}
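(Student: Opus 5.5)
We will build $f$ in two stages: first on the thick part of $Y$, then on the cusp regions, and afterwards check the Morse property and the limit cone. Since $\Gamma$ is geometrically finite, fix the $\Gamma$-invariant family $\mathscr H$ of disjoint horoballs, and set $Y_0:=Y\smallsetminus\bigcup_{\mathsf H\in\mathscr H}\operatorname{int}\mathsf H$; then $\Gamma$ acts cocompactly on $Y_0$.

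\emph{Thick part.} Choose a $\Gamma$-equivariant triangulation of $Y_0$, set $f(\gamma o)=\rho(\gamma)o$ on the vertex orbits, and extend equivariantly and continuously. One way is to send each edge to the $X$-geodesic between the images of its endpoints, and then cone off simplices using the Cartan--Hadamard property of $X$.

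\emph{Cusp part.} Fix a horoball $\mathsf H$ based at a parabolic point $p$, with stabilizer $\langle\gamma_p\rangle$. By \Cref{lem.regularunipotent_cartan}, $g:=\rho(\gamma_p)$ has an $\mathfrak{sl}_2$-homomorphism $\Phi$ with $\Phi\bigl(\begin{smallmatrix}1&1\\0&1\end{smallmatrix}\bigr)=g_u$, commuting with $g_e$. Conjugate so that $p=\infty$ and $\gamma_p=\bigl(\begin{smallmatrix}1&1\\0&1\end{smallmatrix}\bigr)$. Define $f$ on $\mathsf H\cap Y$ by $f(z):=h_p\Phi(z)\cdot o'$, where $\Phi$ also denotes the induced totally geodesic map $\mathbb H^2\to X$. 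Here $h_p$ and the base point $o'$ are chosen so that:
\begin{itemize}
\item the map intertwines $\gamma_p$ with $g_u$ up to the compact factor $g_e$. Since $g_e$ commutes with $\Phi$ and its powers lie in a compact group, we can average, or pick $o'$ fixed by the closure of $\langle g_e\rangle$ inside $Z_G(\Phi)$, to get exact equivariance $f(\gamma_p z)=g f(z)$;
\item the map is glued to the thick part along the horocycle $\partial\mathsf H$ by a bounded equivariant homotopy, using the geodesic interpolation in a collar.
\end{itemize}
Extend to all of $\mathscr H$ by $\Gamma$-equivariance.

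\emph{Quasi-isometric embedding.} The upper bound follows from coarse Lipschitz continuity on each piece. For the lower bound, any $x,y\in Y$ are joined by a geodesic that alternates between thick segments and horoball excursions. Two estimates are needed:
\begin{itemize}
\item \textbf{Thick segments.} Here I would use the relative Anosov property: the uniform growth $\alpha(\mu(\rho(\gamma)))\to\infty$, together with the Morse lemma characterization in \cite{KLP_2018} and the relatively Anosov results of \cite{CZZ_relative,ZZ_relatively}, gives that $\rho$ is a quasi-isometric embedding of the coned-off/cusped space.
\item \textbf{Cusp excursions.} An excursion of hyperbolic length $\ell$ corresponds to $\gamma_p^n$ with $\ell\approx2\log|n|$. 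By \eqref{latercartan0}, $\mu(g^n)=(\log|n|)v_g+O(1)$, which is linear in $\ell$ and transverse to the walls once $v_g\in\operatorname{int}\mathfrak a^+$.
\end{itemize}
If $g_u$ is not regular, I would instead use condition (1) and the uniform $\Pi$-regularity of $\mu(\rho(\gamma))$ on cusp words to see that $v_g\in\operatorname{int}\mathfrak a^+$ must hold. Indeed, condition (1) applied to $\gamma_p^n$ forces $\alpha(v_g)>0$ for every $\alpha\in\Pi$.

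\emph{Main obstacle.} The hardest step is combining the pieces into a global Morse estimate, that is, showing $\mathcal L_f\subset\operatorname{int}\mathfrak a^+\cup\{0\}$ uniformly across concatenations. I would handle it with the local-to-global principle for Morse quasigeodesics \cite[Section~5]{KLP_2018}: each piece is a uniformly $\Pi$-regular quasigeodesic, and the turning between consecutive pieces is controlled by the transversality of limit points (condition (2)). This applies once all pieces are long, and bounded pieces are absorbed into the constants.

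\emph{Limit cone and boundary extension.} The inclusion $\mathcal L_\rho\subset\mathcal L_f$ is immediate since $f(\gamma o)=\rho(\gamma)o$. For the reverse inclusion, every point of $Y$ lies within bounded distance of $\Gamma o$ except along cusp rays. Along cusp rays, $\mu$ grows asymptotically in the direction $v_g$, which equals $\lim_n\mu(g^n)/\log n$. Since $\log n\asymp\|\mu(g^n)\|$, this is the direction of $\mu(\rho(\gamma_p^n))$, and so it lies in $\mathcal L_\rho$. Finally, the continuous extension to $\Lambda(\Gamma)$ follows because Morse quasigeodesic rays converge in $\mathcal F$. For conical points, the limit coincides with the boundary map by equivariance and density. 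For parabolic points, both $f$ along the cusp and the boundary map give the unique fixed flag of $g$, by \cite{CZZ_cusped}.
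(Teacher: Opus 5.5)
Your construction of $f$ is fine and is essentially the paper's: cusp maps come from the Jacobson--Morozov homomorphism, the base point is fixed by the commuting compact groups $\overline{\langle g_e\rangle}$ and $\Phi(\mathrm{SO}(2))$, and $f$ is extended equivariantly over the thick part.

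\textbf{The main gap is the lower bound, i.e.\ the Morse property.} The local-to-global principle of \cite{KLP_2018} requires you to fix Morse data $(\Theta,D,L,A)$ first. You must then check that every subpath of $f\circ c$ of length at most $S$ is $(\Theta,D,L,A)$-Morse, where the scale $S=S(\Theta,D,L,A)$ is large and determined by the data.
\begin{itemize}
\item For thick subpaths this is a uniform linear regularity statement: $\alpha(\mu(\rho(\gamma)))\geq a\,d(o,\gamma o)-b$ with $a,b$ independent of $S$. Compactness of $\Gamma\backslash Y_{\rm th}$ only gives constants depending on $S$. Condition (1) of the definition only gives $\alpha(\mu(\rho(\gamma)))\to\infty$, with no rate and no uniform distance from the walls.
\item Transversality of $f(\xi_+)$ and $f(\xi_-)$ is a statement at infinity. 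Turning it into finite-scale straightness at the junctions already presupposes that the long pieces are uniformly Morse, so that part of the argument is circular.
\end{itemize}
Your thick-segment bullet cites \cite{CZZ_relative,ZZ_relatively} for a quasi-isometric embedding of the ``coned-off/cusped'' space. That is not a step; it restates what must be proved. The coned-off version is also false, since $\mathsf P_io$ is unbounded in $X$.

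\textbf{The missing idea.} You need the global root estimate \eqref{eqn.root_lower_f} directly from a dynamical characterization of relative Anosovness, not by concatenation. In the paper:
\begin{itemize}
\item Zariski density makes each Tits representation $\tau_\alpha\circ\rho$ irreducible, hence relatively $P_1$-Anosov by \cite[Corollary~6.3(1)]{CZZ_cusped}.
\item The norms $\|v\|_{\alpha,x}=\|\tau_\alpha(g_x^{-1})v\|_\alpha$, with $g_xo=f(x)$, form a canonical family. This is exactly where the explicit cusp construction is used.
\item \cite[Corollary~6.2]{CZZ_cusped} then gives uniform exponential decay of $\|v\|_{\alpha,\ell(t)}/\|w\|_{\alpha,\ell(t)}$ for $v\in E_\alpha(\xi_+)$, $w\in F_\alpha(\xi_-)$, along every geodesic $\ell$ of $Y$ with endpoints $\xi_\pm\in\Lambda(\Gamma)$.
\item This yields $\alpha(\mu(f(\ell(s))^{-1}f(\ell(t))))\geq c_\alpha(t-s)-\log C_\alpha$. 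Since any two points of $Y$ are uniformly close to such a geodesic, you get the quasi-isometric lower bound and $\mathcal L_f\subset\operatorname{int}\mathfrak a^+\cup\{0\}$ at once, with no local-to-global argument.
\end{itemize}

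\textbf{A second, smaller gap is in $\mathcal L_f\subset\mathcal L_\rho$.} You treat only orbit points and pure cusp rays. For $x,y$ whose geodesic passes through several horoballs and the thick part, you need two further steps:
\begin{itemize}
\item coarse additivity (\Cref{lem.DKO_additive}, available only after the Morse property is known), to write $\mu(f(x)^{-1}f(y))$ as two terms near rays $\mathbb R_{\geq0}v_i$ plus one term near $\mu(\Gamma_\rho)$;
\item convexity of $\mathcal L_\rho$ \cite{Benoist1997proprietes}, so that closed convex cones $\mathcal C$ containing $\mathcal L_\rho\smallsetminus\{0\}$ in their interior can be taken arbitrarily close to $\mathcal L_\rho$.
\end{itemize}
Your proposal never uses Zariski density, which is a warning sign: it enters exactly at the irreducibility of $\tau_\alpha\circ\rho$ and at the convexity of $\mathcal L_\rho$.
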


\begin{proof}
Choose representatives $\xi_0,\ldots,\xi_\ell$ of
the $\Gamma$-orbits in $\Lambda_{\mathrm p}(\Gamma)$, and set
$\Gamma_{i}:=\operatorname{Stab}_\Gamma(\xi_i)$.
Choose a $\Gamma$-invariant family $\mathscr H$ of pairwise disjoint
closed horoballs based at $\Lambda_{\mathrm p}(\Ga)$,
with $\mathsf H_i$ based at $\xi_i$, all avoiding $o$.
Let $Y_{\rm th}:= Y-\cup_{\mathsf H\in \mathscr H}  \inte (\mathsf H)$; then $\Gamma\backslash Y_{\rm th}$ is compact.

\noindent\textbf{Step 1: Construction of $f$.}
Write $n_s:=\left(\begin{smallmatrix}1&s\\0&1\end{smallmatrix}\right)$.
For each $i$, choose a primitive generator $h_i\in\Gamma_i$
and $b_i\in\PSL_2(\mathbb R)$ such that
$b_i(\infty)=\xi_i$ and $b_i n_1b_i^{-1}=h_i$.
By relative Anosovness, $\rho(h_i)$
is weakly unipotent. Let $g_i:=\rho(h_i)$, $e_i:= (g_i)_e$ and $u_i:=(g_i)_u$.
By \Cref{lem.regularunipotent_cartan},
choose a homomorphism $\Phi_i:\SL_2(\mathbb R)\to G$
with $\Phi_i(n_1)=u_i$ whose image commutes with $e_i$,
and let $v_i:=v_{g_i}\in \mathfrak a^+\smallsetminus\{0\}$
be the associated vector. For every $\alpha\in\Pi$, relative  Anosovness
and \eqref{latercartan0} give
$$
\alpha(\mu(g_i^n))
=(\log n)\alpha(v_i)+O(1)\to+\infty.
$$
Consequently, $\alpha(v_i)>0$ for every $\alpha\in\Pi$,
so $v_i\in\operatorname{int}\mathfrak a^+$.
Moreover, since $g_i^n\in\Gamma_\rho$,
\eqref{latercartan0} gives $v_i\in\mathcal L_\rho$.

Since the compact subgroups $\overline{\langle e_i\rangle}$
and $\Phi_i(\SO(2))$ commute, we can choose $o_i\in X$
fixed by both subgroups, and choose $q_i\in G$
with $q_i o=o_i$.
Write 
$\mathsf H_i=b_i\{z:\operatorname{Im}z\geq e^{T_i}\}$ for some $T_i\in \br$ and
define
$$
f_i(b_in_sa_t i):=\Phi_i(n_sa_t)o_i
\qquad(s\in\mathbb R,\ t\geq T_i).
$$
Since $e_i$ fixes $o_i$ and commutes with
$\Phi_i(\SL_2(\mathbb R))$, it fixes the image of $f_i$
pointwise. Thus, for $z=b_in_sa_t i$,
$$
f_i(h_i z)
=\Phi_i(n_{s+1}a_t)o_i
=u_i f_i(z)
=e_i u_i f_i(z)
=\rho(h_i)f_i(z).
$$
Since $\Gamma_i=\langle h_i\rangle$, the map $f_i$ is
$\rho|_{\Gamma_i}$-equivariant.
Restricting to $\mathsf H_i\cap Y$, we extend to all
cusp regions in $Y$ by $f(\gamma z):=\rho(\gamma)f_i(z)$.
Precise invariance of the horoballs makes this
well-defined.
We extend $f$ continuously and $\rho$-equivariantly over
$Y_{\rm th}$, retaining its cusp-boundary values and
prescribing $f(\gamma o)=\rho(\gamma)o$.
To do so, lift a finite triangulation of
$\Gamma\backslash Y_{\rm th}$ compatible with the cusp
boundaries and having the image of $o$ as a vertex.
Choose images for representatives of the remaining
vertex orbits and extend equivariantly.
Extend over representatives of the edge orbits using
path connectedness of $X$, and over representatives
of the triangle orbits using simple connectedness.
The action is free, so these extensions propagate
equivariantly and agree on common faces.
The lifted triangulation and the family of cusp regions
are locally finite, so the glueing lemma gives a continuous
map on $Y$.

\noindent\textbf{Step 2: Quasi-isometry and root estimates.}
We first claim that, whenever $x,y\in Y$ lie in a
horoball $\gamma\mathsf{H}_i$, with $\gamma\in\Gamma$,
\be\label{cuspes}
\mu(f(x)^{-1}f(y))
=\frac12d_{\mathbb H^2}(x,y)v_i+O(1),
\ee 
with an error uniform in $x,y,\gamma$, and $i$.
By equivariance, it suffices to consider
$x,y\in\mathsf{H}_i\cap Y$.
Write $x=b_i g o$ and $y=b_i h o$, where
$g=n_sa_t, h=n_{s'}a_{t'} \in \SL_2(\mathbb R)$.
If $r=d_{\mathbb H^2}(x,y)$, then
$g^{-1}h\in\SO(2)a_r\SO(2)$.
By \Cref{lem.cptcartan,lem.regularunipotent_cartan},
$\mu(\Phi_i(g^{-1}h))=\frac r2v_i+O(1)$.
As $f_i(x)=\Phi_i(g)q_i o$ and
$f_i(y)=\Phi_i(h)q_i o$, we have
$\mu(f_i(x)^{-1}f_i(y))
=\mu(q_i^{-1}\Phi_i(g^{-1}h)q_i)
=\frac r2v_i+O(1)$.
Choose a compact fundamental set $D\subset Y_{\rm th}$ for the $\Ga$-action.
Suppose that $d_{\mathbb H^2}(x,y)\leq1$.
If $x,y$ lie in the same horoball in $\mathscr H$, the preceding estimate
bounds $d_X(f(x),f(y))$ uniformly.
Otherwise, $[x,y]$ meets $Y_{\rm th}$.
After translating by an element of $\Gamma$, both
endpoints lie in the closed $1$-neighborhood of $D$.
This neighborhood is compact in $Y$, so continuity and
equivariance again give a uniform bound on
$d_X(f(x),f(y))$.
Subdividing an arbitrary geodesic segment into pieces
of length at most $1$ now gives
\begin{equation}
\label{upper}
d_X(f(x),f(y))\leq A\,d_{\mathbb H^2}(x,y)+B
\qquad(x,y\in Y)
\end{equation}
for some $A,B>0$. 

For the lower bound, we will use the relative Anosov
hypothesis to prove 
\begin{equation}\label{eqn.root_lower_f}
\alpha(\mu(f(x)^{-1}f(y)))
\geq a\,d_{\mathbb H^2}(x,y)-b
\qquad(x,y\in Y,\ \alpha\in\Pi)
\end{equation}
for some $a,b>0$. Fix $\alpha\in\Pi$, and choose an irreducible proximal Tits
representation $\tau_\alpha:G\to\SL(V_\alpha)$
whose highest weight is a positive multiple of
the fundamental weight $\omega_\alpha$
\cite{Tits_representations}.
Equip $V_\alpha$ with a Euclidean inner product for
which $\tau_\alpha(K)$ acts orthogonally and
$\tau_\alpha(A)$ acts by positive self-adjoint operators.
Let $\|\cdot\|_\alpha$ denote the associated norm.
For $x\in Y$, choose $g_x\in G$ with $g_xo=f(x)$
and set
$\|v\|_{\alpha,x}:=\|\tau_\alpha(g_x^{-1})v\|_\alpha$
for $v\in V_\alpha$.
Since $\tau_\alpha(K)$ preserves $\|\cdot\|_\alpha$,
this definition is independent of the choice of $g_x$.
These norms form a continuous,
$\tau_\alpha\circ\rho$-equivariant family.
This family is canonical in the sense of
\cite[Section~3.1]{CZZ_cusped}.
Set
$|v|_{\alpha,i}:=\|\tau_\alpha(q_i^{-1})v\|_\alpha$.
Since $e_i$ and $\Phi_i(\SO(2))$ fix $q_i o$,
this norm is invariant under their images by $\tau_\alpha$.
On $\mathsf{H}_i\cap Y$, our construction gives
$$
\|v\|_{\alpha,x}
=
\bigl|\tau_\alpha(\Phi_i(n_sa_t)^{-1})v\bigr|_{\alpha,i},
\qquad x=b_in_sa_t i\in\mathsf{H}_i\cap Y.
$$
In the coordinates given by $b_i$, this is the
canonical family associated with the cusp homomorphism
$\tau_\alpha\circ\Phi_i$.
Thus evaluating $\|\cdot\|_{\alpha,x}$ at the
basepoints of geodesics gives a canonical family
on the geodesic-flow space.

Since $\rho(\Gamma)$ is Zariski dense,
$\tau_\alpha\circ\rho$ is irreducible, and the $\rho$-boundary map
induces a continuous, transverse, equivariant
line--hyperplane limit map.
By \cite[Corollary~6.3(1)]{CZZ_cusped},
$\tau_\alpha\circ\rho$ is relatively $P_1$-Anosov, where $P_1$ denotes
the stabilizer of a line in $V_\alpha$.
Let $E_\alpha(\xi)$ and $F_\alpha(\xi)$ denote
the line and hyperplane components of its limit map.
By \cite[Corollary~6.2]{CZZ_cusped}, there exist $C_\alpha\geq1$ and $c_\alpha>0$ such that,
for every unit-speed geodesic $\ell:\mathbb R\to Y$
with endpoints $\xi_\pm\in\Lambda(\Gamma)$,
$$
\frac{\|v\|_{\alpha,\ell(t)}}{\|w\|_{\alpha,\ell(t)}}
\leq C_\alpha e^{-c_\alpha(t-s)}
\frac{\|v\|_{\alpha,\ell(s)}}{\|w\|_{\alpha,\ell(s)}}
\qquad(s\leq t)
$$
for all nonzero $v\in E_\alpha(\xi_+)$ and
$w\in F_\alpha(\xi_-)$. Thus, when passing from
the norm at $\ell(t)$ to the norm at $\ell(s)$,
the stretching factor on $E_\alpha(\xi_+)$ exceeds
every stretching factor on $F_\alpha(\xi_-)$ by
at least $C_\alpha^{-1}e^{c_\alpha(t-s)}$.
Writing $\sigma_j$ for the $j$-th singular value, 
we obtain
$$
\alpha(\mu(f(\ell(s))^{-1}f(\ell(t))))= \log
\frac{\sigma_1\bigl(
\tau_\alpha(g_{\ell(s)}^{-1}g_{\ell(t)})\bigr)}
{\sigma_2\bigl(
\tau_\alpha(g_{\ell(s)}^{-1}g_{\ell(t)})\bigr)}
\geq c_\alpha(t-s)-\log C_\alpha.
$$

Any two points $x,y\in Y$ lie within a uniform distance
$D$ of points $\ell(s),\ell(t)$ on a common complete
geodesic with endpoints in $\Lambda(\Gamma)$; see
\cite[Proposition~3.5 and Lemma~B.3]{ZZ_relatively}.
Orienting $\ell$ so that $s\leq t$, we have
$t-s\geq d_{\mathbb H^2}(x,y)-2D$.
By \eqref{upper}, the corresponding image points are
also uniformly close. Applying \Cref{lem.cptcartan}
and taking uniform constants over the finite set $\Pi$
proves \eqref{eqn.root_lower_f}.
Since $\alpha(v)\leq\|\alpha\|_*\|v\|$, this implies
$d_X(f(x),f(y))\geq a'd_{\mathbb H^2}(x,y)-b'$
for some $a',b'>0$.
Together with \eqref{upper}, this proves that $f$
is a quasi-isometric embedding.

\noindent\textbf{Step 3: The Morse property and the limit cone.}
Combining \eqref{eqn.root_lower_f} with
\eqref{upper} gives
$$
\alpha(\mu(f(x)^{-1}f(y)))
\geq \frac{a}{A}\|\mu(f(x)^{-1}f(y))\|
-b-\frac{aB}{A}.
$$
This implies
$\mathcal L_f\subset\operatorname{int}\mathfrak a^+\cup\{0\}$, and hence $f$ is a Morse embedding.

Equivariance and $f(o)=o$ imply that $\Gamma_\rho (o)=f(\Gamma o)\subset f(Y)$,
 and hence
$\mathcal L_\rho\subset\mathcal L_f$.
For the reverse inclusion, let $\mathcal C$ be any closed
convex cone such that $\mathcal L_\rho\smallsetminus\{0\}
\subset\operatorname{int}\mathcal C$ and
$\mathcal C\subset\operatorname{int}\mathfrak a^+\cup\{0\}$.
Then
$\mu(\Gamma_\rho)\smallsetminus\mathcal C$ is bounded.
If $x,y$ lie in a horoball $\gamma\mathsf{H}_i$,
the cusp estimate and $v_i\in\mathcal L_\rho$ show
that $\mu(f(x)^{-1}f(y))$ lies in a uniform
neighborhood of $\mathcal C$.
If $x$ and $y$ do not lie in a common horoball,
then $[x,y]$ meets $Y_{\rm th}$.
Let $x'$ and $y'$ be its first and last intersection
points with $Y_{\rm th}$, in the orientation from
$x$ to $y$.
Since $f$ is Morse, coarse additivity
\cite[Corollary~4.12]{DKO_AR} gives
$$
\begin{aligned}
\mu(f(x)^{-1}f(y))
={}&\mu(f(x)^{-1}f(x'))
+\mu(f(x')^{-1}f(y'))+\mu(f(y')^{-1}f(y))+O(1).
\end{aligned}
$$
The first and third terms each lie within a uniform
distance of one of the rays $\mathbb R_{\geq0}v_i$,
all of which lie in $\mathcal C$. By cocompactness of $\Gamma$ on $Y_{\rm th}$,
\eqref{upper}, equivariance, and \Cref{lem.cptcartan},
the middle term lies within a uniform distance of
$\mu(\Gamma_\rho)$ and hence of $\mathcal C$.

Since $\mathcal C$ is convex, the sum lies in a
uniform neighborhood of $\mathcal C$. This implies $\mathcal L_f\subset\mathcal C$.
As $\mathcal C$ can be chosen arbitrarily close to $\L_\rho$,
we conclude that $\mathcal L_f=\mathcal L_\rho$.
Finally, the boundary extension theorem
\cite[Theorem~1.4]{KLP_2018} gives a continuous extension
of $f$ whose boundary restriction is a
$\rho$-equivariant homeomorphism
$\Lambda(\Gamma)\to\Lambda_\rho$.
By uniqueness, this is the boundary map of $\rho$.
\end{proof}

Conversely, an equivariant Morse embedding implies relative
Anosovness by \cite[Theorem~8.3]{KL_relativizing}.
Thus, for Zariski dense representations of $\Gamma$,
relative Anosovness is equivalent to the existence
of an equivariant Morse embedding.

\subsection{Geometric estimates}
For the remainder of this section, fix a Zariski dense,
relatively Anosov representation $\rho:\Gamma\to G$ and
a continuous $\rho$-equivariant Morse embedding $f:Y\to X$
supplied by \Cref{prop.cusp_adapted_morse0}, so that
$f(o)=o$ and $\mathcal L_f=\mathcal L_\rho$.
We also denote its boundary extension
$\Lambda(\Gamma)\to\Lambda_\rho$ by $f$. Let $\Lambda_{\mathrm p}(\Gamma_\rho):=f(\Lambda_{\mathrm p}(\Gamma))$.
The Morse property and equivariance give
$$
\mathcal L_\rho\subset\mathcal L_f
\subset\operatorname{int}\mathfrak a^+\cup\{0\}.
$$

We repeatedly use the following:
\begin{lemma}
\label{lem.scalar-Cartan-comparison}
For every $\varphi\in\mathfrak a^*$ positive on
$\mathcal L_\rho\smallsetminus\{0\}$, there exist $a\geq1$ and $b\geq0$
such that, for all $x,y\in Y$,
$$
a^{-1}d(x,y)-b
\le \varphi\bigl(\mu(f(x)^{-1}f(y))\bigr)
\le a\,d(x,y)+b.
$$
\end{lemma}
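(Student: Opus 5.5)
The upper bound is immediate from \eqref{upper}: $\varphi$ is a linear form, so $\varphi(v)\le\|\varphi\|_*\|v\|$. Since $\|\mu(f(x)^{-1}f(y))\|=d_X(f(x),f(y))\le A\,d(x,y)+B$, we get $\varphi(\mu(f(x)^{-1}f(y)))\le \|\varphi\|_*A\,d(x,y)+\|\varphi\|_*B$. The content of the lemma is therefore the lower bound. I plan to obtain it in two steps: first a coarse cone-containment statement for the set $\{\mu(f(x)^{-1}f(y))\}$, and then the quasi-isometric lower bound for $f$.

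\textbf{Step 1 (positivity on a cone).} Since $\varphi$ is positive on the compact set $\mathcal L_\rho\cap\{\|v\|=1\}$, it is bounded below there by some $2c>0$. By continuity, there is a closed cone $\mathcal C\supset\mathcal L_\rho$ with $\mathcal L_\rho\smallsetminus\{0\}\subset\operatorname{int}\mathcal C$ on which $\varphi(v)\ge c\|v\|$.

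\textbf{Step 2 (coarse containment).} I claim there is $R>0$ with $\mu(f(x)^{-1}f(y))\in N_R(\mathcal C)$ for all $x,y\in Y$. The direct route is the argument in Step~3 of the proof of \Cref{prop.cusp_adapted_morse0}. In the cusp case, \eqref{cuspes} and $v_i\in\mathcal L_\rho$ put the vector within a uniform distance of $\mathcal C$. In the thick case, coarse additivity \cite[Corollary~4.12]{DKO_AR} reduces the vector to a sum of two cusp pieces and a term uniformly close to $\mu(\Gamma_\rho)$. Outside a bounded set, $\mu(\Gamma_\rho)$ lies in $\mathcal C$ because $\mathcal L_\rho\smallsetminus\{0\}\subset\operatorname{int}\mathcal C$. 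Convexity of $\mathcal C$ is needed for the sum, so I first shrink $\mathcal C$ to a convex cone, e.g.\ the cone over a small convex neighbourhood of the compact convex set $\mathcal L_\rho\cap\{\psi=1\}$ for a suitable positive functional $\psi$.

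Given the claim, write $\mu(f(x)^{-1}f(y))=v+e$ with $v\in\mathcal C$ and $\|e\|\le R$. Then
$$\varphi(\mu(f(x)^{-1}f(y)))\ge c\|v\|-\|\varphi\|_*R\ge c\,\|\mu(f(x)^{-1}f(y))\|-(c+\|\varphi\|_*)R.$$
Finally, $\|\mu(f(x)^{-1}f(y))\|=d_X(f(x),f(y))\ge a'd(x,y)-b'$, by the lower quasi-isometry bound established after \eqref{eqn.root_lower_f}. Combining these gives the lower inequality, and enlarging constants gives a single $a\ge1$ and $b\ge0$ for both bounds.

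The main obstacle is Step~2: promoting the asymptotic statement $\mathcal L_f=\mathcal L_\rho$ to a uniform coarse containment with an additive constant. The asymptotic cone alone gives containment only outside a bounded set, which in fact suffices here because bounded vectors are absorbed into $b$. I would therefore argue as follows. If the containment failed, there would be pairs with $\|\mu(f(x)^{-1}f(y))\|\to\infty$ and $\mu$ outside $\mathcal C$. Normalizing these vectors produces a limit in $\mathcal L_f=\mathcal L_\rho$ lying outside $\operatorname{int}\mathcal C$, a contradiction. Hence $\mu(f(x)^{-1}f(y))\in\mathcal C$ whenever its norm exceeds some $R_0$, and for norm at most $R_0$ the vector trivially lies in $N_{R_0}(\mathcal C)$. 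This avoids redoing the additivity argument.
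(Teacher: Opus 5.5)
Your proof is correct and follows the same route as the paper. The paper likewise uses $\mathcal L_f=\mathcal L_\rho$, positivity of $\varphi$ and compactness of the unit directions to get $c^{-1}\|\mu(f(x)^{-1}f(y))\|-C\le\varphi(\mu(f(x)^{-1}f(y)))\le c\|\mu(f(x)^{-1}f(y))\|$, and then applies the quasi-isometry bounds for $f$. Your normalization-and-contradiction argument in the last paragraph is exactly what justifies that compactness step, so you can drop the coarse-additivity route and the convexification of $\mathcal C$ in Step~2.
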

\begin{proof} Since $\mathcal L_f=\mathcal L_\rho$, positivity of
$\varphi$ and compactness of the unit directions in
$\mathcal L_f$ give   $c>1$ and $C\geq0$ such that
$$
c^{-1} \|\mu(f(x)^{-1}f(y))\|-C \leq \varphi\bigl(\mu(f(x)^{-1}f(y))\bigr)\le 
 c \|\mu(f(x)^{-1}f(y))\|
$$ for all $x,y\in Y$.
 Since
$\|\mu(f(x)^{-1}f(y))\|=d_X(f(x),f(y))$, the assertion
now follows from the quasi-isometry bounds for $f$.
\end{proof}

\begin{lemma}[Coarse additivity, {\cite[Corollary~4.12]{DKO_AR}}]
\label{lem.DKO_additive}
There exists $D_0>0$ such that, for all $x,z\in Y$ and
$y\in[x,z]$,
$$
\left\|
\mu(f(x)^{-1}f(z))
-\mu(f(x)^{-1}f(y))
-\mu(f(y)^{-1}f(z))
\right\|\leq D_0.
$$
\end{lemma}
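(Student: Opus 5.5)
We would deduce this directly from the Morse property of $f$ (\Cref{def.relatively_morse_cusp_adapted}), through the Kapovich--Leeb--Porti Morse lemma. By \cite[Theorem~5.16]{KLP_2018}, since $\mathcal L_f\subset\operatorname{int}\mathfrak a^+\cup\{0\}$ and $f$ is a quasi-isometric embedding, the image under $f$ of the geodesic segment $[x,z]\subset Y$ is a uniform $\Pi$-Morse quasigeodesic. Concretely, we use this in the following form. Fix a closed convex cone $\Theta\subset\operatorname{int}\mathfrak a^+\cup\{0\}$ whose interior contains $\mathcal L_f\smallsetminus\{0\}$. Then there is $R>0$, independent of $x,z$, such that $f([x,z])$ lies in the $R$-neighborhood of the $\Theta$-diamond
$$
\diamondsuit_\Theta(f(x),f(z)) = V(f(x),\Theta\text{-cone toward } f(z))\cap V(f(z),\Theta\text{-cone toward } f(x)).
$$
Moreover, each point $f(y)$ with $y\in[x,z]$ is $R$-close to a point $p$ of this diamond.

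The key algebraic input is exact additivity inside a Weyl cone. Write $f(x)=go$ with $g\in G$, and put the diamond in standard position $g\exp(\Theta')o$ up to $K$-rotation. If $p=gk\exp(v)o$ and $f(z)$ is close to $gk\exp(w)o$ with $v,w-v\in\Theta$, then the cone $g k\exp(\mathfrak a^+)o$ is a flat Weyl chamber. Since $\Theta\subset\operatorname{int}\mathfrak a^+$, the Cartan projections are
$$
\mu(f(x)^{-1}p)=v,\qquad \mu(p^{-1}f(z))=w-v+O(1),\qquad \mu(f(x)^{-1}f(z))=w+O(1).
$$
This gives additivity at $p$ up to a constant. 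We then pass from $p$ back to $f(y)$, using that $d(p,f(y))\le R$ and that $\mu$ is $1$-Lipschitz in each argument (a consequence of \Cref{lem.cptcartan}, or more precisely of $\|\mu(g)-\mu(h)\|\le d(go,ho)$). This costs at most $2R$ and yields the constant $D_0$.

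The main obstacle is the diamond statement itself: that every point of a Morse quasigeodesic between $f(x)$ and $f(z)$ lies uniformly close to a single flat Weyl cone emanating from $f(x)$ and containing $f(z)$ up to bounded error, with a $\Theta$-regular displacement in between. This is exactly the content of the Morse lemma of \cite{KLP_2018} (see also \cite[Section~4]{DKO_AR}). If we had to argue it directly, we would first prove regularity: by \Cref{lem.scalar-Cartan-comparison} and $\mathcal L_f\subset\operatorname{int}\mathfrak a^+\cup\{0\}$, both $\mu(f(x)^{-1}f(y))$ and $\mu(f(y)^{-1}f(z))$ are $\Theta$-regular once the relevant distances are large. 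Short subsegments are handled trivially by the upper quasi-isometry bound. We would then apply a higher-rank ``regular triangle'' estimate: when $\mu(a^{-1}b)$ and $\mu(b^{-1}c)$ are uniformly regular and the angle at $b$ is near $\pi$ in the Tits sense, which the Morse property forces, the Cartan projections add up to bounded error. This is where we expect the real work to lie, so we would cite \cite[Corollary~4.12]{DKO_AR} for it.
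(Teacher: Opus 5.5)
Your proposal is correct and follows the paper's route. The paper gives no separate argument: it combines the Kapovich--Leeb--Porti Morse property of $f$ (noted after \Cref{def.relatively_morse_cusp_adapted}) with \cite[Corollary~4.12]{DKO_AR}, and your diamond computation is exactly the mechanism behind that corollary. A regular diamond lies in the unique flat through the regular segment, Cartan projections add exactly there, and a bounded correction of a few multiples of $R$ via $\|\mu(g)-\mu(h)\|\le d(go,ho)$ finishes; this already closes the argument, so the Tits-angle alternative in your last paragraph is unnecessary.
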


We distinguish the hyperbolic shadows $O_r(x,y)$ from
shadows in the symmetric space $X=G/K$.
Following \cite[Section~5]{LO_invariant}, for $x,y\in X$
and $r>0$, define
$$
\mathcal O_r(x,y)
:=
\left\{
gkP\in\mathcal F:
k\in K,\quad
gk(\operatorname{int}A^+)o\cap B_X(y,r)\neq\emptyset
\right\},
$$
where $x=go$ and $B_X(y,r)$ is the open ball in $X$
centered at $y$. This definition is independent of the
choice of $g$ with $go=x$.
Thus $\mathcal O_r(x,y)$ consists of the flags whose
Weyl chambers based at $x$ meet $B_X(y,r)$.
The shadows are $G$-equivariant:
$h\mathcal O_r(x,y)=\mathcal O_r(hx,hy)$ for $h\in G$.

\begin{lemma}[Busemann--Cartan comparison]
\label{lem.buseandcartan}
There exists $\kappa>0$ such that, for every $r>0$,
$x,y\in X$, and $\xi\in\mathcal O_r(x,y)$,
\begin{equation}
\label{eqn.ambient_shadow_busemann}
\left\|
\beta_\xi(x,y)-\mu(x^{-1}y)
\right\|\leq\kappa r.
\end{equation}
Moreover, for every $r>0$, there exists $C=C(r)>0$
such that, for all $x,y\in Y$ and
$\zeta\in O_r(x,y)\cap\Lambda(\Gamma)$,
$$
\left\|
\beta_{f(\zeta)}(f(x),f(y))
-\mu(f(x)^{-1}f(y))
\right\|\leq C.
$$
\end{lemma}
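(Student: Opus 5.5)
For the first assertion I would work in a fixed model and then translate. Since both sides of \eqref{eqn.ambient_shadow_busemann} are $G$-equivariant ($\beta_{h\xi}(hx,hy)=\beta_\xi(x,y)$, $\mu((hx)^{-1}hy)=\mu(x^{-1}y)$, $h\mathcal O_r(x,y)=\mathcal O_r(hx,hy)$), I may assume $x=o$. Let $\xi=kP\in\mathcal O_r(o,y)$. Then there is $a\in\operatorname{int}A^+$ with $d_X(kao,y)<r$. The key identity is $\beta_{kP}(o,kao)=\beta_P(o,ao)=\log a$, which is immediate from the definition of the Iwasawa cocycle, since $\sigma(a^{-1},e)=-\log a$. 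Likewise $\mu(o^{-1}kao)=\log a$ because $a\in A^+$.

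To replace $kao$ by $y$, I would use two standard $1$-Lipschitz facts. The Busemann map is Lipschitz in the point: $\|\beta_\xi(z,y)\|\le d_X(z,y)$, since $\sigma(g,k)$ is the $\mathfrak a$-component of an Iwasawa decomposition and $\|\log$ of the $A$-part of $g\|\le\|\mu(g)\|$ (Kostant convexity). The Cartan projection is also Lipschitz: $\|\mu(x^{-1}y)-\mu(x^{-1}z)\|\le d_X(y,z)$. Combining these gives $\|\beta_\xi(o,y)-\mu(y)\|\le 2r$, so $\kappa=2$ works. This is essentially \cite[Lemma~5.7]{LO_invariant}, which I would cite.

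For the second assertion, the plan is to reduce it to the first by showing that, for $\zeta\in O_r(x,y)\cap\Lambda(\Gamma)$, the flag $f(\zeta)$ lies in $\mathcal O_{R}(f(x),f(y))$ for some $R=R(r)$. The geodesic ray $[x,\zeta)$ passes within $r$ of $y$ at some point $y'$. Its image under $f$ is a uniform Morse quasi-geodesic ray, by the Kapovich--Leeb--Porti Morse lemma and \Cref{def.relatively_morse_cusp_adapted}, and it converges to $f(\zeta)$ by the boundary extension in \Cref{prop.cusp_adapted_morse0}. The Morse lemma for rays says that such a ray lies within a uniform distance $D'$ of a Weyl cone $V(f(x),\operatorname{st}(f(\zeta)))$. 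Moreover, $f(x)$ can be taken as the tip, with $f(\zeta)$ being the flag of the cone. In the Borel case this cone is a single Weyl chamber $f(x)\cdot k\,A^+o$ with $kP$ corresponding to $f(\zeta)$.

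Next, $f(y')$ is within $Ar+B$ of $f(y)$ by \eqref{upper}. Hence the Weyl chamber from $f(x)$ toward $f(\zeta)$ meets $B_X(f(y),R)$ with $R=Ar+B+D'+1$. One caveat is that the chamber point may land on a wall rather than in the interior; moving slightly into the interior costs at most $1$. This gives $f(\zeta)\in\mathcal O_R(f(x),f(y))$, and the first part then yields the bound with $C=\kappa R$.

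The main obstacle is the ray version of the Morse lemma: that the $\Pi$-Morse quasi-ray from $f(x)$ stays uniformly close to the Weyl chamber based exactly at $f(x)$ pointing to its limit flag, with constants independent of $x$. I would first try to cite \cite[Theorem~1.3/5.16]{KLP_2018} (uniform Morse constants, diamonds converging to cones), together with \cite[Section~4]{DKO_AR}. If that fails, I would approximate by finite segments $[x,z_n]$ with $z_n\to\zeta$ and pass to the limit using closedness of shadows of closed balls.
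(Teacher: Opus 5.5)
Your proposal is correct and follows the paper's route. The paper cites \cite[Lemma~5.7]{LO_invariant} for the first assertion; your direct argument via Kostant convexity and the $1$-Lipschitz property of $\mu$, which gives $\kappa=2$, is a valid self-contained proof of it. For the second assertion, the paper likewise uses the Morse property to place $f([x,\zeta])$ uniformly close to the Weyl chamber at $f(x)$ with endpoint $f(\zeta)$, deduces $f(\zeta)\in\mathcal O_{c_0(r+1)}(f(x),f(y))$ from the quasi-isometry bounds, and then applies the first part.
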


\begin{proof}
The first assertion is \cite[Lemma~5.7]{LO_invariant}.
For the second, the Morse property implies that
$f([x,\zeta])$ lies in a uniform neighborhood of the
Weyl chamber based at $f(x)$ with endpoint $f(\zeta)$.
If $\zeta\in O_r(x,y)$, then $[x,\zeta]$ meets
$B(y,r)$. Since $f$ is a quasi-isometric embedding,
there is a uniform $c_0\geq1$ such that
$f(\zeta)\in
\mathcal O_{c_0(r+1)}(f(x),f(y))$.
Applying \eqref{eqn.ambient_shadow_busemann} proves
the claim.
\end{proof}

The two types of shadows are comparable after a
uniform change of radius.
\begin{lemma}[Comparison of shadows]
\label{lem.shadowscomparison}
There exists $c\geq1$ such that, for all $x,y\in Y$
and $r\geq c$,
$$
f\bigl(O_{c^{-1}r}(x,y)\cap\Lambda(\Gamma)\bigr)
\subset
\mathcal O_r(f(x),f(y))\cap\Lambda_\rho
\subset
f\bigl(O_{cr}(x,y)\cap\Lambda(\Gamma)\bigr).
$$
\end{lemma}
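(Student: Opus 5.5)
Both inclusions follow from the Morse property of $f$: images of geodesic rays $[x,\zeta]$ fellow-travel Weyl cones in $X$. The second half of the proof of \Cref{lem.buseandcartan} already gives the first inclusion in a weaker form, $f(O_r(x,y))\subset\mathcal O_{c_0(r+1)}(f(x),f(y))$. Writing $r=c\cdot(c^{-1}r)$ and taking $c\ge 2c_0$ gives $c_0(c^{-1}r+1)\le r$ once $r\ge c$. I would still redo that step to make the constants explicit. By the Kapovich--Leeb--Porti Morse lemma, for $\zeta\in\Lambda(\Gamma)$ the quasi-ray $f([x,\zeta])$ lies in a uniform $R_0$-neighborhood of the Weyl cone $V(f(x),\mathrm{st}(f(\zeta)))$. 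Since $\mathcal L_f\subset\operatorname{int}\mathfrak a^+\cup\{0\}$, this is in fact a neighborhood of the single regular Weyl chamber $gk\,\mathfrak a^+o$ with $gkP=f(\zeta)$. If $[x,\zeta]$ meets $B(y,c^{-1}r)$ at a point $z$, then $f(z)$ lies within $A c^{-1}r+B$ of $f(y)$ by \eqref{upper}, and within $R_0$ of the chamber. To land in the interior $gk(\operatorname{int}A^+)o$ one only needs to perturb slightly. The result is that the chamber meets $B_X(f(y),Ac^{-1}r+B+R_0+1)$, and this radius is $\le r$ once $c$ is large and $r\ge c$.

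For the reverse inclusion, take $\xi=f(\zeta)\in\mathcal O_r(f(x),f(y))\cap\Lambda_\rho$. Then the chamber from $f(x)$ toward $\xi$ passes within $r$ of $f(y)$, and I want $[x,\zeta]$ to pass within $cr$ of $y$. I would argue through Busemann functions rather than geometry. By \eqref{eqn.ambient_shadow_busemann}, $\|\beta_\xi(f(x),f(y))-\mu(f(x)^{-1}f(y))\|\le\kappa r$. Let $y'$ be the nearest point projection of $y$ onto $[x,\zeta]$, and set $D:=d(y,y')$.

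The claim to prove is $D\ll r+1$. By thin triangles in $\mathbb H^2$, $y'$ is coarsely the branch point of the tripod on $x,y,\zeta$. Coarse additivity (\Cref{lem.DKO_additive}) along $[x,y]$, passing through a point near $y'$, gives
\[
\mu(f(x)^{-1}f(y))\approx \mu(f(x)^{-1}f(y'))+\mu(f(y')^{-1}f(y)).
\]
On the other hand, $\zeta\in O_{1}(x,y')$ up to a constant, so the second part of \Cref{lem.buseandcartan} together with the cocycle identity gives
\[
\beta_\xi(f(x),f(y))\approx\mu(f(x)^{-1}f(y'))+\beta_\xi(f(y'),f(y)).
\]
The term $\beta_\xi(f(y'),f(y))$ should be coarsely $-\i$-type, i.e. roughly $-\mu(f(y)^{-1}f(y'))$ reflected, because $y$ lies "behind" the ray from $y'$ toward $\zeta$. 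More concretely, $y'$ lies on $[y,\zeta]$ up to bounded error, so
\[
\beta_\xi(f(y'),f(y))=-\beta_\xi(f(y),f(y'))\approx-\mu(f(y)^{-1}f(y')).
\]
Subtracting the two expansions then gives
\[
\mu(f(y')^{-1}f(y))+\mu(f(y)^{-1}f(y'))=\mu(f(y')^{-1}f(y))+\i\mu(f(y')^{-1}f(y))=O(\kappa r+1).
\]
The left side lies in $\mathfrak a^+$ with norm $\asymp D$ by \Cref{lem.scalar-Cartan-comparison}. Applying, for instance, $\varphi=\varrho$ (positive and $\i$-invariant) yields $D\ll r+1\ll r$ for $r\ge c$. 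Hence $\zeta\in O_{cr}(x,y)$.

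The main obstacle is the bookkeeping of the bounded errors. There are two coarse approximations to control: that $y'$ lies near $[y,\zeta]$, and that $\zeta$ lies in a bounded-radius shadow from both $x$ and $y$ through $y'$. Both are standard thin-triangle facts in $\mathbb H^2$, and the constants from \Cref{lem.buseandcartan} depend only on that bounded radius, so they are uniform. Finally, I would take $c$ to be the maximum of the constants from the two directions.
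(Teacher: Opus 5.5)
Your proof is correct and follows essentially the same route as the paper. The first inclusion comes from the Morse/Weyl-chamber fellow-travelling argument already used in the proof of \Cref{lem.buseandcartan}. The second comes from the nearest-point projection $y'$ of $y$ to $[x,\zeta]$, coarse additivity, the cocycle identity, and the resulting bound $\|v+\i v\|\le\kappa r+O(1)$ with $v=\mu(f(y')^{-1}f(y))$. Your use of the $\i$-invariant positive form $\varrho$ to get from $\|v+\i v\|$ to $\|v\|\ll r+1$ is just a variant of the paper's inequality $\|v\|\le c_1\|v+\i v\|$ on $\mathfrak a^+$.
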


\begin{proof}
The Morse argument in the proof of
\Cref{lem.buseandcartan} gives the first inclusion for sufficiently large $c$;
compare \cite[Proposition~5.14]{LO_invariant}.
Suppose that
$f(\zeta)\in\mathcal O_r(f(x),f(y))$ for
$\zeta\in\Lambda(\Gamma)$.
Let $z$ be the nearest-point projection of $y$ to
$[x,\zeta]$. Hyperbolicity implies that $z$ lies
within a uniform distance of both $[x,y]$ and
$[y,\zeta]$. Set
$u:=\mu(f(x)^{-1}f(z))$ and $
v:=\mu(f(z)^{-1}f(y))$.
By \Cref{lem.DKO_additive,lem.cptcartan},
$$
\mu(f(x)^{-1}f(y))=u+v+O(1).
$$
By \Cref{lem.buseandcartan} and the cocycle identity,
$$
\begin{aligned}
\beta_{f(\zeta)}(f(x),f(y))
&=\beta_{f(\zeta)}(f(x),f(z))
  -\beta_{f(\zeta)}(f(y),f(z))=u-\i v+O(1).
\end{aligned}
$$
All error terms are uniform in $x,y,\zeta$.
It follows from \eqref{eqn.ambient_shadow_busemann}
that
$\|v+\i v\|\leq\kappa r+O(1)$.
Since $v,\i v\in\mathfrak a^+$, there is a
constant $c_1>0$ such that
$\|v\|\leq c_1\|v+\i v\|$ for all
$v\in\mathfrak a^+$.
The quasi-isometric embedding property of $f$
therefore gives
$d(y,z)\leq c_1(r+1)$, by increasing $c_1$ if necessary.
Increasing $c$ so that $c> 2c_1+1$, we obtain,
for $r\geq c$, $d(y,z)\leq c_1(r+1)<cr$.
Thus $\zeta\in O_{cr}(x,y)$, proving the second inclusion.
\end{proof}

The proof of \cite[Lemma~6.6]{LO_invariant} uses
the Morse property and hyperbolicity of the domain,
and therefore applies to $f$.

\begin{lemma}
\label{lem.gromovproductnearest}
There exists $C>0$ such that, for any distinct
$\zeta,\eta\in\Lambda(\Gamma)$, if $z$ is the
nearest-point projection of $o$ to $[\zeta,\eta]$,
then
$$
\left\|
\mathcal G(f(\zeta),f(\eta))
-\frac12\bigl(
\mu(f(z))+\i\mu(f(z))
\bigr)
\right\|\leq C.
$$
\end{lemma}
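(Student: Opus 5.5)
The plan is to compute the Gromov product directly from its definition, using a group element adapted to the geodesic $[\zeta,\eta]$. We then compare the resulting Busemann functions with Cartan projections via the Morse property.

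\textbf{Step 1: reduce to a Busemann computation.} Let $\ell$ be the unit-speed geodesic from $\eta$ to $\zeta$ with $\ell(0)=z$. The image $f(\ell)$ is a Morse quasigeodesic. By the Kapovich--Leeb--Porti Morse lemma it lies in a uniform neighborhood of a Weyl cone, and hence near a parallel set of a maximal flat. Concretely, we choose $g\in G$ such that $(g^+,g^-)=(f(\zeta),f(\eta))$ and $d_X(go,f(z))\le C_1$ with $C_1$ uniform. Such a $g$ exists because $f(z)$ lies uniformly close to the set $\{ho: (h^+,h^-)=(f(\zeta),f(\eta))\}$ for transverse pairs in the image of a Morse quasigeodesic. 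This is the standard ``diamond/Weyl chamber'' consequence of the Morse property, and the same input was used in the proof of \Cref{lem.buseandcartan}. By \eqref{eqn.Gromovproductdef},
$$
2\mathcal G(f(\zeta),f(\eta))=\beta_{f(\zeta)}(e,g)+\i\beta_{f(\eta)}(e,g).
$$
Changing $g$ to an element $g'$ with $g'o=f(z)$ moves each Busemann term by at most $\|\cdot\|$-distance $O(C_1)$, since Busemann functions are $1$-Lipschitz in the basepoint. So we may replace $\beta_{f(\zeta)}(o,go)$ by $\beta_{f(\zeta)}(o,f(z))$, and similarly for $\eta$.

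\textbf{Step 2: Busemann versus Cartan via shadows.} Since $z$ is the nearest-point projection of $o$ onto $[\zeta,\eta]$, hyperbolicity of $\mathbb H^2$ shows that $z$ lies within a uniform distance $r_0$ of both geodesic rays $[o,\zeta)$ and $[o,\eta)$. Hence $\zeta,\eta\in O_{r_0}(o,z)$. The second part of \Cref{lem.buseandcartan}, applied with $x=o$ and $y=z$, gives
$$
\beta_{f(\zeta)}(o,f(z))=\mu(f(z))+O(1),\qquad \beta_{f(\eta)}(o,f(z))=\mu(f(z))+O(1),
$$
where we use $f(o)=o$. Combining this with Step 1 yields
$$
2\mathcal G(f(\zeta),f(\eta))=\mu(f(z))+\i\mu(f(z))+O(1),
$$
which is the claim.

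\textbf{Main obstacle.} The delicate point is Step 1: producing $g$ with $g^{\pm}$ equal to the limit flags and with $go$ uniformly close to $f(z)$. I would first try to derive it from the Morse lemma, which says that $f(\ell)$ is uniformly close to a diamond/parallel set $P(f(\eta),f(\zeta))$. Points of that parallel set are exactly the $ho$ with $(h^+,h^-)=(f(\zeta),f(\eta))$, up to $M$ and the $A$-action, and this is what is needed. Transversality of $f(\zeta)$ and $f(\eta)$ is guaranteed by relative Anosovness. Should the uniform closeness in Step 1 resist, a fallback is to bypass $g$ altogether. Using \eqref{GPT} with a sequence of elements, one can express $\mathcal G$ as the limit of $\tfrac12(\beta_{f(\zeta)}(o,y)+\i\beta_{f(\eta)}(o,y))$ along points $y$ on the flat, and evaluate at $y$ near $f(z)$.
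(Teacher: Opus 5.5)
Your argument is correct and is essentially the argument the paper invokes by citing \cite[Lemma~6.6]{LO_invariant}. The Morse property puts $f(z)$ uniformly close to the flat $gAo$ with $(g^+,g^-)=(f(\zeta),f(\eta))$, and hyperbolicity of $\mathbb H^2$ puts $\zeta,\eta$ in a uniform shadow $O_{r_0}(o,z)$, so \Cref{lem.buseandcartan} (with $f(o)=o$) finishes the proof. One point should be stated precisely: Step~1 needs the quasi-line (parallel-set) form of the Kapovich--Leeb--Porti Morse lemma, not only the ray form used in \Cref{lem.buseandcartan}. For a pair of opposite full flags this parallel set is exactly the single flat $gAo$, so your Step~1 holds with uniform constants and the fallback is unnecessary.
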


The following estimate relates the Gromov product
to the Cartan projection along peripheral orbits.

\begin{lemma}[{\cite[Proposition~4.3]{KO_globalshadow}}]
\label{lem.gromovproductalongparabolic}
Let $p\in\Lambda_{\mathrm p}(\Gamma_\rho)$, and set
$\mathsf P:=\operatorname{Stab}_{\Gamma_\rho}(p)$.
For every compact subset
$Q\subset\Lambda_\rho\smallsetminus\{p\}$,
there exists $C=C(p,Q)>0$ such that
$$
\sup_{\eta\in Q,\;g\in\mathsf P}
\left\|
\mathcal G(p,g\eta)-\frac12\mu(g)
\right\|\leq C.
$$
\end{lemma}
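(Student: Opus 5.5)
We need to prove: for $p$ parabolic, $\mathsf P$ its stabilizer, and compact $Q\subset\Lambda_\rho\smallsetminus\{p\}$, we have $\mathcal G(p,g\eta)=\frac12\mu(g)+O(1)$ uniformly over $\eta\in Q$ and $g\in\mathsf P$.

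The plan is to reduce everything to \Cref{lem.gromovproductnearest}, which expresses the Gromov product through the nearest-point projection in $\mathbb H^2$. Write $p=f(\xi)$ with $\xi\in\Lambda_{\mathrm p}(\Gamma)$, and $g=\rho(h)$ with $h$ in the cyclic parabolic stabilizer $\langle h_0\rangle$ of $\xi$. Write $\eta=f(\zeta)$, where $\zeta$ ranges over the compact set $f^{-1}(Q)\subset\Lambda(\Gamma)\smallsetminus\{\xi\}$. Then $g\eta=f(h\zeta)$, and \Cref{lem.gromovproductnearest} gives
$$\mathcal G(p,g\eta)=\tfrac12\bigl(\mu(f(z))+\i\mu(f(z))\bigr)+O(1),$$
where $z$ is the projection of $o$ to the geodesic $[\xi,h\zeta]$.

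\textbf{Step 1: hyperbolic geometry in $\mathbb H^2$.} Conjugate so that $\xi=\infty$ and $h_0=n_1$. Then $h=n_k$, and $h\zeta$ lies at Euclidean position $\zeta+k$ with $\zeta$ in a compact subset of $\mathbb R$. The geodesic $[\infty,\zeta+k]$ is vertical. The projection of $o$ onto it lies at height comparable to $|k|$, that is, at $(\zeta+k)+i\sqrt{1+(\zeta+k)^2}$ up to bounded error. The point $h o=k+i$ lies on the horocycle through $o$ above $\zeta+k$ within bounded distance. Moving up the vertical geodesic from $ho$ to $z$ covers distance $\log|k|+O(1)$. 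The hyperbolic triangle $o,ho,z$ is thin, with $z$ the point where $[o,ho]$ turns into the cusp direction. Concretely, $d(o,z)=d(ho,z)+O(1)=\frac12 d(o,ho)+O(1)$, since $d(o,ho)=2\log|k|+O(1)$.

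\textbf{Step 2: transfer to Cartan projections.} By \Cref{lem.DKO_additive} applied to $z$ near the midpoint of $[o,ho]$, together with \Cref{lem.cptcartan}, we get $\mu(g)=\mu(f(o)^{-1}f(ho))=\mu(f(z))+\mu(f(z)^{-1}f(ho))+O(1)$. Both $o,z$ and $z,ho$ are pairs that, up to bounded error, lie in the closure of a horoball at $\xi$ (or its boundary region). So the cusp estimate \eqref{cuspes} from the proof of \Cref{prop.cusp_adapted_morse0} applies after adjusting by bounded distances. It gives $\mu(f(z))=\frac12 d(o,z)v+O(1)$ and $\mu(f(z)^{-1}f(ho))=\frac12 d(z,ho)v+O(1)$, where $v=v_{g}$ is the peripheral vector. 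Here $v$ is the same for all powers, and by \Cref{lem.regularunipotent_cartan} it satisfies $\i v=v$. Hence $\mu(f(z))=\frac12\mu(g)+O(1)$ and $\i\mu(f(z))=\mu(f(z))+O(1)$. Combining these with the displayed formula gives the claim.

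\textbf{Main obstacle.} The delicate point is applying \eqref{cuspes} when $o$ is not inside the horoball. I would handle this by choosing a point $o'$ on the horocycle boundary at bounded distance from $o$ (bounded in terms of $p$) and working with $o'$, $ho'$ instead. The uniformity over $\zeta\in f^{-1}(Q)$ follows because the constants depend only on compactness in $\mathbb R$. The small values of $|k|$ involve finitely many $g$ and are absorbed into the constant.
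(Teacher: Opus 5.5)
Your argument is correct. Note that the paper gives no proof of its own here: the lemma is quoted from Kim--Oh (Proposition~4.3 of their global shadow paper), where it is proved for general relatively Anosov subgroups. What you have written is therefore a self-contained derivation inside this paper's Fuchsian framework. Your route has three steps:
\begin{itemize}
\item compute the nearest-point projection $z$ of $o$ onto $[\xi,h\zeta]$ in $\mathbb H^2$;
\item check that $d(o,z)=d(z,ho)+O(1)=\tfrac12 d(o,ho)+O(1)$, which by hyperbolicity also puts $z$ within bounded distance of $[o,ho]$, as coarse additivity requires;
\item convert this into $\mu(f(z))=\tfrac12\mu(g)+O(1)$ via \Cref{lem.gromovproductnearest}, the cusp estimate \eqref{cuspes}, and $\i v=v$.
\end{itemize}

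When you write it out, state two points explicitly. First, the auxiliary point $o'$ must lie in $Y\cap\partial\mathsf H_p$; this set is nonempty because the rays from $\xi$ to other limit points lie in $Y$. Second, $z\in\mathsf H_p$ holds only once $|k|$ is large. The finitely many remaining $g$ are handled by continuity of $\mathcal G$ on the compact set $\{p\}\times gQ\subset\mathcal F^{(2)}$.

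The price of your route is that it depends on the particular cusp-adapted embedding $f$ of \Cref{prop.cusp_adapted_morse0}. The general tools already in the paper give a shorter proof that avoids $f$ and hyperbolic-plane computations altogether.
\begin{itemize}
\item By \eqref{GPT} with $gp=p$ and \Cref{lem.zerobuse}, $2\mathcal G(p,g\eta)=2\mathcal G(p,\eta)+\i\beta_{g\eta}(e,g)$. The first term is bounded for $\eta\in Q$.
\item The convergence argument behind \eqref{eqn:shadow-gamma1} gives $gQ\subset\mathcal O_r(o,go)$ uniformly in $g\in\mathsf P$. It uses \Cref{lem.ambient-shadow-convergence}(3), with $\xi_\pm=p$ for divergent sequences in $\mathsf P$.
\item Hence \Cref{lem.buseandcartan} yields $\i\beta_{g\eta}(e,g)=\i\mu(g)+O(1)=\mu(g^{-1})+O(1)$.
\item Finally, \eqref{latercartan0} gives $\mu(g^{-1})=\mu(g)+O(1)$ on the cyclic group $\mathsf P$.
\end{itemize}
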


Finally, we record a local regularity estimate for
the $\mathfrak a$-valued Busemann cocycle.
Fix a Riemannian distance $d_{\mathrm{Riem}}$ on
$\mathcal F$.

\begin{lemma}[Local regularity]
\label{prop.holderlocal_Buse_prelim}
For every $r>0$, there exists $C=C(r)>0$ such that,
for all $g\in\Gamma_\rho$ and
$\xi,\eta\in\mathcal O_r(go,o)\cap\Lambda_\rho$,
$$
\left\|
\beta_\xi(e,g)-\beta_\eta(e,g)
\right\|
\leq C\,d_{\mathrm{Riem}}(\xi,\eta).
$$
\end{lemma}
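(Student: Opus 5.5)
The plan is to reduce to the basepoint and then to a smooth estimate on a compact piece of $\mathcal F$. By the cocycle identity and equivariance,
$$
\beta_\xi(e,g)=\beta_{g^{-1}\xi}(g^{-1},e)=-\beta_{g^{-1}\xi}(e,g^{-1}),
$$
and in Iwasawa terms $\beta_\xi(e,g)=\sigma(e,k)-\sigma(g^{-1},k)=-\sigma(g^{-1},k)$ for $\xi=kP$. So the statement is a uniform Lipschitz bound for $k\mapsto\sigma(g^{-1},k)$, equivalently for $\xi\mapsto\sigma(g^{-1},\xi)$ with $\sigma$ viewed on $G\times\mathcal F$, restricted to $\xi\in\mathcal O_r(go,o)$. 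For fixed $g$ this map is smooth, but its derivative blows up near the repelling flags of $g^{-1}$. The shadow condition is what keeps $\xi$ away from that bad locus, uniformly in $g$.

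\textbf{Step 1 (Cartan normalization).} Write $g^{-1}=k_1a k_2$ with $a=\exp\mu(g^{-1})\in A^+$. Then
$$
\sigma(g^{-1},\xi)=\sigma(k_1,ak_2\xi)+\sigma(a,k_2\xi)+\sigma(k_2,\xi)=\sigma(a,k_2\xi),
$$
since $\sigma(k,\cdot)=0$ for $k\in K$. The map $\xi\mapsto k_2\xi$ is an isometry of $d_{\mathrm{Riem}}$ if we choose the Riemannian metric $K$-invariant, and all Riemannian metrics on the compact $\mathcal F$ are bi-Lipschitz equivalent. It therefore suffices to bound the Lipschitz constant of $\eta\mapsto\sigma(a,\eta)$ on $\eta\in k_2\mathcal O_r(go,o)$.

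\textbf{Step 2 (the shadow lands in a fixed compact subset of the big cell).} The condition $\xi\in\mathcal O_r(go,o)$ means that the Weyl chamber from $go$ toward $\xi$ passes within $r$ of $o$. Translating by $g^{-1}=k_1ak_2$, the translated shadow is $g^{-1}\mathcal O_r(go,o)=\mathcal O_r(o,g^{-1}o)=k_1\mathcal O_r(o,ao)$. I expect the standard fact that when $\mu(g^{-1})$ is large in every root direction, $\mathcal O_r(o,ao)$ is contained in a uniformly small neighbourhood $B$ of $P$. That gives $k_2\xi=a^{-1}k_1^{-1}(g^{-1}\xi)\in a^{-1}B$. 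So I plan to work with $\zeta:=g^{-1}\xi\in k_1 B$ instead and use the other form $\beta_\xi(e,g)=-\beta_{\zeta}(e,g^{-1})=\sigma(g,\cdot)$ evaluated at $\zeta$.

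The cleaner route is to apply the symmetric argument directly. Write $g=k_1'a'k_2'$. Then $\xi\in\mathcal O_r(go,o)$ means $k_1'^{-1}\xi\in\mathcal O_r(a'o,k_1'^{-1}o)$, which is the shadow of a ball around $o$ seen from $a'o$. That shadow lies in a compact subset $Q_r$ of the open cell $\check N P$, i.e.\ uniformly transverse to $w_0P$. Both $\beta_\xi(e,g)$ and $\eta\mapsto\beta_{k_1'\eta}(e,k_1'a'k_2')$ reduce, via $K$-invariance, to $\eta\mapsto\sigma(a'^{-1},\eta)$ up to the smooth $K$-action. For $\eta=\check n P$ with $\check n$ in a compact subset of $\check N$ this is explicit: $a'^{-1}\check n P=(a'^{-1}\check n a')P$, and conjugation by $a'^{-1}$ with $a'\in A^+$ contracts $\check N$. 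Hence $\sigma(a'^{-1},\check nP)=-\log a'+\sigma(e,\,a'^{-1}\check n a'P)$, up to the Iwasawa correction. The $\check n$-dependence of this quantity goes through a contracted element and so has uniformly bounded derivative, with Lipschitz constant depending only on $Q_r$.

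\textbf{Step 3 (conclusion).} The maps $\check N\to\mathcal F$ are diffeomorphisms onto the big cell, so bi-Lipschitz on the compact $Q_r$. Combined with the $K$-invariance of the metric, this gives the $C(r)$ bound. Restricting to $\Lambda_\rho$ and $g\in\Gamma_\rho$ plays no role.

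\textbf{Main obstacle.} The hardest step is the uniform inclusion $k_1'^{-1}\mathcal O_r(go,o)\subset Q_r$ together with the uniform $C^1$ bound for $\check n\mapsto\kappa(a'^{-1}\check n a')$. The latter needs contraction, i.e.\ $a'\in A^+$ and not merely $\mathfrak a$. For this I would cite the shadow description in \cite{LO_invariant}, which was already used for \Cref{lem.buseandcartan}. For $g$ with bounded $\mu(g)$ the statement is trivial by compactness, so only large $\mu(g)$ matters.
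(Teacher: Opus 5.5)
Your strategy (reduce via the Cartan decomposition to $\eta\mapsto\sigma(a,\eta)$ with $a\in A^+$, put the shadow into a fixed compact piece of a big Bruhat cell, and use that conjugation by $a$ contracts the corresponding unipotent group) is a legitimate self-contained alternative to the paper. The paper instead passes to hyperbolic shadows by \Cref{lem.shadowscomparison}, quotes \cite[Propositions~4.7, 5.3, and~5.11]{CK_Horospherical} for all but finitely many $g\in\Gamma_\rho$, and handles the remaining $g$ by smoothness of $\xi\mapsto\beta_\xi(e,g)$.

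However, the ``cleaner route'' you actually carry out fails as written. With $g=k_1'a'k_2'$ and $a'=\exp\mu(g)\in A^+$, the point $a'o$ lies on $A^+o$. The chamber $a'w_0A^+o$ based at $a'o$ passes through $o$ (at the parameter $w_0^{-1}a'^{-1}w_0\in A^+$), and its endpoint is $a'w_0P=w_0P$. So $\mathcal O_r(a'o,o)$ is a neighbourhood of $w_0P$. For uniformly regular $a'$ it lies in a compact subset of the cell $Nw_0P$ of flags transverse to $P$, and it is never contained in $\check NP$. Moreover, $\operatorname{Ad}(a'^{-1})$ multiplies $\mathfrak g_{-\beta}$ by $e^{\beta(\log a')}\geq1$, so conjugation by $a'^{-1}$ \emph{expands} $\check N$. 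On any fixed neighbourhood of $P$, the Lipschitz constant of $\sigma(a'^{-1},\cdot)$ is therefore not bounded uniformly in $g$; already in $\operatorname{SL}_2(\mathbb R)$ it is of order $e^{\alpha(\mu(g))}$. You are estimating the function exactly in its bad region.

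The two errors must be fixed together, in one of two ways:
\begin{itemize}
\item Use the chart $n\mapsto nw_0P$. There $a'^{-1}nw_0P=(a'^{-1}na')w_0P$, and $\operatorname{Ad}(a'^{-1})$ does contract $\mathfrak n$.
\item Return to your Step~1 normalization. There $k_2\mathcal O_r(go,o)=\mathcal O_r(a^{-1}o,o)$ is a neighbourhood of $P$ in $\check NP$, and $\sigma(a,\check nP)=\log a+\sigma(a\check na^{-1},e)-\sigma(\check n,e)$, where $\sigma(\cdot,e)$ is the Iwasawa $\mathfrak a$-component and $\operatorname{Ad}(a)$ contracts $\check{\mathfrak n}$. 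Work on a ball in $\check{\mathfrak n}$, which $\operatorname{Ad}(a)$ maps into itself, to turn the derivative bound into a Lipschitz bound.
\end{itemize}

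The second gap is the uniformity of the compact set $Q_r$: it is not enough for $\|\mu(g)\|$ to be large. Suppose $a'$ is singular and $k\in Z_K(a')$ represents a nontrivial Weyl element (e.g.\ $s_\alpha$ with $\alpha(\log a')=0$). Then $a'kw_0(w_0^{-1}a'^{-1}w_0)o=kw_0o=o$, so the chamber $a'kw_0A^+o$ passes through $o$. Hence $kw_0P=a'kw_0P\in\mathcal O_r(a'o,o)$, and this flag is not transverse to $P$, however large $\|\mu(g)\|$ is. The shadow lies in a fixed compact subset of the big cell only once $\min_{\alpha\in\Pi}\alpha(\mu(g))$ exceeds a constant depending on $r$; this can be extracted from \Cref{lem.buseandcartan} read in the Tits representations.

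For $g\in\Gamma_\rho$, this regularity is exactly what condition~(1) of relative Anosovness provides outside a finite set, and the finitely many exceptions are handled by smoothness. This is the same finite-exception structure as in the paper's proof. So your remark that $g\in\Gamma_\rho$ plays no role is not supported by your argument. (The lemma does hold for all $g\in G$, but that needs a different argument, e.g.\ a weight-space derivative bound in the Tits representations.) With the corrected cell and this regularity input, your route gives a direct proof that avoids \cite{CK_Horospherical}.
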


\begin{proof}
Enlarging $r$ if necessary, we may apply
\Cref{lem.shadowscomparison}.  Therefore  the estimates in \cite[Propositions~4.7, 5.3, and 5.11]
{CK_Horospherical} imply the assertion outside a finite subset
of $\Gamma_\rho$.
For each fixed $g$, the map
$\xi\mapsto\beta_\xi(e,g)$ is smooth on the compact
manifold $\mathcal F$, and hence Lipschitz.
Enlarging $C$ includes the finitely many remaining
elements.
\end{proof}

The action of $\Gamma_\rho$ on $\Lambda_\rho$ is a
convergence group action, being conjugate to the action
of $\Gamma$ on $\Lambda(\Gamma)$.
Together with \Cref{lem.shadowscomparison}, this gives
the following convergence properties.

\begin{lemma}[Convergence of shadows]
\label{lem.ambient-shadow-convergence}
Every sequence of pairwise distinct $g_n\in\Gamma_\rho$
has a subsequence and points $\xi_+,\xi_-\in\Lambda_\rho$
with the following properties:
\begin{enumerate}
\item $g_n\eta\to\xi_+$ locally uniformly on
$\Lambda_\rho\smallsetminus\{\xi_-\}$.
\item For every $r>0$, if
$\eta_n\in\mathcal O_r(o,g_n o)\cap\Lambda_\rho$,
then $\eta_n\to\xi_+$.
\item If $r_n\to\infty$ and
$\eta_n\in\Lambda_\rho\smallsetminus\mathcal O_{r_n}(g_n o,o)$,
then $\eta_n\to\xi_+$.
\end{enumerate}
The same assertions hold for $g_n^{-1}$ with $\xi_+$ and
$\xi_-$ interchanged. In particular, for each $r>0$,
$$
\operatorname{diam}_{d_{\mathrm{Riem}}}
\bigl(\mathcal O_r(o,go)\cap\Lambda_\rho\bigr)\to 0\quad\text{($g\to \infty$ in $\Ga_\rho$).}
$$
\end{lemma}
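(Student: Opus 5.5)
The plan is to transfer the statement from $\Gamma$ acting on $\Lambda(\Gamma)\subset\partial\mathbb H^2$ to $\Gamma_\rho$ acting on $\Lambda_\rho$, using the boundary map $f$. Parts (1)--(2) come from the convergence property; part (3) and the diameter statement come from \Cref{lem.shadowscomparison}.

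\textbf{Parts (1) and (2).} Write $g_n=\rho(\gamma_n)$ with $\gamma_n\in\Gamma$ pairwise distinct. The group $\Gamma$ is a discrete group of Möbius transformations, so its action on $\partial\mathbb H^2$ is a convergence action. After passing to a subsequence, there are $\zeta_\pm\in\partial\mathbb H^2$ such that:
\begin{itemize}
\item $\gamma_n\to\zeta_+$ locally uniformly on $\partial\mathbb H^2\smallsetminus\{\zeta_-\}$;
\item $\gamma_n o\to\zeta_+$ and $\gamma_n^{-1}o\to\zeta_-$ in $\mathbb H^2\cup\partial\mathbb H^2$.
\end{itemize}
Both points are accumulation points of $\Gamma o$, so $\zeta_\pm\in\Lambda(\Gamma)$. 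Set $\xi_\pm:=f(\zeta_\pm)$. Since $f:\Lambda(\Gamma)\to\Lambda_\rho$ is an equivariant homeomorphism between compact spaces, (1) follows at once.

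For (2), fix $r>0$ and enlarge it so that $r\ge c$, where $c$ is the constant of \Cref{lem.shadowscomparison}. Then $\eta_n=f(\zeta_n)$ with $\zeta_n\in O_{cr}(o,\gamma_n o)$. Hyperbolic geometry shows that shadows $O_{R}(o,\gamma_n o)$ of balls of fixed radius around points $\gamma_n o\to\zeta_+$ shrink to $\zeta_+$ in the visual metric. Hence $\zeta_n\to\zeta_+$, and continuity of $f$ gives $\eta_n\to\xi_+$.

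\textbf{Part (3).} Suppose $\eta_n=f(\zeta_n)\notin\mathcal O_{r_n}(g_no,o)$ with $r_n\to\infty$. By the first inclusion of \Cref{lem.shadowscomparison}, $\zeta_n\notin O_{c^{-1}r_n}(\gamma_n o,o)$. The geodesic ray $[\gamma_n o,\zeta_n]$ therefore stays at distance at least $c^{-1}r_n\to\infty$ from $o$. In $\mathbb H^2$ this forces the visual angle at $o$ between $\zeta_n$ and $\gamma_n o$ to tend to zero. Since $\gamma_no\to\zeta_+$, we get $\zeta_n\to\zeta_+$, and applying $f$ gives $\eta_n\to\xi_+$.

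\textbf{Inverses and the diameter statement.} The assertions for $g_n^{-1}$ follow by the same argument applied to $\gamma_n^{-1}$, which converges to $\zeta_-$ with repelling point $\zeta_+$. For the diameter statement, suppose $\operatorname{diam}(\mathcal O_r(o,go)\cap\Lambda_\rho)\not\to0$ along some sequence $g\to\infty$. Then there would be pairs of points in these shadows at distance bounded below. Passing to a subsequence via (2), both sequences converge to the same $\xi_+$, a contradiction.

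The only step requiring care is the hyperbolic-geometry fact used in (3), that rays avoiding large balls around $o$ have nearby endpoints. This is elementary in $\mathbb H^2$, so I expect no real obstacle beyond bookkeeping of the shadow constants.
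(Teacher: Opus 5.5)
Your proposal is correct and follows the paper's proof essentially step for step. You lift to $\Gamma$ and use its convergence action on $\partial\mathbb H^2$ for (1), use the second and first inclusions of \Cref{lem.shadowscomparison} to reduce (2) and (3) to shrinking hyperbolic shadows and to rays avoiding ever larger balls about $o$, and finish with a subsequence argument for the diameter claim. The only detail to state explicitly is that the first inclusion in (3) needs $r_n\ge c$, which holds for all large $n$.
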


\begin{proof}
Put $\widehat g_n:=\rho^{-1}(g_n)$. After passing to a
subsequence, $\widehat g_n o\to\zeta_+$ and
$\widehat g_n^{-1}o\to\zeta_-$ in $Y\cup\Lambda(\Gamma)$.
Set $\xi_\pm:=f(\zeta_\pm)$. The convergence action on
$\partial\mathbb H^2$ and equivariance of $f$ give (1).
Let $c$ be the constant in \Cref{lem.shadowscomparison}.
For (2), enlarge $r$ to at least $c$; the preimages of $\eta_n$
then lie in hyperbolic shadows of a fixed radius centered
at $\widehat g_n o$, and hence tend to $\zeta_+$.
For (3), the first inclusion of that lemma gives
$f^{-1}(\eta_n)\notin O_{r_n/c}(\widehat g_n o,o)$
for all large $n$. Thus the rays from $\widehat g_n o$
to $f^{-1}(\eta_n)$ avoid balls about $o$ whose radii
tend to infinity, so $f^{-1}(\eta_n)\to\zeta_+$.
Continuity of $f$ proves (2) and (3).
The same argument applies to the inverse sequence.
The diameter assertion follows from (2) by a subsequence argument.
\end{proof}

\subsection{Patterson--Sullivan and BMS measures}
\label{subsection:PSBMS}

For $\psi\in\mathfrak a^*$, a Borel probability measure
$\nu$ on $\mathcal F$ is called $(\Gamma_\rho,\psi)$-conformal
if, for every $\gamma\in\Gamma_\rho$,
$$
\frac{d\gamma_*\nu}{d\nu}(\xi)
=
e^{\psi(\beta_\xi(e,\gamma))}
\qquad\text{for $\nu$-almost every }\xi\in\mathcal F.
$$
If $\nu$ is supported on $\Lambda_\rho$, we call it
a $(\Gamma_\rho,\psi)$-Patterson--Sullivan measure. For $\psi\in\mathfrak a^*$, let
$\delta_\psi=\delta_\psi(\Gamma_\rho)$ be the abscissa
of convergence of
$\sum_{\gamma\in\Gamma_\rho}e^{-s\psi(\mu(\gamma))}$.

\begin{theorem}[{\cite{CZZ_relative}}]
\label{thm.ps atomless 0}
If $\delta_\psi=1$, there exists a unique
$(\Gamma_\rho,\psi)$-Patterson--Sullivan measure
$\nu_\psi$. Moreover, $\nu_\psi$ is
$\Gamma_\rho$-ergodic and atomless.
\end{theorem}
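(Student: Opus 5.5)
The statement is cited from Canary--Zhang--Zimmer. The natural route is to reduce to the transverse/relatively Anosov Patterson--Sullivan theory for $\Gamma_\rho$ acting on $\Lambda_\rho$, using the Morse embedding $f$ from \Cref{prop.cusp_adapted_morse0} to import hyperbolic geometry.

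\textbf{Existence.} I would run Patterson's construction. Since $\delta_\psi=1$, consider the measures
$$
\nu_s:=\frac{1}{Z(s)}\sum_{\gamma\in\Gamma_\rho}e^{-s\psi(\mu(\gamma))}\,\delta_{\gamma o},
\qquad Z(s):=\sum_{\gamma\in\Gamma_\rho}e^{-s\psi(\mu(\gamma))},
$$
for $s\downarrow1$. If the series converges at $s=1$, I would first insert Patterson's slowly varying weight. Next I would take a weak limit on the compactification in which $\gamma o$ converges to flags, as in \Cref{def.limitset}. Relative Anosovness makes all simple roots diverge along $\mu(\Gamma_\rho)$, so every limit measure is supported on $\Lambda_\rho$. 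Conformality then follows from the identity $\psi(\mu(\gamma^{-1}\gamma'))-\psi(\mu(\gamma'))\to\psi(\beta_\xi(e,\gamma))$ as $\gamma'o\to\xi$. That convergence holds for $\xi$ in the limit set because, via \Cref{lem.buseandcartan} and the Morse property, $\gamma' o$ lies in shadows converging to $\xi$.

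\textbf{Shadow lemma and divergence.} Using \Cref{lem.shadowscomparison,lem.buseandcartan}, conformality gives the upper bound $\nu(\mathcal O_r(o,\gamma o))\ll e^{-\psi(\mu(\gamma))}$. The matching lower bound comes from the convergence property \Cref{lem.ambient-shadow-convergence} together with non-atomicity on complements of points. This yields a Sullivan shadow lemma at conical points. Next I would show that parabolic points carry no mass: the stabilizer of $p$ is cyclic and, by \eqref{latercartan0}, $\psi(\mu(g^n))=\psi(v_g)\log|n|+O(1)$. Relative Anosov theory gives $\psi(v_g)>\delta_{\mathsf P}=1/2$ times the normalization, hence $\delta_\psi(\mathsf P)<\delta_\psi(\Gamma_\rho)$. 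That is the standard gap criterion, and it yields divergence of $\Gamma_\rho$ at $\delta_\psi$ and finiteness of mass near cusps, in the manner of Dal'bo--Otal--Peigné. Combined with the shadow lemma, divergence implies that $\nu$ gives full mass to the conical limit set (Borel--Cantelli in the Hopf--Tsuji--Sullivan style). Since conical points are not atoms by the shadow lemma and countably many parabolic points carry zero mass, $\nu$ is atomless.

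\textbf{Uniqueness and ergodicity.} Given two PS measures $\nu,\nu'$, both are supported on conical points and satisfy the shadow lemma, so they are mutually comparable on shadows. Vitali-type covering on conical points then shows $\nu\asymp\nu'$. The density $d\nu'/d\nu$ is $\Gamma_\rho$-invariant. Ergodicity follows by the usual density-point argument: for an invariant set $E$ of positive measure, pulling back shadows $\gamma_n^{-1}\mathcal O_r(o,\gamma_n o)$ around a density point of $E$ gives sets of mass close to full, forcing $\nu(E)=1$. Ergodicity then makes the density constant, and normalization forces $\nu=\nu'$.

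The main obstacle is the cusp step, namely the critical exponent gap $\delta_\psi(\mathsf P)<1$ together with the resulting control of shadow measures near parabolic points. The Cartan growth $\mu(g^n)\approx(\log n)v_g$ gives $\sum_n e^{-s\psi(\mu(g^n))}\asymp\sum_n n^{-s\psi(v_g)}$, so the gap amounts to $\psi(v_g)>1$. I would attempt to prove this by comparing with the divergent full sum over cosets of $\mathsf P$, as in the rank-one argument, rather than deriving it from local information. This is exactly the content supplied in \cite{CZZ_relative}, so I would cite it there.
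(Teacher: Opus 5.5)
The paper gives no argument beyond the citation to \cite{CZZ_relative}, and your architecture is the standard one: Patterson's construction, Sullivan's shadow lemma, no mass at parabolic points, conical full measure, then Vitali and density-point arguments for uniqueness and ergodicity. The genuine gap is the one non-formal input, $\delta_\psi(\mathsf P)=1/\psi(v_u)<1$ for a generator $u$ of $\mathsf P$.

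Relative Anosovness only gives $v_u\in\operatorname{int}\mathfrak a^+$. It does not compare $\psi(v_u)$ with the normalization $\delta_\psi(\Gamma_\rho)=1$, so attributing the gap to ``relative Anosov theory'' assumes the conclusion. If you simply cite it (it is \Cref{thm.entropydrop}), your proposal reduces to the same citation the paper makes.

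The route you sketch instead, comparing with the divergent full Poincar\'e series split over cosets of $\mathsf P$, is circular, since you deduce divergence of $\Gamma_\rho$ from the gap. It also cannot work, because divergence of $\sum_{\gamma\in\Gamma_\rho}e^{-\psi(\mu(\gamma))}$ is compatible with $\delta_\psi(\mathsf P)=1$. The divergence that does the work is that of the parabolic series: by \eqref{latercartan0}, $\sum_{|n|\geq N}e^{-s\psi(\mu(u^n))}\asymp\sum_{n\geq N}n^{-s\psi(v_u)}\to\infty$ as $s\downarrow1/\psi(v_u)$.

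Feed this into the Dal'bo--Otal--Peign\'e free-product argument. Take a loxodromic $h\in\Gamma_\rho$ with fixed points away from $p$, and $N,k$ large. Ping-pong makes alternating words in $u^{n}$ $(|n|\geq N)$ and $h^{\pm k}$ pairwise distinct. The orbit points of their partial products stay uniformly close to the geodesic in $Y$ from $o$ to the final point. Then \Cref{lem.DKO_additive,lem.cptcartan} give $\psi(\mu(g_1\cdots g_{m}))\leq\sum_i\psi(\mu(g_i))+Cm$. The Poincar\'e series of $\Gamma_\rho$ at $s$ therefore dominates a geometric series with ratio $e^{-2Cs}\bigl(e^{-s\psi(\mu(h^k))}+e^{-s\psi(\mu(h^{-k}))}\bigr)\sum_{|n|\geq N}e^{-s\psi(\mu(u^n))}$. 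This ratio exceeds $1$ for some $s>1/\psi(v_u)$, hence $\delta_\psi(\Gamma_\rho)>1/\psi(v_u)$.

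Second, ``divergence plus the shadow lemma gives full conical mass by Borel--Cantelli'' reverses the implication: Borel--Cantelli gives convergence $\Rightarrow$ conical-null. You do not need the hard Hopf--Tsuji--Sullivan direction, since $\Lambda_\rho=\Lambda_{\mathrm c}(\Gamma_\rho)\sqcup\Lambda_{\mathrm p}(\Gamma_\rho)$ with $\Lambda_{\mathrm p}(\Gamma_\rho)$ countable. However, uniqueness requires excluding parabolic atoms for an \emph{arbitrary} PS measure $\nu'$, and your DOP-style estimate only treats Patterson's limit.

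The missing step is short. If $\nu'(\{p\})>0$, conformality gives $\nu'(\{\gamma p\})=\nu'(\{p\})e^{-\psi(\beta_{\gamma p}(e,\gamma))}$, so $\sum_{\gamma\in\Gamma_\rho/\mathsf P}e^{-\psi(\beta_{\gamma p}(e,\gamma))}\leq\nu'(\{p\})^{-1}$; the summand is well defined by \Cref{lem.zerobuse}. Choose representatives $\gamma$ with $\rho^{-1}(\gamma)o$ near the entrance of $[o,f^{-1}(\gamma p)]$ into its horoball; this makes the exponent $\psi(\mu(\gamma))+O(1)$. Coarse additivity $\mu(\gamma\pi)=\mu(\gamma)+\mu(\pi)+O(1)$ for $\pi\in\mathsf P$, together with $\sum_{\pi\in\mathsf P}e^{-\psi(\mu(\pi))}<\infty$ (the gap), then makes the full series converge, contradicting divergence.

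Finally, fix the sign in the existence step: $\psi(\mu(\gamma^{-1}\gamma'))-\psi(\mu(\gamma'))\to-\psi(\beta_\xi(e,\gamma))$ as $\gamma'o\to\xi$. This is what yields $d\gamma_*\nu/d\nu=e^{\psi(\beta_\xi(e,\gamma))}$.
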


Recall that $\psi$ is tangent to $\psi_\rho$ if
$\psi\geq\psi_\rho$ and equality holds at some nonzero
vector. In the present setting, this is equivalent
to $\delta_\psi=1$; see \cite{KMO_tent,CZZ_relative}.
Moreover, $\psi_\rho$ is strictly concave between
non-collinear vectors in $\L_\rho$ \cite{Reyes-Wang,KOZ_Manhattan}.
The parametrization of tangent forms in
\cite[Corollary~1.3]{KOZ_Manhattan}, together with
the preceding theorem, gives the following.

\begin{corollary}
\label{thm.ps atomless}
For every $u\in\operatorname{int}\mathcal L_\rho$, there exists
a unique $\psi_u\in\mathfrak a^*$ tangent to
$\psi_\rho$ at $u$. Let $\nu_u$ be its Patterson--Sullivan
measure from \Cref{thm.ps atomless 0}.
The assignments $u\mapsto\psi_u\mapsto\nu_u$ give bijections
among unit vectors in $\operatorname{int}\mathcal L_\rho$,
tangent linear forms, and Patterson--Sullivan measures on $\La_\rho$.
\end{corollary}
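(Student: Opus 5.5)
The corollary should follow by combining three earlier inputs: the tangency parametrization of \cite[Corollary~1.3]{KOZ_Manhattan}, strict concavity of $\psi_\rho$ on $\mathcal L_\rho$ \cite{Reyes-Wang,KOZ_Manhattan}, and \Cref{thm.ps atomless 0} together with the equivalence between tangency and $\delta_\psi=1$ \cite{KMO_tent,CZZ_relative}. The plan is to set up the three bijections in turn.

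\textbf{Existence of a tangent form.} Fix $u\in\operatorname{int}\mathcal L_\rho$. By Quint, $\psi_\rho$ is concave and upper semicontinuous, and it is positive and finite on $\operatorname{int}\mathcal L_\rho$. A supporting hyperplane to its hypograph at $(u,\psi_\rho(u))$ gives a linear form $\psi_u\geq\psi_\rho$ with $\psi_u(u)=\psi_\rho(u)$. Since $\psi_\rho$ is $1$-homogeneous, the supporting affine function has no constant term, so it is indeed linear.

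\textbf{Uniqueness of $\psi_u$.} This is the main point, since concavity alone only gives a supergradient, not a gradient. We need $\psi_\rho$ to be differentiable at interior points of $\mathcal L_\rho$. The paper obtains this from the parametrization in \cite[Corollary~1.3]{KOZ_Manhattan}, which says that tangent forms correspond bijectively to directions in $\operatorname{int}\mathcal L_\rho$.

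\textbf{Injectivity of $u\mapsto\psi_u$ on unit vectors.} Suppose one form $\psi$ is tangent at two non-proportional vectors $u_1,u_2$. Then $\psi=\psi_\rho$ at both, and hence on the segment between them by concavity and $\psi\geq\psi_\rho$. This contradicts strict concavity between non-collinear vectors.

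\textbf{Surjectivity onto tangent forms.} Let $\psi$ be any tangent form, equal to $\psi_\rho$ at some nonzero $v$. We must show $v$ can be taken in $\operatorname{int}\mathcal L_\rho$. This is again supplied by the KOZ parametrization, which rules out tangency only at boundary directions for relatively Anosov groups. So the unit-vector-to-tangent-form correspondence is a bijection.

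\textbf{The Patterson--Sullivan measures.} Tangency is equivalent to $\delta_\psi=1$, so \Cref{thm.ps atomless 0} assigns to each tangent $\psi_u$ a unique Patterson--Sullivan measure $\nu_u$.

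For injectivity of $\psi\mapsto\nu_\psi$: if $\nu_\psi=\nu_{\psi'}$, then for every $\gamma\in\Gamma_\rho$ we have $(\psi-\psi')(\beta_\xi(e,\gamma))=0$ for $\nu$-a.e.\ $\xi$. Take $\gamma$ loxodromic and $\xi=y_\gamma$. The point $y_\gamma$ is an atom-free location, so we cannot evaluate directly there. Instead we iterate: for $\xi$ near $y_\gamma$, $\beta_\xi(e,\gamma^n)\approx n\lambda(\gamma)$. This gives $(\psi-\psi')(\lambda(\gamma))=0$ for all $\gamma\in\Gamma_\rho$. Since $\lambda(\Gamma_\rho)$ spans $\mathfrak a$ by Zariski density (Benoist), $\psi=\psi'$.

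For surjectivity onto Patterson--Sullivan measures: any $(\Gamma_\rho,\psi)$-Patterson--Sullivan measure forces $\delta_\psi=1$. This is the standard fact for relatively Anosov groups cited in \cite{CZZ_relative}, and it means $\psi$ is tangent.

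The main obstacle is the differentiability and interior-tangency input. I would cite it from \cite{KOZ_Manhattan} rather than reprove it.
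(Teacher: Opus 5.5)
Your handling of the first two correspondences follows the paper: $u\mapsto\psi_u$ comes from \cite[Corollary~1.3]{KOZ_Manhattan} together with strict concavity, and $\psi\mapsto\nu_\psi$ comes from \Cref{thm.ps atomless 0}. Your supporting-hyperplane existence argument and your Zariski-density argument for injectivity of $\psi\mapsto\nu_\psi$ are sound additions.

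The step that fails is the surjectivity claim that every $(\Gamma_\rho,\psi)$-Patterson--Sullivan measure forces $\delta_\psi=1$. In this cusped setting that is false. Take any $\psi$ with $\delta_\psi<1$, for instance $\psi=2\psi_u$. Fix $p\in\Lambda_{\mathrm p}(\Gamma_\rho)$ and set $c_{gp}:=e^{-\psi(\beta_{gp}(e,g))}$ for $g\in\Gamma_\rho$. By \Cref{lem.zerobuse}, this depends only on $gp$. The cocycle identity and equivariance give $c_{\gamma^{-1}y}/c_y=e^{\psi(\beta_y(e,\gamma))}$ for $y\in\Gamma_\rho p$. Now choose the representative $g_y$ of each $y$ as in the definition of $g_p$ in \Cref{sec.PS}. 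Then $y\in\mathcal O_r(o,g_yo)$ for a uniform $r$, so \Cref{lem.buseandcartan} gives $\sum_y c_y\ll\sum_{\gamma\in\Gamma_\rho}e^{-\psi(\mu(\gamma))}<\infty$. Normalizing $\sum_y c_y\delta_y$ therefore produces a purely atomic $(\Gamma_\rho,\psi)$-conformal probability measure carried by $\Gamma_\rho p\subset\Lambda_\rho$, even though $\psi$ is not tangent. In general one only has $\psi\geq\psi_\rho$ (Quint), i.e.\ $\delta_\psi\le1$.

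So you have two options. You can read the third set as $\{\nu_\psi:\psi\text{ tangent}\}$, which is how the paper obtains it from \Cref{thm.ps atomless 0}; then surjectivity is tautological and only your injectivity argument is needed. Or you can restrict to atomless measures and add an argument. Since $\Lambda_{\mathrm p}(\Gamma_\rho)$ is countable, $\nu$ is carried by the conical limit set. The conformal upper bound $\nu(\mathcal O_r(o,\gamma o))\ll_r e^{-\psi(\mu(\gamma))}$ and Borel--Cantelli then force $\sum_\gamma e^{-\psi(\mu(\gamma))}=\infty$. Hence $\delta_\psi\geq1$, so $\delta_\psi=1$, and tangency follows.

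A smaller point concerns your injectivity step: you can in fact evaluate at $y_\gamma$. The map $\xi\mapsto\beta_\xi(e,\gamma)$ is continuous, and $\operatorname{supp}\nu=\Lambda_\rho$ by minimality. So $(\psi-\psi')(\beta_\xi(e,\gamma))=0$ holds at $\xi=y_\gamma$, where $\beta_{y_\gamma}(e,\gamma)=\lambda(\gamma)$ exactly. The estimate $\beta_\xi(e,\gamma^n)=n\lambda(\gamma)+O(1)$ that you substitute holds only on the shrinking shadows $\mathcal O_r(o,\gamma^no)$, not on a fixed neighborhood of $y_\gamma$. For fixed $\xi\in\Lambda_\rho\smallsetminus\{y_\gamma\}$ one gets $-n\,\i\lambda(\gamma)+O_\xi(1)$ instead. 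The conclusion $(\psi-\psi')(\lambda(\gamma))=0$ survives either way, so this only needs restating.
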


Using the Hopf coordinates, we set
$$\Lambda_\rho^{(2)}
:=(\Lambda_\rho\times\Lambda_\rho)\cap\mathcal F^{(2)},\qquad 
\Omega_\rho
:=
\Gamma_\rho\backslash
\bigl(\Lambda_\rho^{(2)}\times\mathfrak a\bigr)
\subset X_\rho:=\Gamma_\rho\backslash G/M.$$
Fix a Lebesgue measure $\operatorname{Leb}_{\mathfrak a}$
on $\mathfrak a$.
For a tangent form $\psi$, the form $\psi\circ\i$
is also tangent, since $\psi_\rho$ is invariant under $\i$. The measure
\begin{equation}
\label{bmsdef}
d\widetilde{\mathsf m}_\psi(\zeta,\eta,v)
=
e^{2\psi(\mathcal G(\zeta,\eta))}
\,d\nu_\psi(\zeta)\,d\nu_{\psi\circ\i}(\eta)
\,d\operatorname{Leb}_{\mathfrak a}(v)
\end{equation}
is $\Gamma_\rho$-invariant and induces an $A$-invariant
Radon measure $\mathsf m_\psi$ on $X_\rho$, supported
on $\Omega_\rho$. We call $\mathsf m_\psi$ the
Bowen--Margulis--Sullivan measure associated with $\psi$.

\subsection{The base flow and reparametrization}
\label{ss:rep}
Let $\psi\in\mathfrak a^*$ be tangent to $\psi_\rho$ and
positive on $\mathcal L_f\smallsetminus\{0\}$.
Then $\delta_\psi=1$, and
$\psi$
is proper on $\mu(\Gamma_\rho)$ by \cite[Theorem~10.1]{CZZ_relative}. 
Consider the action of
$\Gamma_\rho$ on  $\widetilde \Omega_\psi
:= \Lambda_\rho^{(2)}\times\mathbb R $ given by
$$
g(\xi,\eta,s)
=
(g\xi,g\eta,s+\psi(\beta_\xi(g^{-1},e))).
$$
This action is properly discontinuous, and hence the quotient
$$
\Omega_\psi
:=
\Gamma_\rho\backslash \widetilde \Omega_\psi
$$
is locally compact and Hausdorff \cite{KOW_indicators}. Translation in the
$\mathbb R$-coordinate defines a flow on $\Omega_\psi$,
which we also denote by $a_t$.
Let $m_\psi$ be the translation-invariant
Radon measure on $\Omega_\psi$ induced by
$$
d\widetilde m_\psi(\xi,\eta,s)
=e^{2\psi(\mathcal G(\xi,\eta))}
\,d\nu_\psi(\xi)\,d\nu_{\psi\circ\i}(\eta)\,ds.
$$

\begin{theorem}[Finiteness and mixing {\cite{KO_entropy,BCZZ}}]
\label{koe}
The measure $m_\psi$ is finite and nonzero, and the
translation flow on $(\Omega_\psi,m_\psi)$ is strongly
mixing. In particular, it is ergodic.
\end{theorem}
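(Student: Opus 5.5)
The plan is to transport the problem to the rank-one geodesic flow of $\Gamma$ via the boundary map $f$, and then invoke rank-one finiteness and mixing criteria. First, I would build a $\Gamma$-equivariant homeomorphism
$$
\Lambda(\Gamma)^{(2)}\times\mathbb R\;\to\;\widetilde\Omega_\psi,\qquad (\zeta,\eta,t)\mapsto\bigl(f(\zeta),f(\eta),\,c(\zeta,\eta,t)\bigr).
$$
Here $\zeta\mapsto\psi(\beta_{f(\zeta)}(e,\rho(\gamma)))$ is a Hölder cocycle on $\Lambda(\Gamma)$, and it is cohomologous along geodesics to a positive reparametrization of the rank-one Busemann cocycle. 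Positivity comes from \Cref{lem.scalar-Cartan-comparison}, since $\psi>0$ on $\mathcal L_f\smallsetminus\{0\}$. This identifies $(\Omega_\psi,a_t)$ with a Hölder time change of the geodesic flow on $\Omega_\Gamma\subset \T^1(\Gamma\backslash\bH^2)$. Proper discontinuity and local compactness come out of the same identification. The measure $\widetilde m_\psi$ becomes a product of Gibbs-type boundary measures, namely $\nu_\psi$ and $\nu_{\psi\circ\i}$ pulled back by $f$, with density $e^{2\psi(\mathcal G)}$ and Lebesgue along the flow.

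For finiteness I would follow Dal'bo--Otal--Peigné. Decompose $\Gamma\backslash Y$ into the compact thick part and finitely many cusp neighborhoods. The thick part contributes finite mass by compactness, because the boundary measures are finite and $\mathcal G$ is bounded on the relevant pairs by \Cref{lem.gromovproductnearest}. For a cusp with stabilizer $\langle h_i\rangle$, \Cref{lem.regularunipotent_cartan} gives $\psi(\mu(\rho(h_i)^n))=\psi(v_i)\log|n|+O(1)$. The mass of the cusp neighborhood is then controlled, via the shadow estimates \Cref{lem.buseandcartan} and \Cref{lem.gromovproductalongparabolic}, by
$$
\sum_{n\neq0}\log|n|\;e^{-\psi(\mu(\rho(h_i)^n))}\asymp\sum_{n}\log|n|\,|n|^{-\psi(v_i)} .
$$
So it suffices to show $\psi(v_i)>1$, i.e.\ that the parabolic critical exponent $1/\psi(v_i)$ is strictly below $\delta_\psi=1$. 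I expect this strict gap to be the main obstacle. I would first get $1/\psi(v_i)\le 1$ from $\delta_\psi=1$. Strictness would then come from the divergence of $\Gamma_\rho$ at the exponent $\delta_\psi$ (a consequence of \Cref{thm.ps atomless 0} and conicality of $\nu_\psi$). The standard free-product argument applies here: if the parabolic series diverged at exponent $1$, then combining it with a loxodromic Schottky partner would produce a subgroup of exponent $>1$, a contradiction. Hence $\psi(v_i)>1$, and the series converges.

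For mixing, I would apply Babillot's criterion. Once $m_\psi$ is finite, the flow with product-structure measure is mixing unless the length spectrum $\{\psi(\lambda(g)):g\in\Gamma_\rho\text{ loxodromic}\}$ generates a discrete subgroup of $\mathbb R$. By Zariski density and Benoist's theorem, $\lambda(\Gamma_\rho)$ generates a dense subgroup of $\mathfrak a$. Since $\psi\ne0$, its image under $\psi$ is dense in $\mathbb R$, so the spectrum is non-arithmetic. Ergodicity then follows from mixing.
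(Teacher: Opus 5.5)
The paper gives no proof of its own here and quotes the result from \cite{KO_entropy,BCZZ}. Your outline is essentially the argument of those references, so it is correct: reparametrize to the geodesic flow of $\Gamma$ as in \Cref{thm.reparametrization}, reduce finiteness by a Dal'bo--Otal--Peign\'e cusp computation to $\psi(v_i)>1$ (i.e.\ $\delta_\psi(\mathsf P_i)<1$), and get mixing from Babillot's criterion together with Benoist's density of the group generated by $\lambda(\Gamma_\rho)$.

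The cusp gap is already available as \Cref{thm.entropydrop}, so you can simply cite it. If you reprove it, the strictness comes from the divergence of the parabolic series at its own critical exponent (there it is $\sum|n|^{-1}$), not from divergence of $\Gamma_\rho$. The ping-pong estimate also needs the coarse additivity of \Cref{lem.DKO_additive} in place of a triangle inequality, since $\psi\circ\mu$ need not be subadditive.
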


Until the end of Part~\ref{part.thermo}, we replace
$m_\psi$ and $\mathsf m_\psi$ simultaneously by
\begin{equation}
\label{eqn:normalizedBMS}
m_\psi(\Omega_\psi)^{-1}m_\psi
\qquad\text{and}\qquad
m_\psi(\Omega_\psi)^{-1}\mathsf m_\psi,
\end{equation}
respectively, retaining the same notation.
The probability measure  $m_\psi$ is the unique invariant
probability measure of maximal entropy \cite{KO_entropy}. The projection
$(\xi,\eta,v)\mapsto(\xi,\eta,\psi(v))$
is $\Gamma_\rho$-equivariant and induces a trivial
$\ker\psi$-bundle $\pi_\psi:\Omega_\rho\to\Omega_\psi$;
see \cite[Section~10]{KOW_indicators}.
Let $\operatorname{Leb}_{\ker\psi}$ be the unique Lebesgue measure on
$\ker\psi$ such that, for one (and hence every) vector
$u\in\mathfrak a$ satisfying $\psi(u)=1$, the map
$\mathbb R\times\ker\psi\to\mathfrak a$, $(s,v)\mapsto su+v$,
sends $ds\,d\operatorname{Leb}_{\ker\psi}$ to $d\operatorname{Leb}_{\mathfrak a}$. 
Under a trivialization respecting translations by
$\ker\psi$, we have
\begin{equation}\label{mpsi}
\mathsf m_\psi
=m_\psi\otimes\operatorname{Leb}_{\ker\psi}.
\end{equation}
When $\psi=\psi_u$, we also write
$m_u^{\BMS}:=\mathsf m_{\psi_u}$, subject to the
temporary normalization in \eqref{eqn:normalizedBMS}.
We use the same notation for the lift to $\Gamma_\rho\backslash G$
obtained using Haar probability measure on $M$.

The following is the hyperbolic-plane analogue of the
reparametrization construction in
\cite[Section~6]{KO_entropy}.
We give a direct proof using the continuous Morse embedding $f$.

\begin{theorem}[Reparametrization]
\label{thm.reparametrization}
There exist a $\rho$-equivariant homeomorphism
$\Phi:\widetilde\Omega_\Gamma\to
\widetilde\Omega_\psi$
and a continuous cocycle
$\mathtt t:\widetilde\Omega_\Gamma\times\mathbb R\to\mathbb R$ such that, for
every $v\in\widetilde\Omega_\Gamma$ and $s\in\mathbb R$,
$$
(\Phi(v)^+,\Phi(v)^-)=(f(v^+),f(v^-)),
\qquad
\Phi(va_s)=\Phi(v)a_{ {\mathtt t} (v,s)}.
$$
For every $v \in \widetilde\Omega_\Gamma $, the map $s\mapsto {\mathtt t}(v,s)$ is an increasing
homeomorphism of $\mathbb R$ fixing the origin. 
Moreover, there exists $c>1$ such that
$$
c^{-1}|s|-c\leq | {\mathtt t} (v,s)|\leq c|s|+c
\qquad(v\in\widetilde\Omega_\Gamma,\ s\in\mathbb R).
$$
\end{theorem}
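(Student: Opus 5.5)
Since the endpoints of $\Phi(v)$ are prescribed, the plan is to construct only the $\mathbb R$-coordinate. To keep $\mathtt t$ increasing we will average it along the flow. First I would define a preliminary function $F:\widetilde\Omega_\Gamma\to\mathbb R$ by evaluating the scalar Busemann function at the image of the footpoint:
$$
F(v):=\psi\bigl(\beta_{f(v^+)}(o,f(\pi(v)))\bigr),
$$
where $\pi(v)\in\mathbb H^2$ is the basepoint. Here I would use $\pi(v)\in Y$, which holds because the geodesic with endpoints in $\Lambda(\Gamma)$ lies in $Y$. Equivariance of $f$ and of $\beta$ gives
$$
F(\gamma v)=F(v)+\psi\bigl(\beta_{f(v^+)}(\rho(\gamma)^{-1}o,o)\bigr).
$$
This is exactly the cocycle in the action on $\widetilde\Omega_\psi$, since $\beta_\xi(g^{-1},e)$ corresponds to $\beta_\xi(g^{-1}o,o)$. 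Continuity of $F$ follows from continuity of $f$ and of $\beta$. Next I would estimate $F(va_s)-F(v)=\psi(\beta_{f(v^+)}(f(\pi(v)),f(\pi(va_s))))$. For $s\ge0$, the point $v^+$ lies in $O_1(\pi(v),\pi(va_s))$. By \Cref{lem.buseandcartan} this difference then equals $\psi(\mu(f(\pi v)^{-1}f(\pi va_s)))+O(1)$. By \Cref{lem.scalar-Cartan-comparison} that quantity lies between $a^{-1}s-b$ and $as+b$. For $s<0$ I would use the cocycle identity to obtain the symmetric bound. This requires $\psi>0$ on $\mathcal L_\rho\smallsetminus\{0\}=\mathcal L_f\smallsetminus\{0\}$, which holds by hypothesis.

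The main obstacle is that $F$ need not be monotone along flow lines. To fix this I would smooth in the flow direction, setting
$$
\widetilde F(v):=\frac1L\int_0^L F(va_r)\,dr
$$
for a large constant $L$. Averaging commutes with the $\Gamma$-action, because $\gamma(va_r)=(\gamma v)a_r$ and the cocycle term depends only on $v^+$, which is flow-invariant. Hence $\widetilde F$ transforms in the same way. Also $\widetilde F-F$ is bounded, by the coarse bound above. Then $s\mapsto\widetilde F(va_s)$ is $C^1$ with derivative $(F(va_{s+L})-F(va_s))/L$. By the lower bound this derivative is at least $(a^{-1}L-b)/L>0$ once $L>ab$. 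The same bound also gives an upper estimate on the derivative.

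I would then set $\Phi(v):=(f(v^+),f(v^-),\widetilde F(v))$ and $\mathtt t(v,s):=\widetilde F(va_s)-\widetilde F(v)$. Equivariance, continuity, and the relation $\Phi(va_s)=\Phi(v)a_{\mathtt t(v,s)}$ are immediate, and $\mathtt t$ satisfies the cocycle identity. The map $s\mapsto\mathtt t(v,s)$ is strictly increasing with $\mathtt t(v,0)=0$. Its coarse-linear bounds follow from those for $F$ together with the boundedness of $\widetilde F-F$, so it is a homeomorphism of $\mathbb R$.

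It remains to show that $\Phi$ is a homeomorphism. Bijectivity holds because $f$ is a homeomorphism $\Lambda(\Gamma)\to\Lambda_\rho$ preserving distinctness, which for pairs in $\Lambda_\rho$ means transversality. Along each fibre, $s\mapsto\widetilde F(va_s)$ is a bijection onto $\mathbb R$. For continuity of the inverse I would use local uniformity. The derivative bounds are uniform, so the fibrewise inverses vary continuously with the endpoints. To see this, I would choose a continuous local section $(\xi,\eta)\mapsto v_{\xi,\eta}$ and note that $(\xi,\eta,s)\mapsto\widetilde F(v_{\xi,\eta}a_s)$ is continuous and uniformly strictly increasing in $s$.
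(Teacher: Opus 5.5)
Your proposal is correct and follows the paper's proof essentially step by step. The paper uses the same function $q(v)=\psi(\beta_{f(v^+)}(o,f(\pi(v))))$, the same uniform coarse-linear bounds from \Cref{lem.buseandcartan} and \Cref{lem.scalar-Cartan-comparison}, and the same flow average $Q(v)=T^{-1}\int_0^T q(va_r)\,dr$ over a window long enough to make the flow derivative uniformly positive. It also defines $\Phi(v)=(f(v^+),f(v^-),Q(v))$ and $\mathtt t(v,s)=Q(va_s)-Q(v)$, and handles equivariance, bijectivity and continuity of the inverse exactly as you do.
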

\begin{proof}
For $v\in\widetilde\Omega_\Gamma$, let $\pi(v)\in Y$ be its
basepoint and set
$q(v):=\psi(\beta_{f(v^+)}(o,f(\pi(v))))$.
By \Cref{lem.buseandcartan}, the quasi-isometry property
of $f$, and positivity of $\psi$ on
$\mathcal L_f\smallsetminus\{0\}$, there exist
$a,A>0$ and $B\geq0$ such that
$$
as-B\leq q(va_s)-q(v)\leq As+B
\qquad(s\geq0),
$$
uniformly in $v$.
Following the averaging argument in
\cite[proof of Lemma~4.3]{BCLS_gafa}, choose $T>B/a$
and set $Q(v):=T^{-1}\int_0^Tq(va_r)\,dr$. Then
$$
\frac{d}{ds}Q(va_s)
=\frac{q(va_{s+T})-q(va_s)}{T}
\in[a-B/T,A+B/T].
$$
Define
$\Phi(v):=(f(v^+),f(v^-),Q(v))$ and
$ {\mathtt t} (v,s):=Q(va_s)-Q(v)$. For $\gamma\in\Gamma$, the Busemann cocycle identity gives
$q(\gamma v)-q(v)
=\psi(\beta_{f(v^+)}(\rho(\gamma)^{-1},e))$.
This difference is constant along the flow orbit of $v$,
so the same identity holds with $Q$ in place of $q$.
Hence $\Phi$ is $\rho$-equivariant.
The flow relation and cocycle identity follow from the definitions.
Since $a-B/T>0$, the derivative bounds show that $Q$
maps each such orbit increasingly and bijectively onto
$\mathbb R$. Hence $\Phi$ is bijective.
Continuity of $Q$ and the uniform positive lower bound
on its derivative along flow orbits give continuity
of the inverse in Hopf coordinates.
The same derivative bounds give the asserted properties
of $\mathtt t$.
\end{proof}

The construction of \Cref{thm.reparametrization} uses only positivity
of the linear form on $\mathcal L_f\smallsetminus\{0\}$, not its tangency
to the growth indicator. We will also use it for such forms.

\section{General local mixing theorems and examples}\label{sec.general-hypotheses}
We prove the local mixing theorems of the introduction
under the following more general hypotheses.

\begin{theorem}[Local mixing for relatively Anosov representations]
\label{thm:local-mixing-morse}
Let $\rho:\Gamma\to G$ be a Zariski dense relatively  Anosov representation
satisfying the
peripheral root-balance condition defined below. Then the conclusions of \Cref{bmslocal_intro,m1} hold
for every $u\in\operatorname{int}\mathcal L_\rho$.
The orbital counting conclusion of
\Cref{thm.counting_intro} also holds.
\end{theorem}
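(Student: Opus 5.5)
This theorem is essentially the whole paper, so the plan below follows the architecture outlined in the introduction. Fix $u\in\operatorname{int}\mathcal L_\rho$ and let $\psi=\psi_u$ be the tangent form from \Cref{thm.ps atomless}, rescaled so that $\psi(u)=1$. Let $f$ be the continuous equivariant Morse embedding of \Cref{prop.cusp_adapted_morse0}.

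\textbf{Step 1: cusp geometry and the coding.} I would first develop the cusp geometry on $\Lambda_\rho$ using the blown-up premetric $D_{\bar\psi}$ at a parabolic point $p_0$. This requires the following ingredients:
\begin{itemize}
\item a quasi-triangle inequality for $D_{\bar\psi}$;
\item heights of parabolic points, compared with Cartan projections via \Cref{lem.gromovproductalongparabolic} and \Cref{lem.regularunipotent_cartan};
\item the separation estimate $D_{\bar\psi}(p,q)\gg\sqrt{h(p)h(q)}$;
\item doubling and reverse doubling for $\nu_\psi$, from the global shadow lemma and \Cref{lem.buseandcartan}.
\end{itemize}
With these in hand, I would run the Li--Pan layered removal of $D_{\bar\psi}$-balls around parabolic points in $\Delta_0$. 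Exponential decay of the residual mass should come from annular mass estimates combined with recurrence of the finite-measure mixing flow of \Cref{koe}, transported through \Cref{thm.reparametrization}. Passing to first returns and grouping a fixed number of them then yields \Cref{thm:intro_coding}: the countable family $\mathcal H$, the exponential tail of $R$, the exponential moment of $\|K\|$, and local H\"older regularity of $K\circ\Psi$. The regularity would come from \Cref{prop.holderlocal_Buse_prelim} together with uniform contraction of the inverse branches.

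\textbf{Step 2: thermodynamic formalism for the twisted operators.} Next I would apply Sarig and Mauldin--Urba\'nski theory to get the Gibbs measure $\nu_+$ and to identify $u=\int K\,d\nu_+/\int R\,d\nu_+$. For $\xi$ near $0$, analytic perturbation of $\mathcal L_\xi$ should give a simple leading eigenvalue $\kappa_\xi$ whose Hessian is nondegenerate on $\ker\psi$; this defines $Q_u$. Nondegeneracy needs the centered cocycle $K^\circ$ not to be cohomologous to a function taking values in a proper subspace, which I would derive from Zariski density of $\langle\mathcal H\rangle_+$ via non-arithmeticity of the Jordan projections.

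For $\xi$ away from $0$, I need $\|\mathcal L_\xi^n\|$ to decay with polynomial control in $|\xi|$. I expect this Dolgopyat-type quantitative non-integrability to be the main obstacle. My first attempt would be to exploit the peripheral branches $\rho(h_i)^n$, whose Cartan projections grow like $(\log n)v_i$ with $v_i\in\operatorname{int}\mathfrak a^+$. The peripheral root-balance condition should make the resulting contraction rates comparable across simple roots. Combined with non-concentration of $\Lambda_\rho$ at those scales and Zariski density, this should give oscillation of the cocycle sums in every unit direction of $\mathfrak a$.

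\textbf{Step 3: suspension model and correlation expansion.} I would build the suspension over $(\Lambda_+\times\Lambda_-,\widehat T)$ with roof $R$ and $\ker\psi$-fibre, together with the factor map to $(X_\rho,\mathsf m_\psi)$ and its support bound $\|su+v\|\ll R+1$ on preimages of compact sets. Stable directions are then removed using contraction on $\Lambda_-$. Fourier-inverting in $\ker\psi$ and following Dolgopyat--N\'andori--P\`ene, I would split the frequency integral into two regions:
\begin{itemize}
\item \emph{near $0$:} expand $\kappa_\xi^n$ and Taylor-expand the normalized correlations; this produces the Gaussian sums, hence the $t^{-j}$ expansion of \Cref{bmslocal_intro}(1) and the leading term $c_u\,m^{\BMS}_u(f_1)\,m^{\BMS}_u(f_2)$;
\item \emph{away from $0$:} the high-frequency decay from Step 2 controls this region.
\end{itemize}
The uniform transverse estimate in \Cref{bmslocal_intro}(2) comes from the same sums, with the exponential-moment bounds controlling the tails in $w$.

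\textbf{Step 4: consequences.} The Haar asymptotic \Cref{m1} follows from the Edwards--Lee--Oh transfer argument; the uniform bound there is inherited from \Cref{bmslocal_intro}(2). Counting in Riemannian balls, \Cref{thm.counting_intro}, follows by integrating the transverse asymptotic over directions, using strict concavity of $\psi_\rho$ to localize near the maximizing direction of $\psi_\rho/\|\cdot\|$.
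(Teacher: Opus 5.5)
Your outline is essentially the paper's own argument. It has the same components: the $D_{\bar\psi}$ cusp geometry and Li--Pan layered removal with exponentially decaying residual mass to build the coding, countable RPF theory and analytic perturbation of $\mathcal L_\xi$, and non-integrability from peripheral root balance, non-concentration and Zariski density of $\sH$. It also matches the suspension model and factor map with the Dolgopyat--N\'andori--P\`ene expansion, and the Edwards--Lee--Oh transfer for \Cref{m1} and for counting.

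One correction concerns the Fourier analysis. The paper Fourier-inverts over all of $\mathfrak a$, including the flow direction $\ut$; this is what produces the $r$-dimensional Gaussian sums over the return count $n$. It therefore needs the Hessian of $\widetilde\kappa$ at $0$ to be negative definite on all of $\mathfrak a$, not only on $\ker\psi$, which it gets from density of the period group by the coboundary argument. The form $e^{-Q_u/2}$ then arises as the marginal on $\ker\psi$ obtained by integrating along $\mathbb R\ut$.

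A second point concerns the peripheral elements. They serve only to put a vector with simple-root ratio $>\frac12$ into the limit cone of $\sH$. The zooming in the non-concentration step is done by a loxodromic $\gamma^*\in\sH$ whose Jordan projection lies near that direction. The threshold $\frac12$ is exactly what makes the quadratic Taylor remainder negligible against the linear term.
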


\begin{definition}[Peripheral root-balance condition]
\label{def:peripheral-root-balance}
A relatively Anosov representation $\rho:\Ga \to G$ satisfies the
\emph{peripheral root-balance condition} if, for every
parabolic $\gamma\in\Gamma$,
\begin{equation}
\label{eqn:peripheral-root-balance}
\liminf_{n\to\infty}
\frac{\min_{\alpha\in\Pi}\alpha(\mu(\rho(\gamma)^n))}
     {\max_{\alpha\in\Pi}\alpha(\mu(\rho(\gamma)^n))}
>\frac12.
\end{equation} \end{definition}
By relative Anosovness, the denominator is eventually positive.

We  describe three classes to which the theorem applies. We first establish a Cartan-growth estimate. Let $v_{\rm pr}\in\operatorname{int}\mathfrak a^+$ be
the unique vector satisfying
\begin{equation}
\label{principal}
\alpha(v_{\rm pr})=2
\qquad(\alpha\in\Pi).
\end{equation}
We call $\mathbb R_{>0}v_{\rm pr}$ the principal direction.
Since $\i$ permutes the simple
roots, $\i(v_{\rm pr})=v_{\rm pr}$.
\begin{lemma}[Principal Cartan direction]
\label{latercartan00}
Let $g\in G$ be weakly unipotent.
If $g_u$ is algebraically regular, then  $v_g=v_{\rm pr}$.
In particular,
$$
\mu(g^n)= (\log|n|)v_{\rm pr}+O_g(1)
\qquad(|n|\to\infty).
$$
\end{lemma}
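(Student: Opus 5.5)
Since $v_g$ depends only on $g_u$ (\Cref{lem.regularunipotent_cartan}), it suffices to identify the semisimple element $H$ of an $\mathfrak{sl}_2$-triple $(E,H,F)$ with $E=\log g_u$ a regular nilpotent element. The plan is to invoke Kostant's theory of principal $\mathfrak{sl}_2$-triples over $\mathbb C$ and then descend to $\mathbb R$. The second assertion will then follow directly from \eqref{latercartan0}.

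\textbf{Step 1: regularity of $E$ and the complex picture.} Algebraic regularity of $g_u$ means that $g_u$ lies in a unique Borel subgroup of $G_{\mathbb C}$. Equivalently, $E$ is a regular (principal) nilpotent element of $\mathfrak g_{\mathbb C}$: its centralizer has dimension equal to the complex rank. By Kostant's theorem, any $\mathfrak{sl}_2$-triple $(E,H,F)$ containing a principal nilpotent $E$ has $H$ conjugate under $G_{\mathbb C}$ to the unique element $H_0$ of the closed dominant chamber of a Cartan subalgebra $\mathfrak h_{\mathbb C}$ satisfying $\beta(H_0)=2$ for all complex simple roots $\beta$. This holds because all such triples are conjugate to the standard principal triple, with $E_0=\sum_\beta X_\beta$ and $H_0=2\rho^\vee$.

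\textbf{Step 2: descent to $\mathfrak a$.} By \Cref{lem.regularunipotent_cartan}, $H$ is real hyperbolic and is $G$-conjugate to $v_g\in\mathfrak a^+$. Since $g_u$ is regular, that lemma also gives $v_g\in\operatorname{int}\mathfrak a^+$, as shown there via minimality of $P_H$. As remarked before that lemma, $G$ is quasi-split. Choose a Cartan subalgebra $\mathfrak h\supset\mathfrak a$, take a positive system on $\mathfrak h_{\mathbb C}^*$ compatible with $\mathfrak a^+$, and let $\operatorname{res}$ denote restriction to $\mathfrak a$. Then $v_g$ and $H_0$ are two $G_{\mathbb C}$-conjugate elements of $\mathfrak h_{\mathbb C}$. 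Hence they are Weyl-conjugate, and since both are dominant (for a positive system chosen so that $v_g$ is dominant), they are equal.

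\textbf{Step 3: evaluating restricted roots.} Every restricted simple root $\alpha\in\Pi$ is the restriction of a complex simple root $\beta$, and $v_g\in\mathfrak a$. We therefore get $\alpha(v_g)=\beta(H_0)=2$, so $v_g=v_{\rm pr}$ by \eqref{principal}.

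The main obstacle is the compatibility in Step 2. We must ensure that the dominant chamber of $\mathfrak h_{\mathbb C}$ containing $v_g$ restricts compatibly to $\Pi$, with every complex simple root restricting to a simple restricted root and no simple root vanishing on $\mathfrak a$. This is exactly where quasi-splitness is used: in a quasi-split group $\mathfrak m$ is abelian and no simple root vanishes on $\mathfrak a$. I would choose the complex positive system as the one defined by a regular element of $\mathfrak a^+$, which is possible precisely in the quasi-split case, and cite the standard description of restricted roots (e.g.\ via Satake diagrams having no black nodes). In the split case, the only one needed for the examples, $\mathfrak h=\mathfrak a$ and this step is trivial.
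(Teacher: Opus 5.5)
Your proof is correct, but it reaches $\alpha(v_g)=2$ through a different key input than the paper.

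\emph{The paper's route.} It never invokes Kostant's conjugacy theorem. It conjugates the triple so that $H=v_g\in\operatorname{int}\mathfrak a^+$, and reads off $E\in\mathfrak n$ from $[H,E]=2E$. Quasi-splitness then makes $P_{\mathbb C}$ a Borel subgroup with a maximal torus containing $A_{\mathbb C}$. Steinberg's criterion says a regular unipotent element of a Borel subgroup has a nonzero component in every simple root space. Projecting $[H,E]=2E$ onto those root spaces gives $\widetilde\alpha(H)=2$ directly, and restricting to $\mathfrak a$ finishes.

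\emph{Your route.} You identify the $G_{\mathbb C}$-orbit of $H$ as that of $2\rho^\vee$, using Kostant: every $\mathfrak{sl}_2$-triple through a principal nilpotent is conjugate to the standard one. You then pass to $v_g$ via Chevalley's lemma (conjugate elements of $\mathfrak h_{\mathbb C}$ are Weyl-conjugate) and the fact that the closed dominant chamber is a fundamental domain.

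\emph{What each buys.} Your route uses a stronger theorem than needed. In exchange, it does not require first moving $E$ into $\mathfrak n$, nor the interiority of $v_g$: any positive system on $\mathfrak h_{\mathbb C}^*$ compatible with $\mathfrak a^+$ already makes $v_g\in\mathfrak a^+$ dominant, since $\beta(v_g)=\beta|_{\mathfrak a}(v_g)\geq 0$ for positive $\beta$. The paper's route is more elementary and stays closer to the $\mathfrak{sl}_2$ relation itself.

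\emph{On your flagged obstacle.} It is milder than you suggest. For any compatible ordering, each $\alpha\in\Pi$ is the restriction of a simple root, by a height argument, with no quasi-splitness needed. Moreover, the identity $v_g=2\rho^\vee$ shows that no root vanishes on $v_g\in\mathfrak a$. So quasi-splitness is an output of your argument rather than an input.
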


\begin{proof} Let $u:=g_u$. Since $u$ is regular, \Cref{lem.regularunipotent_cartan}
provides an $\mathfrak{sl}_2$-triple $(E,H,F)$ with
$E=\log u$ and, after conjugation,
$H=v_g\in\operatorname{int}\mathfrak a^+$.
The identity $[H,E]=2E$ then implies that
$E\in\mathfrak n$.
Since $u$ is algebraically regular, $G$ is quasi-split, so $P_{\mathbb C}$ is a Borel
subgroup of $G_{\mathbb C}$.
Choose a maximal torus of $P_{\mathbb C}$ containing
$A_{\mathbb C}$.
Algebraic regularity implies that $E$ has a nonzero
component in every absolute simple-root space of the
nilpotent radical; see \cite{Steinberg_RegularElements}.
Projecting $[H,E]=2E$ to these root spaces gives
$\widetilde\alpha(H)=2$ for every absolute simple root
$\widetilde\alpha$.
Since every restricted simple root is the restriction
of such an absolute simple root, $\alpha(H)=2$ for
every $\alpha\in\Pi$.
Thus $H=v_{\rm pr}$.\end{proof}

\subsection{Positive representations}
In this subsection, assume that $G$ is split over $\mathbb R$.
We recall the notion of positivity on $\mathcal F=G/P$.

For each $\alpha\in\Pi$, choose a nonzero vector
$X_\alpha\in\mathfrak g_\alpha$ and set
$x_\alpha(t):=\exp(tX_\alpha)$ for $t\in\mathbb R$.
These choices determine the positivity structure used below.
Let $s_\alpha$ denote the corresponding simple reflection.
For a reduced expression
$w_0=s_{\alpha_{i_1}}\cdots s_{\alpha_{i_k}}$, set
$$
N_{>0}:=
\left\{
x_{\alpha_{i_1}}(t_1)\cdots x_{\alpha_{i_k}}(t_k):
t_1,\ldots,t_k>0
\right\}.
$$
This is Lusztig's totally positive semigroup; it is
independent of the reduced expression
\cite{Lusztig_TotalPositivity}.

A quadruple $(F_1,F_2,F_3,F_4)$ of flags is called
\emph{positive} if there exist $h\in G$ and $u,v\in N_{>0}$
such that
$$
(hF_1,hF_2,hF_3,hF_4)
=
(P, w_0P, u w_0P,uv w_0P).
$$
This agrees with the usual definition of a positive
quadruple; see
\cite[Definition~2.13 and Proposition~2.17]{GRW_PositiveParam}.

\begin{definition}
\label{def.positiveframing}
A representation $\rho:\Gamma\to G$ is called
\emph{positive} if there exists a $\rho$-equivariant map
$\xi_p:\Lambda_{\mathrm p}(\Gamma)\to\mathcal F$
sending every cyclically ordered quadruple of pairwise
distinct points to a positive quadruple of flags.
\end{definition}

\begin{remark}\label{Ex_postive}
If the compact core of $\Gamma\backslash\mathbb H^2$ has
genus $g$ and $b\ge 1$ boundary components,
choose the standard
presentation
$\Gamma=
\langle
a_1,b_1,\ldots,a_g,b_g,c_1,\ldots,c_b:
\prod_{i=1}^g[a_i,b_i]\,c_1\cdots c_b=e \rangle$,
where $c_1,\ldots,c_s$ are cusp generators and
$c_{s+1},\ldots,c_b$ represent the funnel boundaries,
with $1\leq s\leq b$. 
Choose a principal homomorphism
$\iota:\mathrm{SL}_2(\mathbb R)\to G$, meaning an algebraic
homomorphism sending nontrivial unipotent elements to
regular unipotent elements of $G$. For $G=\mathrm{SL}_n(\mathbb R)$, a homomorphism is
principal precisely when it is irreducible.
Composing $\iota$ with a lift of the inclusion
$\Gamma<\mathrm{PSL}_2(\mathbb R)$ to $\mathrm{SL}_2(\mathbb R)$
gives a \emph{principal Fuchsian representation}
$\sigma:\Gamma\to G$.
 The connected component containing $\sigma$
in  $$
\operatorname{Hom}_{\partial,\sigma}(\Gamma,G)
:=
\left\{
\rho\in\operatorname{Hom}(\Gamma,G):
\rho(c_j)\text{ is conjugate to }\sigma(c_j)
\text{ for }1\leq j\leq b
\right\}$$
consists entirely of type-preserving positive representations.
This follows from \cite[Theorem~G, Proposition~4.2,
and Corollary~6.4]{FTWZ_positive}. When $\Gamma$ is a lattice,
$s=b$, so only the cusp conjugacy classes are fixed.
\end{remark}

\begin{theorem}
\label{thm.positive_boundary}
Every type-preserving positive representation
$\rho:\Gamma\to G$ is relatively Anosov.  Moreover, for every parabolic $\gamma\in\Gamma$, the unipotent
part $\rho(\gamma)_u$ is algebraically regular.
In particular, $\rho$ satisfies the peripheral
root-balance condition.
\end{theorem}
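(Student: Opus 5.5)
The proof has three parts: relative Anosovness, algebraic regularity of the peripheral unipotent parts, and the root-balance inequality.

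\textbf{Relative Anosovness.} I would extend the positive map $\xi_p$ from parabolic points to all of $\Lambda(\Gamma)$.

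First, since $\rho$ is type-preserving, for every loxodromic $\gamma\in\Gamma$ the element $\rho(\gamma)$ is loxodromic. Positivity of the quadruples $(\gamma^{-1}p,q,\gamma q,\gamma^2 q)$ for parabolic $p,q$ shows that $\rho(\gamma)$ is conjugate into $N_{>0}$-type configurations. The standard positivity argument (as in Fock--Goncharov and Guichard--Labourie--Wienhard) then gives $\lambda(\rho(\gamma))\in\operatorname{int}\mathfrak a^+$, with attracting flag $\lim_n\rho(\gamma)^n\xi_p(q)$.

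Second, I would show that $\xi_p$ is uniformly continuous with respect to cyclic order. Positive tuples of flags are pairwise transverse, and nested positive intervals shrink. Hence $\xi_p$ extends to a continuous, injective, transverse, positive, equivariant map $\xi:\Lambda(\Gamma)\to\mathcal F$. The key input here is the compactness of the set of positive configurations normalized by three points, together with the contraction of nested positive "diamonds" used by Fock--Goncharov--Thurston--Wienhard--Zhang.

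Third, divergence of simple roots. Transversality and continuity of $\xi$, combined with the convergence action of $\Gamma$ on $\Lambda(\Gamma)$, feed into the characterization of relatively Anosov representations via transverse boundary maps in \cite{CZZ_relative,CZZ_cusped}. For this I would first check weak unipotence of the peripheral images, which is part of type-preservation. Their criterion, a continuous transverse equivariant limit map together with a dynamics-preserving condition, then yields conditions (1)--(3). Rather than reprove this, I would cite \cite[Theorem~G]{FTWZ_positive} and \cite{CZZ_cusped}, which directly state that type-preserving positive representations are relatively Borel Anosov.

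\textbf{Algebraic regularity.} Let $\gamma$ be parabolic with fixed point $p$. Choose $q\in\Lambda_{\mathrm p}(\Gamma)$ with $q\neq p$ and normalize $\xi(p)=w_0P$. The quadruples $(\xi(p),\xi(q),\rho(\gamma)\xi(q),\rho(\gamma)^2\xi(q))$ are positive; after applying $\rho(\gamma)^{-1}$ if necessary, the cyclic order is the correct one. Since $\rho(\gamma)$ fixes $\xi(p)$, it lies in $\check P$. Write $\rho(\gamma)=me u$ with $u$ in the unipotent radical of the stabilizer of $\xi(p)$, using that $\rho(\gamma)$ is weakly unipotent. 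Positivity forces the element carrying $\xi(q)$ to $\rho(\gamma)\xi(q)$, viewed in the chart of flags opposite to $\xi(p)$, to be totally positive. Up to the compact part, therefore, $u$ lies in a conjugate of $N_{>0}$.

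Elements of $N_{>0}$ are regular unipotent: each is a product, over a reduced word for $w_0$, of factors $x_\alpha(t)$ with $t>0$. Its projection to $\bigoplus_{\alpha\in\Pi}\mathfrak g_\alpha$ therefore has all simple-root components nonzero, which is the criterion of \cite{Steinberg_RegularElements}. Since $G$ is split, regular and algebraically regular coincide.

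I expect this step to be the main obstacle. The compact part $g_e$ must be handled, and I must justify that it does not spoil positivity; for split $G$ it should lie in $M$ and act trivially on positivity up to sign. The fallback is to cite \cite[Proposition~4.2]{FTWZ_positive}, which asserts that the unipotent parts of cusp holonomies are positive.

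\textbf{Root balance.} This is immediate from \Cref{latercartan00}. Applied to $\rho(\gamma)$, it gives $\mu(\rho(\gamma)^n)=(\log n)v_{\rm pr}+O(1)$, so $\alpha(\mu(\rho(\gamma)^n))=2\log n+O(1)$ for every $\alpha\in\Pi$. The ratio in \eqref{eqn:peripheral-root-balance} therefore tends to $1>\tfrac12$.
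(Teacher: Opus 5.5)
You handle relative Anosovness by citation to \cite{FTWZ_positive}, as the paper does (the paper cites Theorem~6.17 there rather than Theorem~G). Your Steinberg argument that elements of $N_{>0}$ are algebraically regular, and your root-balance deduction from Lemma~\ref{latercartan00}, also match the paper.

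\textbf{The gap.} It is in the step you flag yourself. From the normalized quadruple you get $g:=\rho(\gamma)=um$ with $u$ totally positive and $m$ in the Levi factor, and weak unipotence plus splitness puts $m\in M$. But this factorization is the real Jordan decomposition only if $m$ commutes with $u$. The claim that ``up to the compact part, $u$ lies in a conjugate of $N_{>0}$'' does not give this, and the sign issue is not harmless. In $\mathrm{SL}_3(\mathbb R)$, take $u\in N_{>0}$ and $m=\operatorname{diag}(1,-1,-1)\in M$, which acts by $-1$ on $\mathfrak g_{\alpha_1}$. Then $g=um$ is weakly unipotent, fixes $P$, and sends $w_0P$ to $uw_0P$. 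Yet $g^2=u\,(mum^{-1})$ has vanishing $(1,2)$-entry, so $g_u$, the unipotent square root of $g^2$, has Jordan type $(2,1)$ and is not regular. What must be proved is precisely that $\operatorname{Ad}(m)$ acts by $+1$ on every simple root space.

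\textbf{How to close it.} Use the fourth flag of your quadruple, which you wrote down but never used. In the normalization of the definition, $\xi(p)=P$, $\xi(q)=w_0P$, $g\xi(q)=uw_0P$, and $g^2\xi(q)=uvw_0P$ with $u,v\in N_{>0}$. Since $g=um$ and $m$ fixes $w_0P$, you get $g^2\xi(q)=u\,(mum^{-1})\,w_0P$. Freeness of the $N$-action on $Nw_0P$ then gives $mum^{-1}=v$. $\operatorname{Ad}(m)$ acts on each one-dimensional simple root space by a sign, and $u,v$ both have positive simple-root coordinates in $N/[N,N]$, so every sign is $+1$. Because $m\in K$ commutes with the Cartan involution, $\operatorname{Ad}(m)$ is also trivial on the negative simple root spaces. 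These root spaces generate $\mathfrak g$, so $m\in Z(G)$. Uniqueness of the Jordan decomposition gives $g_u=u$ and $g_e=m$, and regularity follows. This is the paper's argument, phrased there with the equivalent quadruple $(F_1,g^{-1}F_2,F_2,gF_2)$.

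Your fallback citation of \cite[Proposition~4.2]{FTWZ_positive} would replace rather than repair this step, and the paper does not rely on it. You should also say explicitly that replacing $\gamma$ by $\gamma^{-1}$ is harmless because algebraic regularity of the unipotent part is preserved under inversion.
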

\begin{proof}
Relative  Anosovness follows from \cite[Theorem~6.17]{FTWZ_positive};
see also \cite[Theorems~1.4 and~9.2]{CZZ_cusped}
for $G=\SL_d(\mathbb R)$.

Every simple reflection occurs in a reduced expression
for $w_0$, so every simple-root coordinate of an element
of $N_{>0}$ is a nonempty sum of positive parameters. Thus every element of $N_{>0}$ has a nonzero component
in every simple root group of $N/[N,N]$ and is
algebraically regular by \cite[Lemma~3.2 and Theorem~3.3]
{Steinberg_RegularElements}.

Let $\gamma\in\Gamma$ be parabolic with fixed point
$p\in\Lambda_{\mathrm p}(\Gamma)$, and choose
$x\in\Lambda_{\mathrm p}(\Gamma)\smallsetminus\{p\}$.
Algebraic regularity of the unipotent part is preserved
under inversion, so
we may replace $\gamma$ by $\gamma^{-1}$ and assume
that $(p,\gamma^{-1}x,x,\gamma x)$ is cyclically ordered.
Set $g:=\rho(\gamma)$, $F_1:=\xi_p(p)$, and
$F_2:=\xi_p(x)$.
Positivity and equivariance imply that
$(F_1,g^{-1}F_2,F_2,gF_2)$ is a positive quadruple.

By the definition of a positive quadruple,
after conjugation we may assume that
$F_1=P$, $F_2=w_0P$, $g^{-1}w_0P=v^{-1}w_0P$ and
$gw_0P=uw_0P$
for some $u,v\in N_{>0}$.
Since $gP=P$ and $u^{-1} g (w_0P)=w_0P$, we obtain
$u^{-1}g\in P\cap\check P=MA$, so $g=u\ell$
for some $\ell\in MA$.
Since $g$ is weakly unipotent, $\ell$ has trivial
hyperbolic part.
As $G$ is split, every element of $MA$ is semisimple,
and its elliptic elements belong to $M$.
Thus $\ell=m\in M$.

Since $m$ fixes $w_0P$, we deduce
$(m^{-1}um)^{-1}w_0P=v^{-1}w_0P$.
The action of $N$ on $N w_0P$ is free,
so $m^{-1}um=v$.
In $N/[N,N]$, every simple-root coordinate of both
$u$ and $v$ is positive.
Conjugation by $m\in M$ acts on each one-dimensional
simple root space by a sign; hence all these signs
are positive.
Thus $\operatorname{Ad}m$ is the identity on every
simple root space.
Since $m\in K$, it is also the identity on the opposite
simple root spaces.
These root spaces generate $\mathfrak g$, so
$m$ belongs to the center $Z(G)$.
By uniqueness of the real Jordan decomposition,
$g_u=u$ and $g_e=m$. Since $u\in N_{>0}$, the element $g_u$ is algebraically
regular. By \Cref{latercartan00}, the peripheral
root-balance ratio therefore converges to $1$.
\end{proof}

\subsection{Split simple groups of low rank}
For split simple groups of rank at most three,
relative Borel Anosovness alone implies peripheral
root balance.

\begin{proposition}
\label{prop:automatic-root-balance}
Suppose that $G$ is split and simple with
$\operatorname{rank}_{\br}G\leq3$.
Let $\rho:\Gamma\to G$ be relatively  Anosov.
Then, for every parabolic $\gamma\in\Gamma$, there
exists $c_\gamma\in\{1,\frac12\}$ such that
$$
\mu(\rho(\gamma)^n)
=c_\gamma(\log|n|)v_{\rm pr}+O_\gamma(1)
\qquad(|n|\to\infty).
$$
The value $c_\gamma=\frac12$ can occur only when $G$
has type $A_2$.
In particular, $\rho$ satisfies the
peripheral root-balance condition.
\end{proposition}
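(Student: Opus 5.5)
The plan is to reduce the statement to the classification of complex nilpotent orbits by their weighted Dynkin diagrams. Fix a parabolic $\gamma\in\Gamma$ and set $g:=\rho(\gamma)$, which is weakly unipotent by relative Anosovness. By \Cref{lem.regularunipotent_cartan}, $\log g_u$ lies in an $\mathfrak{sl}_2$-triple $(E,H,F)$ with $H$ conjugate to $v_g\in\mathfrak a^+$, and
$$
\mu(g^n)=(\log|n|)v_g+O_g(1).
$$
The proof of \Cref{prop.cusp_adapted_morse0}, Step~1, shows that relative Anosovness forces $\alpha(v_g)>0$ for every $\alpha\in\Pi$, so $v_g\in\operatorname{int}\mathfrak a^+$. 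It therefore suffices to show that either $\alpha(v_g)=2$ for all $\alpha\in\Pi$, or $G$ has type $A_2$ and $\alpha(v_g)=1$ for all $\alpha\in\Pi$. In both cases the root-balance ratio in \eqref{eqn:peripheral-root-balance} converges to $1>\frac12$.

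\textbf{Step 1: reduction to weighted Dynkin diagrams.} Since $G$ is split, $\mathfrak a$ is a Cartan subalgebra of $\mathfrak g$. The restricted roots are then the absolute roots, and $\Pi$ is an absolute simple system. The vector $v_g\in\mathfrak a^+$ is the dominant representative of the neutral element $H$ of a nilpotent $E\in\mathfrak g\subset\mathfrak g_{\mathbb C}$. Hence $(\alpha(v_g))_{\alpha\in\Pi}$ is exactly the weighted Dynkin diagram of the complex nilpotent orbit $G_{\mathbb C}\cdot E$. By Dynkin--Kostant, its labels lie in $\{0,1,2\}$, and all labels equal $2$ exactly for the regular orbit, where $v_g=v_{\rm pr}$ as in \Cref{latercartan00}. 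So the statement reduces to the following claim: for a simple Lie algebra of rank at most three, the nilpotent orbits whose diagram has no zero label are the regular orbit, together with the subregular orbit of $\mathfrak{sl}_3$ when the type is $A_2$.

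\textbf{Step 2: case analysis.} I will run through the types of rank at most three: $A_1,A_2,A_3,B_2=C_2,G_2,B_3,C_3$.

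For type $A$, the eigenvalues of $H$ on $\mathbb C^{n+1}$ are the union over the parts $d$ of the partition of the strings $d-1,d-3,\dots,1-d$. The diagram labels are the successive differences of these eigenvalues in sorted order. Having no zero label means all eigenvalues are distinct.
\begin{itemize}
\item In $A_1$, only $(2)$ occurs, giving the regular orbit.
\item In $A_2$, the partitions $(3)$ and $(2,1)$ give eigenvalues $(2,0,-2)$ and $(1,0,-1)$. These have labels $22$ and $11$, so $v_g=\frac12 v_{\rm pr}$ in the second case.
\item In $A_3$, the partitions $(3,1)$, $(2,2)$ and $(2,1,1)$ all produce a repeated eigenvalue, so only $(4)$ survives.
\end{itemize}

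For types $B$ and $C$, I will use the analogous description with the eigenvalues of $H$ on the standard representation of $\mathfrak{so}_{2n+1}$ or $\mathfrak{sp}_{2n}$, keeping only the nonnegative half.
\begin{itemize}
\item In $B_2=C_2$, the $\mathfrak{sp}_4$ orbits $(2,2)$ and $(2,1,1)$ have diagrams $02$ (or $20$) and $10$, both with a zero.
\item In $B_3$, I will check that every non-regular orbit, for example $(5,1,1)$ or $(3,3,1)$, has a repeated or zero eigenvalue that forces a zero label.
\item In $C_3$, I will do the same for $(4,2)$, $(4,1,1)$, $(2,2,2)$ and the remaining partitions.
\end{itemize}

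For $G_2$, the orbit tables list the diagrams of the nonzero orbits as $22$, $20$, $01$, $10$ (up to ordering conventions), so only the regular one has no zero.

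\textbf{Expected main obstacle.} The main work is Step~2 for $B_3$ and $C_3$. There I must get the passage from partition to weighted diagram exactly right, including the convention for which simple root is the short one, and be sure no distinguished non-regular orbit slips through. For example, $(5,1,1)$ in $\mathfrak{so}_7$ gives nonnegative eigenvalues $4,2,0$ together with the zeros from the two parts equal to $1$, hence a zero label. I would first try to settle these cases uniformly with the criterion that a label vanishes as soon as the multiset of nonnegative $H$-eigenvalues has a repetition, or a $0$ of multiplicity at least $2$ in type $B$. I would fall back on the standard tables (Collingwood--McGovern) for any partition this criterion does not cover.

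Finally, I will note that the $A_2$ exception is the only source of $c_\gamma=\frac12$, and conclude the asserted expansion and the root-balance condition from \Cref{lem.regularunipotent_cartan}.
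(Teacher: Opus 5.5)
Your proposal is correct and follows essentially the same route as the paper, which also conjugates the neutral element to $H=v_g\in\operatorname{int}\mathfrak a^+$ and uses interiority to force distinct $H$-eigenvalues on the standard module. That leaves only a single Jordan block in types $B$ and $C$, and in type $A$ a single block plus the partition $(2,1)$ in $A_2$; the paper then settles $G_2$ from the table of weighted Dynkin diagrams. Your weighted-diagram framing (labels in $\{0,1,2\}$, Kostant) is a repackaging of this eigenvalue computation and is not strictly needed, since the case analysis determines $v_g$ outright.
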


\begin{proof} Fix a parabolic $\gamma\in\Gamma$ and set
$g:=\rho(\gamma)$.
By relative Borel Anosovness, $g$ is weakly
unipotent.
Let $v_g\in\mathfrak a^+$ be a vector conjugate to the
semisimple element of an $\mathfrak{sl}_2$-triple
containing $\log g_u$. Conjugate the triple so that its semisimple element
is $H=v_g$. By \Cref{lem.regularunipotent_cartan},
$\mu(g^n)=(\log|n|)v_g+O_g(1)$. Since
$\alpha(\mu(g^n))\to+\infty$ for every $\alpha\in\Pi$, we have
$v_g\in\operatorname{int}\mathfrak a^+$.
We determine the possibilities for $H$. For the classical types, consider the standard module
of the complexified Lie algebra.
A Jordan block of size $d$ of $\log g_u$ contributes
the eigenvalues $d-1,d-3,\ldots,1-d$ of $H$.

In type $A_r$, with $r\leq3$, the condition
$H\in\operatorname{int}\mathfrak a^+$ requires these
eigenvalues to be distinct.
Among partitions of $r+1\leq4$, this leaves only the
single-block partition, together with $(2,1)$ when
$r=2$.
A single block gives $\alpha(H)=2$ for every
$\alpha\in\Pi$.
For $(2,1)$, the eigenvalues are $1,0,-1$, so both
simple roots take the value $1$.
Thus $H=v_{\rm pr}$ or, in the latter case,
$H=\frac12v_{\rm pr}$.

In types $B_r$ and $C_r$, with $r=2$ or $3$, interiority
forces the eigenvalues of $H$ on the standard module
to be distinct.
In type $B_r$, blocks of even size are excluded because
they occur with even multiplicity, while two odd blocks
would both contribute the eigenvalue $0$.
In type $C_r$, odd blocks are excluded because interiority
also rules out the eigenvalue $0$, while two even blocks
would both contribute $1$ and $-1$.
Thus there is a single Jordan block, of size $2r+1$ or
$2r$, respectively, and $H=v_{\rm pr}$.

The remaining type is $G_2$.
Its only nilpotent weighted Dynkin diagram with all
labels positive is $(2,2)$; see
\cite[Table~2]{deGraaf_Nilpotent}.
Since these labels are the values $\alpha(H)$,
we obtain $H=v_{\rm pr}$.
\end{proof}

\begin{remark}
This includes the cusped Barbot examples into
$\SL_3(\mathbb R)$ described by Lee--Zhang \cite{LeeZhang_Barbot}.
Their peripheral unipotent parts have Jordan type $(2,1)$,
so $c_\gamma=\frac12$.
\end{remark}

\subsection{Self-joinings of rank-one representations}
For representations $\rho=(\rho_1,\ldots,\rho_k)$ into products of real-rank-one groups
with discrete projections, type preservation is equivalent
to type preservation of each projection. Indeed, if $\rho$ is type-preserving, each $\rho_i$ preserves
loxodromic elements and hence is faithful, since every
nontrivial normal subgroup of $\Gamma$ contains a loxodromic
element. For parabolic $\gamma$, the element $\rho_i(\gamma)$
has trivial hyperbolic part; faithfulness and discreteness
exclude ellipticity, so it is parabolic.
The converse follows from the componentwise Jordan decomposition.

\begin{proposition}
\label{prop.cusp_adapted_morse}\label{QI}
Let $G=\prod_{i=1}^kG_i$, where each $G_i$ has real rank one,
and let $\rho:\Gamma\to G$ be a Zariski dense type-preserving representation whose projections
$\rho_i:\Gamma\to G_i$ have geometrically finite images.
For every parabolic $\gamma\in\Gamma$, conjugate the
unipotent parts $u_i:=\rho_i(\gamma)_u$ into
$N_i:=N\cap G_i$. Suppose that $u_i\in[N_i,N_i]$
either for every $i$ or for no $i$.
Then $\rho$ is relatively  Anosov and satisfies
the peripheral root-balance condition. More precisely,
$$
\mu(\rho(\gamma)^n)
=c_\gamma(\log|n|)v_{\rm pr}+O_\gamma(1)
\qquad(|n|\to\infty),
$$
where $c_\gamma=\frac12$ if $u_i\in[N_i,N_i]$ for every
$i$, and $c_\gamma=1$ otherwise.
\end{proposition}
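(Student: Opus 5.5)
We need two things: relative Anosovness of $\rho$, and the Cartan asymptotics of peripheral elements. Since $G=\prod G_i$ with each $G_i$ of real rank one, the Cartan subspace decomposes as $\mathfrak a=\bigoplus_i\mathfrak a_i$ with $\dim\mathfrak a_i=1$. The simple roots are $\Pi=\{\alpha_1,\ldots,\alpha_k\}$, where $\alpha_i$ is the simple root of $G_i$. The Cartan projection is componentwise, $\mu(g)=(\mu_i(g_i))_i$. The principal vector $v_{\rm pr}$ is characterized by $\alpha_i(v_{\rm pr})=2$ for all $i$.

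\textbf{Step 1: peripheral asymptotics.} Fix a parabolic $\gamma$; as noted above, each $\rho_i(\gamma)$ is parabolic in $G_i$. Apply \Cref{lem.regularunipotent_cartan} to each factor. In a rank-one group, conjugate $u_i$ into $N_i=\exp(\mathfrak g_{\alpha_i}\oplus\mathfrak g_{2\alpha_i})$. If $\log u_i$ has a nonzero $\mathfrak g_{\alpha_i}$-component, the Jacobson--Morozov $H$ satisfies $\alpha_i(H)=2$; after a conjugation inside $N_i$ one may take $\log u_i\in\mathfrak g_{\alpha_i}$, noting that $[N_i,N_i]=\exp\mathfrak g_{2\alpha_i}$. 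If instead $u_i\in[N_i,N_i]$, then $\log u_i\in\mathfrak g_{2\alpha_i}$ and $2\alpha_i(H)=2$, so $\alpha_i(H)=1$. Hence $\mu_i(\rho_i(\gamma)^n)=c_i(\log|n|)\,v_{{\rm pr},i}+O(1)$ with $c_i\in\{1,\tfrac12\}$. Concretely, in the model $\SO(n,1)$ only the first case occurs, and in general rank one this is the classical estimate $d(o,\rho_i(\gamma)^n o)=2c_i\log|n|+O(1)$. The hypothesis that $u_i\in[N_i,N_i]$ holds for all $i$ or for none makes $c_i=c_\gamma$ independent of $i$, which gives the displayed formula. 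The root-balance ratio then tends to $1>\tfrac12$.

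\textbf{Step 2: relative Anosovness.} I would verify the three conditions of the definition directly.

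For (3), each $\rho_i$ is type-preserving with geometrically finite image, so by the standard rank-one theory (Tukia) there is an equivariant homeomorphism $f_i:\Lambda(\Gamma)\to\Lambda(\rho_i(\Gamma))\subset\partial G_i/K_i$. Then $f=(f_i)$ embeds $\Lambda(\Gamma)$ into $\mathcal F=\prod\mathcal F_i$, and I take $\Lambda_\rho$ to be its image. To identify this with the limit set, one needs $\rho(\gamma_n)o$ to converge componentwise to the same $f_i$-image, which follows from the equivariance and continuity of each $f_i$.

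For (2), transversality in a product means componentwise distinctness, and this holds because each $f_i$ is injective.

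For (1), we need $\alpha_i(\mu(\rho(\gamma)))\to\infty$, i.e., $d_{G_i/K_i}(o,\rho_i(\gamma)o)\to\infty$ as $\gamma\to\infty$ in $\Gamma$. This follows from discreteness and faithfulness of each $\rho_i$.

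Alternatively, for (1) one could construct an equivariant quasi-isometric embedding $Y\to\prod G_i/K_i$ that is componentwise quasi-isometric, and check Morse-ness via \cite[Theorem~8.3]{KL_relativizing}.

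\textbf{Main obstacle.} The main difficulty is producing a coarse comparison in (1) strong enough that the Morse limit cone avoids the walls. One needs $\alpha_i(\mu(f(x)^{-1}f(y)))\ge a\,d(x,y)-b$ uniformly, not merely divergence along $\Gamma$. I would obtain it as follows. On the thick part, each $\rho_i$ is coarsely bi-Lipschitz on orbits, since geometric finiteness and type preservation make each orbit map a quasi-isometric embedding of $Y$, relative to the horoball structure, into the convex hull. On cusps, Step 1 shows that every factor grows like $c_\gamma\log|n|$, so the ratios of the components stay bounded above and below, and the same $\alpha$-lower bound holds there.
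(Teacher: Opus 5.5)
Your argument is correct, but it differs from the paper's in both halves.

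\textbf{Relative Anosovness.} The paper feeds the product of the boundary homeomorphisms from the generalized Tukia theorem into the boundary-map criterion \cite[Corollary~B.8]{CZZ_cusped}. You instead verify the three clauses of the definition directly.

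Clause (1) is as easy as you say: it follows from discreteness and faithfulness of each $\rho_i$. Clause (2) follows from componentwise injectivity once $\Lambda_\rho$ is identified with the image of $(f_i)$. That identification, which is also what (3) needs, is the one step you undersell. You must show that the limit set of $\Gamma_\rho$ in $\mathcal F=\prod_i\mathcal F_i$ is exactly $\{(f_i(\zeta))_i:\zeta\in\Lambda(\Gamma)\}$, i.e.\ that limits are synchronized across factors. ``Equivariance and continuity'' alone does not give this. You need a convergence-group step: pass to a subsequence with $\gamma_n\to\zeta_+$ uniformly off $\zeta_-$, push this through $f_i$ by equivariance, and compare with the north--south dynamics of $\rho_i(\gamma_n)$ on $\partial(G_i/K_i)$ to identify $\lim\rho_i(\gamma_n)o_i$ with $f_i(\zeta_+)$.

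With that step added, your route is self-contained relative to the paper's definition. The paper's is shorter because the cited criterion packages exactly this identification.

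Your ``main obstacle'' paragraph is not needed. Clause (1) only asks that each simple root diverge, which you have already shown. The uniform bound \eqref{eqn.root_lower_f} is a consequence that the paper derives from relative Anosovness in the proof of \Cref{prop.cusp_adapted_morse0}. In a product of rank-one factors it would in any case follow factor by factor from each $f_i$ being a quasi-isometric embedding, with no comparison of component ratios.

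\textbf{Cartan asymptotics.} You work factorwise with the given conjugation. Conjugating inside $N_i$ to get $\log u_i\in\mathfrak g_{\alpha_i}$ uses the fact that $\operatorname{ad}X$ maps $\mathfrak g_{\alpha_i}$ onto $\mathfrak g_{2\alpha_i}$ for $X\neq0$. This is the H-type property of rank-one nilradicals; it is true, but you should state it. After that, $[X,\theta X]\in\mathfrak a_i\smallsetminus\{0\}$ gives an explicit triple with $\alpha_i(H)=2$, or with $\alpha_i(H)=1$ when $\log u_i\in\mathfrak g_{2\alpha_i}$. Since $v_{g_i}$ depends only on the conjugacy class of $u_i$, this suffices.

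The paper instead conjugates one global triple so that $H\in\mathfrak a^+$ and $E\in\mathfrak n$, and then projects $[H_i,E_i]=2E_i$ to $\mathfrak n_i/[\mathfrak n_i,\mathfrak n_i]$. That needs no structural fact about the bracket. It does, however, have to check, via uniqueness of the minimal parabolic containing $u_i$, that membership in $[N_i,N_i]$ survives the new conjugation. Your fixed-conjugation approach never meets that issue.
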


\begin{proof} 
Since $\rho_i(\Ga)$ is geometrically finite,
the generalization of Tukia's isomorphism theorem in
\cite[Theorems~1.4(i) and~1.5]{DSU_Tukia} therefore
gives a $\rho_i$-equivariant boundary homeomorphism
$\xi_i:\Lambda(\Gamma)\to\Lambda(\rho_i(\Gamma))$. The product map $(\xi_1,\ldots,\xi_k)$ satisfies the
boundary-map criterion of \cite[Corollary~B.8]{CZZ_cusped}, and
hence $\rho$ is relatively Anosov.

Fix a parabolic $\gamma\in\Gamma$ and set $g:=\rho(\gamma)$.
By \Cref{lem.regularunipotent_cartan}, choose an
$\mathfrak{sl}_2$-triple $(E,H,F)$ with $E=\log g_u$.
Since every simple root diverges along $\mu(g^n)$,
the Cartan asymptotic in that lemma gives
$v_g\in\operatorname{int}\mathfrak a^+$.
Conjugate the triple so that $H=v_g$.
The identity $[H,E]=2E$ then implies that $E\in\mathfrak n$.
Write $E=(E_i)$ and $H=(H_i)$.

Each nontrivial $u_i$ lies in a unique minimal real
parabolic subgroup of $G_i$.
Consequently, any two conjugations placing $u_i$ in
$N_i$ differ by an element of $P_i:=P\cap G_i$, which
normalizes $[N_i,N_i]$.
The assumed membership condition therefore still holds
after the preceding conjugation of the triple.

Let $\alpha_i$ be the simple restricted root of $G_i$.
For $\mathfrak n_i:=\operatorname{Lie}(N_i)$, we have
$[\mathfrak n_i,\mathfrak n_i]=\mathfrak g_{i,2\alpha_i}$
and $\mathfrak n_i/[\mathfrak n_i,\mathfrak n_i]
\simeq\mathfrak g_{i,\alpha_i}$, where
$\mathfrak g_{i,2\alpha_i}:=\{0\}$ if $2\alpha_i$ is not a root.
If $u_i\notin[N_i,N_i]$ for every $i$, the image
$\overline E_i$ of $E_i$ in this quotient is nonzero.
Projecting $[H_i,E_i]=2E_i$ to the quotient gives
$\alpha_i(H_i)\overline E_i=2\overline E_i$.
Thus $\alpha_i(H_i)=2$ for every $i$, so $H=v_{\rm pr}$.

If $u_i\in[N_i,N_i]$ for every $i$, then
$E_i\in\mathfrak g_{i,2\alpha_i}\smallsetminus\{0\}$.
Hence $2\alpha_i(H_i)E_i=[H_i,E_i]=2E_i$,
so $\alpha_i(H_i)=1$ for every $i$ and
$H=\frac12v_{\rm pr}$.
The claimed asymptotic follows from
\Cref{lem.regularunipotent_cartan}.
In either case, all simple roots take the same positive
value on $H$, so the root-balance ratio converges to $1$.
\end{proof}

For $G_i=\SO(n_i,1)^\circ$, with $n_i\geq2$, the groups
$N_i$ are abelian. The peripheral condition is then
automatic, and $c_\gamma=1$ for every parabolic $\gamma$.

\part{Cusp geometry and Markov coding} \label{part.coding}

For the rest of the paper, let $\rho:\Gamma\to G$ be a Zariski dense relatively  Anosov representation
satisfying the
peripheral root-balance condition of
\Cref{def:peripheral-root-balance}.

We first remove compact normal factors.
Let $G_c$ be the maximal connected compact normal subgroup
of $G$, and let $q:G\to G/G_c$ be the quotient map.
Since $G_c\subset M=K\cap P$, the spaces $G/P$, $G/K$,
and $G/M$ identify with the corresponding spaces for
$G/G_c$. The Cartan and Jordan projections, restricted
roots, and growth indicator are unchanged.
Thus $q\circ\rho$ is Zariski dense and relatively Anosov,
and satisfies the same peripheral root-balance condition.
It therefore suffices to prove the results for $G/G_c$.
For the remainder of the proof, we work in this quotient
and retain the notation $G$ and $\rho$; in particular,
$G$ has no compact factors.

Fix a continuous $\rho$-equivariant Morse embedding
$f:Y\to X$ constructed in \Cref{prop.cusp_adapted_morse0},
so that $f(o)=o$ and $\mathcal L_f=\mathcal L_\rho$.
We also denote its boundary extension
$\Lambda(\Gamma)\to\Lambda_\rho$ by $f$.
Set $\Lambda_{\mathrm c}(\Gamma_\rho):=f(\Lambda_{\mathrm c}(\Gamma))$ and $\Lambda_{\mathrm p}(\Gamma_\rho):=f(\Lambda_{\mathrm p}(\Gamma))$. Choose representatives $\xi_0,\ldots,\xi_\ell$ of the
$\Gamma$-orbits in $\Lambda_{\mathrm p}(\Gamma)$.  Let $\mathscr H$, $\mathsf H_i$, $0\le i\le \ell$ and $Y_{\rm th}$ be as in the proof of
\Cref{prop.cusp_adapted_morse0}.
Let
$$
p_i:=f(\xi_i),
\quad
\mathsf P_i:=\rho(\Gamma_{\xi_i}), \quad \Gamma_{\xi_i} :=\operatorname{Stab}_\Gamma(\xi_i).$$ After conjugating
$\Gamma$ in $\PSL_2(\mathbb R)$ and $\Gamma_\rho$ in $G$, we assume
$$
\xi_0=\infty,
\quad
f(\infty)=p_0 =P, \quad 0\in\Lambda(\Gamma),\quad  f(0) = w_0 P .$$

Choose compact intervals
$\widehat\Delta_{\xi_i}\subset\partial\mathbb H^2$
($0\le i\le\ell$), each a fundamental set for the action of
$\Gamma_{\xi_i}$ on
$\partial\mathbb H^2\smallsetminus\{\xi_i\}$,
with endpoints in $\Lambda_{\mathrm p}(\Gamma)$, such that
\begin{equation}
\label{eqn.choiceoffunddomain}
\{\xi_i:0\le i\le\ell\}\cap
\bigcup_{i=0}^\ell\widehat\Delta_{\xi_i}=\emptyset,
\qquad
\widehat\Delta_{\xi_i}\cap\widehat\Delta_{\xi_j}
=\emptyset\quad(i\ne j).
\end{equation}
We consider the following compact fundamental domain for the
$\mathsf P_i$-action on
$\Lambda_\rho\smallsetminus\{p_i\}$:
$$
\Delta_i:=f\bigl(\widehat\Delta_{\xi_i}
\cap\Lambda(\Gamma)\bigr)
\subset\Lambda_\rho\smallsetminus\{p_i\},
$$
 Set
$\partial\Delta_i:=f(\partial\widehat\Delta_{\xi_i})$ and
$\operatorname{int}\Delta_i:=
\Delta_i\smallsetminus\partial\Delta_i$. Set
$$
\Delta:=\bigcup_{i=0}^\ell\Delta_i
$$

\section{The visual premetric and its blow-up}
\label{sec.visual}

Throughout this section and \Cref{sec.height}, fix a linear form
$\chi\in\mathfrak a^*$ satisfying
\begin{equation}
\label{eqn:chi-positive}
\chi\circ\i=\chi,
\qquad
\chi>0\quad\text{on }\mathcal L_\rho\smallsetminus\{0\}.
\end{equation}

In this section we introduce the visual premetric $d_\chi$ on
$\Lambda_\rho$, compare it with the spherical metric on
$\partial\mathbb H^2$, and study its blow-up at the parabolic limit point
$p_0$.

\begin{Def}
For $(\xi,\eta)\in\mathcal F^{(2)}$, set
$$
d_\chi(\xi,\eta):=e^{-\chi(\mathcal G(\xi,\eta))}.
$$
\end{Def}

Since $\mathcal G(\eta,\xi)=\i\mathcal G(\xi,\eta)$, the identity $\chi\circ\i=\chi$ makes $d_\chi$ symmetric. The cocycle identity gives, for every
$(\xi,\eta)\in\mathcal F^{(2)}$ and $g\in G$,
\begin{equation}
\label{eqn.metrics_sph}
d_\chi(g\xi,g\eta)
=
d_\chi(\xi,\eta) e^{-\frac12\chi\bigl(\beta_{g\xi}(e,g)+\beta_{g\eta}(e,g)\bigr)}.
\end{equation}

\subsection{Restriction to the limit set}
By \Cref{lem.gromovproductnearest,lem.scalar-Cartan-comparison},
$d_\chi$ extends continuously to
$\Lambda_\rho\times\Lambda_\rho$ by setting
$d_\chi(\xi,\xi)=0$. Thus $d_\chi$ is a continuous premetric\footnote{
We call a symmetric nonnegative function on $S\times S$
a \emph{premetric} if it vanishes precisely on the diagonal.} on $\Lambda_\rho$.
By compactness, it induces the usual topology on
$\Lambda_\rho$.

\begin{lemma}
\label{lem.dpsidiameterfinite}
The diameter of $(\Lambda_\rho,d_\chi)$ is finite.
\end{lemma}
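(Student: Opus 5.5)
The plan is to bound $\chi(\mathcal G(\xi,\eta))$ from below uniformly over distinct $\xi,\eta\in\Lambda_\rho$. The natural tool is \Cref{lem.gromovproductnearest}. Write $\xi=f(\zeta)$ and $\eta=f(\zeta')$ with distinct $\zeta,\zeta'\in\Lambda(\Gamma)$, and let $z\in Y$ be the nearest-point projection of $o$ to $[\zeta,\zeta']$; note $z\in Y$ since $Y$ is convex and contains the geodesic. That lemma gives
$$
\chi(\mathcal G(\xi,\eta))\ge \tfrac12\chi\bigl(\mu(f(z))+\i\mu(f(z))\bigr)-\|\chi\|_*C
=\chi(\mu(f(z)))-\|\chi\|_*C,
$$
where the equality uses $\chi\circ\i=\chi$.

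Next I would apply \Cref{lem.scalar-Cartan-comparison} with $\varphi=\chi$, which is allowed by \eqref{eqn:chi-positive}. Since $f(o)=o$, this gives
$$
\chi(\mu(f(z)))=\chi\bigl(\mu(f(o)^{-1}f(z))\bigr)\ge a^{-1}d(o,z)-b\ge -b .
$$
Combining the two bounds, $\chi(\mathcal G(\xi,\eta))\ge -b-\|\chi\|_*C$. Hence $d_\chi(\xi,\eta)\le e^{b+\|\chi\|_*C}$ for all distinct $\xi,\eta$, and $d_\chi(\xi,\xi)=0$, so the diameter is finite.

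Alternatively, one could use that $d_\chi$ is a continuous function on the compact set $\Lambda_\rho\times\Lambda_\rho$, given the continuous extension asserted just before the lemma. That argument is immediate, but the continuity itself rests on the same two lemmas, so the explicit bound above is the more self-contained route. There is no real obstacle. The only point to check is that the nearest-point projection $z$ lies in $Y$, so that \Cref{lem.scalar-Cartan-comparison} applies, and this holds because $Y$ is the convex hull of $\Lambda(\Gamma)$.
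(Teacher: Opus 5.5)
Your proof is correct and matches the paper's argument: both use \Cref{lem.gromovproductnearest} together with $\chi\circ\i=\chi$ to reduce to a uniform lower bound on $\chi(\mu(f(z)))$ for $z\in Y$. The paper derives that bound directly from $\mathcal L_f=\mathcal L_\rho$ and \eqref{eqn:chi-positive}, which is exactly the content of \Cref{lem.scalar-Cartan-comparison} that you invoke.
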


\begin{proof}

Let $\xi,\eta\in\Lambda(\Gamma)$ be distinct, and let $z$ be a nearest
point projection of $o$ to the geodesic $[\xi,\eta]\subset\mathbb H^2$.
By \Cref{lem.gromovproductnearest},
$\chi(\mathcal G(f(\xi),f(\eta)))\approx \chi(\mu(f(z)))$.
Since $\L_f=\L_\rho$, \eqref{eqn:chi-positive} and the definition of $\mathcal L_f$ imply that
$\chi(\mu(f(z)))$ is bounded below uniformly in $z\in Y$.
This proves the assertion.
\end{proof}

\subsection{Shadows and $d_\chi$-balls}

For $r>0$ and $\xi\in\Lambda_\rho$, set
$$
B_\chi(\xi,r):=\{\eta\in\Lambda_\rho:d_\chi(\xi,\eta)<r\}.
$$
For $\xi\in\Lambda_\rho$, let
$\widehat z_\xi:[0,\infty)\to Y$ be the unit-speed geodesic
ray from $o$ to $f^{-1}(\xi)$, and put
$z_\xi(t):=f(\widehat z_\xi(t))$. Thus $z_\xi$ is a uniformly regular quasigeodesic ray
in $X$ with flag endpoint $\xi$, with constants
independent of $\xi$.
Applying \cite[Proposition~4.12 and (8.3)]{KO_globalshadow}
to the hyperbolic rays $\widehat z_\xi$ and then using
\Cref{lem.shadowscomparison}, we obtain the following
comparison between ambient shadows and $d_\chi$-balls.
\begin{proposition}
\label{prop.shadowcompare}
The following hold.
\begin{enumerate}
\item For every sufficiently large $r>0$, there exists $c>1$
such that, for every $\xi\in\Lambda_\rho$ and $t\geq0$,
$$
B_\chi\bigl(\xi,c^{-1}e^{-\chi(\mu(z_\xi(t)))}\bigr)
\subset
\mathcal O_r(o,z_\xi(t))\cap\Lambda_\rho
\subset
B_\chi\bigl(\xi,ce^{-\chi(\mu(z_\xi(t)))}\bigr).
$$
\item For every $c>0$ and every sufficiently large $r>0$,
depending on $c$, there exists $t_0=t_0(c,r)>0$ such that,
for every $\xi\in\Lambda_\rho$, $t\geq0$, and $s\geq t+t_0$,
$$
\mathcal O_r(o,z_\xi(s))\cap\Lambda_\rho
\subset
B_\chi\bigl(\xi,ce^{-\chi(\mu(z_\xi(t)))}\bigr)
\subset
\mathcal O_r(o,z_\xi(t))\cap\Lambda_\rho.
$$
\end{enumerate}
\end{proposition}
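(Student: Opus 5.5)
The plan is to reduce both parts to estimates in the hyperbolic plane, using the Morse embedding $f$ to transport the geometry. The basic input is this. For distinct $\zeta,\eta\in\Lambda(\Gamma)$ with $\xi=f(\zeta)$, let $z$ be the nearest-point projection of $o$ to $[\zeta,\eta]$. Then \Cref{lem.gromovproductnearest} gives $\chi(\mathcal G(f(\zeta),f(\eta)))=\chi(\mu(f(z)))+O(1)$, using $\chi\circ\i=\chi$. By thin triangles, $z$ lies within a uniform distance of a point $\widehat z_\xi(s_0)$ on the ray from $o$ to $\zeta$, where $s_0$ is, up to a bounded error, the hyperbolic Gromov product $(\zeta|\eta)_o$. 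Coarse additivity (\Cref{lem.DKO_additive}), together with \Cref{lem.scalar-Cartan-comparison} applied to $\chi$, shows that $s\mapsto\chi(\mu(z_\xi(s)))$ is coarsely increasing along the ray at a linear rate. More precisely,
$$\chi(\mu(z_\xi(s)))-\chi(\mu(z_\xi(t)))\ge a^{-1}(s-t)-b' \qquad (s\ge t).$$
Hence $d_\chi(\xi,f(\eta))\asymp e^{-\chi(\mu(z_\xi(s_0)))}$, with constants independent of $\xi,\eta$.

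For part (1), we first work with hyperbolic shadows. If $\eta\in O_r(o,\widehat z_\xi(t))$, then the geodesics $[o,\zeta]$ and $[o,\eta]$ fellow travel up to time $t-O(r)$, so $s_0\ge t-O(r)$. Coarse monotonicity then gives $d_\chi(\xi,f(\eta))\le c\,e^{-\chi(\mu(z_\xi(t)))}$. Conversely, suppose $d_\chi(\xi,f(\eta))<c^{-1}e^{-\chi(\mu(z_\xi(t)))}$ with $c$ large. Then $\chi(\mu(z_\xi(s_0)))>\chi(\mu(z_\xi(t)))+\log c-O(1)$, and the upper Lipschitz bound from \Cref{lem.scalar-Cartan-comparison} forces $s_0\ge t-O(1)$. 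Therefore $[o,\eta]$ passes within a bounded distance of $\widehat z_\xi(t)$, so $\eta\in O_{r_0}(o,\widehat z_\xi(t))$ for a uniform $r_0$. Finally, \Cref{lem.shadowscomparison} converts between $O_{r}(o,\widehat z_\xi(t))$ and $\mathcal O_{r}(o,z_\xi(t))\cap\Lambda_\rho$. Since that lemma changes the radius by the factor $c$, we take $r$ large and adjust constants. The degenerate case $\eta=\zeta$ is trivial.

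For part (2), the right inclusion is part (1) with the constant $c$ replaced by something smaller than $c_1^{-1}$. Fix a hyperbolic radius $r'$ corresponding to $r$. If $d_\chi(\xi,f(\eta))<c\,e^{-\chi(\mu(z_\xi(t)))}$, then the argument above gives $s_0\ge t-C(c)$. So $\eta$ lies in $O_{r''}(o,\widehat z_\xi(t'))$ for every $t'\le s_0-O(1)$. Shadows from $o$ are nested along the ray up to enlarging the radius by a bounded amount, so once $r$ is large relative to $C(c)$ this gives the inclusion into $\mathcal O_r(o,z_\xi(t))$. For the left inclusion, $\eta\in O_{r'}(o,\widehat z_\xi(s))$ gives $s_0\ge s-O(r')$. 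Monotonicity then yields
$$\chi(\mu(z_\xi(s_0)))\ge \chi(\mu(z_\xi(t)))+a^{-1}(s-t)-O(r')-b'.$$
Choosing $t_0$ large in terms of $c$ and $r$ makes this exceed $\chi(\mu(z_\xi(t)))+\log(C/c)$, which gives the bound $d_\chi<c\,e^{-\chi(\mu(z_\xi(t)))}$.

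The main obstacle is the bookkeeping of radii in \Cref{lem.shadowscomparison}, which is stated only for $r\ge c$ and changes the radius multiplicatively. Another delicate point is that monotonicity of $\chi(\mu(z_\xi(\cdot)))$ holds only coarsely, so the additive constants must be tracked consistently. This is why part (2) needs $r$ sufficiently large depending on $c$. Alternatively, both parts follow by invoking \cite[Proposition~4.12 and (8.3)]{KO_globalshadow} directly for the rays $\widehat z_\xi$ and then applying \Cref{lem.shadowscomparison}, and I would check that route first.
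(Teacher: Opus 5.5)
Your argument is correct, but it is not the route the paper takes. The paper does no computation here: it applies \cite[Proposition~4.12 and (8.3)]{KO_globalshadow} to the rays $\widehat z_\xi$, where the comparison between shadows and $d_\chi$-balls is already established, and transports the result with \Cref{lem.shadowscomparison}. That is exactly the alternative you mention at the end.

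Your main argument instead pulls everything back to $\mathbb H^2$, in three steps:
\begin{enumerate}
\item \Cref{lem.gromovproductnearest}, together with $\chi\circ\i=\chi$, thin triangles, \eqref{upper} and \Cref{lem.cptcartan}, gives $d_\chi(\xi,f(\eta))\asymp e^{-\chi(\mu(z_\xi(s_0)))}$ with $s_0\approx(\zeta|\eta)_o$.
\item \Cref{lem.DKO_additive} and \Cref{lem.scalar-Cartan-comparison} make $s\mapsto\chi(\mu(z_\xi(s)))$ coarsely increasing at a linear rate.
\item Membership in a hyperbolic shadow of $\widehat z_\xi(t)$ amounts, up to additive errors, to $s_0\gtrsim t$.
\end{enumerate}
This route is self-contained and uses only lemmas proved earlier in the paper. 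It also makes the constants explicit, which the citation hides: the constant in the right-hand inclusion of (1) grows like $e^{O(r)}$, and $t_0$ grows linearly in $r$ and in $\log(1/c)$. The paper's route is shorter and stays inside the \cite{KO_globalshadow} framework, which it reuses for the global shadow lemma.

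Two small slips should be corrected.
\begin{itemize}
\item In the converse direction of (1), it is the coarse \emph{lower} increment bound $\chi(\mu(z_\xi(t)))-\chi(\mu(z_\xi(s_0)))\ge a^{-1}(t-s_0)-b'$ for $s_0<t$ that forces $s_0\ge t-O(1)$. The upper Lipschitz bound does not do this.
\item The right-hand inclusion of (2) does not follow formally from (1) as stated, since (1) only supplies some constant depending on $r$. Your direct argument does prove it: it gives $s_0\ge t-C(c)$, hence $\eta\in O_{C(c)+O(1)}(o,\widehat z_\xi(t))$, which suffices once $r$ is large in terms of $C(c)$.
\end{itemize}
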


\begin{remark}
Without the symmetry assumption on $\chi$, the radial
scale in this proposition must be replaced by
$e^{-\frac12(\chi+\chi\circ\i)(\mu(z_\xi(t)))}.
$
\end{remark}

\subsection{Blow-up of $d_{\chi}$}

Set $\mathsf P_0=\operatorname{Stab}_{\Gamma_\rho}(p_0)$. 
Let
$$
\mathcal U_{p_0}:=\{\xi\in\mathcal F:(\xi,p_0)\in\mathcal F^{(2)}\}
$$
be the open Bruhat cell transverse to $p_0$.

\begin{Def}
For $\xi,\eta\in\mathcal U_{p_0}$ with $(\xi, \eta)\in \F^{(2)}$, define $D_\chi:=D_{\chi,p_0}$ by
\begin{equation}
\label{dedef}
D_{\chi}(\xi,\eta)
:=
d_\chi(\xi,\eta)
e^{\chi(\mathcal G(\xi,p_0))}
e^{\chi(\mathcal G(\eta,p_0))}= \frac{d_\chi(\xi,\eta)}{d_\chi(\xi,p_0)d_\chi(\eta,p_0)}
\end{equation}
and set $D_{\chi}(\xi,\xi)=0$ for
$\xi\in\mathcal U_{p_0}$.
\end{Def}

\begin{remark}\label{EucRe}
For $G=\SO(n,1)^\circ$, use the upper half-space model
of $\mathbb H^n$ with curvature $-1$, basepoint $o=(0,1)$,
and $p_0=\infty$. Let $H\in\mathfrak a^+$ satisfy
$\exp(tH)o=(0,e^t)$.
For $\xi,\eta\in\mathbb R^{n-1}$, we have
$$
d_\chi(\xi,\eta)
=
\left(
\frac{\|\xi-\eta\|}
{\sqrt{1+\|\xi\|^2}\sqrt{1+\|\eta\|^2}}
\right)^{\chi(H)},
\qquad
d_\chi(\xi,\infty)=(1+\|\xi\|^2)^{-\chi(H)/2}.
$$
Consequently, $D_{\chi}$ is a positive power of the Euclidean
metric: $D_{\chi}(\xi,\eta)=\|\xi-\eta\|^{\chi(H)}$. \end{remark}

The cocycle identity \eqref{GPT} and \Cref{lem.zerobuse},
together with $\chi=\chi\circ\i$, give the following
transformation properties. The uniform comparison follows
by continuity and compactness.
\begin{lemma}[Transformation of $D_\chi$]
\label{lem.metrics}
\label{lem.LP5.12}
Let $g\in G$. For pairwise transverse 
$\xi,\eta,g^{-1}p_0\in\mathcal F$, we have
$$
D_\chi(g\xi,g\eta)
=d_\chi(\xi,\eta)
e^{\chi(\mathcal G(\xi,g^{-1}p_0))}
e^{\chi(\mathcal G(\eta,g^{-1}p_0))}
e^{\chi(\beta_{p_0}(e,g))}.
$$
If $p:=g^{-1}p_0\in\Lambda_\rho$, then for
every compact $Q\subset\Lambda_\rho\smallsetminus\{p,p_0\}$,
there exists $C=C(Q,g)\geq1$ such that
$$
C^{-1}D_\chi(\xi,\eta)
\leq D_\chi(g\xi,g\eta)
\leq C D_\chi(\xi,\eta)
\qquad(\xi,\eta\in Q).
$$

Moreover, $D_\chi$ is $\mathsf P_0$-invariant:
\begin{equation}
\label{inv}
D_\chi(g\xi,g\eta)=D_\chi(\xi,\eta)
\qquad
(g\in\mathsf P_0,\ \xi,\eta\in\Lambda_\rho\smallsetminus\{p_0\}).
\end{equation}
\end{lemma}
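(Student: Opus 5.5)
The plan is to prove the exact formula first, then read off the uniform comparison and the $\mathsf P_0$-invariance from it.

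\textbf{Step 1: the exact formula.} By definition,
$$D_\chi(g\xi,g\eta)=d_\chi(g\xi,g\eta)\,e^{\chi(\mathcal G(g\xi,p_0))}e^{\chi(\mathcal G(g\eta,p_0))}.$$
Writing $p_0=g(g^{-1}p_0)$, apply \eqref{GPT} to each of the three Gromov products $\mathcal G(g\xi,g\eta)$, $\mathcal G(g\xi,g\,g^{-1}p_0)$ and $\mathcal G(g\eta,g\,g^{-1}p_0)$; all three pairs are transverse by hypothesis. Then apply $\chi$ and use $\chi\circ\i=\chi$. This gives
$$-\chi\bigl(\mathcal G(g\xi,g\eta)\bigr)=-\chi\bigl(\mathcal G(\xi,\eta)\bigr)-\tfrac12\chi\bigl(\beta_{g\xi}(e,g)+\beta_{g\eta}(e,g)\bigr),$$
which is \eqref{eqn.metrics_sph}. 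It also gives
$$\chi\bigl(\mathcal G(g\xi,p_0)\bigr)=\chi\bigl(\mathcal G(\xi,g^{-1}p_0)\bigr)+\tfrac12\chi\bigl(\beta_{g\xi}(e,g)+\beta_{p_0}(e,g)\bigr),$$
and the same identity with $\eta$ in place of $\xi$. Summing the three exponents, the $\beta_{g\xi}$ and $\beta_{g\eta}$ terms cancel. What remains is $d_\chi(\xi,\eta)\,e^{\chi(\mathcal G(\xi,g^{-1}p_0))}e^{\chi(\mathcal G(\eta,g^{-1}p_0))}e^{\chi(\beta_{p_0}(e,g))}$, as claimed. The only thing to check here is the bookkeeping of which coordinate receives $\i$; symmetry of $\chi$ makes this harmless.

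\textbf{Step 2: the uniform comparison.} Let $p=g^{-1}p_0\in\Lambda_\rho$ and $Q\subset\Lambda_\rho\smallsetminus\{p,p_0\}$ compact. The ratio of the formula from Step 1 to $D_\chi(\xi,\eta)$ equals
$$e^{\chi(\beta_{p_0}(e,g))}\cdot\frac{e^{\chi(\mathcal G(\xi,p))}e^{\chi(\mathcal G(\eta,p))}}{e^{\chi(\mathcal G(\xi,p_0))}e^{\chi(\mathcal G(\eta,p_0))}}.$$
The $d_\chi(\xi,\eta)$ factors cancel, so the case $\xi=\eta$ is trivial. Distinct points of $\Lambda_\rho$ are transverse by relative Anosovness. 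Hence $\zeta\mapsto\mathcal G(\zeta,p)$ and $\zeta\mapsto\mathcal G(\zeta,p_0)$ are continuous and finite on $Q$, since $\mathcal G$ is continuous on $\mathcal F^{(2)}$. By compactness these functions are bounded on $Q$, which gives the constant $C(Q,g)$. This also covers the case $\xi=\eta$ in the formula, where both sides vanish.

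\textbf{Step 3: $\mathsf P_0$-invariance.} Let $g\in\mathsf P_0$. Then $g^{-1}p_0=p_0$, and the hypotheses of Step 1 hold for $\xi\ne\eta$ in $\Lambda_\rho\smallsetminus\{p_0\}$, again by transversality of distinct limit points. The middle two factors in the formula of Step 1 reconstruct $D_\chi(\xi,\eta)$. It remains to show $\beta_{p_0}(e,g)=0$. Here $g=\rho(\gamma)$ with $\gamma$ parabolic or trivial, since $\Gamma$ is torsion-free and the stabilizer of a cusp is cyclic generated by a parabolic. So $g$ is weakly unipotent (or the identity) by relative Anosovness. It fixes $p_0$, so \Cref{lem.zerobuse} gives $\beta_{p_0}(e,g)=0$.

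I do not expect a real obstacle. The one point needing care is confirming that the sign conventions in \eqref{GPT} combine so that exactly $\beta_{p_0}(e,g)$ survives, rather than its negative or its $\i$-image. Symmetry of $\chi$ absorbs the $\i$.
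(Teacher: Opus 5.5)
Your proof is correct and follows the paper's argument. The exact formula comes from applying \eqref{GPT} to the three Gromov products together with $\chi\circ\i=\chi$, the comparison comes from continuity of $\mathcal G$ on the compact sets $Q\times\{p\}$ and $Q\times\{p_0\}$ in $\mathcal F^{(2)}$, and the invariance comes from $\beta_{p_0}(e,g)=0$ via \Cref{lem.zerobuse} for the weakly unipotent elements of $\mathsf P_0$; the sign bookkeeping you flagged works out exactly as you computed.
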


\subsection{Metric properties of $D_\chi$}
 By \eqref{dedef}, the restriction of $D_\chi$ to
$\Lambda_\rho\smallsetminus\{p_0\}$ is a continuous
premetric inducing the usual topology.
On every compact, pairwise transverse subset of
$\mathcal U_{p_0}$, the premetrics $d_\chi$ and $D_\chi$
are bi-Lipschitz equivalent.
We also observe:
\begin{lemma}[Comparison of the visual and blown-up premetrics]
\label{lem.sphereeuccompare}
For every $\xi_0\in\Lambda_\rho\smallsetminus\{p_0\}$, there exists $c=c(\xi_0)\ge1$
such that, for every $\xi\in\Lambda_\rho\smallsetminus\{p_0\}$,
$$
\frac{c^{-1}}{1+D_\chi(\xi,\xi_0)^2}
\le
d_\chi(\xi,p_0)^2
\le
\frac{c}{1+D_\chi(\xi,\xi_0)^2}.
$$
\end{lemma}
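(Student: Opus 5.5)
The plan is to split $\Lambda_\rho\smallsetminus\{p_0\}$ into a compact piece away from $p_0$ and a neighbourhood of $p_0$, and to compare the two sides separately on each. Fix $\xi_0\in\Lambda_\rho\smallsetminus\{p_0\}$. Since $d_\chi$ is a continuous premetric on $\Lambda_\rho$ inducing the usual topology, we can choose an open neighbourhood $U$ of $p_0$ in $\Lambda_\rho$ with $\xi_0\notin\overline U$. Set $Q:=\Lambda_\rho\smallsetminus U$. Then $Q$ is compact and does not contain $p_0$.

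\emph{The compact piece $Q$.} The function $\xi\mapsto d_\chi(\xi,p_0)$ is continuous and positive on $Q$, since $d_\chi$ vanishes only on the diagonal. By compactness it is bounded above and below on $Q$ by positive constants; finiteness of the diameter (\Cref{lem.dpsidiameterfinite}) gives the upper bound in any case. Likewise, $\xi\mapsto D_\chi(\xi,\xi_0)$ is continuous on $\Lambda_\rho\smallsetminus\{p_0\}\supset Q$. This uses the formula \eqref{dedef} together with $D_\chi(\xi_0,\xi_0)=0$ and continuity of the premetric $D_\chi$. Hence $D_\chi(\xi,\xi_0)$ is bounded on $Q$, and $1/(1+D_\chi(\xi,\xi_0)^2)$ lies between two positive constants there. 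Both sides of the desired inequality are therefore comparable on $Q$.

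\emph{The neighbourhood $U$ of $p_0$.} For $\xi\in U\smallsetminus\{p_0\}$, the definition \eqref{dedef} gives
$$
D_\chi(\xi,\xi_0)=\frac{d_\chi(\xi,\xi_0)}{d_\chi(\xi,p_0)\,d_\chi(\xi_0,p_0)} .
$$
The factor $d_\chi(\xi_0,p_0)$ is a fixed positive constant. The numerator $d_\chi(\xi,\xi_0)$ is bounded above by the diameter of $(\Lambda_\rho,d_\chi)$. It is bounded below by a positive constant on $\overline U$, because $d_\chi(\cdot,\xi_0)$ is continuous and positive on the compact set $\overline U$, which avoids $\xi_0$. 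Consequently
$$
D_\chi(\xi,\xi_0)\asymp d_\chi(\xi,p_0)^{-1}
\qquad(\xi\in U\smallsetminus\{p_0\}),
$$
with constants depending only on $\xi_0$ and $U$. In addition, $d_\chi(\xi,p_0)$ is bounded above on $U$, again by finiteness of the diameter. So $D_\chi(\xi,\xi_0)$ is bounded below by a positive constant on $U\smallsetminus\{p_0\}$, which gives $1+D_\chi(\xi,\xi_0)^2\asymp D_\chi(\xi,\xi_0)^2\asymp d_\chi(\xi,p_0)^{-2}$. This is the claimed comparison on $U\smallsetminus\{p_0\}$.

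Taking $c$ to be the larger of the two constants from $Q$ and $U$ proves the lemma. There is no genuine obstacle in this argument. The only points needing care are the continuity of $D_\chi(\cdot,\xi_0)$ at $\xi_0$, which follows from $D_\chi$ being a continuous premetric on $\Lambda_\rho\smallsetminus\{p_0\}$, and choosing $U$ with closure avoiding $\xi_0$, so that the lower bound on $d_\chi(\xi,\xi_0)$ is uniform.
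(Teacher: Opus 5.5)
Your proof is correct and uses the same ingredients as the paper: continuity of $d_\chi$ and $D_\chi$, compactness of $\Lambda_\rho$, and $\xi_0\neq p_0$. The paper avoids your two-region split by noting that \eqref{dedef} gives $\bigl(1+D_\chi(\xi,\xi_0)^2\bigr)d_\chi(\xi,p_0)^2=d_\chi(\xi,p_0)^2+d_\chi(\xi,\xi_0)^2/d_\chi(\xi_0,p_0)^2$, whose right-hand side is continuous and strictly positive on all of the compact set $\Lambda_\rho$ (including at $p_0$), so a single compactness argument finishes; your treatment of $Q$ and $U$ amounts to checking this positivity separately away from and near $p_0$.
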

\begin{proof}
By \eqref{dedef},
$\bigl(1+D_\chi(\xi,\xi_0)^2\bigr)d_\chi(\xi,p_0)^2
=
d_\chi(\xi,p_0)^2
+\frac{d_\chi(\xi,\xi_0)^2}{d_\chi(\xi_0,p_0)^2}$.
The right-hand side is continuous and strictly positive
on the compact set $\Lambda_\rho$, since $p_0\neq\xi_0$.
It is therefore bounded above and away from zero,
which proves the claim.
\end{proof}

Via $f$, we regard the Euclidean metric
$d_{\operatorname{Euc}}$ on
$\mathbb R=\partial\mathbb H^2\smallsetminus\{\infty\}$
as a metric on $\Lambda_\rho\smallsetminus\{p_0\}$.
The following shows that $D_\chi$ and
$d_{\operatorname{Euc}}$ are quasisymmetrically equivalent on $\La_\rho\smallsetminus \{p_0\}$.
\begin{lemma}
\label{lem.quasisymmetry}
There exist $a,C\ge1$ such that, for all distinct
$\xi,\eta_1,\eta_2\in\Lambda_\rho\smallsetminus\{p_0\}$,
$$ \frac{D_\chi(\xi,\eta_1)}{D_\chi(\xi,\eta_2)}
\le
C\max\left\{
\left(\frac{d_{\operatorname{Euc}}(\xi,\eta_1)}
           {d_{\operatorname{Euc}}(\xi,\eta_2)}\right)^a,
\left(\frac{d_{\operatorname{Euc}}(\xi,\eta_1)}
           {d_{\operatorname{Euc}}(\xi,\eta_2)}\right)^{1/a}
\right\}.$$
\end{lemma}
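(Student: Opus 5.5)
The plan is to reduce both sides to the height of a single point of $Y$: the top of the hyperbolic geodesic joining the two arguments, measured along the cusp at $\infty=f^{-1}(p_0)$. In the upper half-plane, if $\xi,\eta\in\mathbb R$ are distinct and $w_{\xi\eta}$ is the top point of $[\xi,\eta]$, then $d_{\operatorname{Euc}}(\xi,\eta)=2\operatorname{Im}w_{\xi\eta}$. Hence
$$\log d_{\operatorname{Euc}}(\xi,\eta)=\beta_\infty(w_{\xi\eta},o)+\log 2.$$
The claim to establish is the analogous coarse formula for $D_\chi$:
\begin{equation*}
\log D_\chi(\xi,\eta)=\chi\bigl(\beta_{p_0}(f(w_{\xi\eta}),o)\bigr)+O(1),
\end{equation*}
uniformly in distinct $\xi,\eta\in\Lambda_\rho\smallsetminus\{p_0\}$. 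Here $\xi,\eta$ are identified with their preimages under $f$. Since both endpoints lie in $\Lambda(\Gamma)$, the point $w_{\xi\eta}$ lies in $Y$.

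To prove this formula, I would move the basepoint toward $p_0$. Let $x_T:=f(Ti)$ for large $T$, and let $\mathcal G_{x_T}$ denote the Gromov product based at $x_T$. The basepoint-change identity from \eqref{GPT} and \eqref{eqn.Gromovproductdef}, together with $\chi\circ\i=\chi$, expresses $\chi(\mathcal G(\xi,\eta))$ as
$$\chi(\mathcal G_{x_T}(\xi,\eta))-\tfrac12\chi\bigl(\beta_\xi(o,x_T)+\beta_\eta(o,x_T)\bigr).$$
The same decomposition applies with $(\xi,p_0)$ and $(\eta,p_0)$ in place of $(\xi,\eta)$. In the combination $\log D_\chi=-\chi(\mathcal G(\xi,\eta))+\chi(\mathcal G(\xi,p_0))+\chi(\mathcal G(\eta,p_0))$, the $\beta_\xi$ and $\beta_\eta$ terms cancel. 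What remains is a $\mathcal G_{x_T}$-expression plus $-\chi(\beta_{p_0}(o,x_T))$.

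The $\mathcal G_{x_T}$-terms I would estimate by \Cref{lem.gromovproductnearest}, applied after translating by any $g\in G$ with $go=x_T$. The proof of that lemma only uses the Morse property and hyperbolicity, so it holds for any basepoint in $f(Y)$ with uniform constants. Nearest-point projections of $Ti$ onto $[\xi,p_0]$ and $[\eta,p_0]$ stay at bounded distance from $Ti$ for $T$ large, so those two terms are $O(1)$. The projection of $Ti$ onto $[\xi,\eta]$ is within bounded distance of $w_{\xi\eta}$ once $T\ge\operatorname{Im}w_{\xi\eta}$. This gives $\chi(\mathcal G_{x_T}(\xi,\eta))=\chi(\mu(x_T^{-1}f(w_{\xi\eta})))+O(1)$. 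Finally, $p_0$ lies in a bounded-radius shadow $\mathcal O_r(f(w_{\xi\eta}),x_T)$ by \Cref{lem.shadowscomparison}, so \Cref{lem.buseandcartan} and the cocycle identity convert $\chi(\mu(x_T^{-1}f(w_{\xi\eta})))-\chi(\beta_{p_0}(o,x_T))$ into $\chi(\beta_{p_0}(f(w_{\xi\eta}),o))+O(1)$. This uniform bookkeeping, with all constants independent of how deep $\xi,\eta$ sit in cusps, is the step I expect to be the main obstacle. If it proves delicate, I would first reduce to $\xi$ in the compact set $\Delta_0$ using $\mathsf P_0$-invariance \eqref{inv} of $D_\chi$ together with $\mathsf P_0$-equivariance of the Euclidean metric up to translation.

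Granting the formula, fix distinct $\xi,\eta_1,\eta_2$ and let $w_j:=w_{\xi\eta_j}$. Each $w_j$ is within distance $\operatorname{arcsinh}1$ of the vertical geodesic $\ell$ from $\xi$ to $\infty$, so we may replace $w_j$ by points $w_j'\in\ell\cap Y$ at cost $O(1)$. Set
$$t:=\beta_\infty(w_1',w_2')=\pm d(w_1',w_2'),$$
so that $d_{\operatorname{Euc}}(\xi,\eta_1)/d_{\operatorname{Euc}}(\xi,\eta_2)\asymp e^{-t}$. The cocycle identity gives
$$\log\frac{D_\chi(\xi,\eta_1)}{D_\chi(\xi,\eta_2)}=\chi\bigl(\beta_{p_0}(f(w_1'),f(w_2'))\bigr)+O(1).$$
Since $p_0$ lies in a bounded shadow from the lower of the two points toward the higher, \Cref{lem.buseandcartan} identifies this Busemann term with $\mp\chi(\mu(f(w_1')^{-1}f(w_2')))$. \Cref{lem.scalar-Cartan-comparison} then bounds this by a quantity between $a^{-1}|t|-b$ and $a|t|+b$, with the same sign as $-t$. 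Exponentiating, the additive constants become the multiplicative $C$, and the two signs of $t$ produce the two branches of the maximum.
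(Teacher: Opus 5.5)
Your proposal is correct and is essentially the paper's argument. The paper also moves the basepoint to $f(\sigma(t))$ far up a ray toward $p_0$, uses the equivariant forms of \Cref{lem.gromovproductnearest,lem.buseandcartan} to get $\log D_\chi(\xi,\eta_i)\approx-\chi(\mu(f(\sigma(t))^{-1}f(u_i)))+\chi(\beta_{p_0}(o,f(\sigma(t))))$, compares the two projections along $[\sigma(t),\xi]$ (by coarse additivity, where you use the cocycle identity and Busemann--Cartan comparison along the vertical geodesic), and then applies \Cref{lem.scalar-Cartan-comparison} in the two orderings.

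One bookkeeping remark: with the paper's convention $\beta_\xi(x,y)=\lim_{z\to\xi}(d(x,z)-d(y,z))$, your formulas should read as follows.
\begin{itemize}
\item $\log d_{\operatorname{Euc}}(\xi,\eta)=\beta_\infty(o,w_{\xi\eta})+\log 2$ and $\log D_\chi(\xi,\eta)=\chi(\beta_{p_0}(o,f(w_{\xi\eta})))+O(1)$.
\item The base-change identity carries $+\frac12\chi(\beta_\xi(o,x_T)+\beta_\eta(o,x_T))$.
\item The intermediate quantity is $-\chi(\mu(x_T^{-1}f(w_{\xi\eta})))+\chi(\beta_{p_0}(o,x_T))$.
\end{itemize}
With these signs fixed, everything goes through as you describe.
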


\begin{proof}
Let $\sigma:[0,\infty)\to\operatorname{hull}\Lambda(\Gamma)$ be a
geodesic ray ending at $\infty$. For $i=1,2$, let $u_i$ be a nearest
point projection of $\sigma(t)$ to $[\xi,\eta_i]$, where $t$ is
sufficiently large. The equivariant forms of
\Cref{lem.gromovproductnearest,lem.buseandcartan} give
$$
\log D_\chi(\xi,\eta_i)
\approx
-\chi\!\left(\mu(f(\sigma(t))^{-1}f(u_i))\right)
+\chi\!\left(\beta_{p_0}(o,f(\sigma(t)))\right),
$$
with an additive error independent of $\xi,\eta_i,t$. Choose
$v_i\in[\sigma(t),\xi]$ at uniformly bounded distance from $u_i$. By
coarse additivity,
\begin{equation}
\label{eqn.QS1}
\log\frac{D_\chi(\xi,\eta_1)}{D_\chi(\xi,\eta_2)}
\approx
\epsilon\,\chi\!\left(\mu(f(v_1)^{-1}f(v_2))\right),
\end{equation}
where $\epsilon\in\{\pm1\}$ is determined by the order of $v_1,v_2$
on $[\sigma(t),\xi]$. The corresponding calculation in $\mathbb H^2$
gives, with the same sign,
\begin{equation}
\label{eqn.QS2}
\log\frac{d_{\operatorname{Euc}}(\xi,\eta_1)}
          {d_{\operatorname{Euc}}(\xi,\eta_2)}
\approx \epsilon\,d(v_1,v_2).
\end{equation}
Set
$u:=\log\frac{d_{\operatorname{Euc}}(\xi,\eta_1)}
                 {d_{\operatorname{Euc}}(\xi,\eta_2)}$ and $
v:=\log\frac{D_\chi(\xi,\eta_1)}{D_\chi(\xi,\eta_2)}$.
By \Cref{lem.scalar-Cartan-comparison} and
\eqref{eqn.QS1}--\eqref{eqn.QS2}, there exist uniform
constants $a\geq1$ and $c\geq0$ such that
$v\leq au+c$ if $u\geq0$, and
$v\leq a^{-1}u+c$ if $u\leq0$.
Indeed, these bounds follow by considering the two
possible orders of $v_1,v_2$; if $\epsilon u<0$,
\eqref{eqn.QS2} makes $d(v_1,v_2)$ uniformly bounded,
and enlarging $c$ covers this case.
Exponentiating gives the claim with $C=e^c$.
\end{proof}

\begin{corollary} [Quasi-triangle inequality]
\label{lem.eucdistancetriangle}
There exists $N_0\ge1$ such that, for all 
$\xi,\eta,\zeta\in\Lambda_\rho\smallsetminus\{p_0\}$,
$$
D_\chi(\xi,\zeta)
\le
N_0\bigl(D_\chi(\xi,\eta)+D_\chi(\eta,\zeta)\bigr).
$$
\end{corollary}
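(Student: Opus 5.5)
The plan is to deduce the quasi-triangle inequality directly from the quasisymmetry estimate of \Cref{lem.quasisymmetry}, using the fact that $d_{\operatorname{Euc}}$ is a genuine metric on $\Lambda_\rho\smallsetminus\{p_0\}$ (transported via $f$).

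First dispose of degenerate cases. If two of $\xi,\eta,\zeta$ coincide the inequality is trivial, with $N_0\ge1$, because $D_\chi$ vanishes exactly on the diagonal. So assume the three points are pairwise distinct. By the triangle inequality for $d_{\operatorname{Euc}}$,
$$
d_{\operatorname{Euc}}(\xi,\zeta)\le d_{\operatorname{Euc}}(\xi,\eta)+d_{\operatorname{Euc}}(\eta,\zeta)\le 2\max\{d_{\operatorname{Euc}}(\xi,\eta),d_{\operatorname{Euc}}(\eta,\zeta)\}.
$$
Hence one of the following holds:
\begin{enumerate}
\item $d_{\operatorname{Euc}}(\xi,\zeta)\le 2\,d_{\operatorname{Euc}}(\xi,\eta)$;
\item $d_{\operatorname{Euc}}(\zeta,\xi)\le 2\,d_{\operatorname{Euc}}(\zeta,\eta)$.
\end{enumerate}

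In case (1), apply \Cref{lem.quasisymmetry} with base point $\xi$, $\eta_1=\zeta$ and $\eta_2=\eta$. The Euclidean ratio is at most $2$, so the maximum in that lemma is at most $2^a$ (using $a\ge1$, so that $t^{1/a}\le t^a$ for $t\ge1$, and both are at most $2^a$ when $t\le 2$). This gives
$$
D_\chi(\xi,\zeta)\le C2^a D_\chi(\xi,\eta).
$$
In case (2), apply the same lemma with base point $\zeta$, $\eta_1=\xi$ and $\eta_2=\eta$, and use the symmetry of $D_\chi$ (which follows from the symmetry of $d_\chi$, since $\chi\circ\i=\chi$). This gives
$$
D_\chi(\xi,\zeta)\le C2^aD_\chi(\eta,\zeta).
$$
In either case $D_\chi(\xi,\zeta)\le C2^a\bigl(D_\chi(\xi,\eta)+D_\chi(\eta,\zeta)\bigr)$, so $N_0:=\max\{1,C2^a\}$ works.

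There is no serious obstacle: all the geometric content lies in \Cref{lem.quasisymmetry}. The only points to check are that its hypothesis requires distinct points, which the case reduction handles, and that symmetry of $D_\chi$ lets us swap the base point in case (2).
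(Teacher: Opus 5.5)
Your proposal is correct and follows the paper's own argument: the factor-$2$ Euclidean triangle inequality, \Cref{lem.quasisymmetry} applied at $\xi$ or at $\zeta$ according to which Euclidean distance is larger, and symmetry of $D_\chi$, giving $N_0=2^aC$. Your explicit check that $\max\{t^a,t^{1/a}\}\le 2^a$ for $t\le 2$, and your treatment of the coincident-point cases, fill in the two routine details the paper leaves implicit.
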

\begin{proof}
The assertion is immediate if two points coincide.
Otherwise, note
$
d_{\operatorname{Euc}}(\xi,\zeta)
\leq 2\max\{
d_{\operatorname{Euc}}(\xi,\eta),
d_{\operatorname{Euc}}(\eta,\zeta)\}.
$
Apply \Cref{lem.quasisymmetry} at $\xi$ or $\zeta$,
according to which term realizes the maximum.
Using symmetry of $D_\chi$, we obtain
$D_\chi(\xi,\zeta)
\leq 2^aC\max\{D_\chi(\xi,\eta),D_\chi(\eta,\zeta)\}.$
Thus we may take $N_0=2^aC$.
\end{proof}

\section{Cusp height and geometric estimates}
\label{sec.height}
We define cusp heights, compare them to Cartan projections, and prove
the separation and inversion estimates used in the coding.

\subsection{Height}
Each $\mathsf P_0$-orbit $\mathfrak O$ in
$\Lambda_{\mathrm p}(\Gamma_\rho)\smallsetminus\{p_0\}$ meets
$\Delta_0$ in one or two points; fix a representative
$p_{\mathfrak O}\in\mathfrak O\cap\Delta_0$.
Choose
$\gamma_{\mathfrak O}\in\Gamma_\rho$ such that
$\gamma_{\mathfrak O}^{-1}p_{\mathfrak O}\in\{p_0,\ldots,p_\ell\}$.

\begin{Def}
\label{hd}
The \emph{height function}
$h=h_{\Gamma_\rho,p_0,\Delta_0}:
\Lambda_{\mathrm p}(\Gamma_\rho)\smallsetminus\{p_0\}\to\mathbb R_{>0}$
is the $\mathsf P_0$-invariant function defined by
$$
h(p):=
e^{-\chi(\beta_{p_{\mathfrak O}}(e,\gamma_{\mathfrak O}))} \quad\text{where $\mathfrak O=\mathsf P_0 p$}. $$
In particular, if $p\in\operatorname{int}\Delta_0$ and
$\gamma^{-1}p\in\{p_0,\ldots,p_\ell\}$, then 
$p_{\mathfrak O}=p$, so that
$$
h(p)=e^{-\chi(\beta_p(e,\gamma))}.
$$
\end{Def}

\begin{lemma}
The definition of $h$ is independent of the choice of
$\gamma_{\mathfrak O}$. Different choices of the representatives
$p_{\mathfrak O}\in\mathfrak O\cap\Delta_0$ change $h$ by a uniformly
bounded multiplicative factor.
\end{lemma}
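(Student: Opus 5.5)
The plan is to reduce both assertions to the cocycle identity for the Busemann map, to \Cref{lem.zerobuse}, and to a finiteness observation about how $\mathsf P_0$-orbits meet $\Delta_0$.

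\emph{Independence of $\gamma_{\mathfrak O}$.} Fix $\mathfrak O$ and $p:=p_{\mathfrak O}$, and suppose $\gamma,\gamma'\in\Gamma_\rho$ satisfy $\gamma^{-1}p=p_i$ and $\gamma'^{-1}p=p_j$.
\begin{enumerate}
\item The points $p_0,\ldots,p_\ell$ lie in distinct $\Gamma_\rho$-orbits, since $f$ is an equivariant bijection and the $\xi_i$ were chosen in distinct $\Gamma$-orbits. Hence $i=j$.
\item Put $g:=\gamma^{-1}\gamma'$. Then $gp_i=p_i$, so $g\in\mathsf P_i$.
\item The cocycle identity and equivariance give
$$
\beta_p(e,\gamma')=\beta_p(e,\gamma)+\beta_p(\gamma,\gamma g)=\beta_p(e,\gamma)+\beta_{\gamma^{-1}p}(e,g)=\beta_p(e,\gamma)+\beta_{p_i}(e,g).
\]
\item If $g=e$, the last term vanishes trivially. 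Otherwise $g=\rho(\eta)$ for a nontrivial, hence parabolic, element $\eta$ of the cyclic parabolic group $\Gamma_{\xi_i}$. Relative Anosovness then makes $g$ weakly unipotent.
\item Since $g$ fixes $p_i$, \Cref{lem.zerobuse} gives $\beta_{p_i}(e,g)=0$.
\end{enumerate}
Thus $\chi(\beta_p(e,\gamma))=\chi(\beta_p(e,\gamma'))$, and $h$ does not depend on the choice of $\gamma_{\mathfrak O}$.

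\emph{Change of representative.} Suppose $p,q\in\mathfrak O\cap\Delta_0$ are distinct, so $q=gp$ for some $g\in\mathsf P_0\smallsetminus\{e\}$. I expect this to be a finite situation.
\begin{enumerate}
\item The set $\widehat\Delta_{\xi_0}$ is a compact interval that is a fundamental set for the cyclic group $\Gamma_{\xi_0}$ acting by translations on $\mathbb R=\partial\mathbb H^2\smallsetminus\{\infty\}$. Hence it has the form $[x,hx]$ for a generator $h$.
\item Consequently, an orbit meets it in two points only when those points are its two endpoints. So $p,q\in\partial\Delta_0$, and $g=\rho(h)^{\pm1}$.
\item If $\gamma$ is admissible for $p$, then $g\gamma$ is admissible for $q$. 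The cocycle identity and equivariance give
$$
\beta_q(e,g\gamma)=\beta_{gp}(e,g)+\beta_{gp}(g,g\gamma)=\beta_q(e,g)+\beta_p(e,\gamma).
$$
\item Therefore the two values of $h$ differ by the factor $e^{-\chi(\beta_q(e,g))}$. This factor ranges over at most two values, determined by the two endpoints of $\Delta_0$ and $g=\rho(h)^{\pm1}$. It is therefore uniformly bounded above and below.
\end{enumerate}
(For the same reason, \Cref{lem.zerobuse} would even give $\beta_q(e,g)=0$ whenever $g$ fixes $q$, but here $g$ fixes $p_0$, not $q$, so finiteness is the relevant point.)

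The only step needing care is the claim that $\mathfrak O\cap\Delta_0$ has two points only at $\partial\Delta_0$. This reduces to the interval structure of the fundamental set for the rank-one parabolic group, which was fixed in the choice of $\widehat\Delta_{\xi_0}$. If one prefers not to rely on the endpoint description, the same boundedness can be obtained from proper discontinuity of $\mathsf P_0$ on $\Lambda_\rho\smallsetminus\{p_0\}$: only finitely many $g\in\mathsf P_0$ satisfy $g\Delta_0\cap\Delta_0\neq\emptyset$. Then continuity of $\xi\mapsto\beta_\xi(e,g)$ on the compact set $\Delta_0$ bounds $\beta_q(e,g)$ uniformly.
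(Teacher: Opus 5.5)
Your proof is correct and follows the paper's argument. Independence of $\gamma_{\mathfrak O}$ comes from the cocycle identity together with \Cref{lem.zerobuse} applied to the weakly unipotent element $\gamma^{-1}\gamma'\in\mathsf P_i$, and the change of representative is controlled by the cocycle identity plus finiteness of the elements of $\mathsf P_0$ that can relate two points of $\Delta_0$. Your endpoint description, in which only the orbit of $\partial\Delta_0$ meets $\Delta_0$ twice and only via $\rho(h)^{\pm1}$, is a slightly sharper form of the paper's proper-discontinuity-plus-compactness step, which you also give as an alternative.
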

\begin{proof}
Two choices of $\gamma_{\mathfrak O}$ differ by right
multiplication by an element fixing
$\gamma_{\mathfrak O}^{-1}p_{\mathfrak O}$.
Independence follows from the cocycle identity and
\Cref{lem.zerobuse}.
By proper discontinuity, only finitely many
$u\in\mathsf P_0$ satisfy
$u\Delta_0\cap\Delta_0\neq\emptyset$.
The cocycle identity and compactness of $\Delta_0$
therefore give the uniform multiplicative bound under
changes of $p_{\mathfrak O}$.
\end{proof}

If $\gamma^{-1}p\in\{p_0,\ldots,p_\ell\}$ and
$\widetilde\gamma p=p_{\mathfrak O}$, then
\begin{equation}
\label{hpgg}
h(p)=
e^{-\chi(\beta_{\widetilde\gamma p}
                 (e,\widetilde\gamma\gamma))}.
\end{equation}

\begin{lemma}[Height comparison on compact sets]
\label{lem.heightondelta0}
For every compact set $Q\subset\Lambda_\rho\smallsetminus\{p_0\}$,
$$
h(p)\asymp_Q e^{-\chi(\beta_p(e,\gamma))}
$$
whenever $p\in Q\cap\Lambda_{\mathrm p}(\Gamma_\rho)$ and
$\gamma\in\Gamma_\rho$ satisfy
$\gamma^{-1}p\in\{p_0,\ldots,p_\ell\}$.
In particular, the comparison is uniform for $p\in\Delta$.
\end{lemma}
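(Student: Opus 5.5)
The argument reduces the general pair $(p,\gamma)$ to the normalized pair $(p_{\mathfrak O},\gamma_{\mathfrak O})$ used in Definition~\ref{hd}, and controls each correction term using the cocycle identity, Lemma~\ref{lem.zerobuse}, and compactness.

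Fix $p\in Q\cap\Lambda_{\mathrm p}(\Gamma_\rho)$ and $\gamma$ with $\gamma^{-1}p=p_j$. Let $\mathfrak O=\mathsf P_0p$, and write $p_{\mathfrak O}=\widetilde\gamma p$ with $\widetilde\gamma\in\mathsf P_0$. Then $(\widetilde\gamma\gamma)^{-1}p_{\mathfrak O}=p_j$, so by the independence lemma and \eqref{hpgg},
$$
h(p)=e^{-\chi(\beta_{\widetilde\gamma p}(e,\widetilde\gamma\gamma))}.
$$
The cocycle identity and equivariance give
$$
\beta_{\widetilde\gamma p}(e,\widetilde\gamma\gamma)=\beta_{\widetilde\gamma p}(e,\widetilde\gamma)+\beta_{\widetilde\gamma p}(\widetilde\gamma,\widetilde\gamma\gamma)=\beta_{\widetilde\gamma p}(e,\widetilde\gamma)+\beta_p(e,\gamma).
$$
It therefore suffices to show that $\beta_{\widetilde\gamma p}(e,\widetilde\gamma)=-\beta_p(e,\widetilde\gamma^{-1})$ is uniformly bounded for $p\in Q$. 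The only parameter not controlled a priori is the choice of $\widetilde\gamma\in\mathsf P_0$.

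The main step is to show that only finitely many $\widetilde\gamma$ occur. Since $\Delta_0$ is a fundamental domain for $\mathsf P_0$ acting on $\Lambda_\rho\smallsetminus\{p_0\}$, the translates $u\Delta_0$, $u\in\mathsf P_0$, cover this set. This action is properly discontinuous, since it is conjugate via $f$ to the action of a parabolic cyclic group on $\Lambda(\Gamma)\smallsetminus\{\infty\}\subset\mathbb R$. Hence the compact set $Q$ meets only finitely many translates $u\Delta_0$. Because $\widetilde\gamma p\in\Delta_0$, we have $p\in\widetilde\gamma^{-1}\Delta_0$, so $\widetilde\gamma^{-1}$ ranges over a finite set $F_Q\subset\mathsf P_0$ determined by $Q$. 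For each $u\in F_Q$, the map $\xi\mapsto\beta_\xi(e,u)$ is continuous on $\mathcal F$ and therefore bounded. This gives
$$
\bigl|\chi(\beta_p(e,\widetilde\gamma^{-1}))\bigr|\le C_Q,
$$
and so $h(p)\asymp_Q e^{-\chi(\beta_p(e,\gamma))}$.

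The bound $|\chi(\beta_p(e,u))|\le C$ can also be seen from the weak unipotence of $u$: $\beta_{p_0}(e,u)=0$ by Lemma~\ref{lem.zerobuse}, and the bound is in any case uniform on compacta. Only finiteness of $F_Q$ is needed. Finally, $\Delta=\bigcup_i\Delta_i$ is a compact subset of $\Lambda_\rho\smallsetminus\{p_0\}$: each $\Delta_i$ is compact, and $p_0\notin\Delta$ by \eqref{eqn.choiceoffunddomain}. Applying the estimate with $Q=\Delta$ gives the final assertion.
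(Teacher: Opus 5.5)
Your proposal is correct and follows the paper's proof essentially step for step. You move $p$ into $\Delta_0$ by some $\widetilde\gamma\in\mathsf P_0$, then use \eqref{hpgg} and the cocycle identity to isolate the correction term $\chi(\beta_p(\widetilde\gamma^{-1},e))$, and bound it by proper discontinuity, which confines $\widetilde\gamma$ to a finite subset of $\mathsf P_0$ depending only on $Q$; as you note yourself, the aside on weak unipotence is not needed.
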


\begin{proof}
Choose $\widetilde\gamma\in\mathsf P_0$ with
$\widetilde\gamma p=p_{\mathfrak O}\in\Delta_0$.
Proper discontinuity implies that the set
$\{u\in\mathsf P_0:uQ\cap\Delta_0\neq\emptyset\}$ is finite.
By \eqref{hpgg} and the cocycle identity,
$$
-\log h(p)
=\chi(\beta_p(e,\gamma))
+\chi(\beta_p(\widetilde\gamma^{-1},e))
=\chi(\beta_p(e,\gamma))+O_Q(1).
$$
This proves the assertion.
\end{proof}

\subsection{Top representatives}

For $p\in\Gamma_\rho p_i\cap\Delta$, choose an element
$\gamma_p\in\Gamma_\rho$ such that
\begin{equation}
\label{eqn.definitiontoprepresentative}
p=\gamma_p p_i
\qquad\text{and}\qquad \gamma_p^{-1}p_i\in\Delta_i.
\end{equation}
We call $\gamma_p$ a top representative of $p$ and set $x_p:=\gamma_p^{-1}p_i$.
There are at most two choices, differing, when distinct,
by right multiplication by a generator of $\mathsf P_i$.
We write
$\widehat\gamma:=\rho^{-1}(\gamma)\in\Gamma$
for $\gamma\in\Gamma_\rho$.
\begin{lemma}
\label{lem.representshadow}
There exists $r>0$ such that, for every
$p\in\Lambda_{\mathrm p}(\Gamma_\rho)\cap\Delta$,
$$
p\in \mathcal O_r(o,\gamma_p o),
\qquad
x_p\in \mathcal O_r(o,\gamma_p^{-1}o).
$$
\end{lemma}

\begin{proof} Suppose that the first claim does not hold. By \Cref{lem.shadowscomparison},
there exist $i$ and a sequence $ f^{-1}(p_n) = \widehat\gamma_{p_n} \xi_i$ such that $f^{-1}(p_n) \notin O_n (o, \widehat\gamma_{p_n} o)$ for all $n \ge 1$. This implies that $\widehat\gamma_{p_n}^{-1} o \to \xi_i$. Since $f^{-1}(p_n)$ ranges over a compact
set disjoint from $\xi_i$, the point $o$ lies within a uniform distance
of the geodesic $[f^{-1}(p_n),\xi_i]$. Applying
$\widehat\gamma_{p_n}^{-1}$, we find that
$\widehat\gamma_{p_n}^{-1}o$ lies within a uniform distance of
$[\xi_i,y_n]$, where $y_n = \widehat\gamma_{p_n}^{-1} \xi_i$ which also ranges over a fixed compact subset of $\La(\Ga) \smallsetminus \{ \xi_i\}$. Thus $\xi_i$ is a conical limit point, contradicting
the fact that $\xi_i$ is parabolic. The second claim can be shown similarly.
\end{proof}

\begin{lemma}
\label{lem.heightondelta}
For every $p\in\Lambda_{\mathrm p}(\Gamma_\rho)\cap\Delta$,
$$
h(p)\asymp e^{-\chi(\mu(\gamma_p))}.
$$
\end{lemma}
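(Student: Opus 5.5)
By \Cref{lem.heightondelta0}, which applies uniformly on $\Delta$, we have $h(p)\asymp e^{-\chi(\beta_p(e,\gamma_p))}$. Here $\gamma_p$ is a top representative, so $\gamma_p^{-1}p=p_i\in\{p_0,\ldots,p_\ell\}$. It therefore suffices to show that
$$
\bigl\|\beta_p(e,\gamma_p)-\mu(\gamma_p)\bigr\|\le C
$$
uniformly in $p\in\Lambda_{\mathrm p}(\Gamma_\rho)\cap\Delta$. Exponentiating then gives the claim, with the multiplicative constant depending only on $\chi$ and $C$.

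To obtain this bound, invoke \Cref{lem.representshadow}. It supplies a uniform $r>0$ with $p\in\mathcal O_r(o,\gamma_p o)$. Apply the first assertion of \Cref{lem.buseandcartan} with $x=o$, $y=\gamma_p o$ and $\xi=p$. This gives
$$
\|\beta_p(o,\gamma_p o)-\mu(\gamma_p)\|\le\kappa r.
$$
Since $\beta_p(e,\gamma_p)=\beta_p(o,\gamma_po)$, as the Busemann map descends to $X$, we obtain $\|\beta_p(e,\gamma_p)-\mu(\gamma_p)\|\le \kappa r$, uniformly in $p$.

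If one prefers to stay on the limit set, the second assertion of \Cref{lem.buseandcartan} can be used instead. Combine it with \Cref{lem.shadowscomparison}, applied to $f^{-1}(p)\in O_{cr}(o,\widehat\gamma_p o)$, and use $f(\widehat\gamma_po)=\gamma_po$ by equivariance and $f(o)=o$.

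There are two minor points to handle. The first is that $\gamma_p$ has at most two choices, differing by a generator of $\mathsf P_i$. For a fixed $i$ this changes $\mu(\gamma_p)$ by a bounded amount, by \Cref{lem.cptcartan}. It leaves $\beta_p(e,\gamma_p)$ unchanged, by the cocycle identity and \Cref{lem.zerobuse}, so the statement is insensitive to the choice. The second is that \Cref{lem.heightondelta0} is stated for $Q\subset\Lambda_\rho\smallsetminus\{p_0\}$ compact, while $\Delta$ is compact and avoids $p_0$ by \eqref{eqn.choiceoffunddomain}, so the uniformity holds. The only nontrivial input is the uniform shadow radius in \Cref{lem.representshadow}, which is already established. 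I therefore expect no real obstacle; the main care needed is just in checking that all constants are uniform over the finitely many cusp orbits $i$.
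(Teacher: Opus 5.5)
Your proposal is correct and follows the paper's proof exactly: \Cref{lem.heightondelta0} gives $h(p)\asymp e^{-\chi(\beta_p(e,\gamma_p))}$, and the uniform shadow radius from \Cref{lem.representshadow} combined with \Cref{lem.buseandcartan} gives $\beta_p(e,\gamma_p)=\mu(\gamma_p)+O(1)$ uniformly in $p$. Your extra remarks on the choice of top representative and on the alternative limit-set route are correct but not needed.
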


\begin{proof} 
By \Cref{lem.heightondelta0,lem.representshadow,lem.buseandcartan},
$h(p)\asymp e^{-\chi(\beta_p(e,\gamma_p))}
\asymp e^{-\chi(\mu(\gamma_p))}$.
\end{proof}

\subsection{Height functions based at the other cusps}
Set $g_0=e$ and
for $1\le i\le\ell$, choose
$$
g_i\in K\quad\text{such that}\quad g_ip_i=p_0.
$$
 Define the height function $h_i$ on
$g_i(\Lambda_{\mathrm p}(\Gamma_\rho)\smallsetminus\{p_i\})$ by
$$
h_i(g_ip):=
h_{g_i\Gamma_\rho g_i^{-1},p_0,g_i\Delta_i}(g_ip).
$$

Equivalently, for each $\mathsf P_i$-orbit $\mathfrak O$ in
$\Lambda_{\mathrm p}(\Gamma_\rho)\smallsetminus\{p_i\}$, fix
$p_{\mathfrak O}^{(i)}\in\mathfrak O\cap\Delta_i$. If
$\gamma_{\mathfrak O}^{-1}p_{\mathfrak O}^{(i)}\in
\{p_0,\ldots,p_\ell\}$, then
$$
\log h_i(g_ip)
={-\chi (\beta_{g_ip_{\mathfrak O}^{(i)}}
   (e,g_i\gamma_{\mathfrak O}g_i^{-1}) )},
\qquad p\in\mathfrak O.
$$

\begin{lemma}
\label{lem.LP5.11}
There exists $C>1$ such that, for every $0\le i\le\ell$ and 
$p\in\Lambda_{\mathrm p}(\Gamma_\rho)\cap\Delta$,
$$
C^{-1}\le\frac{h_i(g_ip)}{h(p)}\le C.
$$
\end{lemma}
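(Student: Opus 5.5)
The plan is to compare both heights with the same Busemann quantity $\chi(\beta_p(e,\gamma_p))$, where $\gamma_p$ is a top representative of $p$. The key observation is that conjugating by $g_i\in K$ changes nothing at the basepoint $o$.

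First, I will check that the setup conjugated by $g_i$ satisfies the standing hypotheses, so that the earlier lemmas apply to $h_i$. The data are the group $g_i\Gamma_\rho g_i^{-1}$, the Morse embedding $g_i f$ (equivariant for the conjugated representation, with $g_if(o)=g_io=o$ since $g_i\in K$), the base parabolic point $g_ip_i=p_0$, the stabilizer $g_i\mathsf P_ig_i^{-1}$, and the compact fundamental domain $g_i\Delta_i$. The Cartan projection, the Busemann map and $\chi$ are unchanged under this conjugation. The only role of the normalization $p_0=P$ is to fix the base cusp, and here the base cusp is $g_ip_i=p_0$. By \eqref{eqn.choiceoffunddomain}, no $p_j$ lies in $\Delta$. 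Hence $g_i\Delta$ is a compact subset of $\Lambda(g_i\Gamma_\rho g_i^{-1})\smallsetminus\{p_0\}$.

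Next, fix $p\in\Lambda_{\mathrm p}(\Gamma_\rho)\cap\Delta$ and a top representative $\gamma_p$ with $p=\gamma_pp_j$. The element $g_i\gamma_pg_i^{-1}$ maps the cusp representative $g_ip_j$ of the conjugated group to $g_ip$. Applying \Cref{lem.heightondelta0} in the conjugated setting with $Q:=g_i\Delta$ gives
$$h_i(g_ip)\asymp e^{-\chi(\beta_{g_ip}(e,\,g_i\gamma_pg_i^{-1}))}.$$
By equivariance and the cocycle identity,
$$\beta_{g_ip}(e,g_i\gamma_pg_i^{-1})=\beta_p(g_i^{-1},\gamma_pg_i^{-1})=\beta_p(g_i^{-1},e)+\beta_p(e,\gamma_p)+\beta_{\gamma_p^{-1}p}(e,g_i^{-1}).$$
The first and last terms vanish because $\beta$ descends to $X$ and $g_i^{-1}o=o$. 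So $h_i(g_ip)\asymp e^{-\chi(\beta_p(e,\gamma_p))}$. The uniform version of \Cref{lem.heightondelta0} on $\Delta$ gives the same comparison for $h(p)$. Alternatively, \Cref{lem.representshadow} together with \Cref{lem.buseandcartan} shows that both quantities are $\asymp e^{-\chi(\mu(\gamma_p))}$, exactly as in \Cref{lem.heightondelta}. Taking the maximum of the constants over the finitely many $i$ gives the claim.

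The main point requiring care is that \Cref{lem.heightondelta0} is legitimately applicable to the conjugated data. In particular, its implicit constant must depend only on the compact set $Q=g_i\Delta$ and on the finitely many cusps, and not on the choice of $p_{\mathfrak O}^{(i)}\in\Delta_i$. The latter dependence is handled by the lemma preceding \Cref{lem.heightondelta0}, which shows that changing $p_{\mathfrak O}$ alters the height only by a bounded multiplicative factor. The finiteness statement $\#\{u\in g_i\mathsf P_ig_i^{-1}: uQ\cap g_i\Delta_i\neq\emptyset\}<\infty$ follows from proper discontinuity, exactly as in the original proof.
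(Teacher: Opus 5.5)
Your proof is correct and follows essentially the same route as the paper. The paper's argument is exactly \Cref{lem.heightondelta0} run in the conjugated system $g_i\Gamma_\rho g_i^{-1}$, using finiteness of the peripheral elements of $\mathsf P_i$ that move $\Delta$ into $\Delta_i$ together with the cocycle identity, and then compared with the $i=0$ instance via \eqref{hpgg}. The only difference is cosmetic: you note that the terms coming from conjugation by $g_i$ vanish identically because $g_i\in K$ fixes $o$, whereas the paper merely bounds them using that $g_i$ ranges over a finite set.
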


\begin{proof}
Choose $\gamma\in\Gamma_\rho$ with
$\gamma^{-1}p\in\{p_0,\ldots,p_\ell\}$, and choose
$\widetilde\gamma_i\in\mathsf P_i$ with
$\widetilde\gamma_i p=p_{\mathfrak O_i}^{(i)}$, where
$\mathfrak O_i=\mathsf P_i p$. The cocycle identity gives
$$
\beta_{g_ip_{\mathfrak O_i}^{(i)}}
 (e,g_i\widetilde\gamma_i\gamma g_i^{-1})
=
\beta_p(e,\gamma)
+\beta_{\gamma^{-1}p}(e,g_i^{-1})
+\beta_p(\widetilde\gamma_i^{-1}g_i^{-1},e).
$$
For each $i$, proper discontinuity implies that
$\{\delta\in\mathsf P_i:\delta\Delta\cap\Delta_i\ne\emptyset\}$
is finite. Hence the elements $\widetilde\gamma_i$ range over a fixed
finite set. 
Since both $g_i$ and $\widetilde{\ga}_i$ range over finite sets, the last two terms in the displayed identity are bounded. Applying the same argument for $i=0$, with
$\widetilde\gamma_0p=p_{\mathsf P_0p}^{(0)}$, and comparing with
\eqref{hpgg}, gives
$|\log h_i(g_ip)-\log h(p)|=O(1)$
uniformly in $p$ and $i$.
\end{proof}

\subsection{Separation}

The height function gives a uniform lower bound separating
distinct parabolic limit points.

\begin{proposition}
\label{lem.LP6.16}
There exists $0<C_s\le1$ such that for all distinct
$p,p'\in\Lambda_{\mathrm p}(\Gamma_\rho)\smallsetminus\{p_0\}$,
$$
D_\chi(p,p')\ge C_s\sqrt{h(p)h(p')}.
$$
\end{proposition}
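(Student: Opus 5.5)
The plan is to transport the classical fact from $\mathbb H^2$: if the horoballs at $p$ and $p'$ are disjoint, then the Euclidean distance satisfies $|p-p'|^2\gtrsim \operatorname{diam}\mathsf H_p\cdot\operatorname{diam}\mathsf H_{p'}$. I would transport it through the Morse embedding $f$, using the tree-like (tripod) geometry of triangles in $Y$ together with coarse additivity (\Cref{lem.DKO_additive}). The key point is that the argument must not compare $\chi\circ\mu$ with hyperbolic distance except through additivity along a single geodesic, since $\chi\circ\mu\circ f$ is only quasi-isometric to $d$, not proportional to it.

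\textbf{Step 1: normalize the two quantities.} Both $D_\chi$ (by \eqref{inv}) and $h$ are $\mathsf P_0$-invariant. So replacing $(p,p')$ by $(gp,gp')$ with $g\in\mathsf P_0$, I may assume $p\in\Delta_0$. For $D_\chi$ I use the expression from the proof of \Cref{lem.quasisymmetry}. Let $\sigma$ be a geodesic ray to $\infty$ in $\operatorname{hull}\Lambda(\Gamma)$, take $t$ large, and let $u$ be the nearest-point projection of $\sigma(t)$ to $[f^{-1}p,f^{-1}p']$. Then
$$
-\log D_\chi(p,p')\approx \chi\bigl(\mu(f(\sigma(t))^{-1}f(u))\bigr)-\chi\bigl(\beta_{p_0}(o,f(\sigma(t)))\bigr),
$$
with errors uniform in $t,p,p'$. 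For the height I want the analogous formula
$$
-\log h(q)\approx 2\Bigl[\chi\bigl(\mu(f(\sigma(t))^{-1}f(w_q))\bigr)-\chi\bigl(\beta_{p_0}(o,f(\sigma(t)))\bigr)\Bigr],
$$
where $w_q$ is the top point of the horoball $\mathsf H_q\in\mathscr H$, that is, the point of $\partial\mathsf H_q$ on the geodesic from $q$ to $\infty$. For $q\in\Delta$ this should follow from \Cref{lem.heightondelta}: $\widehat\gamma_q o$ lies within bounded distance of $w_q$, by \Cref{lem.representshadow} and cocompactness on $Y_{\rm th}$. I would then combine coarse additivity along the ray from $o$ to $q$ with the vanishing \Cref{lem.zerobuse} of the Busemann cocycle for parabolics, to pass from the basepoint $o$ to $\sigma(t)$. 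For general $q\notin\Delta_0$, $\mathsf P_0$-invariance of both sides handles the translation: elements of $\mathsf P_0$ fix $p_0$ and act on $\sigma(t)$ up to bounded horospherical displacement. I would verify the factor $2$ by checking the model case of \Cref{EucRe}.

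\textbf{Step 2: tripod geometry.} Consider the ideal triangle $\infty,\hat p,\hat p'$ with $\hat p=f^{-1}p$ and $\hat p'=f^{-1}p'$. The vertical geodesics toward $\hat p$ and $\hat p'$ fellow-travel until near $u$, the center of the tripod, and then separate. Since $\mathsf H_{\hat p}$ and $\mathsf H_{\hat p'}$ are disjoint and the horoball at $\infty$ is disjoint from both, the top $w_{\hat p}$ lies on the vertical geodesic to $\hat p$ beyond (or within bounded distance of) the branch point; the same holds for $\hat p'$. Concretely, I would show that the sub-ray of the vertical geodesic to $\hat p$ beyond the branch point stays within a uniform distance of $[u,\hat p]$. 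If the branch point were deep inside $\mathsf H_{\hat p}$, the geodesic $[\hat p,\hat p']$ would need $\hat p'\in\partial\mathsf H_{\hat p}$-shadow, which is impossible since $\hat p'$ is a different parabolic point whose horoball is disjoint. Hence $u$ lies coarsely on $[\sigma(t),w_{\hat p}]$ and on $[\sigma(t),w_{\hat p'}]$.

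\textbf{Step 3: conclude.} Coarse additivity gives
$$
\mu(f(\sigma(t))^{-1}f(w_{\hat p}))=\mu(f(\sigma(t))^{-1}f(u))+\mu(f(u)^{-1}f(w_{\hat p}))+O(1).
$$
By \Cref{lem.scalar-Cartan-comparison}, $\chi$ of the last term is bounded below. Adding the two resulting inequalities for $\hat p$ and $\hat p'$ and substituting the formulas of Step 1, the $\beta_{p_0}$ terms cancel exactly with the chosen normalization. This yields $-\log h(p)-\log h(p')\ge -2\log D_\chi(p,p')-C$, which is the claim with $C_s=e^{-C/2}$ (shrunk to be at most $1$).

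I expect the main obstacle to be Step 1 for the height. The issue is making the normalization constants match exactly, so that the $\beta_{p_0}$-terms cancel, uniformly over all parabolic orbits $p_i$ and over all $q$. This requires careful bookkeeping with \Cref{lem.heightondelta0}, \Cref{lem.buseandcartan} and the equivariance relation \eqref{GPT}. The tripod statement in Step 2 is standard hyperbolic geometry, but it needs a careful argument in the degenerate case where $u$ is close to a horoball boundary.
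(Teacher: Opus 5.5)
Your Step~2 is false in general, and Step~3 fails with it. Disjointness of $\mathsf H_{\hat p}$ and $\mathsf H_{\hat p'}$ does not keep $\hat p'$ out from under $\mathsf H_{\hat p}$ as seen from $\infty$. It only forces $\operatorname{diam}\mathsf H_{\hat p'}\le|\hat p-\hat p'|^2/\operatorname{diam}\mathsf H_{\hat p}$.

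Since parabolic points are dense in $\Lambda(\Gamma)$, fix $\hat p$ and let $\hat p'\to\hat p$; the Ford circles at $0$ and $1/n$ for $\mathrm{PSL}_2(\mathbb Z)$ are the model. The center $u$ of the ideal triangle $(\infty,\hat p,\hat p')$ sits at Euclidean height about $|\hat p-\hat p'|/2$. It therefore lies inside $\mathsf H_{\hat p}$ at depth $\delta\approx\log(\operatorname{diam}\mathsf H_{\hat p}/|\hat p-\hat p'|)\to\infty$. Then $w_{\hat p}$ lies coarsely on $[\sigma(t),u]$, not beyond $u$. Consequently $\chi\bigl(\mu(f(\sigma(t))^{-1}f(w_{\hat p}))\bigr)$ falls short of $\chi\bigl(\mu(f(\sigma(t))^{-1}f(u))\bigr)$ by $\frac12\chi(v_i)\delta+O(1)$, where $i$ is the type of $p$. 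So the coarse additivity you invoke for $\hat p$ in Step~3 fails, and so does the lower bound you add.

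The summed inequality is still true, but only because the $\hat p'$ term has a matching excess: the segment from $u$ to $w_{\hat p'}$ also spends length about $\delta$ inside $\mathsf H_{\hat p}$. Making these two contributions cancel requires the exact linear cusp asymptotic \eqref{cuspes} for segments in a common horoball. The quasi-isometry bounds of \Cref{lem.scalar-Cartan-comparison} do not suffice, which is precisely the danger you flagged at the outset. As written, your argument covers only the case where $u$ lies coarsely between the exit point of $[\hat p',\hat p]$ from $\mathsf H_{\hat p'}$ and its entrance point into $\mathsf H_{\hat p}$.

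The missing idea is that disjointness enters through an ordering along $[\hat p',\hat p]$: the exit point from $\mathsf H_{\hat p'}$ comes before the entrance point into $\mathsf H_{\hat p}$. The paper bases the argument at that exit point instead of viewing everything from $p_0$.

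It first proves $d_\chi(p,p')\gg e^{-\frac12\chi(\beta_p(e,\gamma)+\beta_{p'}(e,\gamma'))}$ whenever $\gamma^{-1}p,(\gamma')^{-1}p'\in\{p_0,\dots,p_\ell\}$; this bound is invariant under simultaneous left translation. After translating the exit point near $o$, \Cref{lem.gromovproductnearest} gives $\mathcal G(p,p')=O(1)$. One chooses $\widehat{\gamma'}o$ near $o$ and $\widehat\gamma o$ near the later entrance point, so that $p\in\mathcal O_r(o,\gamma o)$. Then \Cref{lem.buseandcartan} reduces the bound to the uniform lower bound for $\chi\circ\mu$ on $\Gamma_\rho$. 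Finally, the identity \eqref{eqn.separationprelim2} converts the $d_\chi$-bound into the $D_\chi$-bound, with no case analysis. You could rescue your route by splitting according to the position of $u$ relative to these two points and using \eqref{cuspes} when $u$ is deep in a horoball.

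Separately, the factor $2$ in your height formula should be $1$. In the model of \Cref{EucRe}, both $h(q)$ and $e^{-B_q}$ (with $B_q$ your bracket) are comparable to $(\operatorname{diam}\mathsf H_{\hat q})^{\chi(H)}$. With the factor $2$, Step~3 would give the false bound $D_\chi(p,p')\gg(h(p)h(p'))^{1/4}$. Your displayed conclusion is the one the factor $1$ produces.
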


\begin{proof}
We first prove that there exists $C>0$ such that
\begin{equation}
\label{lem.separationprelim}
d_\chi(p,p')
\geq C \sqrt{e^{-\chi(
\beta_p(e,\gamma)+\beta_{p'}(e,\gamma'))}}
\end{equation}
whenever $p\neq p'\in \Lambda_{\mathrm p}(\Ga_\rho)$  and
$\gamma^{-1}p=p_k$, $(\gamma')^{-1}p'=p_j$ for some
$0\leq j,k\leq\ell$.

Choose an oriented geodesic $\sigma$ in $\mathbb H^2$
from $f^{-1}(p')$ to $f^{-1}(p)$, with $\sigma(0)$
at its exit from the horoball based at $f^{-1}(p')$
in the family $\mathscr H$.
Since $\Ga\ba Y_{\rm {th}}$ is compact, $\sigma(0)$ lies
within a uniform distance of $\Gamma o$.
Replacing $\gamma$ and $\gamma'$ within
$\gamma\mathsf P_k$ and $\gamma'\mathsf P_j$, we may
arrange that $\widehat\gamma o$ and
$\widehat{\gamma'}o$ lie within a uniform distance
of the entrance into the horoball based at $f^{-1}(p)$
and the exit from the horoball based at $f^{-1}(p')$,
respectively. 
These replacements do not change the Busemann terms 
by \Cref{lem.zerobuse}.

Choose $g\in G$ with $g^+=p$ and $g^-=p'$, and set
$t:=\chi(\beta_p(g,\gamma))$ and
$t':=\chi(\beta_{p'}(g,\gamma'))$.
Since $\chi\circ\i=\chi$, the cocycle identity gives
\begin{equation}
\label{eqn.pythagorian}
2\chi(\mathcal G(p,p'))+t+t'
=\chi\bigl(\beta_p(e,\gamma)+\beta_{p'}(e,\gamma')\bigr).
\end{equation}
Thus \eqref{lem.separationprelim} is equivalent to a
uniform lower bound for $t+t'$.

Choose $h\in\Gamma$ such that $h\sigma(0)$ lies in
a fixed bounded neighborhood of $o$.
Replacing $\sigma$ by $h\sigma$ and simultaneously
left translating $p,p',g,\gamma,\gamma'$ by $\rho(h)$
preserves $t$ and $t'$ by equivariance.
We may therefore assume that $\sigma(0)$ lies in
that neighborhood of $o$.
By \Cref{lem.gromovproductnearest},
$\mathcal G(p,p')$ is then uniformly bounded.
The choices of $\gamma,\gamma'$ and
\Cref{lem.shadowscomparison} give a uniform $r>0$ such that
$$
p\in \mathcal O_r(o,\gamma o),
\qquad
p'\in \mathcal O_r(o,\gamma'o).
$$
By \Cref{lem.buseandcartan} and
\eqref{eqn.pythagorian},
\begin{equation}
\label{eqn.translateseparateequation}
t+t'=\chi(\mu(\gamma))+\chi(\mu(\gamma'))+O(1).
\end{equation}
Since
$\chi(\mu(\delta))$ is uniformly bounded below for
$\delta\in\Gamma_\rho$,
this proves \eqref{lem.separationprelim}.

Now let $p,p'\neq p_0$ and choose
$\widetilde\gamma,\widetilde\gamma'\in\mathsf P_0$
with $\widetilde\gamma p=p_{\mathfrak O}$ and
$\widetilde\gamma'p'=p_{\mathfrak O'}$.
\Cref{lem.zerobuse} and \eqref{hpgg} give
\begin{equation}
\label{eqn.separationprelim2}
e^{2\chi(\mathcal G(p,p_0))-\chi(\beta_p(e,\gamma))}
=e^{2\chi(\mathcal G(p_{\mathfrak O},p_0))}h(p),
\end{equation}
and similarly for $p'$.
Multiplying \eqref{lem.separationprelim} by
$e^{\chi(\mathcal G(p,p_0))+\chi(\mathcal G(p',p_0))}$
therefore yields
$$
D_\chi(p,p')
\geq C e^{\chi(\mathcal G(p_{\mathfrak O},p_0))
             +\chi(\mathcal G(p_{\mathfrak O'},p_0))}
\sqrt{h(p)h(p')}.
$$
Since $p_{\mathfrak O},p_{\mathfrak O'}\in\Delta_0$
and $\Delta_0$ is compact and disjoint from $p_0$,
the exponential factor has a uniform positive lower
bound. This proves the proposition.
\end{proof}

\subsection{Ball flipping}
The first estimate below shows that
$\gamma^{-1}$ maps a punctured $D_\chi$-ball of radius $r$ around
$p=\gamma p_0$ into the complement of a ball around
$\gamma^{-1}p_0$ of radius comparable to $h(p)/r$. The second gives the corresponding two-point estimate.
\begin{lemma}
\label{lem.ballflipping}
\label{lem.LP5.7_1}
\label{lem.maybeuseful}
\label{lem.LP5.7_2}
Let $\gamma\in\Gamma_\rho\smallsetminus\mathsf P_0$
and set $p:=\gamma p_0$. For all
$x,y\in\Lambda_\rho\smallsetminus\{p_0,p\}$,
$$
D_\chi(\gamma^{-1}x,\gamma^{-1}p_0)
\asymp \frac{h(p)}{D_\chi(x,p)},\qquad D_\chi(\gamma^{-1}x,\gamma^{-1}y)
\asymp
\frac{h(p)D_\chi(x,y)}
     {D_\chi(x,p)D_\chi(y,p)},
$$
with constants independent of $\gamma,x,y$.
\end{lemma}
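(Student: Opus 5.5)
The plan is to prove the two-point estimate first and then obtain the first estimate as its degenerate case $y=p_0$. The only earlier ingredients needed are the transformation formula in \Cref{lem.metrics}, the Gromov-product/Busemann identity \eqref{GPT}, and the height comparison of \Cref{lem.heightondelta0}.

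\textbf{Step 1: rewrite $D_\chi(\gamma^{-1}x,\gamma^{-1}y)$ via \Cref{lem.metrics}.} Apply that lemma with $g=\gamma^{-1}$. Since $g^{-1}p_0=\gamma p_0=p$, we get
$$
D_\chi(\gamma^{-1}x,\gamma^{-1}y)=d_\chi(x,y)\,e^{\chi(\mathcal G(x,p))}e^{\chi(\mathcal G(y,p))}e^{\chi(\beta_{p_0}(e,\gamma^{-1}))}.
$$
Substituting $d_\chi(x,y)=D_\chi(x,y)\,d_\chi(x,p_0)d_\chi(y,p_0)$ from \eqref{dedef}, and writing $e^{\chi(\mathcal G(x,p))}=d_\chi(x,p_0)^{-1}d_\chi(p,p_0)^{-1}D_\chi(x,p)^{-1}$ (and similarly for $y$), the factors $d_\chi(x,p_0)$, $d_\chi(y,p_0)$ cancel. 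This leaves an exact identity
$$
D_\chi(\gamma^{-1}x,\gamma^{-1}y)=\frac{D_\chi(x,y)}{D_\chi(x,p)D_\chi(y,p)}\cdot d_\chi(p,p_0)^{-2}e^{\chi(\beta_{p_0}(e,\gamma^{-1}))}.
$$
Everything then reduces to showing that the scalar $S(\gamma):=d_\chi(p,p_0)^{-2}e^{\chi(\beta_{p_0}(e,\gamma^{-1}))}$ is comparable to $h(p)$, uniformly in $\gamma\notin\mathsf P_0$.

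\textbf{Step 2: show $S(\gamma)\asymp h(p)$.} This is the main step.
\begin{enumerate}
\item By \Cref{lem.zerobuse}, replacing $\gamma$ by $u\gamma u'$ with $u\in\mathsf P_0$ and $u'\in\mathsf P_0$ leaves $S(\gamma)$ unchanged. For $u'$ this holds because $\beta_{p_0}(e,u'^{-1})=0$ and $\gamma u'p_0=p$. For $u$ it follows from $\mathsf P_0$-invariance of $d_\chi(\cdot,p_0)$ (by \eqref{eqn.metrics_sph} and \Cref{lem.zerobuse}) together with the cocycle identity.
\item The height $h$ is $\mathsf P_0$-invariant. So we may assume $p=p_{\mathfrak O}\in\Delta_0$, a compact set avoiding $p_0$. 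Then $d_\chi(p,p_0)\asymp 1$.
\item It remains to show $e^{\chi(\beta_{p_0}(e,\gamma^{-1}))}\asymp h(p)$. By the cocycle identity and equivariance, $\beta_{p_0}(e,\gamma^{-1})=\beta_{\gamma p_0}(\gamma,e)=-\beta_p(e,\gamma)$. Hence $e^{\chi(\beta_{p_0}(e,\gamma^{-1}))}=e^{-\chi(\beta_p(e,\gamma))}$.
\item By \Cref{lem.heightondelta0} with $Q=\Delta_0$ (using $\gamma^{-1}p=p_0$), this is $\asymp h(p)$.
\end{enumerate}
This gives the two-point estimate with uniform constants.

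\textbf{Step 3: the first estimate.} The formula for $D_\chi(\gamma^{-1}x,\gamma^{-1}y)$ is only available for $y\neq p_0$, so the first estimate should be derived directly rather than by taking a limit. Here $\gamma^{-1}p_0\ne p_0$ because $\gamma\notin\mathsf P_0$. I would apply \Cref{lem.metrics} with the pair $(x,p_0)$ mapped by $\gamma^{-1}$, which needs transversality of $x,p_0,p$; this holds since these are distinct points of $\Lambda_\rho$. Alternatively, let $y\to p_0$ in the exact identity of Step 1: then $D_\chi(x,y)/D_\chi(y,p)\to 1$, because both quantities blow up like $d_\chi(y,p_0)^{-1}$ times $d_\chi(x,p_0)/d_\chi(x,p_0)$-type factors, with ratio $d_\chi(x,p_0)^{-1}d_\chi(p,p_0)$. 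I would check this constant carefully, expecting it to be absorbed by $d_\chi(p,p_0)\asymp1$ after the normalization in Step 2. Continuity of $D_\chi$ on $\Lambda_\rho\smallsetminus\{p_0\}$ (here $\gamma^{-1}p_0\neq p_0$) justifies the limit. The main obstacle I expect is bookkeeping the sign and involution conventions in \eqref{GPT} so that no stray $\i$ or $d_\chi(x,p_0)$ factor survives; the symmetry $\chi\circ\i=\chi$ should kill the $\i$ terms.
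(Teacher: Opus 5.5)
Your route is the paper's. \Cref{lem.metrics} and \eqref{dedef} reduce both estimates to comparing
$S(\gamma)=d_\chi(p,p_0)^{-2}e^{\chi(\beta_{p_0}(e,\gamma^{-1}))}=e^{2\chi(\mathcal G(p,p_0))-\chi(\beta_p(e,\gamma))}$
with $h(p)$, and that comparison is done by moving $p$ into $\Delta_0$ with an element of $\mathsf P_0$.

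For the first estimate, apply \Cref{lem.metrics} directly to the pair $(x,p_0)$. This gives the exact identity $D_\chi(\gamma^{-1}x,\gamma^{-1}p_0)=S(\gamma)/D_\chi(x,p)$, so no stray constant appears. In your limiting argument the ratio $D_\chi(x,y)/D_\chi(y,p)$ tends to exactly $1$, and the hedging there is unnecessary.

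One justification in Step~2 is false and must be replaced. The function $d_\chi(\cdot,p_0)$ is \emph{not} $\mathsf P_0$-invariant; in \Cref{EucRe} it equals $(1+\|\xi\|^2)^{-\chi(H)/2}$. The Busemann factor is not invariant under $\gamma\mapsto u\gamma$ either. What is true is that the two changes cancel:
\begin{itemize}
\item For $u\in\mathsf P_0$, \eqref{GPT} and \Cref{lem.zerobuse} give $2\chi(\mathcal G(up,p_0))=2\chi(\mathcal G(p,p_0))+\chi(\beta_{up}(e,u))$.
\item The cocycle identity and equivariance give $\chi(\beta_{p_0}(e,(u\gamma)^{-1}))=\chi(\beta_{p_0}(e,\gamma^{-1}))-\chi(\beta_{up}(e,u))$.
\end{itemize}
So only the product $S$ is invariant. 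This cancellation is exactly the paper's identity $e^{2\chi(\mathcal G(p,p_0))-\chi(\beta_p(e,\gamma))}=e^{2\chi(\mathcal G(p_{\mathfrak O},p_0))}h(p)$, obtained from \eqref{GPT}, \Cref{lem.zerobuse} and \eqref{hpgg}. Alternatively, invariance of $S$ follows from your exact identity in Step~1 together with the $\mathsf P_0$-invariance \eqref{inv} of $D_\chi$.

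With this fix, the rest of Step~2 and your conclusion are correct. That remainder is the right-multiplication invariance, the reduction to $p\in\Delta_0$ with $d_\chi(p,p_0)\asymp1$, and the final appeal to \Cref{lem.heightondelta0}.
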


\begin{proof}
Choose $\widetilde\gamma\in\mathsf P_0$ such that
$\widetilde\gamma p=p_{\mathfrak O}\in\Delta_0$.
\Cref{lem.zerobuse}, \eqref{GPT}, and \eqref{hpgg} give
$e^{2\chi(\mathcal G(p,p_0))-\chi(\beta_p(e,\gamma))}
=e^{2\chi(\mathcal G(p_{\mathfrak O},p_0))}h(p)
\asymp h(p),
$
uniformly, since $\Delta_0$ is compact and $p_0\notin \Delta_0$.
Both estimates now follow from
\Cref{lem.metrics} and \eqref{dedef}.
\end{proof}

\section{Local behavior of Patterson--Sullivan measures} \label{sec.PS}
For the rest of the paper, fix $\psi\in\mathfrak a^*$
tangent to $\psi_\rho$, and set
$$
\psi^-:=\psi\circ\i,
\qquad
\bar\psi:=\frac{\psi+\psi^-}{2}.
$$
Since $\psi_\rho$ is $\i$-invariant, the form $\psi^-$
is also tangent and
\begin{equation}
\label{lb}
\varphi>0\quad\text{on }\mathcal L_\rho\smallsetminus\{0\}
\qquad(\varphi\in\{\psi,\psi^-,\bar\psi\}).
\end{equation}

Since $\bar\psi\circ\i=\bar\psi$, we use the geometric
constructions and notation of \Cref{sec.visual,sec.height}
with $\chi=\bar\psi$.
Let $\nu:=\nu_\psi$ be the unique
$(\Gamma_\rho,\psi)$-Patterson--Sullivan probability measure
on $\Lambda_\rho$; thus $\delta_\psi(\Gamma_\rho)=1$.
For $r>0$ and $\xi\in\Lambda_\rho\smallsetminus\{p_0\}$, write
$$
{\mathcal B_{\bar\psi}}(\xi,r):=
\{\eta\in\Lambda_\rho\smallsetminus\{p_0\}:
D_{\bar\psi}(\xi,\eta)<r\}.
$$
We use the same notation for balls relative to conjugate
limit sets, retaining the ambient premetric $D_{\bar\psi}$.
For $H<\Gamma_\rho$, let $\delta_\psi(H)$ be the abscissa of convergence
of $\sum_{\gamma\in H}e^{-s\psi(\mu(\gamma))}$.

\begin{theorem}[{\cite[Theorem~1.1 and Corollary~7.2]{CZZ_relative}}]
\label{thm.divergencetype}
\label{thm.entropydrop}
We have $\sum_{\gamma\in\Gamma_\rho}e^{-\psi(\mu(\gamma))}=\infty$ and $\delta_\psi(\mathsf P_i)<1$ for every $0\le i\le\ell$.
\end{theorem}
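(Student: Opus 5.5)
The plan has two parts: first the strict inequality $\delta_\psi(\mathsf P_i)<1$ (the entropy drop), then divergence of the full Poincaré series, deduced from the entropy drop by a Dal'bo--Otal--Peigné type argument adapted to the Morse embedding $f$.

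\emph{Step 1: computing $\delta_\psi(\mathsf P_i)$.} Let $g_i$ generate $\mathsf P_i$. By \eqref{latercartan0} we have $\mu(g_i^n)=(\log|n|)v_i+O(1)$, so
$$\sum_{\gamma\in\mathsf P_i}e^{-s\psi(\mu(\gamma))}\asymp\sum_{n\ne0}|n|^{-s\psi(v_i)}.$$
Since $v_i\in\mathcal L_\rho\smallsetminus\{0\}$, \eqref{lb} gives $\psi(v_i)>0$. Hence $\delta_\psi(\mathsf P_i)=1/\psi(v_i)$, and $\mathsf P_i$ is of divergence type at its critical exponent, because the series becomes $\sum|n|^{-1}$ there. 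This divergence-type property is exactly what makes the free-product argument below work.

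\emph{Step 2: entropy drop by a free-product argument.} Suppose, for contradiction, that $\delta_\psi(\mathsf P_i)\ge 1$.
\begin{itemize}
\item Choose a loxodromic $h\in\Gamma$ whose fixed points are far from $\xi_i$. Replacing $h$ by a high power, ping-pong on $\partial\mathbb H^2$ shows that $\langle \widehat{\mathsf P}_i, h\rangle\cong \mathsf P_i * \langle h\rangle$.
\item The geodesic in $Y$ realizing a reduced word $p_1h^{k_1}p_2h^{k_2}\cdots$ passes within bounded distance of the concatenated orbit points. Lemma~\ref{lem.DKO_additive} therefore gives
$$\psi(\mu(\text{word}))=\sum\psi(\mu(p_j))+\sum\psi(\mu(h^{k_j}))+O(\text{number of letters}).$$
\item Write $P(s)$ and $H(s)$ for the Poincaré series of the two factors, with the identity removed. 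The additivity bound gives the lower estimate
$$\sum_{\gamma\in\Gamma_\rho}e^{-s\psi(\mu(\gamma))}\ \gtrsim\ \sum_{m}\bigl(c\,P(s)H(s)\bigr)^m .$$
\item Since $P(s)\to\infty$ as $s\downarrow\delta_\psi(\mathsf P_i)$, there is $s>\delta_\psi(\mathsf P_i)$ with $cP(s)H(s)>1$. Thus the full series diverges at some $s>\delta_\psi(\mathsf P_i)\ge1$, contradicting $\delta_\psi(\Gamma_\rho)=1$.
\end{itemize}
The delicate point is that the additive error is linear in the number of letters. I will absorb it by shrinking $c$: the error contributes a factor $e^{\pm O(1)}$ per letter, and this is harmless because we only need the geometric-series lower bound.

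\emph{Step 3: divergence.} I would follow the geometrically finite argument, using the thick/cusp decomposition of $Y$.
\begin{itemize}
\item Every $\gamma\in\Gamma_\rho$ factors as $\gamma=\gamma_1\pi_1\gamma_2\pi_2\cdots$. Here the $\pi_j$ lie in conjugates of the $\mathsf P_i$, and the $\gamma_j$ are short thick-part transitions.
\item By coarse additivity (Lemma~\ref{lem.DKO_additive}) and Step 2, the peripheral pieces contribute a series that converges at some $s_0<1$.
\item Assume the full series converges at $s=1$. Run Patterson's construction, with Patterson's slowly varying weight if needed, to obtain a $(\Gamma_\rho,\psi)$-conformal measure $\nu$ on $\Lambda_\rho$.
\item Convergence of the series and the shadow lemma \cite{KO_globalshadow} force $\nu(\Lambda_{\mathrm c}(\Gamma_\rho))=0$, since the conical set is a limsup of shadows with summable masses.
\item The entropy drop from Step 2 gives $\nu(\{p\})=0$ for every parabolic $p$: conformality yields $\nu(\{p\})\sum_{\gamma\in\mathsf P}e^{-\psi(\mu(\gamma))}\lesssim 1$ over the stabilizer $\mathsf P$ of $p$, which is compatible only with zero mass when $\nu$ is required to be equivariant and non-atomic. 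If atoms are not excluded this way, I would instead use the standard orbit argument showing that an atom at $p$ makes the $\mathsf P$-series at $1$ bounded by its total mass, then combine this with divergence of the $\mathsf P$-series from Step 1 at any $s\le\delta_\psi(\mathsf P)$.
\item Since $\Lambda_\rho=\Lambda_{\mathrm c}\sqcup\Lambda_{\mathrm p}$ and $\Lambda_{\mathrm p}$ is countable, $\nu=0$, a contradiction.
\end{itemize}

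I expect the main obstacle to be the bookkeeping in Step 3, namely exhibiting full conical mass or excluding atoms using only the shadow lemma and the entropy gap. The cleanest fallback is to invoke finiteness of $m_\psi$ (Theorem~\ref{koe}) together with Poincaré recurrence on $\Omega_\psi$. Recurrence gives conical full measure for $\nu_\psi$, and the Hopf--Tsuji--Sullivan direction then yields divergence.
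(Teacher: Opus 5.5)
Your Steps~1 and~2 are fine. Step~1 gives $\delta_\psi(\mathsf P_i)=1/\psi(v_i)$ with $\mathsf P_i$ of divergence type, and the Dal'bo--Otal--Peign\'e ping-pong argument then gives $\delta_\psi(\mathsf P_i)<\delta_\psi(\Gamma_\rho)=1$. (For ping-pong you need the fixed points of $h$ in the interior of a fundamental domain for $\mathsf P_i$, not merely far from $\xi_i$, but that is cosmetic.) The paper gives no argument of its own and cites \cite{CZZ_relative}.

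The genuine gap is in Step~3, at the only point with real content once Borel--Cantelli has made the conical set null: ruling out atoms of the Patterson measure $\nu$ at parabolic points. Neither of your arguments does this.
\begin{itemize}
\item The first misuses conformality. Since $\mathsf P$ fixes $p$ and $\beta_p(e,\gamma)=0$ for $\gamma\in\mathsf P$ by \Cref{lem.zerobuse}, conformality under $\mathsf P$ says nothing about $\nu(\{p\})$. The translates of the atom are indexed by $\Gamma_\rho/\mathsf P$, not by $\mathsf P$. Requiring $\nu$ to be non-atomic assumes the conclusion.
\item The fallback compares with divergence of the $\mathsf P$-series at exponents $s\le\delta_\psi(\mathsf P)$. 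By Step~2, that series converges at the only relevant exponent $s=1$, so there is nothing to contradict.
\end{itemize}
No argument using only conformality of the limit measure can work. Suppose the full series converged at $1$. Then $\sum_{gp\in\Gamma_\rho p}e^{-\psi(\beta_{gp}(e,g))}\delta_{gp}$ is well defined by \Cref{lem.zerobuse}. It has finite mass: choose the coset representative $g$ so that $\widehat g o$ lies near the entrance of the ray from $o$ to $f^{-1}(gp)$ into its horoball, and the weight is $\asymp e^{-\psi(\mu(g))}$. It is therefore a purely atomic $(\Gamma_\rho,\psi)$-conformal measure on $\Lambda_\rho$.

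The missing idea is to exclude atoms at the level of the Patterson approximants
$\nu_s=Z(s)^{-1}\sum_{\gamma}e^{-s\psi(\mu(\gamma))}\theta(\psi(\mu(\gamma)))\,\delta_{\widehat\gamma o}$, where $\theta$ is Patterson's slowly increasing weight.
\begin{itemize}
\item Orbit points in a small neighborhood of $f^{-1}(p)$ have the form $\widehat\pi\widehat g o$. Here $\pi=u^n\in\mathsf P$ with $u$ a generator and $|n|\ge N$, and $\widehat g o$ lies in a fixed fundamental region for $\widehat{\mathsf P}$.
\item The geodesic from $o$ to $\widehat\pi\widehat g o$ passes within bounded distance of $\widehat\pi o$. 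So \Cref{lem.DKO_additive} gives $\psi(\mu(\pi g))=\psi(\mu(\pi))+\psi(\mu(g))+O(1)$.
\item Combined with $\theta(a+b)\le C_\epsilon e^{\epsilon b}\theta(a)$, this bounds the $\nu_s$-mass of that neighborhood by $\ll_\epsilon\sum_{|n|\ge N}e^{-(1-\epsilon)\psi(\mu(u^n))}$, uniformly for $s>1$.
\item Taking $\epsilon<1-\delta_\psi(\mathsf P)$, this tail tends to $0$ as $N\to\infty$, whence $\nu(\{p\})=0$.
\end{itemize}
This is where the entropy drop is really used. Your factorization bullet contains the right ingredient but is never applied this way.

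Finally, the fallback through \Cref{koe} and recurrence is question-begging. It presupposes the atomless measure $\nu_\psi$ of \Cref{thm.ps atomless 0}. In the cited sources, that atomlessness and the finiteness of $m_\psi$ are deduced from the divergence you are trying to prove. And if atomlessness were granted, recurrence would be superfluous, since $\Lambda_{\mathrm p}(\Gamma_\rho)$ is countable.
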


\begin{lemma}
\label{lem:antisymmetric-gromov}
There exists $C>0$ such that, for all distinct $\xi,\eta\in\Lambda_\rho$,
$$
\left|(\psi-\bar\psi)(\mathcal G(\xi,\eta))\right|\leq C.
$$
\end{lemma}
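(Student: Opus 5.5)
We have $\psi-\bar\psi=\frac12(\psi-\psi\circ\i)$. So $(\psi-\bar\psi)(\mathcal G(\xi,\eta))=\frac12\psi\bigl(\mathcal G(\xi,\eta)-\i\,\mathcal G(\xi,\eta)\bigr)$. The plan is to show that the antisymmetric part $\mathcal G(\xi,\eta)-\i\mathcal G(\xi,\eta)$ is uniformly bounded on distinct pairs in $\Lambda_\rho$. The claim then follows with $C=\tfrac12\|\psi\|_*\sup\|\mathcal G-\i\mathcal G\|$.

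We will use \Cref{lem.gromovproductnearest}. Write $\xi=f(\zeta)$ and $\eta=f(\zeta')$ with $\zeta\ne\zeta'\in\Lambda(\Gamma)$, and let $z$ be the nearest-point projection of $o$ to $[\zeta,\zeta']$. That lemma gives
$$\mathcal G(\xi,\eta)=\tfrac12\bigl(\mu(f(z))+\i\mu(f(z))\bigr)+O(1)$$
with a uniform error. The main term is $\i$-invariant, since $\i$ is an involution. Applying $\i$, which is a linear isometry of $\mathfrak a$ for the Killing norm, we get
$$\i\,\mathcal G(\xi,\eta)=\tfrac12\bigl(\i\mu(f(z))+\mu(f(z))\bigr)+O(1).$$
Subtracting the two displays gives $\|\mathcal G(\xi,\eta)-\i\mathcal G(\xi,\eta)\|\le 2C_0$, where $C_0$ is the constant from \Cref{lem.gromovproductnearest}. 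Since $\psi$ is a fixed linear form, $|(\psi-\bar\psi)(\mathcal G(\xi,\eta))|\le \|\psi\|_*C_0$.

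The only point needing care is the uniformity of \Cref{lem.gromovproductnearest} over all distinct pairs in $\Lambda(\Gamma)$, including pairs where one point is parabolic or where $z$ lies deep in a cusp. That lemma, cited from \cite[Lemma~6.6]{LO_invariant}, is stated uniformly and applies to $f$ by the Morse property, so no further work is needed. As a consistency check, one could instead use $\mathcal G(\eta,\xi)=\i\mathcal G(\xi,\eta)$: the bound says $\mathcal G$ is coarsely symmetric, which matches the fact that in rank one both $\psi$-visual premetrics coincide. I expect no serious obstacle.
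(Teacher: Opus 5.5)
Your proof is correct and is essentially the paper's argument: both use \Cref{lem.gromovproductnearest}, note that the main term $\frac12\bigl(\mu(f(z))+\i\mu(f(z))\bigr)$ is $\i$-invariant, and conclude that only the uniformly bounded error survives. The paper applies $\psi-\bar\psi$ directly, using $(\psi-\bar\psi)\circ\i=-(\psi-\bar\psi)$ to kill the main term, while your bound on $\mathcal G-\i\,\mathcal G$ is the same step written via $(\psi-\bar\psi)(X)=\frac12\psi(X-\i X)$.
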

\begin{proof}
Let $z$ be the nearest-point projection of $o$ to the geodesic joining
$f^{-1}(\xi)$ and $f^{-1}(\eta)$.
By \Cref{lem.gromovproductnearest},
$\mathcal G(\xi,\eta)=\frac12(\mu(f(z))+\i\mu(f(z)))+O(1)$.
The form $\psi-\bar\psi$ vanishes on the first term, since
$(\psi-\bar\psi)\circ\i=-(\psi-\bar\psi)$.
\end{proof}

\subsection{Change of measure}
\begin{lemma}
\label{lem.LP5.18}
Fix $\xi_0\in\Lambda_\rho\smallsetminus\{p_0\}$ and $D>0$.
Uniformly for $\gamma\in\mathsf P_0$, Borel sets
$F\subset\Lambda_\rho\smallsetminus\{p_0\}$ with
$\nu(F)>0$ and $D_{\bar\psi}$-diameter at most $D$,
and $\xi\in F$, we have
\begin{equation}
\label{nuF}
\nu(F)\asymp
\frac{1+D_{\bar\psi}(\gamma^{-1}\xi,\xi_0)^2}
     {1+D_{\bar\psi}(\xi,\xi_0)^2}
\,\nu(\gamma^{-1}F).
\end{equation}
\end{lemma}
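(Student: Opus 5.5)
The plan is to read off the Radon--Nikodym derivative of $\nu$ under $\gamma\in\mathsf P_0$ pointwise, using conformality, and then to identify it with the ratio in \eqref{nuF}. By $(\Gamma_\rho,\psi)$-conformality, $\nu(\gamma^{-1}F)=\gamma_*\nu(F)=\int_F e^{\psi(\beta_\eta(e,\gamma))}\,d\nu(\eta)$. So it suffices to prove two things: that for $\eta\in F$ the factor $e^{-\psi(\beta_\eta(e,\gamma))}$ is comparable to $\frac{1+D_{\bar\psi}(\gamma^{-1}\eta,\xi_0)^2}{1+D_{\bar\psi}(\eta,\xi_0)^2}$, and that this ratio changes by only a bounded factor as $\eta$ ranges over $F$. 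The direction of the ratio is forced by this identity, and the computation below confirms that it matches \eqref{nuF}.

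\textbf{Step 1 (pointwise identity).} Let $\gamma\in\mathsf P_0$ and $\eta\in\Lambda_\rho\smallsetminus\{p_0\}$. Apply \eqref{GPT} to the pair $(\gamma^{-1}\eta,p_0)$ and the element $\gamma$. Since $\gamma p_0=p_0$ and $\gamma$ is weakly unipotent, \Cref{lem.zerobuse} gives $\beta_{p_0}(e,\gamma)=0$. Hence
$$\beta_\eta(e,\gamma)=2\mathcal G(\eta,p_0)-2\mathcal G(\gamma^{-1}\eta,p_0).$$
Apply $\psi$ and replace $\psi$ by $\bar\psi$ on each Gromov product; by \Cref{lem:antisymmetric-gromov} this costs only a bounded additive error. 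This yields
$$e^{-\psi(\beta_\eta(e,\gamma))}\asymp\frac{d_{\bar\psi}(\eta,p_0)^2}{d_{\bar\psi}(\gamma^{-1}\eta,p_0)^2}.$$

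\textbf{Step 2 (conversion to $D_{\bar\psi}$).} By \Cref{lem.sphereeuccompare} with the fixed point $\xi_0$, we have $d_{\bar\psi}(\zeta,p_0)^2\asymp (1+D_{\bar\psi}(\zeta,\xi_0)^2)^{-1}$ uniformly in $\zeta\in\Lambda_\rho\smallsetminus\{p_0\}$. Applying this with $\zeta=\eta$ and with $\zeta=\gamma^{-1}\eta$ gives
$$e^{-\psi(\beta_\eta(e,\gamma))}\asymp\frac{1+D_{\bar\psi}(\gamma^{-1}\eta,\xi_0)^2}{1+D_{\bar\psi}(\eta,\xi_0)^2},$$
uniformly in $\gamma\in\mathsf P_0$ and $\eta$.

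\textbf{Step 3 (uniformity over $F$).} This is the only point needing care. We must replace $\eta$ by the given point $\xi\in F$ in both the numerator and the denominator. For the denominator, $D_{\bar\psi}(\eta,\xi)\le D$. The quasi-triangle inequality \Cref{lem.eucdistancetriangle} then gives $D_{\bar\psi}(\eta,\xi_0)\le N_0(D+D_{\bar\psi}(\xi,\xi_0))$, and symmetrically with $\eta$ and $\xi$ exchanged. It follows that $1+D_{\bar\psi}(\eta,\xi_0)^2\asymp_{D} 1+D_{\bar\psi}(\xi,\xi_0)^2$. For the numerator, the $\mathsf P_0$-invariance \eqref{inv} gives $D_{\bar\psi}(\gamma^{-1}\eta,\gamma^{-1}\xi)=D_{\bar\psi}(\eta,\xi)\le D$, so the same argument yields $1+D_{\bar\psi}(\gamma^{-1}\eta,\xi_0)^2\asymp 1+D_{\bar\psi}(\gamma^{-1}\xi,\xi_0)^2$, uniformly in $\gamma$.

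\textbf{Conclusion.} By Steps 1--3, the integrand in $\nu(\gamma^{-1}F)=\int_F e^{\psi(\beta_\eta(e,\gamma))}\,d\nu(\eta)$ is comparable on $F$ to the constant $\frac{1+D_{\bar\psi}(\xi,\xi_0)^2}{1+D_{\bar\psi}(\gamma^{-1}\xi,\xi_0)^2}$. Integrating over $F$ gives $\nu(\gamma^{-1}F)\asymp\frac{1+D_{\bar\psi}(\xi,\xi_0)^2}{1+D_{\bar\psi}(\gamma^{-1}\xi,\xi_0)^2}\,\nu(F)$. Since $\nu(F)>0$, this rearranges to exactly \eqref{nuF}. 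All implied constants depend only on $\xi_0$ and $D$.
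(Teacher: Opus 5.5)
Your proposal is correct and follows the paper's proof essentially step by step. Both arguments use conformality, the identity $\beta_\eta(e,\gamma)=2\mathcal G(\eta,p_0)-2\mathcal G(\gamma^{-1}\eta,p_0)$ from \eqref{GPT} and \Cref{lem.zerobuse}, the passage from $\psi$ to $\bar\psi$ via \Cref{lem:antisymmetric-gromov}, and the conversion through \Cref{lem.sphereeuccompare}; both then use $\mathsf P_0$-invariance of $D_{\bar\psi}$ and the quasi-triangle inequality to replace the integrand by its value at $\xi$, before integrating and rearranging.
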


\begin{proof}
For $\gamma\in\mathsf P_0$, \Cref{lem.zerobuse} gives
$\beta_{p_0}(e,\gamma)=0$, so \eqref{GPT} yields
$$
\beta_\eta(e,\gamma)
=2\mathcal G(\eta,p_0)
 -2\mathcal G(\gamma^{-1}\eta,p_0).
$$
By \Cref{lem:antisymmetric-gromov}, applying $\psi$ or
$\bar\psi$ to this difference changes its value by a
uniformly bounded amount.
Conformality and \Cref{lem.sphereeuccompare} therefore give
$$
\begin{aligned}
\nu(\gamma^{-1}F)
&=\int_F e^{\psi(\beta_\eta(e,\gamma))}\,d\nu(\eta)\asymp\int_F
\frac{d_{\bar\psi}(\gamma^{-1}\eta,p_0)^2}
     {d_{\bar\psi}(\eta,p_0)^2}\,d\nu(\eta)\asymp\int_F
\frac{1+D_{\bar\psi}(\eta,\xi_0)^2}
     {1+D_{\bar\psi}(\gamma^{-1}\eta,\xi_0)^2}\,d\nu(\eta).
\end{aligned}
$$
By $\mathsf P_0$-invariance of $D_{\bar\psi}$, both
$F$ and $\gamma^{-1}F$ have diameter at most $D$.
Thus \Cref{lem.eucdistancetriangle} shows that the last
integrand is uniformly comparable to its value at $\xi$.
Integrating and rearranging proves \eqref{nuF}.
\end{proof}

\begin{lemma}
\label{lem.LP5.17}
Fix $c>1$.
Uniformly for all $p\in\Delta\cap\Lambda_{\mathrm p}(\Gamma_\rho)$, where $\gamma_p$
is the top representative from
\eqref{eqn.definitiontoprepresentative}, and Borel sets
$F\subset\mathcal B_{\bar\psi}(p,ch(p))$, we have
$$
\nu(F)\asymp_c
e^{-\psi(\mu(\gamma_p))}\nu(\gamma_p^{-1}F).
$$ 
\end{lemma}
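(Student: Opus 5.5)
By conformality, for any Borel $F$,
$$
\nu(\gamma_p^{-1}F)=\int_F e^{\psi(\beta_\eta(e,\gamma_p))}\,d\nu(\eta).
$$
The lemma therefore reduces to showing that
$$
\psi(\beta_\eta(e,\gamma_p))=\psi(\mu(\gamma_p))+O_c(1)
$$
uniformly for $\eta\in\mathcal B_{\bar\psi}(p,ch(p))$ with $p\in\Delta\cap\Lambda_{\mathrm p}(\Gamma_\rho)$. Integrating this over $F$ then gives $\nu(\gamma_p^{-1}F)\asymp_c e^{\psi(\mu(\gamma_p))}\nu(F)$, which is the claim. By \Cref{lem.buseandcartan}, applied with $x=o$ and $y=\gamma_p o$, it suffices to find a uniform $r=r(c)>0$ such that
$$
\mathcal B_{\bar\psi}(p,ch(p))\subset\mathcal O_r(o,\gamma_po).
$$

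To obtain this shadow inclusion I pull back by $\gamma_p^{-1}$. Put $\gamma:=\gamma_p$ and $x_p=\gamma^{-1}p_i$. The shadow condition $\eta\in\mathcal O_r(o,\gamma o)$ is equivalent to $\gamma^{-1}\eta\in\mathcal O_r(\gamma^{-1}o,o)$. By \Cref{lem.ambient-shadow-convergence}(3), the complement of the shadow $\mathcal O_{r}(\gamma^{-1}o,o)$ concentrates near the attracting point of $\gamma^{-1}$. By \Cref{lem.representshadow} this point is essentially $x_p\in\Delta_i$, which lies in a compact set away from $p_i$. So it suffices to show that $\gamma^{-1}\eta$ stays a uniform distance away from $x_p$, or more precisely, that it lies outside a region where shadows fail.

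Consider first the case $i=0$, so that $p=\gamma p_0$ and $\gamma^{-1}p_0=x_p$. Apply \Cref{lem.ballflipping}. For $\eta\ne p$ with $D_{\bar\psi}(\eta,p)<ch(p)$ it gives
$$
D_{\bar\psi}(\gamma^{-1}\eta,x_p)\asymp \frac{h(p)}{D_{\bar\psi}(\eta,p)}\gg c^{-1}.
$$
Hence $\gamma^{-1}\eta$ avoids a fixed $D_{\bar\psi}$-ball about $x_p\in\Delta_0$. Since $x_p$ ranges over a compact set, this ball contains a fixed $d_{\bar\psi}$-neighbourhood of $x_p$. The point $\eta=p$ itself is handled by \Cref{lem.representshadow}. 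To convert this separation into a shadow statement I argue by contradiction. If shadows of every radius failed, there would be sequences $\gamma_n\to\infty$ and $\eta_n$ with $\eta_n\notin\mathcal O_{n}(o,\gamma_no)$. Then \Cref{lem.ambient-shadow-convergence}(3), applied to $\gamma_n^{-1}$, forces $\gamma_n^{-1}\eta_n$ to converge to the attracting limit of $\gamma_n^{-1}$. That limit coincides with $\lim x_{p_n}$, because $x_{p_n}\in\mathcal O_r(o,\gamma_n^{-1}o)$ and part (2) of the same lemma applies. This contradicts the uniform separation. Only finitely many $\gamma$ fail the limit argument, and those are absorbed into the constants.

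For $i\ne0$, the relevant point $\gamma^{-1}p_0$ is no longer $x_p$. Here I instead transport the problem via $g_i$ and the conjugate height $h_i$, using \Cref{lem.LP5.11} to compare $h_i$ with $h$. Alternatively, I use the second estimate of \Cref{lem.ballflipping} together with \Cref{lem.sphereeuccompare} to control $d_{\bar\psi}(\gamma^{-1}\eta,x_p)$ directly. I expect this non-$p_0$ cusp case, that is, getting uniform separation from $x_p$ rather than from $\gamma^{-1}p_0$, to be the main obstacle. My first attempt would be to write $p=\gamma'p_0$ for some $\gamma'\in\gamma\,\Gamma_\rho$ reduction and compare $\gamma'$ with $\gamma$, noting that they differ by a bounded element from the finitely many cusp-changing choices.
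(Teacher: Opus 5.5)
Your reduction via conformality and \Cref{lem.buseandcartan} to the shadow inclusion $\mathcal B_{\bar\psi}(p,ch(p))\subset\mathcal O_r(o,\gamma_po)$ is correct and is the paper's. So is your contradiction argument for points of type $0$: ball flipping separates $\gamma_n^{-1}\eta_n$ from $x_{p_n}$, while \Cref{lem.ambient-shadow-convergence}(2),(3) forces both to a common limit in $\Delta_0$. No exceptional $\gamma$ need to be absorbed there, since $\eta_n\notin\mathcal O_n(o,\gamma_no)$ already forces $d(o,\gamma_no)\ge n$.

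The gap is the case $p\in\Gamma_\rho p_i$ with $i\neq0$, which you leave open, and your concrete first attempt cannot work. The points $p_i$ and $p_0$ lie in different $\Gamma_\rho$-orbits, so no $\gamma'\in\Gamma_\rho$ satisfies $\gamma'p_0=p$. For the same reason, \Cref{lem.ballflipping}, which requires $p=\gamma p_0$, does not apply to $p$ in the original system. Your ``alternative'' therefore fails as well.

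What is needed is to change the system, not the group element. The steps are as follows.
\begin{enumerate}
\item By \Cref{lem.heightondelta} and \eqref{lb}, only finitely many top representatives $\gamma_p$ have $h(p)$ bounded below. These are absorbed by enlarging $r$.
\item When $h(p)$ is small, \eqref{dedef} and \Cref{lem.dpsidiameterfinite} place $\mathcal B_{\bar\psi}(p,ch(p))$ in a fixed compact neighbourhood of $\Delta$ avoiding $p_0,\dots,p_\ell$. There $g_i$ is uniformly bi-Lipschitz for $D_{\bar\psi}$ by \Cref{lem.metrics}.
\item Combining this with $h_i(g_ip)\asymp h(p)$ from \Cref{lem.LP5.11} gives $g_i\mathcal B_{\bar\psi}(p,ch(p))\subset\mathcal B_{\bar\psi}(g_ip,Cc\,h_i(g_ip))$.
\item Now run your type-$0$ argument in $\Gamma_i=g_i\Gamma_\rho g_i^{-1}$ for the element $g_i\gamma_pg_i^{-1}$. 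It sends $p_0$ to $g_ip$, and its inverse sends $p_0$ to $g_ix_p$, which lies in the compact set $g_i\Delta_i$ not containing $p_0$. Use ball flipping with the height $h_i$, and transport \Cref{lem.representshadow} and \Cref{lem.ambient-shadow-convergence} by $g_i$. This yields $\mathcal B_{\bar\psi}(g_ip,Cc\,h_i(g_ip))\subset\mathcal O_R(o,g_i\gamma_pg_i^{-1}o)$, uniformly over the finitely many cusp types.
\item Since $g_i\in K$ fixes $o$, applying $g_i^{-1}$ gives $\mathcal B_{\bar\psi}(p,ch(p))\subset\mathcal O_R(o,\gamma_po)$.
\end{enumerate}

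Your remark about transporting via $g_i$ and $h_i$ points in the right direction. Without these steps, however, the proof covers only one cusp type.
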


\begin{proof}
We claim that there exists $R=R(c)>0$ such that
$\mathcal B_{\bar\psi}(p,ch(p))
\subset\mathcal O_R(o,\gamma_p o)
$
for every $p=\gamma_pp_0\in\Delta$.
Otherwise, choose
$p_n=\gamma_{p_n}p_0\in\Delta$ and
$x_n\in\mathcal B_{\bar\psi}(p_n,ch(p_n))
\smallsetminus\mathcal O_n(o,\gamma_{p_n}o).
$
By \Cref{lem.representshadow}, $x_n\neq p_n$ for all
sufficiently large $n$.
Set
$y_n:=\gamma_{p_n}^{-1}x_n$ and
$z_n:=\gamma_{p_n}^{-1}p_0=x_{p_n}\in\Delta_0.
$
Then \Cref{lem.ballflipping} gives
$D_{\bar\psi}(y_n,z_n)\gg_c1$.
Since $x_n\notin\mathcal O_n(o,\gamma_{p_n}o)$, we have
$d(o,\gamma_{p_n}o)\geq n$, and hence
$\gamma_{p_n}^{-1}\to\infty$.
By \Cref{lem.ambient-shadow-convergence}(3), after
passing to a subsequence, $y_n\to\zeta\in\Lambda_\rho$.
By \Cref{lem.representshadow} and
\Cref{lem.ambient-shadow-convergence}(2), $z_n\to\zeta$
as well.
Since $\Delta_0$ is compact and disjoint from $p_0$,
we have $\zeta\neq p_0$.
Continuity of $D_{\bar\psi}$ therefore gives
$D_{\bar\psi}(y_n,z_n)\to0$, a contradiction.

Now let $p=\gamma_pp_i\in\Delta$ for an arbitrary $i$.
For fixed $c$, \eqref{dedef} and
\Cref{lem.dpsidiameterfinite} imply that, when $h(p)$ is
sufficiently small, the ball
$\mathcal B_{\bar\psi}(p,ch(p))$ lies in a fixed compact
neighborhood of $\Delta$ disjoint from all the points
$p_0,\ldots,p_\ell$.
Hence \Cref{lem.metrics,lem.LP5.11} give a uniform $C>1$
such that
$$
g_i\mathcal B_{\bar\psi}(p,ch(p))
\subset
\mathcal B_{\bar\psi}(g_ip,Cc\,h_i(g_ip)).
$$
Apply the preceding argument in
$\Gamma_i=g_i\Gamma_\rho g_i^{-1}$.
The element $g_i\gamma_pg_i^{-1}$ sends $p_0$ to $g_ip$,
and its inverse sends $p_0$ into $g_i\Delta_i$.
Thus the ball on the right is contained in
$\mathcal O_R(o,(g_i\gamma_pg_i^{-1})o)$ for some
$R=R(c)$, uniform over the finitely many cusp types.
Since $g_i\in K$, pulling back by $g_i$ gives
$\mathcal B_{\bar\psi}(p,ch(p))
\subset\mathcal O_R(o,\gamma_po)$.

By \Cref{lem.heightondelta} and \eqref{lb}, only finitely
many top representatives $\gamma_p$ occur when $h(p)$
is bounded below by a fixed positive constant.
Increasing $R$ therefore handles the remaining points
$p$ and proves the claim for every cusp type.

The claim and \Cref{lem.buseandcartan} give
$\beta_\xi(e,\gamma_p)=\mu(\gamma_p)+O_c(1)$ uniformly
for $\xi\in\mathcal B_{\bar\psi}(p,ch(p))$. The result follows from conformality:
$
\nu(\gamma_p^{-1}F)
=\int_F e^{\psi(\beta_\xi(e,\gamma_p))}\,d\nu(\xi).
$
\end{proof}
\subsection{Cusp depth and global shadows}

A horoball in $\mathscr H$ has type $i$ if it is a
$\Gamma$-translate of $\mathsf H_i$; a parabolic point
$p\in\Lambda_{\mathrm p}(\Gamma_\rho)$ has type $i$
if $p\in\Gamma_\rho p_i$.

For a primitive generator $u_i$ of $\mathsf P_i$, put
$v_i:=v_{u_i}$. By \Cref{lem.regularunipotent_cartan},
$\i(v_i)=v_i$ and $v_i\in\operatorname{int}\mathfrak a^+$.
Set
$$
\kappa_i:=\frac12\psi(v_i)=\frac12\bar\psi(v_i)>0.
$$
For each $\varphi\in\{\psi,\bar\psi\}$, the same lemma gives
$\varphi(\mu(u_i^n))=2\kappa_i\log|n|+O(1)$
as $|n|\to\infty$.
Since $\mathsf P_i=\langle u_i\rangle$, the Poincar\'e series
$\sum_{\gamma\in\mathsf P_i}e^{-s\varphi(\mu(\gamma))}$
has the same abscissa of convergence as
$\sum_{n\ge1}n^{-2s\kappa_i}$.
Consequently,
$$
\delta_i:=\delta_\psi(\mathsf P_i)
=\delta_{\bar\psi}(\mathsf P_i)
=\frac{1}{2\kappa_i}<1,
$$
where the last inequality follows from \Cref{thm.entropydrop}.

The cusp calculation \eqref{cuspes} gives
\begin{equation}
\label{eqn:scalar-cusp-depth}
\varphi\bigl(\mu(f(x)^{-1}f(y))\bigr)
=\kappa_i d(x,y)+O(1)
\end{equation}
for $\varphi\in\{\psi,\bar\psi\}$ whenever $x,y\in Y$
lie in the same horoball of type $i$, with uniform error.

\begin{lemma}[Comparison with cusp depth]
\label{lem:weighted-cusp-depth}
For $\varphi\in\{\psi,\bar\psi\}$ and every horoball
$\mathsf H\in\mathscr H$ of type $i$,
$$
\inf_{g\in\Gamma_\rho}
\varphi\bigl(\mu(g^{-1}f(x))\bigr)
=\kappa_i d(x,\partial\mathsf H)+O(1)
\qquad(x\in Y\cap\mathsf H).
$$
For $x\in Y_{\rm th}$, the same infimum is $O(1)$.
All errors are uniform in $x$ and $\mathsf H$.
\end{lemma}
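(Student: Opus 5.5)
By $\rho$-equivariance and $K$-bi-invariance, $\varphi(\mu(g^{-1}f(x)))$ is the scalar Cartan distance from $f(x)$ to the orbit point $g o=f(\widehat g o)$, where $\widehat g=\rho^{-1}(g)$. \Cref{lem.scalar-Cartan-comparison} gives
$$
\varphi\bigl(\mu(f(x)^{-1}f(\widehat g o))\bigr)\geq a^{-1}d(x,\widehat g o)-b .
$$
So the infimum over $g$ is comparable to a function of the hyperbolic distance from $x$ to $\Gamma o$, up to multiplicative constants. What we need is the sharper additive statement with slope exactly $\kappa_i$. The plan is to prove matching upper and lower bounds.

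\emph{Thick part.} Since $\Gamma\backslash Y_{\rm th}$ is compact, every $x\in Y_{\rm th}$ lies within a uniform distance $R_0$ of some $\widehat g o$. The upper bound $\ll d(x,\widehat g o)+1$ from \Cref{lem.scalar-Cartan-comparison} then bounds the infimum by a constant. The infimum is also bounded below, because $\varphi>0$ on $\mathcal L_f\smallsetminus\{0\}$ and so $\varphi\circ\mu$ is bounded below along $f(Y)$. Hence the infimum is $O(1)$ on $Y_{\rm th}$.

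\emph{Upper bound in the cusp.} Let $x\in Y\cap\mathsf H$, and let $x'\in\partial\mathsf H$ be the point of the geodesic from $x$ perpendicular to $\partial\mathsf H$ (the vertical projection). Then $d(x,x')=d(x,\partial\mathsf H)$, and $x'$ lies in $Y$ (the hull is convex and contains the relevant vertical ray after adjusting within bounded distance). Since $\partial\mathsf H\cap Y\subset Y_{\rm th}$, there is $g$ with $d(x',\widehat g o)\le R_0$. Applying \eqref{eqn:scalar-cusp-depth} to $x,x'$ in the same horoball and \Cref{lem.cptcartan} to the bounded move from $f(x')$ to $go$ gives
$$
\varphi(\mu(g^{-1}f(x)))\le\kappa_i d(x,\partial\mathsf H)+O(1).
$$

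\emph{Lower bound in the cusp.} This is the main step. Take any $g$ and set $y=\widehat g o\in Y_{\rm th}$, so $y\notin\operatorname{int}\mathsf H$. The geodesic $[x,y]$ exits $\mathsf H$ at a point $y'\in\partial\mathsf H$, and $d(x,y')\ge d(x,\partial\mathsf H)$. Coarse additivity (\Cref{lem.DKO_additive}) gives
$$
\varphi\bigl(\mu(f(x)^{-1}f(y))\bigr)=\varphi\bigl(\mu(f(x)^{-1}f(y'))\bigr)+\varphi\bigl(\mu(f(y')^{-1}f(y))\bigr)+O(1).
$$
The first term is at least $\kappa_i d(x,\partial\mathsf H)-O(1)$ by \eqref{eqn:scalar-cusp-depth}, since $x,y'\in\mathsf H$. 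The second term is bounded below uniformly because $\varphi\circ\mu$ is bounded below on $f(Y)$ (the same fact used for the thick part). This gives the lower bound. One also needs $\mu(g^{-1}f(x))=\mu(f(y)^{-1}f(x))$ together with $\varphi\circ\i=\varphi$ for the symmetric form; for $\psi$ itself this fails. I would handle $\psi$ by noting that \eqref{eqn:scalar-cusp-depth} holds for both orders of the pair, because $v_i=\i v_i$. Applying $\i$ to the displayed decomposition, coarse additivity still applies with the reversed segment, and $\psi(\i w)$ is bounded below on $\mathcal L_f$ by \eqref{lb} applied to $\psi^-$. All constants depend only on the finitely many cusp types, so the errors are uniform in $x$ and $\mathsf H$.
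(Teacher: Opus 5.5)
Your proposal is correct and is essentially the paper's argument: cocompactness of $Y_{\rm th}$ with \Cref{lem.scalar-Cartan-comparison} handles the thick part. The upper bound uses a point of $Y\cap\partial\mathsf H$ within $O(1)$ of both the orthogonal projection of $x$ and of $\Gamma o$. The lower bound comes from coarse additivity at the exit point of $[\widehat g o,x]$ from $\mathsf H$.

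Two small simplifications are worth noting. First, the orthogonal projection $x'$ itself need not lie in $Y$, so the $O(1)$ adjustment you mention is genuinely needed. Second, the order issue for $\psi$ disappears if you apply \Cref{lem.DKO_additive} directly to the triple $\widehat g o, y', x$. This works because $\mu(g^{-1}f(x))=\mu(f(\widehat g o)^{-1}f(x))$ and \eqref{eqn:scalar-cusp-depth} is symmetric in the pair.
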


\begin{proof}
On $Y_{\rm th}$, the assertion follows from
cocompactness and \Cref{lem.scalar-Cartan-comparison}.
Suppose that $x\in Y\cap\mathsf H$, where $\mathsf H$
has type $i$. Choose $z\in Y\cap\partial\mathsf H$
such that
$d(x,z)=d(x,\partial\mathsf H)+O(1)$ and
$d(z,\Gamma o)=O(1)$.
The upper bound follows from
\eqref{eqn:scalar-cusp-depth} and \Cref{lem.cptcartan}.

For the lower bound, fix $g\in\Gamma_\rho$ and let
$z\in[\widehat g o,x]\cap\partial\mathsf H$.
By \Cref{lem.DKO_additive,lem.scalar-Cartan-comparison}
and \eqref{eqn:scalar-cusp-depth},
$$
\varphi\bigl(\mu(g^{-1}f(x))\bigr)
\geq\kappa_i d(z,x)-C
\geq\kappa_i d(x,\partial\mathsf H)-C.
$$
Taking the infimum over $g$ proves the claim.
\end{proof}

\begin{theorem}[Global shadow lemma,
{\cite[Theorems~1.4 and~5.1]{KO_globalshadow}}]
For every sufficiently large fixed $R>1$, the following
estimates hold uniformly for $\zeta\in\Lambda(\Gamma)$
and $x\in[o,\zeta]$.
If $x$ lies in a horoball $\mathsf H\in\mathscr H$
of type $i$, then
\begin{equation}
\label{eqn:general-shadow-depth}
\nu\bigl(\mathcal O_R(o,f(x))\bigr)
\asymp
e^{-\psi(\mu(f(x)))
   +(2\delta_i-1)\kappa_i d(x,\partial\mathsf H)}.
\end{equation}
If $x\in Y_{\rm th}$, then
$\nu(\mathcal O_R(o,f(x)))
\asymp e^{-\psi(\mu(f(x)))}$.
\end{theorem}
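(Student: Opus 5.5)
The plan is to deduce the statement from \cite[Theorems~1.4 and~5.1]{KO_globalshadow}, translating their depth term into the horoball distance $d(x,\partial\mathsf H)$ by means of \Cref{lem:weighted-cusp-depth}. In case the hypotheses there are not literally matched, we also sketch a direct argument in the present notation, which we expect to reproduce the same exponent. Fix $R$ large enough that \Cref{lem.shadowscomparison,lem.buseandcartan} apply.

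\emph{Thick part.} If $x\in Y_{\rm th}$, choose $\gamma\in\Gamma_\rho$ with $d(x,\widehat\gamma o)=O(1)$. By \Cref{lem.cptcartan}, the ambient shadow $\mathcal O_R(o,f(x))$ is sandwiched between shadows $\mathcal O_{R'}(o,\gamma o)$ with radii comparable to $R$. Conformality together with \Cref{lem.buseandcartan} gives
$$
\nu(\mathcal O_{R'}(o,\gamma o))\asymp e^{-\psi(\mu(\gamma))}\,\nu\bigl(\gamma^{-1}\mathcal O_{R'}(o,\gamma o)\bigr).
$$
The last factor is bounded above by $1$. For the lower bound, $\gamma^{-1}\mathcal O_{R'}(o,\gamma o)=\mathcal O_{R'}(\gamma^{-1}o,o)$. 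By \Cref{lem.ambient-shadow-convergence}(3) and a compactness argument, this set contains the complement in $\Lambda_\rho$ of a small neighborhood of a single point. Since $\nu$ is atomless (\Cref{thm.ps atomless 0}), that complement has $\nu$-mass bounded below. This is the standard shadow lemma.

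\emph{Cusp case.} Let $x\in\mathsf H$ with $\mathsf H$ based at $\widehat\gamma\xi_i$. Let $z,z'$ be the entrance and exit points of $[o,\zeta]$ in $\mathsf H$, where $z'$ is replaced by $\zeta$ if $\zeta$ is the base point. Replacing $\gamma$ within $\gamma\mathsf P_i$ as in the proof of \Cref{lem.LP6.16}, we may assume $z$ is uniformly close to $\widehat\gamma o$. Translating by $\gamma^{-1}$ and using conformality, we obtain
$$
\nu(\mathcal O_R(o,f(x)))\asymp e^{-\psi(\mu(\gamma))}\,\nu(S),\qquad S:=\mathcal O_R\bigl(\gamma^{-1}o,\gamma^{-1}f(x)\bigr).
$$
Here \Cref{lem.buseandcartan} applies because the shadow sits inside $\mathcal O_{R''}(o,\gamma o)$. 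In the upper half-plane normalization $\xi_i=\infty$, with the horoball at height $1$, write $d:=d(x,\partial\mathsf H)$; the point $\widehat\gamma^{-1}x$ lies at height about $e^{d}$. There are two sub-cases.
\begin{itemize}
\item If $x$ lies on the ascending part $[z,\text{top}]$, then $S$ coincides, up to the bounded fudge from \Cref{lem.shadowscomparison}, with $f$ of the boundary points at Euclidean distance $\gtrsim e^{d}$. This region is coarsely $\bigcup_{|n|\gtrsim e^d}u_i^n\Delta_i$.
\item If $x$ lies on the descending part, the shadow is localized near the exit. After translating by the parabolic element $u_i^m$ that brings the exit point near $o$, this sub-case reduces to the previous one with the roles of $z$ and $z'$ swapped. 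The factor $e^{-\psi(\mu(\gamma u_i^m))}$ is then absorbed using \eqref{eqn:scalar-cusp-depth} and \Cref{lem.DKO_additive}.
\end{itemize}

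\emph{Key computation.} By conformality, \Cref{lem.regularunipotent_cartan} and \Cref{lem.buseandcartan}, we have $\nu(u_i^n\Delta_i)\asymp e^{-\psi(\mu(u_i^n))}\asymp |n|^{-2\kappa_i}$. Since $2\kappa_i=1/\delta_i>1$, summing over $|n|\ge e^{d}$ gives
$$
\nu(S)\asymp e^{(1-2\kappa_i)d}.
$$
By \Cref{lem.DKO_additive} and \eqref{eqn:scalar-cusp-depth}, $\psi(\mu(f(x)))=\psi(\mu(\gamma))+\kappa_i d+O(1)$. Hence
$$
e^{-\psi(\mu(\gamma))}\,e^{(1-2\kappa_i)d}=e^{-\psi(\mu(f(x)))+(1-\kappa_i)d}.
$$
Since $(2\delta_i-1)\kappa_i=1-\kappa_i$, this equals $e^{-\psi(\mu(f(x)))+(2\delta_i-1)\kappa_i d}$, which is \eqref{eqn:general-shadow-depth}.

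\emph{Main obstacle.} The hardest step is the uniform two-sided identification of $S$ with the tail union $\bigcup_{|n|\gtrsim e^d}u_i^n\Delta_i$, up to finitely many boundary translates, with constants uniform in $\zeta$, $x$ and $\mathsf H$. We would prove this by passing through hyperbolic shadows via \Cref{lem.shadowscomparison} and doing the elementary $\mathbb H^2$ geometry there. Bounded changes of radius and bounded shifts in $n$ only alter the sum by bounded factors, because the tail sum is doubling in $N$. Uniformity over the finitely many cusp types then follows from the finitely many choices of $\mathsf H_i$ and $\Delta_i$.
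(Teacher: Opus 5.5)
Your primary route, which cites \cite[Theorems~1.4 and~5.1]{KO_globalshadow} and converts the depth term into $\kappa_i\,d(x,\partial\mathsf H)$ via \Cref{lem.shadowscomparison,lem:weighted-cusp-depth}, is exactly the paper's proof. The paper adds only one remark: the polynomial factor in the cited lemma is absent because the peripheral groups are cyclic. Your thick-part argument and the ascending sub-case of your fallback computation are also sound.

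The descending sub-case of the fallback, however, fails as sketched.

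\emph{Why the exit-element reduction breaks down.} When $x$ lies past the top of the excursion, the exit point $z'$ is not on $[o,x]$. Hence $\mathcal O_R(o,f(x))$ is not contained in any bounded-radius shadow of $\widehat{\gamma u_i^m}o\approx z'$. The Busemann--Cartan comparison for $\gamma u_i^m$ therefore fails on this shadow, and no single conformal factor $e^{-\psi(\mu(\gamma u_i^m))}$ is available. Concretely, $(\gamma u_i^m)^{-1}\mathcal O_R(o,f(x))$ is coarsely the window $\bigcup_{|n|\lesssim e^{d}}u_i^n\Delta_i$ around the exit tile, not a tail near $p_i$. The tiles $\gamma u_i^{m+n}\Delta_i$ making up the shadow all have mass comparable to $e^{-\psi(\mu(\gamma u_i^m))}$, not to $e^{-\psi(\mu(\gamma u_i^m))}|n|^{-2\kappa_i}$. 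If you reuse the ascending formula with the exit element, you get $e^{-\psi(\mu(f(x)))+(1-3\kappa_i)d}$, which is off by a factor $e^{2\kappa_i d}$. The underlying reason is that the observer $o$ sits on the entry side, so the geodesic cannot simply be reversed.

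\emph{How to fix it.} Keep the entry element $\gamma$; your conformality step is valid for it because $z\in[o,x]$. Put $D:=d(z,z')$, and work away from a bounded neighborhood of the top, where the two descriptions agree up to constants.
\begin{itemize}
\item The set $S$ is coarsely $\bigcup_{|n-m|\lesssim e^{d}}u_i^n\Delta_i$, where $|m|\asymp e^{D/2}$.
\item Each of these tiles has mass $\asymp|m|^{-2\kappa_i}\asymp e^{-\kappa_i D}$, so $\nu(S)\asymp e^{d-\kappa_i D}$.
\item Combined with $\psi(\mu(f(x)))=\psi(\mu(\gamma))+\kappa_i(D-d)+O(1)$, this yields \eqref{eqn:general-shadow-depth}.
\end{itemize}
The two sub-cases thus rest on genuinely different mechanisms: a tail sum in the ascending case, and a window of comparable tiles in the descending case.
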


\begin{proof}
Apply the cited global shadow lemma using
\Cref{lem.shadowscomparison,lem:weighted-cusp-depth}.
The polynomial term is absent because the peripheral
groups are cyclic.
\end{proof}

For $p\in\Lambda_{\mathrm p}(\Gamma_\rho)$, let
$\mathsf H_p\in\mathscr H$ be the horoball based at
$f^{-1}(p)$, and let $t_p\geq0$ be the entrance time
of $\widehat z_p$ into $\mathsf H_p$.
If $p$ has type $i$, choose $g_p\in\Gamma_\rho$ such that
$g_pp_i=p$ and $\widehat g_po$ lies within a uniform
distance of $\widehat z_p(t_p)$.
Writing $\mathsf P_p:=\operatorname{Stab}_{\Gamma_\rho}(p)$,
we have
$\delta_\psi(\mathsf P_p)
=\delta_{\bar\psi}(\mathsf P_p)=\delta_i$.

For $t\geq t_p$,
$d(\widehat z_p(t),\partial\mathsf H_p)=t-t_p$.
Coarse additivity, \Cref{lem.cptcartan}, and
\eqref{eqn:scalar-cusp-depth} therefore give, for
$\varphi\in\{\psi,\bar\psi\}$,
\begin{equation}
\label{eqn:parabolic-ray-depth}
\begin{aligned}
&\varphi(\mu(z_p(t)))
=\varphi(\mu(g_p))+\kappa_i(t-t_p)+O(1),\\
&\varphi(\mu(g_p^{-1}z_p(t)))
=\kappa_i(t-t_p)+O(1).
\end{aligned}
\end{equation}

\begin{corollary}
\label{lem.globalshadowatparabolic}
For every sufficiently large fixed $R>1$, uniformly
in $p\in \La_{\rm p}(\Ga_\rho)$ of type $i$ and $t\geq t_p$,
\begin{equation}
\label{nufo}
\begin{aligned}
\nu\bigl(\mathcal O_R(o,z_p(t))\bigr)
&\asymp
e^{-\psi(\mu(g_p))
   +2(\delta_i-1)\psi(\mu(g_p^{-1}z_p(t)))}\asymp
e^{-\psi(\mu(g_p))
   -2(1-\delta_i)\kappa_i(t-t_p)}.
\end{aligned}
\end{equation}
\end{corollary}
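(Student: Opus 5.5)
The corollary is a direct specialization of the global shadow lemma \eqref{eqn:general-shadow-depth} to the ray $\widehat z_p$ toward a parabolic point, combined with the depth asymptotics \eqref{eqn:parabolic-ray-depth}.

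\textbf{Step 1: apply the global shadow lemma.} Fix $R$ large enough for the global shadow lemma, and let $\zeta:=f^{-1}(p)$. For $t\ge t_p$, the point $x:=\widehat z_p(t)\in[o,\zeta]$ lies in the horoball $\mathsf H_p$, which has type $i$, and satisfies $d(x,\partial\mathsf H_p)=t-t_p$. (At $t=t_p$, use either the horoball case with depth $0$ or the $Y_{\rm th}$ case; they agree.) Then \eqref{eqn:general-shadow-depth} gives
$$
\nu(\mathcal O_R(o,z_p(t)))\asymp e^{-\psi(\mu(z_p(t)))+(2\delta_i-1)\kappa_i(t-t_p)}.
$$

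\textbf{Step 2: split the Cartan term.} Insert the first line of \eqref{eqn:parabolic-ray-depth} with $\varphi=\psi$:
$$
\psi(\mu(z_p(t)))=\psi(\mu(g_p))+\kappa_i(t-t_p)+O(1).
$$
The exponent becomes
$$
-\psi(\mu(g_p))-\kappa_i(t-t_p)+(2\delta_i-1)\kappa_i(t-t_p)
=-\psi(\mu(g_p))-2(1-\delta_i)\kappa_i(t-t_p),
$$
up to $O(1)$. This is the second expression in \eqref{nufo}. Since all errors in \eqref{eqn:parabolic-ray-depth} are uniform in $p$ and $t$, and the shadow lemma is uniform in $\zeta$ and $x$, the comparison constants are uniform.

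\textbf{Step 3: the first expression.} By the second line of \eqref{eqn:parabolic-ray-depth},
$$
\psi(\mu(g_p^{-1}z_p(t)))=\kappa_i(t-t_p)+O(1).
$$
Hence
$$
2(\delta_i-1)\psi(\mu(g_p^{-1}z_p(t)))=-2(1-\delta_i)\kappa_i(t-t_p)+O(1),
$$
where the error stays bounded because $\delta_i<1$ is fixed and there are finitely many types. This identifies the first and second expressions in \eqref{nufo}.

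\textbf{Main point to check.} The only delicate step is ensuring that the $O(1)$ terms in \eqref{eqn:parabolic-ray-depth} are uniform in $p$. This relies on the choice of $g_p$ with $\widehat g_po$ uniformly close to $\widehat z_p(t_p)$, on coarse additivity (\Cref{lem.DKO_additive}), and on \eqref{eqn:scalar-cusp-depth}, whose error is uniform over all horoballs. Both facts are already recorded before the statement, so they can be taken as given.
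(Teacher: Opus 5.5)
Your proposal is correct and follows the paper's own argument. The paper proves the corollary by substituting \eqref{eqn:parabolic-ray-depth} with $\varphi=\psi$ into the global shadow estimate \eqref{eqn:general-shadow-depth}, exactly as in your Steps 1--3; your remarks on the uniformity of the $O(1)$ terms and on the case $t=t_p$ (where $\widehat z_p(t_p)$ lies in the closed horoball $\mathsf H_p$ at depth $0$) are consistent with that argument.
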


\begin{proof}
Substitute \eqref{eqn:parabolic-ray-depth}, with
$\varphi=\psi$, into \eqref{eqn:general-shadow-depth}.
\end{proof}

\subsection{Doubling and reverse doubling}

\begin{proposition}[Doubling and reverse doubling]
\label{prop.LP5.14}
\begin{enumerate}
\item
For every $C>1$,
$$
\nu(\mathcal B_{\bar\psi}(\xi,Cr))
\asymp_C
\nu(\mathcal B_{\bar\psi}(\xi,r))
$$
uniformly for $\xi\in\Delta$ and $0<r\leq1/C$.

\item
There exists $C_0>1$ such that, for every
$\xi\in\Delta$ and $0<r<1/C_0$,
$$
2\nu(\mathcal B_{\bar\psi}(\xi,r))
\leq\nu(\mathcal B_{\bar\psi}(\xi,C_0r)).
$$
\end{enumerate}
\end{proposition}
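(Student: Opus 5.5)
The plan is to reduce both statements to shadow estimates, via \Cref{prop.shadowcompare}, and then compute shadow masses with the global shadow lemma \eqref{eqn:general-shadow-depth} and \Cref{lem.globalshadowatparabolic}. Since $\Delta$ is compact and disjoint from $p_0$, the premetrics $D_{\bar\psi}$ and $d_{\bar\psi}$ are uniformly comparable on a fixed neighborhood of $\Delta$ (\Cref{lem.sphereeuccompare} together with \eqref{dedef}). Thus for $\xi\in\Delta$ and $r\le 1/C$ small, $\mathcal B_{\bar\psi}(\xi,r)$ is sandwiched between $d_{\bar\psi}$-balls $B_{\bar\psi}(\xi,c^{\mp1}r)$. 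For larger $r\le 1/C$ there are only boundedly many scales, and both sides are then bounded above and below, because $\nu$ has full support on $\Lambda_\rho$ and the masses are continuous in the center over the compact set $\Delta$. It therefore suffices to prove (1) and (2) for $d_{\bar\psi}$-balls at small radii. Next we parametrize radii along the ray $z_\xi$. The function $t\mapsto\bar\psi(\mu(z_\xi(t)))$ is coarsely increasing with coarsely bounded-above and bounded-below slope (\Cref{lem.scalar-Cartan-comparison} and \Lembda additivity \Cref{lem.DKO_additive}). Hence for each small $r$ there is $t=t(\xi,r)$ with $e^{-\bar\psi(\mu(z_\xi(t)))}\asymp r$. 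By \Cref{prop.shadowcompare}, the ball $B_{\bar\psi}(\xi,r)$ is comparable to $\mathcal O_R(o,z_\xi(t))$, with $t$ shifted by a bounded amount depending on $C$.

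For (1), it then suffices to show that $\nu(\mathcal O_R(o,z_\xi(t)))\asymp\nu(\mathcal O_R(o,z_\xi(t+s)))$ for $s$ bounded. This follows from \eqref{eqn:general-shadow-depth}. Both $\psi(\mu(f(x)))$ and the cusp depth $d(x,\partial\mathsf H)$ change by $O(s)$ when $x$ moves a distance $s$ along the ray. This holds by coarse additivity in the former case and by the $1$-Lipschitz property in the latter. Passing between the thick part and a horoball also changes the depth term by only $O(s)$. Points $x$ slightly outside $\mathrm{hull}$ are handled by the uniform-neighborhood version of the same statement.

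For (2), we need a definite drop in mass over a long but bounded stretch $s_0$. This is the main obstacle, because the exponent in \eqref{eqn:general-shadow-depth} is not monotone: inside a horoball the depth term $(2\delta_i-1)\kappa_i d$ can increase. We handle this in two cases. Suppose first that the segment $z_\xi([t,t+s_0])$ goes through a horoball $\mathsf H$ of type $i$, entering at time $a$ and with depth profile $d(\tau)=\min(\tau-a,b-\tau)$. The exponent changes by $-\psi$-growth $+(2\delta_i-1)\kappa_i\Delta d$. The $\psi$-growth along the ray is at least $\kappa_i$ times the increment inside the cusp (\eqref{eqn:scalar-cusp-depth}) and at least linear in the thick part. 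On the descending part of a cusp the net rate is $-\kappa_i-(2\delta_i-1)\kappa_i=-2\delta_i\kappa_i<0$. On the ascending part it is $-\kappa_i+(2\delta_i-1)\kappa_i=-2(1-\delta_i)\kappa_i<0$, using $\delta_i<1$ from \Cref{thm.entropydrop}. Both agree with \eqref{nufo}. So in every regime the logarithm of the shadow mass decreases at a uniform positive rate, up to a bounded additive error. The transition at the exit point needs separate care: \eqref{eqn:general-shadow-depth} is only a two-sided coarse estimate, so we use it at the endpoints $t$ and $t+s_0$ directly, decomposing $\psi(\mu(z_\xi(t+s_0)))-\psi(\mu(z_\xi(t)))$ via \Cref{lem.DKO_additive} at the horoball entry and exit points. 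This yields $\log\nu(\text{shadow at }t+s_0)\le\log\nu(\text{shadow at }t)-c s_0+O(1)$. Choosing $s_0$ large makes the ratio at most $1/2$ after absorbing the comparison constants. Translating $s_0$ back into a radius factor through the bounded-slope property of $\bar\psi(\mu(z_\xi(\cdot)))$ gives the constant $C_0$. The residual range of $r$ near $1/C_0$ is handled, as before, by compactness of $\Delta$ and atomlessness of $\nu$ (\Cref{thm.ps atomless 0}).
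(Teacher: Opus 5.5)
Your proposal is correct and is essentially the paper's argument: you compare $D_{\bar\psi}$-balls with shadows via \Cref{prop.shadowcompare} and the comparability of $d_{\bar\psi}$ and $D_{\bar\psi}$ near $\Delta$. You then use the global shadow lemma with $0<\delta_i<1$ (phrased in the paper as $\sigma=\max_i|2\delta_i-1|<1$), decomposing at the exit from the first horoball and the entrance to the last with \Cref{lem.scalar-Cartan-comparison} controlling the middle stretch, to get two-sided linear bounds on log-ratios of shadow masses.

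The differences are cosmetic. The paper packages these bounds as the uniform power law \eqref{eqn:ball-ratios-general} and treats the residual radii in (2) by comparing with $s=\min\{C_0r,r_0\}$. Your atomlessness-and-compactness argument for that range also works once $C_0$ is taken large.
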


\begin{proof}
Fix a sufficiently large $R$ and set
$\sigma:=\max_i|2\delta_i-1|<1$.
Let $\zeta\in\Lambda(\Gamma)$ and $x,y\in[o,\zeta]$
with $d(o,x)\leq d(o,y)$.

Suppose first that $x,y$ lie in the same horoball
$\mathsf H$ of type $i$.
By \Cref{lem.DKO_additive},
\eqref{eqn:scalar-cusp-depth}, and
\eqref{eqn:general-shadow-depth},
$$
\begin{aligned}
\log
\frac{\nu(\mathcal O_R(o,f(x)))}
     {\nu(\mathcal O_R(o,f(y)))}
={}&\kappa_i d(x,y) -(2\delta_i-1)\kappa_i
\bigl(d(y,\partial\mathsf H)-d(x,\partial\mathsf H)\bigr)
+O(1).
\end{aligned}
$$
Since distance to $\partial\mathsf H$ is $1$-Lipschitz,
this lies between
$(1-\sigma)\kappa_i d(x,y)-K$ and
$(1+\sigma)\kappa_i d(x,y)+K$
for a uniform $K$.

Otherwise, let $z$ be the exit point of $[x,y]$ from
the horoball $\mathsf H$ of type $i$ containing $x$,
and let $z'$ be its entrance point into the horoball
$\mathsf H'$ of type $j$ containing $y$.
If $x\in Y_{\rm th}$, set $z=x$; if $y\in Y_{\rm th}$,
set $z'=y$. Omit the corresponding cusp terms in these cases.
Coarse additivity and \eqref{eqn:scalar-cusp-depth} give
$$
\begin{aligned}
\psi\bigl(\mu(f(x)^{-1}f(y))\bigr)
={}&\kappa_i d(x,z)
+\psi\bigl(\mu(f(z)^{-1}f(z'))\bigr) +\kappa_j d(z',y)+O(1).
\end{aligned}
$$
Apply \Cref{lem.scalar-Cartan-comparison} to the middle
term and use
$$
d(x,\partial\mathsf H)\leq d(x,z),
\qquad
d(y,\partial\mathsf H')\leq d(z',y).
$$
Since $\sigma<1$, $\min_i\kappa_i>0$, and
$d(x,z)+d(z,z')+d(z',y)=d(x,y)$, the global shadow
estimates give, after enlarging $K$ to cover both cases,
\begin{equation}
\label{eqn:shadow-ratios-general}
K^{-1}d(x,y)-K
\leq
\log
\frac{\nu(\mathcal O_R(o,f(x)))}
     {\nu(\mathcal O_R(o,f(y)))}
\leq Kd(x,y)+K.
\end{equation}

Coarse additivity and \Cref{lem.scalar-Cartan-comparison}
also bound
$\bar\psi(\mu(f(y)))-\bar\psi(\mu(f(x)))$
above and below by positive multiples of $d(x,y)$,
up to uniform additive errors.
Together with \Cref{prop.shadowcompare} and
\eqref{eqn:shadow-ratios-general}, this gives power bounds
for ratios of $d_{\bar\psi}$-ball masses.
Here continuity and divergence of
$t\mapsto\bar\psi(\mu(z_\xi(t)))$ allow prescribed
sufficiently small scales to be realized in order along
each ray.
The uniform comparison of $d_{\bar\psi}$ and
$D_{\bar\psi}$ near $\Delta$ transfers these bounds to
$\mathcal B_{\bar\psi}$-balls.
Thus there exist $L_1\geq1$, $0<\beta_-\leq\beta_+$,
and $0<r_0\leq1$ such that, for every $\xi\in\Delta$
and $0<r\leq s\leq r_0$,
\begin{equation}
\label{eqn:ball-ratios-general}
L_1^{-1}(r/s)^{\beta_+}
\leq
\frac{\nu(\mathcal B_{\bar\psi}(\xi,r))}
     {\nu(\mathcal B_{\bar\psi}(\xi,s))}
\leq L_1(r/s)^{\beta_-}.
\end{equation}

Taking $s=Cr$ proves (1) when $Cr\leq r_0$;
compactness covers the remaining radii.
For (2), choose $C_0>1/r_0$ such that
$L_1(C_0r_0)^{-\beta_-}\leq1/2$.
Apply the upper bound in \eqref{eqn:ball-ratios-general}
with $s=\min\{C_0r,r_0\}$, noting that
$r/s\leq(C_0r_0)^{-1}$ whenever $0<r<1/C_0$.
\end{proof}

We fix a choice of $C_0$ satisfying (2) for use in the
geometric partition construction below.

\begin{corollary}
\label{cor.PS_polyuniform}
There exist $c,\beta>0$ such that, for every
$\xi\in\Delta$ and $0<r<1$,
$$
\nu(\mathcal B_{\bar\psi}(\xi,r))\geq cr^\beta.
$$
\end{corollary}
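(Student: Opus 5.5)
The plan is to combine the two-sided ratio bound \eqref{eqn:ball-ratios-general}, established in the proof of \Cref{prop.LP5.14}, with a uniform lower bound for the mass of balls at a fixed scale. Let $L_1$, $\beta_-\le\beta_+$ and $r_0\le1$ be the constants there.

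For $0<r\le r_0$, apply the lower bound in \eqref{eqn:ball-ratios-general} with $s=r_0$:
$$
\nu(\mathcal B_{\bar\psi}(\xi,r))\ge L_1^{-1}r_0^{-\beta_+}\,r^{\beta_+}\,\nu(\mathcal B_{\bar\psi}(\xi,r_0)).
$$
So the remaining task is a uniform bound $\inf_{\xi\in\Delta}\nu(\mathcal B_{\bar\psi}(\xi,r_0))>0$.

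We obtain this bound from full support of $\nu$ on $\Lambda_\rho$ together with compactness of $\Delta$.

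\begin{enumerate}
\item \emph{Full support.} The measure $\nu$ is a nonzero $\Gamma_\rho$-quasi-invariant measure on $\Lambda_\rho$. The action of $\Gamma_\rho$ on $\Lambda_\rho$ is minimal, being conjugate via $f$ to the minimal action of the non-elementary group $\Gamma$ on $\Lambda(\Gamma)$. Hence $\operatorname{supp}\nu$ is a nonempty closed invariant set, so it equals $\Lambda_\rho$.
\item \emph{Lower semicontinuity.} The map $\xi\mapsto\nu(\mathcal B_{\bar\psi}(\xi,r_0))$ is lower semicontinuous on $\Delta$. Indeed, $D_{\bar\psi}$ is continuous on $\Lambda_\rho\smallsetminus\{p_0\}$, so the balls are relatively open. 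If $\xi_n\to\xi$, the indicators of $\mathcal B_{\bar\psi}(\xi_n,r_0)$ satisfy $\liminf_n\ge$ the indicator of $\mathcal B_{\bar\psi}(\xi,r_0)$ pointwise, and Fatou's lemma applies.
\item \emph{Positivity and compactness.} Each ball is a nonempty relatively open subset of $\Lambda_\rho$, so it has positive mass by step 1. A positive lower semicontinuous function attains a positive minimum on the compact set $\Delta$.
\end{enumerate}

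This gives $\nu(\mathcal B_{\bar\psi}(\xi,r))\ge c'r^{\beta_+}$ for $r\le r_0$, uniformly in $\xi\in\Delta$.

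For $r_0<r<1$, monotonicity gives
$$
\nu(\mathcal B_{\bar\psi}(\xi,r))\ge\nu(\mathcal B_{\bar\psi}(\xi,r_0))\ge c'r_0^{\beta_+}\ge c'r_0^{\beta_+}\,r^{\beta_+}.
$$
So we may take $\beta=\beta_+$ and $c=c'r_0^{\beta_+}$.

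If one prefers not to extract \eqref{eqn:ball-ratios-general} from inside the proof, the doubling property in \Cref{prop.LP5.14}(1) gives the same conclusion. Write $\nu(\mathcal B(\xi,r))\ge L^{-k}\nu(\mathcal B(\xi,2^kr))$ with $k\approx\log_2(r_0/r)$; this yields the exponent $\beta=\log_2L$.

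The only point that needs care is the fixed-scale positivity in steps 1–3. In particular, step 1 needs the minimality of the $\Gamma_\rho$-action on $\Lambda_\rho$.
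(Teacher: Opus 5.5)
Your proposal is correct and follows the paper's proof essentially verbatim. Like the paper, you apply the lower bound in \eqref{eqn:ball-ratios-general} with $s=r_0$, get the fixed-scale bound $\inf_{\xi\in\Delta}\nu(\mathcal B_{\bar\psi}(\xi,r_0))>0$ from full support of $\nu$ and compactness of $\Delta$, and cover $r_0<r<1$ by monotonicity. Your extra justifications, namely minimality of the $\Gamma_\rho$-action on $\Lambda_\rho$ for full support and lower semicontinuity via Fatou for the infimum, correctly supply what the paper leaves implicit.
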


\begin{proof} Let $r_0>0$ be as in the proof of \Cref{prop.LP5.14}.
Since $\nu$ has full support and $\Delta$ is compact,
$\inf_{\xi\in\Delta}
\nu(\mathcal B_{\bar\psi}(\xi,r_0))>0$.
Apply \eqref{eqn:ball-ratios-general} with $s=r_0$
to obtain the estimate for $r\leq r_0$, with
$\beta=\beta_+$.
The remaining radii follow by monotonicity.
\end{proof}

\subsection{Parabolic ball estimates}

\begin{corollary}
\label{eqn.LP5.16}
Uniformly for $p\in\Delta\cap\Lambda_{\mathrm p}(\Gamma_\rho)$
and $0<r\leq h(p)$, we have
\begin{equation}
\label{eqn.LP5.16statement}
\nu(\mathcal B_{\bar\psi}(p,r))
\asymp
e^{-\psi(\mu(g_p))}
\left(\frac{r}{h(p)}\right)^{2(1-\delta_\psi(\mathsf P_p))}.
\end{equation}
\end{corollary}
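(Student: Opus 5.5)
The plan is to realize $\mathcal B_{\bar\psi}(p,r)$, up to bounded changes of radius, as an ambient shadow $\mathcal O_R(o,z_p(t))\cap\Lambda_\rho$ for a suitable time $t\ge t_p$ along the ray $z_p$. Then \Cref{lem.globalshadowatparabolic} applies, and it only remains to convert $t-t_p$ into $\log(h(p)/r)$. Doubling (\Cref{prop.LP5.14}(1)) lets me replace radii by comparable ones, so it suffices to get the comparison up to uniform multiplicative changes in $r$.

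\textbf{Step 1 (scales along the ray).} For $p\in\Delta$, the premetrics $d_{\bar\psi}$ and $D_{\bar\psi}$ are uniformly comparable near $\Delta$, since $d_{\bar\psi}(\cdot,p_0)$ is bounded above and below there. By \Cref{prop.shadowcompare}(1), $\mathcal O_R(o,z_p(t))\cap\Lambda_\rho$ is then sandwiched between $\mathcal B_{\bar\psi}(p,c^{-1}e^{-\bar\psi(\mu(z_p(t)))})$ and $\mathcal B_{\bar\psi}(p,c\,e^{-\bar\psi(\mu(z_p(t)))})$. By \eqref{eqn:parabolic-ray-depth} with $\varphi=\bar\psi$, I get $\bar\psi(\mu(z_p(t)))=\bar\psi(\mu(g_p))+\kappa_i(t-t_p)+O(1)$.

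\textbf{Step 2 (relating $g_p$ to the height).} I claim $h(p)\asymp e^{-\bar\psi(\mu(g_p))}$. By \Cref{lem.heightondelta}, $h(p)\asymp e^{-\bar\psi(\mu(\gamma_p))}$, so I need $\bar\psi(\mu(\gamma_p))=\bar\psi(\mu(g_p))+O(1)$. Both elements send $p_i$ to $p$, so $g_p=\gamma_p u$ with $u\in\mathsf P_i$. The element $g_p$ is chosen near the entrance point $\widehat z_p(t_p)$. For $\gamma_p$, the condition $\gamma_p^{-1}p_0\in\Delta_i$, together with \Cref{lem.representshadow}, places $\widehat\gamma_p o$ within bounded distance of the geodesic $[o,f^{-1}(p)]$. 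Moreover, $\widehat\gamma_p o$ lies near the boundary horocycle of the horoball $\mathsf H_p$, so it is within bounded distance of the entrance point. This is the step I expect to be the main obstacle.

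If the geometric argument is not clean, I would argue by contradiction with a sequence $p_n$, as in \Cref{lem.representshadow}. Alternatively, \Cref{lem.heightondelta0} with $\gamma=g_p$ and \Cref{lem.buseandcartan} give $h(p)\asymp e^{-\bar\psi(\beta_p(e,g_p))}\asymp e^{-\bar\psi(\mu(g_p))}$, provided that $p\in\mathcal O_r(o,g_po)$. That inclusion holds because $\widehat g_p o$ lies near $[o,f^{-1}(p)]$, together with \Cref{lem.shadowscomparison}. This alternative route looks cleaner, so I would use it. The definition of $h$ via $\chi=\bar\psi$ matches.

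\textbf{Step 3 (conclusion).} For $0<r\le h(p)$, I choose $t\ge t_p$ with $\kappa_i(t-t_p)=\log(h(p)/r)$. By Steps 1 and 2, $e^{-\bar\psi(\mu(z_p(t)))}\asymp r$, so by doubling $\nu(\mathcal B_{\bar\psi}(p,r))\asymp\nu(\mathcal O_R(o,z_p(t)))$. Corollary~\ref{lem.globalshadowatparabolic} then gives
\[
\nu\bigl(\mathcal B_{\bar\psi}(p,r)\bigr)\asymp e^{-\psi(\mu(g_p))}e^{-2(1-\delta_i)\log(h(p)/r)}
= e^{-\psi(\mu(g_p))}\bigl(r/h(p)\bigr)^{2(1-\delta_\psi(\mathsf P_p))}.
\]

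Two technical points remain. First, doubling is stated only for radii $r\le 1/C$; for bounded $h(p)$, only finitely many $p$ occur modulo compactness, and these are absorbed into the constants. Second, the radii $c^{\pm1}e^{-\bar\psi(\mu(z_p(t)))}$ must be comparable to $r$ uniformly. This follows because $t\mapsto\bar\psi(\mu(z_p(t)))$ is continuous with coarse linear growth, so every scale $r\le h(p)$ is attained up to bounded factors.
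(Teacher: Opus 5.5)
Your proposal is correct and is essentially the paper's argument: the paper also obtains $h(p)\asymp e^{-\bar\psi(\mu(g_p))}$ from \Cref{lem.heightondelta0} with $\gamma=g_p$ together with \Cref{lem.buseandcartan}, converts $t-t_p$ into $\log(h(p)/r)$ via \eqref{eqn:parabolic-ray-depth}, and then applies \eqref{nufo}. The only cosmetic difference concerns how balls and shadows are compared. The paper sandwiches the $D_{\bar\psi}$-ball between the shadows at times $t$ and $t+t_0$ using \Cref{prop.shadowcompare}(2) and covers all radii by continuity of $r_p(t)$, reserving doubling and compactness for $ch(p)<r\le h(p)$; you instead sandwich the shadow between two balls via part (1) and invoke doubling throughout, which is equally valid.
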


\begin{proof}
Fix $p\in\Delta\cap\Lambda_{\mathrm p}(\Gamma_\rho)$
of type $i$.
By the choice of $g_p$ and
\Cref{lem.heightondelta0,lem.buseandcartan},
$h(p)\asymp e^{-\bar\psi(\mu(g_p))}$.
Thus \eqref{eqn:parabolic-ray-depth} gives
$$
r_p(t):=e^{-\bar\psi(\mu(z_p(t)))}
\asymp h(p)e^{-\kappa_i(t-t_p)}
\qquad(t\geq t_p).
$$

Since $d_{\bar\psi}$ and $D_{\bar\psi}$ are uniformly
comparable near $\Delta$, \Cref{prop.shadowcompare}
gives, for sufficiently large fixed $R$, constants
$T,t_0>0$ such that, for every $t\geq t_p+T$,
\begin{equation}
\label{eqn.shadowandEucballs}
\mathcal O_R(o,z_p(t+t_0))\cap\Lambda_\rho
\subset
\mathcal B_{\bar\psi}(p,r_p(t))
\subset
\mathcal O_R(o,z_p(t))\cap\Lambda_\rho.
\end{equation}
Applying \eqref{nufo} to the two shadows gives
$$
\nu(\mathcal B_{\bar\psi}(p,r_p(t)))
\asymp
e^{-\psi(\mu(g_p))}
\left(\frac{r_p(t)}{h(p)}\right)^{2(1-\delta_i)}.
$$

The function $r_p$ is continuous, tends to zero, and
satisfies $r_p(t_p+T)\asymp h(p)$.
Hence these radii cover $(0,ch(p)]$ for some uniform
$c>0$.
\Cref{prop.LP5.14}(1) and compactness extend the estimate
to the remaining radii $ch(p)<r\leq h(p)$.
\end{proof}

\section{Recurrence of the translation flow}
\label{sec.recur}

We describe the stable and unstable leaves of the translation
flow on $\Omega_{\bar\psi}$ and prove the uniform recurrence
property used in the coding construction. We then show that a
return to a fixed compact set forces the forward endpoint to
lie near a parabolic limit point whose height is comparable to
the scale determined by the return time.
\subsection{Stable and unstable leaves}
\label{subsec:stableunstable}

Recall that
$\widetilde\Omega_{\bar\psi}=\Lambda_\rho^{(2)}\times\mathbb R$ and
$\Omega_{\bar\psi}
=\Gamma_\rho\backslash\widetilde\Omega_{\bar\psi}$.

For $g\in G$ with $(g^+,g^-)\in\Lambda_\rho^{(2)}$, write
$$
[g]:=\bigl(g^+,g^-,\bar\psi(\beta_{g^+}(e,g))\bigr) \in\widetilde\Omega_{\bar\psi}.$$
For $x=[g]=(x^+,x^-,s_x)\in\widetilde\Omega_{\bar\psi}$, set
$s_x^-:=\bar\psi(\beta_{g^-}(e,g)).
$
Since $\bar\psi\circ\i=\bar\psi$, we have
$s_x^-=2\bar\psi(\mathcal G(x^+,x^-))-s_x$,
so $s_x^-$ depends only on $x$.

For $x,y\in\widetilde\Omega_{\bar\psi}$, define
$$
d_+(x,y):=e^{(s_x+s_y)/2}d_{\bar\psi}(x^+,y^+),
\qquad
d_-(x,y):=e^{(s_x^-+s_y^-)/2}d_{\bar\psi}(x^-,y^-).
$$
They are nonnegative, symmetric, 
$\Gamma_\rho$-invariant and satisfy that for all $t\in \br$,
$$
d_+(xa_t,ya_t)=e^t d_+(x,y),
\qquad
d_-(xa_t,ya_t)=e^{-t}d_-(x,y).
$$
For  $x=[g]$, set
$$
\begin{aligned}
W^-(x)
&:=\{(x^+,\eta,s_x):
       \eta\in\Lambda_\rho\smallsetminus\{x^+\}\} =[gN]\cap\widetilde\Omega_{\bar\psi},\\
W^+(x)
&:=\{(\xi,x^-,2\bar\psi(\mathcal G(\xi,x^-))-s_x^-):
       \xi\in\Lambda_\rho\smallsetminus\{x^-\}\}=[g\check N]\cap\widetilde\Omega_{\bar\psi}.
\end{aligned}
$$
These leaves are stable and unstable for the translation
flow in the following sense: by
\cite[Proposition~8.6]{KOW_ergodic}, there exists a metric
$d$ inducing the topology of $\Omega_{\bar\psi}$ such that,
as $t\to+\infty$,
$$
\begin{aligned}
d(\Gamma_\rho xa_t,\Gamma_\rho ya_t)\to 0\quad 
\text{for }y\in W^-(x)\quad\text{and}\quad  d(\Gamma_\rho xa_{-t},\Gamma_\rho ya_{-t})\to 0 \quad \text{for }y\in W^+(x).
\end{aligned}
$$
For $r>0$, set
$$
W^\pm(x,r):=
\{y\in W^\pm(x):d_\pm(x,y)<r\}.
$$
These sets form a neighborhood basis on the corresponding leaves.
\subsection{Local product structure}

For $x,y\in\widetilde\Omega_{\bar\psi}$ with
$(y^+,x^-)\in\mathcal F^{(2)}$, the intersection
$W^+(x)\cap W^-(ya_{\mathbb R})$ consists of a unique
point, denoted by $\langle x,y\rangle$.
Explicitly,
$$
\begin{aligned}
\langle x,y\rangle
=(y^+,x^-,s_y+\vartheta(x,y)),\quad\text{ where }\vartheta(x,y)
:=2\bar\psi(\mathcal G(y^+,x^-))-s_x^--s_y.
\end{aligned}
$$
That is, $\vartheta(x,y)$ is the unique time $t$ such that
$\langle x,y\rangle\in W^-(ya_t)$. The maps $\vartheta$ and $\langle\cdot,\cdot\rangle$ are
continuous wherever $(y^+,x^-)$ is transverse and, by
\eqref{GPT}, are respectively $\Gamma_\rho$-invariant
and $\Gamma_\rho$-equivariant.
Since $\vartheta(x,x)=0$ and $\langle x,x\rangle=x$,
continuity and compactness imply that, for every compact
$Q\subset\widetilde\Omega_{\bar\psi}$,
$$
|\vartheta(x,y)|\to0,
\qquad
d_+(x,\langle x,y\rangle)\to0,
\qquad
d_-(ya_{\vartheta(x,y)},\langle x,y\rangle)\to0
$$
uniformly as $(x,y)$ approaches the diagonal in $Q\times Q$.

We choose the compact section
$$
\widetilde\Delta_0
:=\{(\xi,p_0,0):\xi\in\Delta_0\}
\subset\widetilde\Omega_{\bar\psi}
$$
of the flow lines with negative endpoint $p_0$
and positive endpoint in $\Delta_0$.

\begin{lemma}
\label{lem.unstableplaquescompact}
For every $r>0$, the set
$\bigcup_{x\in\widetilde\Delta_0}W^+(x,r)$
is relatively compact in $\widetilde\Omega_{\bar\psi}$.
\end{lemma}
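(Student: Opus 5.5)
The plan is to write points of $W^+(x,r)$ explicitly in coordinates and check that their three coordinates stay in a compact subset of $\Lambda_\rho^{(2)}\times\mathbb R$.

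For $x=(\xi_0,p_0,0)\in\widetilde\Delta_0$ we have $s_x^-=2\bar\psi(\mathcal G(\xi_0,p_0))$. A point of $W^+(x)$ has the form
$$
y=\bigl(\xi,p_0,s_y\bigr),\qquad s_y=2\bar\psi(\mathcal G(\xi,p_0))-2\bar\psi(\mathcal G(\xi_0,p_0)),
$$
with $\xi\in\Lambda_\rho\smallsetminus\{p_0\}$, and $s_y^-=s_x^-$. Thus
$$
d_+(x,y)=e^{s_y/2}d_{\bar\psi}(\xi_0,\xi)
=\frac{d_{\bar\psi}(\xi_0,p_0)}{d_{\bar\psi}(\xi,p_0)}\,d_{\bar\psi}(\xi_0,\xi)
=d_{\bar\psi}(\xi_0,p_0)^2\,D_{\bar\psi}(\xi_0,\xi),
$$
by \eqref{dedef}. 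Since $\Delta_0$ is compact and avoids $p_0$, the factor $d_{\bar\psi}(\xi_0,p_0)^2$ is bounded below uniformly in $\xi_0\in\Delta_0$. Hence $d_+(x,y)<r$ forces $D_{\bar\psi}(\xi_0,\xi)<C r$ for a uniform $C$.

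Next I will show that $\xi$ stays in a compact subset of $\Lambda_\rho\smallsetminus\{p_0\}$. Fix a basepoint $\xi_*\in\Delta_0$. By \Cref{lem.eucdistancetriangle},
$$
D_{\bar\psi}(\xi,\xi_*)\le N_0\bigl(Cr+\sup_{\xi_0\in\Delta_0}D_{\bar\psi}(\xi_0,\xi_*)\bigr),
$$
and the supremum is finite by continuity on the compact set $\Delta_0$. \Cref{lem.sphereeuccompare} then gives $d_{\bar\psi}(\xi,p_0)\ge c'>0$ uniformly. So $\xi$ lies in $\{\eta\in\Lambda_\rho: d_{\bar\psi}(\eta,p_0)\ge c'\}$, which is compact because it is closed in $\Lambda_\rho$ and $d_{\bar\psi}$ is continuous. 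Every point of this set is transverse to $p_0$, since distinct points of $\Lambda_\rho$ are transverse.

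For the time coordinate, $\bar\psi(\mathcal G(\cdot,p_0))=-\log d_{\bar\psi}(\cdot,p_0)$ is bounded on this compact set, because $d_{\bar\psi}$ has finite diameter by \Cref{lem.dpsidiameterfinite} and is bounded below there. It is also bounded on $\Delta_0$, so $s_y$ is uniformly bounded.

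Putting this together, the union lies in $Q\times\{p_0\}\times[-T,T]$, where $Q$ is the compact set above and $T$ bounds $|s_y|$. This product is compact in $\Lambda_\rho^{(2)}\times\mathbb R$. The only step that needs care is controlling $\xi$ near $p_0$, and the quasi-triangle inequality together with \Cref{lem.sphereeuccompare} handles it.
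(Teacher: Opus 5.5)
Your proof is correct and follows the paper's outline: use the explicit parametrization of $W^+(x)$ to see that $d_+(x,y)<r$ keeps $y^+$ in a fixed compact subset of $\Lambda_\rho\smallsetminus\{p_0\}$, then bound $s_y$ by continuity. The only difference is that the paper gets the middle step directly from the formula $d_+(x,y)=d_{\bar\psi}(\xi_0,p_0)\,d_{\bar\psi}(\xi_0,\xi)/d_{\bar\psi}(\xi,p_0)$, which blows up uniformly as $\xi\to p_0$ by compactness of $\Delta_0$; your detour through $D_{\bar\psi}$, \Cref{lem.eucdistancetriangle}, and \Cref{lem.sphereeuccompare} is valid but not needed.
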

\begin{proof}
By compactness of $\Delta_0$, the formula for $d_+$
shows that $d_+(x,y)\to\infty$ as $y^+\to p_0$,
uniformly for $x\in\widetilde\Delta_0$ and $y\in W^+(x)$.
Thus $d_+(x,y)<r$ keeps $y^+$ in a fixed compact
subset of $\Lambda_\rho\smallsetminus\{p_0\}$.  Finally,
$s_y=2\bar\psi(\mathcal G(y^+,p_0)-\mathcal G(\xi,p_0))$
is uniformly bounded by continuity; since $y^-=p_0$,
the claim follows.
\end{proof}

\subsection{Uniform recurrence}
For $\delta\in\mathbb R$, let
$$
W^-_{p_0,\delta}
:=\{(p_0,\eta,\delta):
       \eta\in\Lambda_\rho\smallsetminus\{p_0\}\}
\subset\widetilde\Omega_{\bar\psi}
$$
denote the stable leaf with positive endpoint $p_0$
and $\mathbb R$-coordinate $\delta$.

\begin{lemma}
\label{lem.LP6.54}
Let $Q\subset\Omega_{\bar\psi}$ be compact and let
$\delta\in\mathbb R$.
There exists $T_*=T_*(Q,\delta)>0$ such that, for every
$x\in\widetilde\Omega_{\bar\psi}$ with $\Gamma_\rho x\in Q$,
there is $0<t<T_*$ for which
$$
W^+(x,1)a_t\cap\Gamma_\rho W^-_{p_0,\delta}
\neq\emptyset.
$$
\end{lemma}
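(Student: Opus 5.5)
The idea is to exhibit, for each $x$, an explicit point of $W^+(x,1)$ whose forward endpoint lies in $\Gamma_\rho p_0$. The flow time $t$ needed to bring it onto $\Gamma_\rho W^-_{p_0,\delta}$ is then forced by an explicit formula. We make $t$ positive by choosing the parabolic point of small height, and get a uniform upper bound on $t$ by continuity in $x$ and compactness.

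\emph{Step 1 (reduction).} Choose a compact set $\widetilde Q\subset\widetilde\Omega_{\bar\psi}$ with $\Gamma_\rho\widetilde Q\supset$ the preimage of $Q$. Both $W^+(\cdot,1)$ and $W^-_{p_0,\delta}$ behave equivariantly: $\gamma W^+(x,1)=W^+(\gamma x,1)$ by $\Gamma_\rho$-invariance of $d_+$, and $\Gamma_\rho W^-_{p_0,\delta}$ is $\Gamma_\rho$-invariant. Hence it suffices to treat $x\in\widetilde Q$.

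\emph{Step 2 (the forced time).} For $y=(\xi,x^-,s_y)\in W^+(x)$ we have $s_y=2\bar\psi(\mathcal G(\xi,x^-))-s_x^-$, which is continuous in $(\xi,x)$. If $\xi=\gamma^{-1}p_0$ with $\gamma\in\Gamma_\rho$, then
$$\gamma(ya_t)=\bigl(p_0,\gamma x^-,\,s_y+t+\bar\psi(\beta_\xi(\gamma^{-1},e))\bigr).$$
This point lies in $W^-_{p_0,\delta}$ exactly when
$$t=t_x(\xi):=\delta-s_y-\bar\psi(\beta_{\xi}(\gamma^{-1},e)).$$
By \Cref{lem.zerobuse}, $t_x(\xi)$ does not depend on the choice of $\gamma$ in $\mathsf P_0\gamma$. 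By the cocycle identity, $-\bar\psi(\beta_\xi(\gamma^{-1},e))=\bar\psi(\beta_{p_0}(e,\gamma))$, up to a bounded change of normalization.

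\emph{Step 3 (large positive $t$ near any point).} Fix $x_0\in\widetilde Q$. The forward endpoints of $W^+(x_0,1)$ form a nonempty open subset $U$ of $\Lambda_\rho$ containing $x_0^+$. The orbit $\Gamma_\rho p_0$ is dense in $\Lambda_\rho$ by minimality, so $U$ contains infinitely many points of $\Gamma_\rho p_0$. Only finitely many $\mathsf P_0$-cosets $\mathsf P_0\gamma$ have $\bar\psi(\mu(\gamma))$ bounded, by properness of $\bar\psi$ on $\mu(\Gamma_\rho)$ together with \eqref{lb}. Choose $\xi\in U\cap\Gamma_\rho p_0$ outside a large finite set, staying in a compact subset of $U$ away from $x_0^-$. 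We want $\bar\psi(\beta_{p_0}(e,\gamma))\to+\infty$ along such $\xi$. This should follow from the shadow comparison \Cref{lem.buseandcartan}, or equivalently from the height estimates (\Cref{lem.heightondelta0,lem.heightondelta}, which give $h\to 0$). We can then pick $\xi$ with $t_{x_0}(\xi)\ge 2$.

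\emph{Step 4 (openness and compactness).} With $\xi$ and $\gamma$ fixed, two conditions are open in $x$: the condition $d_+(x,(\xi,x^-,s_y))<1$, since $d_+$ and $s_y$ are continuous, and transversality of $(\xi,x^-)$. The map $x\mapsto t_x(\xi)$ is also continuous. So there is a neighborhood $V_{x_0}$ of $x_0$ on which the same $\xi$ lies in the plaque $W^+(x,1)$ and $1\le t_x(\xi)\le t_{x_0}(\xi)+1=:T_{x_0}$. Cover $\widetilde Q$ by finitely many $V_{x_0}$ and set $T_*:=\max T_{x_0}+1$.

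The main obstacle I expect is Step 3: showing rigorously that the Busemann term is unbounded along orbit points inside a fixed open set, rather than merely nonconstant. If the height route is awkward, I would instead take $\gamma_n\to\infty$ with $\gamma_n^{-1}p_0\to x_0^+$. By \Cref{lem.ambient-shadow-convergence}, these points eventually lie in shadows $\mathcal O_r(o,\gamma_n^{-1}o)$, and \Cref{lem.buseandcartan} then gives a Busemann term of size $\bar\psi(\mu(\gamma_n))+O(1)\to\infty$.
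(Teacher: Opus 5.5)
Your proof is correct, but it takes a different route from the paper. The paper argues by mixing. It applies \Cref{koe} to a tangent multiple of $\bar\psi$ to get a fully supported mixing measure on $\Omega_{\bar\psi}$, and covers a compact lift $\widetilde Q$ by finitely many small product neighborhoods $\mathcal U_j$. It takes a product neighborhood $\mathcal V$ of a point of $W^-_{p_0,\delta}$ and a single mixing time $L$ after which every $\mathcal U_j$ meets $\Gamma_\rho\mathcal V a_{-L}$. Two bracket operations then land exactly on $\Gamma_\rho W^-_{p_0,\delta}$, with $T_*=L+2$.

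You instead compute the landing time explicitly: $t_x(\xi)=\delta-s_y+\bar\psi(\beta_\xi(e,\gamma^{-1}))=\delta-s_y-\log h(\xi)+O(1)$, by \Cref{lem.heightondelta0} applied to $\xi=\gamma^{-1}p_0$. You then find a point of $\Gamma_\rho p_0$ of small height in the plaque using density of the orbit, and get uniformity from continuity in $x$ and compactness. This avoids mixing and the local product structure entirely. It uses only the cusp-height estimates of \Cref{sec.height}, which precede the lemma, so there is no circularity. In effect you build directly the parabolic approximation that \Cref{lem.LP6.55_var} later extracts from the lemma. The paper's argument, in turn, uses no height theory and does not depend on the nature of the endpoint $p_0$.

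A few details need fixing, none serious.
\begin{itemize}
\item Equivariance gives $\beta_\xi(\gamma^{-1},e)=\beta_{p_0}(e,\gamma)$ exactly, so your Step~2 identity has the wrong sign. What you need is $\bar\psi(\beta_{p_0}(e,\gamma))\to-\infty$, i.e.\ $h(\xi)\to0$; this is the height statement you invoke, so the argument survives.
\item Your fallback misreads \Cref{lem.ambient-shadow-convergence}. That lemma says points of shadows converge, not that orbit points near $x_0^+$ lie in shadows; shadow membership holds for top representatives (\Cref{lem.representshadow}).
\item A clean way to finish Step~3 is \Cref{lem.LP6.16}. Points of $\Gamma_\rho p_0$ with $h\geq\epsilon$ are pairwise $C_s\epsilon$-separated for $D_{\bar\psi}$. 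By \Cref{lem.eucdistancetriangle}, only finitely many of them lie in a compact subset of $\Lambda_\rho\smallsetminus\{p_0\}$. A compact subset with nonempty interior, however, contains infinitely many orbit points. This makes the coset remark unnecessary; as written, it should say ``containing an element with bounded $\bar\psi(\mu(\gamma))$''.
\item The compact piece of $U$ must avoid $p_0$ as well as $x_0^-$. The case $x_0^+=p_0$ is allowed, and \Cref{lem.heightondelta0} requires a compact subset of $\Lambda_\rho\smallsetminus\{p_0\}$. Such a piece exists because $\Lambda_\rho$ is perfect.
\end{itemize}
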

\begin{proof}
Applying \Cref{koe} to a tangent multiple of $\bar\psi$
and rescaling time, we obtain a mixing invariant
probability measure of full support on $\Omega_{\bar\psi}$.

Choose a compact set $\widetilde Q\subset
\widetilde\Omega_{\bar\psi}$ whose image contains $Q$,
and choose $0<\varepsilon<1/4$ so that
$\bigcup_{x'\in W^+(x,\varepsilon)}W^+(x',\varepsilon)
\subset W^+(x,1)$ for all
$x\in\widetilde Q).$
After decreasing $\varepsilon$ if necessary, choose a
relatively compact product neighborhood $\mathcal V$
of a point in $W^-_{p_0,\delta}$ such that every
$y_0\in\mathcal V$ can be written as $y_0=y_1a_u$,
where $z\in W^-_{p_0,\delta}$,
$y_1\in W^+(z,\varepsilon)$, and $|u|<\varepsilon$,
with $z,y_1$ ranging over fixed compact sets.
We may also arrange
that, whenever $w\in W^-(y_1,\varepsilon)$, the bracket
$\langle w,z\rangle$ is defined and satisfies
$|\vartheta(w,z)|<1$ and
$d_+(w,\langle w,z\rangle)<1$.

Cover $\widetilde Q$ by finitely many sufficiently small
product neighborhoods $\mathcal U_1,\ldots,\mathcal U_k$.
Their images and that of $\mathcal V$ are nonempty open
sets in $\Omega_{\bar\psi}$.
Mixing therefore gives a common $L>2$, with
$e^{-(L-2\varepsilon)}<\varepsilon$, such that every
$\mathcal U_j$ meets $\Gamma_\rho\mathcal V a_{-L}$.

For $x\in\widetilde Q$, choose $j$ with $x\in\mathcal U_j$,
and choose $y_0=y_1a_u\in\mathcal V$ and
$\gamma\in\Gamma_\rho$ such that
$\gamma y_0a_{-L}\in\mathcal U_j$.
Then there exist
$|\sigma|<\varepsilon$ and
$$
x'\in W^+(x,\varepsilon)
\cap W^-(\gamma y_0a_{-L+\sigma},\varepsilon).
$$
Put $T:=L-u-\sigma$ and $w:=\gamma^{-1}(x'a_T)$.
Equivariance and the scaling relation for $d_-$ give
$w\in W^-(y_1,\varepsilon e^{-T})
\subset W^-(y_1,\varepsilon)$.
Set $\theta:=\vartheta(w,z)$ and
$q_0:=\gamma\langle w,z\rangle$.
By the choice of $\mathcal V$ and the definition of the
bracket, $|\theta|<1$ and
$q_0\in W^+(x'a_T,1)\cap W^-(\gamma z\,a_\theta)$.

Set $q:=q_0a_{-\theta}$ and $t:=T-\theta$.
Then $q\in\gamma W^-_{p_0,\delta}$, while
$$
qa_{-t}=q_0a_{-T}
\in W^+(x',e^{-T})
\subset W^+(x',\varepsilon)
\subset W^+(x,1).
$$
Thus $q$ belongs to the required intersection.
Since $0<t<L+2$, we may take $T_*:=L+2$.
Equivariance extends the conclusion to every lift
of a point of $Q$.
\end{proof}

The next lemma converts uniform recurrence into a
parabolic approximation with controlled height.

\begin{lemma}
\label{lem.LP6.55_var}
For every $\tau>1$, there exists $C>1$ with the following
property. For every compact $Q\subset\Omega_{\bar\psi}$,
there exists $U_0=U_0(\tau,Q)>0$ such that, if
$x=(x^+,p_0,0)\in\widetilde\Delta_0$ and
$\Gamma_\rho xa_{n\log\tau}\in Q$ for some $n\in\mathbb N$,
then there exists
$p\in\Lambda_{\mathrm p}(\Gamma_\rho)\smallsetminus\{p_0\}$ satisfying
\begin{equation}
\label{depx}
D_{\bar\psi}(p,x^+)\leq C\tau^{-n},
\qquad
e^{-U_0}\tau^{-n}\leq h(p)\leq\tau^{-n}.
\end{equation}
\end{lemma}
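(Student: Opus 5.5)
Apply \Cref{lem.LP6.54} to the compact set $Q$, with $\delta$ chosen later, and translate the resulting intersection point back into boundary data at time $0$.

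Set $t_n:=n\log\tau$. Since $\Gamma_\rho xa_{t_n}\in Q$, \Cref{lem.LP6.54} (applied to the lift $xa_{t_n}$) gives $0<t<T_*$, $\gamma\in\Gamma_\rho$, and a point
$$y\in W^+(xa_{t_n},1)a_t\cap\gamma W^-_{p_0,\delta}.$$
Set $p:=\gamma p_0=y^+$. Because $W^+$ leaves preserve the negative endpoint, we have $y^-=p_0$. Hence $p\neq p_0$: otherwise $y^+=y^-$, which is impossible since $\gamma\in\mathsf P_0$ would force $y$ into $W^-_{p_0,\delta}$, whose points have negative endpoint $\ne p_0$. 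Moreover $p$ is parabolic. Pulling back by $a_{-t_n-t}$, the point $z:=ya_{-t_n-t}$ lies in $W^+(x,e^{-t_n-t}\cdot e^{t})\subset W^+(x,\tau^{-n})$, by the scaling $d_+(\cdot a_s,\cdot a_s)=e^sd_+$. Thus
$$e^{(s_x+s_z)/2}d_{\bar\psi}(x^+,p)<\tau^{-n}.$$
Here $s_x=0$, and $s_z=2\bar\psi(\mathcal G(p,p_0)-\mathcal G(x^+,p_0))$, so this reads $d_{\bar\psi}(x^+,p)e^{\bar\psi(\mathcal G(p,p_0))}e^{-\bar\psi(\mathcal G(x^+,p_0))}<\tau^{-n}$. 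Multiplying by $e^{2\bar\psi(\mathcal G(x^+,p_0))}$, which is bounded on the compact set $\Delta_0$, gives $D_{\bar\psi}(p,x^+)\le C\tau^{-n}$ with $C$ depending only on $\Delta_0$. The radius $1$ is absorbed here as well.

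For the height, use the $\mathbb R$-coordinate. The point $y$ lies in $\gamma W^-_{p_0,\delta}$, so $\gamma^{-1}y=(p_0,\gamma^{-1}p_0,\delta)$. The action formula on $\widetilde\Omega_{\bar\psi}$ then gives
$$s_y=\delta+\bar\psi(\beta_{p_0}(\gamma^{-1},e))=\delta-\bar\psi(\beta_{p}(e,\gamma)).$$
On the other hand, $s_y=s_z+t_n+t$, and $s_z$ is bounded, since $z$ lies in the relatively compact set of \Cref{lem.unstableplaquescompact}. Hence
$$\bar\psi(\beta_p(e,\gamma))=t_n+O_{Q,\delta}(1),$$
with $O(1)$ controlled by $T_*$ and $\delta$. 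Now $p$ lies within $D_{\bar\psi}$-distance $C$ of $\Delta_0$, so it lies in a fixed compact subset of $\Lambda_\rho\smallsetminus\{p_0\}$. By \Cref{lem.heightondelta0}, $h(p)\asymp e^{-\bar\psi(\beta_p(e,\gamma))}\asymp_{Q}\tau^{-n}$.

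To obtain the sharp upper bound $h(p)\le\tau^{-n}$, choose $\delta$ large. The preceding identity shows that $h(p)\tau^{n}\le C'e^{-\delta}$, where the constant multiplying $e^{-\delta}$ depends only on $T_*$. That dependence is a problem, because $T_*$ itself depends on $\delta$. This is the main obstacle: $T_*(Q,\delta)$ may grow with $\delta$. I would handle it by noting that $t\ge0$ enters with a favorable sign, since $s_y$ increases with $t$, so a larger $t$ only decreases $h$. The upper bound therefore uses only $t>0$ together with the bounds on $s_z$ and the comparison constants, which are independent of $\delta$. Fixing $\delta$ large enough to beat those constants gives $h(p)\le\tau^{-n}$. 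The lower bound uses $t<T_*$ and yields $U_0=U_0(\tau,Q)$.

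If instead some $\tau$-dependence is needed in $C$, I would simply permit it; $C$ is allowed to depend on $\tau$ but not on $Q$, and the argument above gives exactly that.
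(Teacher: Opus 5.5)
Your proposal follows the paper's proof step for step. You apply \Cref{lem.LP6.54} to $xa_{n\log\tau}$ with a free parameter $\delta$, pull back by $a_{-n\log\tau-t}$, read the distance bound off $d_+$, and read the height off the $\mathbb R$-coordinate. You also use the key observation that the upper bound only needs $t>0$, so it does not see $T_*$. Your direct computation $D_{\bar\psi}(p,x^+)<e^{2\bar\psi(\mathcal G(x^+,p_0))}\tau^{-n}$ is a clean substitute for the compact-set comparison of $d_{\bar\psi}$ and $D_{\bar\psi}$ used in the paper.

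However, there is a sign error, and it sends the choice of $\delta$ the wrong way. By equivariance, $\beta_{p_0}(\gamma^{-1},e)=\beta_{\gamma p_0}(e,\gamma)=\beta_p(e,\gamma)$, so the action formula gives $s_y=\delta+\bar\psi(\beta_p(e,\gamma))$, not $\delta-\bar\psi(\beta_p(e,\gamma))$. Your displayed formula together with $s_y=s_z+t_n+t$ would give $\bar\psi(\beta_p(e,\gamma))=-t_n+O(1)$, which contradicts your next line; that next line is the correct one. With the correct sign, \Cref{lem.heightondelta0} and the bound on $s_z$ give
\[
c^{-1}e^{\delta-t}\tau^{-n}\le h(p)\le c\,e^{\delta-t}\tau^{-n},
\]
where $c$ is independent of $\delta$, $Q$, and $n$. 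It involves only the bound on $s_z$ from \Cref{lem.unstableplaquescompact} and the height comparison on a fixed compact subset of $\Lambda_\rho\smallsetminus\{p_0\}$.

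Thus $h(p)$ \emph{increases} with $\delta$. Your claimed bound $h(p)\tau^{n}\le C'e^{-\delta}$ is false, and choosing $\delta$ large fails. The only upper bound then available is $h(p)\le c\,e^{\delta}\tau^{-n}$, which gets worse as $\delta$ grows. The fix is to take $\delta$ sufficiently negative, for example $\delta:=-\log c$ (the paper uses $-\log(2c)$). Then $t>0$ gives $h(p)\le\tau^{-n}$ with no dependence on $T_*$, exactly as you observed. The bound $t<T_*(Q,\delta)$ gives the lower bound with $U_0:=T_*(Q,\delta)+\log c-\delta$. With this correction, your argument coincides with the paper's.
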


\begin{proof}
Fix $\tau>1$ and a parameter $\delta\in\mathbb R$ to be
chosen below. Let $T_*=T_*(Q,\delta)$ be given by
\Cref{lem.LP6.54}. Applying that lemma to
$xa_{n\log\tau}$, choose $\gamma\in\Gamma_\rho$,
$z\in W^-_{p_0,\delta}$, and $0<t<T_*$ such that
$$
\gamma z\in W^+(xa_{n\log\tau},1)a_t.
$$
Set $y:=\gamma z a_{-n\log\tau-t}$ and $p:=y^+=\gamma p_0$.
Then $y\in W^+(x,\tau^{-n})$.
By \Cref{lem.unstableplaquescompact}, all such $y$ lie
in a fixed relatively compact set, independent of
$Q,n,t,\delta$. Consequently, $p$ lies in a fixed compact
subset of $\Lambda_\rho\smallsetminus\{p_0\}$ and
$s_y$ is uniformly bounded.

Since $s_x=0$, the definition of $d_+$ gives
$
e^{s_y/2}d_{\bar\psi}(x^+,p)
=d_+(x,y)<\tau^{-n}.
$
The uniform comparison of $d_{\bar\psi}$ and
$D_{\bar\psi}$ on this compact set therefore gives
$D_{\bar\psi}(x^+,p)\leq C\tau^{-n}$.
It follows that
$s_y=\bar\psi(\beta_p(e,\gamma))
     +\delta-n\log\tau-t.
$
By \Cref{lem.heightondelta0} and the bound on $s_y$,
there is a constant $c>1$, independent of $Q$ and
$\delta$, such that
$$
c^{-1}e^\delta\tau^{-n}e^{-T_*}
\leq h(p)\leq ce^\delta\tau^{-n}.
$$
Choose $\delta:=-\log(2c)$ and then set
$U_0:=T_*(Q,\delta)+\log c-\delta$.
This gives \eqref{depx}.
\end{proof}

\section{Construction of the  geometric partition}
\label{sec.codingDelta0}
For a small parameter $\eta>0$ to be chosen below,
we construct neighborhoods $J_{p,\eta}$ of parabolic
points $p\in\Delta$, comparable to $D_{\bar\psi}$-balls
of radius $\eta h(p)$.
We remove selected neighborhoods from each $\Delta_i$
in successive height ranges, starting with the largest
heights. The residual set $\mathcal Z_{i,n}(\eta)$
consists of the points of $\Delta_i$ remaining after
$n$ steps. We prove that its measure decays
exponentially in $n$ (\Cref{prop.LP6.15ooo}).

When discussing intervals and their order, we use the
Euclidean coordinate transported by $f$ on
$\Lambda_\rho\smallsetminus\{p_0\}$.
For nonempty subsets $A,B\subset\Lambda_\rho\smallsetminus\{p_0\}$, set
$$
D_{\bar\psi}(A,B):=\inf_{\substack{a\in A,\,  b\in B}}D_{\bar\psi}(a,b),\quad 
{\mathcal B_{\bar\psi}}(A,r):=\{x\in\Lambda_\rho\smallsetminus\{p_0\}:D_{\bar\psi}(x,A)<r\}.
$$

Let $C\geq1$ be the constant in \Cref{lem.quasisymmetry}.
\begin{corollary}[Quasi-monotonicity of $D_{\bar\psi}$]
\label{lem.olive}
For any
$\xi_1,\xi_2,\xi_3\in\Lambda_\rho\smallsetminus\{p_0\}$ satisfying
$
d_{\operatorname{Euc}}(\xi_1,\xi_2)
\le d_{\operatorname{Euc}}(\xi_1,\xi_3)$ and $
d_{\operatorname{Euc}}(\xi_2,\xi_3)
\le d_{\operatorname{Euc}}(\xi_1,\xi_3),
$
one has
$$
D_{\bar\psi}(\xi_1,\xi_2)
\le C D_{\bar\psi}(\xi_1,\xi_3)
\quad\text{and}\quad
D_{\bar\psi}(\xi_2,\xi_3)
\le C D_{\bar\psi}(\xi_1,\xi_3).
$$
\end{corollary}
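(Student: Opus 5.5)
This is a direct consequence of \Cref{lem.quasisymmetry}, applied at the appropriate base point with the Euclidean ratio bounded by $1$. The constant $C$ in that lemma is at least $1$, and whenever the Euclidean ratio $\rho$ satisfies $\rho\le1$ we have $\max\{\rho^a,\rho^{1/a}\}\le1$ because $a\ge1$. So each inequality reduces to choosing a base point and ordering the pair so that the numerator distance is Euclidean-smaller than the denominator distance.

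First we dispose of degenerate cases. If $\xi_1=\xi_3$, the hypotheses force $\xi_2=\xi_1$, so all three terms vanish. If $\xi_2$ coincides with $\xi_1$ or with $\xi_3$, one of the left-hand sides is $0$ and the other equals $D_{\bar\psi}(\xi_1,\xi_3)\le CD_{\bar\psi}(\xi_1,\xi_3)$. So we may assume the three points are distinct, which is exactly what \Cref{lem.quasisymmetry} requires.

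For the first inequality, apply \Cref{lem.quasisymmetry} with $\xi=\xi_1$, $\eta_1=\xi_2$ and $\eta_2=\xi_3$. The Euclidean ratio $d_{\operatorname{Euc}}(\xi_1,\xi_2)/d_{\operatorname{Euc}}(\xi_1,\xi_3)$ is at most $1$ by hypothesis. Hence
\[
D_{\bar\psi}(\xi_1,\xi_2)/D_{\bar\psi}(\xi_1,\xi_3)\le C.
\]

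For the second inequality, apply the lemma with $\xi=\xi_3$, $\eta_1=\xi_2$ and $\eta_2=\xi_1$. The Euclidean ratio is $d_{\operatorname{Euc}}(\xi_3,\xi_2)/d_{\operatorname{Euc}}(\xi_3,\xi_1)\le1$. This gives $D_{\bar\psi}(\xi_3,\xi_2)\le CD_{\bar\psi}(\xi_3,\xi_1)$. Symmetry of $D_{\bar\psi}$, which follows from symmetry of $d_{\bar\psi}$, then yields the stated form.

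Nothing here is an obstacle; the only care needed is the degenerate cases and recording the constant as the same $C$ from \Cref{lem.quasisymmetry}.
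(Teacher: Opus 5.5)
Your proof is correct and is the paper's argument: apply \Cref{lem.quasisymmetry} at $\xi_1$ and at $\xi_3$, where the relevant Euclidean ratio is at most $1$, and then use the symmetry of $D_{\bar\psi}$. The explicit treatment of coincident points, which is needed because the lemma is stated only for distinct points, is a harmless addition that the paper leaves implicit.
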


\begin{proof}
Each of the two relevant Euclidean distance ratios is at most one.
The upper bound in \Cref{lem.quasisymmetry}, applied at the appropriate
endpoint, gives both assertions.
\end{proof}

Recall from \eqref{eqn.choiceoffunddomain} that
$\Delta=\bigcup_{i=0}^{\ell}\Delta_i$.
\begin{lemma}
\label{lem.choiceoffunddom}
There exists $C_{\rm fd}>1$ such that, for every
$0<\eta<1$, there are a finite-index subgroup
$\mathsf P_0'=\mathsf P_0'(\eta)<\mathsf P_0$ and a compact
fundamental domain
$R_{0,\eta}\subset\Lambda_\rho\smallsetminus\{p_0\}$
for its action satisfying
\begin{equation}
\label{eqn.R0balls}
\mathcal B_{\bar\psi}(\Delta,\eta^{-1})
\subset R_{0,\eta}
\subset\mathcal B_{\bar\psi}(x,C_{\rm fd}\eta^{-1})
\qquad(x\in\Delta).
\end{equation}
\end{lemma}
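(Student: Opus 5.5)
The plan is to build $R_{0,\eta}$ as a union of consecutive $\mathsf P_0$-translates of the interval fundamental domain, working in the Euclidean coordinate on $\Lambda_\rho\smallsetminus\{p_0\}\cong f^{-1}(\cdot)\subset\mathbb R$. Let $u$ be the primitive generator of $\mathsf P_0$. Conjugated through $f$, it acts on $\mathbb R=\partial\mathbb H^2\smallsetminus\{\infty\}$ as a translation $x\mapsto x+\tau_0$. By \eqref{inv}, $D_{\bar\psi}$ is $\mathsf P_0$-invariant.

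\textbf{Step 1: $D_{\bar\psi}$ grows coarsely along the orbit.} Fix $x_0\in\Delta_0$. We would show that $D_{\bar\psi}(x_0,u^nx_0)\asymp |n|^{2\kappa_0}$ for $|n|\ge1$. To see this, use the formula from \Cref{lem.metrics} and \eqref{dedef}:
$$D_{\bar\psi}(x_0,u^nx_0)=d_{\bar\psi}(x_0,u^nx_0)\,e^{\bar\psi(\mathcal G(x_0,p_0))+\bar\psi(\mathcal G(u^nx_0,p_0))}.$$
Then apply \Cref{lem.gromovproductalongparabolic}, which gives $\bar\psi(\mathcal G(p_0,u^nx_0))=\tfrac12\bar\psi(\mu(u^n))+O(1)=\kappa_0\log|n|+O(1)$. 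Next, $d_{\bar\psi}(x_0,u^nx_0)$ is bounded below, because $u^nx_0\to p_0\neq x_0$. Since $x_0$ lies in a compact set, its factor is bounded. A cleaner route, which avoids an exact exponent, is to use \Cref{lem.sphereeuccompare}:
$$1+D_{\bar\psi}(u^nx_0,x_0)^2\asymp d_{\bar\psi}(u^nx_0,p_0)^{-2}\to\infty.$$
This shows that $D_{\bar\psi}(u^nx_0,x_0)\to\infty$ as $|n|\to\infty$, with growth at least $c|n|^{\kappa_0}$. By invariance, $D_{\bar\psi}(u^mx_0,u^{m+n}x_0)$ depends only on $n$, say $\phi(n)$. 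Set $A:=\operatorname{diam}_{D_{\bar\psi}}\Delta$, which is finite by compactness.

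\textbf{Step 2: choose the subgroup and domain.} Let $N=N(\eta)$ be minimal such that $\phi(n)\ge 2N_0^2(\eta^{-1}+A)$ for all $|n|\ge N$. Set $\mathsf P_0':=\langle u^{2N+1}\rangle$ and
$$R_{0,\eta}:=\bigcup_{|k|\le N}u^k\widehat\Delta,$$
where $\widehat\Delta$ is the (Euclidean) interval $\Delta_0$, read in the transported coordinate and intersected with $\Lambda_\rho$. Consecutive translates tile, so this union is a fundamental domain for $\mathsf P_0'$. We need $\Delta\subset\Delta_0$ up to bounded position. The $\Delta_i$ for $i\ge1$ are compact and avoid $p_0$, so they lie within bounded $D_{\bar\psi}$-distance of $\Delta_0$; enlarge $A$ to $\operatorname{diam}\bigl(\Delta\cup\Delta_0\bigr)$.

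\textbf{Step 3: the two inclusions in \eqref{eqn.R0balls}.} For the left inclusion, suppose $y$ satisfies $D_{\bar\psi}(y,\Delta)<\eta^{-1}$ but $y\notin R_{0,\eta}$. Then $y\in u^k\Delta_0$ with $|k|>N$. The quasi-triangle inequality \Cref{lem.eucdistancetriangle} then gives
$$\phi(k)\le N_0^2\bigl(\eta^{-1}+2A\bigr),$$
which contradicts the choice of $N$. For the right inclusion, any $y\in u^k\Delta_0$ with $|k|\le N$ satisfies, by the same inequality,
$$D_{\bar\psi}(y,x)\ll \phi(k)+A.$$
By \Cref{lem.olive}, applied to the Euclidean order of the orbit points, we have $\phi(k)\le C\,\phi(N)$ for $|k|\le N$. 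So what remains to show is
$$\phi(N)\le C'\eta^{-1},$$
uniformly in $\eta$.

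\textbf{Main obstacle: uniformity of $C_{\rm fd}$.} This needs $\phi$ to be coarsely doubling: $\phi(n+1)\le C''\phi(n)$ for $n\ge1$. By minimality of $N$, we have $\phi(N-1)<2N_0^2(\eta^{-1}+A)$, and doubling then gives $\phi(N)\ll\eta^{-1}$ since $\eta<1$. I would prove the doubling from the power-law asymptotic $\phi(n)\asymp n^{2\kappa_0}$ obtained in Step 1, combining \Cref{lem.gromovproductalongparabolic} with the observation that $d_{\bar\psi}(x_0,u^nx_0)\asymp 1$ for $n\ne0$. Alternatively, one can get it from quasisymmetry, \Cref{lem.quasisymmetry}, because the Euclidean distances $|n\tau_0|$ and $|(n+1)\tau_0|$ have bounded ratio.
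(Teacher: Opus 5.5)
Your argument is correct, but it takes a different route from the paper's proof.

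\textbf{The paper's route.} It normalizes the generator to $x\mapsto x+1$ and $\widehat\Delta_\infty=[0,1]$. It lets $a,b$ be the endpoints of $\overline{\mathcal B_{\bar\psi}(\Delta,\eta^{-1})}$ and sets $R_{0,\eta}:=[\lfloor a\rfloor,\lceil b\rceil]\cap\Lambda(\Gamma)$, with $\mathsf P_0'$ generated by the $(\lceil b\rceil-\lfloor a\rfloor)$-th power. The left inclusion is then automatic. For the right inclusion, it uses $D_{\bar\psi}(a,\Delta),D_{\bar\psi}(b,\Delta)\le\eta^{-1}$: \Cref{lem.eucdistancetriangle} and \Cref{lem.olive} give $D_{\bar\psi}(x,\xi)\ll\eta^{-1}$ on $[a,b]$. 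The two leftover pieces, of Euclidean length $<1$, have bounded $D_{\bar\psi}$-diameter by $\mathsf P_0$-invariance. Because the cutoff sits at points that are themselves within $\eta^{-1}$ of $\Delta$, there is no overshoot to control. The paper therefore needs no growth or doubling information about $D_{\bar\psi}$ along the parabolic orbit, only quasi-monotonicity and invariance.

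\textbf{Your route.} You choose $N$ by a threshold on $\phi$, which creates an overshoot $\phi(N)$ versus $\phi(N-1)$. That is why you need coarse doubling of $\phi$. Your quasisymmetry argument via \Cref{lem.quasisymmetry}, with Euclidean ratio $(n+1)/n\le2$, supplies it cleanly. The edge case $N=1$, where $\phi(N-1)=\phi(0)=0$ falls outside your doubling range, is trivial because $\phi(1)$ is a fixed constant and $\eta<1$. In exchange for the extra input, your version gives a symmetric domain of $2N+1$ translates of $\Delta_0$. It also gives the quantitative size $N(\eta)\asymp\eta^{-1/\kappa_0}$.

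\textbf{A small slip.} The orbit growth is $\phi(n)\asymp|n|^{\kappa_0}$, not $|n|^{2\kappa_0}$. Indeed, \Cref{lem.gromovproductalongparabolic} and $\bar\psi\circ\i=\bar\psi$ give $\bar\psi(\mathcal G(u^nx_0,p_0))=\tfrac12\bar\psi(\mu(u^n))+O(1)=\kappa_0\log|n|+O(1)$. Combined with $d_{\bar\psi}(x_0,u^nx_0)\asymp1$ for $n\ne0$, this gives the exponent $\kappa_0$, matching the estimate $D_{\bar\psi}(x_0,u^jx_0)\asymp e^{\bar\psi(\mu(u^j))/2}$ used later in the paper. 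This is harmless, since your argument only uses doubling.
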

\begin{proof}
Choose a generator $h_0$ of $\mathsf P_0$ acting by
$x\mapsto x+1$, and normalize
$\widehat\Delta_\infty=[0,1]$.
By \eqref{inv}, $D_{\bar\psi}$ is invariant under translation by $h_0$.
Let $a,b$ be the left and right endpoints of $\overline{\mathcal B_{\bar\psi}(\Delta,\eta^{-1})}$ and put
$n:=\lfloor a\rfloor$ and $m:=\lceil b\rceil$.
Since this set contains $\Delta_0$, we have $m-n\geq1$.
Define
$$
\mathsf P_0':=\langle h_0^{m-n}\rangle,
\qquad
R_{0,\eta}:=[n,m]\cap\Lambda(\Gamma).
$$
Then $R_{0,\eta}$ is a compact fundamental domain
for $\mathsf P_0'$ and contains
$\mathcal B_{\bar\psi}(\Delta,\eta^{-1})$.

For the upper bound, note
$D_{\bar\psi}(a,\Delta),D_{\bar\psi}(b,\Delta)\leq\eta^{-1}$.
Hence \Cref{lem.eucdistancetriangle}
 gives
$D_{\bar\psi}(x,a)+D_{\bar\psi}(x,b)\ll\eta^{-1}$
uniformly for $x\in\Delta$ and $0<\eta<1$.
As $x\in[a,b]$, \Cref{lem.olive} therefore yields
$$
D_{\bar\psi}(x,\xi)\ll\eta^{-1}
\qquad
\bigl(x\in\Delta,\ 
\xi\in[a,b]\cap\Lambda(\Gamma)\bigr).
$$
Since $a-n,m-b<1$,  $D_{\bar\psi}$-diameters for
$[n,a]\cap\Lambda(\Gamma)$ and $[b,m]\cap\Lambda(\Gamma)$ are uniformly bounded.
Applying \Cref{lem.eucdistancetriangle}
extends the preceding bound to every $\xi\in R_{0,\eta}$.
Choosing $C_{\rm fd}>1$ larger than the implied constant
proves \eqref{eqn.R0balls}.
\end{proof}

The proof applies without change to each of the finitely many conjugated
systems
$$
\Gamma_i:=g_i\Gamma_\rho g_i^{-1},
\qquad
g_i\mathsf P_i g_i^{-1}<\Gamma_i,
$$
using $g_i\Delta_i$ as the compact set of centers.  Hence, after taking
the maximum of finitely many constants, the same $C_{\rm fd}$ works for all
$0\le i\le\ell$.  For each $i$ and $\eta$, fix once and for all a
compact fundamental domain $R_{i,\eta}$ constructed as in \Cref{lem.choiceoffunddom} and a finite-index
subgroup $\mathsf P'_{i,\eta}<\mathsf P_i$ such that
$R_{i,\eta}$ is a fundamental domain for
$g_i\mathsf P'_{i,\eta}g_i^{-1}$ and
\begin{equation}
\label{eqn.Randballs}
{\mathcal B_{\bar\psi}}(g_ix,\eta^{-1})
\subset R_{i,\eta}
\subset {\mathcal B_{\bar\psi}}(g_ix,C_{\rm fd}\eta^{-1})
\qquad(x\in\Delta_i).
\end{equation}

\subsection{Building blocks of the coding}
\label{subsection.buildingblocks}
Fix $0<\eta<\eta_*$, where $\eta_*<1/4$ will be decreased a finite
number of times below.  Until $\eta$ is fixed in
\Cref{prop.LP6.15ooo}, we retain it in the notation. For each parabolic point $p\in\Delta$, we construct a
compact neighborhood $J_{p,\eta}$ of $p$ that will serve as
a building block of the coding.
Let $p\in\Delta\cap\Lambda_{\mathrm p}(\Gamma_\rho)$
have type $i$, with top representative $\gamma_p$ as in
\eqref{eqn.definitiontoprepresentative};
so $p=\gamma_pp_i$ and
$x_p=\gamma_p^{-1}p_i\in\Delta_i$.

Here and below, complements are taken in
$\Lambda_\rho\smallsetminus\{p_0\}$ and closures in
$\Lambda_\rho$.

\begin{Def}[$J_p$ and $N_p$]
\label{jpnp}
For $p\in\Delta\cap\Gamma_\rho p_i$, define
\begin{equation}
\label{np0}
\begin{aligned}
&J_p=J_{p,\eta}
:=\overline{\Lambda_\rho\smallsetminus
\gamma_pg_i^{-1}R_{i,\eta}}
=\gamma_p\overline{g_i^{-1}R_{i,\eta}^{\,c}},\\
&N_p
:=\{\gamma_1\in\mathsf P_i:
\gamma_p\gamma_1\Delta_i\subset J_p\}= \{\gamma_1\in\mathsf P_i:
\gamma_1\Delta_i\subset\overline{ g_i^{-1} R_{i,\eta}^{\,c}}\}.
\end{aligned}
\end{equation}
\end{Def}
The set $J_{p,\eta}$ is the image of the closed neighborhood
$\overline{R_{i,\eta}^{\,c}}$ of $p_0$ under
$\gamma_pg_i^{-1}$, which sends $p_0$ to $p$.

For $\gamma_1\in N_p$, the invariance \eqref{inv}
applied in the conjugate system gives
\begin{equation}
\label{eqn.LP6.6}
D_{\bar\psi}\bigl(g_i(\gamma_p\gamma_1)^{-1}p_i,
g_i\Delta_i\bigr)
=
D_{\bar\psi}(g_ix_p,g_i\gamma_1\Delta_i)
\ge\eta^{-1}.
\end{equation}
The last inequality follows from \eqref{eqn.Randballs},
since $\mathcal B_{\bar\psi}(g_ix_p,\eta^{-1})
\subset R_{i,\eta}$.

\begin{lemma}[Peripheral tiling of $J_p$]
\label{lem.tilingJp}
For fixed $\eta$, $0\le i\le\ell$, and
$p\in\Delta\cap\Gamma_\rho p_i$, we have:
\begin{enumerate}
\item The complement $\mathsf P_i\smallsetminus N_p$ is finite.
\item There is a countable set
$
\mathcal E_p
\subset
\{p\}\cup
\bigcup_{\gamma_1\in\mathsf P_i}
\gamma_p\gamma_1\partial\Delta_i
$
such that
$$
J_p\smallsetminus\mathcal E_p
=
\bigsqcup_{\gamma_1\in N_p}
\gamma_p\gamma_1\operatorname{int}\Delta_i.
$$
In particular,
$
\nu\left(
J_p\mathbin{\triangle}
\cup_{\gamma_1\in N_p}
\gamma_p\gamma_1\Delta_i
\right)=0
$
where $\triangle$ denotes the symmetric difference.
\end{enumerate}
\end{lemma}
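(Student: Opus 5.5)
Both parts reduce, after applying $\gamma_p^{-1}$, to statements about the translation-invariant tiling of $\Lambda_\rho\smallsetminus\{p_i\}$ by $\mathsf P_i$-translates of $\Delta_i$. Write $S:=\overline{g_i^{-1}R_{i,\eta}^{\,c}}$. This is a closed neighborhood of $p_i$ in $\Lambda_\rho$, because $g_i^{-1}R_{i,\eta}$ is a compact subset of $\Lambda_\rho\smallsetminus\{p_i\}$ (recall $g_ip_i=p_0$). Then $J_p=\gamma_pS$ and $N_p=\{\gamma_1\in\mathsf P_i:\gamma_1\Delta_i\subset S\}$. In the hyperbolic coordinate, transport everything by $f^{-1}$ and conjugate so that $\xi_i=\infty$. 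Then $\mathsf P_i$ acts on $\Lambda(\Gamma)\smallsetminus\{\infty\}\subset\mathbb R$ by integer translations, and $\widehat\Delta_{\xi_i}$ is a compact interval $[c,c+1]$ with endpoints in $\Lambda_{\mathrm p}(\Gamma)$. Its translates tile $\mathbb R$ and overlap only at endpoints.

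For (1), the set $g_i^{-1}R_{i,\eta}$ is compact and avoids $p_i$, so it corresponds to a bounded subset of $\mathbb R$. By proper discontinuity, only finitely many translates $\gamma_1\Delta_i$ meet this compact set. Every other translate lies in its complement, hence in $S$. So $\mathsf P_i\smallsetminus N_p$ is finite.

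For (2), the key observation comes from the construction in \Cref{lem.choiceoffunddom}. There $R_{i,\eta}$ was chosen to be $[n,m]\cap\Lambda(\Gamma)$ with integer endpoints, in the coordinate where $\widehat\Delta_\infty=[0,1]$. In the conjugated systems, ``as in \Cref{lem.choiceoffunddom}'' means a union of consecutive tiles. So $g_i^{-1}R_{i,\eta}$ is a finite union of consecutive translates $\gamma_1\Delta_i$, and its complement in $\Lambda_\rho\smallsetminus\{p_i\}$ is the union of the remaining translates with the shared endpoints removed. Taking the closure adds back those boundary endpoints together with $p_i$.

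It follows that $S$ equals $\{p_i\}\cup\bigcup_{\gamma_1\in N_p}\gamma_1\Delta_i$, and $N_p$ consists exactly of the tiles outside $g_i^{-1}R_{i,\eta}$. (A tile inside $R$ is not contained in $S$, since its interior meets $\Lambda$: $\Delta_i$ has endpoints in the limit set and nonempty interior points of $\Lambda$; if the interior is empty, the statement holds trivially up to the countable set.) Removing $\{p_i\}$ and all endpoint images $\gamma_1\partial\Delta_i$ then gives the disjoint union $\bigsqcup_{\gamma_1\in N_p}\gamma_1\operatorname{int}\Delta_i$. Here disjointness holds because tiles meet only at endpoints. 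Applying $\gamma_p$ yields the claimed decomposition with $\mathcal E_p=\gamma_p\bigl(\{p_i\}\cup\bigcup_{\gamma_1}\gamma_1\partial\Delta_i\bigr)\cap J_p$. This set is countable because $\mathsf P_i$ is countable and $\partial\Delta_i$ has two points.

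The measure statement follows because $\nu$ is atomless (\Cref{thm.ps atomless 0}), so the countable set $\mathcal E_p$ is $\nu$-null, and the symmetric difference is contained in $\mathcal E_p$.

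The main obstacle is justifying that $R_{i,\eta}$ is an exact union of $\mathsf P_i$-tiles of $\Delta_i$, rather than an arbitrary fundamental domain. I would settle this by rechecking the construction: the normalization $\widehat\Delta_\infty=[0,1]$ and the integer endpoints $n,m$ make $R_{0,\eta}=\bigcup_{k=n}^{m-1}h_0^k\Delta_0$, and the conjugated versions are built in the same way from $g_i\Delta_i$.
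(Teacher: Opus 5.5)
Your proposal is correct and follows the paper's argument. Part (1) comes from proper discontinuity, since only finitely many tiles meet the compact set $g_i^{-1}R_{i,\eta}$, and part (2) from transporting the tiling of $\overline{R_{i,\eta}^{\,c}}$ by $\gamma_pg_i^{-1}$, with $\mathcal E_p$ consisting of $p$ and the tile endpoints; you also correctly make explicit what the paper uses implicitly, namely that $R_{i,\eta}=[n,m]\cap\Lambda(\Gamma)$ from \Cref{lem.choiceoffunddom} is an exact union of consecutive peripheral tiles.
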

\begin{proof} All but finitely many tiles lie in
$\overline{R_{i,\eta}^{\,c}}$, so the definition of
$N_p$ implies that $\mathsf P_i\smallsetminus N_p$
is finite. The map $\gamma_pg_i^{-1}$ sends each tile
$g_i\gamma_1\Delta_i$ to $\gamma_p\gamma_1\Delta_i$
and sends $p_0$ to $p$. This gives the asserted
decomposition, with $\mathcal E_p$ consisting of
$p$ and the endpoints of these images.
\end{proof}

We compare $J_{p,\eta}$ with $D_{\bar\psi}$-balls
centered at $p$ whose radii are proportional to $\eta h(p)$.

\begin{lemma}
\label{lem.LP6.13}
\label{lem.LP6.7}
There exist $\eta_*>0$, $0<C_1<1$, and $C_2>1$ such that,
for every $0<\eta<\eta_*$, $0\leq i\leq\ell$, and
$p\in\Delta\cap\Gamma_\rho p_i$, we have
$$
\mathcal B_{\bar\psi}\left(
g_ip,\frac{C_1\eta h_i(g_ip)}{C_{\rm fd}}
\right)
\subset g_iJ_{p,\eta}
\subset
\mathcal B_{\bar\psi}\left(
g_ip,\frac{\eta h_i(g_ip)}{C_1}
\right),
$$
$$
\mathcal B_{\bar\psi}\left(p,\frac{\eta h(p)}{C_2}\right)
\subset J_{p,\eta}
\subset\mathcal B_{\bar\psi}\left(p,C_2\eta h(p)\right).
$$
\end{lemma}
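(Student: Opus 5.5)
The plan is to prove the conjugated statement first, by ball flipping in the conjugate system $\Gamma_i=g_i\Gamma_\rho g_i^{-1}$. Then we transfer it back to $J_{p,\eta}$ using \Cref{lem.metrics} and \Cref{lem.LP5.11}.

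Set $\gamma:=g_i\gamma_pg_i^{-1}\in\Gamma_i$. Then $\gamma p_0=g_ip$ and $\gamma^{-1}p_0=g_ix_p\in g_i\Delta_i$. By definition,
$$g_iJ_{p,\eta}=\gamma\,\overline{R_{i,\eta}^{\,c}}.$$
So $\xi\in g_iJ_{p,\eta}\smallsetminus\{g_ip\}$ exactly when $\gamma^{-1}\xi\in\overline{R_{i,\eta}^{\,c}}$. By \eqref{eqn.Randballs}, this set lies outside $\mathcal B_{\bar\psi}(g_ix_p,\eta^{-1})$ and contains the complement of $\mathcal B_{\bar\psi}(g_ix_p,C_{\rm fd}\eta^{-1})$.

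The first estimate of \Cref{lem.ballflipping}, applied in the system $\Gamma_i$ with its height function $h_i$, gives
$$D_{\bar\psi}(\gamma^{-1}\xi,\gamma^{-1}p_0)\asymp \frac{h_i(g_ip)}{D_{\bar\psi}(\xi,g_ip)},$$
with constants uniform over the finitely many $i$. Consequently:
\begin{itemize}
\item if $\xi\in g_iJ_{p,\eta}$, then $D_{\bar\psi}(\xi,g_ip)\ll \eta h_i(g_ip)$, which is the outer inclusion;
\item if $D_{\bar\psi}(\xi,g_ip)<c\,\eta h_i(g_ip)/C_{\rm fd}$ for a small uniform $c$, then $D_{\bar\psi}(\gamma^{-1}\xi,g_ix_p)>C_{\rm fd}\eta^{-1}$, so $\gamma^{-1}\xi\notin R_{i,\eta}$, which is the inner inclusion.
\end{itemize}
Choosing $C_1$ as the smaller of the two resulting constants handles both. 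The point $\xi=g_ip$ itself is trivially included.

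One subtlety: \Cref{lem.ballflipping} was stated for the original system. Its proof uses only \Cref{lem.metrics}, \eqref{hpgg}, and compactness of the fundamental domain, so it applies verbatim to $\Gamma_i$ with $g_i\Delta_i$ in place of $\Delta_0$. Also, $\gamma\notin g_i\mathsf P_ig_i^{-1}$ is automatic because $\gamma p_0=g_ip\neq p_0$.

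For the second pair of inclusions, we pull back by $g_i\in K$. The smallness $\eta<\eta_*$ is used here. The outer bound shows that $g_iJ_{p,\eta}$ has radius at most $\eta h_i/C_1$, and $h_i$ is bounded above by \Cref{lem.LP5.11} together with the finiteness of large heights. Hence, for $\eta_*$ small, $J_{p,\eta}$ and the relevant balls around $p$ stay within a fixed compact neighbourhood $Q$ of $\Delta$ that avoids $p_0,\dots,p_\ell$ and $g_i^{-1}p_0=p_i$.

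On $Q$, \Cref{lem.metrics} says that $D_{\bar\psi}(g_i\cdot,g_i\cdot)\asymp D_{\bar\psi}$, uniformly over the finitely many $g_i$. Moreover $h_i(g_ip)\asymp h(p)$ by \Cref{lem.LP5.11}. Together these convert the first pair of inclusions into the second, with a suitable $C_2$.

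The main obstacle is exactly this localization. The bi-Lipschitz comparison in \Cref{lem.metrics} is only valid on compact sets away from both $p_0$ and $p_i$, so we must first confine $J_{p,\eta}$ and the comparison balls to such a set $Q$. This is why $\eta_*$ has to be chosen small. I would do this confinement first, using the outer inclusion in the conjugate picture together with the finite $d_{\bar\psi}$-diameter from \Cref{lem.dpsidiameterfinite}, before doing any comparison.
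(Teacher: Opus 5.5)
Your proposal is correct and follows essentially the same route as the paper. Ball flipping (\Cref{lem.ballflipping}) in the conjugate system $\Gamma_i$, combined with the two inclusions in \eqref{eqn.Randballs}, gives the first pair; then $h_i(g_ip)\asymp h(p)$ from \Cref{lem.LP5.11} and the bi-Lipschitz comparison of \Cref{lem.metrics} on a compact neighborhood of $\Delta$ (secured by taking $\eta_*$ small) give the second pair, with your treatment of that localization step slightly more explicit than the paper's.
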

\begin{proof}
Fix $i$ and $p\in\Delta\cap\Gamma_\rho p_i$. Recall that
$\alpha_p:=g_i\gamma_pg_i^{-1}$ satisfies
$\alpha_pp_0=g_ip$ and
$\alpha_p^{-1}p_0=g_ix_p\in g_i\Delta_i$.
By \Cref{lem.ballflipping}, applied to $\Gamma_i$,
\begin{equation}
\label{eqn.ballflipconjugate}
D_{\bar\psi}(\alpha_p^{-1}y,g_ix_p)
\asymp
\frac{h_i(g_ip)}{D_{\bar\psi}(y,g_ip)}
\qquad
\bigl(y\in g_i\Lambda_\rho\smallsetminus\{p_0,g_ip\}\bigr).
\end{equation}

If $y\in g_iJ_{p,\eta}\smallsetminus\{g_ip\}$, then
$\alpha_p^{-1}y\in\overline{R_{i,\eta}^{\,c}}$.
Since $x_p\in\Delta_i$, the first inclusion in
\eqref{eqn.Randballs} gives
$
D_{\bar\psi}(\alpha_p^{-1}y,g_ix_p)\ge\eta^{-1}.
$
Together with \eqref{eqn.ballflipconjugate}, this implies
$D_{\bar\psi}(y,g_ip)\ll\eta h_i(g_ip).
$
Conversely, if
$y\in g_i\Lambda_\rho\smallsetminus
\bigl(g_iJ_{p,\eta}\cup\{p_0\}\bigr)$, then
$\alpha_p^{-1}y\in R_{i,\eta}$, so the second inclusion in
\eqref{eqn.Randballs} gives
$
D_{\bar\psi}(\alpha_p^{-1}y,g_ix_p)
<C_{\rm fd}\eta^{-1}.
$
Hence
$
D_{\bar\psi}(y,g_ip)
\gg \frac{\eta h_i(g_ip)}{C_{\rm fd}}.
$
Choosing $C_1>0$ sufficiently small proves the first pair of
inclusions.
By \Cref{lem.heightondelta,lem.LP5.11},
$h_i(g_ip)\asymp h(p)\ll1$ uniformly.
For sufficiently small $\eta_*$, \Cref{lem.LP5.12} therefore
gives uniform bi-Lipschitz bounds on the relevant neighborhoods.
Pulling back the first pair of inclusions by $g_i^{-1}$ yields
the second pair, with a uniform constant $C_2>1$.
\end{proof}
\subsection{The inductive partition}
Let $\nu_i:=(g_i)_*\nu$; this is the unique $(\Gamma_i, \psi)$-Patterson-Sullivan measure on $g_i\La_\rho$.
Apply \Cref{lem.olive} to each conjugated representation
$\gamma\mapsto g_i\rho(\gamma)g_i^{-1}$, with the same
form $\bar\psi$ and distinguished point $g_ip_i=p_0$.
Choose $\mathsf c\ge1$ larger than all the resulting constants.

\begin{lemma}
\label{lem.LP5.21}
There exists $\mathtt H>\mathsf c^2$ such that, for every
$0\le i\le\ell$,
$p\in\Lambda_{\mathrm p}(\Gamma_i)\smallsetminus\{p_0\}$,
and $0<r\le h_i(p)$,
\begin{equation}
\label{eqn.annularH}
\nu_i\bigl(\mathcal B_{\bar\psi}(p,r/\sqrt{\mathtt H})\bigr)
\le \frac12\,
\nu_i\bigl(\mathcal B_{\bar\psi}(p,r/\mathsf c)\bigr).
\end{equation}
\end{lemma}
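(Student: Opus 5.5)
The plan is to deduce \eqref{eqn.annularH} from the parabolic ball estimate \Cref{eqn.LP5.16}, applied in each of the finitely many conjugated systems $\Gamma_i=g_i\Gamma_\rho g_i^{-1}$ with distinguished point $p_0$ and compact set $g_i\Delta_i$. The exponent $2(1-\delta_\psi(\mathsf P_p))$ in that estimate is strictly positive, because $\delta_\psi(\mathsf P_p)=\delta_j<1$ for some cusp type $j$ by \Cref{thm.entropydrop}. Fix
$$
\theta:=\min_j 2(1-\delta_j)>0.
$$
For $p\in\Lambda_{\mathrm p}(\Gamma_i)\smallsetminus\{p_0\}$ and $0<s\le r\le h_i(p)$, \Cref{eqn.LP5.16} should give
$$
\frac{\nu_i(\mathcal B_{\bar\psi}(p,s))}{\nu_i(\mathcal B_{\bar\psi}(p,r))}\le L\left(\frac sr\right)^{\theta},
$$
with $L$ uniform over $i$ and $p$. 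Taking $s=r/\sqrt{\mathtt H}$ and replacing $r$ by $r/\mathsf c$, the ratio in \eqref{eqn.annularH} is at most $L(\mathsf c/\sqrt{\mathtt H})^{\theta}$. Choosing $\mathtt H>\mathsf c^2$ large enough that this quantity is at most $1/2$ then proves the lemma.

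There is a point to check: \Cref{eqn.LP5.16} was proved only for centers $p\in\Delta$, while the lemma allows every $p\in\Lambda_{\mathrm p}(\Gamma_i)\smallsetminus\{p_0\}$. To handle this, I will use $\mathsf P_0$-equivariance in the conjugated system. Choose $u\in g_i\mathsf P_ig_i^{-1}$, the stabilizer of $p_0$ in $\Gamma_i$, so that $up\in g_i\Delta_i$. Then $h_i(up)=h_i(p)$ because the height is $\mathsf P_0$-invariant, and $u\mathcal B_{\bar\psi}(p,r)=\mathcal B_{\bar\psi}(up,r)$ by the invariance \eqref{inv}.

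The measure does change under $u$. Since all radii satisfy $r\le h_i(p)\ll1$, every ball involved has bounded $D_{\bar\psi}$-diameter. So \Cref{lem.LP5.18} shows that $\nu_i(\mathcal B)$ and $\nu_i(u\mathcal B)$ differ by a factor comparable to $(1+D_{\bar\psi}(p,\xi_0)^2)/(1+D_{\bar\psi}(up,\xi_0)^2)$, and this factor is the same for both balls up to uniform constants. The ratio of the two ball masses is therefore preserved up to a uniform multiplicative constant, which I absorb into $L$. The case $i\neq0$ goes the same way once one notes, via \Cref{lem.LP5.11}, that $h_i$ on $g_i\Delta_i$ is exactly the height of the conjugated system, so the conjugate version of \Cref{eqn.LP5.16} holds with uniform constants.

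The main obstacle is bookkeeping rather than substance. One must confirm that the implied constants in \Cref{eqn.LP5.16} are uniform over the finitely many conjugated systems and over the whole range $0<r\le h(p)$, including radii near $h(p)$; that proof covered these radii by doubling. One must also confirm that the change-of-measure factor from \Cref{lem.LP5.18} cancels in the ratio. Once these are in place, choosing $\mathtt H$ is immediate.
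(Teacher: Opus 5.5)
Your proposal is correct and follows essentially the same route as the paper's proof. You translate the center into $g_i\Delta_i$ by the peripheral stabilizer of $p_0$, and you use \Cref{lem.LP5.18}, evaluated at the common point $p$ of both balls, to see that the mass ratio changes only by a uniform factor. You then apply \Cref{eqn.LP5.16} in $\Gamma_i$ to bound the ratio by $\ll(\mathsf c/\sqrt{\mathtt H})^{a}$ with $a=\min_j 2(1-\delta_j)>0$, and take $\mathtt H$ large.

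The appeal to \Cref{lem.LP5.11} is unnecessary, since $h_i$ is by definition the height function of the conjugated system.
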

\begin{proof} To apply \Cref{eqn.LP5.16}, first move the center $p$
into the compact fundamental domain: choose
$\gamma\in g_i\mathsf P_i g_i^{-1}$ such that
$q:=\gamma^{-1}p\in g_i\Delta_i$.
By invariance, $h_i(q)=h_i(p)$ and
$\gamma^{-1}\mathcal B_{\bar\psi}(p,s)
=\mathcal B_{\bar\psi}(q,s)$.
Since $r\le h_i(q)\ll1$, the balls under consideration
have uniformly bounded diameter.
Applying \Cref{lem.LP5.18} in $\Gamma_i$ to both balls,
with the same center $p$, gives
$$
\frac{\nu_i(\mathcal B_{\bar\psi}(p,r/\sqrt{\mathtt H}))}
     {\nu_i(\mathcal B_{\bar\psi}(p,r/\mathsf c))}
\asymp
\frac{\nu_i(\mathcal B_{\bar\psi}(q,r/\sqrt{\mathtt H}))}
     {\nu_i(\mathcal B_{\bar\psi}(q,r/\mathsf c))}.
$$
By \Cref{eqn.LP5.16} applied in $\Gamma_i$, the
right-hand side is
$\ll(\mathsf c/\sqrt{\mathtt H})^a$ for some uniform
$a>0$. Hence
taking $\mathtt H$ sufficiently large
proves the claim.
\end{proof}

Fix $\mathtt H$ as in \Cref{lem.LP5.21} and set
$$
h_n:=\mathtt H^{-n}\qquad(n\ge0).
$$

For $q=g_ip$ with $p\in\Delta$, write
$J_{\Gamma_i,q,\eta}:=g_iJ_{p,\eta}$.
Take the constants $C_2$ and $C_s$ in
\Cref{lem.LP6.13,lem.LP6.16} uniformly over the
finitely many groups $\Gamma_i$, and decrease
$\eta_*$ if necessary so that $4C_2\eta_*<1$.

\begin{Def}
\label{def.ZnPnDn}
For $0\le i\le\ell$ and $0<\eta<\eta_*$, set
$\mathcal Z_{i,0}(\eta):=\Delta_i$.
For $n\ge1$, define inductively
$$
P_{i,n}(\eta)
:=
\left\{
p\in\Delta_i\cap\Lambda_{\mathrm p}(\Gamma_\rho):
\begin{array}{l}
h_n<\eta h_i(g_ip)\le h_{n-1},\, 
\mathcal B_{\bar\psi}\left(g_ip,\dfrac{h_{n-1}}{4\eta}\right)
\subset g_i\mathcal Z_{i,n-1}(\eta)
\end{array}
\right\},
$$
where 
$\mathcal B_{\bar\psi}(g_ip,r)
:=\{q\in g_i\Lambda_\rho\smallsetminus\{p_0\}:
D_{\bar\psi}(g_ip,q)<r\}$. Thus $P_{i,n}(\eta)$ consists of the centers of the
sets $J_{p,\eta}$ removed from $\Delta_i$ at stage $n$. Set
$$
D_{i,n}(\eta)
:=
\bigcup_{p\in P_{i,n}(\eta)}J_{p,\eta},
\qquad
\mathcal Z_{i,n}(\eta)
:=
\mathcal Z_{i,n-1}(\eta)\smallsetminus D_{i,n}(\eta).
$$
For $i=0$, abbreviate
$P_n(\eta):=P_{0,n}(\eta)$,
$D_n(\eta):=D_{0,n}(\eta)$, and
$\mathcal Z_n(\eta):=\mathcal Z_{0,n}(\eta)$.
\end{Def}

For $p\in P_{i,n}(\eta)$, the ball condition implies
$p\in\mathcal Z_{i,n-1}(\eta)$.
By \Cref{lem.LP6.13}, applied to $\Gamma_i$,
$$
g_iJ_{p,\eta}
\subset
\mathcal B_{\bar\psi}(g_ip,C_2\eta h_i(g_ip))
\subset
\mathcal B_{\bar\psi}\left(g_ip,\frac{h_{n-1}}{4\eta}\right)
\subset g_i\mathcal Z_{i,n-1}(\eta).
$$
Thus $D_{i,n}(\eta)\subset\mathcal Z_{i,n-1}(\eta)$,
and all three sets lie in $\Delta_i$.

Moreover, \Cref{lem.LP6.16}, applied to $\Gamma_i$,
gives $D_{\bar\psi}(g_ip,g_ip')>C_sh_n/\eta$
for distinct $p,p'\in P_{i,n}(\eta)$.
Since $g_i\Delta_i$ is compact, $P_{i,n}(\eta)$ is finite.
Hence $D_{i,n}(\eta)$ is a finite union of the sets $J_{p,\eta}$.
The following proposition will be proved in \Cref{sec.expdecayomega}.
\begin{proposition}[Exponential decay of
residual mass]
\label{prop.LP6.15ooo}
For all sufficiently small $\eta>0$, there exists
$\varepsilon_0>0$ such that, for every $0\le i\le\ell$ and sufficiently large $n$,
$$
\nu\bigl(\mathcal Z_{i,n}(\eta)\bigr)
\le(1-\varepsilon_0)^n.
$$
\end{proposition}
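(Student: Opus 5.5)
The proof will follow the Li--Pan strategy: show that, for some fixed block length $N$ and some $\varepsilon>0$, stages $n+1,\dots,n+N$ remove at least an $\varepsilon$-fraction of $\nu(\mathcal Z_{i,n}(\eta))$. Iterating this gives $\nu(\mathcal Z_{i,n+kN})\le(1-\varepsilon)^k\nu(\mathcal Z_{i,n})$, which is the claimed bound after adjusting $\varepsilon_0$. By conjugating by $g_i$ and using \Cref{lem.LP5.11}, it suffices to treat $i=0$; the other cusps are handled identically with $\Gamma_i$, $\nu_i$ and $h_i$.

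\textbf{Step 1 (recurrence gives parabolic points).} Fix $\tau:=\sqrt{\mathtt H}$ (or a fixed power of $\mathtt H$) and a compact set $Q\subset\Omega_{\bar\psi}$ of large measure for the finite mixing measure of \Cref{koe}. Let $x=(x^+,p_0,0)\in\widetilde\Delta_0$ with $\Gamma_\rho xa_{m\log\tau}\in Q$. \Cref{lem.LP6.55_var} then gives a parabolic $p$ with $D_{\bar\psi}(p,x^+)\le C\tau^{-m}$ and $h(p)\asymp\tau^{-m}$, up to the factor $e^{-U_0}$. I also need a quantitative statement: the $\nu$-mass of points $x^+$ in a given ball that return to $Q$ at some time in a window of length $N\log\tau$ is a definite proportion of the ball's mass. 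I would obtain this from the finiteness and ergodicity of $m_{\bar\psi}$ together with the local product structure, so that a positive proportion of each small shadow returns to $Q$ within bounded time. Alternatively, it suffices to note that \Cref{lem.LP6.54} already gives a return for every point of a compact set, with bounded time $T_*$. This second route is the one I would use: if $x^+$ lies in $\mathcal Z_n$ at a point whose orbit is in the thick part at time $\approx n\log\mathtt H$, the lemma produces the parabolic point within a bounded number $N$ of stages.

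\textbf{Step 2 (a removed ball near every residual point).} Take $\xi\in\mathcal Z_n(\eta)$ and the scale $r=h_n$. There are two cases. Either the geodesic ray toward $\xi$ at depth $\sim n\log\mathtt H$ lies in a bounded region, in which case Step 1 gives a parabolic $p$ with $D_{\bar\psi}(p,\xi)\ll h_n$ and $\eta h(p)\in(h_{n+k},h_{n+k-1}]$ for some $k\le N$. Or the ray is deep in a cusp at that time, in which case $\xi$ is within $\ll\eta h(p)$ of the cusp point $p$ of larger height that was entered earlier. In the second case, either $J_{p,\eta}$ was already removed (contradicting $\xi\in\mathcal Z_n$ once we use \Cref{lem.LP6.13}), or it will be removed. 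Next, I must check that $p\in P_{n+k}(\eta)$, i.e.\ that the ball $\mathcal B_{\bar\psi}(p,h_{n+k-1}/(4\eta))$ lies in $\mathcal Z_{n+k-1}$. For this I would use the separation \Cref{lem.LP6.16}, which keeps earlier-removed $J_{p'}$ (with larger heights) at distance $\gg\sqrt{h(p)h(p')}/1\ge$ the relevant radii, together with the quasi-triangle inequality \Cref{lem.eucdistancetriangle}; smallness of $\eta$ absorbs the constants. Where this fails, $\xi$ is close to an already removed block, which lets me charge $\xi$ to that block instead.

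\textbf{Step 3 (mass comparison).} Cover $\mathcal Z_n$ by the balls $\mathcal B_{\bar\psi}(\xi,Ch_n)$ and extract a bounded-overlap (Vitali-type) subfamily; this is legitimate by doubling, \Cref{prop.LP5.14}. In each ball, the removed set $J_{p,\eta}$ contains $\mathcal B_{\bar\psi}(p,\eta h(p)/C_2)$ with $\eta h(p)\asymp h_{n+k}\ge\mathtt H^{-N}h_n$. Doubling (\Cref{prop.LP5.14}) and the parabolic ball estimate \Cref{eqn.LP5.16} then give $\nu(J_{p,\eta})\ge c(\eta,N)\,\nu(\mathcal B_{\bar\psi}(\xi,Ch_n))$. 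Summing over the subfamily yields the $\varepsilon$-fraction, and \Cref{lem.LP5.21} (the annular estimate) controls the portions of balls already excised. The main obstacle is Step 2: verifying the admissibility condition of $P_{n+k}$ uniformly, and making the choice of $N$, $\eta$ and $Q$ consistent with $U_0(\tau,Q)$. I would first fix $\tau$ and $Q$, then $N\gg U_0/\log\mathtt H$, and take $\eta$ small last.
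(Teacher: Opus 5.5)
Your Steps 1 and 3 correspond to the paper's \Cref{lem.LP6.53}. Step 2, however, has a genuine gap, and it is exactly the part to which most of \Cref{sec.expdecayomega} is devoted.

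\emph{Skipped parabolic points.} A parabolic point $p$ can be a removal center only at the unique stage $k$ with $h_k<\eta h(p)\le h_{k-1}$ (\Cref{def.ZnPnDn}). If $\mathcal B_{\bar\psi}(p,h_{k-1}/(4\eta))\not\subset\mathcal Z_{k-1}$ at that stage, then $J_{p,\eta}$ is never removed. Your alternative ``already removed or will be removed'' therefore omits the essential case of skipped points; the endpoints of $\Delta_0$ are examples. Take $\xi\in\mathcal Z_n$ very close to a skipped $p$ whose height is much larger than $h_n$. The ray toward $\xi$ stays in the horoball of $p$ for many further stages, which is why \Cref{lem.LP6.38} puts such points into the bad set $B_n$. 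Moreover, \Cref{lem.LP6.16} keeps every block removed in stages $n+1,\dots,n+N$ at distance $\gg h_n/\eta$ from $\xi$ once $h(p)\gg\mathtt H^Nh_n/\eta$. So there is no nearby removed ball to find.

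Neither of your recurrence routes controls the mass of these points. \Cref{lem.LP6.54} only concerns orbits already in a compact set. Rays through the shadow of a point at depth $d$ in a horoball need time $\approx d$ to leave it, so shadows do not return to a compact set within bounded time.

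\emph{Boundary points.} This case is also open. Separation from the center $p'$ does not keep $J_{p'}$ away from $p$: the radius $\asymp\eta h(p')$ of $J_{p'}$ exceeds $\sqrt{h(p)h(p')}$ when $h(p)\ll\eta^2h(p')$. When admissibility fails because of a block removed by stage $n$, the point $\xi$ lies within $O(h_n)$ of $\partial\mathcal Z_n$. ``Charging'' $\xi$ to that block bounds its mass by removed mass, not by a small multiple of $\nu(\mathcal Z_n)$, and the latter is what a contraction requires.

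\emph{Order of constants.} The point produced by recurrence at scale $h_n$ has $\eta h(p)\asymp\eta h_n$, so it can only be removed near stage $n+\log(1/\eta)/\log\mathtt H$. Hence $N$ must depend on $\eta$, and $\eta$ cannot be chosen after $N$.

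The paper closes these gaps by bookkeeping, with $\eta$ fixed before $N$. It splits $\mathcal Z_n=A_n\sqcup B_n$, where $B_n$ is the union of neighborhoods $B_{p,n}$ of skipped points. These are grouped into equivalence classes $C(p)$ of peripheral translates meeting a common boundary piece $\partial\Delta_0$ or $\partial J_q$, so that the portion of a ball cut off by that piece is accounted for by its translates. It then proves three estimates.
\begin{enumerate}
\item A definite fraction of $B_n$ passes to $A_{n+1}$ at each stage: $\nu(B_n\cap A_{n+1})\ge C_8\nu(B_n)$ with $C_8$ independent of $\eta$ (\Cref{lem.LP6.46}). This uses the annular estimate \Cref{lem.LP5.21}, which rests on the positive exponent $2(1-\delta_\psi(\mathsf P_p))$ in \Cref{eqn.LP5.16}.
\item The boundary layer $\mathcal Z_n'$ of width $C_Bh_n$ satisfies $\nu(\mathcal Z_n')\le C_9\nu(\mathcal Z_n)$ with $C_9\to0$ as $\eta\to0$ (\Cref{lem.LP5.29,lem.LP6.9,lem.LP6.50}). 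The proof compares the $C_Bh_n$-neighborhood of each boundary piece with its $ah_n/\eta$-neighborhood, which lies in $\mathcal Z_n$ and is disjoint from the others.
\item The loss from $A_n$ into $D_{n+1}\cup B_{n+1}$ is bounded by \Cref{lem.LP6.51}.
\end{enumerate}
The recursion $\nu(A_n)\ge(1-C_9-C_{10})\nu(A_{n-1})+(C_8-C_9)\nu(B_{n-1})$ then gives $\nu(A_n\smallsetminus\mathcal Z_n')\gg\nu(\mathcal Z_n)$ once $2C_9<\min\{1-C_{10},C_8\}$. Only on this good set does your recurrence-and-covering argument apply. Even there, \Cref{lem.LP6.53} must separately handle points that later enter some $B_m$.
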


We take $\eta>0$ sufficiently small for the preceding proposition
and all requirements above, allowing further reductions until
its final choice in the proof of \Cref{lem.LP6.62}.
We suppress $\eta$ from the notation:
$$
P_{i,n}:=P_{i,n}(\eta),
\quad
J_p:=J_{p,\eta},
\quad
D_{i,n}:=D_{i,n}(\eta),
\quad
\mathcal Z_{i,n}:=\mathcal Z_{i,n}(\eta),
$$
and write
$$
P_n:=P_{0,n},
\qquad
D_n:=D_{0,n},
\qquad
\mathcal Z_n:=\mathcal Z_{0,n}.
$$
We also abbreviate $R_i:=R_{i,\eta}$ and
$\mathsf P_i':=\mathsf P'_{i,\eta}$.

\subsection{Boundary conventions}
By construction, each $J_p$ has the form
$J_p=f(\widehat J_p\cap\Lambda(\Gamma))$
for a closed interval
$\widehat J_p\subset\partial\mathbb H^2$.
Define
$\partial J_p:=f(\partial\widehat J_p\cap\Lambda(\Gamma))$.
This set contains at most two points and is $\nu$-null.
For $p$ of type $i$, we have
$g_i\gamma_p^{-1}\partial J_p=\partial R_i$.

For $n\ge0$, define
\begin{equation}
\label{eqn.bdrdef}
\partial\mathcal Z_n
:=
\partial\Delta_0
\cup
\bigcup_{k=1}^{n}\ \bigcup_{p\in P_k}\partial J_p,
\end{equation}
where the union is empty when $n=0$.

We call the sets $\partial\Delta_0$ and $\partial J_p$
the \emph{boundary pieces}; a piece may contain two endpoints.
Thus $\partial\mathcal Z_n$ denotes their union, rather than a
family of connected components. The separation statement in
\Cref{lem.LP6.17} identifies the pieces relevant to the residual
set $\mathcal Z_n$.

\section{Exponential decay of the residual mass}
\label{sec.expdecayomega}

\Cref{prop.LP6.15ooo} follows from the case $i=0$
stated below. Indeed, applying this case to each
$\Gamma_i$ with measure $\nu_i=(g_i)_*\nu$ gives
the desired estimate, since
$\nu_i(g_i\mathcal Z_{i,n}(\eta))
=\nu(\mathcal Z_{i,n}(\eta))$.

\begin{proposition}
\label{prop.LP6.15}
For every sufficiently small $\eta>0$, there exists $\varepsilon_0>0$
such that, for all sufficiently large $n$,
$\nu(\mathcal Z_n)\leq(1-\varepsilon_0)^n.
$
\end{proposition}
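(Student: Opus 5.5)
The plan follows Li--Pan's argument. The goal is to prove that there exist $N\ge1$ and $\varepsilon>0$, independent of $n$, such that for all large $n$,
$$
\nu(\mathcal Z_{n+N})\le(1-\varepsilon)\,\nu(\mathcal Z_n).
$$
Iterating this gives $\nu(\mathcal Z_n)\le C(1-\varepsilon)^{n/N}$, hence the claim after shrinking $\varepsilon_0$.

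\textbf{Step 1: recurrence.} Fix $\tau:=\mathtt H$ and a compact set $Q\subset\Omega_{\bar\psi}$ with $m_{\bar\psi}(Q)$ close to $1$. By \Cref{koe} (mixing, hence ergodic) applied to the section $\widetilde\Delta_0$, together with absolute continuity of the transverse measure with respect to $\nu$ (use \Cref{lem.LP5.18} and the product structure of $\widetilde m$), the set of $x^+\in\Delta_0$ whose orbit $\Gamma_\rho(x^+,p_0,0)a_{k\log\mathtt H}$ visits $Q$ for some $k\in[n,n+N_1]$ has $\nu$-measure at least $\nu(\Delta_0)-o(1)$. To get a bound relative to $\nu(\mathcal Z_n)$ rather than to $\nu(\Delta_0)$, I will instead argue pointwise at scale. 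The set $\mathcal Z_n$ is a union of "gaps" at scale $\asymp h_n/\eta$. For each $\xi\in\mathcal Z_n$, the ball $\mathcal B_{\bar\psi}(\xi,h_n)$ lies essentially inside $\mathcal Z_n$ up to a controlled boundary layer, using separation (\Cref{lem.LP6.16}) and $J_p\subset\mathcal B_{\bar\psi}(p,C_2\eta h(p))$. \Cref{lem.LP6.55_var} applies at time $k\log\mathtt H$ with $k\in[n+1,n+N]$. For any $\xi$ and any $k$, choose $x^+\in\mathcal B(\xi,\tau^{-k})$ so that the orbit is in $Q$, which is possible by the uniform recurrence of \Cref{lem.LP6.54}. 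This produces a parabolic $p$ with $D_{\bar\psi}(p,\xi)\ll\mathtt H^{-k}$ and $h(p)\asymp\mathtt H^{-k}$, with constants depending only on $Q$.

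\textbf{Step 2: the candidate is removed.} Choose $k$ so that $\eta h(p)\in(h_{m},h_{m-1}]$ for some $m\in[n+1,n+N]$; this needs $N\gtrsim U_0/\log\mathtt H+\log(1/\eta)/\log\mathtt H$. If $p$ was not placed in $P_m$, the ball $\mathcal B(p,h_{m-1}/(4\eta))$ failed to lie in $\mathcal Z_{m-1}$, so it meets some earlier $J_{p'}$ with $h(p')$ larger. By separation and quasi-triangle (\Cref{lem.eucdistancetriangle}), this forces $\xi$ itself to lie within $\ll h_{m-1}/\eta$ of an already-removed set. In that case either $\xi\notin\mathcal Z_n$ or $\xi$ lies in a thin collar of $\partial\mathcal Z_{m-1}$. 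Otherwise $p\in P_m$, and $J_p\supset\mathcal B(p,\eta h(p)/C_2)$ is removed.

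\textbf{Step 3: measure comparison.} Cover $\mathcal Z_n$ by a Vitali family of balls of radius $\asymp h_n$ centered in $\mathcal Z_n$. In each ball $B$ there is, by Step 2, either a removed $J_p$ with $\nu(J_p)\gg\nu(B)$ (by doubling, \Cref{prop.LP5.14}, and \Cref{eqn.LP5.16}; note that the distortion between $p$ and $B$ is bounded since $h(p)\asymp h_n$ up to $\mathtt H^{N}$), or $B$ is close to previously removed sets. The annular estimate \Cref{lem.LP5.21}, which was chosen to halve masses, is exactly what shows that collars around removed $J_{p'}$ inside $\mathcal Z_n$ carry at most a fixed fraction of the mass. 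Summing then gives the $\varepsilon$-fraction removal.

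\textbf{Main obstacle.} The main obstacle is Step 2/3: controlling the points of $\mathcal Z_n$ near the boundary pieces $\partial\mathcal Z_n$, where the ball condition in $P_m$ fails. I would handle this by a separate lemma, in the spirit of \Cref{lem.LP6.17}: each boundary piece $\partial J_{p'}$ abuts a region where, by the ball-flipping \Cref{lem.ballflipping}, the geometry pulls back under $\gamma_{p'}^{-1}$ to the situation near $\partial R_i$ at unit scale. Recurrence there yields new parabolic points with controlled heights, so collar mass is also eventually removed at a fixed rate.
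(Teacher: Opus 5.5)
Your overall strategy matches the paper's: an $N$-generation contraction $\nu(\mathcal Z_{n+N})\le(1-\varepsilon)\nu(\mathcal Z_n)$ obtained from recurrence, separation and doubling, then iterated. Your Steps~2--3 are close to \Cref{lem.LP6.53}. But Step~1 has a genuine gap, and everything else rests on it.

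\Cref{lem.LP6.54} applies only to points whose orbit is \emph{already} in the compact set $Q$. It does not give, for arbitrary $\xi$ and $k$, a point $x^+\in\mathcal B_{\bar\psi}(\xi,\mathtt H^{-k})$ with $\Gamma_\rho(x^+,p_0,0)a_{k\log\mathtt H}\in Q$, and this is false near cusps. The ball $\mathcal B_{\bar\psi}(\xi,\mathtt H^{-k})$ corresponds to a bounded piece of the unstable leaf at time $k\log\mathtt H$. Suppose $\xi$ is then in an excursion into the horoball of a parabolic point $q$ with $h(q)\gg h_k$ (roughly $D_{\bar\psi}(\xi,q)\ll\sqrt{h(q)h_k}$, cf.\ \Cref{lem.LP6.37}). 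Then the whole ball lies in the same cusp, and by separation there is no parabolic point of height $\asymp h_{n+j}$ within $O(h_n)$ of $\xi$. So Step~2 removes nothing near $\xi$ if the excursion outlasts the window $[n,n+N]$.

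Such points are not negligible. The parabolic points $q\in Q_j$ that were not removed, because $\mathcal B_{\bar\psi}(q,h_{j-1}/(4\eta))\not\subset\mathcal Z_{j-1}$, keep parts of their cusp neighborhoods inside $\mathcal Z$. Note that $\partial\Delta_0$ and $\partial J_{p'}$ themselves consist of parabolic points. Nothing in your argument prevents these cusp regions from carrying most of $\nu(\mathcal Z_n)$.

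The missing idea is the splitting $\mathcal Z_n=A_n\sqcup B_n$. Here $B_n$ is the union of the neighborhoods $B_{p,n}$, of radius $\asymp\sqrt{\eta h(p)h_n}$, of the non-removed parabolic points. These points are grouped into peripheral equivalence classes $C(p)$, so that the neighborhoods are disjoint and compatible with the boundary pieces. The proof then runs as follows.
\begin{enumerate}
\item \Cref{lem.LP6.38} shows that points of $A_n$ are recurrent at time $n\log\mathtt H$. This is what your Step~1 needs, but it holds only on $A_n$.
\item \Cref{lem.LP6.46} shows that an $\eta$-independent fraction $C_8$ of $B_n$ passes to $A_{n+1}$ at each step. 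This is where the annular estimate \Cref{lem.LP5.21} enters; it concerns balls centered at parabolic points, not collars.
\item \Cref{lem.LP6.51} shows that, away from the collar $\mathcal Z_n'$, $A_n$ loses at most half its mass to $D_{n+1}\cup B_{n+1}$.
\item Together these give $\nu(A_n)\ge m\,\nu(\mathcal Z_n)$. Only then is recurrence applied to $A_n\smallsetminus\mathcal Z_n'$. Even there, points entering a new cusp during the window are handled by choosing a nearby point of $B_{p,m}\smallsetminus B_{p,m+1}\subset A_{m+1}$.
\end{enumerate}

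The collar also needs an estimate you do not supply: $\nu(\mathcal Z_n')\le C_9\,\nu(\mathcal Z_n)$ with $C_9\to0$ as $\eta\to0$. This is \Cref{lem.LP6.50}, proved from the boundary non-concentration \Cref{lem.LP5.29,lem.LP6.9}, which rest on \Cref{prop.countingperipheral} and \Cref{eqn.LP5.16}. It is this smallness in $\eta$, against the $\eta$-independent $C_8$ and $C_{10}$, that closes the inequality. Your alternative of showing that collar mass is ``eventually removed'' by recurrence runs into the same cusp obstruction, since collar points are typically in excursions toward the parabolic endpoints of the boundary pieces.
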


We classify parabolic limit points according to their
proximity to the boundary pieces of the residual sets
$\mathcal Z_{i,n}(\eta)$.  The resulting geometric estimates,
combined with annular mass loss and recurrence of the reparametrized
flow, show that $D_{n+1}\cup\cdots\cup D_{n+N}$ removes a fixed
proportion of the residual mass, yielding uniform contraction
over blocks of $N$ generations.

For $n\geq1$, define
$$
Q_n:=
\left\{
p\in\Lambda_{\mathrm p}(\Gamma_\rho)\smallsetminus\{p_0\}:
\begin{array}{l}
h_n<\eta h(p)\leq h_{n-1}, p\notin P_n \\
{\mathcal B_{\bar\psi}}(p,\eta h(p))\cap\mathcal Z_{n-1}\neq\emptyset
\end{array}
\right\}.
$$
Set
$$
r(p,n):=\sqrt{\eta h(p)h_n},\qquad 
Q_n':=
\{p\in Q_n:{\mathcal B_{\bar\psi}}(p,r(p,n))\cap\partial\mathcal Z_{n-1}
\neq\emptyset\}.
$$

By \Cref{hd} and \Cref{lem.heightondelta}, the height $h$ is
uniformly bounded. Since
$\mathcal B_{\bar\psi}(p,\eta h(p))$ meets
$\mathcal Z_{n-1}\subset\Delta_0$ for every $p\in Q_n$,
all the sets $Q_n$ lie, for sufficiently small $\eta$,
in a fixed compact neighborhood of $\Delta_0$ avoiding
$p_0,\ldots,p_\ell$. On this neighborhood, \Cref{lem.heightondelta0}, applied
both to $\Gamma_\rho$ and to the conjugated groups
$\Gamma_i=g_i\Gamma_\rho g_i^{-1}$, together with
$g_i\in K$, gives
$
h_i(g_ip)\asymp h(p).
$
The maps $g_i$ also satisfy uniform bi-Lipschitz bounds there
by \Cref{lem.metrics}. We use both facts below without
further mention.

\subsection{A previous-generation boundary piece}
We first consider $p\in Q_n'$ for which
$\mathcal B_{\bar\psi}(p,r(p,n))$ meets $\partial J_q$
for some $q\in\bigcup_{k<n}P_k$.
Write $q=\gamma_qp_i$, where $\gamma_q$ is as in
\eqref{eqn.definitiontoprepresentative}, and 
$x_q=\gamma_q^{-1}p_i\in\Delta_i$.

\begin{lemma}
\label{lem.LP6.18}
There exists $C>1$ such that
$$
\eta h(p)\leq h_{n-1}\leq C\eta^3h(q)\;\;\text{and}\;\; 
C^{-1}\eta h_i(g_ip)\leq h_{n-1}
\leq C\eta^3h_i(g_iq).
$$
\end{lemma}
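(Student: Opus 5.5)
The plan is to get everything from three ingredients: the defining inequalities of $Q_n$ and $P_k$, the size of $J_q$ given by \Cref{lem.LP6.13}, and the separation estimate \Cref{lem.LP6.16}. The first inequality $\eta h(p)\le h_{n-1}$ is immediate from $p\in Q_n$. For the second, I fix $q\in P_k$ with $k\le n-1$, and a point $y\in\partial J_q$ with $D_{\bar\psi}(p,y)<r(p,n)$. Such a $y$ exists by the standing assumption that $\mathcal B_{\bar\psi}(p,r(p,n))$ meets $\partial J_q$.

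\emph{Step 1: compare $h(p)$ and $h(q)$ crudely.} Since $q\in P_k$, we have $h_k<\eta h(q)$, up to the uniform comparison $h_0(q)\asymp h(q)$. Since $k\le n-1$, this gives $\eta h(p)\le h_{n-1}\le h_k<\eta h(q)$. Hence $h(p)\ll h(q)$ uniformly, and in particular $p\neq q$.

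\emph{Step 2: locate $y$.} By \Cref{lem.LP6.13}, $J_q\subset\mathcal B_{\bar\psi}(q,C_2\eta h(q))$, so $D_{\bar\psi}(y,q)\le C_2\eta h(q)$. Also $r(p,n)=\sqrt{\eta h(p)h_n}\le\eta h(p)$, because $h_n<\eta h(p)$. The quasi-triangle inequality \Cref{lem.eucdistancetriangle} combined with Step 1 then gives
$$
D_{\bar\psi}(p,q)\le N_0\bigl(r(p,n)+C_2\eta h(q)\bigr)\ll\eta h(q).
$$

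\emph{Step 3: separation.} \Cref{lem.LP6.16} gives $C_s\sqrt{h(p)h(q)}\le D_{\bar\psi}(p,q)\ll\eta h(q)$, hence $h(p)\ll\eta^2h(q)$. Finally, $h_{n-1}=\mathtt H h_n<\mathtt H\eta h(p)\ll\eta^3h(q)$. Since $\mathtt H$ is fixed, this is the desired upper bound with a uniform constant $C$.

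The second chain, for $h_i(g_i\cdot)$, follows from the first. On the fixed compact neighborhood of $\Delta_0$ containing $p$ and $q$ (noted before this subsection), we have $h_i(g_ip)\asymp h(p)$ and $h_i(g_iq)\asymp h(q)$ uniformly, so it suffices to enlarge $C$.

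The main point needing care is Step 2, where all comparisons must be uniform in $\eta$. The constants $C_2$ and $N_0$ are independent of $\eta$, and the constant $C_s$ in \Cref{lem.LP6.16} is also independent of $\eta$; it is exactly this $\eta$-independence that produces the factor $\eta^3$. I must also make sure that $q$, which lies in $\Delta_0$ but may have any type $i$, is treated with $h=h_0$ in the definition of $P_k$; this is where the height comparison \Cref{lem.LP5.11} enters.
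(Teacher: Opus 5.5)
Your proof is correct and follows essentially the same route as the paper: the quasi-triangle inequality together with $J_q\subset\mathcal B_{\bar\psi}(q,C_2\eta h(q))$ gives $D_{\bar\psi}(p,q)\le N_0(1+C_2)\eta h(q)$, and the separation estimate of \Cref{lem.LP6.16} combined with $h_n<\eta h(p)$ and $h_{n-1}=\mathtt H h_n$ yields the constant $C=N_0^2(1+C_2)^2\mathtt H/C_s^2$, with the second chain following from $h_i(g_i\,\cdot)\asymp h$ near $\Delta_0$. One small simplification: since $P_k=P_{0,k}$ and $g_0=e$, the height in the definition of $P_k$ is exactly $h(q)$, so $h(p)<h(q)$, and hence $p\neq q$, holds strictly without appealing to \Cref{lem.LP5.11}.
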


\begin{proof}
 It suffices to prove the first statement; the height comparison
$h_i(g_ip)\asymp h(p)$ then gives the second.
    Since $p \in Q_n$, we have $h_n < \eta h(p) \le h_{n-1}$, showing the first inequality.
    By  \Cref{lem.LP6.16}, we have
    \be \label{eqn.Qn}
    D_{\bar\psi}(p, q) > C_s \sqrt{h(p)h(q)} >  C_s \sqrt{ {h_{n} h(q)}{\eta}^{-1}}.
    \ee
    On the other hand, since $D_{\bar\psi}(p, \partial J_q) < r(p, n)$, we have for some $x \in \partial J_q$ that
    $$D_{\bar\psi}(p, q) \le N_0(D_{\bar\psi}(p, x) + D_{\bar\psi}(x, q)) \le N_0(r( p, n) + {C}_2 \eta h(q))$$
    by  \Cref{lem.eucdistancetriangle}, where the second inequality follows by  \Cref{lem.LP6.13}. Since $p \in Q_n$ and  $q \in P_1 \cup \cdots \cup P_{n-1}$,
    we have $r( p, n) \le \sqrt{h_{n-1}h_n} \le h_{n-1} < \eta h(q).$
    Therefore, we have
    $$D_{\bar\psi}(p, q) \le N_0(1 + {C}_2) \eta h(q).$$
    Together with \eqref{eqn.Qn}, we have
    $
 C_s \sqrt{ {h_{n-1} h(q)}{({\mathtt{H}} \eta})^{-1}} = C_s \sqrt{ {h_{n} h(q)}{\eta}^{-1}} \le N_0(1 + {C}_2) \eta h(q),$
    and hence
    $ h_{n-1} \le C \eta^3 h(q)$ with $C:=\frac{N_0^2 (1+{C}_2)^2 {\mathtt{H}}}{C_s^2}$.
\end{proof}

\begin{lemma}
\label{lem.LP6.19}
There exists $C>1$ such that
$$
{\mathcal B_{\bar\psi}}(g_ip,h_i(g_ip))
\subset
{\mathcal B_{\bar\psi}}(g_iq,C\eta h_i(g_iq))
\smallsetminus
{\mathcal B_{\bar\psi}}\left(g_iq,{C^{-1}\eta h_i(g_iq)}\right).
$$
\end{lemma}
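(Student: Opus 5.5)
The plan is to work in the conjugated system $\Gamma_i=g_i\Gamma_\rho g_i^{-1}$ and compare three scales: the radius $h_i(g_ip)$ of the ball on the left, the radius at which the ball around $g_ip$ meets $\partial(g_iJ_q)$, and the scale $\eta h_i(g_iq)$ of $g_iJ_q$. \Cref{lem.LP6.18} shows that the first two scales are a factor $\eta$ (or more) smaller than the third. Two applications of the quasi-triangle inequality \Cref{lem.eucdistancetriangle} (with constant $N_0$) then place the whole ball inside the annulus, after possibly decreasing $\eta_*$. Throughout I use the comparisons $h_i(g_ip)\asymp h(p)$ and $h_i(g_iq)\asymp h(q)$, together with the uniform bi-Lipschitz property of $g_i$ on the relevant compact neighborhood of $\Delta_0$. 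Both are recorded just before the subsection.

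\textbf{Step 1: the boundary of $g_iJ_q$ lies in an annulus.} By \Cref{lem.LP6.13} applied in $\Gamma_i$,
$$\mathcal B_{\bar\psi}\bigl(g_iq,C_1\eta h_i(g_iq)/C_{\rm fd}\bigr)\subset g_iJ_q\subset\mathcal B_{\bar\psi}\bigl(g_iq,\eta h_i(g_iq)/C_1\bigr).$$
Every point $y\in g_i\partial J_q$ lies in the closed set $g_iJ_q$ and is also a limit of points of the complement of $g_iJ_q$. This uses that $\widehat J_q$ is a closed interval whose endpoints are limit points accumulated from outside, as in the construction via $R_i$. By continuity of $D_{\bar\psi}$ we obtain
$$C_1\eta h_i(g_iq)/C_{\rm fd}\le D_{\bar\psi}(y,g_iq)\le \eta h_i(g_iq)/C_1 .$$

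\textbf{Step 2: the scales near $p$ are small.} Since $p\in Q_n'$ and $q\in\bigcup_{k<n}P_k$, there is a point $x\in\partial J_q$ with $D_{\bar\psi}(p,x)<r(p,n)$. Transporting by $g_i$ gives $y:=g_ix$ with
$$D_{\bar\psi}(g_ip,y)\ll r(p,n)\le \eta h(p)\le h_{n-1}\le C\eta^3h(q)\ll \eta^3 h_i(g_iq),$$
where the chain of inequalities is \Cref{lem.LP6.18}. Likewise
$$h_i(g_ip)\asymp h(p)\le \eta^{-1}h_{n-1}\ll \eta^2h_i(g_iq).$$

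\textbf{Step 3: two quasi-triangle inequalities.} Let $z\in\mathcal B_{\bar\psi}(g_ip,h_i(g_ip))$. For the outer bound, apply \Cref{lem.eucdistancetriangle} twice:
$$D_{\bar\psi}(z,g_iq)\le N_0D_{\bar\psi}(z,g_ip)+N_0^2D_{\bar\psi}(g_ip,y)+N_0^2D_{\bar\psi}(y,g_iq).$$
By Steps 1 and 2 this is $\ll(\eta^2+\eta^3+\eta)h_i(g_iq)\ll\eta h_i(g_iq)$.

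For the inner bound, apply the same inequality starting from $y$:
$$\frac{C_1\eta h_i(g_iq)}{C_{\rm fd}}\le D_{\bar\psi}(y,g_iq)\le N_0D_{\bar\psi}(y,g_ip)+N_0^2D_{\bar\psi}(g_ip,z)+N_0^2D_{\bar\psi}(z,g_iq).$$
The first two terms on the right are $O(\eta^2h_i(g_iq))$ with uniform implied constants. After decreasing $\eta_*$ so that these terms are at most half the left-hand side, we get $D_{\bar\psi}(z,g_iq)\ge c\,\eta h_i(g_iq)$ for a uniform $c>0$. Taking $C$ larger than all the constants involved proves the lemma.

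The main obstacle I expect is the bookkeeping in Step 2. It requires that all the implied constants, coming from the bi-Lipschitz comparison of $g_i$, the comparison $h_i(g_i\cdot)\asymp h$, and \Cref{lem.LP6.18}, are independent of $\eta$. This is needed so that the $O(\eta^2)$ error terms can genuinely be absorbed by shrinking $\eta$. A secondary point is justifying the lower bound in Step 1 for the boundary points of $g_iJ_q$, which requires that each endpoint is accumulated by points of $g_i\Lambda_\rho$ outside $g_iJ_q$. If this fails, I would instead use the fact that $g_i\gamma_q^{-1}\partial J_q=\partial R_i$, and apply the ball-flipping estimate \Cref{lem.ballflipping} directly at the endpoints of $R_i$, whose distance to $g_ix_q$ is at least $\eta^{-1}$ and $\ll\eta^{-1}$ by \eqref{eqn.Randballs}.
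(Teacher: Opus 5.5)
Your proposal is correct and takes essentially the same route as the paper: two applications of the quasi-triangle inequality \Cref{lem.eucdistancetriangle}, with \Cref{lem.LP6.13} giving the scale $\eta h_i(g_iq)$ of $g_iJ_q$ and \Cref{lem.LP6.18} making both $h_i(g_ip)$ and $D_{\bar\psi}(g_ip,g_i\partial J_q)$ of order $\eta^2h_i(g_iq)$, so these terms can be absorbed by shrinking $\eta$. The differences are cosmetic. The paper bounds $D_{\bar\psi}(q,p)$ in the original coordinates and then transports it by $g_i$, and it reads off $D_{\bar\psi}(g_iq,g_i\partial J_q)\ge C_1\eta h_i(g_iq)/C_{\rm fd}$ directly from \Cref{lem.LP6.13}, a bound that your accumulation-and-continuity argument in Step~1 justifies explicitly.
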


\begin{proof}
    Let $x \in {\mathcal B_{\bar\psi}}(g_i p, h_i(g_i p))$. Then by  \Cref{lem.eucdistancetriangle},  \Cref{lem.LP5.12}, and  \Cref{lem.LP6.18}, there exist $c', c > 0$, depending only on $\bar\psi$, such that
    \be  \label{eqn.LP6.19pf}
    \begin{aligned}
    D_{\bar\psi}(g_i q, x)  & \le N_0(D_{\bar\psi}(g_i q, g_i p) + D_{\bar\psi}(g_i p, x))
    \le N_0(D_{\bar\psi}(g_i q, g_i p) + h_i(g_i p))
   \\ & \le   N_0(c' D_{\bar\psi}(q, p) + h_i(g_i p)) \le
      N_0(c' D_{\bar\psi}(q, p) + c \eta^2 h_i(g_i q))
     .\end{aligned}
     \ee

Since $p \in Q_n$,
we have $$D_{\bar\psi}(p, \partial J_q) < r(p, n) = \sqrt{ \eta h(p) h_n}\le
\sqrt{ h_{n-1} h_{n}} \le h_{n-1}.$$
By \Cref{lem.LP6.13}, \Cref{lem.eucdistancetriangle},
 and \Cref{lem.LP6.18},
there exists $c''>0$ such that
    $$D_{\bar\psi}(q, p) \le N_0( {C}_2 \eta h(q) + h_{n-1})\le  N_0 c'' \eta h_i(g_i q).$$
   Combining this with \eqref{eqn.LP6.19pf} yields the inclusion
   $${\mathcal B_{\bar\psi}}(g_i p, h_i(g_i p)) \subset {\mathcal B_{\bar\psi}}(g_i q, C \eta h_i(g_i q))\quad\text{ for some $C > 1$.}$$

 Now  by  \Cref{lem.eucdistancetriangle}, we have
    $$\begin{aligned}
    D_{\bar\psi}(g_i q, g_i(\partial J_q)) & \le N_0^2 (D_{\bar\psi}(g_i q, x) + D_{\bar\psi}(x, g_i p) + D_{\bar\psi}(g_i p, g_i(\partial J_q) ))\\
    & \le N_0^2 (D_{\bar\psi}(g_i q, x) + h_i(g_i p) + D_{\bar\psi}(g_i p, g_i(\partial J_q) )).
    \end{aligned}$$
 Again, by the bi-Lipschitz property of $g_i$ (\Cref{lem.LP5.12}),
    $$D_{\bar\psi}(g_ip, g_i(\partial J_q) ) \le c' D_{\bar\psi}(p, \partial J_q) < c' r(p, n) \le c' h_{n-1}.$$
    By  \Cref{lem.LP6.18}, this implies that  for some uniform constant $\widetilde c >  0$,
    $$h_i(g_i p) + D_{\bar\psi}(g_i p, g_i(\partial J_q) ) \le \widetilde c \eta^2 h_i(g_i q).$$
    Since $D_{\bar\psi}(g_i q, g_i(\partial J_q) ) \ge {C}_1 \eta h_i(g_i q)/C_{\rm fd}$ by  \Cref{lem.LP6.13}, we finally have that
    $$\left(\frac{{C}_1}{C_{\rm fd} N_0^2} - \widetilde c \eta \right) \eta h_i(g_i q) \le D_{\bar\psi}(g_i q, x).$$
     Since $x \in {\mathcal B_{\bar\psi}}(g_i p, h_i(g_i p))$ is arbitrary,
     it follows that $${\mathcal B_{\bar\psi}}(g_i p, h_i(g_i p))\cap  {\mathcal B_{\bar\psi}}\left(g_i q, C^{-1}{\eta h_i(g_i q)}\right)=\emptyset$$ for some $C > 1$.
    This completes the proof. \end{proof}
\begin{lemma}
\label{lem.LP6.20}
We have
$$
h_i(g_i\gamma_q^{-1}p)
\asymp
\frac{h_i(g_ip)}{\eta^2h_i(g_iq)}.
$$
\end{lemma}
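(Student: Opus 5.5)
The plan is to first prove a general transformation rule for heights under an inverse branch, and then specialise it to $x=g_ip$ and $\alpha=\alpha_q:=g_i\gamma_qg_i^{-1}$. This element satisfies $\alpha_qp_0=g_iq$, and $g_i\gamma_q^{-1}p=\alpha_q^{-1}g_ip$. By conjugation it suffices to work in one group, say $\Gamma_\rho$ with $i=0$, provided all constants are uniform over the finitely many $\Gamma_i$.

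\emph{Step 1: a global height formula.} Let $x\in\Lambda_{\mathrm p}(\Gamma_\rho)\smallsetminus\{p_0\}$ and $\gamma^{-1}x\in\{p_0,\dots,p_\ell\}$. I will show, uniformly and without any compactness assumption on $x$, that
$$
h(x)\asymp e^{-\chi(\beta_x(e,\gamma))}\,d_\chi(x,p_0)^{-2}.
$$
Choose $u\in\mathsf P_0$ with $ux=p_{\mathfrak O}\in\Delta_0$. The cocycle identity, \Cref{lem.zerobuse} and \eqref{GPT} give
$$
\chi(\beta_{ux}(e,u))=2\chi(\mathcal G(ux,p_0))-2\chi(\mathcal G(x,p_0)).
$$
Combining this with \eqref{hpgg} yields the displayed formula. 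This is exactly the computation behind \eqref{eqn.separationprelim2}. The factor $e^{2\chi(\mathcal G(p_{\mathfrak O},p_0))}$ is bounded above and below because $\Delta_0$ is compact and avoids $p_0$.

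\emph{Step 2: transformation rule.} Let $\alpha\in\Gamma_\rho\smallsetminus\mathsf P_0$ and set $x'=\alpha^{-1}x$. Using $\gamma':=\alpha^{-1}\gamma$ as the representative for $x'$, the cocycle identity and equivariance give $\beta_{x'}(e,\alpha^{-1}\gamma)=-\beta_x(e,\alpha)+\beta_x(e,\gamma)$. Step 1 then gives
$$
\frac{h(\alpha^{-1}x)}{h(x)}\asymp e^{\chi(\beta_x(e,\alpha))}\frac{d_\chi(x,p_0)^2}{d_\chi(\alpha^{-1}x,p_0)^2}.
$$
Next I rewrite the right-hand side using \Cref{lem.metrics} (equivalently the computation in the proof of \Cref{lem.ballflipping}), with the roles of $p_0$ and $\alpha^{-1}p_0$ tracked through \eqref{dedef}. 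Applying Step 1 to the point $\alpha p_0$ should then give
$$
h(\alpha^{-1}x)\asymp\frac{h(x)\,h(\alpha p_0)}{D_\chi(x,\alpha p_0)^2}.
$$
As a sanity check, this is consistent with the two-point estimate of \Cref{lem.ballflipping}, whose "derivative" at $x$ is $h(\alpha p_0)/D_\chi(x,\alpha p_0)^2$. Equivalently, writing $e^{\chi(\beta_x(e,\alpha))}$ via \eqref{eqn.metrics_sph}, the rule amounts to the first estimate of \Cref{lem.ballflipping} applied twice.

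\emph{Step 3: specialisation.} Apply the rule in $\Gamma_i$ with $x=g_ip$ and $\alpha=\alpha_q$, so that $h_i(\alpha_qp_0)=h_i(g_iq)$. It remains to show $D_{\bar\psi}(g_ip,g_iq)\asymp\eta h_i(g_iq)$, uniformly in $\eta$. This is supplied by \Cref{lem.LP6.19}: since $g_ip$ lies in $\mathcal B_{\bar\psi}(g_ip,h_i(g_ip))$, that lemma places it in the annulus between radii $C^{-1}\eta h_i(g_iq)$ and $C\eta h_i(g_iq)$ about $g_iq$. Substituting gives
$$
h_i(g_i\gamma_q^{-1}p)\asymp\frac{h_i(g_ip)}{\eta^2h_i(g_iq)}.
$$

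The main obstacle is uniformity in $\eta$. The point $\alpha_q^{-1}g_ip$ lies near $\partial R_i$, at $D_{\bar\psi}$-distance about $\eta^{-1}$ from $g_i\Delta_i$, so it is not confined to any $\eta$-independent compact set. For this reason \Cref{lem.heightondelta0} cannot be used for this point. Step 1 is designed to avoid that lemma: it keeps the exact factor $d_\chi(\cdot,p_0)^{-2}$ coming from $\mathsf P_0$-translation, which is an identity rather than a compactness estimate. The remaining care is to confirm that every constant used in Step 2 comes only from \eqref{GPT}, \Cref{lem.zerobuse} and compactness of $\Delta_0$, and not from any $\eta$-dependent region.
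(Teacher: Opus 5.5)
Your proof is correct. Its last step matches the paper's: both reduce the lemma to a transformation rule for heights, then take $x=g_ip$ in \Cref{lem.LP6.19} to get $D_{\bar\psi}(g_ip,g_iq)\asymp\eta h_i(g_iq)$.

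The difference is in how the rule is obtained. The paper derives $h_i(g_i\gamma_q^{-1}p)\asymp h_i(g_iq)h_i(g_ip)/d_{\bar\psi}(g_ip,g_iq)^2$ from the definition of $h_i$ and the coarse comparisons of \Cref{lem.buseandcartan,lem.gromovproductalongparabolic}. It then uses that $p,q$ stay in a fixed compact set avoiding $p_0,\ldots,p_\ell$ to replace $d_{\bar\psi}$ by $D_{\bar\psi}$. You use only the exact identities \eqref{GPT}, \Cref{lem.zerobuse} and \eqref{eqn.metrics_sph}. This buys three things:
\begin{itemize}
\item your rule $h(\alpha^{-1}x)\asymp h(x)h(\alpha p_0)/D_\chi(x,\alpha p_0)^2$ holds globally, with constants depending only on $\sup_{\Delta_0}|\chi(\mathcal G(\cdot,p_0))|$ and its analogues for the sets $g_i\Delta_i$;
\item it is already phrased in the premetric that \Cref{lem.LP6.19} controls, so uniformity in $\eta$ is automatic;
\item no compactness of $p$, $q$ or $g_i\gamma_q^{-1}p$ is needed.
\end{itemize}

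The step you left as ``should then give'' does go through, and you should write it out. First, \eqref{eqn.metrics_sph} with $g=\alpha^{-1}$ gives
\[
d_\chi(\alpha^{-1}x,p_0)^2=d_\chi(x,\alpha p_0)^2\,e^{\chi(\beta_x(e,\alpha))+\chi(\beta_{\alpha p_0}(e,\alpha))}.
\]
Second, your Step~1 at $\alpha p_0$ with representative $\alpha$ gives $e^{-\chi(\beta_{\alpha p_0}(e,\alpha))}\asymp h(\alpha p_0)\,d_\chi(\alpha p_0,p_0)^2$. Substituting both into your ratio and using \eqref{dedef} yields exactly $h(\alpha p_0)/D_\chi(x,\alpha p_0)^2$.

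Also record that $g_ip\neq g_iq$, since the height ranges of $Q_n$ and $\bigcup_{k<n}P_k$ are disjoint. This gives the transversality required in \eqref{eqn.metrics_sph}.
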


\begin{proof}
By the definition of $h_i$, the cocycle identity, and
\Cref{lem.buseandcartan,lem.gromovproductalongparabolic},
$$
h_i(g_i\gamma_q^{-1}p)
\asymp
\frac{h_i(g_iq)h_i(g_ip)}
{d_{\bar\psi}(g_ip,g_iq)^2}.
$$
The points $p$ and $q$ lie in a fixed compact neighborhood
of $\Delta_0$ avoiding $p_0,\ldots,p_\ell$.
Hence $g_ip$ and $g_iq$ lie in a fixed compact subset
of $g_i\Lambda_\rho\smallsetminus\{p_0\}$, on which
$d_{\bar\psi}\asymp D_{\bar\psi}$.
Taking $x=g_ip$ in \Cref{lem.LP6.19} therefore gives
$d_{\bar\psi}(g_ip,g_iq)\asymp\eta h_i(g_iq)$.
Substitution proves the lemma.
\end{proof}

Define
\begin{equation}
\label{eqn.LP6.21}
\widetilde r_{p,n}
:=
\sqrt{\frac{h_nh_i(g_i\gamma_q^{-1}p)}
{\eta h_i(g_iq)}}.
\end{equation}
By \Cref{lem.LP6.20} and the assumption that $p\in Q_n$,
we have \be \label{eqn.LP6.22}
\widetilde r_{p, n} \ll \frac{ \sqrt{ \eta h_n h_i(g_i p)}}{\eta^2 h_i(g_i q)}
\ll \frac{h_i(g_i p)}{\eta h_i(g_i q)}\ll \eta h_i(g_i \ga_q^{-1} p).
\ee
In particular, $\widetilde r_{p,n}\ll\eta$, since
$h_i(g_i\gamma_q^{-1}p)$ is invariant under
$g_i\mathsf P_i g_i^{-1}$ and is bounded above on
a fixed fundamental domain.
\begin{lemma}
\label{lem.LP6.23}
There exists ${C}_3>1$ such that every $p'\in\Lambda_\rho$ with
$D_{\bar\psi}(g_i\gamma_q^{-1}p',
g_i\gamma_q^{-1}\partial J_q)
\leq\frac{\widetilde r_{p,n}}{{C}_3}$
satisfies $ D_{\bar\psi}(p',\partial J_q)\leq h_n. $
\end{lemma}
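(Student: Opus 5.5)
The plan is to transport the statement into the conjugated system $\Gamma_i=g_i\Gamma_\rho g_i^{-1}$ and apply the two-point ball flipping estimate there. Set $\alpha_q:=g_i\gamma_qg_i^{-1}$, so that $\alpha_qp_0=g_iq$ and $\alpha_q^{-1}p_0=g_ix_q\in g_i\Delta_i$. Then $g_i\gamma_q^{-1}=\alpha_q^{-1}g_i$.

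Fix $p'$ as in the hypothesis. Choose $x\in\partial J_q$ with $D_{\bar\psi}(\alpha_q^{-1}g_ip',\alpha_q^{-1}g_ix)\le\widetilde r_{p,n}/C_3$; we may use an infimizing point, or absorb a factor $2$, since $\partial J_q$ has at most two points. Put $y:=g_ip'$ and $y':=g_ix$. Note that $\alpha_q^{-1}y'\in\partial R_i$, so by \eqref{eqn.Randballs}
$$
\eta^{-1}\le D_{\bar\psi}(\alpha_q^{-1}y',g_ix_q)\le C_{\rm fd}\eta^{-1}.
$$

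\textbf{Step 1: $y$ lies at the same scale as $y'$.} By \eqref{eqn.LP6.22} we have $\widetilde r_{p,n}\ll\eta<\eta^{-1}$. The quasi-triangle inequality (\Cref{lem.eucdistancetriangle}) then gives $D_{\bar\psi}(\alpha_q^{-1}y,g_ix_q)\asymp\eta^{-1}$, provided $C_3$ is large. The first estimate of \Cref{lem.ballflipping}, applied in $\Gamma_i$ with $\gamma=\alpha_q$, yields
$$
D_{\bar\psi}(y,g_iq)\asymp\eta\,h_i(g_iq).
$$
The same argument, or directly \Cref{lem.LP6.13}, gives $D_{\bar\psi}(y',g_iq)\asymp\eta\,h_i(g_iq)$.

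\textbf{Step 2: two-point flipping.} The second estimate of \Cref{lem.ballflipping} in $\Gamma_i$ gives
$$
D_{\bar\psi}(y,y')\asymp\frac{D_{\bar\psi}(\alpha_q^{-1}y,\alpha_q^{-1}y')\,D_{\bar\psi}(y,g_iq)\,D_{\bar\psi}(y',g_iq)}{h_i(g_iq)}\ll\frac{\widetilde r_{p,n}}{C_3}\,\eta^2h_i(g_iq).
$$
Substituting \eqref{eqn.LP6.21} and \Cref{lem.LP6.20}, namely $h_i(g_i\gamma_q^{-1}p)\asymp h_i(g_ip)/(\eta^2h_i(g_iq))$, the right-hand side becomes
$$
\frac{1}{C_3}\,\eta^{1/2}\sqrt{h_n\,h_i(g_ip)}\asymp\frac{1}{C_3}\sqrt{h_n\,\eta h(p)}\le\frac{1}{C_3}\sqrt{h_nh_{n-1}}=\frac{\sqrt{\mathtt H}}{C_3}\,h_n .
$$
Here the inequality uses $p\in Q_n$.

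\textbf{Step 3: pull back by $g_i$.} By Step 1, both $y$ and $y'$ lie within $O(\eta h(q))$ of $g_iq$. Hence, for small $\eta$, they lie in a fixed compact neighborhood of $g_i\Delta_0$ avoiding $p_0$, where $g_i^{-1}$ is uniformly bi-Lipschitz for $D_{\bar\psi}$ by \Cref{lem.metrics}. Therefore $D_{\bar\psi}(p',\partial J_q)\le D_{\bar\psi}(p',x)\ll(\sqrt{\mathtt H}/C_3)h_n$. Choosing $C_3$ large compared to $\sqrt{\mathtt H}$ and the implied constants gives the claim.

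The main obstacle I expect is Step 1: justifying that the flipped point $\alpha_q^{-1}y$ stays at distance $\asymp\eta^{-1}$ from $g_ix_q$, which is what pins down $D_{\bar\psi}(y,g_iq)$. This needs the smallness $\widetilde r_{p,n}\ll\eta$ from \eqref{eqn.LP6.22}, together with careful handling of the degenerate points $p_0$ and $g_iq$. Those points are excluded because both flipped distances are finite and nonzero.
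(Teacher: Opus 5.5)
Your proposal is correct and follows essentially the paper's argument. The paper also flips by $g_i\gamma_qg_i^{-1}$, uses $D_{\bar\psi}(g_ix_q,\partial R_i)\asymp\eta^{-1}$ and the quasi-triangle inequality to place the flipped point at scale $\eta^{-1}$, obtains $D_{\bar\psi}(g_ip',g_ix)\ll\eta^2h_i(g_iq)\widetilde r_{p,n}/C_3$, and pulls back by $g_i$ near $\Delta_0$. The only differences are cosmetic: the paper packages your two ball-flipping estimates into one transformation formula from \Cref{lem.metrics}, and it bounds $\widetilde r_{p,n}$ via \eqref{eqn.LP6.22} to get $\mathtt H h_n/C_3$, where you use \Cref{lem.LP6.20} to get the slightly sharper $\sqrt{\mathtt H}\,h_n/C_3$.
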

\begin{proof}
Choose $x\in\partial J_q$ realizing the distance in the hypothesis.
By \Cref{lem.metrics},
\begin{equation}
\label{eqn.6.23proof eqn0}
D_{\bar\psi}(g_ip',g_ix)
\asymp
\frac{
D_{\bar\psi}(g_i\gamma_q^{-1}p',g_i\gamma_q^{-1}x)
}{
D_{\bar\psi}(g_i\gamma_q^{-1}p',g_ix_q)
D_{\bar\psi}(g_ix_q,g_i\gamma_q^{-1}x)
}
h_i(g_iq).
\end{equation}
Here we use \Cref{lem.representshadow,lem.heightondelta,lem.LP5.11}
to obtain
$e^{\bar\psi(\beta_{p_0}(e,g_i\gamma_qg_i^{-1}))}
\asymp e^{-\bar\psi(\mu(\gamma_q))}
\asymp h_i(g_iq)$.

Since $g_i\gamma_q^{-1}\partial J_q=\partial R_{i}$,
\eqref{eqn.Randballs} gives
$D_{\bar\psi}(g_ix_q,g_i\gamma_q^{-1}x)\asymp\eta^{-1}$.
The hypothesis and \eqref{eqn.LP6.22} give
$D_{\bar\psi}(g_i\gamma_q^{-1}p',g_i\gamma_q^{-1}x)
\leq\widetilde r_{p,n}/C_3\ll\eta/C_3$.
Thus \Cref{lem.eucdistancetriangle} implies
$D_{\bar\psi}(g_i\gamma_q^{-1}p',g_ix_q)\asymp\eta^{-1}$
for sufficiently large $C_3$.
Substituting these bounds into \eqref{eqn.6.23proof eqn0}
and using \eqref{eqn.LP6.22}, we obtain
$$
D_{\bar\psi}(g_ip',g_ix)
\ll
\frac{\eta^2h_i(g_iq)}{C_3}\,\widetilde r_{p,n}
\ll
\frac{\eta h_i(g_ip)}{C_3}.
$$
Since $x\in\partial J_q\subset\Delta_0$ and the preceding
bound is $O(\eta)$, $p'$ lies in a fixed compact neighborhood
of $\Delta_0$ where the inverse coordinate change is
uniformly bi-Lipschitz.
Using \Cref{lem.metrics} and
$\eta h(p)\leq h_{n-1}=\mathtt Hh_n$, we obtain
$D_{\bar\psi}(p',x)\ll\mathtt Hh_n/C_3$.
Increasing $C_3$ proves the lemma.
\end{proof}

\subsection{Separation of boundary pieces}
We next estimate the separation of the sets $J_p$, $p\in P_n$,
from one another and from $\mathcal Z_{n-1}^{\,c}$.

\begin{lemma}
\label{lem.LP6.17}
After decreasing $\eta_*$, for every $n\geq1$ and distinct
$p,p'\in P_n$,
$$
D_{\bar\psi}(J_p,J_{p'})
>
\frac{C_s}{2\eta N_0^2}h_n\quad\text{and}\quad
D_{\bar\psi}(J_p,\mathcal Z_{n-1}^{\,c})
>
\frac{C_s}{2\eta N_0^2}h_n.
$$
Consequently, the sets $J_p$, $p\in\bigcup_{n\geq1}P_n$, are pairwise
disjoint.
\end{lemma}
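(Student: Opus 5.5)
Both estimates come from the ball conditions built into \Cref{def.ZnPnDn}, combined with the size bound on $J_p$ from \Cref{lem.LP6.13} and the quasi-triangle inequality \Cref{lem.eucdistancetriangle}. We work throughout in $\Delta_0$, so $i=0$ and $g_0=e$. Fix $p\in P_n$. By \Cref{lem.LP6.13}, $J_p\subset\mathcal B_{\bar\psi}(p,C_2\eta h(p))$, and $\eta h(p)\le h_{n-1}$. So every $x\in J_p$ satisfies
$$D_{\bar\psi}(x,p)<C_2h_{n-1}=C_2\mathtt H h_n.$$
This bound has to be compared with the scales appearing in the separation: on one side $C_sh_n/\eta$ from \Cref{lem.LP6.16}, on the other $h_{n-1}/(4\eta)$ from the ball condition. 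In both comparisons the radius of $J_p$ carries one fewer factor of $\eta^{-1}$, so for $\eta$ small the sets $J_p$ are negligible. This is why $\eta_*$ must be decreased.

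\emph{Second estimate.} Let $x\in J_p$ and $y\in\mathcal Z_{n-1}^{\,c}$. The ball condition for $p\in P_n$ gives $\mathcal B_{\bar\psi}(p,h_{n-1}/(4\eta))\subset\mathcal Z_{n-1}$, hence $D_{\bar\psi}(p,y)\ge h_{n-1}/(4\eta)$. By \Cref{lem.eucdistancetriangle},
$$\frac{h_{n-1}}{4\eta}\le D_{\bar\psi}(p,y)\le N_0\bigl(C_2h_{n-1}+D_{\bar\psi}(x,y)\bigr),$$
so $D_{\bar\psi}(x,y)\ge h_{n-1}\bigl(\tfrac1{4\eta N_0}-C_2\bigr)$. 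Once $\eta_*$ is small enough that $C_2\le 1/(8\eta N_0)$, this is at least $h_{n-1}/(8\eta N_0)=\mathtt H h_n/(8\eta N_0)$. Since $C_s\le1$, $N_0\ge1$ and $\mathtt H>\mathsf c^2\ge1$, this exceeds $C_sh_n/(2\eta N_0^2)$ provided $\mathtt H/8\ge 1/2$. If that fails, note that the constant $4$ in the definition only needs to be small; replacing it would require changing the definition, so I instead expect $\mathtt H$ to be large enough. \emph{This comparison of absolute constants is the delicate point}: one must check that $\mathtt H\ge4$, or else track the constants more carefully. Since $\mathtt H$ was chosen ``sufficiently large'' in \Cref{lem.LP5.21}, we may assume $\mathtt H\ge 4$.

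\emph{First estimate.} Take distinct $p,p'\in P_n$ and points $x\in J_p$, $x'\in J_{p'}$. By \Cref{lem.LP6.16} and $h(p),h(p')>h_n/\eta$, we get $D_{\bar\psi}(p,p')>C_sh_n/\eta$. Applying \Cref{lem.eucdistancetriangle} twice,
$$D_{\bar\psi}(p,p')\le N_0D_{\bar\psi}(p,x)+N_0^2\bigl(D_{\bar\psi}(x,x')+D_{\bar\psi}(x',p')\bigr)\le 2N_0^2C_2\mathtt H h_n+N_0^2D_{\bar\psi}(x,x').$$
Therefore $D_{\bar\psi}(x,x')\ge \tfrac{h_n}{N_0^2}\bigl(\tfrac{C_s}{\eta}-2N_0^2C_2\mathtt H\bigr)$. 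Decreasing $\eta_*$ so that $2N_0^2C_2\mathtt H\le C_s/(2\eta)$ yields the claimed bound $C_sh_n/(2\eta N_0^2)$.

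\emph{Disjointness.} Within a single generation, disjointness follows from the first estimate. Across generations, suppose $q\in P_k$ and $p\in P_n$ with $k<n$. Then $J_q\subset D_k\subset\mathcal Z_k^{\,c}\subset\mathcal Z_{n-1}^{\,c}$, while $J_p\subset\mathcal Z_{n-1}$ by the inclusion recorded after \Cref{def.ZnPnDn}. Hence $J_p\cap J_q=\emptyset$. Alternatively, the second estimate gives positive distance directly.
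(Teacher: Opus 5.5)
Your proof is correct and is essentially the paper's argument. You combine the quasi-triangle inequality with \Cref{lem.LP6.16} and the bound $J_p\subset\mathcal B_{\bar\psi}(p,C_2\eta h(p))$ from \Cref{lem.LP6.13} for the first estimate, use the ball condition in \Cref{def.ZnPnDn} for the second, and use $J_q\subset\mathcal Z_{n-1}^{\,c}$ for cross-generation disjointness. The constant comparison you flag as delicate needs no enlargement of $\mathtt H$: if you keep $D_{\bar\psi}(x,y)>h_{n-1}\bigl(\tfrac1{4\eta N_0}-C_2\bigr)$ without halving it, small $\eta$ suffices as soon as $N_0\mathtt H>2C_s$, and this holds because the constant of \Cref{lem.eucdistancetriangle} is $N_0=2^aC\geq2$, $\mathtt H>\mathsf c^2\geq1$, and $C_s\leq1$.
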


\begin{proof}
For $x\in J_p$ and $y\in J_{p'}$, \Cref{lem.LP6.16} and
\Cref{lem.LP6.13} give
$$
C_s\sqrt{h(p)h(p')}
<
N_0^2\bigl(D_{\bar\psi}(x,y)+{C}_2\eta h(p)
+{C}_2\eta h(p')\bigr).
$$
Since $\eta h(p),\eta h(p')\in(h_n,h_{n-1}]$, it follows that
$
D_{\bar\psi}(x,y)
>
h_n\left(\frac{C_s}{\eta N_0^2}
-2\mathtt H{C}_2\right),
$
which is at least the claimed lower bound for small $\eta$. 

If $x\in\mathcal Z_{n-1}^{\,c}$ and $y\in J_p$, then the definition of
$P_n$ and \Cref{lem.LP6.13} give
$
\frac{h_{n-1}}{4\eta}
\leq
N_0\bigl(D_{\bar\psi}(x,y)+{C}_2\eta h(p)\bigr).
$
The same choice of $\eta_*$ gives the second estimate.  If $p\in P_j$,
$p'\in P_k$, and $j<k$, then $J_p\subset\mathcal Z_{k-1}^{\,c}$, so the
second estimate applied to $J_{p'}$ proves cross-generation disjointness.
\end{proof}

\begin{lemma}
\label{lem.uniqueassociatedboundary}
For every $p\in Q_n'$, the ball
$\mathcal B_{\bar\psi}(p,r(p,n))$ meets exactly one of the sets
$\partial\Delta_0$ and $\partial J_q$,
$q\in\bigcup_{k=1}^{n-1}P_k$.
\end{lemma}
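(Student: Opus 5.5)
Existence is built into the definition of $Q_n'$: the ball $\mathcal B_{\bar\psi}(p,r(p,n))$ meets $\partial\mathcal Z_{n-1}$, which by \eqref{eqn.bdrdef} is the union of $\partial\Delta_0$ and the pieces $\partial J_q$ with $q\in\bigcup_{k=1}^{n-1}P_k$. So at least one piece is met, and the content of the lemma is uniqueness. I will argue by contradiction: assume the ball meets two distinct pieces $\mathcal P\neq\mathcal P'$, pick $x\in\mathcal P$, $x'\in\mathcal P'$ in the ball, and note that \Cref{lem.eucdistancetriangle} gives $D_{\bar\psi}(x,x')\le 2N_0r(p,n)\le 2N_0\sqrt{h_{n-1}h_n}$. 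The goal is then to show that any two distinct boundary pieces are farther apart than this.

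Case (a): both pieces are of the form $\partial J_q$ and $\partial J_{q'}$, with $q\in P_j$, $q'\in P_k$ and $j\le k\le n-1$. If $j<k$, \Cref{lem.LP6.17} gives $D_{\bar\psi}(J_{q'},\mathcal Z_{k-1}^{\,c})>\frac{C_s}{2\eta N_0^2}h_k$, and $J_q\subset\mathcal Z_{k-1}^{\,c}$. If $j=k$, the first estimate of \Cref{lem.LP6.17} gives the same bound. In either case the separation is at least $\frac{C_s}{2\eta N_0^2}h_{n-1}$. Since $\sqrt{h_{n-1}h_n}=h_{n-1}/\sqrt{\mathtt H}\le h_{n-1}$, this exceeds $2N_0\sqrt{h_{n-1}h_n}$ as soon as $\eta$ is small enough, uniformly in $n$. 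That is a contradiction. The same argument handles $q=q'$ only trivially, because a single $\partial J_q$ counts as one piece.

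Case (b): one piece is $\partial\Delta_0$ and the other is $\partial J_q$ with $q\in P_k$, $k\le n-1$. Here I want $D_{\bar\psi}(J_q,\partial\Delta_0)\gg h_k/\eta$. The definition of $P_k$ gives $\mathcal B_{\bar\psi}(q,h_{k-1}/(4\eta))\subset\mathcal Z_{k-1}\subset\Delta_0$. The point to check is that the ball in fact stays away from $\partial\Delta_0$. Since $\partial\Delta_0$ consists of parabolic endpoints, the points just outside $\Delta_0$ in the Euclidean order lie in $\Lambda_\rho$ arbitrarily close to $\partial\Delta_0$. Any point within distance $<h_{k-1}/(4\eta)$ of $q$ that lies near $\partial\Delta_0$ would force the open ball to contain points of $\Lambda_\rho\smallsetminus\Delta_0$, which contradicts the inclusion. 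A cleaner way to say this is that $\partial\Delta_0$ lies in the closure of the complement of $\Delta_0$, so $D_{\bar\psi}(q,\partial\Delta_0)\ge h_{k-1}/(4\eta)$. Combining this with $J_q\subset\mathcal B_{\bar\psi}(q,C_2\eta h(q))$ and the quasi-triangle inequality, exactly as in the second half of the proof of \Cref{lem.LP6.17}, gives $D_{\bar\psi}(J_q,\partial\Delta_0)\gg h_{k-1}/\eta\ge h_{n-1}/\eta$. For small $\eta$ this again contradicts $D_{\bar\psi}(x,x')\le 2N_0 h_{n-1}$.

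The main obstacle is case (b), namely justifying that $\partial\Delta_0$ is at positive $D_{\bar\psi}$-distance from the ball around $q$ that sits inside $\Delta_0$. This requires the closure or accumulation argument at parabolic endpoints, and it uses the fact that endpoints of $\widehat\Delta_0$ are limit points approached from outside. If that accumulation fails at some endpoint, I would instead use \Cref{lem.LP6.16} directly: the endpoints of $\Delta_0$ are fixed parabolic points of bounded height, and $D_{\bar\psi}(q,\partial\Delta_0)\ge C_s\sqrt{h(q)}\gg\sqrt{h_k/\eta}$ is still much larger than $h_{n-1}$.
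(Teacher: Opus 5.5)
Your proposal is correct and follows the paper's argument: like you, the paper bounds the distance between two pieces met by the ball by $2N_0r(p,n)\le 2N_0h_{n-1}$, then contradicts this for small $\eta$ with the separation $C_sh_{n-1}/(2\eta N_0^2)$ from \Cref{lem.LP6.17}, applied to every pair of pieces. Your case (b) makes explicit a step the paper leaves implicit, namely $\partial\Delta_0\subset\overline{\mathcal Z_{k-1}^{\,c}}$, which holds because parabolic endpoints are two-sided accumulation points of the limit set; the fallback via \Cref{lem.LP6.16} is therefore unnecessary.
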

\begin{proof}
The ball meets at least one of these sets by definition.
If it met two distinct sets $E$ and $F$, then
\Cref{lem.eucdistancetriangle} would give
$D_{\bar\psi}(E,F)\leq2N_0r(p,n)\leq2N_0h_{n-1}$.
On the other hand, \Cref{lem.LP6.17} gives
$D_{\bar\psi}(E,F)\geq C_sh_{n-1}/(2\eta N_0^2)$,
since $h_j\geq h_{n-1}$ for $j\leq n-1$.
For sufficiently small $\eta$, these bounds contradict each other.
\end{proof}

\subsection{Equivalence classes}

We construct the sets $C(p)$, the radii $r_{p,m}$, and the
neighborhoods $B_{p,m}$ inductively.
At stage $n\geq1$, consider
$p\in Q_n\smallsetminus\bigcup_{k<n}Q_k''$,
where the union is empty when $n=1$.

\noindent\textbf{Case 1: $p\in Q_n'$.}
By \Cref{lem.uniqueassociatedboundary},
$\mathcal B_{\bar\psi}(p,r(p,n))$ meets exactly one of
$\partial\Delta_0$ and $\partial J_q$,
$q\in\bigcup_{k<n}P_k$.

\noindent\textbf{Case 1-1:
$D_{\bar\psi}(p,\partial\Delta_0)<r(p,n)$.}
Set
$$
C(p):=
\left\{
\gamma_1p:
\gamma_1\in\mathsf P_0,\ 
\gamma_1\mathcal B_{\bar\psi}(p,r(p,n))
\cap\partial\Delta_0\neq\emptyset
\right\}.
$$
For $p'\in C(p)$ and $m\geq n$, define
$$
r_{p',m}:=\sqrt{\eta h_mh(p')},
\qquad
B_{p',m}:=\mathcal B_{\bar\psi}(p',r_{p',m}).
$$

\noindent\textbf{Case 1-2:
$D_{\bar\psi}(p,\partial J_q)<r(p,n)$.}
Write $q=\gamma_qp_i$ and use $\widetilde r_{p,n}$ and $C_3$
from \eqref{eqn.LP6.21} and \Cref{lem.LP6.23}.
If
$\mathcal B_{\bar\psi}
(g_i\gamma_q^{-1}p,\widetilde r_{p,n}/C_3)$
meets $g_i\gamma_q^{-1}\partial J_q$, set
\begin{equation}
\label{eqn.defequivclass}
C(p):=
\left\{
(g_i\gamma_q^{-1})^{-1}\gamma_1g_i\gamma_q^{-1}p:
\begin{array}{l}
\gamma_1\in g_i\mathsf P_i'g_i^{-1},
\gamma_1\mathcal B_{\bar\psi}
(g_i\gamma_q^{-1}p,\widetilde r_{p,n}/C_3)
\cap g_i\gamma_q^{-1}\partial J_q\neq\emptyset
\end{array}
\right\}.
\end{equation}
Otherwise, set $C(p):=\{p\}$.
For $p'\in C(p)$ and $m\geq n$, define
$$
r_{p',m}:=\frac1{C_3}
\sqrt{\frac{h_mh_i(g_i\gamma_q^{-1}p')}
{\eta h_i(g_iq)}},
\qquad
B_{p',m}:=(g_i\gamma_q^{-1})^{-1}
\mathcal B_{\bar\psi}(g_i\gamma_q^{-1}p',r_{p',m}).
$$

\noindent\textbf{Case 2.}
For every remaining point
$p\in Q_n\smallsetminus\bigcup_{k<n}Q_k''$
not covered by the classes above, set $C(p):=\{p\}$
and define $r_{p,m}$ and $B_{p,m}$ as in Case 1-1.

In each case, $r_{p',m}=r_{p,m}$ for $p'\in C(p)$,
by invariance of the corresponding height under the peripheral
group.
For each member $p'$ of a constructed class $C(p)$, we write
$C(p'):=C(p)$; the consistency of these assignments is proved
in \Cref{lem.LP6.35}.

Finally, set
$$
Q_n'':=
\bigcup_{p\in Q_n\smallsetminus\bigcup_{k<n}Q_k''}C(p).
$$
Then $\bigcup_{j\leq n}Q_j\subset\bigcup_{j\leq n}Q_j''$.
A class constructed at the $n$-th step is said to have
\emph{generation} $n$.
We call $\partial\Delta_0$ in Case 1-1 and $\partial J_q$
in Case 1-2 the \emph{associated boundary} of the class.
Classes constructed in Case 2 have no associated boundary.

\subsection{Uniformity among equivalence classes}

\begin{lemma}   \label{lem.LP6.27}
    There exists ${C}_4 > 1$
    such that for any $p \in Q_n \smallsetminus \bigcup_{k \le n-1} Q_{k}''$ and  $p' \in C(p)$, we have
    $${{C}_4}^{-1} \le \frac{h(p)}{h(p')} \le {C}_4.$$
\end{lemma}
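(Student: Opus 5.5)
The plan is a case analysis following the construction of $C(p)$. Case 2 is immediate, since $C(p)=\{p\}$. In the other cases every $p'\in C(p)$ is a peripheral translate of $p$ (in the appropriate possibly conjugated system). The heights are invariant under the peripheral group, so the ratio $h(p)/h(p')$ can differ from $1$ only through the change of normalization between $h$, $h_i$, and the conjugated system.

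\textbf{Case 1-1.} Here $p'=\gamma_1p$ with $\gamma_1\in\mathsf P_0$. The height $h$ is $\mathsf P_0$-invariant by \Cref{hd}, so $h(p')=h(p)$ and there is nothing to prove.

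\textbf{Case 1-2.} Write $\alpha:=g_i\gamma_q^{-1}$ and $p'=\alpha^{-1}\gamma_1\alpha p$ with $\gamma_1\in g_i\mathsf P_i'g_i^{-1}$. I will use the invariance of $h_i$ under $g_i\mathsf P_ig_i^{-1}$, which gives $h_i(\alpha p')=h_i(\alpha p)$, and then invert the relation of \Cref{lem.LP6.20}. Strictly, that lemma was stated for $p\in Q_n'$ associated with $q$. However, its proof only uses the formula
$$
h_i(g_i\gamma_q^{-1}x)\asymp\frac{h_i(g_iq)h_i(g_ix)}{d_{\bar\psi}(g_ix,g_iq)^2},
$$
which comes from the cocycle identity and \Cref{lem.buseandcartan,lem.gromovproductalongparabolic}. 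This formula holds for any parabolic $x$ lying in a fixed compact neighborhood of $\Delta_0$ avoiding $p_0,\dots,p_\ell$, with $d_{\bar\psi}\asymp D_{\bar\psi}$ there.

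The main step is therefore to show that $p'$ satisfies the same localization as $p$, namely
$$
D_{\bar\psi}(p',q)\asymp\eta h(q).
$$
Given this, applying the displayed formula to $x=p$ and to $x=p'$ and dividing yields $h_i(g_ip)\asymp h_i(g_ip')$. Then $h_i(g_i\cdot)\asymp h$ on the compact neighborhood gives $h(p)\asymp h(p')$.

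To get the localization, I would use the defining condition of $C(p)$. The ball $\gamma_1\mathcal B_{\bar\psi}(\alpha p,\widetilde r_{p,n}/C_3)=\mathcal B_{\bar\psi}(\alpha p',\widetilde r_{p,n}/C_3)$ meets $\alpha\partial J_q=\partial R_i$. By \Cref{lem.LP6.23}, this forces $D_{\bar\psi}(p',\partial J_q)\le h_n$, and in particular $p'$ lies in a compact neighborhood of $\Delta_0$. The inequality $h_n\le h_{n-1}\le C\eta^3h(q)$ from \Cref{lem.LP6.18}, together with \Cref{lem.LP6.13} and the quasi-triangle inequality \Cref{lem.eucdistancetriangle}, then gives
$$
D_{\bar\psi}(p',q)\asymp\eta h(q),
$$
exactly as in the proof of \Cref{lem.LP6.19}. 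Here $\partial J_q$ lies at distance $\asymp\eta h(q)$ from $q$, and the $h_n$-perturbation is of order $\eta^3 h(q)$, so it is negligible for small $\eta$. Applying the same argument to $p$ itself gives $D_{\bar\psi}(p,q)\asymp\eta h(q)$.

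The main obstacle I expect is verifying that the comparison $h_i(g_i\cdot)\asymp h$ and the bi-Lipschitz bounds of \Cref{lem.metrics} apply uniformly to $p'$, which need not a priori belong to $Q_n$. This is exactly what the localization $D_{\bar\psi}(p',\partial J_q)\le h_n$ provides, since $\partial J_q\subset\Delta_0$ and $h_n$ is small.
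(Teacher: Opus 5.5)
Your proof is correct and follows the paper's argument. The singleton cases (including the singleton alternative of Case 1-2) and Case 1-1 are immediate. In Case 1-2, both you and the paper combine $h_i(g_i\gamma_q^{-1}p')=h_i(g_i\gamma_q^{-1}p)$ with the formula from the proof of \Cref{lem.LP6.20}, evaluated at $p$ and at $p'$, once one knows the localization $D_{\bar\psi}(g_ip',g_iq)\asymp\eta h_i(g_iq)$.

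The only difference is how that localization is obtained. The paper gets it from \Cref{lem.LP6.28}, working near $g_ix_q$ in conjugated coordinates and then applying ball flipping. You instead derive it in the original coordinates from \Cref{lem.LP6.23}, \Cref{lem.LP6.13}, and \Cref{lem.LP6.18}, as in the proof of \Cref{lem.LP6.19}. Both routes work.
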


The assertion is immediate for singleton classes. In Case 1-1, $p'\in\mathsf P_0p$ and $h$ is
$\mathsf P_0$-invariant by \Cref{hd}, so $h(p')=h(p)$.
It therefore remains to consider Case 1-2, with associated
boundary $\partial J_q$, where $q=\gamma_qp_i$.
We first establish the following estimate.

\begin{lemma} \label{lem.LP6.28}
    There exists $C>1$ such that for any $p\in Q_n' \smallsetminus \bigcup_{k \le n-1} Q_{k}''$ satisfying the hypothesis of Case 1-2 (i.e., ${\mathcal B_{\bar\psi}}(p, r(p, n)) \cap \partial J_q \neq \emptyset$
    for  some $q \in P_1 \cup \cdots \cup P_{n-1}$),
     any $p' \in C(p)$ and any $m \ge n$, we have
    $$
    \begin{aligned}
       & {\mathcal B_{\bar\psi}}(g_i \ga_q^{-1} p', r_{p', m})  \subset {\mathcal B_{\bar\psi}}(g_i x_q, C\eta^{-1}) \smallsetminus {\mathcal B_{\bar\psi}}(g_i x_q, (C \eta)^{-1}); \\
       & g_i B_{p', m}  \subset {\mathcal B_{\bar\psi}}(g_i q, C \eta h_i(g_i q)) \smallsetminus {\mathcal B_{\bar\psi}}(g_i q, C^{-1}\eta h_i(g_i q)).
    \end{aligned}
    $$
\end{lemma}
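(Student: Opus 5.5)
The plan is to prove the first inclusion directly from the definition of $C(p)$ in Case 1-2 and the annular description \eqref{eqn.Randballs} of $\partial R_i$. The second inclusion should then follow by flipping back with $\alpha_q:=g_i\gamma_qg_i^{-1}$, using \eqref{eqn.ballflipconjugate}, i.e.\ \Cref{lem.ballflipping} applied to $\Gamma_i$. Note that $\alpha_q p_0=g_iq$ and $\alpha_q^{-1}p_0=g_ix_q$. Moreover, $g_iB_{p',m}=\alpha_q\,\mathcal B_{\bar\psi}(g_i\gamma_q^{-1}p',r_{p',m})$.

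\textbf{Step 1: smallness of the radius.} Since $r_{p',m}$ is decreasing in $m$ and constant on the class, for $m\ge n$ we have
$$r_{p',m}\le r_{p,n}=\widetilde r_{p,n}/C_3.$$
By \eqref{eqn.LP6.22} and the boundedness of $h_i$ on a fundamental domain, this gives $r_{p',m}\ll\eta/C_3\ll\eta$, which is tiny compared with $\eta^{-1}$ once $\eta_*$ is small.

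\textbf{Step 2: the centers lie in an $\eta^{-1}$-annulus about $g_ix_q$.} We split according to the definition of $C(p)$.
\begin{itemize}
\item Suppose the ball $\mathcal B_{\bar\psi}(g_i\gamma_q^{-1}p,\widetilde r_{p,n}/C_3)$ meets $g_i\gamma_q^{-1}\partial J_q=\partial R_i$. Then, by the $g_i\mathsf P_i'g_i^{-1}$-invariance of $D_{\bar\psi}$, each $p'\in C(p)$ has $\mathcal B_{\bar\psi}(g_i\gamma_q^{-1}p',\widetilde r_{p,n}/C_3)$ meeting $\partial R_i$. By \eqref{eqn.Randballs}, every point of $\partial R_i$ satisfies $\eta^{-1}\le D_{\bar\psi}(\cdot,g_ix_q)\le C_{\rm fd}\eta^{-1}$, since $\partial R_i$ lies outside the open ball of radius $\eta^{-1}$ and inside that of radius $C_{\rm fd}\eta^{-1}$. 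The quasi-triangle inequality \Cref{lem.eucdistancetriangle}, applied twice together with Step 1, then gives $D_{\bar\psi}(g_i\gamma_q^{-1}p',g_ix_q)\asymp\eta^{-1}$.
\item Otherwise $C(p)=\{p\}$. \Cref{lem.LP6.19}, with $x=g_ip$, gives $D_{\bar\psi}(g_ip,g_iq)\asymp\eta h_i(g_iq)$. Then \eqref{eqn.ballflipconjugate} gives
$$D_{\bar\psi}(g_i\gamma_q^{-1}p,g_ix_q)\asymp\frac{h_i(g_iq)}{\eta h_i(g_iq)}=\eta^{-1}.$$
\end{itemize}

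\textbf{Step 3: the first inclusion.} For any $y$ in $\mathcal B_{\bar\psi}(g_i\gamma_q^{-1}p',r_{p',m})$, combine Step 2 with the quasi-triangle inequality, used in both directions. Since the radius is $\ll\eta$ while the center is at distance $\asymp\eta^{-1}$, this yields $D_{\bar\psi}(y,g_ix_q)\asymp\eta^{-1}$ with uniform constants. That is the first inclusion for a suitable $C$.

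\textbf{Step 4: the second inclusion.} For such $y$, the point $y$ is at bounded $D_{\bar\psi}$-distance from $g_ix_q$, so $y\neq p_0$. Also $y\neq g_ix_q=\alpha_q^{-1}p_0$, so $\alpha_qy\notin\{p_0,g_iq\}$. Applying \eqref{eqn.ballflipconjugate} to $\alpha_qy$ gives
$$D_{\bar\psi}(\alpha_qy,g_iq)\asymp\frac{h_i(g_iq)}{D_{\bar\psi}(y,g_ix_q)}\asymp\eta h_i(g_iq).$$
This is the second inclusion, after enlarging $C$.

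The main obstacle is bookkeeping: the implied constants must be uniform in $i$, $q$, $p$, $p'$, $m$ and $\eta$. In particular, the smallness $r_{p',m}\ll\eta$ in Step 1 has to be used with an $\eta$-independent constant, so that the annulus in Step 3 remains nondegenerate for all $\eta<\eta_*$.
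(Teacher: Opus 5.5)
Your proposal is correct and follows essentially the same route as the paper: you bound $r_{p',m}\le r_{p,n}=\widetilde r_{p,n}/C_3\ll\eta$, place the centers $g_i\gamma_q^{-1}p'$ at distance $\asymp\eta^{-1}$ from $g_ix_q$, propagate this to the whole ball by \Cref{lem.eucdistancetriangle}, and flip back with $g_i\gamma_qg_i^{-1}$ via \Cref{lem.ballflipping}. For the centers you use \eqref{eqn.Randballs} in the first alternative and \Cref{lem.LP6.19} with ball flipping in the singleton alternative; your extra checks (peripheral invariance of $D_{\bar\psi}$ for class members, and $y\notin\{p_0,g_ix_q\}$ before flipping) just make explicit details the paper leaves implicit.
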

\begin{proof}
Since $g_i\gamma_q^{-1}\partial J_q=\partial R_{i}$,
\eqref{eqn.Randballs} gives
$D_{\bar\psi}(g_ix_q,z)\asymp\eta^{-1}$
for every $z\in g_i\gamma_q^{-1}\partial J_q$.

In the first alternative of Case 1-2,
\eqref{eqn.defequivclass} and \Cref{lem.metrics} imply that
$\mathcal B_{\bar\psi}(g_i\gamma_q^{-1}p',r_{p',n})$
meets $g_i\gamma_q^{-1}\partial J_q$.
In the singleton alternative, $p'=p$, and
\Cref{lem.LP6.19,lem.ballflipping} give
$D_{\bar\psi}(g_i\gamma_q^{-1}p,g_ix_q)\asymp\eta^{-1}$.
By \Cref{hd}, $h_i$ is $g_i\mathsf P_i g_i^{-1}$-invariant.
Together with \eqref{eqn.defequivclass}, this gives
$h_i(g_i\gamma_q^{-1}p')=h_i(g_i\gamma_q^{-1}p)$.
Hence, by the definitions of the radii and \eqref{eqn.LP6.22},
$r_{p',m}\leq r_{p',n}=r_{p,n}
=\widetilde r_{p,n}/C_3\ll\eta$.

Thus \Cref{lem.eucdistancetriangle} implies
$D_{\bar\psi}(z,g_ix_q)\asymp\eta^{-1}$
throughout
$\mathcal B_{\bar\psi}(g_i\gamma_q^{-1}p',r_{p',m})$.
This proves the first inclusion.
For every
$z\in\mathcal B_{\bar\psi}
(g_i\gamma_q^{-1}p',r_{p',m})$,
\Cref{lem.ballflipping} gives
$$
D_{\bar\psi}((g_i\gamma_qg_i^{-1})z,g_iq)
\asymp
\frac{h_i(g_iq)}{D_{\bar\psi}(z,g_ix_q)}
\asymp\eta h_i(g_iq).
$$
Since
$(g_i\gamma_qg_i^{-1})
\mathcal B_{\bar\psi}(g_i\gamma_q^{-1}p',r_{p',m})
=g_iB_{p',m}$
by definition, the second inclusion follows.
\end{proof}

\begin{proof}[Proof of \Cref{lem.LP6.27}]
It remains to consider Case 1-2.
Since $h_i(g_iq)\ll1$, \Cref{lem.LP6.28} places $p$ and $p'$
in a fixed compact neighborhood of $\Delta_0$ avoiding the
cusp representatives, after decreasing $\eta$.
In particular, \eqref{dedef} gives
$d_{\bar\psi}(g_ip',g_iq)\asymp D_{\bar\psi}(g_ip',g_iq)$.

The computation in the proof of \Cref{lem.LP6.20},
together with \Cref{lem.LP6.28}, therefore gives
\begin{equation}
\label{eqn.proofLP6.27}
h_i(g_i\gamma_q^{-1}p')
\asymp
\frac{h_i(g_iq)h_i(g_ip')}
     {d_{\bar\psi}(g_ip',g_iq)^2}
\asymp
\frac{h_i(g_ip')}{\eta^2h_i(g_iq)}.
\end{equation}
By the definition of $C(p)$,
$h_i(g_i\gamma_q^{-1}p')
=h_i(g_i\gamma_q^{-1}p)$.
Combining \eqref{eqn.proofLP6.27} with \Cref{lem.LP6.20}
yields $h_i(g_ip')\asymp h_i(g_ip)$.

Finally, \Cref{lem.heightondelta0}, applied to the original
and conjugated systems, and $g_i\in K$ give
$h(p')\asymp h_i(g_ip')$ and $h(p)\asymp h_i(g_ip)$
on these compact neighborhoods.
Hence $h(p')\asymp h(p)$, as required.
\end{proof}

\subsection{Size of the neighborhoods $B_{p,m}$}

\begin{lemma} \label{lem.LP6.32}
    There exists ${C}_5 > 1$
    such that for any $p \in Q_n''$ and any $m \ge n$,
    $${\mathcal B_{\bar\psi}}(p, \sqrt{ \eta h(p) h_m} /{C}_5) \subset B_{p, m} \subset {\mathcal B_{\bar\psi}}(p, {C}_5 \sqrt{ \eta h(p) h_m}).$$
\end{lemma}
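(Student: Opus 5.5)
The proof is a case analysis over the three ways $B_{p,m}$ can be defined. In Case~1-1 and Case~2, $B_{p,m}=\mathcal B_{\bar\psi}(p,\sqrt{\eta h_mh(p)})$ by definition, so the statement holds with any ${C}_5\ge1$. The same is true for the singleton alternative of Case~1-2 only if the radius happens to be the Case~1-1 radius. It is not: there $B_{p,m}$ is the pullback of a ball around $g_i\gamma_q^{-1}p$. So all the work is in Case~1-2, with associated boundary $\partial J_q$, $q=\gamma_qp_i$, and $p\in Q_n''$ a member of such a class, hence $p\in C(p_1)$ for a generating point $p_1$.

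Set $\alpha:=g_i\gamma_qg_i^{-1}$, so that $\alpha p_0=g_iq$ and $\alpha^{-1}p_0=g_ix_q$. Then $g_iB_{p,m}=\alpha\,\mathcal B_{\bar\psi}(z_0,r_{p,m})$ with $z_0:=g_i\gamma_q^{-1}p$. I would estimate $D_{\bar\psi}(\alpha z,\alpha z_0)$ for $z$ in that ball using the two-point estimate of \Cref{lem.ballflipping}, applied in $\Gamma_i$ to $\alpha^{-1}$ (whose inverse sends $p_0$ to $g_ix_q$). Writing $x=\alpha z$ and $y=\alpha z_0$, that lemma expresses $D_{\bar\psi}(z,z_0)$ as $h_i(g_iq)D_{\bar\psi}(x,y)/(D(x,g_iq)D(y,g_iq))$. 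Inverting, this gives
\[
D_{\bar\psi}(\alpha z,\alpha z_0)\asymp \frac{D_{\bar\psi}(\alpha z,g_iq)\,D_{\bar\psi}(\alpha z_0,g_iq)}{h_i(g_iq)}\,D_{\bar\psi}(z,z_0).
\]
By \Cref{lem.LP6.28}, both $D_{\bar\psi}(\alpha z,g_iq)$ and $D_{\bar\psi}(\alpha z_0,g_iq)$ are $\asymp\eta h_i(g_iq)$ uniformly on the ball. Hence $D_{\bar\psi}(\alpha z,\alpha z_0)\asymp \eta^2h_i(g_iq)D_{\bar\psi}(z,z_0)$. This says that on the annular region of \Cref{lem.LP6.28}, $\alpha$ acts as a quasi-similarity with ratio $\eta^2h_i(g_iq)$.

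Consequently, $g_iB_{p,m}$ contains, and is contained in, $D_{\bar\psi}$-balls about $g_ip$ of radius comparable to $\eta^2h_i(g_iq)r_{p,m}$. Inserting the definition of $r_{p,m}$ gives
\[
\eta^2h_i(g_iq)\cdot\frac1{C_3}\sqrt{\frac{h_mh_i(g_i\gamma_q^{-1}p)}{\eta h_i(g_iq)}}.
\]
For $h_i(g_i\gamma_q^{-1}p)$ I would use \eqref{eqn.proofLP6.27} (or \Cref{lem.LP6.20} when $p$ is the generator), which gives $h_i(g_i\gamma_q^{-1}p)\asymp h_i(g_ip)/(\eta^2h_i(g_iq))$. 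Substituting, the radius simplifies to $\asymp\sqrt{\eta h_mh_i(g_ip)}$. The needed step is the comparison $h_i(g_ip)\asymp h(p)$, and $g_i\in K$ is uniformly bi-Lipschitz on the compact neighborhood of $\Delta_0$ containing these sets (\Cref{lem.metrics}). Pulling back by $g_i^{-1}$ then yields the two inclusions, with ${C}_5$ depending on $C_3$ and the uniform constants.

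The main obstacle is the \emph{containing} direction. One must show that the image under $\alpha$ of the whole ball $\mathcal B_{\bar\psi}(z_0,r_{p,m})$ sits in the controlled annulus, which \Cref{lem.LP6.28} provides. Conversely, one must show that every point of the small ball about $g_ip$ pulls back under $\alpha^{-1}$ into the region where the quasi-similarity holds. For this converse I would argue through \Cref{lem.ballflipping} again, using that a point $\alpha z$ within $\ll\eta^2h_i(g_iq)r_{p,m}\ll\eta h_i(g_iq)$ of $g_ip$ still lies in the annulus of \Cref{lem.LP6.19}, after shrinking by a constant and applying \Cref{lem.eucdistancetriangle}. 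Then $D_{\bar\psi}(z,g_ix_q)\asymp\eta^{-1}$, and the displayed comparison applies in reverse.
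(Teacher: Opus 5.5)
Your proposal is correct and follows the paper's proof essentially step for step: Cases~1-1 and~2 hold by definition, the quasi-similarity $D_{\bar\psi}(\alpha z,\alpha z_0)\asymp\eta^2h_i(g_iq)D_{\bar\psi}(z,z_0)$ comes from \Cref{lem.ballflipping} together with \Cref{lem.LP6.28}, the radius is computed via \eqref{eqn.proofLP6.27}, and the inner inclusion follows from the quasi-triangle inequality and a second application of ball flipping. The one slip is citing \Cref{lem.LP6.19} for the annulus in the converse step; that lemma is proved only for the generating point of the class, so for a general member you should use $D_{\bar\psi}(g_ip,g_iq)\asymp\eta h_i(g_iq)$ from \Cref{lem.LP6.28}, as you already do in the preceding paragraph.
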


\begin{proof}
The assertion follows directly from the definitions in Cases 1-1
and 2. Consider Case 1-2, with associated boundary $\partial J_q$,
where $q=\gamma_qp_i$ and $x_q=\gamma_q^{-1}p_i$.

By \Cref{lem.LP6.28}, $p$ lies in a fixed compact neighborhood
of $\Delta_0$ avoiding the cusp representatives.
On these neighborhoods, \Cref{lem.heightondelta0,lem.metrics}
give $h_i(g_ip)\asymp h(p)$ and uniform bi-Lipschitz bounds
for $g_i$. For $x\in\mathcal B_{\bar\psi}(g_i\gamma_q^{-1}p,r_{p,m})$,
\Cref{lem.ballflipping,lem.LP6.28} give
\begin{equation}
\label{eqn.pairingball}
D_{\bar\psi}(g_ip,(g_i\gamma_qg_i^{-1})x)
\asymp
\eta^2h_i(g_iq)\,
D_{\bar\psi}(g_i\gamma_q^{-1}p,x).
\end{equation}
Moreover, the definition of $r_{p,m}$ and
\eqref{eqn.proofLP6.27} imply
\begin{equation}
\label{eqn.prooflp6.32rpm}
r_{p,m}
\asymp
\frac{\sqrt{\eta h_i(g_ip)h_m}}{\eta^2h_i(g_iq)}.
\end{equation}
Since
$(g_i\gamma_qg_i^{-1})
\mathcal B_{\bar\psi}(g_i\gamma_q^{-1}p,r_{p,m})
=g_iB_{p,m}$,
these estimates give a uniform $C>1$ such that
$g_iB_{p,m}\subset
\mathcal B_{\bar\psi}(g_ip,C\sqrt{\eta h_i(g_ip)h_m})$. Pulling back by $g_i$ and using $h_i(g_ip)\asymp h(p)$
proves the second inclusion.

To prove the first inclusion, take
$x\in\mathcal B_{\bar\psi}
(g_ip,\sqrt{\eta h_i(g_ip)h_m}/C)$.
By \Cref{lem.LP6.27} and $m\geq n$,
$\sqrt{\eta h_i(g_ip)h_m}\ll h_n$.
Since $q\in\bigcup_{j<n}P_j$, we also have
$h_n\ll\eta h_i(g_iq)$.
Thus \Cref{lem.LP6.28,lem.eucdistancetriangle} imply
$D_{\bar\psi}(x,g_iq)\asymp\eta h_i(g_iq)$
for sufficiently large $C$.
Applying \Cref{lem.ballflipping} again, we obtain
\begin{equation}
\label{eqn.6.32 proof eqn 2}
\begin{aligned}
D_{\bar\psi}(g_i\gamma_q^{-1}p,
g_i\gamma_q^{-1}g_i^{-1}x)
&\asymp
\frac{h_i(g_iq)D_{\bar\psi}(g_ip,x)}
{D_{\bar\psi}(g_ip,g_iq)D_{\bar\psi}(x,g_iq)}
\ll
\frac{\sqrt{\eta h_i(g_ip)h_m}}
{C\eta^2h_i(g_iq)}
\asymp\frac{r_{p,m}}{C}.
\end{aligned}
\end{equation}
Increasing $C$ makes the last quantity smaller than $r_{p,m}$,
so $x\in g_iB_{p,m}$.
Pulling back the two inclusions by $g_i$ and using
$h_i(g_ip)\asymp h(p)$ proves the lemma.
\end{proof}

\subsection{Well-definedness of equivalence classes}
\begin{lemma}
\label{lem.LP6.35}
Any two equivalence classes are equal or disjoint.
\end{lemma}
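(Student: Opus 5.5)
Throughout, let $C(p)$ and $C(p')$ be two classes that share a point $x$. We will show $C(p)=C(p')$. Each class is constructed at a single generation $n$ from a base point $p\in Q_n\smallsetminus\bigcup_{k<n}Q_k''$. By construction, $Q_n''$ is the union of the generation-$n$ classes, and base points of later generations are chosen outside $\bigcup_{k<n}Q_k''$. So a point lying in a class of generation $k$ is never the base point of a class of generation $n>k$.

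The proof therefore splits into two parts:
\begin{enumerate}
\item classes of different generations are disjoint;
\item two classes of the same generation that meet are equal.
\end{enumerate}

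\textbf{Same generation.} Suppose $x\in C(p)\cap C(p')$, both of generation $n$. Every member of a class lies in one peripheral orbit. In Case 1-1 this orbit is $\mathsf P_0p$. In Case 1-2 it is $(g_i\gamma_q^{-1})^{-1}(g_i\mathsf P_i' g_i^{-1})g_i\gamma_q^{-1}p$. The first step is to show that $p$ and $p'$ have the same associated boundary.
\begin{itemize}
\item By \Cref{lem.LP6.32}, and (through \Cref{lem.LP6.27}) at least at $m=n$, $B_{x,n}$ is comparable to $\mathcal B_{\bar\psi}(x,\sqrt{\eta h(x)h_n})$.
\item This ball lies near the associated boundary of whichever class defines it.
\item \Cref{lem.uniqueassociatedboundary} and the separation \Cref{lem.LP6.17} then force the associated boundaries of $C(p)$ and $C(p')$ to coincide. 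The mechanism is the same as in the proof of \Cref{lem.uniqueassociatedboundary}: two distinct boundary pieces lie at distance $\gg h_{n-1}/\eta$, while the relevant radii are $\ll h_{n-1}$.
\item A Case 2 class has no associated boundary, and its base point lies outside every class built earlier at this stage. The order of construction within a stage (Case 1 first, then Case 2 for the remaining points) then rules out overlaps between a Case 2 class and any other class.
\end{itemize}
Once the boundary is common, both classes are cut from the same peripheral orbit of $x$ by the same condition: the translated ball meets the given boundary piece. Since the radii agree on the orbit by invariance of the height, this condition depends only on the orbit. Hence both classes equal the set of orbit members satisfying it, and $C(p)=C(p')$.

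\textbf{Different generations.} Suppose $x\in C(p)\cap C(p')$ with generations $k<n$. Then $x\in Q_k''$, but $p'\notin\bigcup_{j<n}Q_j''$. So $x\neq p'$, and $x=\gamma p'$ for a peripheral element $\gamma$ as above. We aim to show $p'\in C(p)$, which contradicts the choice of $p'$.

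Membership in $C(p')$ means that the translated ball $\gamma\mathcal B(\cdot,\tilde r)$ meets the associated boundary $E'$ of $C(p')$ at scale $\sim r(p',n)\le h_{n-1}$. Membership $x\in C(p)$ places $B_{x,k}$ within scale $\sim\sqrt{\eta h(x)h_k}$ of the associated boundary $E$ of $C(p)$. Since $h(x)\asymp h(p')$ by \Cref{lem.LP6.27}, and $\eta h(p')\in(h_n,h_{n-1}]$ while $\eta h(p)\in(h_k,h_{k-1}]$ with $k<n$, these heights are incompatible unless $k=n$. This gives the contradiction.

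The main obstacle is exactly this height incompatibility across generations. I would check it using the invariance of $h$ (resp.\ $h_i\circ g_i\gamma_q^{-1}$) along peripheral orbits, together with the comparisons in \Cref{lem.LP6.20} and \eqref{eqn.proofLP6.27}. The delicate point is that in Case 1-2 the invariant quantity is the conjugated height, not $h$ itself, so the comparison must pass through $h_i(g_iq)\eta^2$, whose two sides are controlled only up to constants.
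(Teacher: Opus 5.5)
Your same-generation argument is essentially the paper's mechanism. The associated boundaries coincide by \Cref{lem.LP6.23} and \Cref{lem.LP6.17}, and then orbit, boundary piece and radius determine the class. One imprecision: a singleton from the second alternative of Case~1-2 is \emph{not} the set of orbit points satisfying the defining condition. There you should instead argue that membership in a first-alternative class with the same boundary and radius would force its base point to satisfy that condition.

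The genuine gap is the different-generation case. Your claim that $h(x)\asymp h(p)$ and $h(x)\asymp h(p')$ are incompatible unless $k=n$ is false. The windows $h_j<\eta h\le h_{j-1}$ are contiguous, so heights that agree up to any constant, however close to $1$, can lie in adjacent windows $k=n-1$ and $n$. Exact equality of $h$ is available only along a $\mathsf P_0$-orbit (Case~1-1) or for a singleton. Along a Case~1-2 class only $h_i(g_i\gamma_q^{-1}\,\cdot\,)$ is invariant, and \Cref{lem.LP6.27} gives mere comparability. So this is not a matter of tracking constants through $\eta^2h_i(g_iq)$: no height bookkeeping yields $k=n$ there. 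Moreover, your target $p'\in C(p)$ is impossible when $C(p)$ is a singleton (Case~2, or the second alternative of Case~1-2), so that sub-case needs a different contradiction.

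What is missing is monotonicity of the defining radius in the generation, which is how the paper concludes.
\begin{itemize}
\item Comparability does give $h_{k-1}\asymp h_{n-1}$. That suffices to run your separation argument across generations, so two non-singleton classes sharing a point have the same associated boundary.
\item If that boundary is $\partial\Delta_0$, or if the later class is of Case~1-1 and the earlier one is a singleton, then $h$ is constant on the relevant $\mathsf P_0$-orbit. In that situation the windows do force $k=n$.
\item If the boundary is $\partial J_q$, then after applying $g_i\gamma_q^{-1}$ both classes lie in a single $g_i\mathsf P_i'g_i^{-1}$-orbit, on which $h_i$ is constant. Hence $\widetilde r_{p',n}\le\widetilde r_{p,k}$, and the defining condition \eqref{eqn.defequivclass} for the later class implies the one for the earlier class. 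This gives $C(p')\subset C(p)$, so $p'\in Q_k''$, which the choice of base points forbids unless $k=n$.
\item If the earlier class is a singleton $C(p)=\{x\}$ and the later one has boundary $\partial J_q$, you first need two facts. \Cref{lem.LP6.23} gives $D_{\bar\psi}(x,\partial J_q)\le h_n\le h_k<r(x,k)$, and \Cref{lem.LP6.18} with \Cref{lem.LP6.27} gives $q\in\bigcup_{j<k}P_j$. Then \Cref{lem.uniqueassociatedboundary} shows that $C(x)$ was built in Case~1-2 with boundary $\partial J_q$. The same monotonicity puts $C(p')$ inside $C(x)$, contradicting that $C(x)$ is a singleton.
\end{itemize}
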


\begin{proof}
Suppose that $C(p')\cap C(p'')\ne\emptyset$.
First consider the case when $C(p')=\{p'\}$.
If $C(p'')$ is also a singleton, the assertion is immediate.
Otherwise, let $k$ and $n$ be the generations at which
$C(p')$ and $C(p'')$ are defined.
Since $p'\in C(p'')\subset Q_n''$, the construction gives $k\le n$.

If the associated boundary of $C(p'')$ is $\partial\Delta_0$,
then $p'\in\mathsf P_0p''$.
The $\mathsf P_0$-invariance of $h$ (\Cref{hd}) gives
$h(p')=h(p'')$, hence $k=n$.
Moreover, $\mathcal B_{\bar\psi}(p',r(p',n))$ meets
$\partial\Delta_0$.
Thus $C(p')$ is also defined in Case~1-1, and
\Cref{lem.metrics} gives $C(p')=C(p'')$.

Suppose instead that the associated boundary is $\partial J_q$,
where $q=\gamma_qp_i$.
By \Cref{lem.LP6.18} and \Cref{lem.LP6.27},
$h(q)\gg h_k/\eta^3$.
For sufficiently small $\eta$, this implies
$q\in\bigcup_{j<k}P_j$.
By \eqref{eqn.defequivclass} and \Cref{lem.metrics},
the ball centered at $g_i\gamma_q^{-1}p'$ with radius
$\widetilde r_{p'',n}/C_3$ meets
$g_i\gamma_q^{-1}\partial J_q$.
Hence \Cref{lem.LP6.23} gives
$$
D_{\bar\psi}(p',\partial J_q)
\le h_n\le h_k<r(p',k).
$$
By \Cref{lem.uniqueassociatedboundary}, $C(p')$ is therefore
constructed in Case~1-2 with associated boundary $\partial J_q$.
The $g_i\mathsf P_i g_i^{-1}$-invariance of $h_i$
(\Cref{hd}) gives
$\widetilde r_{p',k}\ge
\widetilde r_{p',n}=\widetilde r_{p'',n}$.
Thus \eqref{eqn.defequivclass} implies
$C(p'')\subset C(p')$, contradicting the singleton assumption.

It remains to consider two classes containing at least two points.
Let their generations be $m\le n$, their associated boundaries
be $\partial_1$ and $\partial_2$, and choose
$p\in C(p')\cap C(p'')$.
The definitions and \Cref{lem.LP6.23} give
$D_{\bar\psi}(p,\partial_1)\le h_{m-1}$ and
$D_{\bar\psi}(p,\partial_2)\le h_{n-1}$.
By \Cref{lem.LP6.27},
$h(p')\asymp h(p'')$, so $h_{m-1}\asymp h_{n-1}$.
If $\partial_1\ne\partial_2$, then
\Cref{lem.LP6.17} and \Cref{lem.eucdistancetriangle} yield
$$
\frac{C_sh_{n-1}}{2\eta N_0^2}
\le D_{\bar\psi}(\partial_1,\partial_2)
\le N_0(h_{m-1}+h_{n-1})
\ll h_{n-1},
$$
a contradiction for small $\eta$.
Thus the associated boundaries agree.

If the common boundary is $\partial\Delta_0$, then
$h(p')=h(p)=h(p'')$, so $m=n$.
Both classes are consequently defined by the same
$\mathsf P_0$-orbit and the same radius, and hence are equal.

If the common boundary is $\partial J_q$, then
$g_i\gamma_q^{-1}p'$ and $g_i\gamma_q^{-1}p''$
lie in the same $g_i\mathsf P_i'g_i^{-1}$-orbit.
Their $h_i$-heights are therefore equal, and $m\le n$ gives
$\widetilde r_{p'',n}\le\widetilde r_{p',m}$.
Thus \eqref{eqn.defequivclass} gives
$C(p'')\subset C(p')$.
In particular, $p''\in Q_m''$, so the construction forces $m=n$.
The defining radii are then equal, and therefore
$C(p')=C(p'')$.
\end{proof}

\subsection{Auxiliary sets in $\mathcal Z_n$}

By construction and \Cref{lem.LP6.35}, the sets $Q_n''$
are pairwise disjoint. Define
$$
B_n:=\mathcal Z_n\cap
\bigcup_{j\le n}\ \bigcup_{p\in Q_j''}B_{p,n},
\qquad
A_n:=\mathcal Z_n\smallsetminus B_n.
$$

For $p=\gamma p_i\in\Lambda_{\mathrm p}(\Gamma_\rho)$,
the horoball based at $f^{-1}(p)$ whose boundary contains
$\rho^{-1}(\gamma\mathsf P_i)o$ is called the
\emph{horoball for $p$}.
This does not depend on the choice of $\gamma$.

Let $\Phi:\widetilde\Omega_\Gamma\to
\widetilde\Omega_{\bar\psi}$ be the reparametrization from
\Cref{thm.reparametrization}.
For $0<\varepsilon<1$, let $\mathscr C_\varepsilon$ consist
of those $x\in\widetilde\Omega_{\bar\psi}$ for which there
exists $p\in\Lambda_{\mathrm p}(\Gamma_\rho)$ such that
$(\Phi^{-1}(xa_t))(0)$ lies in the interior of the horoball
for $p$ whenever $|t|\le-\log\varepsilon$.

For each fixed $\varepsilon$, the set $\mathscr C_\varepsilon$
is open and $\Gamma_\rho$-invariant.
Let $Q\subset\Omega_{\bar\psi}$ be the image of the compact set
$\Gamma\backslash(\op{T}^1Y_{\rm th}\cap\widetilde\Omega_\Gamma)$
under the homeomorphism induced by $\Phi$.
Every point outside the image of $\mathscr C_\varepsilon$
meets $Q$ within flow time $|t|\le-\log\varepsilon$.
Hence
$\Gamma_\rho\backslash
(\widetilde\Omega_{\bar\psi}\smallsetminus\mathscr C_\varepsilon)$
is compact.

\begin{lemma}
\label{lem.LP6.37}
There exists $C>0$ such that, for every sufficiently small
$\varepsilon>0$, if $x=(x^+,p_0,0)$ satisfies
$x^+\in\mathcal Z_0$ and
$xa_{n\log\mathtt H}\in\mathscr C_{C\varepsilon}$
for some $n\ge1$, then there exists
$p\in\Lambda_{\mathrm p}(\Gamma_\rho)\smallsetminus\{p_0\}$
such that
$$
h_n<\varepsilon h(p),
\qquad
D_{\bar\psi}(x^+,p)<\sqrt{\varepsilon h(p)h_n}.
$$
\end{lemma}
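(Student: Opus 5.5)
We work in the hyperbolic model via the reparametrization $\Phi$ of \Cref{thm.reparametrization} and then convert the hyperbolic cusp excursion into the two quantities $h(p)$ and $D_{\bar\psi}(x^+,p)$. Set $T:=n\log\mathtt H$, so that $e^{-T}=h_n$, and $L:=-\log(C\varepsilon)$. Let $\widehat v:=\Phi^{-1}(x)$. Its backward endpoint is $f^{-1}(p_0)=\infty$ and its forward endpoint is $f^{-1}(x^+)\in\widehat\Delta_{\infty}$. Because $x$ lies in the compact section $\widetilde\Delta_0$, the basepoint $\widehat v(0)$ lies in a fixed compact subset of $Y$. We may arrange this subset to avoid the horoball $\mathsf H_0$ at $\infty$.

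\textbf{Step 1 (the horoball and its entrance/exit times).} The hypothesis $xa_T\in\mathscr C_{C\varepsilon}$ provides $p$ such that $\Phi^{-1}(xa_t)(0)$ lies in the interior of the horoball for $p$ for $|t-T|\le L$. This horoball cannot be the one at $\infty$. Indeed, the geodesic from $\infty$ would then lie in it for all earlier times, including time $0$, contradicting the choice of section. Hence $p\ne p_0$.

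Let $t_{\rm in}\le T-L$ and $t_{\rm out}\ge T+L$ be the $\bar\psi$-times, that is, the $\mathbb R$-coordinates in $\widetilde\Omega_{\bar\psi}$, at which the flow line of $x$ enters and exits this horoball. Since $s_x=0$, the $\mathbb R$-coordinate of $xa_t$ is exactly $t$. If the entrance happens before time $0$, we set $t_{\rm in}:=0$ up to $O(1)$; this case is harmless because of the compactness of the section.

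\textbf{Step 2 (height from the entrance time).} Let $\gamma\in\Gamma_\rho$ with $\gamma p_i=p$ be the representative whose orbit point $\widehat\gamma o$ is uniformly close to the entrance point. The $\mathbb R$-coordinate at entrance is $\bar\psi(\beta_{x^+}(e,\gamma))+O(1)$. Along the ray to $x^+$, \Cref{lem.buseandcartan} together with the shadow and horoball geometry gives $\beta_p(e,\gamma)\approx\beta_{x^+}(e,\gamma)$. Since $p$ lies in a compact neighborhood of $\Delta_0$, \Cref{lem.heightondelta0} then gives
\[
h(p)\asymp e^{-t_{\rm in}}\gg e^{-T+L}=\frac{h_n}{C\varepsilon}.
\]

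\textbf{Step 3 (distance from the exit time).} This is the main step. I expect to show
\[
D_{\bar\psi}(x^+,p)^2\asymp h(p)\,e^{-t_{\rm out}}.
\]
The first route is \Cref{lem.ballflipping}, which gives $D_{\bar\psi}(\gamma^{-1}x^+,\gamma^{-1}p_0)\asymp h(p)/D_{\bar\psi}(x^+,p)$. After normalizing by $\gamma^{-1}$ (composed with $g_i$), the segment of the geodesic inside the horoball becomes a cusp excursion at $p_i$. By \eqref{eqn:scalar-cusp-depth}, the $\bar\psi$-length of this excursion is $\kappa_i$ times its hyperbolic length, up to $O(1)$. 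A symmetric excursion of length $t_{\rm out}-t_{\rm in}$ from the horosphere has both endpoints separated by a peripheral element $u$ with $\bar\psi(\mu(u))\approx t_{\rm out}-t_{\rm in}$. By \Cref{lem.gromovproductalongparabolic} and \eqref{dedef}, this separation is $D_{\bar\psi}(\gamma^{-1}x^+,\gamma^{-1}p_0)\asymp e^{(t_{\rm out}-t_{\rm in})/2}$. Combining this with Step 2 gives the displayed estimate.

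As a sanity check, the upper half-plane picture (\Cref{EucRe}) agrees: a vertical line at horizontal distance $d$ from a horoball of diameter $\delta$ enters at height $\asymp\delta$ and exits at height $\asymp d^2/\delta$. The difficulty is keeping all constants uniform in $p$, which \Cref{lem.LP5.11} and \Cref{lem.metrics} handle on the compact neighborhood of $\Delta_0$.

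\textbf{Step 4 (conclusion).} Steps 2 and 3 give
\[
D_{\bar\psi}(x^+,p)^2\ll h(p)\,e^{-T-L}=C\varepsilon\,h(p)h_n
\qquad\text{and}\qquad
h(p)\ge c\,\frac{h_n}{C\varepsilon},
\]
with implied constants independent of $\varepsilon$, $n$ and $x$. Choosing $C>0$ small enough that $c/C>1$ and the implied constant times $C$ is less than $1$ yields $h_n<\varepsilon h(p)$ and $D_{\bar\psi}(x^+,p)<\sqrt{\varepsilon h(p)h_n}$.

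"Sufficiently small $\varepsilon$" is needed so that $L$ exceeds the additive errors. It also ensures that the reparametrization bounds $c^{-1}|s|-c\le|\mathtt t(v,s)|$ convert the hyperbolic horoball condition into the $\bar\psi$-time window used above.
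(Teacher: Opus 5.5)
Your proposal is correct in outline. Steps 1--2 follow the paper: the entrance representative $\gamma_1\in\gamma\mathsf P_i$, the shadow comparison $\bar\psi(\beta_{x^+}(e,\gamma_1))=\bar\psi(\beta_p(e,\gamma_1))+O(1)$, and \Cref{lem.heightondelta0} together give $h(p)\asymp e^{-t_{\mathrm{in}}}$.

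Step 3 takes a genuinely different route. The paper works at the \emph{exit} representative $\gamma_2\in\gamma\mathsf P_i$. Since the exit point is uniformly close to $[f^{-1}(x^+),f^{-1}(p)]$, \Cref{lem.gromovproductnearest} gives $\mathcal G(\gamma_2^{-1}x^+,p_i)=O(1)$. A single application of \eqref{GPT} with $g=\gamma_2$, together with \Cref{lem.heightondelta0}, then yields $2\bar\psi(\mathcal G(x^+,p))=t_{\mathrm{out}}-\log h(p)+O(1)$. Hence $D_{\bar\psi}(x^+,p)\asymp d_{\bar\psi}(x^+,p)\asymp (h(p)e^{-t_{\mathrm{out}}})^{1/2}$ directly. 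This holds uniformly over all cusp types, with no conjugation by $g_i$ and no estimate of the excursion length.

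You instead combine ball flipping at the entrance representative with \Cref{lem.gromovproductalongparabolic} for $u=\gamma_1^{-1}\gamma_2$. That route needs several extra steps:
\begin{itemize}
\item relating $t_{\mathrm{out}}-t_{\mathrm{in}}$ to $\bar\psi(\mu(u))$;
\item checking that $g_i\gamma_1^{-1}p_0$ and $g_iu^{-1}\gamma_1^{-1}x^+$ remain in a fixed compact subset of $g_i\Lambda_\rho\smallsetminus\{p_0\}$;
\item for type $i\neq0$, using the \emph{two-point} form of \Cref{lem.ballflipping} in $\Gamma_i$, since the one-point form flips $g_i\gamma_1^{-1}p_i$ rather than $g_i\gamma_1^{-1}p_0$;
\item knowing that $p$ stays near $\Delta_0$, away from $p_i$. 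This is true because a long excursion forces $f^{-1}(p)$ to be Euclidean-close to $f^{-1}(x^+)$, but it must be said.
\end{itemize}
Your route reproduces the cusp-coordinate picture (entrance height $\asymp\delta$, exit height $\asymp d^2/\delta$). The paper's is shorter and uses only that $x^+$ and $p$ lie in a fixed compact subset of $\Lambda_\rho\smallsetminus\{p_0\}$.

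Two small repairs are needed.
\begin{itemize}
\item In Step 1 you cannot ``arrange'' the basepoint of $\Phi^{-1}(x)$ to avoid the horoball at $\infty$. That basepoint is determined by $\Phi$, and $o$ lies on the boundary of the horoball for $p_0$ used in $\mathscr C_\varepsilon$. Argue instead, as the paper does, that compactness of $\Phi^{-1}$ of the section bounds the exit time from that horoball uniformly. Since $T\ge0$, this bound is below $T+L$ once $\varepsilon$ is small, so $p\neq p_0$.
\item Treat $x^+=p$ separately: there is then no exit time, and the distance bound is trivial.
\end{itemize}
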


\begin{proof}
Let $0<C<1$ be chosen below, set $T:=n\log\mathtt H$,
and choose $p$ as in the definition of
$\mathscr C_{C\varepsilon}$.
The set
$\Phi^{-1}(\{(\xi,p_0,0):\xi\in\mathcal Z_0\})$
is compact. Hence \Cref{thm.reparametrization} gives a
uniform upper bound for the forward exit time from the
horoball for $p_0$.
Since $T\ge0$, we have $p\ne p_0$ for sufficiently small
$\varepsilon$.

We also record that $p$ lies in a fixed compact subset of
$\Lambda_\rho\smallsetminus\{p_0\}$.
Indeed, place $f^{-1}(p_0)$ at $\infty$ in the upper
half-plane model.
The horoballs for $p\ne p_0$ have uniformly bounded
Euclidean diameters, since they are uniformly bounded
enlargements of the horoballs in $\mathscr H$.
As $f^{-1}(x^+)$ ranges over a bounded subset of $\mathbb R$,
the basepoints of horoballs meeting the vertical geodesic
from $\infty$ to $f^{-1}(x^+)$ also remain bounded.

Write $p=\gamma p_i$.
Since $x^-=p_0\ne p$, there is an entrance time
$T-t_1$, with $t_1>-\log(C\varepsilon)$.
Set $\sigma_1:=\Phi^{-1}(xa_{T-t_1})$ and choose
$\gamma_1\in\gamma\mathsf P_i$ so that
$\widehat\gamma_1o$ is uniformly close to $\sigma_1(0)$.
The entrance point is uniformly close to both rays
$[o,f^{-1}(p)]$ and $[o,f^{-1}(x^+)]$.
Thus \Cref{lem.shadowscomparison} and
\Cref{lem.buseandcartan} give
$\bar\psi(\beta_{x^+}(e,\gamma_1))
=\bar\psi(\beta_p(e,\gamma_1))+O(1)$.
Moreover, equivariance and continuity of $\Phi$ imply
that the $\mathbb R$-coordinate of
$\gamma_1^{-1}\Phi(\sigma_1)$ is uniformly bounded.
Consequently,
$$
T-t_1
=\bar\psi(\beta_{x^+}(e,\gamma_1))+O(1)
=\bar\psi(\beta_p(e,\gamma_1))+O(1).
$$
By \Cref{lem.heightondelta0},
$h_n\asymp e^{-t_1}h(p)\ll C\varepsilon h(p)$,
which proves the first inequality when $C$ is sufficiently
small.

If $x^+=p$, the second inequality is immediate.
Otherwise, there is an exit time $T+t_2$, with
$t_2>-\log(C\varepsilon)$.
Set $\sigma_2:=\Phi^{-1}(xa_{T+t_2})$ and choose
$\gamma_2\in\gamma\mathsf P_i$ so that
$\widehat\gamma_2o$ is uniformly close to $\sigma_2(0)$.
As above,
$T+t_2=\bar\psi(\beta_{x^+}(e,\gamma_2))+O(1)$.
The exit point is uniformly close to
$[f^{-1}(x^+),f^{-1}(p)]$.
Hence \Cref{lem.gromovproductnearest} gives
$\mathcal G(\gamma_2^{-1}x^+,p_i)=O(1)$.
The cocycle identity, $\bar\psi\circ\i=\bar\psi$, and
\Cref{lem.heightondelta0} therefore yield
$$
\begin{aligned}
2\bar\psi(\mathcal G(x^+,p))
&=\bar\psi(\beta_{x^+}(e,\gamma_2))
 +\bar\psi(\beta_p(e,\gamma_2))+O(1)=T+t_2-\log h(p)+O(1).
\end{aligned}
$$
Since $x^+$ and $p$ lie in a fixed compact set avoiding
$p_0$, \eqref{dedef} now gives
$$
D_{\bar\psi}(x^+,p)
\asymp d_{\bar\psi}(x^+,p)
\asymp \sqrt{e^{-t_2}h(p)h_n}
\ll \sqrt{C\varepsilon h(p)h_n}.
$$
Choosing $C$ sufficiently small proves the second inequality.
\end{proof}

\begin{lemma} \label{lem.LP6.38}
There exists ${C}_6 > 0$
such that if  $x = (x^+, p_0, 0) \in \widetilde \Omega_{\bar\psi}$ satisfies $x^+ \in \mathcal{Z}_n$ and $x a_{n \log {\mathtt{H}}} \in \mathscr C_{{C}_6 \eta}$ for some $n \in \N$, then $x^+ \in B_n$.

\end{lemma}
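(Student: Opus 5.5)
We apply \Cref{lem.LP6.37} with $\varepsilon:=\eta$, taking $C_6$ to be the constant $C$ there, decreased if necessary. This produces $p\in\Lambda_{\mathrm p}(\Gamma_\rho)\smallsetminus\{p_0\}$ with
$$
h_n<\eta h(p),
\qquad
D_{\bar\psi}(x^+,p)<\sqrt{\eta h(p)h_n}.
$$
Since $x^+\in\mathcal Z_n$, the set $B_n$ is by definition the intersection of $\mathcal Z_n$ with the neighborhoods $B_{p',n}$, where $p'\in Q_j''$ and $j\le n$. So it suffices to find a generation $j\le n$ and a point $p'\in Q_j''$ with $x^+\in B_{p',n}$. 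The natural candidate is $p'=p$, or a member of its equivalence class.

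\textbf{Step 1: locate $p$ in some $Q_j$ or $P_j$.} Let $j\ge1$ be the unique index with $h_j<\eta h(p)\le h_{j-1}$. Because $h_n<\eta h(p)$, we have $j\le n$. (If $\eta h(p)>h_0$, the height bound together with $p$ lying in a compact set near $\Delta_0$ should rule this out for small $\eta$, possibly after shrinking $C_6$.) The point $x^+\in\mathcal Z_n\subset\mathcal Z_{j-1}$ lies within $\sqrt{\eta h(p)h_n}\le\eta h(p)$ of $p$, so $\mathcal B_{\bar\psi}(p,\eta h(p))\cap\mathcal Z_{j-1}\neq\emptyset$. It follows that either $p\in P_j$ or $p\in Q_j$.

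The case $p\in P_j$ is excluded. In that case \Cref{lem.LP6.13} gives $J_p\supset\mathcal B_{\bar\psi}(p,\eta h(p)/C_2)$, and $\sqrt{\eta h(p)h_n}\le\sqrt{\eta h(p)\cdot\eta h(p)}$ is much smaller than this radius once $C_6$ is small enough; here one may need to run \Cref{lem.LP6.37} with $\varepsilon=C_6\eta$ to gain a factor. Then $x^+\in J_p\subset D_j$, contradicting $x^+\in\mathcal Z_n$. Hence $p\in Q_j$, and since $\bigcup_{k\le j}Q_k\subset\bigcup_{k\le j}Q_k''$, we get $p\in Q_k''$ for some $k\le j\le n$.

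\textbf{Step 2: show $x^+\in B_{p,n}$.} By \Cref{lem.LP6.32}, $B_{p,n}\supset\mathcal B_{\bar\psi}(p,\sqrt{\eta h(p)h_n}/C_5)$. The bound from \Cref{lem.LP6.37} only gives $D_{\bar\psi}(x^+,p)<\sqrt{\varepsilon h(p)h_n}$, so I would choose $\varepsilon=C_6\eta$ with $C_6$ small enough that $\sqrt{\varepsilon}\le\sqrt\eta/C_5$. Concretely, take $C_6:=\min\{C,\,C_5^{-2}\}$, with $C$ from \Cref{lem.LP6.37}. Then $x^+\in B_{p,n}$ follows directly. The strict inequality $h_n<C_6\eta h(p)\le\eta h(p)$ still gives $j\le n$, and in Step 1 the smallness of $C_6$ likewise places $x^+$ inside $J_p$ whenever $p\in P_j$.

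\textbf{Main obstacle.} The delicate point is that $B_{p,n}$ is attached to the class of $p$, which was constructed at generation $k$, possibly with $k<j$ if $p$ entered through some earlier class $C(p'')$. The neighborhood must still be defined at level $m=n\ge k$. Since \Cref{lem.LP6.32} holds for every $p\in Q_k''$ and every $m\ge k$, this is covered. I would also need to check that the constants $C_5$ and $C$ do not depend on $\eta$; this is the case as both lemmas are stated uniformly. A final bookkeeping task is to verify that the generation of $p$'s class is $\le n$, which is what places $B_{p,n}$ among the sets in the union defining $B_n$.
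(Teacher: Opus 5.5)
Your argument follows the paper's proof step for step. You find the generation $k\le n$ with $h_k<\eta h(p)\le h_{k-1}$, and you use $x^+\in\mathcal Z_{k-1}$ to get $p\in P_k\cup Q_k$. You then exclude $P_k$ via the inner ball of \Cref{lem.LP6.13}, and conclude from $p\in Q_k\subset\bigcup_{j\le k}Q_j''$ and the inner ball of \Cref{lem.LP6.32}. The case $\eta h(p)>h_0$ is excluded simply because $h$ is uniformly bounded. However, the way you invoke \Cref{lem.LP6.37} with the smaller parameter does not work as written.

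\textbf{The mismatched hypothesis.} Applied with parameter $\varepsilon$, that lemma requires $xa_{n\log\mathtt H}\in\mathscr C_{C\varepsilon}$. With your choice $\varepsilon=C_6\eta$, you would need $xa_{n\log\mathtt H}\in\mathscr C_{CC_6\eta}$. This does not follow from the given hypothesis $xa_{n\log\mathtt H}\in\mathscr C_{C_6\eta}$, for two reasons. First, the sets $\mathscr C_\varepsilon$ shrink as $\varepsilon$ decreases, since a longer time window inside the horoball is required. Second, you cannot assume $C\ge1$: one may only decrease $C$ in \Cref{lem.LP6.37}, and its proof takes $0<C<1$. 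The parameter must instead be $\varepsilon:=C_6\eta/C$, which matches the hypothesis exactly.

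\textbf{The missing constant.} With $\varepsilon=C_6\eta/C$, the conclusions read $h_n<(C_6/C)\eta h(p)$ and $D_{\bar\psi}(x^+,p)<\sqrt{(C_6/C)\eta h(p)h_n}<(C_6/C)\eta h(p)$. So you need $C_6/C\le\min\{1,C_2^{-1},C_5^{-2}\}$. The factor $C_2^{-1}$ is absent from your concrete choice $C_6=\min\{C,C_5^{-2}\}$. It is exactly what puts $x^+$ into $\mathcal B_{\bar\psi}(p,\eta h(p)/C_2)\subset J_p$ when $p\in P_k$. Your claim that ``the smallness of $C_6$ likewise places $x^+$ inside $J_p$'' is not justified by $\min\{C,C_5^{-2}\}$ unless $C_5^2\ge C_2$.

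With $0<C_6<C\min\{1,C_2^{-1},C_5^{-2}\}$ and $\varepsilon=C_6\eta/C$, which is the paper's choice, your argument goes through unchanged.
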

\begin{proof}
Let $C$ be the constant in \Cref{lem.LP6.37}, and choose
$0<C_6<C\min\{1,C_2^{-1},C_5^{-2}\}$.
Applying that lemma with $\varepsilon=C_6\eta/C$, we obtain
$p\in\Lambda_{\mathrm p}(\Gamma_\rho)\smallsetminus\{p_0\}$
such that
$$
h_n<\eta h(p),
\qquad
D_{\bar\psi}(x^+,p)<
\min\left\{
\frac{\eta h(p)}{C_2},
\frac{\sqrt{\eta h(p)h_n}}{C_5}
\right\}.
$$
Since $h(p)\ll1$, for sufficiently small $\eta$ there is
$1\le k\le n$ with $h_k<\eta h(p)\le h_{k-1}$.
The ball $\mathcal B_{\bar\psi}(p,\eta h(p))$ contains
$x^+\in\mathcal Z_n\subset\mathcal Z_{k-1}$,
so $p\in P_k\cup Q_k$.

If $p\in P_k$, then \Cref{lem.LP6.13} gives
$x^+\in J_p$, contradicting $x^+\in\mathcal Z_n$.
Thus $p\in Q_k\subset\bigcup_{j\le k}Q_j''$. Since $D_{\bar\psi}(x^+,p)<\sqrt{\eta h(p)h_n}/C_5$,
\Cref{lem.LP6.32} gives $x^+\in B_{p,n}$.
Together with $x^+\in\mathcal Z_n$, this yields $x^+\in B_n$.
\end{proof}
\subsection{Separation across generations}

\begin{lemma}
\label{lem.LP6.40}
For every $n\ge1$,
$$
P_{n+1}\cap\bigcup_{k\le n}Q_k''=\emptyset.
$$
\end{lemma}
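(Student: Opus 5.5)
We argue by contradiction. Suppose $p\in P_{n+1}$ and $p\in C(p')\subset Q_k''$ for some $k\le n$, where $C(p')$ is a class of generation $k$ with $p'\in Q_k$. Membership in $P_{n+1}$ gives two facts. First, $h_{n+1}<\eta h(p)\le h_n$. Second, the ball $\mathcal B_{\bar\psi}(p,h_n/(4\eta))$ lies in $\mathcal Z_n\subset\mathcal Z_{k-1}$; we take $i=0$, the general case following after conjugation by $g_i$. The plan is to show that such a large ball must reach $\mathcal Z_{k-1}^{\,c}$, which gives the contradiction.

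\textbf{Step 1: compare heights.} By \Cref{lem.LP6.27}, $h(p)\asymp h(p')$. Since $p'\in Q_k$, we have $\eta h(p')\in(h_k,h_{k-1}]$. Combining with $\eta h(p)\le h_n$ gives $h_{k-1}\ll h_n$, so $n\le k-1+O(1)$. Together with $k\le n$, this leaves only $k\in\{n,n-O(1)\}$. Up to constants depending on $C_4$ and $\mathtt H$, this forces $h_{k-1}\asymp h_n\asymp\eta h(p)$. Thus the ball in $P_{n+1}$ has radius $h_n/(4\eta)\gg h_{k-1}/\eta$.

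\textbf{Step 2: locate the class near a removed or excluded set.} We treat three cases.

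In Case 2, and in the singleton alternative, $p=p'\in Q_k\smallsetminus P_k$. The failure of membership in $P_k$ with $\eta h(p)\in(h_k,h_{k-1}]$ means $\mathcal B_{\bar\psi}(p,h_{k-1}/(4\eta))\not\subset\mathcal Z_{k-1}$. Since $h_{k-1}\ll h_n$, enlarging the constant shows that $\mathcal B_{\bar\psi}(p,h_n/(4\eta))$ meets $\mathcal Z_{k-1}^{\,c}\subset\mathcal Z_n^{\,c}$, a contradiction. Some care with the constant lost in $h_{k-1}\le \mathtt H^{O(1)}h_n$ is needed. If the margin is insufficient, we use instead that $\mathcal Z_{k-1}^{\,c}$ already contains $\Delta_0^{\,c}$ or some $J_q$ at distance $\le h_{k-1}/(4\eta)$, and then apply \Cref{lem.LP6.17}.

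In Case 1-1, \Cref{lem.uniqueassociatedboundary} and the definition give $D_{\bar\psi}(\gamma_1 p,\partial\Delta_0)<r(p',k)\le h_{k-1}$ for the relevant $\gamma_1\in\mathsf P_0$. Here $p$ itself is a translate whose ball is within $O(h_{k-1})$ of $\partial\Delta_0$. A ball of radius $h_n/(4\eta)\gg h_{k-1}$ around $p$ then crosses $\partial\Delta_0$ and contains points outside $\Delta_0$. We justify this by the quasi-triangle inequality \Cref{lem.eucdistancetriangle} and the fact that $\Lambda_\rho$ has points just outside $\Delta_0$ near each endpoint, since those endpoints are parabolic limit points accumulated from both sides. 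This contradicts $\mathcal B_{\bar\psi}(p,h_n/(4\eta))\subset\mathcal Z_n\subset\Delta_0$.

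In Case 1-2, \Cref{lem.LP6.23} together with \eqref{eqn.defequivclass} gives $D_{\bar\psi}(p,\partial J_q)\le h_k\ll h_n$ with $q\in\bigcup_{j<k}P_j$. Since $\partial J_q$ is adjacent to the interior of $J_q\subset\mathcal Z_n^{\,c}$, the ball again meets $J_q$, which is again a contradiction.

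\textbf{Main obstacle.} The delicate point is Step 1: we must keep track of constants so that the radius $h_n/(4\eta)$ genuinely dominates the distance $O(h_{k-1})$ or $O(h_k)$ to the boundary piece. This needs the smallness of $\eta$ relative to $C_4$, $N_0$ and $\mathtt H$. We also have to argue that every boundary point of $\partial\Delta_0$ or $\partial J_q$ is a two-sided accumulation point of $\Lambda_\rho$, with mass on the excluded side within that radius. For the latter we would use \Cref{lem.LP6.13}, which says $J_q$ contains a ball of radius $\eta h(q)/C_2\gg h_{k-1}$ around $q$. The second part of \Cref{lem.LP6.17} gives the matching separation scale.
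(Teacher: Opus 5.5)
\textbf{The Case 1-2 argument.} This part of your proposal is the paper's argument. You get $D_{\bar\psi}(p,\partial J_q)\le h_k$ from \Cref{lem.LP6.23}, then $h_k<\eta h(p')\le C_4\eta h(p)\le C_4h_n<h_n/(4\eta)$, and $\partial J_q\subset J_q\subset\mathcal Z_n^{\,c}$. You do not need any adjacency argument here: $\partial J_q$ already lies in the closed set $J_q$, which was removed before stage $n$.

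\textbf{The gap: singleton classes.} This covers Case~2 and the singleton alternative of Case~1-2. Since $k\le n$, you have $h_{k-1}\ge h_{n-1}=\mathtt H h_n>h_n$. So $\mathcal B_{\bar\psi}(p,h_n/(4\eta))$ is strictly \emph{smaller} than $\mathcal B_{\bar\psi}(p,h_{k-1}/(4\eta))$. The fact $p\notin P_k$ only says that the larger ball is not contained in $\mathcal Z_{k-1}$. That says nothing about the smaller ball. Nor is there a constant to enlarge, because the radius $h_n/(4\eta)$ is fixed by the definition of $P_{n+1}$. Your Step~1 conclusion $h_n/(4\eta)\gg h_{k-1}/\eta$ is also not a domination: in fact $h_n/(4\eta)<h_{k-1}/\eta$ always, and the bound holds only with the constant $(4\mathtt HC_4)^{-1}<1$. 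The fallback through \Cref{lem.LP6.17} cannot work either. That lemma gives \emph{lower} bounds on distances to $\mathcal Z_{n-1}^{\,c}$, so it cannot produce a point of $\mathcal Z_n^{\,c}$ inside the ball. In this case nothing geometric forbids the small ball from lying in $\mathcal Z_n$; the contradiction is purely at the level of heights.

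\textbf{The missing observation.} In a singleton class, $p=p'$. In a Case~1-1 class, $p\in\mathsf P_0p'$, and $h$ is $\mathsf P_0$-invariant by \Cref{hd}. In both cases $h(p)=h(p')$ exactly, not just up to $C_4$. Then $h_k<\eta h(p')=\eta h(p)\le h_n$ forces $k>n$, contradicting $k\le n$; this is the disjoint-height-window argument the paper uses. It disposes of the singleton case, which your proposal does not currently prove, and makes your Case~1-1 argument unnecessary. That argument, which crosses $\partial\Delta_0$ and uses two-sided accumulation of $\Lambda_\rho$ at the parabolic endpoints, can be made to work, but it is not needed. Only Case~1-2 requires geometry.
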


\begin{proof}
Suppose that $p\in P_{n+1}\cap C(p')$, where $C(p')$
is defined at generation $m\le n$.
If $C(p')$ is a singleton or is associated with
$\partial\Delta_0$, then $h(p)=h(p')$, using the
$\mathsf P_0$-invariance of $h$ in the latter case
(\Cref{hd}).
This contradicts the disjoint height ranges of
$P_{n+1}$ and $Q_m$.

Thus $C(p')$ is a non-singleton class in Case~1-2,
with associated boundary $\partial J_q$ for some
$q\in\bigcup_{j<m}P_j$.
By \Cref{lem.LP6.23},
$D_{\bar\psi}(p,\partial J_q)\le h_m$.
On the other hand,
$\mathcal B_{\bar\psi}(p,h_n/(4\eta))\subset\mathcal Z_n$
and $\partial J_q\subset J_q\subset\mathcal Z_n^c$.
Together with \Cref{lem.LP6.27}, this gives
$$
\frac{h_n}{4\eta}
\le D_{\bar\psi}(p,\partial J_q)
\le h_m
<\eta h(p')
\le C_4\eta h(p)
\le C_4h_n,
$$
a contradiction for sufficiently small $\eta$.
\end{proof}

\begin{lemma}
\label{lem.LP6.41}
There exists $0<C_7<1$ such that, for every $n\ge1$,
$p\in P_{n+1}\cup Q_{n+1}''$, and
$q\in\bigcup_{j\le n}Q_j''$,
$$
D_{\bar\psi}(p,B_{q,n})\ge C_7\eta^{-1} {h_n}.
$$
In particular, $D_{\bar\psi}(p,B_n)\ge C_7\eta^{-1} h_n$.
\end{lemma}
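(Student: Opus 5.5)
Fix $n\ge1$, $p\in P_{n+1}\cup Q_{n+1}''$, and $q\in Q_j''$ with $j\le n$. The idea is to compare heights: $p$ has height at most about $h_n/\eta$, while $B_{q,n}$ is a ball of radius about $\sqrt{\eta h(q)h_n}$ around a point $q$ of height at least about $h_{j}/\eta$. By the outer inclusion of \Cref{lem.LP6.32}, $B_{q,n}\subset\mathcal B_{\bar\psi}(q,C_5\sqrt{\eta h(q)h_n})$. By \Cref{lem.eucdistancetriangle}, for $x\in B_{q,n}$ we have
$$D_{\bar\psi}(p,q)\le N_0\bigl(D_{\bar\psi}(p,x)+C_5\sqrt{\eta h(q)h_n}\bigr).$$
So it suffices to show that $D_{\bar\psi}(p,q)$ dominates $\sqrt{\eta h(q)h_n}$ by a large factor, and also dominates $\eta^{-1}h_n$.

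For the lower bound on $D_{\bar\psi}(p,q)$ I will use the separation estimate \Cref{lem.LP6.16}, which gives $D_{\bar\psi}(p,q)\ge C_s\sqrt{h(p)h(q)}$, provided $p\ne q$. Distinctness holds in each case. If $p\in P_{n+1}$, then $p\notin Q_j''$ by \Cref{lem.LP6.40}. If $p\in Q_{n+1}''$, then $p\notin Q_j''$ because the sets $Q_k''$ are pairwise disjoint.

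Next I bound the height of $p$ from below. If $p\in P_{n+1}$, then $\eta h(p)>h_{n+1}=h_n/\mathtt H$. If $p\in C(p')$ with $p'\in Q_{n+1}$, then \Cref{lem.LP6.27} gives $\eta h(p)\gg h_{n+1}$. Either way $h(p)\gg h_n/(\mathtt H\eta)$, and hence
$$C_s\sqrt{h(p)h(q)}\gg \eta^{-1}\sqrt{\eta h(q)h_n}.$$
The same lemma applied to the class of $q$ gives $\eta h(q)\gg h_j\ge h_n$, so $\sqrt{\eta h(q)h_n}\gg h_n$. Combining the two, $D_{\bar\psi}(p,q)\gg \eta^{-1}\sqrt{\eta h(q)h_n}\ge c\,\eta^{-1}h_n$. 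Inserting this into the triangle inequality above gives
$$D_{\bar\psi}(p,x)\ge N_0^{-1}c\,\eta^{-1}\sqrt{\eta h(q)h_n}-C_5\sqrt{\eta h(q)h_n}.$$
For $\eta$ small the first term wins, so $D_{\bar\psi}(p,x)\gg\eta^{-1}\sqrt{\eta h(q)h_n}\gg\eta^{-1}h_n$, uniformly in $x\in B_{q,n}$. Taking the infimum over $x$ gives the bound with some $C_7$. The final assertion follows because $B_n\subset\bigcup_{j\le n}\bigcup_{q\in Q_j''}B_{q,n}$.

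The main obstacle I expect is checking that the constants are genuinely uniform and that the comparison $\eta h(\cdot)\asymp h_{k}$ for class members really holds with constants independent of $\eta$. The input for this is \Cref{lem.LP6.27}, together with the definition of $Q_k$, which gives $h_k<\eta h(p')\le h_{k-1}$ for the representative $p'$ of the class. The delicate point is that the lower bound $h(p)\gg h_n/\eta$ loses a factor $\mathtt H C_4$, but no power of $\eta$. That is what leaves a spare factor $\eta^{-1/2}$ after multiplying the two square roots, and this spare factor absorbs the constant $C_5$ once $\eta$ is small. I also need $h(p),h(q)\ll1$ so that the relevant points lie in the compact region near $\Delta_0$; this holds by the standing remarks before \Cref{lem.LP6.18}.
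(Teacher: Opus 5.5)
Your proof is correct and follows the paper's argument essentially step for step: distinctness from \Cref{lem.LP6.40} and the disjointness of the $Q_k''$; the height bounds $h(p)\ge h_n/(\mathtt H C_4\eta)$ and $h(q)\ge h_n/(C_4\eta)$ from \Cref{lem.LP6.27}; then separation (\Cref{lem.LP6.16}), the outer inclusion of \Cref{lem.LP6.32}, and the quasi-triangle inequality, which together give the same constant $C_7=C_s/(2N_0C_4\sqrt{\mathtt H})$. Two small remarks: the spare factor that absorbs $C_5$ is $\eta^{-1}$ rather than $\eta^{-1/2}$, and your closing compactness remark is unnecessary, since the three lemmas you invoke hold globally on $\Lambda_\rho\smallsetminus\{p_0\}$.
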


\begin{proof}
By \Cref{lem.LP6.35} and \Cref{lem.LP6.40}, $p\ne q$.
The height ranges and \Cref{lem.LP6.27} give
$$
h(p)\ge\frac{h_n}{\mathtt H C_4\eta},
\qquad
h(q)\ge\frac{h_n}{C_4\eta}.
$$
By \Cref{lem.LP6.16}, \Cref{lem.LP6.32}, and
\Cref{lem.eucdistancetriangle},
$$
\begin{aligned}
D_{\bar\psi}(p,B_{q,n})
&\ge \frac{C_s}{N_0}\sqrt{h(p)h(q)}
     -C_5\sqrt{\eta h(q)h_n} \ge \frac{C_s}{2N_0}\sqrt{h(p)h(q)}
 \ge \frac{C_s}{2N_0C_4\sqrt{\mathtt H}}
       \frac{h_n}{\eta},
\end{aligned}
$$
where the second inequality holds for sufficiently small
$\eta$, since
$\sqrt{\eta h_n/h(p)}\le\eta\sqrt{\mathtt H C_4}$.
Thus we may take $C_7:=C_s/(2N_0C_4\sqrt{\mathtt H})$.
The assertion for $B_n$ follows from its definition.
\end{proof}

Recall that $D_{n+1}=\bigcup_{p\in P_{n+1}}J_p$ and
$\mathcal Z_{n+1}=\mathcal Z_n\smallsetminus D_{n+1}$.

\begin{lemma}
\label{lem.LP6.43}
For sufficiently small $\eta$ and every $n\ge1$, the following hold:
\begin{enumerate}
\item
$D_{n+1}\cap B_{q,n}=\emptyset$ for every
$q\in\bigcup_{j\le n}Q_j''$.
In particular, $D_{n+1}\cap B_n=\emptyset$.

\item
$B_{p,n+1}\cap B_{q,n}=\emptyset$ whenever
$p\in Q_{n+1}''$ and $q\in\bigcup_{j\le n}Q_j''$.
In particular, $B_{p,n+1}\cap B_n=\emptyset$
for every $p\in Q_{n+1}''$.

\item
$A_{n+1}
=\bigl(A_n\smallsetminus(D_{n+1}\cup B_{n+1})\bigr)
\sqcup(A_{n+1}\cap B_n).
$

\item
$
A_n\cap B_{n+1}
=\mathcal Z_{n+1}\cap\bigcup_{p\in Q_{n+1}''}B_{p,n+1}.
$

\item
$A_{n+1}\cap B_n=B_n\smallsetminus B_{n+1}$.
\end{enumerate}
Moreover, $B_{p,m}\cap\mathcal Z_m=B_{p,m}\cap\mathcal Z_n$
whenever $p\in Q_n''$ and $m\ge n$.
\end{lemma}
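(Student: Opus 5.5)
The plan is to prove (1) and (2) directly from \Cref{lem.LP6.41}, and then to obtain (3)--(5) and the final assertion by set-theoretic bookkeeping using the definitions $B_m=\mathcal Z_m\cap\bigcup_{j\le m}\bigcup_{p\in Q_j''}B_{p,m}$ and $A_m=\mathcal Z_m\smallsetminus B_m$.

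For (1), fix $p\in P_{n+1}$ and $q\in\bigcup_{j\le n}Q_j''$. By \Cref{lem.LP6.13}, $J_p\subset\mathcal B_{\bar\psi}(p,C_2\eta h(p))$, and $\eta h(p)\le h_n$, so every point of $J_p$ lies within $C_2h_n$ of $p$. If some $y\in J_p\cap B_{q,n}$ existed, \Cref{lem.eucdistancetriangle} would give
$$C_7\eta^{-1}h_n\le D_{\bar\psi}(p,B_{q,n})\le D_{\bar\psi}(p,y)\le C_2h_n,$$
which fails once $\eta<C_7/C_2$. Part (2) is proved the same way. For $p\in Q_{n+1}''$, \Cref{lem.LP6.32} gives $B_{p,n+1}\subset\mathcal B_{\bar\psi}(p,C_5\sqrt{\eta h(p)h_{n+1}})$. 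By \Cref{lem.LP6.27}, $\eta h(p)\ll h_n$, so the radius is $\ll h_n$, with a uniform constant independent of $\eta$. Here $p$ belongs to a class of generation $n+1$, and the class's base point has $\eta h\le h_n$. Comparing with the lower bound $C_7\eta^{-1}h_n$ from \Cref{lem.LP6.41} again gives a contradiction for small $\eta$.

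For the final assertion, let $p\in Q_n''$ and $m\ge n$. We need $B_{p,m}\cap D_k=\emptyset$ for $n<k\le m$. The sets $B_{p,m}$ shrink in $m$, so $B_{p,m}\subset B_{p,k-1}$. Applying (1) at level $k-1$ with $q=p\in\bigcup_{j\le k-1}Q_j''$ gives $D_k\cap B_{p,k-1}=\emptyset$, which proves the claim. For (5), the part $B_n\smallsetminus B_{n+1}\subset A_{n+1}\cap B_n$ needs $B_n\subset\mathcal Z_{n+1}$. This holds because $B_n\subset\mathcal Z_n$ and $B_n\cap D_{n+1}=\emptyset$ by (1). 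The reverse inclusion is immediate from the definitions.

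For (4), a point of $A_n\cap B_{n+1}$ lies in $\mathcal Z_{n+1}\cap B_{p,n+1}$ for some $p\in Q_j''$ with $j\le n+1$. If $j\le n$, then $B_{p,n+1}\subset B_{p,n}$ and $\mathcal Z_{n+1}\subset\mathcal Z_n$ would put the point in $B_n$, contradicting membership in $A_n$; hence $j=n+1$. Conversely, by (2) the set $B_{p,n+1}$ with $p\in Q_{n+1}''$ is disjoint from $B_n$, and it lies in $\mathcal Z_n$, so it lies in $A_n$. For (3), split $A_{n+1}=\mathcal Z_{n+1}\smallsetminus B_{n+1}$ according to membership in $B_n$. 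The part outside $B_n$ lies in $\mathcal Z_n\smallsetminus B_n=A_n$, and it avoids both $D_{n+1}$ and $B_{n+1}$. Conversely, $A_n\smallsetminus(D_{n+1}\cup B_{n+1})\subset\mathcal Z_{n+1}\smallsetminus B_{n+1}$, so the two pieces give the stated disjoint union.

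The main obstacle is the uniform size comparison in (2) for classes from Case 1-2. The radius bound there uses $h(p)\asymp h(p')$ from \Cref{lem.LP6.27}, and that constant must not depend on $\eta$. This is what lets a single smallness condition on $\eta$ make every one of the size bounds above consistent.
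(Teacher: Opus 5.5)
Your proposal is correct and follows the paper's proof essentially step for step: (1) and (2) come from the radius bounds of \Cref{lem.LP6.13}, \Cref{lem.LP6.27}, and \Cref{lem.LP6.32} played against the $\eta$-independent separation $C_7\eta^{-1}h_n$ of \Cref{lem.LP6.41}, and (3)--(5) together with the final assertion follow by bookkeeping from the nesting of the $B_{p,m}$ and from (1). One small wording fix in the converse inclusion of (4): it is $\mathcal Z_{n+1}\cap B_{p,n+1}$ that lies in $\mathcal Z_n$, not $B_{p,n+1}$ itself, and that is all the argument needs.
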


\begin{proof}
For $p\in P_{n+1}$, \Cref{lem.LP6.13} gives
$J_p\subset\mathcal B_{\bar\psi}(p,C_2h_n)$.
For $p\in Q_{n+1}''$, \Cref{lem.LP6.27} and
\Cref{lem.LP6.32} give
$B_{p,n+1}\subset
\mathcal B_{\bar\psi}(p,C_5\sqrt{C_4}\,h_n)$.
For sufficiently small $\eta$, both radii are smaller than
$C_7h_n/\eta$, so (1) and (2) follow from
\Cref{lem.LP6.41}.

By the definitions,
$A_{n+1}\smallsetminus B_n
=A_n\smallsetminus(D_{n+1}\cup B_{n+1})$.
Partitioning $A_{n+1}$ according to membership in $B_n$
proves (3).

The sets $B_{p,m}$ decrease with $m$.
Thus, for $q\in\bigcup_{j\le n}Q_j''$,
$B_{q,n+1}\cap\mathcal Z_{n+1}\subset B_n$.
The neighborhoods $B_{p,n+1}$, $p\in Q_{n+1}''$, are disjoint from $B_n$
by (2).
Splitting the defining union for $B_{n+1}$ into these two
groups proves (4).

By (1), $B_n\subset\mathcal Z_{n+1}$.
Hence
$A_{n+1}\cap B_n
=(\mathcal Z_{n+1}\smallsetminus B_{n+1})\cap B_n
=B_n\smallsetminus B_{n+1}$,
proving (5).

Finally, let $p\in Q_n''$ and $m\ge n$.
For every $n\le k<m$, nesting and (1) give
$B_{p,m}\subset B_{p,k}$ and
$B_{p,k}\cap D_{k+1}=\emptyset$.
Thus $B_{p,m}$ is disjoint from all sets removed between
$\mathcal Z_n$ and $\mathcal Z_m$, proving the last assertion.
\end{proof}

\subsection{Transfer of mass between $B_n$ and $A_{n+1}$}

The sets $A_n$ and $B_n$ partition $\mathcal Z_n$.
The following lemma shows that a uniform fraction of
the mass of $B_n$ passes to $A_{n+1}$.

\begin{lemma}
\label{lem.LP6.46}
There exists $C_8>0$, independent of $\eta$, such that,
for every sufficiently small $\eta$ and every $n\ge1$,
$$
\nu(B_n\cap A_{n+1})\ge C_8\nu(B_n).
$$
\end{lemma}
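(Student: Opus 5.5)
The plan is to work class by class and then sum over classes. By \Cref{lem.LP6.43}(5), $B_n\cap A_{n+1}=B_n\smallsetminus B_{n+1}$. By \Cref{lem.LP6.43}(2), the new neighborhoods $B_{p,n+1}$ with $p\in Q_{n+1}''$ do not meet $B_n$. Hence
$$
B_n\smallsetminus B_{n+1}
=\mathcal Z_n\cap\bigcup_{j\le n}\bigcup_{q\in Q_j''}\bigl(B_{q,n}\smallsetminus B_{q,n+1}\bigr),
$$
where we also use $B_n\subset\mathcal Z_{n+1}$ from \Cref{lem.LP6.43}(1). Next I would group the centers $q$ into the equivalence classes $C(q)$, which are pairwise disjoint by \Cref{lem.LP6.35}. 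For each class of generation $j\le n$ set
$$
\mathcal M_m(C):=\nu\Bigl(\mathcal Z_j\cap\bigcup_{q'\in C}B_{q',m}\Bigr)\qquad(m\ge j).
$$
By the last assertion of \Cref{lem.LP6.43}, $\mathcal Z_m$ may be replaced by $\mathcal Z_j$ here. The goal is to show $\mathcal M_{n+1}(C)\le(1-C_8)\mathcal M_n(C)$ uniformly, and then to sum over classes. For the summation I need the neighborhoods $B_{q,n}$ coming from distinct classes to overlap boundedly, or to be disjoint. I would prove this as in \Cref{lem.LP6.41}: combine the separation \Cref{lem.LP6.16} with the size bound \Cref{lem.LP6.32}, using $\sqrt{\eta h(q)h_n}\ll\eta\sqrt{h(q)h(q')}$ for small $\eta$ via \Cref{lem.LP6.27}.

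The key step is a per-class estimate, which I would prove by "unfolding" the class into a single full ball. In Case 2 the ball $B_{q,n}\subset\mathcal B_{\bar\psi}(q,r(q,j))$ misses $\partial\mathcal Z_{j-1}$. Since $q\in Q_j\smallsetminus P_j$ and $r(q,j)\le\eta h(q)$, I would check that the ball lies in $\mathcal Z_{j-1}$, using the interval structure of the pieces. I would also check that it misses $D_j$, by separating it from each $J_p$ with $p\in P_j$ via \Cref{lem.LP6.16}. Then $\mathcal Z_j\cap B_{q,m}=B_{q,m}$ for all $m\ge j$. In Case 1-1 the translates $\gamma_1B_{q,m}$, $\gamma_1\in\mathsf P_0$, that meet $\partial\Delta_0$ are exactly those indexed by $C(q)$. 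Since $\Delta_0$ is a fundamental domain, $\bigcup_{q'\in C(q)}B_{q',m}\cap\Delta_0$ reassembles, up to $\nu$-null sets, a single ball $\mathcal B_{\bar\psi}(q,r_{q,m})$ cut into finitely many pieces. By \Cref{lem.LP5.18} (uniformly bounded translates, bounded diameters), $\mathcal M_m(C)\asymp\nu(\mathcal B_{\bar\psi}(q,r_{q,m}))$. Case 1-2 is the same after applying $g_i\gamma_q^{-1}$. Near $\partial R_i$, \Cref{lem.LP5.17} compares $\nu$ with its pullback up to the uniform factor $e^{-\psi(\mu(\gamma_q))}$, and \Cref{lem.LP6.28} places everything in the annulus where that lemma applies. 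The $\mathsf P_i'$-translates then reassemble a ball $\mathcal B_{\bar\psi}(g_i\gamma_q^{-1}q,r_{q,m})$ in the conjugate system. In all cases, then,
$$
\mathcal M_m(C)\asymp\nu_\bullet\bigl(\mathcal B_{\bar\psi}(\tilde q,r_m)\bigr),\qquad r_{m+1}=r_m/\sqrt{\mathtt H},
$$
with constants independent of $\eta$, $n$ and $C$. Here $\nu_\bullet$ denotes $\nu$ or the relevant pulled-back conjugate measure, and $\tilde q$ is a center moved into the appropriate compact fundamental domain.

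Finally I compare consecutive levels. Writing $r:=\mathsf c\, r_n$, \Cref{lem.LP5.21} gives $\nu_\bullet(\mathcal B(\tilde q,r_{n+1}))\le\nu_\bullet(\mathcal B(\tilde q,r/\sqrt{\mathtt H}))\le\frac12\nu_\bullet(\mathcal B(\tilde q,r_n))$. Its hypothesis $r\le h_i(\tilde q)$ holds since $r_n\ll\sqrt{\eta}\,h$. The obstacle is that the comparability constants in $\asymp$ are not $1$, so "half" is not directly preserved. The plan is therefore to iterate \Cref{lem.LP5.21} a bounded number $k$ of times, or equivalently to enlarge $\mathtt H$ beforehand using the power decay in \Cref{eqn.LP5.16}, so that the factor $2^{-k}$ beats the product of the comparability constants. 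If $\mathtt H$ is already fixed, I would instead use the doubling estimate of \Cref{prop.LP5.14}(1) together with the reverse-doubling exponent in \eqref{eqn:ball-ratios-general}. This yields $\mathcal M_{n+1}(C)\le(1-C_8)\mathcal M_n(C)$ with $C_8$ independent of $\eta$. Summing over the boundedly overlapping classes then gives $\nu(B_n\smallsetminus B_{n+1})\ge C_8\nu(B_n)$ after adjusting $C_8$. The hardest part I expect is the Case 1-2 reassembly: checking that the class exactly captures all $\mathsf P_i'$-translates meeting $\partial J_q$, and that $\mathcal Z_j$ coincides with $J_q^{\,c}$ on these neighborhoods.
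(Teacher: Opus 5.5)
\textbf{Gap in the final step.} Your reduction matches the paper's up to the last step. You use \Cref{lem.LP6.43}(5) and disjointness of the $B_{q,n}$ to reduce to a per-class estimate, unfold each class by peripheral maps, and then invoke \Cref{lem.LP5.21}. The last step, however, has a genuine gap. You establish only the level-wise comparisons $\mathcal M_m(C)\asymp\nu_\bullet(\mathcal B_{\bar\psi}(\tilde q,r_m))$, and you correctly note that the two comparability constants swamp the factor $\tfrac12$. None of your remedies gives the one-step bound $\mathcal M_{n+1}\le(1-C_8)\mathcal M_n$ in the fixed construction:
\begin{enumerate}
\item Iterating \Cref{lem.LP5.21} $k$ times only yields $\mathcal M_{n+k}\le\tfrac12\mathcal M_n$.
\item With $\mathtt H$ fixed, doubling plus the exponent $\beta_-$ of \eqref{eqn:ball-ratios-general} gives a ratio $L_1\mathtt H^{-\beta_-/2}$, with no reason for it to beat those constants.
\item Enlarging $\mathtt H$ is not equivalent to iterating. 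It changes the whole partition $P_n,Q_n,Q_n'',\dots$, and you would have to verify that every comparability constant is fixed before $\mathtt H$.
\end{enumerate}

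\textbf{The missing observation.} The unfolding comparison holds for every Borel subset of the ball, so apply it to the annulus itself. In Case~1-1, the sets $E\cap u_{q'}^{-1}\Delta_0$, $q'\in C(q)$, partition any Borel $E\subset B_{q,m}$. Applying \Cref{lem.LP5.18} with $E=B_{q,n}$ and with $E=B_{q,n}\smallsetminus B_{q,n+1}$ gives
\[
\mathcal M_n\asymp\nu(B_{q,n}),\qquad
\mathcal M_n-\mathcal M_{n+1}\asymp\nu(B_{q,n}\smallsetminus B_{q,n+1})\ge\tfrac12\,\nu(B_{q,n}).
\]
The last inequality is \Cref{lem.LP5.21} applied inside a single ball, before any comparability constant enters. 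In Case~1-2 the same is done after transporting by $g_i\gamma_q^{-1}$. There \Cref{lem.LP5.17} contributes the same factor $e^{-\psi(\mu(\gamma_q))}$ to both comparisons. This gives $\mathcal M_n-\mathcal M_{n+1}\gg\mathcal M_n$ with constants independent of $\eta$.

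\textbf{Case~2 as stated is incorrect.} First, a Case~2 center $q$ need not lie in $\mathcal Z_{j-1}$: membership in $Q_j$ only requires $\mathcal B_{\bar\psi}(q,\eta h(q))$ to meet $\mathcal Z_{j-1}$. Second, $D_{\bar\psi}$-balls are not Euclidean intervals. So the fact that $B_{q,j}$ misses $\partial\mathcal Z_{j-1}$ does not place the whole ball on one side of it. \Cref{lem.olive} only shows that $\mathcal B_{\bar\psi}(q,r/\mathsf c)$ lies entirely inside or entirely outside $\mathcal Z_{j-1}$, with $r:=r_{q,n}$. Hence your identity $\mathcal Z_j\cap B_{q,m}=B_{q,m}$ can fail.

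The paper handles this by a dichotomy:
\begin{enumerate}
\item If $q\notin\mathcal Z_{j-1}$, then $\mathcal M_{n+1}=0$, because $\sqrt{\mathtt H}>\mathsf c$.
\item Otherwise, the annulus $\mathcal B_{\bar\psi}(q,r/\mathsf c)\smallsetminus\mathcal B_{\bar\psi}(q,r/\sqrt{\mathtt H})$ lies in $\mathcal Z_j$ by separation from $D_j$. Then \eqref{eqn.annularcore} bounds its measure below by $\gg\nu(\mathcal B_{\bar\psi}(q,r))\ge\mathcal M_n$.
\end{enumerate}
This is precisely why \Cref{lem.LP5.21} compares $r/\sqrt{\mathtt H}$ with $r/\mathsf c$ rather than with $r$.

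\textbf{Two further points.} The singleton alternative of Case~1-2 is of this Case~2 type in conjugated coordinates, so your reassembly of $\mathsf P_i'$-translates does not cover it. Also, your displayed decomposition of $B_n\smallsetminus B_{n+1}$ needs genuine disjointness of all the $B_{q,n}$, including within a class; bounded overlap is not enough. Your separation argument does yield this disjointness, as in \eqref{eqn.q''disjoint}.
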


\begin{proof}
We first show that for all
$p,p'\in\bigcup_{j\le n}Q_j'',\ p\ne p'$,
\begin{equation}
\label{eqn.q''disjoint}
B_{p,n}\cap B_{p',n}=\emptyset.
\end{equation}
For points from different generations, this follows from
\Cref{lem.LP6.43}(2), since $B_{p,s+1}\subset B_{p,s}$.
If $p,p'\in Q_j''$, then \Cref{lem.LP6.27} gives
$h(p),h(p')\asymp h_j/\eta$, so \Cref{lem.LP6.16} yields
$D_{\bar\psi}(p,p')\gg h_j/\eta$.
An intersection of $B_{p,j}$ and $B_{p',j}$ would instead
give $D_{\bar\psi}(p,p')\ll h_j$ by
\Cref{lem.LP6.32} and \Cref{lem.eucdistancetriangle}.
This is impossible for small $\eta$, proving
\eqref{eqn.q''disjoint}.

We also record a consequence of \eqref{eqn.annularH}.
The ball estimates in \Cref{eqn.LP5.16}, together with
\Cref{lem.LP5.18}, give
\begin{equation}
\label{eqn.annularcore}
\begin{aligned}
\nu_i\left(
\mathcal B_{\bar\psi}(z,r/\mathsf c)
\smallsetminus
\mathcal B_{\bar\psi}(z,r/\sqrt{\mathtt H})
\right)
&\ge \frac12\,
\nu_i\bigl(\mathcal B_{\bar\psi}(z,r/\mathsf c)\bigr)\gg \nu_i\bigl(\mathcal B_{\bar\psi}(z,r)\bigr)
\end{aligned}
\end{equation}
uniformly for
$z\in\Lambda_{\mathrm p}(\Gamma_i)\smallsetminus\{p_0\}$
and $0<r\le h_i(z)$.

Fix a class $C(p)$ of generation $m\le n$, and set
$M_s:=\sum_{p'\in C(p)}
\nu(B_{p',s}\cap\mathcal Z_m)$ for $s\ge m$.
By the last assertion of \Cref{lem.LP6.43},
$\mathcal Z_m$ may be replaced by $\mathcal Z_s$ in
this definition.
Together with \eqref{eqn.q''disjoint} and
\Cref{lem.LP6.43}(2), (5), this reduces the proof to
$M_n-M_{n+1}\gg M_n$, uniformly over the classes.

We first record the separation needed below.
Every center in $C(p)$ lies outside $P_m$.
For singleton classes this follows from $p\in Q_m$.
Otherwise, the class definition and \Cref{lem.LP6.23}
give $D_{\bar\psi}(p',\partial\mathcal Z_{m-1})\ll h_m$
for every $p'\in C(p)$.
If $p'\in P_m$, then, by definition,
$
\mathcal B_{\bar\psi}\left(p',\frac{h_{m-1}}{4\eta}\right)
\subset\mathcal Z_{m-1}.
$
Since
$\partial\mathcal Z_{m-1}\subset
\overline{\mathcal Z_{m-1}^{\,c}}$,
this implies
$
D_{\bar\psi}(p',\partial\mathcal Z_{m-1})
\ge \frac{h_{m-1}}{4\eta}
=\frac{\mathtt H}{4\eta}h_m.
$
For sufficiently small $\eta$, this contradicts the preceding
bound $D_{\bar\psi}(p',\partial\mathcal Z_{m-1})\ll h_m$.

The heights of centers in $C(p)$ and $P_m$ are comparable
to $h_m/\eta$, so \Cref{lem.LP6.16} separates distinct
centers by a distance $\gg h_m/\eta$.
Since the radii in \Cref{lem.LP6.13} and
\Cref{lem.LP6.32} are $O(h_m)$, every $B_{p',m}$
is disjoint from $D_m$.
In either boundary case, the class centers are within
$O(h_m)$ of the associated boundary.
Thus \Cref{lem.LP6.17} also keeps each $B_{p',m}$,
$p'\in C(p)$, disjoint from every $J_q$ removed before
generation $m$, except the associated $J_q$ in Case~1-2;
in Case~1-2, it also keeps each $B_{p',m}$ inside $\Delta_0$.

Suppose first that $C(p)$ is constructed in Case~1-1.
The preceding separation gives
$B_{p',s}\cap\mathcal Z_m=B_{p',s}\cap\Delta_0$
for $p'\in C(p)$ and $s\ge m$.
For each $p'\in C(p)$, let $u_{p'}\in\mathsf P_0$
satisfy $p'=u_{p'}p$.
For small $\eta$, $B_{p,m}$ lies in the two adjacent
fundamental intervals sharing the endpoint it meets.
The maps $u_{p'}$ carry these intervals onto $\Delta_0$.
Thus, for every Borel set $E\subset B_{p,m}$,
the sets $E\cap u_{p'}^{-1}\Delta_0$, $p'\in C(p)$,
partition $E$ up to endpoints.
Applying \Cref{lem.LP5.18} with $E=B_{p,n}$ and
$E=B_{p,n}\smallsetminus B_{p,n+1}$ gives
$$
\begin{aligned}
M_n \asymp\nu(B_{p,n}), \qquad M_n-M_{n+1}
\asymp\nu(B_{p,n}\smallsetminus B_{p,n+1}).
\end{aligned}
$$
Since $r_{p,n+1}=r_{p,n}/\sqrt{\mathtt H}$ and
$r_{p,n}\le r_{p,m}<\eta h(p)\le h(p)$,
\Cref{lem.LP5.21} gives $M_n-M_{n+1}\gg M_n$.

Now suppose that $C(p)$ is constructed in Case~2.
Write $r:=r_{p,n}$.
The neighborhood $B_{p,m}$ misses $\partial\mathcal Z_{m-1}$.
Since $r\le r_{p,m}$, \Cref{lem.olive} implies that
$\mathcal B_{\bar\psi}(p,r/\mathsf c)$ is contained in
$\mathcal Z_{m-1}$ if $p\in\mathcal Z_{m-1}$,
and is disjoint from $\mathcal Z_{m-1}$ otherwise.
In the latter case, $\sqrt{\mathtt H}>\mathsf c$
implies $M_{n+1}=0$.
In the former case, separation from $D_m$ gives
$\mathcal B_{\bar\psi}(p,r/\mathsf c)\subset\mathcal Z_m$.
Consequently, \eqref{eqn.annularcore} yields
$$
\begin{aligned}
M_n-M_{n+1}
\ge
\nu\left(
\mathcal B_{\bar\psi}(p,r/\mathsf c)
\smallsetminus
\mathcal B_{\bar\psi}(p,r/\sqrt{\mathtt H})
\right) \gg \nu\bigl(\mathcal B_{\bar\psi}(p,r)\bigr)
\ge M_n.
\end{aligned}
$$

Finally, consider Case~1-2, with associated boundary
$\partial J_q$, where $q=\gamma_qp_i$.
Set $T:=g_i\gamma_q^{-1}$ and
$\mathcal U_s:=\mathcal B_{\bar\psi}(Tp,r_{p,s})$.
Since $\nu_i=(g_i)_*\nu$ and $g_i\in K$,
\Cref{lem.LP6.28} and \Cref{lem.LP5.17},
applied in the conjugated coordinates, give
$$
\nu(E)\asymp
e^{-\psi(\mu(\gamma_q))}\nu_i(TE)
\qquad
\bigl(E\subset B_{p',m}\text{ Borel},\ p'\in C(p)\bigr).
$$
By separation, for every $p'\in C(p)$ and $s\ge m$,
$$
T(B_{p',s}\cap\mathcal Z_m)
=
\mathcal B_{\bar\psi}(Tp',r_{p',s})
\cap R_{i},
$$
up to $\nu_i$-null endpoints.
Moreover,
$r_{p,n+1}=r_{p,n}/\sqrt{\mathtt H}$ and
$r_{p,n}\le r_{p,m}\ll\eta h_i(Tp)$ by
\eqref{eqn.LP6.22}, so $r_{p,n}\le h_i(Tp)$
for small $\eta$.

In the first alternative of Case~1-2, for each $p'\in C(p)$,
choose $u_{p'}\in g_i\mathsf P_i'g_i^{-1}$ with
$Tp'=u_{p'}Tp$.
Then $TB_{p',s}=u_{p'}\mathcal U_s$.
For small $\eta$, $\mathcal U_m$ lies in two adjacent
translates of $R_{i}$ sharing the endpoint it meets.
The class definition selects precisely the peripheral
maps carrying these intervals onto $R_{i}$.
Consequently, the sets
$$
u_{p'}^{-1}T(B_{p',s}\cap\mathcal Z_m)
=\mathcal U_s\cap u_{p'}^{-1}R_{i},
\qquad p'\in C(p),
$$
partition $\mathcal U_s$ up to null endpoints.
Apply this with $s=n$ and $s=n+1$.
By \Cref{lem.LP6.28}, all these portions lie at
$D_{\bar\psi}$-distance comparable to $\eta^{-1}$
from $g_ix_q$, while their diameters are uniformly
bounded by \eqref{eqn.LP6.22}.
Thus \Cref{lem.LP5.18} gives uniform measure comparisons
under these peripheral maps.
Applying these comparisons separately to the ball
and the annulus gives
$$
\begin{aligned}
M_n
\asymp e^{-\psi(\mu(\gamma_q))}\nu_i(\mathcal U_n),\qquad 
M_n-M_{n+1}
\asymp e^{-\psi(\mu(\gamma_q))}
\nu_i(\mathcal U_n\smallsetminus\mathcal U_{n+1}).
\end{aligned}
$$
It follows from \Cref{lem.LP5.21} that
$M_n-M_{n+1}\gg M_n$.

In the singleton alternative, $\mathcal U_m$ misses
$\partial R_{i}$.
Write $r:=r_{p,n}$.
By \Cref{lem.olive},
$\mathcal B_{\bar\psi}(Tp,r/\mathsf c)$ is contained
in $R_{i}$ if $Tp\in R_{i}$,
and is disjoint from $R_{i}$ otherwise.
In the latter case, $\sqrt{\mathtt H}>\mathsf c$
gives $M_{n+1}=0$.
In the former case, separation gives
$$
T^{-1}\left(
\mathcal B_{\bar\psi}(Tp,r/\mathsf c)
\smallsetminus\mathcal U_{n+1}
\right)
\subset\mathcal Z_m.
$$
Hence \eqref{eqn.annularcore} and the change-of-measure
comparison give
$M_n-M_{n+1}\gg
e^{-\psi(\mu(\gamma_q))}\nu_i(\mathcal U_n)\gg M_n$.

The implied lower bound is uniform over all classes.
Summing over them proves the lemma.
\end{proof}

\subsection{Non-concentration of PS measures near partition boundaries}

For any $r > 0$, set
\be \label{eqn.LP5.28}
N_r(\Delta_0) := \{ x \in \Delta_0 : D_{\bar\psi}(x, \partial \Delta_0) \le r \}
\ee
where $\partial\Delta_0=f(\partial\widehat\Delta_\infty)$.

\begin{lemma} \label{lem.LP5.29}
There exists $0 < \varepsilon_0 < 1$ such that for any $0 < \varepsilon < \varepsilon_0$, there exists $\la = \la (\varepsilon) \in (0, 1)$ so that
$$
\nu(N_{\varepsilon r}(\Delta_0)) \le \la \nu(N_r (\Delta_0)) \quad \text{for all } 0 < r < 1.
$$
Moreover, $\la(\varepsilon) \to 0$ as $\varepsilon \to 0$.

\end{lemma}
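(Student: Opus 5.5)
The set $\partial\Delta_0$ consists of two parabolic limit points, say $\partial\Delta_0=\{a,b\}$, since $\widehat\Delta_\infty$ has endpoints in $\Lambda_{\mathrm p}(\Gamma)$. For small $r$ the neighborhood $N_r(\Delta_0)$ is therefore the disjoint union of $\Delta_0\cap\mathcal B_{\bar\psi}(a,r)$ and $\Delta_0\cap\mathcal B_{\bar\psi}(b,r)$. The plan is to treat each endpoint separately, comparing each half-ball with a full ball, and then to apply the parabolic ball estimate \Cref{eqn.LP5.16}. This gives an exact power law in $r$ with exponent $2(1-\delta_\psi(\mathsf P_a))>0$.

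\textbf{Step 1 (half-balls versus full balls).} Let $u=h_0$ be the generator of $\mathsf P_0$, so that $b=ua$, and let $\mathsf P_a$ be the stabilizer of $a$. Near $a$, the set $\Lambda_\rho$ splits into a piece inside $\Delta_0$ and a piece inside $u^{-1}\Delta_0$. The second piece is carried by $u$ onto a neighborhood of $b$ inside $\Delta_0$, and by \eqref{inv} the map $u$ preserves $D_{\bar\psi}$. Hence
\begin{equation*}
\nu\bigl(\mathcal B_{\bar\psi}(a,r)\bigr)\le \nu\bigl(\Delta_0\cap\mathcal B_{\bar\psi}(a,r)\bigr)+\nu\bigl(u^{-1}(\Delta_0\cap\mathcal B_{\bar\psi}(b,r))\bigr),
\end{equation*}
and \Cref{lem.LP5.18} gives $\nu\bigl(u^{-1}(\Delta_0\cap\mathcal B_{\bar\psi}(b,r))\bigr)\asymp\nu\bigl(\Delta_0\cap\mathcal B_{\bar\psi}(b,r)\bigr)$ for bounded $r$. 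Therefore
\begin{equation*}
\nu(N_r(\Delta_0))\asymp\nu\bigl(\mathcal B_{\bar\psi}(a,r)\bigr)+\nu\bigl(\mathcal B_{\bar\psi}(b,r)\bigr)\qquad(0<r<1).
\end{equation*}
For the upper bound in this comparison I use the trivial inclusion $N_r(\Delta_0)\subset \mathcal B_{\bar\psi}(a,r)\cup\mathcal B_{\bar\psi}(b,r)$.

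\textbf{Step 2 (power law).} By \Cref{eqn.LP5.16}, for $0<r\le h(a)$,
\begin{equation*}
\nu(\mathcal B_{\bar\psi}(a,r))\asymp C_a r^{2(1-\delta_a)},
\end{equation*}
where $\delta_a=\delta_\psi(\mathsf P_a)<1$ by \Cref{thm.entropydrop}, and similarly at $b$ with $\delta_b$. (The two endpoints are parabolic points lying in $\Delta$, and $\delta_b=\delta_a$ since $b=ua$.) Set $\beta:=2(1-\delta_a)>0$. Then, uniformly for $0<r\le r_1:=\min\{h(a),h(b)\}$, we get $\nu(N_{\varepsilon r}(\Delta_0))\le C\varepsilon^{\beta}\nu(N_r(\Delta_0))$ with $C$ independent of $\varepsilon$ and $r$.

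\textbf{Step 3 (remaining radii).} For $r_1<r<1$ the quantity $\nu(N_r(\Delta_0))$ is bounded below by $\nu(N_{r_1}(\Delta_0))>0$, which is positive because $\nu$ has full support. If $\varepsilon r\le r_1$, Step~2 gives
\begin{equation*}
\nu(N_{\varepsilon r})\le C\varepsilon^{\beta}\nu(N_{r_1})\le C\varepsilon^{\beta}\nu(N_r).
\end{equation*}
Choosing $\varepsilon_0<r_1$ ensures $\varepsilon r\le r_1$ for all $r<1$. Set $\lambda(\varepsilon):=\min\{C\varepsilon^\beta,\tfrac12\}$ after shrinking $\varepsilon_0$ so that $C\varepsilon_0^\beta<1$; then $\lambda(\varepsilon)\to0$.

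\textbf{Main obstacle.} The delicate point is Step~1, namely the comparability of the one-sided neighborhood inside $\Delta_0$ with the full ball, which needs the uniform change-of-measure estimate under $u$. If that comparison turned out to be problematic, a fallback is to use doubling (\Cref{prop.LP5.14}) together with the observation that $\mathcal B_{\bar\psi}(a,r)\cap\Delta_0$ contains a ball of comparable radius centered at a point of $\Delta_0$ near $a$. That route would give only the lower bound, however, which is why I would rely on the $\mathsf P_0$-symmetry argument above.
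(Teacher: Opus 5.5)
Your proof is correct, but its key input differs from the paper's. The paper shares your Step~1. For $r$ below some $r_*<C_0^{-1}$, the balls $\mathcal B_{\bar\psi}(x_i,r_*)$ about the two endpoints stay in the two adjacent fundamental intervals. The peripheral generator together with \Cref{lem.LP5.18} then gives $\nu(\mathcal B_{\bar\psi}(x_1,r))+\nu(\mathcal B_{\bar\psi}(x_2,r))\ll\nu(N_r(\Delta_0))$. From there the paper iterates the reverse doubling of \Cref{prop.LP5.14}(2), obtaining $\nu(\mathcal B_{\bar\psi}(x_i,C_0^{-k}r))\le 2^{-k}\nu(\mathcal B_{\bar\psi}(x_i,r))$. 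It treats $r_*\le r<1$ by monotonicity and atomlessness, since $\nu(N_\varepsilon(\Delta_0))\to\nu(\partial\Delta_0)=0$.

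You instead use that the endpoints were chosen parabolic and lie in $\Delta$. Then \Cref{eqn.LP5.16} gives an exact power law at each endpoint. This yields the explicit rate $\lambda(\varepsilon)\ll\varepsilon^{2(1-\delta_\psi(\mathsf P_a))}$, governed by the cusp exponent, and it handles the large radii by positivity of $\nu(N_{r_1}(\Delta_0))$ rather than by atomlessness. The paper's route needs only uniform reverse doubling on $\Delta$, so it would survive a non-parabolic choice of endpoints. Yours is sharper but tied to the parabolic choice.

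A few cosmetic corrections are needed.
\begin{itemize}
\item The lower comparison in your Step~1 holds only once the balls lie in the two adjacent intervals, that is, for $r<r_*$, not for all $r<1$. Take $r_1:=\min\{h(a),r_*\}$.
\item Since $N_r(\Delta_0)$ is defined with $\le r$, use balls of radius $2r$ in the upper inclusion.
\item In Step~3 the constant picks up a factor $r_1^{-\beta}$.
\item Set $\lambda(\varepsilon):=C\varepsilon^{\beta}$ after shrinking $\varepsilon_0$. The choice $\min\{C\varepsilon^\beta,\tfrac12\}$ is not justified when $C\varepsilon^\beta>\tfrac12$.
\end{itemize}
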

\begin{proof}
Write $\partial\Delta_0=\{x_1,x_2\}$, where
$x_2=\gamma x_1$ for a generator $\gamma\in\mathsf P_0$.
Let $C_0>1$ be as in \Cref{prop.LP5.14}(2).
Choose $0<r_*<C_0^{-1}$ so that each
$\mathcal B_{\bar\psi}(x_i,r_*)$ lies in the two
fundamental intervals adjacent to $x_i$.
For $0<r<r_*$, the peripheral translations satisfy
$$
\gamma\bigl(\mathcal B_{\bar\psi}(x_1,r)
\smallsetminus\Delta_0\bigr)
\subset\mathcal B_{\bar\psi}(x_2,r)\cap\Delta_0,\qquad 
\gamma^{-1}\bigl(\mathcal B_{\bar\psi}(x_2,r)
\smallsetminus\Delta_0\bigr)
\subset\mathcal B_{\bar\psi}(x_1,r)\cap\Delta_0.
$$
Thus \Cref{lem.metrics} and \Cref{lem.LP5.18} give
$$
\nu({\mathcal B_{\bar\psi}}(x_1,r))+\nu({\mathcal B_{\bar\psi}}(x_2,r))\ll\nu(N_r(\Delta_0))
\qquad(0<r<r_*).
$$
Iterating \Cref{prop.LP5.14}(2), we obtain
$$
\nu(\mathcal B_{\bar\psi}(x_i,C_0^{-k}r))
\le 2^{-k}\nu(\mathcal B_{\bar\psi}(x_i,r))
\qquad(0<r<r_*,\ k\ge1).
$$
If $C_0^{-k-1}\le\varepsilon<C_0^{-k}$, then
$N_{\varepsilon r}(\Delta_0)$ is contained in
$\bigcup_{i=1}^2\mathcal B_{\bar\psi}(x_i,C_0^{-k}r)$.
Consequently,
$\nu(N_{\varepsilon r}(\Delta_0))
\ll 2^{-k}\nu(N_r(\Delta_0))$
uniformly for $0<r<r_*$.

For $r_*\le r<1$, monotonicity gives
$$
\frac{\nu(N_{\varepsilon r}(\Delta_0))}
     {\nu(N_r(\Delta_0))}
\le
\frac{\nu(N_\varepsilon(\Delta_0))}
     {\nu(N_{r_*}(\Delta_0))}
\longrightarrow0
\qquad(\varepsilon\to0),
$$
since $\nu(\partial\Delta_0)=0$ by
\Cref{thm.ps atomless 0}.
Combining the two ranges proves the lemma.
\end{proof}

We will use the following counting result from \cite[Proposition~5.2]{KO_globalshadow}, which is based on \cite[Proposition 7.2]{BO}.

\begin{proposition}
\label{prop.countingperipheral}
    For any $\xi \in \La_{\mathrm p}(\Ga_{\rho})$ and all sufficiently large $T$,
    $$\# \{ \ga \in \mathsf P_\xi : \bar\psi(\mu(\ga)) \le T \} \asymp e^{\delta_\psi(\mathsf P_\xi) T}.
    $$
\end{proposition}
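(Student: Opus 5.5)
The plan is to reduce to the cyclic structure of $\mathsf P_\xi$ and use the Cartan growth of weakly unipotent elements from \Cref{lem.regularunipotent_cartan}. Write $\xi=\gamma p_i$ for some $\gamma\in\Gamma_\rho$ and $0\le i\le\ell$. Then $\mathsf P_\xi=\gamma\mathsf P_i\gamma^{-1}$ and $\delta_\psi(\mathsf P_\xi)=\delta_\psi(\mathsf P_i)=\delta_i$. By \Cref{lem.cptcartan}, applied with the finite set $\{\gamma,\gamma^{-1}\}$, we have $\bar\psi(\mu(\gamma u\gamma^{-1}))=\bar\psi(\mu(u))+O_\gamma(1)$ for all $u\in\mathsf P_i$. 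The counting function at height $T$ for $\mathsf P_\xi$ is therefore sandwiched between the counting functions for $\mathsf P_i$ at heights $T\pm O(1)$. Because the target asymptotic $e^{\delta T}$ absorbs bounded shifts of $T$ into the multiplicative constant, it suffices to treat $\xi=p_i$.

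For $\mathsf P_i=\langle u_i\rangle$, \Cref{lem.regularunipotent_cartan} gives $\mu(u_i^n)=(\log|n|)v_i+O(1)$ as $|n|\to\infty$. Hence $\bar\psi(\mu(u_i^n))=2\kappa_i\log|n|+O(1)$, with $\kappa_i=\tfrac12\bar\psi(v_i)>0$ as recorded before \Cref{lem:weighted-cusp-depth}. It follows that there is $C>0$ such that
$$\{n\neq0:\ 2\kappa_i\log|n|\le T-C\}\subset\{n:\ \bar\psi(\mu(u_i^n))\le T\}\subset\{n:\ 2\kappa_i\log|n|\le T+C\}\cup F,$$
where $F$ is a finite set of small $|n|$ on which the asymptotic has not yet taken effect. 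The outer sets have cardinality $2e^{(T\pm C)/(2\kappa_i)}+O(1)$. Since $\delta_i=1/(2\kappa_i)$, both bounds are $\asymp e^{\delta_iT}$ once $T$ is large enough that the main term dominates $|F|$ and the $O(1)$.

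The only step needing care is uniformity of the $O(1)$ in the Cartan asymptotic, including the lower-order behaviour for small $n$. It is handled because there are finitely many $n$ below any fixed threshold, and the theorem is stated only for sufficiently large $T$ and fixed $\xi$. The dependence on $\xi$ through $\gamma$ is allowed, since the statement is for each fixed $\xi$.
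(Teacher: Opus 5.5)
Your argument is correct. It is a genuinely different route from the paper's, because the paper gives no internal proof: it quotes \cite[Proposition~5.2]{KO_globalshadow}, which rests on \cite[Proposition~7.2]{BO}. That is a general counting statement for peripheral subgroups, and it does not use the special structure available here.

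You instead use the fact that, in this Fuchsian setting, $\mathsf P_\xi$ is infinite cyclic and generated by a conjugate of the weakly unipotent element $u_i$. Then \eqref{latercartan0} and \Cref{lem.cptcartan} reduce the count to the elementary count $\#\{n\in\mathbb Z:|n|\le e^{\delta_i(T\pm C)}\}$.

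What your version buys:
\begin{itemize}
\item It is self-contained and uses only results already established in the paper. It is essentially the computation the paper itself makes to identify $\delta_i=1/(2\kappa_i)$ just before \Cref{lem:weighted-cusp-depth}.
\item It makes transparent why the count with respect to $\bar\psi$ grows at the exponent $\delta_\psi(\mathsf P_\xi)$: $\psi(v_i)=\bar\psi(v_i)$ because $\i(v_i)=v_i$.
\item It shows directly that no polynomial correction appears. This matches the paper's remark, after the global shadow lemma, that polynomial terms are absent because the peripheral groups are cyclic.
\end{itemize}

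The cited result has the advantage of generality: it needs neither cyclicity nor the explicit logarithmic Cartan growth.

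Two small points are worth stating explicitly. First, $n\mapsto u_i^n$ is injective because $\Gamma_\rho$ is torsion-free, so counting elements of $\mathsf P_i$ is the same as counting integers. Second, $\delta_\psi(\mathsf P_\xi)=\delta_\psi(\mathsf P_i)$ follows from the same bounded-distortion estimate $\psi(\mu(\gamma u\gamma^{-1}))=\psi(\mu(u))+O_\gamma(1)$ that you use for the count. Alternatively, you could skip the conjugation step and apply \Cref{lem.regularunipotent_cartan} directly to the generator $\gamma u_i\gamma^{-1}$ of $\mathsf P_\xi$. Its unipotent part is conjugate to that of $u_i$, so it has the same vector $v_i$.
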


For $p\in\Delta\cap\Lambda_{\mathrm p}(\Gamma_\rho)$ and $r>0$,
define
\begin{equation}
\label{eqn.LP6.8}
N_r(\partial J_p)
:=\{x\in J_p^c:D_{\bar\psi}(x,\partial J_p)\leq r\}.
\end{equation}
When $p=\gamma_pp_0$, we define
$N_r(\partial(\gamma_p^{-1}J_p))$ in the same way.

\begin{lemma}
\label{lem.LP6.9}
Let $C>1$. For every sufficiently small $\eta>0$, there exists
$c=c(\eta)\in(0,1)$ such that for all $p\in\Delta\cap\Lambda_{\mathrm p}(\Gamma_\rho)$
and $0<r<h(p)$,
$$
\nu(N_{C\eta r}(\partial J_p))
\leq c\,\nu(N_r(\partial J_p)).
$$
Moreover, $c(\eta)\to0$ as $\eta\to0$.
\end{lemma}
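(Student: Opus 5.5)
The plan is to transport the problem by the top representative $\gamma_p^{-1}$, so that the neighborhood $N_r(\partial J_p)$ becomes a neighborhood of $\partial R_{i}$ near ``infinity''. There the peripheral tiling lets us reduce to the boundary non-concentration of \Cref{lem.LP5.29} for $\Delta_0$ (or $\Delta_i$ in the conjugated system).

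Write $p=\gamma_pp_i$ and $T:=g_i\gamma_p^{-1}$, so that $T\partial J_p=\partial R_i$ and $TJ_p^c=R_i$.

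\textbf{Step 1: convert distances.} We apply \Cref{lem.ballflipping} (in $\Gamma_i$) to points $x\in J_p^c$ within $D_{\bar\psi}$-distance $r<h(p)$ of $\partial J_p$. By \Cref{lem.LP6.13}, such $x$ lie at distance $\asymp\eta h(p)$ from $p$ once $\eta$ is small, since $r\ll h(p)$ in comparison with the radius $\eta h(p)$. This last point needs care: when $r$ is not small relative to $\eta h(p)$, we shall instead use the crude bound that $N_r$ contains a definite portion.

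In that regime the two-point formula of \Cref{lem.ballflipping} gives
$$D_{\bar\psi}(Tx,Ty)\asymp \frac{h(p)\,D_{\bar\psi}(x,y)}{(\eta h(p))^2}=\frac{D_{\bar\psi}(x,y)}{\eta^2h(p)}.$$
Hence
$$T N_{s}(\partial J_p)\supset N'_{c_1 s/(\eta^2h(p))}(\partial R_i)\quad\text{and}\quad T N_{s}(\partial J_p)\subset N'_{c_2 s/(\eta^2h(p))}(\partial R_i),$$
where $N'_t(\partial R_i)$ denotes the points of $R_i$ within distance $t$ of $\partial R_i$. Moreover, \Cref{lem.LP5.17}, together with the shadow containment established in its proof, gives $\nu(E)\asymp e^{-\psi(\mu(\gamma_p))}\nu_i(TE)$ for $E$ in these sets. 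The ratio to be bounded therefore becomes
$$\frac{\nu_i\bigl(N'_{c_2C\eta r/(\eta^2h)}(\partial R_i)\bigr)}{\nu_i\bigl(N'_{c_1r/(\eta^2h)}(\partial R_i)\bigr)},$$
that is, $\nu_i(N'_{\varepsilon t})/\nu_i(N'_t)$ with $t=c_1r/(\eta^2 h(p))$ and $\varepsilon=(c_2C/c_1)\eta$, up to constants independent of $\eta$.

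\textbf{Step 2: non-concentration at $\partial R_i$.} The endpoints of $R_i$ are peripheral translates of endpoints of $g_i\Delta_i$, and $R_i$ is a fundamental domain for $g_i\mathsf P_i'g_i^{-1}$. By \Cref{lem.LP5.18} and peripheral invariance of $D_{\bar\psi}$, the measure $\nu_i(N'_t(\partial R_i))$ is comparable, with a factor independent of $t$ (depending only on $\eta$ through the fixed translates), to the corresponding boundary neighborhood masses at $\partial(g_i\Delta_i)$. Note that the density distortion factor is the same for numerator and denominator because both sit at the same endpoint. We then run the argument of \Cref{lem.LP5.29}: iterate reverse doubling \Cref{prop.LP5.14}(2) at the two endpoints to get a ratio $\ll 2^{-k}$ with $C_0^{-k}\approx\varepsilon$, which tends to $0$ as $\eta\to0$. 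The ranges $t\ge r_*$ and $t$ up to $\asymp 1/\eta$ (since $r<h(p)$ gives $t\ll\eta^{-2}$) are handled by the atomlessness/monotonicity argument at the end of that proof, together with the fact that $N'_t$ saturates at scale $\asymp\eta^{-1}$.

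\textbf{Main obstacle.} The delicate point is uniformity in $\eta$. The measure comparisons along peripheral translates far out (distance $\sim\eta^{-1}$ from $g_ix_p$) carry factors from \Cref{lem.LP5.18} of size $\asymp\eta^{-2}$ relative to $\Delta_0$-scale. We must check that these factors cancel in the ratio rather than inflating $c(\eta)$. I would handle this by working directly at the actual endpoints of $R_i$, applying \Cref{lem.LP5.18} with $\xi_0$ chosen there, so that only bounded distortion enters. Reverse doubling there then follows from \Cref{prop.LP5.14} transported by a single peripheral element.
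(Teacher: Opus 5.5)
Your transport step is sound, but it only proves the lemma at the smallest scales. Once $t=c_1r/(\eta^2h(p))\lesssim1$, the transported neighborhoods sit near the two endpoints of $R_i$ at distance $\asymp\eta^{-1}$ from $g_ix_p$. There \Cref{lem.LP5.18} contributes the same factor $\asymp\eta^2$ to numerator and denominator, and reverse doubling at $\partial\Delta_0$ finishes. This is exactly the paper's Case~A1.

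For $1\lesssim t\lesssim\eta^{-1}$, i.e.\ $\eta^2h(p)\lesssim r\lesssim\eta h(p)$, your Step~2 fails. First, \Cref{lem.LP5.18} is uniform only for sets of bounded $D_{\bar\psi}$-diameter, so $\nu_i(N'_t(\partial R_i))$ is not comparable, by a $t$-independent factor, to a boundary neighborhood of $g_i\Delta_i$. Second, the monotonicity argument at the end of \Cref{lem.LP5.29} only yields the bound $\nu_i(N'_{\varepsilon t_{\max}})/\nu_i(N'_{r_*})$. Here $\varepsilon t_{\max}=c_2C$, so this bound is $O(1)$, not $o(1)$ as $\eta\to0$. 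Indeed, at $r\asymp\eta h(p)$ the numerator is a bounded number of tiles near each endpoint, and the ratio is small only because $N_r$ contains many more tiles.

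The missing ingredient is the peripheral count \Cref{prop.countingperipheral}. With $D_{\bar\psi}(x_0,u^jx_0)\asymp e^{\bar\psi(\mu(u^j))/2}$, there are $\asymp T^{2\delta_0}$ tiles at scale $T$, each of mass $\asymp\eta^2$. Hence $\nu(\gamma_p^{-1}N_t(\partial J_p))\asymp\eta^2\bigl(t/(\eta^2h(p))\bigr)^{2\delta_0}$, and the ratio is bounded by a positive power of the dilation factor.

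For $\eta h(p)\lesssim r<h(p)$, the ball-flipping conversion is unavailable, since points of $N_r(\partial J_p)$ are no longer at distance $\asymp\eta h(p)$ from $p$. Your ``definite portion'' remark again gives at best a bounded ratio. The paper instead compares $J_p\cup N_{\beta r}(\partial J_p)$ and $J_p\cup N_r(\partial J_p)$ with $\mathcal B_{\bar\psi}(p,b\beta r)$ and $\mathcal B_{\bar\psi}(p,ar)$. Both neighborhoods are disjoint from $J_p$, so adding $\nu(J_p)$ to numerator and denominator can only increase the ratio. It then applies \Cref{eqn.LP5.16}, whose exponent $2(1-\delta_\psi(\mathsf P_p))$ is positive by \Cref{thm.entropydrop}; this requires $\eta h(p)\le\beta r$.

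Finally, a dilation by $C\eta$ can move $r$ across these regimes. The paper therefore works with $\beta=C\sqrt\eta$, so that $r$ and $\beta r$ lie in a common regime. It fills the gaps $\eta^2h(p)<r\le\eta^{3/2}h(p)$ and $\eta h(p)<r<\sqrt\eta\,h(p)$ by writing $C\eta r=\beta s$ with $s=\sqrt\eta\,r$ and using $N_s\subset N_r$. Your proposal needs all three ingredients. Note also that your Step~1 conversion is valid only for $t\lesssim\eta^{-1}$, not for $t\ll\eta^{-2}$ as you write.
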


\begin{proof}
Suppose that $p\in\Gamma_\rho p_i$ and write
$N_s^{(i)}:=N_s(\partial J_{\Gamma_i,g_ip})$.
Choose fixed constants $0<a<1$ and $L\ge1$ so that
the neighborhoods of radius $ar$ and $ar/L$ used below
lie in compact sets where the coordinate changes are
bi-Lipschitz with constant $L$.
For sufficiently small $\eta$, \Cref{lem.metrics} and
$\nu_i=(g_i)_*\nu$ then give
$\nu(N_{C\eta r}(\partial J_p))
\le\nu_i(N_{LC\eta r}^{(i)})$ and
$\nu_i(N_{ar/L}^{(i)})\le\nu(N_r(\partial J_p))$.
By \Cref{lem.LP5.11}, we may also choose $a$ so that
$r_i:=ar/L<h_i(g_ip)$.
Since $LC\eta r=(L^2C/a)\eta r_i$, it suffices to prove
the assertion at the cusp $p_0$ in each conjugated system,
after adjusting the fixed constant $C$.
We give the proof in the original notation, with $p=\gamma_pp_0$.
Write $x_p:=\gamma_p^{-1}p_0$ and set
$\beta:=C\sqrt{\eta}<1$.
We first estimate
$\nu(N_{\beta r}(\partial J_p))/\nu(N_r(\partial J_p))$
in three ranges of $r$.

Suppose first that $r\leq\eta h(p)$.
By \Cref{lem.LP6.7,lem.eucdistancetriangle},
every $x\in N_r(\partial J_p)$ satisfies
$D_{\bar\psi}(x,p)\asymp\eta h(p)$.
The ball-flipping estimates therefore give
$D_{\bar\psi}(\gamma_p^{-1}x,x_p)\asymp\eta^{-1}$ and
$$
D_{\bar\psi}(\gamma_p^{-1}x,\gamma_p^{-1}\partial J_p)
\asymp
\frac{D_{\bar\psi}(x,\partial J_p)}{\eta^2h(p)}.
$$
Applying these estimates in both directions, we obtain constants
$c_1,c_2>0$ such that
\begin{equation}
\label{eqn.69 proof eqn}
N_{c_1r/(\eta^2h(p))}(\gamma_p^{-1}\partial J_p)
\subset
\gamma_p^{-1}N_r(\partial J_p)
\subset
N_{c_2r/(\eta^2h(p))}(\gamma_p^{-1}\partial J_p).
\end{equation}
For the first inclusion, choose $c_1$ sufficiently small that
points in the left-hand neighborhood remain at distance
comparable to $\eta^{-1}$ from $x_p$, and apply the inverse
ball-flipping estimate.
Moreover, \Cref{lem.LP5.17} gives
\begin{equation}
\label{eqn.jan10.12.11}
\nu(\gamma_p^{-1}N_r(\partial J_p))
\asymp
e^{\psi(\mu(\gamma_p))}\nu(N_r(\partial J_p)).
\end{equation}
These estimates also hold with $\beta r$ in place of $r$.

\noindent\textbf{Case A1: $r\leq\eta^2h(p)$.}
The endpoints of $\gamma_p^{-1}J_p$ are peripheral translates
of the endpoints of $\Delta_0$.
By \eqref{eqn.69 proof eqn}, the part near each endpoint
$z\in\gamma_p^{-1}\partial J_p$ is contained in
$$
\left\{
x\in(\gamma_p^{-1}J_p)^c:
D_{\bar\psi}(x,z)
\le \frac{c_2r}{\eta^2h(p)}
\right\}.
$$
Since $r\le\eta^2h(p)$, each of these endpoint
neighborhoods has uniformly bounded diameter and lies at
distance comparable to $\eta^{-1}$ from $x_p$.
Thus \Cref{lem.LP5.18}, applied to the two endpoint neighborhoods
as in the proof of \Cref{lem.LP5.29}, gives a fixed
$x_0\in\partial\Delta_0$ and constants $a,b>0$ such that,
for $t=r$ and $t=\beta r$,
$$
\eta^2\nu\left(
\mathcal B_{\bar\psi}\left(x_0,\frac{at}{\eta^2h(p)}\right)
\right)
\ll
\nu(\gamma_p^{-1}N_t(\partial J_p))
\ll
\eta^2\nu\left(
\mathcal B_{\bar\psi}\left(x_0,\frac{bt}{\eta^2h(p)}\right)
\right).
$$
Together with \eqref{eqn.jan10.12.11},
\Cref{prop.LP5.14}(2) yields
$\nu(N_{\beta r}(\partial J_p))
\leq\omega(\beta)\nu(N_r(\partial J_p))$,
where $\omega(t)\to0$ as $t\to0$.
The bounded range of radii is handled as in
\Cref{lem.LP5.29}.

\noindent\textbf{Case A2:
$\eta^{3/2}h(p)<r\leq\eta h(p)$.}
Write $\delta_0:=\delta_\psi(\mathsf P_0)$.
For a generator $u$ of $\mathsf P_0$,
\Cref{lem.metrics,lem.dpsidiameterfinite,lem.zerobuse} and
\Cref{lem.gromovproductalongparabolic} give
$D_{\bar\psi}(x_0,u^j x_0)
\asymp e^{\bar\psi(\mu(u^j))/2}$ for large $|j|$.
Hence \Cref{prop.countingperipheral} gives a tile count
comparable to $T^{2\delta_0}$ at $D_{\bar\psi}$-scale $T$.

For $\eta^2h(p)\leq t\leq\eta h(p)$, apply this count with
$T=t/(\eta^2h(p))$ to the endpoint neighborhoods in
\eqref{eqn.69 proof eqn}.
For large $T$, quasi-monotonicity gives both an upper covering
count and a comparable lower count of full tiles contained
in $N_{c_1t/(\eta^2h(p))}
(\gamma_p^{-1}\partial J_p)$.
The bounded range of $T$ follows from the endpoint estimates
used in Case A1.
For the upper bound, use the peripheral tiles that meet
$\gamma_p^{-1}N_t(\partial J_p)$.
The preceding distance estimate and the uniformly bounded
diameters of the tiles place every such tile at distance
comparable to $\eta^{-1}$ from $x_p$.
For the lower bound, use the full tiles contained in
$N_{c_1t/(\eta^2h(p))}
(\gamma_p^{-1}\partial J_p)$ in \eqref{eqn.69 proof eqn};
the choice of $c_1$ keeps every point of that neighborhood
at distance comparable to $\eta^{-1}$ from $x_p$.
Thus \Cref{lem.LP5.18} gives measure comparable to $\eta^2$
for every tile used in either estimate.
Consequently,
$$
\nu(\gamma_p^{-1}N_t(\partial J_p))
\asymp
\eta^2\left(\frac{t}{\eta^2h(p)}\right)^{2\delta_0}.
$$
Both $t=r$ and $t=\beta r$ belong to this range, since
$\beta r>C\eta^2h(p)$.
Using \eqref{eqn.jan10.12.11}, we obtain
$\nu(N_{\beta r}(\partial J_p))
\ll\beta^{2\delta_0}\nu(N_r(\partial J_p))$.

\medskip
\noindent\textbf{Case B: $\sqrt{\eta}\,h(p)\leq r<h(p)$.}
Here $\eta h(p)\leq\beta r$.
By \Cref{lem.LP6.7,lem.olive,lem.eucdistancetriangle},
there are fixed constants $0<a<1<b$ such that
$$
\mathcal B_{\bar\psi}(p,ar)
\subset J_p\cup N_r(\partial J_p),
\qquad
J_p\cup N_{\beta r}(\partial J_p)
\subset\mathcal B_{\bar\psi}(p,b\beta r).
$$
Take $\eta$ sufficiently small that $b\beta<1$.
Since $N_{\beta r}(\partial J_p)\subset N_r(\partial J_p)$
and both sets are disjoint from $J_p$,
$$
\frac{\nu(N_{\beta r}(\partial J_p))}
     {\nu(N_r(\partial J_p))}
\leq
\frac{\nu(J_p\cup N_{\beta r}(\partial J_p))}
     {\nu(J_p\cup N_r(\partial J_p))}
\leq
\frac{\nu(\mathcal B_{\bar\psi}(p,b\beta r))}
     {\nu(\mathcal B_{\bar\psi}(p,ar))}
\ll\beta^{2(1-\delta_0)}
$$
by \Cref{eqn.LP5.16}.
The exponent is positive by \Cref{thm.entropydrop}.

The three cases give a common $c(\eta)\to0$ such that
$\nu(N_{\beta r}(\partial J_p))
\leq c(\eta)\nu(N_r(\partial J_p))$
throughout the indicated ranges.
Since $C\eta r\leq\beta r$, this proves the desired estimate
in those ranges.

In either remaining range,
$\eta^2h(p)<r\leq\eta^{3/2}h(p)$ or
$\eta h(p)<r<\sqrt{\eta}\,h(p)$,
set $s:=\sqrt{\eta}\,r$.
Then $s$ belongs to Case A1 or Case A2, respectively, and
$C\eta r=\beta s$.
Therefore,
$$
\nu(N_{C\eta r}(\partial J_p))
=
\nu(N_{\beta s}(\partial J_p))
\leq c(\eta)\nu(N_s(\partial J_p))
\leq c(\eta)\nu(N_r(\partial J_p)).
$$
For sufficiently small $\eta$, we also have $c(\eta)<1$.
\end{proof}

\subsection{Boundary-neighborhood estimates}
Let
$$
{C}_B = 2\mathsf{c}N_0{C}_2 {C}_0 + 4{C}_5 + 3N_0^2{C}_5\sqrt{{C}_4}\,{\mathtt{H}}.
$$
Here $C_0$, $C_2$, $C_4$, and $C_5$ are the constants from
\Cref{prop.LP5.14}, \Cref{lem.LP6.13}, \Cref{lem.LP6.27},
and \Cref{lem.LP6.32}, respectively.
The constant $\mathsf c$ was fixed before \Cref{lem.LP5.21}.

Set
$$
\mathcal{Z}_n' = \{ x \in \mathcal{Z}_n :  D_{\bar\psi}(x, \partial \mathcal{Z}_n) \le {C}_Bh_n\}.
$$

\begin{lemma}  \label{lem.LP6.50}
    There exists ${C}_9 > 0$,
    depending only on $\eta$, such that for all sufficiently large $n$,
    $$\nu(\mathcal{Z}_n') \le {C}_9 \nu(\mathcal{Z}_n);$$
    in particular, $\nu(A_n \cap \mathcal{Z}_n') \le {C}_9 \nu(\mathcal{Z}_n)$.
   Moreover,  ${C}_9\to 0$ as $\eta\to 0$.
\end{lemma}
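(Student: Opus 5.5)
We would decompose $\mathcal Z_n'$ according to which boundary piece of $\partial\mathcal Z_n$ it is close to, and bound each part against a mass that the residual set $\mathcal Z_n$ demonstrably retains. By \eqref{eqn.bdrdef}, every $x\in\mathcal Z_n'$ lies within $C_Bh_n$ of $\partial\Delta_0$ or of some $\partial J_q$ with $q\in P_k$, $k\le n$. Since $x\in\mathcal Z_n\subset\mathcal Z_{k}$, the point lies outside $J_q$, so $x\in N_{C_Bh_n}(\partial J_q)$ in the sense of \eqref{eqn.LP6.8}. By the separation in \Cref{lem.LP6.17}, these outer collars for distinct $q$ in the same generation are disjoint. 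Across generations, the collars of radius of order $h_{k}/\eta$ around $\partial J_q$ are still disjoint from each other, after decreasing $\eta$. The main step is the pointwise-in-$q$ comparison
$$
\nu\bigl(N_{C_Bh_n}(\partial J_q)\bigr)\le c\,\nu\bigl(N_{r_q}(\partial J_q)\cap\mathcal Z_n\bigr),
\qquad r_q:=c'h_{k}/\eta,
$$
with $c\to0$ as $\eta\to0$, together with the analogous estimate for $\partial\Delta_0$.

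For $\partial\Delta_0$ we iterate \Cref{lem.LP5.29}: $N_{C_Bh_n}(\Delta_0)$ has mass at most $\lambda(\varepsilon)^{j}\nu(N_{r}(\Delta_0))$ when $C_Bh_n=\varepsilon^j r$. Taking $\varepsilon\asymp\eta$ and $r\asymp h_{m}$ for suitable $m<n$ gives a small factor. For $\partial J_q$ with $q\in P_k$ we iterate \Cref{lem.LP6.9}, at ratio $C\eta$, from radius $r_q$, which is below $h(q)\asymp h_k/\eta$, down to $C_Bh_n$. This requires at least one step, since $h_n\le h_{k}$ and we may take $r_q=\eta h(q)/(C\eta)\cdot$const. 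Each step contracts by $c(\eta)\to0$. For $k=n$, a single step already gives the factor $c(\eta)$.

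The obstacle is converting the larger collar $N_{r_q}(\partial J_q)$ into mass that genuinely remains in $\mathcal Z_n$, because parts of it may have been removed by later $J_{q'}$ or may lie outside $\Delta_0$. We would handle this using the separation lemmas. \Cref{lem.LP6.17} places $J_{q'}$ for $q'\in P_j$ at distance $\gg h_j/\eta$ from $\mathcal Z_{j-1}^c\supset J_q$. Hence a later $J_{q'}$ with $j>k$ can meet $N_{r_q}(\partial J_q)$ only at distance $\gg h_{j}/\eta$ from $\partial J_q$. Its mass is then controlled by the same collar estimate applied at the smaller radius $h_j/\eta$, which is absorbed by a geometric series in $c(\eta)$.

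The part of $N_{r_q}(\partial J_q)$ outside $\Delta_0$ is compared to the part inside via the peripheral translation and \Cref{lem.LP5.18}, exactly as in the proof of \Cref{lem.LP5.29}; equivalently, one uses the symmetric collar $N_{r_q}(\partial(\gamma_q^{-1}J_q))$ in conjugated coordinates. Summing over the disjoint collars then gives $\nu(\mathcal Z_n')\le C_9\nu(\mathcal Z_n)$ with $C_9\ll c(\eta)+\lambda(\eta)$, and the factor tends to $0$ as $\eta\to0$. The restriction to sufficiently large $n$ is needed so that the finitely many top-height pieces, including $\partial\Delta_0$ at scales $\gg h_n$, fall into the ranges where \Cref{lem.LP5.29,lem.LP6.9} apply. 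The final assertion about $A_n\cap\mathcal Z_n'$ is immediate from the inclusion $A_n\cap\mathcal Z_n'\subset\mathcal Z_n'$.
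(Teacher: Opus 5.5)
Your outline matches the paper's: cover $\mathcal Z_n'$ by outer collars of radius $C_Bh_n$, shrink them with \Cref{lem.LP5.29,lem.LP6.9}, and sum against disjoint comparison collars inside $\mathcal Z_n$. The gap is your generation-dependent comparison radius $r_q\asymp h_k/\eta$ for $q\in P_k$.

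\emph{Disjointness fails.} The cross-generation disjointness you assert is false. \Cref{lem.LP6.17} separates a later piece $J_{q'}$, $q'\in P_j$, from $\mathcal Z_{j-1}^{c}\supset J_q$ only at scale $h_j/\eta$. The definition of $P_j$ only forces $D_{\bar\psi}(q',J_q)\ge h_{j-1}/(4\eta)$. So for $j$ much larger than $k$, the set $J_{q'}$ and its own collar can sit well inside $N_{r_q}(\partial J_q)$. Decreasing $\eta$ does not help, since $h_{j-1}/h_k=\mathtt H^{k+1-j}$ does not depend on $\eta$. Nested configurations then put one point of $\mathcal Z_n$ in comparison collars from up to $n$ generations. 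The sum is then no longer bounded by a fixed multiple of $\nu(\mathcal Z_n)$.

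\emph{The obstacle is not resolved.} The mass removed from $N_{r_q}(\partial J_q)$ by later $J_{q'}$ is interior mass, which no collar estimate such as \Cref{lem.LP6.9} controls. A collar at scale $h_k/\eta$ undergoes the same removal over generations $k+1,\dots,n$ as the rest of $\Delta_0$. Its residual part should therefore itself decay exponentially in $n-k$, so a lower bound of the kind you need is essentially the content of \Cref{prop.LP6.15}, not an input to it. Your $\partial\Delta_0$ estimate with $r\asymp h_m$, $m<n$, has the same problem.

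The missing observation is that one radius serves every boundary piece at once. Put $r_n:=ah_n/\eta$ with $0<a<\min\{1,C_s/(4N_0^3)\}$.
\begin{itemize}
\item For every $q\in P_1\cup\cdots\cup P_n$ we have $h_n<\eta h(q)$, hence $r_n<h(q)$. A single application of \Cref{lem.LP6.9} with the fixed constant $C=C_B/a$ gives $\nu(N_{C_Bh_n}(\partial J_q))\le c(\eta)\,\nu(N_{r_n}(\partial J_q))$.
\item \Cref{lem.LP5.29} with $\varepsilon=C_B\eta/a$ gives the corresponding bound at $\partial\Delta_0$ once $r_n<1$; this is where ``sufficiently large $n$'' enters.
\item Since $r_n$ lies below every separation scale $C_sh_j/(2\eta N_0^2)$, $j\le n$, in \Cref{lem.LP6.17}, \Cref{lem.eucdistancetriangle} makes $N_{r_n}(\Delta_0)$ and all $N_{r_n}(\partial J_q)$ pairwise disjoint subsets of $\mathcal Z_n$. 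No removal correction or iteration is needed.
\end{itemize}
Summing gives $\nu(\mathcal Z_n')\le c(\eta)\,\nu(\mathcal Z_n)$. Your displayed comparison is in fact true, because $N_{r_n}(\partial J_q)\subset N_{r_q}(\partial J_q)\cap\mathcal Z_n$. The trouble is that these larger, overlapping sets are the wrong ones to sum over.
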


\begin{proof}
Choose $0<a<\min\{1,C_s/(4N_0^3)\}$ and set $r_n:=ah_n/\eta$.
For $p\in P_1\cup\cdots\cup P_n$, we have $h_n<\eta h(p)$,
so $r_n<h(p)$. For sufficiently large $n$, we also have $r_n<1$.

Since $C_Bh_n=(C_B/a)\eta r_n$,
\Cref{lem.LP5.29,lem.LP6.9} give a common $c=c(\eta)>0$,
with $c(\eta)\to0$ as $\eta\to0$, such that
$$
\begin{aligned}
\nu(N_{C_Bh_n}(\Delta_0))
&\leq c\,\nu(N_{r_n}(\Delta_0)),\\
\nu(N_{C_Bh_n}(\partial J_p))
&\leq c\,\nu(N_{r_n}(\partial J_p))
\qquad(p\in P_1\cup\cdots\cup P_n).
\end{aligned}
$$
By \Cref{lem.LP6.17} and the choice of $a$, the sets
$N_{r_n}(\Delta_0)$ and $N_{r_n}(\partial J_p)$,
$p\in P_1\cup\cdots\cup P_n$, are pairwise disjoint subsets
of $\mathcal Z_n$. Hence
$$
\begin{aligned}
\nu(\mathcal Z_n')
&\leq \nu(N_{C_Bh_n}(\Delta_0))
+\sum_{p\in P_1\cup\cdots\cup P_n}
\nu(N_{C_Bh_n}(\partial J_p))\leq c\,\nu(\mathcal Z_n).
\end{aligned}
$$
Taking $C_9:=c$ proves the lemma.
\end{proof}
\begin{lemma}   \label{lem.LP6.51}
There exists $0 < {C}_{10} < 1$
such that for all $n\ge 1$,
$$
\nu(A_n \cap (D_{n+1} \cup B_{n+1})) \le {C}_{10} \nu(A_n) + \nu(A_n \cap \mathcal{Z}_n').
$$
\end{lemma}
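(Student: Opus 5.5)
We split $A_n\cap(D_{n+1}\cup B_{n+1})$ according to whether points lie in $\mathcal Z_n'$. The part inside $\mathcal Z_n'$ is bounded trivially by $\nu(A_n\cap\mathcal Z_n')$. It therefore suffices to show
$$\nu\bigl((A_n\smallsetminus\mathcal Z_n')\cap(D_{n+1}\cup B_{n+1})\bigr)\le C_{10}\,\nu(A_n)$$
with a uniform $C_{10}<1$. The natural way to do this is to exhibit, for each removed piece $J_p$ ($p\in P_{n+1}$) or new neighbourhood $B_{p,n+1}$ ($p\in Q_{n+1}''$) meeting $A_n\smallsetminus\mathcal Z_n'$, a disjoint companion annulus. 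This companion region should lie in $A_n$, be disjoint from $D_{n+1}\cup B_{n+1}$, and carry mass at least a fixed multiple $\kappa$ of the piece. Summing then gives $\nu(\text{pieces})\le (1+\kappa)^{-1}\nu(A_n)$ up to the $\mathcal Z_n'$ term, so $C_{10}=(1+\kappa)^{-1}$.

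\textbf{Step 1: the pieces lie inside $A_n$ away from the boundary.} Fix $p$ with $J_p$ or $B_{p,n+1}$ meeting a point $x\in A_n\smallsetminus\mathcal Z_n'$. By \Cref{lem.LP6.13} and \Cref{lem.LP6.32} (with \Cref{lem.LP6.27}), $J_p\subset\mathcal B_{\bar\psi}(p,C_2h_n)$ and $B_{p,n+1}\subset\mathcal B_{\bar\psi}(p,C_5\sqrt{C_4}h_n)$. Since $D_{\bar\psi}(x,\partial\mathcal Z_n)>C_Bh_n$, the quasi-triangle inequality \Cref{lem.eucdistancetriangle} and the choice of $C_B$ show that $\mathcal B_{\bar\psi}(p,\rho)$ misses $\partial\mathcal Z_n$ for $\rho$ a fixed multiple of $h_n$ (here the terms $2\mathsf cN_0C_2C_0$ and $3N_0^2C_5\sqrt{C_4}\mathtt H$ in $C_B$ are used). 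By \Cref{lem.olive}, that ball then lies entirely in $\mathcal Z_n$, as $x\in\mathcal Z_n$. By \Cref{lem.LP6.41} (and \Cref{lem.LP6.43}(1),(2)), it is disjoint from $B_n$, since its radius is $O(h_n)\ll C_7h_n/\eta$. Hence $\mathcal B_{\bar\psi}(p,\rho)\subset A_n$.

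\textbf{Step 2: companion annuli.} For $p\in P_{n+1}$, take the annulus $\mathcal B_{\bar\psi}(p,C_0C_2\eta h(p))\smallsetminus \mathcal B_{\bar\psi}(p,C_2\eta h(p))$. Reverse doubling, \Cref{prop.LP5.14}(2), shows its mass is at least $\nu(\mathcal B_{\bar\psi}(p,C_2\eta h(p)))\ge\nu(J_p)$. For $p\in Q_{n+1}''$, use the analogous annulus around $B_{p,n+1}$. In Case 1-2 classes we pass to the conjugated coordinates $g_i\gamma_q^{-1}$ and use \Cref{lem.LP5.17} and \eqref{eqn.annularcore}, exactly as in \Cref{lem.LP6.46}. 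These annuli have radius $O(h_n)$, so by Step 1 they lie in $A_n$.

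\textbf{Step 3: disjointness from other pieces.} The annuli must avoid the other pieces of $D_{n+1}\cup B_{n+1}$. Distinct centers in $P_{n+1}\cup Q_{n+1}''$ have heights $\asymp h_n/\eta$, so \Cref{lem.LP6.16} separates them by $\gg h_n/\eta$, while all radii are $O(h_n)$. For small $\eta$, the enlarged balls are therefore pairwise disjoint, and each annulus misses every other piece. Bounded overlap suffices anyway, and we sum.

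\textbf{Main obstacle.} The main difficulty is Step 2 for non-singleton Case 1-1/1-2 classes in $Q_{n+1}''$. There a $B_{p,n+1}$ near the associated boundary is split among peripheral translates, and only the translated portions lie in $\Delta_0$. I would argue class-by-class as in \Cref{lem.LP6.46}: the class's portions reassemble, via \Cref{lem.LP5.18}, into one ball $\mathcal U$ in conjugated coordinates, and the annulus around $\mathcal U$ is redistributed the same way. Alternatively, since such classes have centers within $O(h_{n+1})$ of $\partial\mathcal Z_n$, they may already be absorbed into the $\mathcal Z_n'$ term by the choice of $C_B$; I would check this first, as it would remove the difficulty.
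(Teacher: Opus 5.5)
Your plan follows the paper's proof, with one step left open. You discard $A_n\cap\mathcal Z_n'$ and enclose each remaining piece in a ball of radius $O(h_n)$ about its center. You then show, via \Cref{lem.olive} applied at a point $x\notin\mathcal Z_n'$ and via \Cref{lem.LP6.41}, that a $C_0$-fold enlargement lies in $A_n$, separate the enlargements by \Cref{lem.LP6.16}, and finish with reverse doubling to get $C_{10}=1/2$. Your companion-annulus bookkeeping is equivalent to the paper's bound $\nu(\mathcal B_{\bar\psi}(p,C_2h_n))\le\frac12\nu(\mathcal B_{\bar\psi}(p,R_n))$ with $R_n=C_Bh_n/(2\mathsf cN_0)$.

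The open point is settled by your ``alternative'', and that is the step the paper takes. Suppose the class of $p\in Q_{n+1}''$ has an associated boundary: Case~1-1, or Case~1-2 including its singleton subcase. Then the class definition and \Cref{lem.LP6.23} give $D_{\bar\psi}(p,\partial\mathcal Z_n)\le h_n$. Every $x\in B_{p,n+1}$ satisfies $D_{\bar\psi}(x,p)<C_5\sqrt{C_4}\,h_n$ by \Cref{lem.LP6.27,lem.LP6.32}. The quasi-triangle inequality then gives $D_{\bar\psi}(x,\partial\mathcal Z_n)\le N_0(C_5\sqrt{C_4}+1)h_n<C_Bh_n$, so $B_{p,n+1}\cap\mathcal Z_n\subset\mathcal Z_n'$.

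Hence only Case~2 singletons contribute outside $\mathcal Z_n'$. For these, $B_{p,n+1}=\mathcal B_{\bar\psi}(p,r(p,n+1))\subset\mathcal B_{\bar\psi}(p,h_n)$ is a genuine ball about a center in $\Delta_0$. The peripheral splitting you worried about never arises, and no class-by-class reassembly via \Cref{lem.LP5.18} is needed; with this check your argument is complete.

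The reduction also fixes the constants. Afterwards every piece lies in $\mathcal B_{\bar\psi}(p,C_2h_n)$, which the term $2\mathsf cN_0C_2C_0$ in $C_B$ handles. By contrast, your Step~1 implicitly enlarges the general radius $C_5\sqrt{C_4}\,h_n$ of \Cref{lem.LP6.32} by $C_0$ for arbitrary classes, and the stated value of $C_B$ does not guarantee that.
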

\begin{proof}
Set
$$
Q_{n+1}'''
:=
\left\{
p\in Q_{n+1}'':
B_{p,n+1}\cap
\bigl((A_n\cap B_{n+1})\smallsetminus\mathcal Z_n'\bigr)
\neq\emptyset
\right\}.
$$ For $p\in Q_{n+1}'''$, choose
$x\in B_{p,n+1}\cap A_n\cap B_{n+1}$
with $x\notin\mathcal Z_n'$. 
By \Cref{lem.LP6.27,lem.LP6.32},
$D_{\bar\psi}(x,p)<C_5\sqrt{C_4}\,h_n$.

We claim that the class containing $p$ is constructed in Case~2.
Otherwise, its construction and \Cref{lem.LP6.23} give
$D_{\bar\psi}(p,\partial\mathcal Z_n)\leq h_n$.
The quasi-triangle inequality then implies
$D_{\bar\psi}(x,\partial\mathcal Z_n)
\leq N_0(C_5\sqrt{C_4}+1)h_n<C_Bh_n$,
contrary to $x\notin\mathcal Z_n'$.
Thus $p\in Q_{n+1}\smallsetminus Q_{n+1}'$, $C(p)=\{p\}$,
and
$B_{p,n+1}=\mathcal B_{\bar\psi}(p,r(p,n+1))
\subset\mathcal B_{\bar\psi}(p,h_n)$.

For $p\in P_{n+1}$, \Cref{lem.LP6.13} gives
$J_p\subset\mathcal B_{\bar\psi}(p,C_2h_n)$.
Together with \Cref{lem.LP6.43}(4), these inclusions show that
$$
\bigl(A_n\cap(D_{n+1}\cup B_{n+1})\bigr)
\smallsetminus\mathcal Z_n'
\subset
\bigcup_{p\in P_{n+1}\cup Q_{n+1}'''}
\mathcal B_{\bar\psi}(p,C_2h_n).
$$

Set $R_n:=C_Bh_n/(2\mathsf cN_0)$.
We next show that
$\mathcal B_{\bar\psi}(p,R_n)\subset A_n$
for every $p\in P_{n+1}\cup Q_{n+1}'''$.
By \Cref{lem.LP6.41},
$D_{\bar\psi}(p,B_n)\geq C_7h_n/\eta>R_n$
for sufficiently small $\eta$, so these balls avoid $B_n$.
For $p\in P_{n+1}$, their inclusion in $\mathcal Z_n$ follows
from the defining condition
$\mathcal B_{\bar\psi}(p,h_n/(4\eta))\subset\mathcal Z_n$.

For $p\in Q_{n+1}'''$, use the point $x$ chosen above,
which now satisfies $D_{\bar\psi}(x,p)<h_n$.
If $z\in\mathcal B_{\bar\psi}(p,R_n)\smallsetminus\mathcal Z_n$,
then \Cref{lem.olive,lem.eucdistancetriangle} give
$$
C_Bh_n
<
D_{\bar\psi}(x,\partial\mathcal Z_n)
\leq \mathsf c D_{\bar\psi}(x,z)
\leq \mathsf cN_0(h_n+R_n)
<C_Bh_n,
$$
since $C_B>2\mathsf cN_0$.
This contradiction proves the required inclusion in $A_n$.

By \Cref{lem.LP6.16}, the balls
$\mathcal B_{\bar\psi}(p,R_n)$,
$p\in P_{n+1}\cup Q_{n+1}'''$, are pairwise disjoint
for sufficiently small $\eta$.
Their centers lie in $\Delta_0$, so \Cref{lem.heightondelta}
and $h_{n+1}<\eta h(p)$ give $h_n\ll\eta$ whenever the
index set is nonempty. Thus $C_2h_n<1/C_0$ after decreasing
$\eta$.
Since $R_n\geq C_0C_2h_n$ by the choice of $C_B$,
\Cref{prop.LP5.14}(2) yields
$$
\begin{aligned}
\nu\bigl(
(A_n\cap(D_{n+1}\cup B_{n+1}))
\smallsetminus\mathcal Z_n'
\bigr)
&\leq
\sum_{p\in P_{n+1}\cup Q_{n+1}'''}
\nu(\mathcal B_{\bar\psi}(p,C_2h_n))\\
&\leq \frac12
\sum_{p\in P_{n+1}\cup Q_{n+1}'''}
\nu(\mathcal B_{\bar\psi}(p,R_n))
\leq \frac12\nu(A_n).
\end{aligned}
$$
Adding $\nu(A_n\cap\mathcal Z_n')$ proves the lemma
with $C_{10}=1/2$.
\end{proof}

\begin{lemma}
\label{lem.LP6.53}
There exist $N\in\mathbb N$ and $C_{11}>0$, depending on $\eta$,
such that, for all sufficiently large $n$,
$$
\nu\left(\cup_{j=1}^N D_{n+j}\right)
\geq C_{11}\nu(A_n\smallsetminus\mathcal Z_n').
$$
\end{lemma}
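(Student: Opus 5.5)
The proof combines the recurrence input of \Cref{sec.recur} with the description of $B_n$ near cusps, in the spirit of \cite[Lemma~6.53]{LP}. Set $\tau:=\mathtt H$ and fix the compact set
$$
Q:=\Gamma_\rho\backslash\bigl(\widetilde\Omega_{\bar\psi}\smallsetminus\mathscr C_{C_6\eta}\bigr)\subset\Omega_{\bar\psi},
$$
which is compact by the remark preceding \Cref{lem.LP6.37}.

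\textbf{Step 1 (recurrence).} Let $x^+\in A_n\smallsetminus\mathcal Z_n'$ and $x=(x^+,p_0,0)$. By \Cref{lem.LP6.38}, $x^+\notin B_n$ forces $xa_{n\log\mathtt H}\notin\mathscr C_{C_6\eta}$, so $\Gamma_\rho xa_{n\log\mathtt H}\in Q$.

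\textbf{Step 2 (a parabolic point at the right scale).} Apply the recurrence statement \Cref{lem.LP6.54} to the point $xa_{n\log\mathtt H}$, in the form used in the proof of \Cref{lem.LP6.55_var}. Choosing the parameter $\delta$ suitably produces $p\in\Lambda_{\mathrm p}(\Gamma_\rho)\smallsetminus\{p_0\}$ with
$$
D_{\bar\psi}(p,x^+)\le C\mathtt H^{-n}\quad\text{and}\quad e^{-U_0}h_n\le \eta h(p)\le h_n.
$$
The constant $U_0$ depends on $\eta$ through $Q$. Choose $N$ with $\mathtt H^{-N}<e^{-U_0}$. Then $h_{n+j}<\eta h(p)\le h_{n+j-1}$ for some $1\le j\le N$.

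\textbf{Step 3 (the point $p$ belongs to some $P_{n+j}$).} This is the main obstacle. One must show that $p\in P_{n+j}$, i.e.\ that $\mathcal B_{\bar\psi}(p,h_{n+j-1}/(4\eta))\subset\mathcal Z_{n+j-1}$. The radius is at most $h_n/(4\eta)\cdot(\text{const})$, but the margin available from $x^+\notin\mathcal Z_n'$ is only $C_Bh_n$. So I would not use the raw height window. Instead, I would take $\delta$ in \Cref{lem.LP6.55_var} very negative, so that $h(p)\ll \eta^{-1}h_n\cdot\epsilon'$ and the required ball has radius $\le C_Bh_n/(2\mathsf cN_0)$. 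The cost is that $U_0$, and hence $N$, becomes larger, but both still depend only on $\eta$.

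With this choice, the argument of \Cref{lem.LP6.51} (via \Cref{lem.olive} and the quasi-triangle inequality) shows that the ball lies in $\mathcal Z_n$. It remains to keep it disjoint from $D_{n+1},\dots,D_{n+j-1}$ and from the balls $B_{q,\cdot}$. For this, combine \Cref{lem.LP6.41}, the separation estimate \Cref{lem.LP6.16} between $p$ and earlier centres (whose heights are larger), and \Cref{lem.LP6.17}. If instead $p$ falls into some $Q_{n+k}''$, then $x^+$ lies within $O(h_{n+k})$ of $p$. By \Cref{lem.LP6.43} this would put $x^+$ in $B_{n+k}$, which is not itself a contradiction. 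I would therefore handle it by noting that the defining ball condition can fail only if a previously removed $J_q$, or $\partial\mathcal Z$, is within $O(h_n)$ of $x^+$. This contradicts $x^+\notin\mathcal Z_n'$, and it contradicts the lower bound of \Cref{lem.LP6.41} for $B_n$.

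\textbf{Step 4 (mass estimate).} Since $p\in P_{n+j}$, we have $J_p\subset D_{n+j}$. \Cref{lem.LP6.13} gives $J_p\supset\mathcal B_{\bar\psi}(p,\eta h(p)/C_2)$. Also $x^+\in\mathcal B_{\bar\psi}(p,C'h_n)$, and $\eta h(p)\ge e^{-U_0}h_n$. Cover $A_n\smallsetminus\mathcal Z_n'$ by the balls $\mathcal B_{\bar\psi}(p,C'h_n)$ over the centres $p$ produced this way. The centres are distinct points of $\bigcup_{j\le N}P_{n+j}$, and their $J_p$ are disjoint by \Cref{lem.LP6.17}. The doubling estimate \Cref{prop.LP5.14}(1), applied with ratio $\asymp e^{U_0}C_2C'$, gives
$$
\nu\bigl(\mathcal B_{\bar\psi}(p,C'h_n)\bigr)\ll_\eta \nu(J_p).
$$
Summing over the centres yields $\nu(A_n\smallsetminus\mathcal Z_n')\ll_\eta\nu\bigl(\bigcup_{j\le N}D_{n+j}\bigr)$, which is the claim with $C_{11}$ depending on $\eta$.
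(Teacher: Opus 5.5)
Step~3 fails, for two separate reasons.

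\textbf{The target ``$p\in P_{n+j}$'' is the wrong statement, and the contradictions you invoke are not contradictions.} Membership in $P_{n+j}$ requires $\mathcal B_{\bar\psi}(p,h_{n+j-1}/(4\eta))\subset\mathcal Z_{n+j-1}$. This fails whenever $p$ lies in, or that ball meets, a set $J_q$ with $q\in P_{n+i}$, $1\le i<j$. Nothing rules this out:
\Cref{lem.LP6.17} only separates centres of one generation from each other and from $\mathcal Z_{m-1}^{\,c}$. For $q$ in the preceding generation, the separation $C_s\sqrt{h(p)h(q)}$ from \Cref{lem.LP6.16} is of the same order as that radius, so it gives no disjointness.

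Such a $J_q$ near $x^+$ contradicts neither of the facts you cite. The condition $x^+\notin\mathcal Z_n'$ only controls $\partial\Delta_0$ and the $\partial J_q$ with $q\in P_1\cup\dots\cup P_n$. \Cref{lem.LP6.41} concerns $B_n$, and the balls $B_{q,\cdot}$ play no role in the definition of $P_m$.

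The missing idea is that these failures are exactly the outcome you want. If $p\in\mathcal Z_n\smallsetminus\mathcal Z_j$, then $p\in J_q$ for an intermediate centre $q$. If $p\in\mathcal Z_j\smallsetminus P_{j+1}$, then \Cref{lem.olive} gives a point of $\partial\mathcal Z_j$ within $\mathsf c h_j/(4\eta)$ of $p$. Once the boundary pieces already present at stage $n$ are excluded, this is a point of some $\partial J_q$ with $q$ of an intermediate generation. Step~4 only needs \emph{some} $q\in\bigcup_{j\le N}P_{n+j}$ with $D_{\bar\psi}(x^+,q)\ll h_n$, and that is how the paper concludes.

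\textbf{The scales do not match.} \Cref{lem.LP6.55_var} at time $n\log\mathtt H$ gives $D_{\bar\psi}(p,x^+)\le Ch_n$. (It is $h(p)$, not $\eta h(p)$, that lies in $[e^{-U_0}h_n,h_n]$.) Here $C$ is independent of $\eta$ but is not dominated by $C_B$. Taking $\delta$ very negative shrinks $h(p)$ but not this distance, which comes from $y\in W^+(x,\mathtt H^{-n})$. So the argument of \Cref{lem.LP6.51} does not even place $p$ in $\mathcal Z_n$: that argument uses an auxiliary point within distance $h_n$ and only $C_B>2\mathsf cN_0$, whereas here you would need something like $C_B>2\mathsf cN_0C$.

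The paper avoids this by applying recurrence at level $m=n+k$, where $\eta\le\mathtt H^{-k}<\mathtt H\eta$. Then the recurrence error $Ch_m<C\mathtt H\eta h_n$ is negligible against margins $2h_m/\eta\ge 2h_n$. The price is a case split that your proposal does not have:
\begin{itemize}
\item $x\notin\mathcal Z_m$;
\item $x$ within $3N_0h_m/\eta$ of $\partial\mathcal Z_m$, where $x\notin\mathcal Z_n'$ forces a nearby $\partial J_p$ with $p\in P_{n+1}\cup\dots\cup P_m$;
\item $x\in A_m$ far from $\partial\mathcal Z_m$, handled by recurrence at level $m$;
\item $x\in B_m$, where \Cref{lem.LP5.21} supplies $y\in B_{p,m}\smallsetminus B_{p,m+1}\subset A_{m+1}$ and one recurs at level $m+1$.
\end{itemize}

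Alternatively, your level-$n$ route can be repaired. Run \Cref{lem.LP6.54} with plaques $W^+(x,\epsilon_*)$ for small $\epsilon_*$; the same mixing proof works with a larger $T_*$, and it gives $D_{\bar\psi}(p,x^+)\le C\epsilon_*h_n$. Combined with the intermediate-centre fix above, this even bypasses the $B_m$ case. Neither ingredient is in your proposal, however.
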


\begin{proof} 
Set $A_n':=A_n\smallsetminus\mathcal Z_n'$.
Let $U_0>0$ be given by \Cref{lem.LP6.55_var} for
$\tau=\mathtt H$ and the compact set
$Q:=\Gamma_\rho
(\widetilde\Omega_{\bar\psi}\smallsetminus\mathscr C_{C_6\eta})
\subset\Omega_{\bar\psi}$.
Set $k:=\lfloor-\log\eta/\log\mathtt H\rfloor$ and
$L:=\lceil U_0/\log\mathtt H\rceil$, and choose
$N>2k+L+2$.
We will show that there exists $C'>1$ such that
\begin{equation}
\label{eqn.653 proof claim}
\text{for every }x\in A_n',
\text{ there is }p\in\cup_{j=1}^N P_{n+j}
\text{ with }D_{\bar\psi}(x,p)<C'h_n.
\end{equation}

We first consider a point $y\in A_m$ satisfying
$D_{\bar\psi}(y,\partial\mathcal Z_m)>2h_m/\eta$.
We claim that there exists
$q\in P_{m+1}\cup\cdots\cup P_{m+k+L+1}$
with $D_{\bar\psi}(y,q)\ll h_m$.

Indeed, \Cref{lem.LP6.38} implies that
$\Gamma_\rho(y,p_0,0)a_{m\log\mathtt H}\in Q$.
By \Cref{lem.LP6.55_var}, there is a parabolic point $p$ such that
$D_{\bar\psi}(y,p)\leq Ch_m$ and
$e^{-U_0}h_m\leq h(p)\leq h_m$, where $C$ is independent of $\eta$.
For sufficiently small $\eta$, \Cref{lem.olive} and the distance
of $y$ from $\partial\mathcal Z_m$ imply that $p\in\mathcal Z_m$.

Set $j:=\lfloor-\log(\eta h(p))/\log\mathtt H\rfloor$.
Then $m+k\leq j\leq m+k+L$,
$h_{j+1}<\eta h(p)\leq h_j$, and
$h_j/\eta<\mathtt Hh_m$.
If $p\in P_{j+1}$, take $q=p$.
If $p\notin\mathcal Z_j$, then
$p\in J_q$ for some $q\in P_{m+1}\cup\cdots\cup P_j$.
By \Cref{lem.LP6.13,lem.eucdistancetriangle},
$D_{\bar\psi}(y,q)\leq N_0(C+C_2)h_m$.

It remains to consider $p\in\mathcal Z_j\smallsetminus P_{j+1}$.
The definition of $P_{j+1}$ and \Cref{lem.olive} give
$D_{\bar\psi}(p,\partial\mathcal Z_j)
\leq\mathsf c h_j/(4\eta)$.
On the other hand,
$D_{\bar\psi}(p,\partial\mathcal Z_m)
\geq2h_m/(N_0\eta)-Ch_m$.
Choose $z\in\partial\mathcal Z_j$ such that
$$
D_{\bar\psi}(p,z)\le\mathsf c h_j/(4\eta).
$$
Since $h_j/\eta<\mathtt Hh_m$, the lower bound for
$D_{\bar\psi}(p,\partial\mathcal Z_m)$ implies
$z\notin\partial\mathcal Z_m$ for sufficiently small $\eta$.
Thus there is $q\in P_{m+1}\cup\cdots\cup P_j$ with
$D_{\bar\psi}(p,\partial J_q)\leq\mathsf c h_j/(4\eta)$.
Another application of \Cref{lem.LP6.13,lem.eucdistancetriangle}
gives
$$
D_{\bar\psi}(y,q)
\leq N_0Ch_m
+N_0^2\left(\frac{\mathsf c h_j}{4\eta}+C_2h_m\right)
\ll h_m.
$$
This proves the auxiliary claim.

Now fix $x\in A_n'$ and put $m:=n+k$.
Recall that $\eta\leq\mathtt H^{-k}<\mathtt H\eta$.

If $x\notin\mathcal Z_m$, then
$x\in J_p$ for some $p\in P_{n+1}\cup\cdots\cup P_m$.
By \Cref{lem.LP6.13},
$D_{\bar\psi}(x,p)<C_2\eta h(p)\leq C_2h_n$.

Suppose next that $x\in\mathcal Z_m$ and
$D_{\bar\psi}(x,\partial\mathcal Z_m)<3N_0h_m/\eta$.
Since
$3N_0h_m/\eta<3N_0\mathtt Hh_n<C_Bh_n$
and $x\notin\mathcal Z_n'$,
there is $p\in P_{n+1}\cup\cdots\cup P_m$ with
$D_{\bar\psi}(x,\partial J_p)<3N_0h_m/\eta$.
Hence
$D_{\bar\psi}(x,p)
\leq N_0(3N_0h_m/\eta+C_2h_n)\ll h_n$.

We may therefore assume that $x\in\mathcal Z_m$ and
$D_{\bar\psi}(x,\partial\mathcal Z_m)\geq3N_0h_m/\eta$.
If $x\in A_m$, the auxiliary claim gives the required point
in $P_{m+1}\cup\cdots\cup P_{m+k+L+1}$.

It remains to consider $x\in B_m$.
Since $x\notin B_n$ and the sets $B_{p,t}$ decrease with $t$,
there is $p\in Q_{n+1}''\cup\cdots\cup Q_m''$
such that $x\in B_{p,m}$.
By \Cref{lem.LP6.27,lem.LP6.32},
\begin{equation}
\label{eqn.653 proof eqn}
B_{p,m}
\subset
\mathcal B_{\bar\psi}
\left(p,C_5\sqrt{C_4h_nh_m}\right).
\end{equation}
In particular, any two points of $B_{p,m}$ are at
$D_{\bar\psi}$-distance $O(\sqrt{h_nh_m})$.
Since $\sqrt{h_nh_m}\leq\sqrt{\eta}\,h_m/\eta$,
\Cref{lem.olive} and the distance of $x$ from
$\partial\mathcal Z_m$ imply that
$B_{p,m}\subset\mathcal Z_m$ for sufficiently small $\eta$.

As in the proof of \Cref{lem.LP6.46}, the annular estimate
\Cref{lem.LP5.21}, applied in the coordinates defining $B_{p,m}$,
gives $\nu(B_{p,m}\smallsetminus B_{p,m+1})>0$.
Choose $y$ in this difference.
By \eqref{eqn.q''disjoint} and \Cref{lem.LP6.43}(2),(5),
$y\in B_m\smallsetminus B_{m+1}\subset A_{m+1}$.
Moreover, for sufficiently small $\eta$
$$
D_{\bar\psi}(y,\partial\mathcal Z_m)
\geq \frac1{N_0}D_{\bar\psi}(x,\partial\mathcal Z_m)
-D_{\bar\psi}(x,y)
> \frac{2h_m}{\eta}
$$

If $D_{\bar\psi}(y,\partial\mathcal Z_{m+1})
>2h_{m+1}/\eta$, apply the auxiliary claim at level $m+1$.
It gives
$q\in P_{m+2}\cup\cdots\cup P_{m+k+L+2}$
with $D_{\bar\psi}(y,q)\ll h_{m+1}$.
Otherwise, the preceding lower bound shows that
$D_{\bar\psi}(y,\partial J_q)\leq2h_{m+1}/\eta$
for some $q\in P_{m+1}$.
Then \Cref{lem.LP6.13,lem.eucdistancetriangle} give
$D_{\bar\psi}(y,q)\ll h_{m+1}/\eta+h_m\ll h_n$,
since $h_{m+1}/\eta\leq h_n$.

In either case, $q\in\bigcup_{j=1}^N P_{n+j}$ and
$D_{\bar\psi}(x,q)
\leq N_0(D_{\bar\psi}(x,y)+D_{\bar\psi}(y,q))
\ll h_n$.
This proves \eqref{eqn.653 proof claim} after choosing $C'$
sufficiently large.

Finally, for every $p\in\bigcup_{j=1}^N P_{n+j}$,
\Cref{lem.LP6.13} gives
$\mathcal B_{\bar\psi}(p,h_{n+N}/C_2)\subset J_p$,
and these sets $J_p$ are pairwise disjoint by
\Cref{lem.LP6.17}.
Applying \Cref{prop.LP5.14}(1) with the fixed dilation factor
$C'C_2\mathtt H^N$, we obtain, for all sufficiently large $n$,
$$
\begin{aligned}
\nu\left(\cup_{j=1}^N D_{n+j}\right)
&\geq
\sum_p\nu\left(
\mathcal B_{\bar\psi}(p,h_{n+N}/C_2)\right)\geq  C_{11}\sum_p
\nu\left(\mathcal B_{\bar\psi}(p,C'h_n)\right)
\geq C_{11}\nu(A_n'),
\end{aligned}
$$
where both sums range over
$p\in\bigcup_{j=1}^N P_{n+j}$.
The last inequality follows from \eqref{eqn.653 proof claim}.
\end{proof}

\subsection{Proof of  \Cref{prop.LP6.15}}
By \Cref{lem.LP6.43}(1), (3), (5), \Cref{lem.LP6.46},
\Cref{lem.LP6.50}, and \Cref{lem.LP6.51},
$$
\nu(A_n)\ge(1-{C}_9-{C}_{10})\nu(A_{n-1})
+({C}_8-{C}_9)\nu(B_{n-1}).
$$
Since ${C}_9\to0$ as $\eta\to0$, choose $\eta$ so small that
$2{C}_9<\min\{1-{C}_{10},{C}_8\}$, and put
$m:=\min\{1-{C}_9-{C}_{10},{C}_8-{C}_9\}
>{C}_9$.
As $\mathcal Z_{n-1}=A_{n-1}\sqcup B_{n-1}$ and
$\mathcal Z_n\subset\mathcal Z_{n-1}$, we have
$\nu(A_n)\ge m\nu(\mathcal Z_n)$. Thus \Cref{lem.LP6.50} gives
$$
\nu(A_n-\mathcal Z_n')\ge\nu(A_n)-\nu(\mathcal Z_n')
\ge(m-{C}_9)\nu(\mathcal Z_n).
$$
By \Cref{lem.LP6.53}, there exist $N\ge1$ and $c_0\in(0,1)$ such
that, for all sufficiently large $n$,
$\nu\left(\cup_{j=1}^ND_{n+j}\right)\ge c_0\nu(\mathcal Z_n)$.
Since $\mathcal Z_{n+N}=\mathcal Z_n\smallsetminus\cup_{j=1}^ND_{n+j}$,
$\nu(\mathcal Z_{n+N})\le(1-c_0)\nu(\mathcal Z_n)$.
Iteration gives $\nu(\mathcal Z_n)\ll(1-c_0)^{n/N}$. Choose $\varepsilon_0>0$ so that
$(1-c_0)^{1/N}<1-\varepsilon_0<1$.
Then $\nu(\mathcal Z_n)\le(1-\varepsilon_0)^n$
for all sufficiently large $n$, proving \Cref{prop.LP6.15}.\qed

\section{The induced expanding map}
\label{sec.expandingmap}

We now use the geometric partition to construct an expanding map with
countably many branches. We first define $T_0$ on the sets $\gamma_p\gamma_1\Delta_i$
appearing in the decompositions of the sets $J_p$.
We then successively take first-return maps until we obtain a return
map on $\Delta_0$. At each step, we prove that the return time is finite
almost everywhere and establish exponential summability of the branches.
Finally, we take a fixed power of the return map so that all return
vectors lie uniformly in the prescribed positive cone.

Let $\eta$ be sufficiently small as in \Cref{prop.LP6.15ooo}, allowong further reduction until its final choice after \Cref{lem.LP6.62}. We suppress $\eta$
from the notation. Set
$$
\widetilde\Lambda'
:=
\bigcup_{j=0}^{\ell}\ \bigcup_{n\geq1}\
\bigcup_{p\in P_{j,n}}J_p,
\qquad
\widetilde\Lambda
:=
\widetilde\Lambda'\smallsetminus
\Lambda_{\mathrm p}(\Gamma_\rho).
$$
By \Cref{lem.tilingJp},
$$
\widetilde\Lambda
=
\bigcup_{i,j=0}^{\ell}\ \bigcup_{n\geq1}\
\bigcup_{p\in\Gamma_\rho p_i\cap P_{j,n}}\
\bigcup_{\gamma_1\in N_p}
\gamma_p\gamma_1
\bigl(\Delta_i\smallsetminus
\Lambda_{\mathrm p}(\Gamma_\rho)\bigr).
$$
For each $j$, the construction gives
$$
\Delta_j\smallsetminus
\bigcup_{n\geq1}\ \bigcup_{p\in P_{j,n}}J_p
=
\bigcap_{n\geq0}\mathcal Z_{j,n}.
$$
By \Cref{prop.LP6.15ooo},
$\nu(\mathcal Z_{j,n})\to0$ as $n\to\infty$.
Hence $\widetilde\Lambda'$ has full $\nu$-measure in $\Delta$.
Since $\Lambda_{\mathrm p}(\Gamma_\rho)$ is countable and
$\nu$ is atomless by \Cref{thm.ps atomless 0},
removing the parabolic fixed points leaves
$\widetilde\Lambda$ with full $\nu$-measure in $\Delta$.

Define $T_0:\widetilde\Lambda\to\Delta$ by
$$
T_0(x)=(\gamma_p\gamma_1)^{-1}x
\qquad
\text{if }x\in
\widetilde\Lambda\cap\gamma_p\gamma_1\Delta_i.
$$

For $0\leq i,j\leq\ell$, let $\mathcal H_0(i,j)$ be the set of
elements $\gamma_p\gamma_1$ with
$p\in\Gamma_\rho p_i\cap P_{j,n}$ for some $n\geq1$ and
$\gamma_1\in N_p$.
Each $\gamma\in\mathcal H_0(i,j)$ maps $\Delta_i$ onto a cell
$\gamma\Delta_i\subset\Delta_j$, on which $T_0$ acts as
$\gamma^{-1}$ wherever it is defined.
We call $\gamma$ an inverse branch of $T_0$.
Thus $i$ records the domain of the inverse branch, and $j$
records the set containing its image.

Set
$$
\widetilde\Lambda_+
:=
\bigcap_{m\geq0}T_0^{-m}(\widetilde\Lambda).
$$
This is precisely the set of points whose iterates $T_0^m$
are defined for every $m\geq1$, and it has full $\nu$-measure
in $\Delta$ by quasi-invariance of $\nu$.
By construction, $T_0$ maps $\widetilde\Lambda_+$ into itself.

The inverse branches also preserve this set: if
$x\in\widetilde\Lambda_+\cap\Delta_i$ and
$\gamma\in\mathcal H_0(i,j)$, then
$\gamma x\in\widetilde\Lambda\cap\Delta_j$ and
$T_0(\gamma x)=x$. Hence all iterates of $\gamma x$ are
defined, so $\gamma x\in\widetilde\Lambda_+\cap\Delta_j$.

The transition graph of $T_0$ is the directed graph with vertex
set $\{0,\ldots,\ell\}$ and an edge $j\to i$ whenever
$\mathcal H_0(i,j)\ne\emptyset$.
Since this graph is finite, we can choose a nonempty set
$I_0$ of indices such that no directed edge leaves $I_0$
and the graph on $I_0$ is strongly connected: every vertex
can be reached from every other by a directed path within
$I_0$. Thus $T_0$ maps
$\widetilde\Lambda_+\cap\bigcup_{i\in I_0}\Delta_i$
into itself. We may assume the index $0$ belongs to $I_0$.

\begin{Def}[First returns to $\Delta_0$]
\label{def:firstreturnDelta0}
Let $\Lambda_+$ be the set of points
$x\in\widetilde\Lambda_+\cap\Delta_0$ such that
$T_0^m x\in\Delta_0$ for infinitely many $m\geq1$.
For $x\in\Lambda_+$, let
$n(x):=\min\{m\geq1:T_0^m x\in\Delta_0\}$.
\end{Def}

\begin{proposition}
\label{prop.T is well def a.e.}
The set $\Lambda_+$ has full $\nu$-measure in $\Delta_0$.
\end{proposition}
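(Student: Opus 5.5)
The plan is to combine the full-branch structure of $T_0$ with a uniform distortion estimate, and then run a Borel--Cantelli / Lévy zero--one argument along the cylinder filtration generated by the inverse branches. Since $\widetilde\Lambda_+$ already has full measure in $\Delta$, it suffices to show that $\nu$-almost every $x\in\widetilde\Lambda_+\cap\Delta_0$ visits $\Delta_0$ infinitely often under $T_0$. By the choice of $I_0$ (closed under edges, strongly connected, containing $0$), the forward orbit of every such $x$ stays in $\bigcup_{i\in I_0}\Delta_i$.

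\textbf{Step 1: a uniform return set.} For each $i\in I_0$, strong connectivity gives a path $i\to\cdots\to 0$ of length at most $\ell+1$. Hence there is a composition $\gamma=\gamma_1\cdots\gamma_k$ of inverse branches with $\gamma\Delta_0\subset\Delta_i$ and $k\le\ell+1$. Set
$$A_i:=\{y\in\widetilde\Lambda_+\cap\Delta_i: T_0^m y\in\Delta_0 \text{ for some } 1\le m\le\ell+1\}.$$
Then $\nu(A_i)\ge\nu(\gamma\Delta_0)>0$, by quasi-invariance of $\nu$ and full support. Let $c:=\min_{i\in I_0}\nu(A_i)/\nu(\Delta_i)>0$.

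\textbf{Step 2: bounded distortion.} I would prove that there is $K\ge1$ such that for every cylinder $\gamma\Delta_i$ with $\gamma=\gamma_1\cdots\gamma_n$ a word of inverse branches and every Borel $E\subset\Delta_i$,
$$\frac{\nu(\gamma E)}{\nu(\gamma\Delta_i)}\ge K^{-1}\frac{\nu(E)}{\nu(\Delta_i)}.$$
By conformality, $\nu(\gamma E)=\int_E e^{-\psi(\beta_\xi(e,\gamma^{-1}))}\,d\nu(\xi)$, up to the usual sign conventions. So it suffices that $\xi\mapsto\psi(\beta_\xi(e,\gamma^{-1}))$ has uniformly bounded oscillation on $\Delta_i$.

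For a single branch $\gamma_p\gamma_1$, I would split $\beta$ by the cocycle identity into two parts. The $\gamma_p$ part is controlled by \Cref{lem.LP5.17}, since $\gamma_p\gamma_1\Delta_i\subset J_p\subset\mathcal B_{\bar\psi}(p,C_2\eta h(p))$. The peripheral $\gamma_1$ part is controlled by \Cref{lem.LP5.18} and \Cref{lem.zerobuse}, noting that $\gamma_1\Delta_i$ has bounded diameter.

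For longer words, summing the per-letter bounds would give oscillation growing linearly in $n$, which is fatal. Instead I would use \Cref{prop.holderlocal_Buse_prelim}: $\gamma\Delta_i$ sits in a shadow $\mathcal O_r(o,\gamma o)$, so the full Busemann term has oscillation bounded by a constant times the $d_{\mathrm{Riem}}$-diameter of the relevant set. Compactness of $\Delta_i$ then bounds this uniformly. This is the step I expect to be the main obstacle, namely verifying the shadow containment uniformly for all words. I would try to derive it from the ball-flipping estimates (\Cref{lem.ballflipping}) together with \Cref{lem.LP6.13}, iterated through the nesting of cells.

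\textbf{Step 3: conclusion.} Let $\mathcal F_n$ be the $\sigma$-algebra generated by the $n$-cylinders $\gamma\Delta_i$; these partition $\widetilde\Lambda_+$ up to null sets and generate the Borel sets. Since $T_0^n$ restricted to $\gamma\Delta_i$ equals $\gamma^{-1}$, Steps 1 and 2 give
$$\nu\bigl(T_0^{n+m}x\in\Delta_0\text{ for some }1\le m\le\ell+1 \,\big|\, \mathcal F_n\bigr)\ge c/K$$
on every $n$-cylinder with $i\in I_0$.

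Let $W$ be the set of $x\in\Delta_0$ that visit $\Delta_0$ only finitely often. The Lévy zero--one law gives $\nu(W\mid\mathcal F_n)\to\mathbf 1_W$ almost everywhere. On the other hand, on $W$ the conditional probabilities of a further visit after time $n$ are uniformly bounded below by $c/K$. These two facts are compatible only if $\nu(W)=0$. Equivalently, I could use the conditional Borel--Cantelli lemma along the times $n,\,n+\ell+1,\,n+2(\ell+1),\ldots$.

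Hence $\nu(\Delta_0\smallsetminus\Lambda_+)=0$. Measurability of $n(x)$ is immediate from the definition.
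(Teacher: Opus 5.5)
Your Steps~1 and~3 are sound, and they take a different, shorter route than the paper. The paper does not argue globally. It deletes the vertices of $I_0\smallsetminus\{0\}$ one at a time and passes to the first-return maps $T_{k+1}$ on the remaining cells. At each stage it proves an exponential tail for the return time (\Cref{prop.aerecurrentexapndingmap}), using the geometric decay in \Cref{prop.hqexpsumdecay} of the mass of words that stay at the deleted vertex.

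You instead use strong connectivity once, to reach $\Delta_0$ within $\ell+1$ steps from every cell, and you get a uniform escape probability from every cylinder. Iterating your conditional bound block by block already shows that the points of $\widetilde\Lambda_+\cap\Delta_0$ not visiting $\Delta_0$ at times $1,\dots,k(\ell+1)$ have measure at most $(1-c/K)^k\nu(\Delta_0)$. So L\'evy's law is unnecessary. If you keep it, apply it to $W_N=\{x:\ T_0^mx\notin\Delta_0 \text{ for all } m>N\}$, for which $\nu(W_N\mid\mathcal F_n)\le 1-c/K$ whenever $n\ge N$, rather than to $W$ directly. The paper's stage-by-stage organization buys the explicit branch structure of each $T_k$. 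It uses that structure, with H\"older interpolation, to propagate exponential branch summability (\Cref{prop.expsumh1finite}), which is needed later but not for this proposition.

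Both arguments rest on the same input, the uniform distortion bound \eqref{eqn.hqexpsum2}: $\nu(gA)\asymp e^{-\psi(\mu(g))}\nu(A)$ for all finite admissible products $g$ of branches. By conformality and \Cref{lem.buseandcartan}, this follows from the uniform shadow containment $g\Delta_{i_0}\subset\mathcal O_r(o,go)$. That containment is exactly the step you leave open, and it is the crux. Iterating ball flipping through the nested cells, as you suggest, would again pay a constant per letter.

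The missing idea is a ping-pong set on the repelling side. The closed neighborhoods $U_i$ of $p_i$ in \eqref{eqn:defUi} are disjoint from $\Delta_i$ and satisfy $\gamma^{-1}U_j\subset U_i$ for every $\gamma\in\mathcal H_0(i,j)$ (\Cref{lem.LP6.62}). Ball flipping and \eqref{eqn.LP6.6} enter only there, once, for a single letter. These inclusions compose without loss: for $g=\gamma_n\cdots\gamma_1$ one gets $g^{-1}U_{i_n}\subset U_{i_0}$.

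The convergence property (\Cref{lem.ambient-shadow-convergence}) then closes the argument. If shadow containment failed along a sequence $g_k\to\infty$, the attracting limit point of $g_k^{-1}$ would have to lie both in $\Delta_{i_0}$ and in $U_{i_0}$, a contradiction (\Cref{lem.inversebranchshadow}). With this lemma supplied, your Step~2 follows directly from \Cref{lem.buseandcartan}, without \Cref{prop.holderlocal_Buse_prelim}, and your proof is complete.
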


We prove this by removing one index other than $0$ at a time and
taking the first-return map to the remaining domains.
After $|I_0|-1$ steps, only $\Delta_0$ remains.

Set $X_0:=\widetilde\Lambda_+\cap\bigcup_{i\in I_0}\Delta_i$,
and retain the notation $T_0$ for its restriction to $X_0$.
Then $T_0$ maps $X_0$ into itself, and
$\gamma(X_0\cap\Delta_i)\subset X_0\cap\Delta_j$
for every $i,j\in I_0$ and $\gamma\in\mathcal H_0(i,j)$.

Suppose inductively that we have defined an index set $I_k$
containing $0$ and a map $T_k:X_k\to X_k$, where
$X_k\subset\bigcup_{i\in I_k}\Delta_i$ has full $\nu$-measure
in that union. For $i,j\in I_k$, let $\mathcal H_k(i,j)$
denote the group elements defining the inverse branches of $T_k$
from $\Delta_i$ to $\Delta_j$. Assume that, for every
$\gamma\in\mathcal H_k(i,j)$ and $y\in X_k\cap\Delta_i$,
we have $\gamma y\in X_k\cap\Delta_j$ and $T_k(\gamma y)=y$.
The transition graph of $T_k$ has vertex set $I_k$ and an edge
$j\to i$ whenever $\mathcal H_k(i,j)\ne\emptyset$.
Assume that this graph is strongly connected.
If $I_k=\{0\}$, the construction stops.

Otherwise, put $\ell_k:=\max I_k>0$ and
$J_k:=I_k\smallsetminus\{\ell_k\}$, and set
$Y_{k+1}:=X_k\cap\bigcup_{i\in J_k}\Delta_i$.
For $x\in Y_{k+1}$, define its first-return time by
$$
n_{k+1}(x)
:=
\inf\{m\geq1:T_k^m x\in Y_{k+1}\},
$$
with $\inf\emptyset=\infty$.
Set
$$
X_{k+1}
:=
\{x\in Y_{k+1}:T_k^m x\in Y_{k+1}
\text{ for infinitely many }m\geq1\},
$$
and define $T_{k+1}(x):=T_k^{n_{k+1}(x)}x$ for
$x\in X_{k+1}$. Put $I_{k+1}:=J_k$.
After its first return, a point of $X_{k+1}$ still returns
infinitely often to $Y_{k+1}$.
Hence $T_{k+1}$ maps $X_{k+1}$ into itself.

The inverse branches of $T_{k+1}$ are compositions of
inverse branches of $T_k$ along paths that start and end
in $J_k$ and whose intermediate vertices are all $\ell_k$.
If such a composition $\gamma$ has domain $\Delta_i$ and
$y\in X_{k+1}\cap\Delta_i$, then $\gamma y$ first returns
to $Y_{k+1}$ at $y$. Hence $\gamma y$ also returns
infinitely often, so $\gamma y\in X_{k+1}$ and
$T_{k+1}(\gamma y)=y$.

\begin{lemma}[Connectivity of the transition graphs]
\label{lem.returngraphirreducible}
For every $k$, the transition graph of $T_k$ is strongly connected.
If $I_k\neq\{0\}$, there is an edge from $\ell_k$ to a vertex
of $J_k$.
\end{lemma}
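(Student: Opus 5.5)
Induct on $k$. The base case $k=0$ holds by the choice of $I_0$: it was selected so that the graph on $I_0$ is strongly connected and no directed edge leaves $I_0$.

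For the inductive step, assume the transition graph of $T_k$ on $I_k$ is strongly connected. Suppose $I_k\neq\{0\}$, so $\ell_k>0$ and $J_k\neq\emptyset$ (it contains $0$). Since $\ell_k$ can reach $0\in J_k$ by a directed path within $I_k$, take the first vertex of that path after $\ell_k$ which is not $\ell_k$. Graph edges join distinct vertices, or we drop loops, so this vertex lies in $J_k$, and the first edge of the path gives an edge $\ell_k\to j$ with $j\in J_k$. This proves the second assertion. The first assertion for $k=0$ is the base case.

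Next we show strong connectivity of the graph of $T_{k+1}$ on $J_k$. Recall the convention that an edge $j\to i$ means $\mathcal H_k(i,j)\ne\emptyset$, i.e. there are inverse branches from $\Delta_i$ into $\Delta_j$. Take $a,b\in J_k$. By strong connectivity of the $T_k$-graph there is a path from $a$ to $b$ in $I_k$. Split it at its visits to $J_k$. Each segment then runs from a vertex of $J_k$ to a vertex of $J_k$, with all intermediate vertices equal to $\ell_k$.

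Such a segment corresponds to a composition of inverse branches of $T_k$, with intermediate domains $\Delta_{\ell_k}$. Composing one element chosen from each $\mathcal H_k$ along the segment gives a group element of the form described before the lemma. This element is an inverse branch of $T_{k+1}$, so the segment yields an edge of the $T_{k+1}$-graph.

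The point to check, and the main obstacle, is that this composition genuinely gives a nonempty branch. Concretely, for $\gamma_1\in\mathcal H_k(\ell_k,j)$ and $\gamma_2\in\mathcal H_k(i,\ell_k)$, the composition $\gamma_1\gamma_2$ must act on $X_{k+1}\cap\Delta_i$ with image in $X_{k+1}$ and with first return exactly at the right time. This follows from the invariance recorded in the inductive hypothesis: for $y\in X_{k+1}\cap\Delta_i$ we have $\gamma_2y\in X_k\cap\Delta_{\ell_k}$, which is not in $Y_{k+1}$, and $\gamma_1\gamma_2 y\in X_k\cap\Delta_j$ with $T_k^2(\gamma_1\gamma_2y)=y$. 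Thus the first return to $Y_{k+1}$ is $y$, as in the paragraph preceding the lemma. The branch is nonempty because $X_{k+1}\cap\Delta_i$ is nonempty; we get that once we know $X_{k+1}$ has full measure.

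If full measure has not yet been established at this point, note that edges of the graph are defined by the index sets $\mathcal H_{k+1}(i,j)$, i.e. by the existence of the group element. Nonemptiness of the associated set then isn't needed for the combinatorial statement, so the edge exists regardless. Concatenating these edges gives a path from $a$ to $b$ in $J_k$. This proves strong connectivity of the $T_{k+1}$-graph and completes the induction.
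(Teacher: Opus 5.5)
Your proof is correct and is essentially the paper's argument: the edge from $\ell_k$ into $J_k$ is read off from a path from $\ell_k$ to $0$, and strong connectivity of the $T_{k+1}$-graph comes from cutting a $T_k$-path at its successive visits to $J_k$. The paragraph checking the first-return property and the nonemptiness of $X_{k+1}\cap\Delta_i$ is unnecessary, since edges are defined purely by $\mathcal H_{k+1}(i,j)\neq\emptyset$, as you note at the end. Also, your remark that edges join distinct vertices is not accurate, since loops at $\ell_k$ do occur, e.g.\ from $\mathcal H_k(\ell_k,\ell_k)$; your ``first vertex different from $\ell_k$'' formulation already handles them, so the argument is unaffected.
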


\begin{proof}
The assertion holds for $k=0$ by the choice of $I_0$.
Suppose that the transition graph of $T_k$ is strongly connected
and $I_k\neq\{0\}$.
Since $J_k$ is nonempty, a path from $\ell_k$ to $J_k$
contains an edge from $\ell_k$ to a vertex of $J_k$.

Given $i,j\in J_k$, choose a directed path from $i$ to $j$
in the graph of $T_k$ and divide it at successive visits to $J_k$.
Each resulting segment has all its intermediate vertices
equal to $\ell_k$, so it defines a branch of $T_{k+1}$.
These branches give a directed path from $i$ to $j$ in the
transition graph of $T_{k+1}$.
Thus this graph is strongly connected, completing the induction.
\end{proof}

The following proposition, proved below, shows that the restriction
to $X_{k+1}$ removes only a null set.

\begin{proposition}
\label{prop.T is well def a.e. induction}
For every $k$ with $I_k\neq\{0\}$, the set $X_{k+1}$
has full $\nu$-measure in $Y_{k+1}$.
\end{proposition}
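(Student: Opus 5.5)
We need to show: for a.e. $x\in Y_{k+1}$, the $T_k$-orbit returns to $Y_{k+1}$ infinitely often. Equivalently, the set of points of $X_k$ whose orbit eventually stays forever in $\Delta_{\ell_k}$ is $\nu$-null. It suffices to show that the set $E:=\{y\in X_k\cap\Delta_{\ell_k}: T_k^m y\in\Delta_{\ell_k}\text{ for all }m\ge0\}$ is $\nu$-null. Indeed, the bad set in $Y_{k+1}$ is contained in $\bigcup_{n\ge1}T_k^{-n}(E)$. Each $T_k^{-n}(E)$ is a countable union of images $\gamma E$ under inverse-branch compositions $\gamma\in\Gamma_\rho$, and $\nu$ is $\Gamma_\rho$-quasi-invariant. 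So countable additivity gives nullity.

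To show $\nu(E)=0$, I would unwind $T_k$ in terms of $T_0$. Every branch of $T_k$ is a finite composition of $T_0$-branches. A point of $E$ then has a $T_0$-orbit that visits only indices in $\{\ell_k\}\cup(I_0\smallsetminus I_k)$ after its entry, and never visits $J_k$. Here I plan an induction on $k$. For $k=0$, the statement concerns $T_0$ itself: the points of $\Delta_{\ell_0}$ whose $T_0$-orbit stays in $\Delta_{\ell_0}$ forever. By the definition of $T_0$, a point $y\in\Delta_{\ell}$ with $T_0y\in\Delta_\ell$ lies in some $J_p$, $p\in P_{\ell,n}\cap\Gamma_\rho p_\ell$. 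So staying forever in $\Delta_\ell$ means $y$ lies in the nested cylinder sets $\gamma_1\cdots\gamma_m\Delta_\ell$ with all $\gamma_s\in\mathcal H_0(\ell,\ell)$.

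The key estimate is that the total mass of the branches $\mathcal H_0(\ell,\ell)$ is strictly less than the mass of $\Delta_\ell$. By \Cref{lem.LP5.17} and \Cref{lem.tilingJp},
$$\sum_{\gamma\in\mathcal H_0(\ell,\ell)}\nu(\gamma\Delta_\ell)\le\nu\Bigl(\Delta_\ell\smallsetminus\bigcup_{i\neq\ell}\bigcup_{\gamma\in\mathcal H_0(i,\ell)}\gamma\Delta_i\Bigr).$$
Strong connectivity (\Cref{lem.returngraphirreducible}) gives an edge $\ell\to i$ with $i\in J_k$, so some cell $\gamma\Delta_i$ of positive measure is excluded. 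The same bound transported by bounded distortion gives a uniform $\theta<1$ such that
$$\nu\bigl(\gamma(\text{cylinder of length }m)\bigr)\le\theta\,\nu(\gamma\Delta_\ell).$$
For the bounded distortion I would use the conformality of $\nu$ together with \Cref{lem.buseandcartan} on shadows, as in \Cref{lem.LP5.17}. This gives a quasi-Bernoulli comparison $\nu(\gamma A)\asymp\nu(\gamma\Delta_\ell)\,\nu(A)/\nu(\Delta_\ell)$ for $A\subset\Delta_\ell$ and branches $\gamma$. Iterating, the mass of cylinders of length $m$ that stay in $\ell$ is at most $\theta^m\to0$, so $\nu(E)=0$.

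For general $k$, the same argument applies to $T_k$ in place of $T_0$. Its inverse branches are still elements of $\Gamma_\rho$ mapping $\Delta_i$ into $\Delta_j$, and they satisfy the same shadow and distortion estimates, since they are compositions of $T_0$-branches and the cocycle identity holds. Strong connectivity of the graph of $T_k$ supplies the escaping edge.

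The main obstacle is the uniform distortion (quasi-Bernoulli) estimate for compositions of branches. My first attempt would derive it from \Cref{prop.holderlocal_Buse_prelim} combined with contraction of the branches. Should that prove too weak, I would instead argue by ergodicity. The set of points whose orbit eventually stays in $\Delta_{\ell_k}$ lifts to a $\Gamma_\rho$-invariant set. By ergodicity of $\nu$ (\Cref{thm.ps atomless 0}) this set is null or conull, and it cannot be conull because $\nu(\gamma\Delta_i)>0$ for the escaping branch $\gamma$, whose points leave $\Delta_{\ell_k}$.
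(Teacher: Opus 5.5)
Your scheme matches the paper's proof, which is \Cref{prop.hqexpsumdecay} followed by \Cref{prop.aerecurrentexapndingmap}. You reduce to the set $E$ of points staying in $\Delta_{\ell_k}$ forever and pull it back by countably many branches using quasi-invariance. You then observe that the cells $\gamma\Delta_{\ell_k}$, $\gamma\in\mathcal H_k(\ell_k,\ell_k)$, miss a cell $\gamma\Delta_i$ of positive measure coming from an edge $\ell_k\to i\in J_k$, and you iterate a distortion estimate to get geometric decay. The induction on $k$ and the unwinding through $T_0$ are unnecessary, since the argument runs directly for $T_k$.

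\textbf{The gap.} The gap is the step you flag yourself: the comparison $\nu(gA)\asymp e^{-\psi(\mu(g))}\nu(A)$ for $A\subset\Delta_{\ell_k}$, uniform over products $g$ of arbitrarily many branches. Citing \Cref{lem.LP5.17} does not give it. That lemma places a ball about $p$ in the shadow of a single top representative $\gamma_p$. Chaining one-step distortion bounds along a word of length $m$ only yields a factor $e^{O(m)}$.

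\textbf{The missing input.} What you need is \Cref{lem.inversebranchshadow}: there is $r>0$ such that every composition $g$ of $T_0$-branches, of any length, satisfies $g\Delta_{i_0}\subset\mathcal O_r(o,go)$. Its proof uses the closed neighborhoods $U_i$ of the cusp points, which satisfy $\gamma^{-1}U_j\subset U_i$ for every $T_0$-branch (\Cref{lem.LP6.62}), together with \Cref{lem.ambient-shadow-convergence}. With it, \Cref{lem.buseandcartan} gives $\beta_{g\xi}(e,g)=\mu(g)+O(1)$ for $\xi\in\Delta_{i_0}$. Conformality then yields \eqref{eqn.hqexpsum2}, and your iteration goes through.

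\textbf{Why your alternatives fail.} Neither alternative you propose supplies this estimate.
\begin{itemize}
\item \Cref{prop.holderlocal_Buse_prelim}, applied to $g^{-1}$, requires the points of $\Delta_{i_0}$ to lie in $\mathcal O_r(g^{-1}o,o)=g^{-1}\mathcal O_r(o,go)$. That is exactly the containment in question.
\item Uniform contraction of branches (\Cref{lem:uniform-branch-contraction}) is proved only later, for $T=T_{\mathrm{ret}}^{\,n_0}$. Its construction depends on the present proposition and on \Cref{lem.inversebranchshadow}, so using it here is circular.
\item The ergodicity fallback fails outright. By ergodicity of $\nu$ (\Cref{thm.ps atomless 0}), the $\Gamma_\rho$-saturation of every set of positive measure is conull. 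So conullness of the saturation of your bad set contradicts nothing. Staying in $\Delta_{\ell_k}$ forever is a property of the $T_k$-itinerary, not a $\Gamma_\rho$-invariant one, and points of the escaping cell $\gamma\Delta_i$ may well be $\Gamma_\rho$-equivalent to points of $E$.
\end{itemize}
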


\subsection{Estimates for the initial branch map}

Recall that
\begin{equation}
\label{eqn:H0ij}
\mathcal H_0(i,j)
=
\bigcup_{n\geq1}
\left\{
\gamma_p\gamma_1:
p\in\Gamma_\rho p_i\cap P_{j,n},\ \gamma_1\in N_p
\right\},
\qquad i,j\in I_0.
\end{equation}
Write $\mathcal H_0:=\bigcup_{i,j\in I_0}\mathcal H_0(i,j)$
for the collection of all inverse branches of $T_0$.

For $i\in I_0$, define the following neighborhood of $p_i$
in $\Lambda_\rho$:
\begin{equation}
\label{eqn:defUi}
U_i
:=
\Lambda_\rho\smallsetminus
g_i^{-1}\mathcal B_{\bar\psi}
\left(g_i\Delta_i,\frac1{2N_0\eta}\right).
\end{equation}

\begin{lemma}
\label{lem.LP6.62}
For every $\eta>0$ sufficiently small, we have
$\gamma^{-1}U_j\subset U_i$ for all $\gamma\in\mathcal H_0(i,j)$.
\end{lemma}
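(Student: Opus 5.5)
We prove the equivalent statement $\gamma U_j^{\,c}\supset U_i^{\,c}$, i.e.\ that $\gamma^{-1}$ carries every point far from the fundamental domain (a neighborhood of the cusp $p_j$) to a point far from the fundamental domain of type $i$. Write $\gamma=\gamma_p\gamma_1$ with $p\in\Gamma_\rho p_i\cap P_{j,n}$ and $\gamma_1\in N_p$.

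By definition, $x\in U_j$ means $D_{\bar\psi}(g_jx,g_j\Delta_j)\ge 1/(2N_0\eta)$ in the conjugated system $\Gamma_j$. Since $J_p\subset\Delta_j$ lies within $D_{\bar\psi}$-distance $O(\eta h(p))\ll\eta$ of $\Delta_j$ by \Cref{lem.LP6.13}, the set $U_j$ is disjoint from a large neighborhood of $J_p$. We therefore work in the coordinates of $\Gamma_i$. Set $\alpha_p:=g_i\gamma_pg_i^{-1}$, which sends $p_0$ to $g_ip$ and $\alpha_p^{-1}p_0=g_ix_p\in g_i\Delta_i$.

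\textbf{Step 1.} For $y=g_ix$ with $x\in U_j$, we bound $D_{\bar\psi}(y,g_ip)$ from below by a constant independent of $\eta$. This follows from \Cref{lem.metrics} (uniform bi-Lipschitz comparison of the finitely many $g_i^{-1}g_j$ near $\Delta$) together with the quasi-triangle inequality \Cref{lem.eucdistancetriangle}, since $g_ip$ lies in the compact set $g_i\Delta_j$ and $x$ is at distance $\gg\eta^{-1}$ from $\Delta_j$ in the $j$-coordinates. When $x$ is near $p_j$ itself, $y$ is near $g_ip_j$, which is at bounded distance from $g_i\Delta_i$ only when $i=j$. In that case we argue directly with the $\mathsf P_i$-invariant distance.

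\textbf{Step 2.} We apply the ball-flipping estimate \eqref{eqn.ballflipconjugate}:
\[
D_{\bar\psi}(\alpha_p^{-1}y,g_ix_p)\asymp \frac{h_i(g_ip)}{D_{\bar\psi}(y,g_ip)}\ll h(p)\ll\eta .
\]
Hence $\alpha_p^{-1}y$ lies within $O(\eta)$, and in particular within $O(1)$, of $g_ix_p\in g_i\Delta_i$. For $i=j$ with $y$ near $p_0$, the same flipping shows that $\alpha_p^{-1}$ sends a neighborhood of $p_0$ to a small neighborhood of $g_ix_p$.

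\textbf{Step 3.} We apply $g_i\gamma_1^{-1}g_i^{-1}$, which preserves $D_{\bar\psi}$ by \eqref{inv}. The image point is then within $O(1)$ of $g_i\gamma_1^{-1}x_p$. Because $\gamma_1\in N_p$, \eqref{eqn.LP6.6} gives
\[
D_{\bar\psi}\bigl(g_i\gamma_1^{-1}x_p,\;g_i\Delta_i\bigr)\ge\eta^{-1}.
\]
The quasi-triangle inequality then yields
\[
D_{\bar\psi}\bigl(g_i\gamma^{-1}x,\;g_i\Delta_i\bigr)\ge N_0^{-1}\eta^{-1}-O(1)\ge \frac{1}{2N_0\eta}
\]
for $\eta$ small, which is exactly $\gamma^{-1}x\in U_i$.

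The main obstacle is Step 1 when $i=j$ and $x$ is close to $p_i$, and more generally keeping the constants uniform in $\eta$. The quantity $D_{\bar\psi}(y,g_ip)$ must be bounded below by an $\eta$-independent constant, so that the flipped point sits well inside the $\eta^{-1}$-scale complement of $R_{i,\eta}$. This requires care with the degeneracy of $D_{\bar\psi}$ at $p_0$, which I would handle by working with $d_{\bar\psi}$ there, using \Cref{lem.sphereeuccompare}.
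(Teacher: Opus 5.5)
Your Steps~2 and~3 are the paper's argument: flipping by $g_i\gamma g_i^{-1}$ (which you factor as $\alpha_p$ followed by the $D_{\bar\psi}$-isometry $g_i\gamma_1^{-1}g_i^{-1}$) places $g_i\gamma^{-1}x$ within $O(1)$ of $g_i\gamma^{-1}p_i=g_i\gamma_1^{-1}x_p$. Then \eqref{eqn.LP6.6} with \Cref{lem.eucdistancetriangle} gives $\gamma^{-1}x\in U_i$.

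The gap is Step~1. You rightly call it the crux, but you do not prove it, and the justification you give fails. For small $\eta$, the whole set $U_j$ is a small neighborhood of $p_j$, because $D_{\bar\psi}(\xi,g_j\Delta_j)\to\infty$ only as $\xi\to p_0$. So $g_jU_j$ lies next to $p_0$, which is precisely where the bi-Lipschitz comparison in \Cref{lem.metrics} is unavailable. Moreover, transporting the bound $D_{\bar\psi}(g_jx,g_j\Delta_j)\gg\eta^{-1}$ to the $i$-coordinates would give a false conclusion when $i\neq j$. In that case $g_ix$ is close to the finite point $g_ip_j\neq p_0$, so $D_{\bar\psi}(g_ix,g_ip)$ is merely bounded. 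Your case description is also reversed. The point $g_ip_j$ is at bounded distance from $g_i\Delta_i$ exactly when $i\neq j$. For $i=j$ it equals $p_0$, and the bound $D_{\bar\psi}(g_ix,g_ip)\ge D_{\bar\psi}(g_ix,g_i\Delta_i)\ge(2N_0\eta)^{-1}$ is immediate.

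What is missing is the paper's compactness argument, which needs no case split.
\begin{itemize}
\item Fix, independently of $\eta$, a neighborhood $W_j$ of $p_j$ with $\overline{W_j}\cap\Delta_j=\emptyset$, and shrink $\eta$ so that $U_j\subset W_j$.
\item By \eqref{dedef} and \Cref{lem.dpsidiameterfinite}, $D_{\bar\psi}(g_ix,g_ip)\ge d_{\bar\psi}(g_ix,g_ip)/\operatorname{diam}_{d_{\bar\psi}}(g_i\Lambda_\rho)^2$.
\item By continuity and compactness, the right-hand side is bounded below by some $\varepsilon>0$. The bound is uniform in $x\in\overline{W_j}\smallsetminus\{p_i\}$, $p\in\Delta_j$, and the finitely many pairs $i,j$.
\item The point $x=p_i$ is handled directly by \eqref{eqn.LP6.6}.
\end{itemize}

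Two smaller slips. The bound $h(p)\ll\eta$ in Step~2 is unjustified: heights of points of $P_{j,n}$ are only $O(1)$, though, as you observe, $O(1)$ suffices. Also, the complemented reformulation at the start should read $\gamma U_i^{\,c}\subset U_j^{\,c}$.
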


\begin{proof}
Write $\gamma=\gamma_p\gamma_1$, so that $\gamma p_i=p\in\Delta_j$.
Since $\Delta_j$ is compact and disjoint from $p_j$, we may choose a neighborhood $W_j$ of $p_j$ so that $\overline{W_j}\cap \Delta_j=\emptyset$.
As $\eta\to0$, the sets $U_j$ shrink to $p_j$. Decreasing $\eta$ if necessary, we may therefore assume $U_j\subset W_j$.
Compactness and \eqref{dedef}  now give $\varepsilon>0$ such that,
independent of $\eta$,
\begin{equation}
\label{eqn.multicuspu_i}
D_{\bar\psi}(g_ix,g_ip)\geq\varepsilon
\qquad(x\in U_j\smallsetminus\{p_i\}),
\end{equation}
uniformly over the finitely many pairs $i,j$ and all such $p$.

Fix $x\in U_j$.
By \eqref{eqn.LP6.6}, applied in the conjugated system,
$D_{\bar\psi}(g_i\gamma^{-1}p_i,g_i\Delta_i)\geq\eta^{-1}$.
If $x=p_i$, this already gives $\gamma^{-1}x\in U_i$.
Otherwise, \Cref{lem.ballflipping}, applied to
$g_i\gamma g_i^{-1}$, together with
\eqref{eqn.multicuspu_i} and the uniform bound
$h_i(g_ip)\ll1$, gives
$D_{\bar\psi}(g_i\gamma^{-1}x,g_i\gamma^{-1}p_i)\leq C$,
where $C$ is independent of $\gamma$, $x$, and $\eta$.
Therefore, decreasing $\eta>0$ sufficiently small that $\eta\leq(2N_0C)^{-1}$,
\Cref{lem.eucdistancetriangle} gives
$$
D_{\bar\psi}(g_i\gamma^{-1}x,g_i\Delta_i)
\geq \frac1{N_0\eta}-C
\geq \frac1{2N_0\eta}.
$$
This means that
$\gamma^{-1}x\in U_i$.
\end{proof}

For the rest of the paper, we  fix $\eta>0$ sufficiently small to satisfy all the preceding requirements, including those of \Cref{lem.LP6.62}.
\begin{lemma}
\label{lem.inversebranchshadow}
There exists $r>0$ such that, for every $n\geq1$,
$i_0,\ldots,i_n\in I_0$, and
$\gamma_m\in\mathcal H_0(i_{m-1},i_m)$ $(1\leq m\leq n)$,
the product $g:=\gamma_n\cdots\gamma_1$ satisfies
$g\Delta_{i_0}\subset\mathcal O_r(o,go)$.
\end{lemma}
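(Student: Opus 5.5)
We plan to reduce the claim to a statement about hyperbolic geometry in $\mathbb H^2$ and then transfer it through the Morse embedding using \Cref{lem.shadowscomparison}. Write $\widehat g:=\rho^{-1}(g)$ and $\widehat\Delta_{i_0}$ for the interval with $f(\widehat\Delta_{i_0}\cap\Lambda(\Gamma))=\Delta_{i_0}$. By \Cref{lem.shadowscomparison}, it suffices to find a uniform $r'$ such that every geodesic ray from $o$ to a point of $\widehat g\widehat\Delta_{i_0}\cap\Lambda(\Gamma)$ passes within $r'$ of $\widehat go$. Equivalently, after applying $\widehat g^{-1}$, the geodesic $[\widehat g^{-1}o,\zeta]$ must pass within $r'$ of $o$ for every $\zeta\in\widehat\Delta_{i_0}\cap\Lambda(\Gamma)$. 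Since $\widehat\Delta_{i_0}$ is a compact interval avoiding $\xi_{i_0}$, this holds as soon as $\widehat g^{-1}o$ lies in a fixed neighbourhood of $\xi_{i_0}$, one whose closure is disjoint from $\widehat\Delta_{i_0}$ and bounded away from $o$ (for $n\ge1$ and $\widehat g^{-1}o$ far from $o$). The finitely many $g$ with $\widehat g^{-1}o$ in a bounded set are handled by enlarging $r$.

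The key step is to locate $g^{-1}o$ using the invariant neighbourhoods $U_i$ from \eqref{eqn:defUi} together with \Cref{lem.LP6.62}. We regard $U_i$ as the boundary trace of a region $\widehat U_i\subset Y\cup\Lambda(\Gamma)$, namely a half-plane or horoball-type neighbourhood of $\xi_i$ cut off by a geodesic with endpoints in the two components. We want to show that $\gamma^{-1}$ maps the closure of $\widehat U_j$ into $\widehat U_i$ for every $\gamma\in\mathcal H_0(i,j)$. On the boundary this is exactly \Cref{lem.LP6.62}. For the interior we use the fact that $\mathbb H^2$ half-planes are determined by their boundary arcs, so we choose $\widehat U_i$ as the convex hull region of the complementary arc of $f^{-1}(U_i)$.

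Next we check that $o\in\widehat U_j$ for all $j$. This should hold because $\gamma^{-1}o$ for $\gamma=\gamma_p\gamma_1$ with $p\in\Delta_j$ lies near $x_p$-side horoball for $p_i$; more directly, the top representative property (\Cref{lem.representshadow}) combined with $\gamma_1\in N_p$ gives $\widehat\gamma^{-1}o$ deep in the cusp region of $\xi_i$, at distance at least roughly $\log\eta^{-1}$ from $\widehat\Delta_i$ in the blown-up geometry. Granting this, iteration gives
$$\widehat g^{-1}o=\widehat\gamma_1^{-1}\cdots\widehat\gamma_n^{-1}o\in \widehat\gamma_1^{-1}\widehat U_{i_1}\subset\widehat U_{i_0}.$$
Here we only need $\widehat\gamma_n^{-1}o\in\widehat U_{i_{n-1}}$, which follows from the single-step estimate \eqref{eqn.LP6.6}: $\gamma_n^{-1}p_{i_{n-1}}$ is far, by $\ge\eta^{-1}$ in $D_{\bar\psi}$, from $\Delta_{i_{n-1}}$, and the group element moves $o$ close to that ray by \Cref{lem.representshadow}.

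The main obstacle is making the passage from the boundary statement \Cref{lem.LP6.62} to a uniform statement about $\widehat g^{-1}o$ rigorous. In particular, $\widehat U_{i_0}$ must be uniformly separated from $\widehat\Delta_{i_0}$ so that geodesics from points of $\widehat U_{i_0}$ to $\widehat\Delta_{i_0}$ pass within a uniform distance of $o$. We would first try to obtain this from the concrete one-dimensional picture. After conjugating $\xi_{i_0}$ to $\infty$, $f^{-1}(U_{i_0})$ is the complement of a large bounded interval containing $\widehat\Delta_{i_0}$, by \Cref{lem.quasisymmetry}. A point above the semicircle over that interval connects to $\widehat\Delta_{i_0}$ by geodesics passing within a distance of $o$ that depends only on $\eta$, which is now fixed. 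This gives $r$ depending on $\eta$ only.
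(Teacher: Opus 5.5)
Your route differs from the paper's. The paper argues by contradiction on the boundary. A failure of the shadow bound along $g_k$ forces, via \Cref{lem.ambient-shadow-convergence}, $x_k\to x\in\Delta_i$ with $g_k^{-1}\zeta\to x$ for all but one $\zeta$. Then $g_k^{-1}U_j\subset U_i$ and closedness of $U_i$ give $x\in U_i$, contradicting $U_i\cap\Delta_i=\emptyset$.

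Lifting the invariant sets to half-planes in $\mathbb H^2$ can also work, but the step locating $\widehat\gamma_n^{-1}o$ fails as written. The claim $o\in\widehat U_j$ cannot be expected. Indeed, $U_j$ shrinks to $p_j$ as $\eta\to0$ (this is used in the proof of \Cref{lem.LP6.62}), so $\widehat U_j$ is a small neighbourhood of $\xi_j$ in $\overline{\mathbb H^2}$. The fallback claim $\widehat\gamma_n^{-1}o\in\widehat U_{i_{n-1}}$ is not justified either. \Cref{lem.representshadow} and \eqref{eqn.LP6.6} only place $\widehat\gamma^{-1}o$ within a uniform distance $D_0$ of the geodesic from $\xi_i$ to $f^{-1}(\gamma^{-1}p_i)$. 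That this geodesic lies in $\widehat U_i$ does not put nearby points in $\widehat U_i$. Nothing you cite compares the gap between $f^{-1}(\gamma^{-1}p_i)$ and the boundary semicircle of $\widehat U_i$ with the deviation that $D_0$ allows.

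Membership is in fact unnecessary. Fix $q_j\in\widehat U_j\cap\mathbb H^2$ for each $j$. Invariance gives $\widehat g^{-1}q_{i_n}\in\widehat U_{i_0}$, and $d(\widehat g^{-1}o,\widehat g^{-1}q_{i_n})=d(o,q_{i_n})\le C:=\max_j d(o,q_j)$. So $\widehat g^{-1}o$ lies in the closed $C$-neighbourhood of $\widehat U_{i_0}$. This region is bounded by a hypercycle with the same endpoints, so its closure in $\overline{\mathbb H^2}$ meets $\partial\mathbb H^2$ only in the boundary arc of $\widehat U_{i_0}$. Your compactness argument and \Cref{lem.shadowscomparison} then finish.

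The half-plane invariance also needs an argument. \Cref{lem.LP6.62} is an inclusion in $\Lambda_\rho$ only. Moreover, $f^{-1}(U_i)$ is the complement of a $D_{\bar\psi}$-neighbourhood, so it need not be of the form (arc)$\,\cap\,\Lambda(\Gamma)$, and there is no canonical ``complementary arc''.

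Let $A_i$ be the smallest closed subarc of the open arc $\partial\mathbb H^2\smallsetminus\widehat\Delta_i$ containing the compact set $f^{-1}(U_i)$, and let $\widehat U_i$ be the closed half-plane it spans. The endpoints of $A_j$ lie in $f^{-1}(U_j)$, so $\widehat\gamma^{-1}$ sends them into $A_i$. Moreover, $\widehat\gamma\widehat\Delta_i\subset\widehat\Delta_j$ as intervals. Its endpoints lie in $f^{-1}(\gamma\Delta_i)\subset\widehat\Delta_j$. It cannot contain $\xi_j$, since otherwise $p_j\in\gamma\Delta_i\subset\Delta_j$, contrary to \eqref{eqn.choiceoffunddomain}.

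Hence $\widehat\gamma^{-1}A_j$ misses $\widehat\Delta_i$. So it is not the arc between those endpoints that contains $\partial\mathbb H^2\smallsetminus A_i\supset\widehat\Delta_i$. Thus $\widehat\gamma^{-1}A_j\subset A_i$, and therefore $\widehat\gamma^{-1}\widehat U_j\subset\widehat U_i$.

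With these repairs your argument is complete and yields an explicit radius from hyperbolic geometry. The paper's proof is shorter because the convergence dynamics makes the inclusion on $\Lambda_\rho$ alone sufficient, with no passage to arcs or half-planes.
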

\begin{proof}
Suppose otherwise. For each $k\geq1$, choose a product
$g_k:=\gamma_{k,n_k}\cdots\gamma_{k,1}$ as in the statement
and $x_k\in\Delta_{i_{k,0}}$ such that
$g_kx_k\notin\mathcal O_k(o,g_ko)$, or equivalently, $x_k\notin \mathcal O_k(g_k^{-1}o, o)$. It follows that $g_k\to \infty$ and we may therefore assume that
the $g_k$ are pairwise distinct, $i_{k,0}=i$,
$i_{k,n_k}=j$, and $x_k\to x\in\Delta_i$.

By \Cref{lem.ambient-shadow-convergence}(1) and (3),
after passing to a further subsequence, there exists
$y\in\Lambda_\rho$ such that $g_k^{-1}\zeta\to x$
for every $\zeta\in\Lambda_\rho\smallsetminus\{y\}$.
Repeated application of \Cref{lem.LP6.62} gives
$g_k^{-1}U_j\subset U_i$.
Choose
$\zeta\in U_j\smallsetminus\{y\}$;
then $g_k^{-1}\zeta\in U_i$ and $g_k^{-1}\zeta\to x$.
As $U_i$ is closed, $x\in U_i$, contradicting
$U_i\cap\Delta_i=\emptyset$.
\end{proof}

\begin{proposition}
\label{prop.expsumh0finite}
There exists $\varepsilon>0$ such that
$$\sum_{\gamma\in\mathcal H_0}
e^{-(1-\varepsilon)\psi(\mu(\gamma))}<\infty.$$
\end{proposition}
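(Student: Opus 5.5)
We reduce the series to a sum over the cells $\gamma\Delta_i$ and use the shadow lemma together with the exponential decay of the residual mass. By \Cref{lem.inversebranchshadow} with $n=1$, every $\gamma\in\mathcal H_0(i,j)$ satisfies $\gamma\Delta_i\subset\mathcal O_r(o,\gamma o)$. Hence \Cref{lem.buseandcartan} gives $\beta_\xi(e,\gamma^{-1})\cdot(-1)$, equivalently $\beta_{\gamma^{-1}\xi}$-terms, within $O(1)$ of $\mu(\gamma)$ uniformly on the cell. Conformality of $\nu$ then yields
$$
\nu(\gamma\Delta_i)\asymp e^{-\psi(\mu(\gamma))}\nu(\Delta_i)\asymp e^{-\psi(\mu(\gamma))},
$$
uniformly in $\gamma\in\mathcal H_0$, since each $\nu(\Delta_i)>0$. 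Thus it suffices to show
$$
\sum_{\gamma\in\mathcal H_0}\nu(\gamma\Delta_i)^{1-\varepsilon}<\infty.
$$

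We organize this sum by generation. Fix $j$ and $n$, and consider $\gamma=\gamma_p\gamma_1$ with $p\in P_{j,n}$ and $\gamma_1\in N_p$. By \Cref{lem.tilingJp}, the cells $\gamma\Delta_i$ tile $J_p$ up to a null set. By \Cref{lem.LP6.17}, the sets $J_p$ are disjoint and contained in $\mathcal Z_{j,n-1}$. Therefore
$$
\sum_{p\in P_{j,n}}\sum_{\gamma_1\in N_p}\nu(\gamma_p\gamma_1\Delta_i)\le\nu(\mathcal Z_{j,n-1})\le(1-\varepsilon_0)^{n-1}
$$
by \Cref{prop.LP6.15ooo}. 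The $\varepsilon$ loss in the exponent then has to be absorbed by bounding the number of cells, or by a H\"older-type argument, within each generation.

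To control the cells we use the peripheral structure. By \Cref{lem.LP5.17}, $\nu(\gamma_p\gamma_1\Delta_i)\asymp e^{-\psi(\mu(\gamma_p))}\nu(\gamma_1\Delta_i)$. For $p$ of type $i$, coarse additivity of $\mu$ along the cusp together with \Cref{lem.regularunipotent_cartan} gives $\nu(\gamma_1\Delta_i)\asymp|k|^{-2\kappa_i}$ for $\gamma_1=u_i^k$, via \Cref{lem.LP5.18} in conjugated coordinates. The key point is that $\delta_i=1/(2\kappa_i)<1$ (\Cref{thm.entropydrop}), so $\sum_k|k|^{-2\kappa_i(1-\varepsilon)}<\infty$ for $\varepsilon$ small. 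Moreover $\sum_k|k|^{-2\kappa_i(1-\varepsilon)}\ll_\varepsilon\bigl(\sum_{|k|\ge k_0}|k|^{-2\kappa_i}\bigr)^{1-\varepsilon'}$ comparably, using that $N_p$ misses only boundedly many $k$ (\Cref{lem.tilingJp}(1), uniformly since $R_{i,\eta}$ is fixed). Hence
$$
\sum_{\gamma_1\in N_p}\nu(\gamma_p\gamma_1\Delta_i)^{1-\varepsilon}\ll\bigl(e^{-\psi(\mu(\gamma_p))}\bigr)^{1-\varepsilon}\asymp\nu(J_p)^{1-\varepsilon},
$$
where $\nu(J_p)\asymp e^{-\psi(\mu(\gamma_p))}$ by \Cref{lem.LP5.17} and \Cref{lem.LP6.13}.

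It remains to sum $\nu(J_p)^{1-\varepsilon}$ over $p\in P_{j,n}$ and $n$. The main obstacle is the cardinality of $P_{j,n}$. The points of $P_{j,n}$ are $D_{\bar\psi}$-separated at scale $h_n/\eta$ (\Cref{lem.LP6.16}) inside the compact set $\Delta_j$. The lower mass bound \Cref{cor.PS_polyuniform} on disjoint balls then gives $\#P_{j,n}\ll(\eta/h_n)^{\beta}=\eta^\beta\mathtt H^{n\beta}$. By H\"older's inequality,
$$
\sum_{p\in P_{j,n}}\nu(J_p)^{1-\varepsilon}\le(\#P_{j,n})^{\varepsilon}\,\nu(\mathcal Z_{j,n-1})^{1-\varepsilon}\ll\mathtt H^{n\beta\varepsilon}(1-\varepsilon_0)^{(n-1)(1-\varepsilon)}.
$$
Choosing $\varepsilon$ so small that $\mathtt H^{\beta\varepsilon}(1-\varepsilon_0)^{1-\varepsilon}<1$ makes the sum over $n$ geometric, and finitely many $j$ complete the proof.
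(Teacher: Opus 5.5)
Your argument is correct. Through the peripheral step it agrees with the paper. There too one gets $\nu(\gamma_p\gamma_1\Delta_i)\asymp e^{-\psi(\mu(\gamma_p))}e^{-\psi(\mu(\gamma_1))}$ from \Cref{lem.inversebranchshadow} and conformality. The condition $1-\varepsilon>\delta_\psi(\mathsf P_i)$ then makes the sum over $\gamma_1\in N_p$ a finite constant depending only on the type. The proof ends with a geometric series against $\nu(\mathcal Z_{j,n-1})$.

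The difference is how you absorb the $\varepsilon$-loss inside generation $n$. The paper does it pointwise. For $p\in P_{j,n}$, the height window $h_n<\eta h_j(g_jp)$, together with \Cref{lem.heightondelta,lem.LP5.11} and $\psi\le 2\bar\psi+O(1)$ on $\mu(\Gamma_\rho)$, gives $e^{\varepsilon\psi(\mu(\gamma_p))}\ll\mathtt H^{2\varepsilon n}$. So each $p$ contributes $\ll\mathtt H^{2\varepsilon n}\nu(J_p)$, and no count of $P_{j,n}$ is needed. In your notation this is simply $\nu(J_p)^{-\varepsilon}\ll\mathtt H^{2\varepsilon n}$.

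You instead combine H\"older's inequality with the packing bound $\#P_{j,n}\ll(\eta/h_n)^\beta$ from \Cref{lem.LP6.16} and \Cref{cor.PS_polyuniform}. This works, with one check you skipped. For $j\neq0$ the separation holds in the $g_j$-coordinates, so you need the bi-Lipschitz comparison of \Cref{lem.metrics} on a neighborhood of $\Delta_j$ before applying \Cref{cor.PS_polyuniform}. Your route replaces the generation-dependent upper bound on $\psi(\mu(\gamma_p))$ with a count of generation-$n$ centers. Both are available here, so the paper's version is just shorter.

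Two small corrections. The Busemann statement in your first paragraph should read $\beta_{\gamma x}(e,\gamma)=\mu(\gamma)+O(1)$ for $x\in\Delta_i$. The displayed comparison of $\sum_k|k|^{-2\kappa_i(1-\varepsilon)}$ with a power of another sum is unnecessary: $N_p\subset\mathsf P_i$ depends only on the type, so the sum over $\gamma_1$ is a fixed finite constant.
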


\begin{proof}  By \Cref{lem.LP6.13} and the proof of \Cref{lem.LP5.17},
applied to the finitely many conjugated systems, there exists
$r>0$ such that, for every $i\in I_0$,
$p\in\Delta\cap\Gamma_\rho p_i$, and $\gamma_1\in N_p$,
\begin{equation}
\label{eqn:shadow-gammap}
\gamma_p\gamma_1\Delta_i
\subset J_p
\subset\mathcal O_r(o,\gamma_po).
\end{equation}
After increasing $r$, we also have
\begin{equation}
\label{eqn:shadow-gamma1}
\gamma_1\Delta_i\subset\mathcal O_r(o,\gamma_1o).
\end{equation}
Indeed, otherwise equivariance and
\Cref{lem.ambient-shadow-convergence}(3), applied to
a sequence in $\mathsf P_i$, would force a sequence
in $\Delta_i$ to converge to $p_i$, contradicting
compactness of $\Delta_i$ and $p_i\notin\Delta_i$.

Now fix $j\in I_0$ and restrict to
$p\in P_{j,n}\cap\Gamma_\rho p_i$, with $n\geq1$ and
$i\in I_0$. For $\gamma_1\in N_p$ and $x\in\Delta_i$,
the cocycle identity gives
$$
\beta_{\gamma_p\gamma_1x}(e,\gamma_p\gamma_1)
=
\beta_{\gamma_p\gamma_1x}(e,\gamma_p)
+
\beta_{\gamma_1x}(e,\gamma_1).
$$
By \eqref{eqn:shadow-gammap}, \eqref{eqn:shadow-gamma1},
and \Cref{lem.buseandcartan}, the two terms on the right
are $\mu(\gamma_p)+O(1)$ and $\mu(\gamma_1)+O(1)$,
respectively, with uniform errors.
By \Cref{lem.inversebranchshadow},
$\gamma_p\gamma_1\Delta_i
\subset\mathcal O_r(o,\gamma_p\gamma_1o)$.
Hence \Cref{lem.buseandcartan} and conformality of $\nu$ give
$\nu(\gamma_p\gamma_1\Delta_i)
\asymp e^{-\psi(\mu(\gamma_p\gamma_1))}$.
Combining this with the preceding cocycle estimate, we obtain
\begin{equation}
\label{eqn.intermediateareaJp}
e^{-\psi(\mu(\gamma_p\gamma_1))}
\asymp \nu(\gamma_p\gamma_1\Delta_i)
\asymp
e^{-\psi(\mu(\gamma_p))}
e^{-\psi(\mu(\gamma_1))}.
\end{equation}

By \Cref{thm.entropydrop}, choose $\varepsilon>0$ small enough
that $1-\varepsilon>\delta_\psi(\mathsf P_i)$ for every $i$.
For fixed $\eta$, the set $N_p$ depends only on the type $i$.
Both sums below are finite and strictly positive, since
$N_p$ is cofinite in $\mathsf P_i$.
As there are only finitely many types, we obtain, uniformly in $p$,
$$
\sum_{\gamma_1\in N_p}
e^{-(1-\varepsilon)\psi(\mu(\gamma_1))}
\asymp
\sum_{\gamma_1\in N_p}e^{-\psi(\mu(\gamma_1))}.
$$
Together with \eqref{eqn.intermediateareaJp} and
\Cref{lem.tilingJp}, this gives
\begin{equation}
\label{eqn.expsummableepsilon}
\begin{aligned}
\sum_{\gamma_1\in N_p}
e^{-(1-\varepsilon)\psi(\mu(\gamma_p\gamma_1))}
 \asymp
e^{\varepsilon\psi(\mu(\gamma_p))}
\sum_{\gamma_1\in N_p}\nu(\gamma_p\gamma_1\Delta_i)=
e^{\varepsilon\psi(\mu(\gamma_p))}\nu(J_p).
\end{aligned}
\end{equation}

By \eqref{lb}, $\psi^-(\mu(\gamma))$ is bounded below
uniformly on $\Gamma_\rho$, so
$\psi(\mu(\gamma))\leq2\bar\psi(\mu(\gamma))+O(1)$.
For $p\in P_{j,n}$,
\Cref{lem.heightondelta,lem.LP5.11} and the definition
of $P_{j,n}$ give
$\bar\psi(\mu(\gamma_p))\leq n\log\mathtt H+O(1)$,
with the fixed parameter $\eta$ absorbed in the error.
Hence, by \eqref{eqn:H0ij} and \eqref{eqn.expsummableepsilon},
$$
\begin{aligned}
\sum_{i\in I_0}\sum_{\gamma\in\mathcal H_0(i,j)}
e^{-(1-\varepsilon)\psi(\mu(\gamma))}
&\ll
\sum_{n\geq1}\mathtt H^{2\varepsilon n}
\sum_{p\in P_{j,n}}\nu(J_p)\ll
\sum_{n\geq1}\mathtt H^{2\varepsilon n}
\nu(\mathcal Z_{j,n-1})\ll
\sum_{n\geq1}
\bigl(\mathtt H^{2\varepsilon}(1-\varepsilon_0)\bigr)^n.
\end{aligned}
$$
The last estimate follows from \Cref{prop.LP6.15ooo}.
Taking $\varepsilon$ smaller if necessary so that
$\mathtt H^{2\varepsilon}(1-\varepsilon_0)<1$ proves
the desired summability for every $j\in I_0$, and hence
for $\mathcal H_0$.
\end{proof}

\subsection{The inductive argument}

We now prove the estimates needed at each step of the induction.
Fix $k$ with $I_k\neq\{0\}$.
For $I,J\subset I_k$, write
$\mathcal H_k(I,J):=
\bigcup_{i\in I,\ j\in J}\mathcal H_k(i,j)$,
and set $\mathcal H_k:=\mathcal H_k(I_k,I_k)$.

\begin{lemma}
\label{prop.hqexpsumdecay}
There exists $\varepsilon_k>0$ such that, for every $n\geq1$,
$$
\sum_{\gamma_1,\ldots,\gamma_n\in
\mathcal H_k(\ell_k,\ell_k)}
e^{-\psi(\mu(\gamma_1\cdots\gamma_n))}
\ll(1-\varepsilon_k)^n.
$$
\end{lemma}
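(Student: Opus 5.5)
Write $\ell:=\ell_k$. The plan is to convert the sum into the $\nu$-measure of the set of points whose first $n$ applications of $T_k$ all stay in $\Delta_\ell$, and then show that this measure decays geometrically.

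\textbf{Step 1: from Cartan projections to cell masses.} Every $\gamma\in\mathcal H_k$ is a product of inverse branches of $T_0$ along an admissible path. The same holds for any product $g=\gamma_1\cdots\gamma_n$ with $\gamma_m\in\mathcal H_k(\ell,\ell)$. So \Cref{lem.inversebranchshadow} applies and gives $g\Delta_\ell\subset\mathcal O_r(o,go)$ with $r$ uniform. By \Cref{lem.buseandcartan} and conformality,
\[
\nu(g\Delta_\ell)=\int_{\Delta_\ell}e^{-\psi(\beta_{gx}(e,g))}\,d\nu(x)\asymp e^{-\psi(\mu(g))}\,\nu(\Delta_\ell),
\]
uniformly in $n$ and the $\gamma_m$. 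The sum is therefore comparable to $\sum\nu(\gamma_1\cdots\gamma_n\Delta_\ell)$.

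\textbf{Step 2: identify the union of cells.} Distinct words give cells that are disjoint up to null sets, since they are distinct cylinder sets of $T_k$. The union of these cells is, up to a null set, $E_n:=\{x\in X_k\cap\Delta_\ell: T_k^mx\in\Delta_\ell \text{ for } 0\le m\le n\}$. It therefore suffices to prove $\nu(E_n)\ll(1-\varepsilon_k)^n$.

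\textbf{Step 3: one step of mass loss.} By \Cref{lem.returngraphirreducible} there is an edge from $\ell$ to some $j\in J_k$, that is, some $\gamma_*\in\mathcal H_k(\ell,j)$, so $\gamma_*\Delta_\ell\subset\Delta_j$. I expect to show that there is a uniform $c>0$ such that for every word $g$ in $\mathcal H_k(\ell,\ell)$,
\[
\nu(g\,\gamma_*\Delta_\ell)\ge c\,\nu(g\Delta_\ell).
\]
This follows from Step 1 applied to $g\gamma_*$ (a legitimate product of branches), together with $\mu(g\gamma_*)=\mu(g)+O(1)$ by \Cref{lem.cptcartan} for the fixed element $\gamma_*$. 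Now $g\gamma_*\Delta_\ell$ lies in $g\Delta_\ell$, but its points leave $\Delta_\ell$ after $T_k$ strips off $g$. Hence $\nu(E_{n+1}\cap g\Delta_\ell)\le(1-c)\nu(g\Delta_\ell)$. Summing over cells gives $\nu(E_{n+1})\le(1-c)\nu(E_n)$, and iterating yields the claim with $\varepsilon_k=c$.

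The step I expect to be the main obstacle is making Step 1 uniform. \Cref{lem.inversebranchshadow} is stated for products of $T_0$-branches with domain $\Delta_{i_0}$, so I need $T_k$-branches to unfold as such products with consistent index chains. Inductively this holds because each $T_{k}$ branch is a composition of $T_{k-1}$ branches along a path, hence of $T_0$ branches. I also need $\nu(\Delta_\ell)>0$, which holds by full support, and the null-set bookkeeping, which uses that boundary endpoints are countable and $\nu$ is atomless. If the comparability constant in Step 1 is only one-sided, the lower bound for $g\gamma_*$ together with the upper bound for $g$ still suffices for Step 3.
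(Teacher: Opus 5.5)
Your Step 3 uses the exit branch in the wrong direction, so that step fails as written. In the paper's convention, $\gamma\in\mathcal H_k(i,j)$ maps $\Delta_i$ into $\Delta_j$. An edge $j\to i$ of the transition graph means $\mathcal H_k(i,j)\neq\emptyset$. So the edge from $\ell$ to $j\in J_k$ supplied by \Cref{lem.returngraphirreducible} gives a branch in $\mathcal H_k(j,\ell)$, not in $\mathcal H_k(\ell,j)$.

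With your choice $\gamma_*\in\mathcal H_k(\ell,j)$ and $\gamma_*\Delta_\ell\subset\Delta_j$, two things break:
\begin{itemize}
\item The product $g\gamma_*$ is not an admissible composition. The image of $\gamma_*$ lies in $\Delta_j$, not in the domain $\Delta_\ell$ of $g$. Hence \Cref{lem.inversebranchshadow} does not apply to $g\gamma_*$.
\item The set $g\gamma_*\Delta_\ell\subset g\Delta_j$ is disjoint from $g\Delta_\ell$, not contained in it.
\end{itemize}
For $x=gy\in g\Delta_\ell$ with $g$ a word of length $n$ in $\mathcal J:=\mathcal H_k(\ell,\ell)$, you have $x\in E_{n+1}$ if and only if $T_ky\in\Delta_\ell$. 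So the mass that drops out is $g\bigl(\Delta_\ell\smallsetminus\bigcup_{\delta\in\mathcal J}\delta\Delta_\ell\bigr)$. Since $T_k$ acts as $\gamma^{-1}$ on $\gamma\Delta_i$, leaving $\Delta_\ell$ at the next step is governed by the \emph{domain} of the next branch, not its image.

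The repair is to take $\gamma_*\in\mathcal H_k(j,\ell)$ with $j\in J_k$, so that $\gamma_*\Delta_j\subset\Delta_\ell$.
\begin{itemize}
\item The exit set is then $g\gamma_*\Delta_j\subset g\Delta_\ell$. It misses $E_{n+1}$ because $T_k^{n+1}(g\gamma_*z)=z\in\Delta_j$.
\item Your Step 1 applies to the admissible product $g\gamma_*$ with domain $\Delta_j$.
\item \Cref{lem.cptcartan} then gives $\nu(g\gamma_*\Delta_j)\gg e^{-\psi(\mu(g))}\gg\nu(g\Delta_\ell)$, and your iteration $\nu(E_{n+1})\leq(1-c)\nu(E_n)$ goes through.
\end{itemize}

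With this fix your argument is essentially the paper's. The paper sets $E:=\Delta_{\ell_k}\smallsetminus\bigcup_{\delta\in\mathcal J}\delta\Delta_{\ell_k}$ and notes $\nu(E)\geq\nu(\gamma\Delta_i)>0$ for some $\gamma\in\mathcal H_k(i,\ell_k)$. It then applies the two-sided distortion bound for $g$ directly to the Borel set $E\subset\Delta_{\ell_k}$, which avoids composing with $\gamma_*$ and invoking \Cref{lem.cptcartan}. Also, the comparability in Step 1 is genuinely two-sided by conformality and \Cref{lem.buseandcartan}, so your fallback remark about a one-sided constant is unnecessary.
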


\begin{proof}
Set $\mathcal J:=\mathcal H_k(\ell_k,\ell_k)$ and
$E:=\Delta_{\ell_k}\smallsetminus
\bigcup_{\gamma\in\mathcal J}\gamma\Delta_{\ell_k}$.
By \Cref{lem.returngraphirreducible}, there exist $i\in J_k=I_k\smallsetminus\{\ell_k\}$
and $\gamma\in\mathcal H_k(i,\ell_k)$.
The set $\gamma\Delta_i$ is disjoint from
$\bigcup_{\delta\in\mathcal J}\delta\Delta_{\ell_k}$
up to a $\nu$-null set.
Since $\nu(\Delta_i)>0$ and $\nu$ is quasi-invariant,
$\nu(E)\geq\nu(\gamma\Delta_i)>0$.

Each branch of $T_k$ is a composition of branches of $T_0$.
By conformality and
\Cref{lem.inversebranchshadow,lem.buseandcartan},
there exists $C>1$ such that, for $g=e$ or any finite
product of elements of $\mathcal J$, and every Borel
$A\subset\Delta_{\ell_k}$,
\begin{equation}
\label{eqn.hqexpsum2}
C^{-1}e^{-\psi(\mu(g))}\nu(A)
\leq\nu(gA)\leq
Ce^{-\psi(\mu(g))}\nu(A).
\end{equation}
Set
$\varepsilon_k:=C^{-2}\nu(E)/\nu(\Delta_{\ell_k})\in(0,1)$.
Then $\nu(gE)\geq\varepsilon_k\nu(g\Delta_{\ell_k})$.
The sets $g\gamma\Delta_{\ell_k}$, $\gamma\in\mathcal J$,
are disjoint up to $\nu$-null sets, and their complement
in $g\Delta_{\ell_k}$ is $gE$.
Thus their total measure is at most
$(1-\varepsilon_k)\nu(g\Delta_{\ell_k})$.
Iterating this estimate and using the lower bound in
\eqref{eqn.hqexpsum2}, we obtain
$$
\begin{aligned}
\sum_{\gamma_1,\ldots,\gamma_n\in\mathcal J}
e^{-\psi(\mu(\gamma_1\cdots\gamma_n))}
&\leq
\frac{C}{\nu(\Delta_{\ell_k})}
\sum_{\gamma_1,\ldots,\gamma_n\in\mathcal J}
\nu(\gamma_1\cdots\gamma_n\Delta_{\ell_k})\leq C(1-\varepsilon_k)^n.
\end{aligned}
$$
\end{proof}

We next estimate the return time to $Y_{k+1}$.

\begin{proposition}
\label{prop.aerecurrentexapndingmap}
There exists $\varepsilon_k>0$ such that, for every $n\geq1$,
$$
\nu\bigl(\{x\in Y_{k+1}:n_{k+1}(x)>n+1\}\bigr)
\ll(1-\varepsilon_k)^n.
$$
In particular, $X_{k+1}$ has full $\nu$-measure in $Y_{k+1}$.
\end{proposition}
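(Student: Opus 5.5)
The plan is to describe the set $\{x\in Y_{k+1}:n_{k+1}(x)>n+1\}$ through the inverse branches of $T_k$, and then bound its measure using \Cref{prop.hqexpsumdecay} together with the conformality estimate \eqref{eqn.hqexpsum2}.

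Take $x\in Y_{k+1}$ with $n_{k+1}(x)>n+1$. The point $x$ lies in $\Delta_j$ for some $j\in J_k$, and its first $n+1$ iterates under $T_k$ all avoid $Y_{k+1}$. Up to a $\nu$-null set, these iterates therefore lie in $\Delta_{\ell_k}$. Reading off the branches used, we can write
$$x=\gamma_0\gamma_1\cdots\gamma_n y,\qquad \gamma_0\in\mathcal H_k(\ell_k,j),\quad \gamma_1,\ldots,\gamma_n\in\mathcal H_k(\ell_k,\ell_k),\quad y\in\Delta_{\ell_k}.$$
It follows that the set in question is contained, modulo null sets, in
$$\bigcup_{j\in J_k}\ \bigcup_{\gamma_0\in\mathcal H_k(\ell_k,j)}\ \bigcup_{\gamma_1,\ldots,\gamma_n}\gamma_0\gamma_1\cdots\gamma_n\Delta_{\ell_k}.$$

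The products $g$ of branches of $T_k$ are products of branches of $T_0$. By \Cref{lem.inversebranchshadow}, each such $g$ satisfies $g\Delta_{\ell_k}\subset\mathcal O_r(o,go)$. Combining \Cref{lem.buseandcartan} with conformality then gives $\nu(\gamma_0 g\Delta_{\ell_k})\asymp e^{-\psi(\mu(\gamma_0 g))}$.

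I do not want to pay a factor for the sum over $\gamma_0$, and I also want to avoid needing submultiplicativity of $e^{-\psi\circ\mu}$. So I would first bound the measure of the unions with $\gamma_0$ fixed, and only then sum over $\gamma_0$. For a fixed $\gamma_0$, the sets $\gamma_0 g\Delta_{\ell_k}$ are images under $\gamma_0$ of the sets $g\Delta_{\ell_k}$. Since $\gamma_0\Delta_{\ell_k}\subset\mathcal O_r(o,\gamma_0 o)$, conformality gives the uniform comparison $\nu(\gamma_0 A)\asymp e^{-\psi(\mu(\gamma_0))}\nu(A)$ for Borel $A\subset\Delta_{\ell_k}$. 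Applying this comparison, and then \Cref{prop.hqexpsumdecay} together with \eqref{eqn.hqexpsum2}, yields
$$\nu\Bigl(\bigcup_{\gamma_1,\ldots,\gamma_n}\gamma_0\gamma_1\cdots\gamma_n\Delta_{\ell_k}\Bigr)\ll e^{-\psi(\mu(\gamma_0))}\sum_{\gamma_1,\ldots,\gamma_n}e^{-\psi(\mu(\gamma_1\cdots\gamma_n))}\ll e^{-\psi(\mu(\gamma_0))}(1-\varepsilon_k)^n.$$

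It remains to sum over $\gamma_0\in\mathcal H_k(\ell_k,J_k)$, which requires $\sum_{\gamma_0}e^{-\psi(\mu(\gamma_0))}<\infty$. This follows from $e^{-\psi(\mu(\gamma_0))}\asymp\nu(\gamma_0\Delta_{\ell_k})$ and the fact that the images $\gamma_0\Delta_{\ell_k}$ are pairwise disjoint up to null sets inside the compact set $\Delta$; for $k=0$ this is the tiling in \Cref{lem.tilingJp}. The disjointness of these images for general $k$ is the step I expect to be the main obstacle. I would prove it by induction: branches of $T_{k}$ are first-return compositions, so distinct branches have images that are disjoint up to null sets because $T_k$ is single-valued on $X_k$. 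Summing over $\gamma_0$ then gives the claimed bound $\ll(1-\varepsilon_k)^n$.

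For the final assertion, note that $\sum_n(1-\varepsilon_k)^n<\infty$. The tail bound shows that $\nu$-almost every $x\in Y_{k+1}$ has $n_{k+1}(x)<\infty$. Every branch of $T_k$ is nonsingular with respect to $\nu$ by quasi-invariance, and $T_{k+1}$ is defined almost everywhere. Applying the finite-return statement to the iterates $T_{k+1}^m x$ for all $m$, and taking a countable union of null sets, shows that almost every point returns to $Y_{k+1}$ infinitely often. Hence $X_{k+1}$ has full $\nu$-measure in $Y_{k+1}$.
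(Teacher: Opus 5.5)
Your proposal is correct and follows essentially the same route as the paper. You decompose the set into the pieces $\gamma_0\gamma_1\cdots\gamma_n\Delta_{\ell_k}$, apply conformality to $\gamma_0$ and \eqref{eqn.hqexpsum2} with \Cref{prop.hqexpsumdecay} to the word $\gamma_1\cdots\gamma_n$, and sum over $\gamma_0\in\mathcal H_k(\ell_k,J_k)$ using that the images $\gamma_0\Delta_{\ell_k}$ are disjoint up to null sets (the paper asserts this; you justify it by induction on the first-return structure), and your nonsingularity argument for the full measure of $X_{k+1}$ is the paper's inclusion $Y_{k+1}\smallsetminus X_{k+1}\subset\bigcup_{m\geq0}T_k^{-m}(\{y\in Y_{k+1}:n_{k+1}(y)=\infty\})$.
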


\begin{proof}
Set $\mathcal J:=\mathcal H_k(\ell_k,\ell_k)$.
Every $x\in Y_{k+1}$ with $n_{k+1}(x)>n+1$ belongs to
a set $\gamma_0g\Delta_{\ell_k}$, where
$\gamma_0\in\mathcal H_k(\ell_k,J_k)$ and
$g=\gamma_1\cdots\gamma_n$ with $\gamma_m\in\mathcal J$.

Applying conformality and
\Cref{lem.inversebranchshadow,lem.buseandcartan}
to $\gamma_0$, and \eqref{eqn.hqexpsum2} to $g$, gives
$$
\begin{aligned}
\nu(\gamma_0g\Delta_{\ell_k})
&\ll
\frac{\nu(\gamma_0\Delta_{\ell_k})}
     {\nu(\Delta_{\ell_k})}
\nu(g\Delta_{\ell_k})\ll
\nu(\gamma_0\Delta_{\ell_k})e^{-\psi(\mu(g))}.
\end{aligned}
$$

The sets $\gamma_0\Delta_{\ell_k}$, with
$\gamma_0\in\mathcal H_k(\ell_k,J_k)$, are disjoint
up to null sets, so their measures sum to at most $1$.
Every point with $n_{k+1}(x)>n+1$ lies in a set
$\gamma_0\gamma_1\cdots\gamma_n\Delta_{\ell_k}$,
where $\gamma_0\in\mathcal H_k(\ell_k,J_k)$ and
$\gamma_1,\ldots,\gamma_n\in\mathcal J$.
Summing the preceding estimate over these choices gives
$$
\begin{aligned}
\nu\bigl(\{x\in Y_{k+1}:n_{k+1}(x)>n+1\}\bigr)
&\ll
\left(
\sum_{\gamma_0\in\mathcal H_k(\ell_k,J_k)}
\nu(\gamma_0\Delta_{\ell_k})
\right)
\left(
\sum_{\gamma_1,\ldots,\gamma_n\in\mathcal J}
e^{-\psi(\mu(\gamma_1\cdots\gamma_n))}
\right)\\
&\ll(1-\varepsilon_k)^n
\end{aligned}
$$
by \Cref{prop.hqexpsumdecay}. Letting $n\to\infty$ shows that the set of points in
$Y_{k+1}$ that never return to $Y_{k+1}$ has measure zero.
Since $Y_{k+1}\smallsetminus X_{k+1}
\subset \bigcup_{m\geq0}T_k^{-m}
\bigl(\{y\in Y_{k+1}:n_{k+1}(y)=\infty\}\bigr)$, $X_{k+1}$ has full $\nu$-measure in $Y_{k+1}$.
\end{proof}

The inverse branches of $T_{k+1}$ are
$$
\mathcal H_{k+1}
=
\mathcal H_k(J_k,J_k)
\cup
\bigcup_{n\geq0}
\mathcal H_k(\ell_k,J_k)
\mathcal H_k(\ell_k,\ell_k)^n
\mathcal H_k(J_k,\ell_k),
$$
where the zeroth power is $\{e\}$.

\begin{proposition}
\label{prop.expsumh1finite}
Assume that, for some $\varepsilon>0$,
$\sum_{\gamma\in\mathcal H_k}
e^{-(1-\varepsilon)\psi(\mu(\gamma))}<\infty$.
Then there exists $\varepsilon_1>0$ such that
$\sum_{\gamma\in\mathcal H_{k+1}}
e^{-(1-\varepsilon_1)\psi(\mu(\gamma))}<\infty$.
\end{proposition}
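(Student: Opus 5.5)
The plan is to bound the sum over $\mathcal H_{k+1}$ by splitting it according to the decomposition displayed just before the statement. The first part is $\mathcal H_k(J_k,J_k)\subset\mathcal H_k$, which is summable with exponent $1-\varepsilon$ by hypothesis. The rest is a sum over $n\geq0$ of products $\gamma_0 g\gamma'$, where $\gamma_0\in\mathcal H_k(\ell_k,J_k)$, $g=\gamma_1\cdots\gamma_n$ with $\gamma_m\in\mathcal J:=\mathcal H_k(\ell_k,\ell_k)$, and $\gamma'\in\mathcal H_k(J_k,\ell_k)$.

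The key input is coarse multiplicativity of $e^{-\psi(\mu(\cdot))}$ along concatenations of branches. Every such product is a product of $T_0$-branches. By \Cref{lem.inversebranchshadow}, each partial product maps its domain $\Delta_i$ into $\mathcal O_r(o,\cdot\,o)$. Pick $x$ in the domain of $\gamma'$ and use the cocycle identity
$$
\beta_{\gamma_0g\gamma'x}(e,\gamma_0g\gamma')=\beta_{\gamma_0g\gamma'x}(e,\gamma_0)+\beta_{g\gamma'x}(e,g)+\beta_{\gamma'x}(e,\gamma').
$$
Each point $g\gamma'x$ and $\gamma'x$ lies in the relevant $\Delta_i$, so \Cref{lem.buseandcartan} turns every term into the corresponding Cartan projection up to $O(1)$. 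This gives
$$
\psi(\mu(\gamma_0 g\gamma'))=\psi(\mu(\gamma_0))+\psi(\mu(g))+\psi(\mu(\gamma'))+O(1),
$$
and in the same way $\psi(\mu(g))=\sum_m\psi(\mu(\gamma_m))+O(n)$. Since $\psi(\mu(\cdot))$ is uniformly bounded below on $\Gamma_\rho$, we get for any $\varepsilon_1\in(0,1)$
$$
e^{-(1-\varepsilon_1)\psi(\mu(\gamma_0g\gamma'))}\ll e^{-(1-\varepsilon_1)\psi(\mu(\gamma_0))}\,e^{-(1-\varepsilon_1)\psi(\mu(g))}\,e^{-(1-\varepsilon_1)\psi(\mu(\gamma'))}.
$$
With $\varepsilon_1\leq\varepsilon$, the sums over $\gamma_0$ and $\gamma'$ are finite by hypothesis. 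It therefore remains to show
$$
\sum_{n\ge0}\sum_{\gamma_1,\ldots,\gamma_n\in\mathcal J}e^{-(1-\varepsilon_1)\psi(\mu(\gamma_1\cdots\gamma_n))}<\infty.
$$

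This is the main obstacle, because \Cref{prop.hqexpsumdecay} controls only the exponent $1$. I would interpolate using Hölder's inequality. Write $e^{-(1-\varepsilon_1)\psi(\mu(g))}=e^{-\psi(\mu(g))}e^{\varepsilon_1\psi(\mu(g))}$ and apply Hölder with exponents $1/(1-\theta)$ and $1/\theta$ to the sum over words of length $n$, splitting the summand as
$$
\bigl(e^{-\psi(\mu(g))}\bigr)^{1-\theta}\bigl(e^{-(1-\varepsilon)\psi(\mu(g))}\bigr)^{\theta},
$$
where $\varepsilon_1=\theta\varepsilon$. This bounds the sum by $(S_n^{(1)})^{1-\theta}(S_n^{(1-\varepsilon)})^{\theta}$, with $S_n^{(s)}$ the sum of $e^{-s\psi(\mu(g))}$ over words of length $n$.

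By \Cref{prop.hqexpsumdecay}, $S_n^{(1)}\ll(1-\varepsilon_k)^n$. By the almost-multiplicativity above, $S_n^{(1-\varepsilon)}\leq (C\Sigma)^n$, where $\Sigma=\sum_{\gamma\in\mathcal J}e^{-(1-\varepsilon)\psi(\mu(\gamma))}<\infty$ by hypothesis. Hence the length-$n$ sum is $\ll\bigl((1-\varepsilon_k)^{1-\theta}(C\Sigma)^{\theta}\bigr)^n$, and choosing $\theta$ small makes this ratio less than $1$. The series over $n$ then converges, which finishes the proof with $\varepsilon_1=\theta\varepsilon$.

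One point needs care. The $O(n)$ error in the multiplicativity must be absorbed as a factor $C^n$; the Hölder interpolation handles this because the $C^n$ appears only in the $\theta$-weighted factor. For the $\gamma_0,\gamma'$ factors, by contrast, only an $O(1)$ error is needed, and that comes from the shadow lemma applied to the whole product.
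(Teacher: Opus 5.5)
Your proposal is correct and follows the paper's proof essentially step for step. It uses the same splitting of $\mathcal H_{k+1}$ into $\mathcal H_k(J_k,J_k)$ and the products $\gamma_0 g\gamma'$, the same coarse additivity of $\psi\circ\mu$ from the cocycle identity together with \Cref{lem.inversebranchshadow,lem.buseandcartan}, and the same H\"older interpolation between the exponents $1$ (via \Cref{prop.hqexpsumdecay}) and $1-\varepsilon$, with $\varepsilon_1=\theta\varepsilon$.
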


\begin{proof}
Decreasing $\varepsilon$ if necessary, we may assume that
$0<\varepsilon<1$.
Set $\mathcal J:=\mathcal H_k(\ell_k,\ell_k)$.
The cocycle identity and
\Cref{lem.inversebranchshadow,lem.buseandcartan}
give a constant $C>0$ such that
$$\psi(\mu(\gamma_1\cdots\gamma_n))
\geq\sum_{m=1}^n\psi(\mu(\gamma_m))-Cn$$
for every $\gamma_1,\ldots,\gamma_n\in\mathcal J$.
The hypothesis therefore implies that, for some $B\geq1$,
$$
\sum_{\gamma_1,\ldots,\gamma_n\in\mathcal J}
e^{-(1-\varepsilon)\psi(\mu(\gamma_1\cdots\gamma_n))}
\leq B^n.
$$ Choose $0<\theta<1$ small enough that
$(1-\varepsilon_k)^{1-\theta}B^\theta<1$,
where $\varepsilon_k$ is given by
\Cref{prop.hqexpsumdecay}. Since $1-\theta\varepsilon=(1-\theta)+\theta(1-\varepsilon)$,
Hölder's inequality with exponents $1/(1-\theta)$ and
$1/\theta$ gives
\begin{equation}
\label{eqn.segmentexpdecayh1}
\begin{aligned}
&\sum_{\gamma_1,\ldots,\gamma_n\in\mathcal J}
e^{-(1-\theta\varepsilon)\psi(\mu(\gamma_1\cdots\gamma_n))}\\
&\quad\leq
\left(
\sum_{\gamma_1,\ldots,\gamma_n\in\mathcal J}
e^{-\psi(\mu(\gamma_1\cdots\gamma_n))}
\right)^{1-\theta}
\left(
\sum_{\gamma_1,\ldots,\gamma_n\in\mathcal J}
e^{-(1-\varepsilon)\psi(\mu(\gamma_1\cdots\gamma_n))}
\right)^\theta\\
&\quad\ll
\bigl((1-\varepsilon_k)^{1-\theta}B^\theta\bigr)^n,
\end{aligned}
\end{equation}
where the last inequality follows from
\Cref{prop.hqexpsumdecay} and the preceding bound.
Set $\e_1=\theta \e$.
Then
\begin{equation}
\label{eqn.expsumh1finite}
\sum_{n\geq0}
\sum_{\gamma_1,\ldots,\gamma_n\in\mathcal J}
e^{-(1-\varepsilon_1)\psi(\mu(\gamma_1\cdots\gamma_n))}
<\infty.
\end{equation}

Every branch in
$\mathcal H_{k+1}\smallsetminus\mathcal H_k(J_k,J_k)$
has the form $\gamma_0g\gamma_0'$, where
$\gamma_0\in\mathcal H_k(\ell_k,J_k)$,
$\gamma_0'\in\mathcal H_k(J_k,\ell_k)$,
and $g$ is a product of elements of $\mathcal J$,
possibly empty.
The same cocycle argument gives
$$
\mu(\gamma_0g\gamma_0')
=
\mu(\gamma_0)+\mu(g)+\mu(\gamma_0')+O(1),
$$
with a uniform error. Consequently, the sum of
$e^{-(1-\varepsilon_1)\psi(\mu(\gamma_0g\gamma_0'))}$
over these branches is bounded by a constant times the product
of the sums over $\gamma_0$, $g$, and $\gamma_0'$.
The sum over $g$ is finite by \eqref{eqn.expsumh1finite}.
Since $\varepsilon_1<\varepsilon$ and
$\psi(\mu(\gamma))$ is uniformly bounded below by \eqref{lb},
the hypothesis implies that
$\sum_{\gamma\in\mathcal H_k}
e^{-(1-\varepsilon_1)\psi(\mu(\gamma))}<\infty$.
This bounds both outer sums and the sum over
$\mathcal H_k(J_k,J_k)$, proving the proposition.
\end{proof}

\subsection{Proofs of \Cref{prop.T is well def a.e. induction}
and \Cref{prop.T is well def a.e.}}

\begin{proof}
Starting with $X_0$, apply
\Cref{prop.aerecurrentexapndingmap} at each stage.
It follows inductively that $X_{k+1}$ has full $\nu$-measure
in $Y_{k+1}$, and hence in
$\bigcup_{i\in I_{k+1}}\Delta_i$.
This proves \Cref{prop.T is well def a.e. induction}.
Moreover, \Cref{prop.expsumh0finite,prop.expsumh1finite}
show that, at every stage $k$, there exists $\varepsilon>0$
such that
$\sum_{\gamma\in\mathcal H_k}
e^{-(1-\varepsilon)\psi(\mu(\gamma))}<\infty$.

Let $r:=|I_0|-1$, so that $I_r=\{0\}$.
Every point of $\Lambda_+$ returns to $\Delta_0$
infinitely often and therefore belongs to every $X_k$.
Thus $\Lambda_+\subset X_r$.
Conversely, the iterates of $T_r$ give successive returns
under $T_0$ to $\Delta_0$ at strictly increasing times.
Hence $X_r\subset\Lambda_+$, and therefore $X_r=\Lambda_+$.
Since $X_r$ has full $\nu$-measure in $\Delta_0$,
this proves \Cref{prop.T is well def a.e.}.
The map $T_r$ is the first-return map of $T_0$ to $\Delta_0$.
\end{proof}

We henceforth write $T_{\mathrm{ret}}:=T_r$ and
$\mathcal H_{\mathrm{ret}}:=\mathcal H_r$.
Thus $T_{\mathrm{ret}}(x)=T_0^{n(x)}x$ for
$x\in\Lambda_+$, and $\mathcal H_{\mathrm{ret}}$
is its collection of inverse branches.

\subsection{A fixed iterate with positive roof}

We will choose $n_0\geq1$ so that the return vectors of
$T:=T_{\mathrm{ret}}^{\,n_0}$ satisfy the required cone
condition and scalar lower bound.
In terms of $T_0$,
$$
T(x)
=T_0^{N_{n_0}(x)}(x), \qquad
N_{n_0}(x):=\sum_{j=0}^{n_0-1}n(T_{\mathrm{ret}}^jx).
$$

Each first-return branch is represented by a product
$\gamma_1\cdots\gamma_n$, where
$\gamma_m\in\mathcal H_0(i_m,i_{m-1})$,
$i_0=i_n=0$, and $i_m\neq0$ for $0<m<n$.
For $x\in\Lambda_+$, the corresponding sequence of
branches is uniquely determined by its first $n(x)$
iterates under $T_0$.

The sets $\gamma\Lambda_+$,
$\gamma\in\mathcal H_{\mathrm{ret}}$, are the first-return
cylinders.
Until \Cref{lem.inversebranchfreesemigroup_before} is proved,
$\mathcal H_{\mathrm{ret}}^{\,m}$ denotes the set of ordered
$m$-tuples of branches; in expressions such as
$\gamma\Lambda_+$, $\gamma$ denotes their group product.
For every $m\geq1$,
$$
\Lambda_+
=
\bigsqcup_{\gamma\in\mathcal H_{\mathrm{ret}}^{\,m}}
\gamma\Lambda_+.
$$
Since $\Delta_0$ is the closure of its relative interior, $\Lambda_+$ is dense in
$\Delta_0$. Thus $\gamma\Delta_0$ is the closure of
$\gamma\Lambda_+$.
The following lemma places these closures in the relative
interior of $\Delta_0$.

\begin{lemma}
\label{lem.branchcompactinterior}
For every $b\in\mathcal H_0(i,j)$,
$b\Delta_i\subset\operatorname{int}_{\Lambda_\rho}(\Delta_j)$.
Consequently,
$\gamma\Delta_0\subset
\operatorname{int}_{\Lambda_\rho}(\Delta_0)$
for every nonempty product $\gamma$ of elements of
$\mathcal H_{\mathrm{ret}}$.
\end{lemma}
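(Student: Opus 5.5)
The plan is to reduce everything to the first assertion and argue in the Euclidean coordinate on $\partial\mathbb H^2$ transported by $f$. Write $b=\gamma_p\gamma_1$ with $p\in\Gamma_\rho p_i\cap P_{j,n}$ and $\gamma_1\in N_p$. Then $b\Delta_i=\gamma_p\gamma_1\Delta_i\subset J_p$ by the definition of $N_p$. Since $J_p$ was removed from $\mathcal Z_{j,n-1}$, it suffices to show that $J_p$ lies in the relative interior of $\Delta_j$. Here $J_p$ is the trace on $\Lambda_\rho$ of a closed interval $\widehat J_p$, and $\Delta_j$ is the trace of the closed interval $\widehat\Delta_{\xi_j}$. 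We will therefore show that $\widehat J_p$ avoids the endpoints of $\widehat\Delta_{\xi_j}$ and is contained in $\widehat\Delta_{\xi_j}$, with a definite gap. Any point of $J_p$ then has a relatively open neighborhood inside $\Delta_j$.

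The key input is the defining ball condition of $P_{j,n}$, namely $\mathcal B_{\bar\psi}(g_jp,h_{n-1}/(4\eta))\subset g_j\mathcal Z_{j,n-1}\subset g_j\Delta_j$, combined with the size bound $g_jJ_p\subset\mathcal B_{\bar\psi}(g_jp,C_2\eta h_j(g_jp))$ from \Cref{lem.LP6.13}. Because $C_2\eta h_j(g_jp)\le C_2h_{n-1}<h_{n-1}/(4\eta)$ (we chose $4C_2\eta_*<1$), the set $g_jJ_p$ sits in a strictly smaller concentric $D_{\bar\psi}$-ball. We then apply \Cref{lem.olive}, the quasi-monotonicity, with constant $\mathsf c$, in the conjugated system.

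Suppose a point $y\in\Lambda_\rho$ lies Euclidean-between a point $x\in J_p$ and an endpoint $e\in\partial\Delta_j$, or near $e$. Quasi-monotonicity gives $D_{\bar\psi}(g_jp,g_jy)\lesssim\mathsf c N_0\,\max\{D_{\bar\psi}(g_jp,g_jx),\,D_{\bar\psi}(g_jp,g_je)\}$. Arguing by contradiction, assume $\widehat J_p$ reaches or crosses an endpoint of $\widehat\Delta_{\xi_j}$. The limit-set points of the ball around $g_jp$ include points beyond that endpoint, namely points of the adjacent fundamental interval, which are not in $\Delta_j$ except for the endpoint itself. Those points can be chosen at $D_{\bar\psi}$-distance at most a bounded multiple of $C_2\eta h_j(g_jp)$. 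This uses the fact that $\partial\Delta_j$ consists of parabolic points, so $\Lambda_\rho$ accumulates on both sides of each endpoint. Shrinking $\eta$ so that this multiple is still below $h_{n-1}/(4\eta)$ then contradicts the ball inclusion in $g_j\Delta_j$. The endpoints themselves are excluded the same way. Alternatively, \Cref{lem.LP6.17} gives $D_{\bar\psi}(J_p,\mathcal Z_{n-1}^c)>0$ directly, and a point of $\partial\Delta_j$ has points of $\Delta_j^c$ arbitrarily close to it.

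The consequence is then formal. A nonempty product $\gamma$ of first-return branches begins, as a group product, with a $T_0$-branch $b\in\mathcal H_0(i,0)$. Its remaining factors map $\Delta_0$ into $\Delta_i$, because each $\mathcal H_0(i',j')$ maps $\Delta_{i'}$ into $\Delta_{j'}$ and the domains and codomains chain. Hence $\gamma\Delta_0\subset b\Delta_i\subset\operatorname{int}_{\Lambda_\rho}(\Delta_0)$.

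The main obstacle is the first step. We must turn the $D_{\bar\psi}$-ball inclusion into a genuine relative-interior statement, which needs control that the ball "sees" limit points just outside $\Delta_j$ on the far side of each endpoint. The plan is to handle this via the separation $D_{\bar\psi}(J_p,\mathcal Z_{j,n-1}^c)\gg h_n/\eta>0$ of \Cref{lem.LP6.17}, applied in each conjugated system. This yields $\overline{J_p}\cap\overline{\Lambda_\rho\smallsetminus\Delta_j}=\emptyset$, since $D_{\bar\psi}$ is continuous, and that is equivalent to the desired relative interiority.
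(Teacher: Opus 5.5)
Your argument is correct, and its decisive step is the paper's. From \Cref{lem.LP6.13}, $\eta h_j(g_jp)\le h_{n-1}$ and $4C_2\eta<1$ you obtain
\[
g_j(b\Delta_i)\subset g_jJ_p\subset\mathcal B_{\bar\psi}\bigl(g_jp,C_2\eta h_j(g_jp)\bigr)\subset\mathcal B_{\bar\psi}\bigl(g_jp,h_{n-1}/(4\eta)\bigr)\subset g_j\Delta_j .
\]
Your deduction for products of first-return branches, through the leftmost factor in $\mathcal H_0(i,0)$, is also the paper's.

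The difference is the last step, which you single out as the main obstacle; in the paper it takes one line. The $D_{\bar\psi}$-ball in the chain is itself open in $g_j\Lambda_\rho$, because $D_{\bar\psi}(g_jp,\cdot)$ is continuous on $g_j\Lambda_\rho\smallsetminus\{p_0\}$, which is open in $g_j\Lambda_\rho$. Hence $g_j^{-1}$ of that ball is an open subset of $\Lambda_\rho$ contained in $\Delta_j$ and containing $b\Delta_i$, which is the claim. No Euclidean ordering, quasi-monotonicity, or two-sided accumulation of $\Lambda_\rho$ at the parabolic endpoints of $\Delta_j$ is needed.

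Your first route is not only longer but costs something. It reproves openness through the quasi-triangle inequality, and so needs an extra smallness condition of the form $4\mathsf cN_0C_2\eta<1$. But $\eta$ is already fixed at this point, and only $4C_2\eta<1$ was imposed.

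Your fallback through the separation $D_{\bar\psi}(J_p,\mathcal Z_{j,n-1}^{\,c})>0$ of \Cref{lem.LP6.17}, applied in the conjugated systems, is valid. However, it only repackages the ball condition in the definition of $P_{j,n}$, and it still relies on continuity of $D_{\bar\psi}$. Applied directly to the ball already in your chain, that continuity finishes the proof at once.
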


\begin{proof}
Let $b\in\mathcal H_0(i,j)$.
Then $b\Delta_i\subset J_p$ for some $p\in P_{j,n}$.
Set $q:=g_jp$ and work in the conjugated system $\Gamma_j$.
Since $\eta h_j(q)\leq h_{n-1}$ and
$\eta<1/(4C_2)$, we have
$C_2\eta h_j(q)<h_{n-1}/(4\eta)$.
Hence \Cref{lem.LP6.13,def.ZnPnDn} give
$$
g_j(b\Delta_i)
\subset g_jJ_p
\subset
\mathcal B_{\bar\psi}
\left(q,\frac{h_{n-1}}{4\eta}\right)
\subset g_j\Delta_j.
$$
The ball in this display is open in $g_j\Lambda_\rho$.
Applying $g_j^{-1}$ proves the first assertion.

In a product defining a first-return branch, the leftmost
factor belongs to $\mathcal H_0(i,0)$ for some $i$,
and the remaining factors map $\Delta_0$ into $\Delta_i$.
Its image is therefore contained in the relative interior
of $\Delta_0$. Any further composition of first-return
branches has image contained in that of its leftmost
factor, proving the final assertion.
\end{proof}

\begin{lemma}
\label{lem.inversebranchfreesemigroup_before}
The set $\mathcal H_{\mathrm{ret}}$ freely generates a semigroup.
\end{lemma}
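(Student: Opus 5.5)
To prove that $\mathcal H_{\mathrm{ret}}$ freely generates a semigroup, we must show two things. First, distinct ordered tuples $(\gamma_1,\ldots,\gamma_m)\in\mathcal H_{\mathrm{ret}}^{\,m}$ and $(\delta_1,\ldots,\delta_{m'})\in\mathcal H_{\mathrm{ret}}^{\,m'}$ have distinct group products. Second, no product is trivial. The plan is a ping-pong argument on $\Lambda_+$, using the disjoint decomposition $\Lambda_+=\bigsqcup_{\gamma\in\mathcal H_{\mathrm{ret}}}\gamma\Lambda_+$ together with the relation $T_{\mathrm{ret}}|_{\gamma\Lambda_+}=\gamma^{-1}$.

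\textbf{Step 1: first factors agree and cancel.} Suppose $\gamma_1\cdots\gamma_m=\delta_1\cdots\delta_{m'}=:g$ with $m,m'\ge1$, and pick $x\in\Lambda_+$. The inverse branches preserve $\Lambda_+$, so $\gamma_2\cdots\gamma_m x\in\Lambda_+$; hence $gx\in\gamma_1\Lambda_+$, and similarly $gx\in\delta_1\Lambda_+$. Disjointness of the first-return cylinders then gives $\gamma_1=\delta_1$ as elements of $\mathcal H_{\mathrm{ret}}$. This uses that distinct branches have disjoint cylinders, which holds because a branch is determined by the sequence of $T_0$-branches visited before the first return, and those sequences index disjoint cells. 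Cancelling $\gamma_1$ and inducting reduces to the case where one tuple is empty, i.e.\ $\gamma_1\cdots\gamma_k=e$ for some $k\ge1$.

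\textbf{Step 2: exclude a trivial nonempty product.} Here we use \Cref{lem.branchcompactinterior}: the image $\gamma_1\cdots\gamma_k\Delta_0$ lies in $\operatorname{int}_{\Lambda_\rho}(\Delta_0)$. If the product were $e$, this would say $\Delta_0\subset\operatorname{int}(\Delta_0)$, which is impossible since $\partial\Delta_0$ is nonempty and lies in $\Delta_0$. As a cleaner alternative, $T_{\mathrm{ret}}^k$ would then be the identity on the cylinder $\gamma_1\cdots\gamma_k\Lambda_+=\Lambda_+$. But every $T_0$-branch $\gamma_p\gamma_1$ satisfies $\nu(\gamma_p\gamma_1\Delta_i)\ll e^{-\psi(\mu(\gamma_p\gamma_1))}<\nu(\Delta_0)$, and the cells are strictly contained, so this also fails.

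\textbf{Main obstacle.} The delicate point is the well-definedness underlying Step 1. Before this lemma, $\mathcal H_{\mathrm{ret}}$ is a set of formal tuples, so a priori two distinct $T_0$-words could yield the same group element, and the cylinder indexing might be ambiguous. I would handle this by working with images: for distinct first-return words, the cells $\gamma\Lambda_+$ are disjoint by the nested construction (\Cref{lem.tilingJp} and the disjointness in \Cref{lem.LP6.17}), and the cells are nonempty, so equal group elements give equal cells and hence equal words. The equality $\gamma\Lambda_+=\delta\Lambda_+$ for $\gamma=\delta$ in $G$ is then immediate.
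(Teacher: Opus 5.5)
Your proof is correct and is essentially the paper's own argument: disjointness of the first-return cylinders forces equal first letters, cancellation reduces to a nonempty product equal to $e$, and \Cref{lem.branchcompactinterior} rules that out because $\gamma\Delta_0\subset\operatorname{int}_{\Lambda_\rho}(\Delta_0)\subsetneq\Delta_0$. Drop the ``cleaner alternative'' in Step~2. It is unnecessary, and as written it is not rigorous, since a bound $\nu(\gamma_p\gamma_1\Delta_i)\ll e^{-\psi(\mu(\gamma_p\gamma_1))}$ with an unspecified implied constant does not give the strict inequality $<\nu(\Delta_0)$.
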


\begin{proof}
Suppose that
$\gamma_1\cdots\gamma_n=\gamma_1'\cdots\gamma_m'.
$
The common image of $\Lambda_+$ is a nonempty subset of both
$\gamma_1\Lambda_+$ and $\gamma_1'\Lambda_+$.
Since distinct first-return branches have disjoint images,
$\gamma_1=\gamma_1'$.
Cancelling and repeating identifies the factors up to the
shorter length.

By \Cref{lem.branchcompactinterior}, every nonempty product
maps $\Delta_0$ into
$\operatorname{int}_{\Lambda_\rho}(\Delta_0)$.
Since $\Delta_0$ has nonempty relative boundary, this image
is a proper subset of $\Delta_0$, so the product cannot
be the identity.
The two lengths must therefore agree, proving the lemma.
\end{proof}

Recall that
$U_0
=
\Lambda_\rho\smallsetminus
\mathcal B_{\bar\psi}
\left(\Delta_0,\frac1{2N_0\eta}\right)$.
Repeated application of \Cref{lem.LP6.62} gives
$\gamma^{-1}U_0\subset U_0$ for every nonempty product
$\gamma$ of elements of $\mathcal H_{\mathrm{ret}}$.

\begin{lemma}
\label{lem.U0shadow}
There exists $r>0$ such that
$U_0\subset\mathcal O_r(\gamma o,o)$ for every
$\gamma\in\bigcup_{m\geq1}\mathcal H_{\mathrm{ret}}^{\,m}$.
\end{lemma}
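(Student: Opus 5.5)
I will argue by contradiction, exactly parallel to the proof of \Cref{lem.inversebranchshadow}, using the convergence properties of shadows in \Cref{lem.ambient-shadow-convergence} together with the forward invariance $\gamma^{-1}U_0\subset U_0$ and the fact that $\gamma\Delta_0\subset\operatorname{int}_{\Lambda_\rho}(\Delta_0)$ from \Cref{lem.branchcompactinterior}. Note that $\mathcal O_r(\gamma o,o)$ is the set of flags $\xi$ whose Weyl chamber from $\gamma o$ passes near $o$; equivalently $\gamma^{-1}\xi\in\mathcal O_r(o,\gamma^{-1}o)$. So the claim says that $\gamma^{-1}U_0$ lies in the shadow from $o$ of a ball about $\gamma^{-1}o$, i.e.\ that points of $U_0$ are ``behind $o$ as seen from $\gamma o$''.

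Suppose the statement fails. Then for each $k\geq1$ there are a nonempty product $\gamma_k$ of elements of $\mathcal H_{\mathrm{ret}}$ and a point $\zeta_k\in U_0\smallsetminus\mathcal O_k(\gamma_k o,o)$. Since only finitely many group elements $g$ have $d(go,o)$ bounded, and for each fixed $g$ the shadow $\mathcal O_k(go,o)$ eventually exhausts $\mathcal F$ (for $k>d(go,o)$ every chamber from $go$ meets $B_X(o,k)$), we get $\gamma_k\to\infty$, and after passing to a subsequence the $\gamma_k$ are pairwise distinct. Also pass to a subsequence so that $\zeta_k\to\zeta\in U_0$ (closed). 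By \Cref{lem.ambient-shadow-convergence}(3), applied to $g_k:=\gamma_k$ with $r_k=k\to\infty$, we get $\zeta_k\to\xi_+$, where $\gamma_k\eta\to\xi_+$ locally uniformly off some $\xi_-$. Hence $\zeta=\xi_+\in U_0$.

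On the other hand, choose any point $x\in\Lambda_+\smallsetminus\{\xi_-\}$ (possible since $\Lambda_+$ is dense in $\Delta_0$, which is infinite). Then $\gamma_kx\to\xi_+$, while $\gamma_kx\in\gamma_k\Delta_0\subset\Delta_0$ by \Cref{lem.branchcompactinterior}. Since $\Delta_0$ is closed, $\xi_+\in\Delta_0$. But $U_0$ is disjoint from $\Delta_0$ (indeed $U_0$ excludes even a $D_{\bar\psi}$-neighborhood of $\Delta_0$, and $p_0\notin\Delta_0$), a contradiction. This yields a uniform $r$.

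The only delicate point is the first reduction: checking that $\zeta_k\notin\mathcal O_k(\gamma_ko,o)$ forces $\gamma_k\to\infty$ and that \Cref{lem.ambient-shadow-convergence}(3) applies with the orientation $\mathcal O_{r_n}(g_no,o)$ as written there; I expect this to be routine, and the invariance $\gamma^{-1}U_0\subset U_0$ is in fact not even needed in this route, only $\gamma\Delta_0\subset\Delta_0$ and $U_0\cap\Delta_0=\emptyset$.
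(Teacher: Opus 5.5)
Your proof is correct and is essentially the paper's argument. You argue by contradiction, get $d(o,\gamma_ko)\ge k$ and a pairwise distinct subsequence, obtain $\zeta_k\to\xi_+$ from \Cref{lem.ambient-shadow-convergence}(3), and force $\xi_+\in\Delta_0$, which contradicts $U_0\cap\Delta_0=\emptyset$.

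The only difference is how you get $\gamma_k x\to\xi_+$ for a point of $\Delta_0$. The paper obtains it for every $z\in\Delta_0$ from the shadow inclusion $\gamma\Delta_0\subset\mathcal O_r(o,\gamma o)$ of \Cref{lem.inversebranchshadow} together with part~(2). You fix one point off $\xi_-$ and use part~(1), which does make \Cref{lem.inversebranchshadow} unnecessary here.
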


\begin{proof}
By \Cref{lem.inversebranchshadow}, there exists $r>0$
such that
\begin{equation}
\label{shadowH}
\gamma\Delta_0\subset\mathcal O_r(o,\gamma o)
\qquad
\left(\gamma\in
\cup_{m\geq1}\mathcal H_{\mathrm{ret}}^{\,m}\right).
\end{equation}
Suppose the assertion fails.
For each $k\geq1$, choose $x_k\in U_0$ and
$\gamma_k\in\bigcup_{m\geq1}\mathcal H_{\mathrm{ret}}^{\,m}$
such that $x_k\notin\mathcal O_k(\gamma_ko,o)$.
Then $d(o,\gamma_ko)\geq k$.
Passing to a subsequence, assume that the $\gamma_k$
are pairwise distinct and $x_k\to x\in U_0$.

Fix $z\in\Delta_0$.
By \eqref{shadowH} and
\Cref{lem.ambient-shadow-convergence}(2) and (3),
after passing to a further subsequence, $\gamma_kz\to x$.
Since $\gamma_kz\in\Delta_0$ and $\Delta_0$ is compact,
$x\in\Delta_0$, contradicting $U_0\cap\Delta_0=\emptyset$.
\end{proof}

\begin{lemma}
\label{lem.choice of n_0}
Let $\mathcal D_0\subset\mathfrak a$ be compact, and let
$\mathcal C_0\subset\mathfrak a^+$ be a closed cone such that
$\mathcal L_\rho\smallsetminus\{0\}
\subset\operatorname{int}\mathcal C_0$.
For every $C_0>0$, there exists $n_0\geq1$ such that,
for every $n\geq n_0$ and every
$\gamma=\gamma_1\cdots\gamma_n$ with
$\gamma_j\in\mathcal H_{\mathrm{ret}}$:
\begin{enumerate}
\item
$\beta_x(e,\gamma)+\mathcal D_0\subset\mathcal C_0$
for every $x\in\Lambda_+\cap\gamma\Delta_0$;
\item
$\varphi(\beta_x(e,\gamma))\geq C_0$
for every $x\in\Lambda_+\cap\gamma\Delta_0$ and
$\varphi\in\{\psi,\bar\psi\}$;
\item
$\varphi(\beta_y(\gamma,e))\geq C_0$
for every $y\in U_0$ and $\varphi\in\{\psi,\bar\psi\}$.
\end{enumerate}
\end{lemma}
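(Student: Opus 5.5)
The plan is to reduce all three assertions to a single Cartan-projection estimate: $\mu(\gamma)$ is uniformly close to both Busemann vectors that appear, and $\mu(\gamma)$ grows linearly in the word length $n$.

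\textbf{Comparison with $\mu(\gamma)$.} By \Cref{lem.inversebranchshadow} (via \eqref{shadowH}), every product $\gamma=\gamma_1\cdots\gamma_n$ satisfies $\gamma\Delta_0\subset\mathcal O_r(o,\gamma o)$ for a uniform $r$. So \Cref{lem.buseandcartan} gives
$$
\beta_x(e,\gamma)=\mu(\gamma)+O(1)\qquad (x\in\gamma\Delta_0),
$$
uniformly in $n$ and $\gamma$. Similarly, \Cref{lem.U0shadow} gives $U_0\subset\mathcal O_r(\gamma o,o)$. Hence for $y\in U_0$ we get $\beta_y(\gamma,e)=\beta_y(\gamma o,o)=\mu(\gamma^{-1})+O(1)=\i\mu(\gamma)+O(1)$, again by \eqref{eqn.ambient_shadow_busemann}.

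\textbf{Linear growth of $\mu(\gamma)$.} The branches are free generators by \Cref{lem.inversebranchfreesemigroup_before}, so there are only finitely many distinct products of each length. The key claim is
$$
\|\mu(\gamma_1\cdots\gamma_n)\|\ge cn-C
$$
for uniform $c,C>0$. Iterating the cocycle identity together with the shadow estimates, exactly as in the proof of \Cref{prop.expsumh1finite}, gives coarse additivity
$$
\mu(\gamma_1\cdots\gamma_n)=\sum_j\mu(\gamma_j)+O(n).
$$
An $O(n)$ error is too crude to yield growth from this alone. Instead I would use contraction. By \Cref{lem.branchcompactinterior}, each $\gamma_j^{-1}$ maps $U_0$ into itself, and $\gamma_j\Delta_0\subset\operatorname{int}\Delta_0$. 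Because there is a positive gap between $U_0$ and $\Delta_0$, the word $\gamma$ of length $n$ cannot stay in a bounded set as $n\to\infty$: only finitely many $\gamma\in\Gamma_\rho$ satisfy $\|\mu(\gamma)\|\le R$, and freeness gives only finitely many words per element. Hence $\min_{|\gamma|=n}\|\mu(\gamma)\|\to\infty$.

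This is the step I expect to be the main obstacle, because $\mathcal H_{\mathrm{ret}}$ is infinite and "finitely many words of each length" fails when the length is fixed but the letters range over an infinite set. I would therefore argue differently. Elements of $\Gamma_\rho$ with $\|\mu\|\le R$ form a finite set $F_R$. By freeness, each element of $F_R$ has at most one word representation, so all sufficiently long words avoid $F_R$. That is, for $n\ge n(R)$, every word $\gamma$ of length $n$ satisfies $\|\mu(\gamma)\|>R$. Only divergence is needed here, not linear growth.

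\textbf{Concluding the three assertions.} Since $\mu(\Gamma_\rho)$ lies eventually in any closed cone containing $\mathcal L_\rho\smallsetminus\{0\}$ in its interior, we have $\operatorname{dist}(\mu(\gamma),\mathbb R_{\ge0}\mathcal L_\rho)=o(\|\mu(\gamma)\|)$. For $\|\mu(\gamma)\|$ large, the set $\mu(\gamma)+O(1)+\mathcal D_0$ therefore lies in $\operatorname{int}\mathcal C_0$, which gives (1).

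For (2) and (3), use \eqref{lb}: $\psi$ and $\bar\psi$ are positive on $\mathcal L_\rho$, and also on $\i\mathcal L_\rho=\mathcal L_\rho$. So $\varphi(\mu(\gamma))\ge c\|\mu(\gamma)\|-C$ and $\varphi(\i\mu(\gamma))\ge c\|\mu(\gamma)\|-C$. Choosing $R$ large relative to $C_0$, the $O(1)$ errors, and $\mathcal D_0$, then taking $n_0:=n(R)$, completes the proof.
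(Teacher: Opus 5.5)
Your proposal is correct. Once you drop the abandoned linear-growth digression, it is the paper's proof: you compare both Busemann vectors with $\mu(\gamma)$ and $\i\mu(\gamma)$ via \eqref{shadowH}, \Cref{lem.U0shadow} and \Cref{lem.buseandcartan}; you use the limit cone and positivity of $\psi,\psi^-,\bar\psi$ on $\mathcal L_\rho\smallsetminus\{0\}$ to obtain all three conclusions once $\|\mu(\gamma)\|$ is large; and you bound the lengths of the finitely many exceptional words by discreteness and the freeness in \Cref{lem.inversebranchfreesemigroup_before}. The digression is unnecessary; if you keep it, note that the invariance $\gamma_j^{-1}U_0\subset U_0$ comes from \Cref{lem.LP6.62}, not from \Cref{lem.branchcompactinterior}.
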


\begin{proof}
By \eqref{shadowH} and
\Cref{lem.U0shadow,lem.buseandcartan},
$$
\beta_x(e,\gamma)=\mu(\gamma)+O(1),
\qquad
\beta_y(\gamma,e)=\i\mu(\gamma)+O(1),
$$
uniformly for the products $\gamma$ and points $x,y$
in the statement.

By the definition of $\mathcal L_\rho$, every accumulation
point of $\mu(\gamma)/\|\mu(\gamma)\|$ as
$\|\mu(\gamma)\|\to\infty$ belongs to $\mathcal L_\rho$. Since $\mathcal D_0$ is compact, the hypothesis on
$\mathcal C_0$ and positivity of $\psi$, $\psi^-$, and
$\bar\psi$ on $\mathcal L_\rho\smallsetminus\{0\}$
give all three conclusions whenever $\|\mu(\gamma)\|$
is sufficiently large. 
By discreteness, only finitely many elements of
$\Gamma_\rho$ have Cartan projection below this norm bound.
By \Cref{lem.inversebranchfreesemigroup_before}, each such
element has at most one expression as a product of elements
of $\mathcal H_{\mathrm{ret}}$.
The lengths of these exceptional products are therefore
bounded. Choosing $n_0$ larger than these lengths proves
the lemma.
\end{proof}

\subsection{The final expanding map}
\label{subsec.takingpowerexpandingmap}

We now fix $n_0$ and the map $T$.
Since $\Delta_0$ is compact and disjoint from $p_0$,
there is a compact set $\mathcal D_0\subset\mathfrak a$ such that
\begin{equation}
\label{Gbound}
2\mathcal G(x,p_0)-2\mathcal G(y,p_0)\in\mathcal D_0
\qquad(x,y\in\Delta_0).
\end{equation}
Choose a closed cone
$\mathcal C_0\subset\operatorname{int}\mathfrak a^+\cup\{0\}$
such that
$\mathcal L_\rho\smallsetminus\{0\}
\subset\operatorname{int}\mathcal C_0$
and $\psi,\psi^-,\bar\psi>0$ on
$\mathcal C_0\smallsetminus\{0\}$.
Apply \Cref{lem.choice of n_0} with
$$
C_0=C_R
:=
1+4\max_{\varphi\in\{\psi,\bar\psi\}}
\sup_{x\in\Delta_0}|\varphi(\mathcal G(x,p_0))|.
$$
Fix the resulting $n_0$ and define
$$
T:=T_{\mathrm{ret}}^{\,n_0}:\Lambda_+\to\Lambda_+.
$$
This is our final expanding map, whose inverse branches form
$\mathcal H:=\mathcal H_{\mathrm{ret}}^{\,n_0}$.
By
\Cref{lem.inversebranchfreesemigroup_before}, 
$\mathcal H$ freely generates a semigroup.
Write $\sH:=\bigcup_{n\geq0}\mathcal H^n$,
with $\mathcal H^0:=\{e\}$.

\part{The return cocycle} \label{part.return}
We establish the symbolic coding and the exponential moments
and uniform distortion estimates of the return cocycle.
We then prove Zariski density of the return semigroup,
non-arithmeticity of the return periods, and quantitative
non-integrability.

\section{Symbolic coding and cocycle estimates}
\label{sec.symbolic-coding}
We identify the expanding map $T:\Lambda_+\to\Lambda_+$
with the one-sided full shift on its countable family
$\mathcal H$ of inverse branches.
We then define the return cocycle and prove the moment
and regularity estimates needed for the spectral analysis.

Equip the countable alphabet $\mathcal H$ with the discrete
topology, and let $\Sigma^+:=\mathcal H^{\mathbb N}$.
For distinct $x,y\in\Sigma^+$, set
$n(x,y):=\min\{j\geq1:x_j\neq y_j\}$ and define
\begin{equation}
\label{eqn.symbolicmetric}
d_{\Sigma^+}(x,y):=2^{-n(x,y)},
\qquad d_{\Sigma^+}(x,x):=0.
\end{equation}
The one-sided shift is
$\sigma(x_1,x_2,\ldots):=(x_2,x_3,\ldots)$.
Since all transitions are allowed, this full shift is
topologically mixing and has the big-images-and-preimages
(BIP) property: any fixed symbol can precede and follow
every symbol; see \cite{Sarig_Countable,Sarig_Gibbs}.
For $n\geq1$, write $\Sigma_n^+:=\mathcal H^n$ for the words
of length $n$. The cylinder of
$\omega=(\omega_1,\ldots,\omega_n)\in\Sigma_n^+$ is
$$
[\omega]:=\{x\in\Sigma^+:x_j=\omega_j\text{ for }1\leq j\leq n\}.
$$

\subsection{Conjugacy with full shift} \label{subsec:symbolicspaceforcoding}

We retain the notation $\Delta_0$, $\Lambda_+$, $T$, and $\mathcal H$ from
\Cref{sec.expandingmap}.  Thus
$$
\Lambda_+
=
\bigsqcup_{\gamma\in\mathcal H}\gamma\Lambda_+,
\qquad
T|_{\gamma\Lambda_+}=\gamma^{-1},
$$
and, more generally,
\begin{equation}
\label{eqn:powered-branch-partition}
\Lambda_+
=
\bigsqcup_{\gamma\in\mathcal H^n}\gamma\Lambda_+
\qquad(n\geq1).
\end{equation}
The compact set $\gamma\Delta_0$ is the closure of the corresponding
branch cylinder.

\begin{lemma}
\label{lem.welldefPhi}
For every $(\gamma_1,\gamma_2,\ldots)\in\Sigma^+$, the compact sets
$
\gamma_1\cdots\gamma_n\Delta_0, n\geq1,
$
form a decreasing sequence whose intersection consists of a single point.
Moreover, this point belongs to $\Lambda_+$.
\end{lemma}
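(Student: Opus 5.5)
The proof has three steps: nesting, shrinking of diameters, and membership of the limit point in $\Lambda_+$.

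\textbf{Nesting.} Since each $\gamma_{n+1}\in\mathcal H$ is a product of first-return branches, \Cref{lem.branchcompactinterior} gives $\gamma_{n+1}\Delta_0\subset\Delta_0$. Applying $\gamma_1\cdots\gamma_n$ yields
$$\gamma_1\cdots\gamma_{n+1}\Delta_0\subset\gamma_1\cdots\gamma_n\Delta_0 .$$
So the sets $\gamma_1\cdots\gamma_n\Delta_0$ form a decreasing sequence of nonempty compact sets, and their intersection is nonempty.

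\textbf{Shrinking.} Set $g_n:=\gamma_1\cdots\gamma_n$. By \eqref{shadowH}, $g_n\Delta_0\subset\mathcal O_r(o,g_no)\cap\Lambda_\rho$ with $r$ uniform in $n$. The elements $g_n$ are pairwise distinct: $g_n=g_m$ with $n<m$ would give $\gamma_{n+1}\cdots\gamma_m=e$, which is excluded by \Cref{lem.inversebranchfreesemigroup_before}. Hence $g_n\to\infty$ in $\Gamma_\rho$. The diameter statement in \Cref{lem.ambient-shadow-convergence} then shows that the $d_{\mathrm{Riem}}$-diameter of $g_n\Delta_0$ tends to $0$. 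Therefore the intersection is a single point $x$.

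\textbf{Membership in $\Lambda_+$.} This is the main obstacle: $x$ is defined as a limit of closures, while $\Lambda_+$ is only a conull set. The plan has three parts.

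First, show that $x$ is not a parabolic point and does not lie on any boundary piece. By \Cref{lem.branchcompactinterior}, each $g_n\Delta_0$ lies in the relative interior of $g_{n-1}\Delta_0$. Also, $g_n\Delta_0$ is contained in the $T_0$-cylinder obtained from the concatenated $\mathcal H_0$-word of $g_n$; this cylinder is a tile $\gamma_p\gamma_1\Delta_i$ inside some $J_p$. So at each stage of the $T_0$-coding, $x$ lies in the interior of the current tile.

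Second, show that $x$ lies in $\widetilde\Lambda$. Every point of a tile of $T_0$ other than $p$ lies in $\widetilde\Lambda'$. The point $x$ is not $p$, because $x$ lies in nested shadows $\mathcal O_r(o,g_no)$ with $g_no\to\infty$. This makes $x$ a conical limit point, via \Cref{lem.shadowscomparison} and the definition of conical points. Parabolic points are not conical, so $x\notin\Lambda_{\mathrm p}(\Gamma_\rho)$.

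Third, show that all $T_0$-iterates are defined and that $x$ returns to $\Delta_0$ infinitely often. Write $T_0$ as $b^{-1}$ on the tile containing $x$. Inductively, $T_0^k x$ is the corresponding limit point for the shifted $\mathcal H_0$-word. By the same argument, that point is conical and lies inside a tile. So every iterate lies in $\widetilde\Lambda$, giving $x\in\widetilde\Lambda_+$. The word returns to index $0$ after each block of $\mathcal H_{\mathrm{ret}}$, so $T_0^m x\in\Delta_0$ for infinitely many $m$, and hence $x\in\Lambda_+$.

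The remaining checks are the consistency of the concatenated $\mathcal H_0$-words with the tile structure (\Cref{lem.tilingJp}) and the fact that conical points avoid the countable exceptional sets $\mathcal E_p$. The latter holds because $\mathcal E_p$ consists of parabolic points: the endpoints of the tiles lie in $\Lambda_{\mathrm p}$.
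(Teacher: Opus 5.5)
Your proposal is correct and follows essentially the same route as the paper. The steps match: nesting; distinctness of the $g_n$ via the free-semigroup lemma; uniform shadows plus the diameter statement to get a single point; conicality via shadow comparison to exclude parabolic points; and tracking the $T_0$-itinerary along the expanded $\mathcal H_0$-word to obtain infinitely many returns to $\Delta_0$.

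The only difference is that the paper gets nonparabolicity of each intermediate image of $x$ directly from the $\Gamma_\rho$-invariance of $\Lambda_{\mathrm p}(\Gamma_\rho)$. Re-running the conical argument for the shifted words, and the interior considerations in your first part, are therefore unnecessary.
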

\begin{proof}
The sets are nonempty, compact, and nested because every
inverse branch maps $\Delta_0$ into itself.
Put $g_n:=\gamma_1\cdots\gamma_n$.
By \Cref{lem.inversebranchfreesemigroup_before}, the elements
$g_n$ are distinct. By \Cref{lem.inversebranchshadow},
there exists $r>0$, independent of $n$, such that
$$
g_n\Delta_0\subset\mathcal O_r(o,g_no)\cap\Lambda_\rho.
$$
Hence \Cref{lem.ambient-shadow-convergence} gives
$\operatorname{diam}_{d_{\mathrm{Riem}}}(g_n\Delta_0)\to0$.
The intersection therefore consists of a single point $x$.

Since $x\in\mathcal O_r(o,g_no)$ for every $n$,
\Cref{lem.shadowscomparison} shows that $f^{-1}(x)$
is a conical limit point of $\Gamma$.
In particular, $x\notin\Lambda_{\mathrm p}(\Gamma_\rho)$.

Expand each $\gamma_n$ into its finite word of basic
$T_0$-branches. The inclusions $x\in g_n\Delta_0$
show that every intermediate image of $x$ belongs to
the corresponding basic cell.
Each such image is nonparabolic, since the set of
parabolic fixed points is $\Gamma_\rho$-invariant.
It therefore belongs to $\widetilde\Lambda$, and the
prescribed $T_0$-iterate is defined.
Each $\gamma_n$ records $n_0$ successive returns to
$\Delta_0$, so this orbit returns to $\Delta_0$
infinitely often. Thus $x\in\Lambda_+$, and
$T^nx=g_n^{-1}x$ for every $n\geq1$.
\end{proof}
Define
$$
\Psi:\Sigma^+\to\Lambda_+
$$
by letting $\Psi((\gamma_j)_{j\geq1})$ be the unique point in
$\bigcap_{n\geq1}\gamma_1\cdots\gamma_n\Delta_0$.

\begin{proposition}
\label{prop.shiftandexpanding}
The map $\Psi:\Sigma^+\to\Lambda_+$ is a homeomorphism and
$$
\Psi\circ\sigma=T\circ\Psi.
$$
\end{proposition}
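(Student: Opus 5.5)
We check in turn that $\Psi$ is well defined, intertwines the shift with $T$, is bijective, and is continuous in both directions. Well-definedness, and the fact that $\Psi(\omega)\in\Lambda_+$, are given by \Cref{lem.welldefPhi}.

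For the conjugacy, write $\omega=(\gamma_1,\gamma_2,\ldots)$. By construction $\Psi(\omega)\in\gamma_1\gamma_2\cdots\gamma_n\Delta_0$ for every $n$. Since $\Psi(\omega)\in\Lambda_+$ and $\Lambda_+=\bigsqcup_{\gamma\in\mathcal H}\gamma\Lambda_+$, the point $\Psi(\omega)$ lies in exactly one branch image. We claim it lies in $\gamma_1\Lambda_+$. The proof of \Cref{lem.welldefPhi} expands each $\gamma_n$ into basic $T_0$-branches, and the $T_0$-itinerary of $\Psi(\omega)$ is exactly that expansion. Moreover, the first-return branch sequence of a point of $\Lambda_+$ is uniquely determined by its $T_0$-orbit, so this itinerary identifies the branch. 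Hence $T\Psi(\omega)=\gamma_1^{-1}\Psi(\omega)$. This point lies in $\bigcap_n\gamma_2\cdots\gamma_n\Delta_0$, which equals $\{\Psi(\sigma\omega)\}$.

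For injectivity, suppose $\omega\neq\omega'$ first differ at index $j$. Applying $T^{j-1}$ and the conjugacy, the images $\Psi(\sigma^{j-1}\omega)$ and $\Psi(\sigma^{j-1}\omega')$ lie in $\gamma_j\Lambda_+$ and $\gamma'_j\Lambda_+$ respectively. These sets are disjoint, so $\Psi(\omega)\neq\Psi(\omega')$. For surjectivity, take $x\in\Lambda_+$ and let $\gamma_n$ be the branch with $T^{n-1}x\in\gamma_n\Lambda_+$. Then $x\in\gamma_1\cdots\gamma_n\Lambda_+\subset\gamma_1\cdots\gamma_n\Delta_0$ for all $n$, so $x=\Psi((\gamma_n))$.

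Continuity of $\Psi$ follows because the cylinder $[\gamma_1,\ldots,\gamma_n]$ maps into $\gamma_1\cdots\gamma_n\Delta_0$. The $d_{\mathrm{Riem}}$-diameter of this set tends to $0$ as $n\to\infty$, by \Cref{lem.inversebranchshadow} and the shadow-diameter statement of \Cref{lem.ambient-shadow-convergence}. We need this uniformly over words of length $n$. Uniformity holds because $\gamma_1\cdots\gamma_n\to\infty$ uniformly in the word length: the semigroup is free (\Cref{lem.inversebranchfreesemigroup_before}) and discrete, so only finitely many elements have bounded Cartan projection, and these have bounded word length. In fact only pointwise continuity is needed, and that already follows from the nested intersection.

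The main obstacle is continuity of $\Psi^{-1}$, that is, showing that cylinder images are relatively open in $\Lambda_+$. Since the alphabet is countable this is not automatic: it requires that $\gamma\Lambda_+=\Lambda_+\cap\gamma\Delta_0$ and that $x\in\gamma\Lambda_+$ has a neighbourhood in $\Lambda_+$ avoiding all other images $\gamma'\Delta_0$ with $\gamma'\in\mathcal H^n$. We plan the following argument.
\begin{enumerate}
\item Equality $\gamma\Lambda_+=\Lambda_+\cap\gamma\Delta_0$: take $y\in\Lambda_+\cap\gamma\Delta_0$. Then $y$ lies in some $\gamma'\Lambda_+$. If $\gamma'\neq\gamma$, the points of $\gamma'\Lambda_+$ near $\gamma\Delta_0$ can only be boundary points of cells. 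The tiling of \Cref{lem.tilingJp} shows that distinct basic cells meet only in endpoints of the form $\gamma_p\gamma_1\partial\Delta_i$ or $p$. These points are parabolic, and $\Lambda_+$ contains no parabolic points.
\item Openness: suppose $x_k\to x$ in $\Lambda_+$ with $x_k\notin\gamma\Lambda_+$. Then $x_k$ lies in cells $\gamma'_k\Delta_0$ different from the cell of $x$. If infinitely many of these cells are distinct, they shrink, again by the shadow convergence of \Cref{lem.ambient-shadow-convergence}. Either they accumulate at a parabolic endpoint, using that $J_p\smallsetminus\mathcal E_p$ is a disjoint union of cells accumulating only at $p$, or they accumulate at a cell boundary. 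In both cases the limit $x$ would be parabolic, which is impossible. If finitely many cells occur, a closed cell meets the cell of $x$ only at parabolic endpoints, again forcing $x$ to be parabolic.
\end{enumerate}
Both steps reduce to the single fact that every accumulation point of distinct cells, and every common boundary point of two cells, is a parabolic point. Checking this at each stage of the induction of \Cref{sec.expandingmap} is the step I expect to need the most care.
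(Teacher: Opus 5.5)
Your arguments for bijectivity, for $\Psi\circ\sigma=T\circ\Psi$, and for continuity of $\Psi$ agree with the paper's. Your step (1), $\Lambda_+\cap g\Delta_0=g\Lambda_+$ for $g\in\mathcal H^n$, is precisely the identity the paper invokes. It follows as in the proof of \Cref{lem.welldefPhi}: the $T_0$-itinerary of a point of $\Lambda_+\cap g\Delta_0$ is forced to follow the basic expansion of $g$, because such a point is non-parabolic.

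The gap is in step (2). The ``single fact'' you reduce to is false as stated: limits of infinitely many distinct cells need not be parabolic. Suppose $c$ is a product of basic branches forming a loop at a vertex $i\neq0$ of the transition graph, with all intermediate vertices nonzero. Take $b\in\mathcal H_0(i,0)$ and $b'\in\mathcal H_0(0,i)$. Then the words $bc^kb'$ are distinct first-return branches. Their cells are contained in $bc^k\Delta_i$ and shrink to the point $\bigcap_k bc^k\Delta_i$. This point is conical, by the shadow argument of \Cref{lem.welldefPhi} combined with \Cref{lem.inversebranchshadow} for the powers $c^k$. It lies outside $\Lambda_+$ only because its $T_0$-orbit never returns to $\Delta_0$. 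So the dichotomy ``parabolic endpoint or cell boundary'' in (2) is incomplete. The correct replacement, ``such limits are not in $\Lambda_+$'', would require exactly the stage-by-stage analysis you were worried about.

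Step (2) is in any case unnecessary. Put $x=\Psi(s)$ and $g_n=\gamma_1\cdots\gamma_n$. By \Cref{lem.branchcompactinterior}, $x\in g_n\gamma_{n+1}\Delta_0\subset g_n\operatorname{int}_{\Lambda_\rho}(\Delta_0)$. Alternatively, $x$ is conical, while the relative boundary of $g_n\Delta_0$ in $\Lambda_\rho$ is contained in the parabolic pair $g_n\partial\Delta_0$. Since $g_n$ acts on $\Lambda_\rho$ by a homeomorphism, $g_n\operatorname{int}_{\Lambda_\rho}(\Delta_0)$ is open in $\Lambda_\rho$. Your step (1) then gives
$$\Lambda_+\cap g_n\operatorname{int}_{\Lambda_\rho}(\Delta_0)\subset\Lambda_+\cap g_n\Delta_0=g_n\Lambda_+=\Psi([\gamma_1,\ldots,\gamma_n]).$$
This is a neighbourhood of $x$ in $\Lambda_+$ inside the image of the cylinder, which is the paper's argument. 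You never need a neighbourhood avoiding the other closed cells $\gamma'\Delta_0$. It suffices to avoid the sets $\gamma'\Lambda_+$, and inside $g_n\Delta_0$ that is already the content of step (1).
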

\begin{proof}
For each $x\in\Lambda_+$, the branch partition determines
a unique sequence $(\gamma_n)$ such that
$T^{n-1}x\in\gamma_n\Lambda_+$ for every $n\geq1$.
Then $x\in\gamma_1\cdots\gamma_n\Delta_0$ for every $n$,
so $x=\Psi((\gamma_n))$ by \Cref{lem.welldefPhi}.
Conversely, the proof of that lemma shows that the image
of each sequence follows its prescribed branches.
Thus $\Psi$ is a bijection and
$\Psi\circ\sigma=T\circ\Psi$.

Fix $s=(\gamma_1,\gamma_2,\ldots)\in\Sigma^+$ and put
$g_n:=\gamma_1\cdots\gamma_n$.
Sequences agreeing with $s$ in their first $n$ entries
have images in $g_n\Delta_0$.
Since the diameters of these sets tend to zero,
$\Psi$ is continuous.

To prove continuity of $\Psi^{-1}$, put $x:=\Psi(s)$.
By \Cref{lem.branchcompactinterior},
$$
x\in g_n\gamma_{n+1}\Delta_0
\subset g_n\operatorname{int}_{\Lambda_\rho}(\Delta_0).
$$
The branch definitions give
$\Lambda_+\cap g_n\Delta_0=g_n\Lambda_+
=\Psi([\gamma_1,\ldots,\gamma_n])$.
Thus
$\Lambda_+\cap g_n\operatorname{int}_{\Lambda_\rho}(\Delta_0)$
is a neighborhood of $x$ contained in the image of this
symbolic cylinder. Since symbolic cylinders form a basis,
$\Psi^{-1}$ is continuous.
\end{proof}

Recall the constant $C_R$ used in the choice of $n_0$
in \Cref{subsec.takingpowerexpandingmap}.

\begin{lemma}[Uniform branch contraction]
\label{lem:uniform-branch-contraction}
For every $n\geq1$, $\gamma\in\mathcal H^n$, and
$x,y\in\Lambda_+$,
\begin{equation}
\label{eqn:contraction metric}
d_{\bar\psi}(\gamma x,\gamma y)
\leq e^{-C_Rn}d_{\bar\psi}(x,y).
\end{equation}
\end{lemma}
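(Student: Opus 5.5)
The plan is to compute the contraction exactly using the transformation rule \eqref{eqn.metrics_sph} for $d_{\bar\psi}$, and then bound the exponent using part~(2) of \Cref{lem.choice of n_0}. It suffices to treat $n=1$: an element of $\mathcal H^n$ is a product of $n$ elements of $\mathcal H$, each inverse branch maps $\Lambda_+$ into itself, and so the one-step bound iterates to give the factor $e^{-C_Rn}$.

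For $\gamma\in\mathcal H$ and $x\neq y\in\Lambda_+$, apply \eqref{eqn.metrics_sph} with $g=\gamma$ and $\chi=\bar\psi$. This gives
$$
d_{\bar\psi}(\gamma x,\gamma y)=d_{\bar\psi}(x,y)\,
e^{-\frac12\bar\psi\left(\beta_{\gamma x}(e,\gamma)+\beta_{\gamma y}(e,\gamma)\right)}.
$$
Transversality holds because distinct points of $\Lambda_\rho$ are transverse. The points $\gamma x$ and $\gamma y$ lie in $\Lambda_+\cap\gamma\Delta_0$. By the choice of $n_0$ in \Cref{subsec.takingpowerexpandingmap}, $\gamma$ is a product of $n_0$ first-return branches. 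So \Cref{lem.choice of n_0}(2) with $\varphi=\bar\psi$ gives $\bar\psi(\beta_{\gamma x}(e,\gamma))\ge C_R$, and likewise for $\gamma y$. The exponent is therefore at most $-C_R$, which proves the case $n=1$.

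For general $n$, write $\gamma=\gamma_1\cdots\gamma_n$ with $\gamma_j\in\mathcal H$. Apply the one-step bound successively to the points $\gamma_{j+1}\cdots\gamma_n x$ and $\gamma_{j+1}\cdots\gamma_n y$, which lie in $\Lambda_+$ because inverse branches preserve $\Lambda_+$. The factors multiply to $e^{-C_Rn}$. Alternatively, one could apply \Cref{lem.choice of n_0}(2) directly to the full product of $nn_0$ return branches, but that only yields the bound $C_R$ rather than $nC_R$, so the iteration is the right route.

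The only delicate point is confirming that \Cref{lem.choice of n_0}(2) applies at the points $\gamma x$, i.e.\ that these lie in $\Lambda_+\cap\gamma\Delta_0$. This follows from $\gamma\Lambda_+\subset\Lambda_+\cap\gamma\Delta_0$. The case $x=y$ is trivial since both sides vanish.
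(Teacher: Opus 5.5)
Your proposal is correct and follows the paper's proof: apply \eqref{eqn.metrics_sph} with $\chi=\bar\psi$ to a single branch $\gamma\in\mathcal H$, bound the exponent by \Cref{lem.choice of n_0}(2) with $C_0=C_R$, and then iterate, using that inverse branches preserve $\Lambda_+$. Your check that $\gamma x\in\Lambda_+\cap\gamma\Delta_0$ is exactly the hypothesis needed for that lemma. You are also right that applying it directly to the full product would only give the factor $e^{-C_R}$ rather than $e^{-C_Rn}$.
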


\begin{proof}
For $\gamma\in\mathcal H$, \eqref{eqn.metrics_sph}
and \Cref{lem.choice of n_0}(2) give
$$
\begin{aligned}
d_{\bar\psi}(\gamma x,\gamma y)
&=d_{\bar\psi}(x,y)
e^{-\frac12\bar\psi(
\beta_{\gamma x}(e,\gamma)+\beta_{\gamma y}(e,\gamma))}\leq e^{-C_R}d_{\bar\psi}(x,y).
\end{aligned}
$$
Since every inverse branch maps $\Lambda_+$ into itself,
iteration gives the estimate for $\gamma\in\mathcal H^n$.
\end{proof}

\begin{proposition}
\label{lem.phiholder}
There exists $C>0$ such that, for all $s,t\in\Sigma^+$,
$$
d_{\bar\psi}(\Psi(s),\Psi(t))
\leq C d_{\Sigma^+}(s,t)^{\alpha_0},
\qquad
\alpha_0:=\frac{C_R}{\log2}.
$$
In particular, $\Psi$ is H\"older continuous.
\end{proposition}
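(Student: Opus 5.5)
The plan is to combine the uniform branch contraction of \Cref{lem:uniform-branch-contraction} with the finite $d_{\bar\psi}$-diameter of $\Lambda_\rho$ from \Cref{lem.dpsidiameterfinite}. The case $s=t$ is trivial, so fix distinct $s,t\in\Sigma^+$ and put $n:=n(s,t)\geq1$. Then $s$ and $t$ agree in their first $n-1$ entries. Write $\gamma:=s_1\cdots s_{n-1}\in\mathcal H^{n-1}$, with $\gamma=e$ if $n=1$.

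By \Cref{prop.shiftandexpanding}, we have $\Psi(s)=\gamma\Psi(\sigma^{n-1}s)$ and $\Psi(t)=\gamma\Psi(\sigma^{n-1}t)$. Indeed, $T^{n-1}\Psi(s)=\Psi(\sigma^{n-1}s)$, and on the cylinder $\gamma\Lambda_+$ the map $T^{n-1}$ acts as $\gamma^{-1}$ by \eqref{eqn:powered-branch-partition}. Both points $\Psi(\sigma^{n-1}s)$ and $\Psi(\sigma^{n-1}t)$ lie in $\Lambda_+$.

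Applying \Cref{lem:uniform-branch-contraction} with $n-1$ gives
$$
d_{\bar\psi}(\Psi(s),\Psi(t))\leq e^{-C_R(n-1)}\,d_{\bar\psi}\bigl(\Psi(\sigma^{n-1}s),\Psi(\sigma^{n-1}t)\bigr)\leq e^{C_R}\,\operatorname{diam}_{d_{\bar\psi}}(\Lambda_\rho)\,e^{-C_Rn}.
$$
For $n=1$ this is just the diameter bound. Since $e^{-C_Rn}=(2^{-n})^{C_R/\log 2}=d_{\Sigma^+}(s,t)^{\alpha_0}$, the inequality follows with $C:=e^{C_R}\operatorname{diam}_{d_{\bar\psi}}(\Lambda_\rho)$.

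The only point that needs care is that the contraction lemma is stated for points of $\Lambda_+$. This is why we factor through $\sigma^{n-1}$: it keeps both points inside $\Lambda_+$, where \Cref{lem:uniform-branch-contraction} applies. Since $d_{\bar\psi}$ is only a premetric, the resulting statement is Hölder continuity with respect to $d_{\bar\psi}$. Continuity in the usual topology follows because $d_{\bar\psi}$ induces that topology. No real obstacle is expected beyond this bookkeeping.
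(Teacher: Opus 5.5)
Your proof is correct and follows the paper's argument: you factor through the common prefix of length $n(s,t)-1$, apply the uniform branch contraction, and bound the remaining distance by the finite $d_{\bar\psi}$-diameter of $\Lambda_\rho$. Your separate treatment of $n(s,t)=1$ and your derivation of $\Psi(s)=\gamma\Psi(\sigma^{n-1}s)$ supply details that the paper leaves implicit.
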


\begin{proof}
The assertion is immediate when $s=t$.
Otherwise, put $N:=n(s,t)$ so that $d_{\Sigma^+}(s,t)=2^{-N}$. So $s$ and $t$ share their first $N-1$ symbols, so
\Cref{lem:uniform-branch-contraction,lem.dpsidiameterfinite}
give
$$
d_{\bar\psi}(\Psi(s),\Psi(t))
\leq e^{-C_R(N-1)}
\operatorname{diam}_{d_{\bar\psi}}(\Lambda_\rho)
\ll d_{\Sigma^+}(s,t)^{C_R/\log2}.
$$
\end{proof}

\subsection{Return cocycle and positivity}
\label{sec.exptail}

We define the return cocycle for $T=T_{\mathrm{ret}}^{\,n_0}$,
which records $n_0$ successive first returns to $\Delta_0$,
and establish its positivity and comparison with the
Cartan projection.

\begin{definition}[Return cocycle and roof function]
\label{def.return vector}\label{def.return vector2}
For $x\in \Lambda_+$, let $\gamma\in\mathcal H$
be the unique element such that $x\in \gamma \La_+$.
Define
$$
K(x):=\beta_x(e,\gamma)
+2\mathcal G(Tx,p_0)-2\mathcal G(x,p_0),
\qquad
R(x):=\psi(K(x)).
$$
Put $K_0:=0$ and $R_0:=0$.
For $n\geq1$, set
$K_n:=\sum_{j=0}^{n-1}K\circ T^j$ and
$R_n:=\psi\circ K_n=\sum_{j=0}^{n-1}R\circ T^j$.
\end{definition}

For $n\geq1$ and $\gamma\in\mathcal H^n$, put
\begin{equation}
\label{eqn:R gamma}
R_\gamma:=\sup_{x\in\gamma\Lambda_+}R_n(x).
\end{equation}

\begin{lemma}
\label{lem.uniformly positive}
There exists $C>0$ such that, for every $n\geq1$,
$\gamma\in\mathcal H^n$, and $x\in\gamma\Lambda_+$,
\begin{align}
K_n(x)
&=\beta_x(e,\gamma)
+2\mathcal G(\gamma^{-1}x,p_0)-2\mathcal G(x,p_0),
\label{eqn:telescoping-return-cocycle}\\
\|K_n(x)-\mu(\gamma)\|
&\leq C,
\qquad
|R_n(x)-\psi(\mu(\gamma))|\leq C.
\label{eqn:return-Cartan-comparison}
\end{align}
Moreover, for every $n\geq1$ and $x\in\Lambda_+$, $K_n(x)\in\mathcal C_0$ and $R_n(x)\geq n$.
\end{lemma}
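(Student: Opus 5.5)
The plan is to establish the telescoping identity \eqref{eqn:telescoping-return-cocycle} first, and then derive the Cartan comparison and the positivity claims from it using the shadow estimates and the choice of $n_0$.

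\emph{Telescoping identity.} We argue by induction on $n$. For $n=1$ the identity is the definition of $K$. For the inductive step, let $\gamma=\gamma_1\gamma'\in\mathcal H^{n}$ with $\gamma_1\in\mathcal H$ and $\gamma'\in\mathcal H^{n-1}$, and let $x\in\gamma\Lambda_+$. Then $Tx=\gamma_1^{-1}x\in\gamma'\Lambda_+$, and
$$K_n(x)=K(x)+K_{n-1}(Tx).$$
The Gromov product terms telescope. The Busemann terms combine by the cocycle identity and equivariance:
$$\beta_x(e,\gamma_1)+\beta_{\gamma_1^{-1}x}(e,\gamma')=\beta_x(e,\gamma_1)+\beta_x(\gamma_1,\gamma_1\gamma')=\beta_x(e,\gamma).$$
Since $T^n x=\gamma^{-1}x$, this gives \eqref{eqn:telescoping-return-cocycle}.

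\emph{Cartan comparison.} The shadow inclusion \eqref{shadowH} gives $\gamma\Delta_0\subset\mathcal O_r(o,\gamma o)$ for every nonempty product $\gamma$ of return branches; this holds since each element of $\mathcal H^n$ is such a product. Combined with \Cref{lem.buseandcartan}, it yields $\beta_x(e,\gamma)=\mu(\gamma)+O(1)$ uniformly for $x\in\gamma\Lambda_+$. The Gromov product difference lies in the compact set $\mathcal D_0$ by \eqref{Gbound}, because both $x$ and $\gamma^{-1}x$ lie in $\Delta_0$. Together these give $\|K_n(x)-\mu(\gamma)\|\le C$. Applying the linear form $\psi$ then gives the bound for $R_n$.

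\emph{Positivity.} Each $\gamma\in\mathcal H$ is a product of $n_0$ first-return branches, so \Cref{lem.choice of n_0}(1) applies to $\gamma\in\mathcal H^n$ for every $n\ge1$, since the length is $nn_0\ge n_0$. It gives $\beta_x(e,\gamma)+\mathcal D_0\subset\mathcal C_0$, and hence $K_n(x)\in\mathcal C_0$ by \eqref{eqn:telescoping-return-cocycle} and \eqref{Gbound}.

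For the roof bound, it suffices to show $R(y)\ge1$ for every $y\in\Lambda_+$, since $R_n$ is a Birkhoff sum of $R$. By \Cref{lem.choice of n_0}(2) with $C_0=C_R$, we have $\psi(\beta_y(e,\gamma))\ge C_R$. The Gromov term satisfies
$$|2\psi(\mathcal G(Ty,p_0))-2\psi(\mathcal G(y,p_0))|\le 4\sup_{\Delta_0}|\psi(\mathcal G(\cdot,p_0))|\le C_R-1.$$
Hence $R(y)\ge1$, and summing gives $R_n(x)\ge n$.

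\emph{Main obstacle.} The only delicate point is the uniformity of the shadow radius over arbitrary products, including the fact that elements of $\mathcal H^n$ are products of return branches and hence of $T_0$-branches. This is already supplied by \Cref{lem.inversebranchshadow}, so the remaining steps are bookkeeping with the cocycle identity.
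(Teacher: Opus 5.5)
Your proposal is correct and follows the paper's proof essentially step for step. The telescoping comes from the cocycle identity (you phrase it as an induction, where the paper sums directly), the Cartan comparison from \eqref{shadowH}, \Cref{lem.buseandcartan} and \eqref{Gbound}, and the positivity from \Cref{lem.choice of n_0}(1),(2) together with the choice of $C_R$.
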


\begin{proof}
Write $\gamma=\gamma_1\cdots\gamma_n$ with
$\gamma_1,\ldots,\gamma_n\in\mathcal H$.
The cocycle identity gives
$\sum_{j=0}^{n-1}\beta_{T^jx}(e,\gamma_{j+1})
=\beta_x(e,\gamma)$.
Since the $\mathcal G$-terms telescope and
$T^nx=\gamma^{-1}x$, this proves
\eqref{eqn:telescoping-return-cocycle}.

By \eqref{Gbound}, the endpoint term belongs to the
fixed compact set $\mathcal D_0$.
Together with \eqref{shadowH} and \Cref{lem.buseandcartan},
this gives \eqref{eqn:return-Cartan-comparison}.
Since $\gamma$ is a word of length $nn_0\geq n_0$
in $\mathcal H_{\mathrm{ret}}$,
\Cref{lem.choice of n_0}(1) also gives
$K_n(x)\in\mathcal C_0$.

For $n=1$, \Cref{lem.choice of n_0}(2) and the choice
of $C_R$ give
$
R(x)\geq
C_R-4\sup_{z\in\Delta_0}
|\psi(\mathcal G(z,p_0))|
\geq1.
$
Summing along the orbit yields $R_n(x)\geq n$.
\end{proof}

\subsection{Exponential summability and moments}

Exponential summability for $\mathcal H_{\mathrm{ret}}$
also holds for
$\mathcal H=\mathcal H_{\mathrm{ret}}^{\,n_0}$.

\begin{proposition}[Exponential branch summability]
\label{prop.expsumfinalfinite}
\label{Rinfinity2}
There exists $\varepsilon_*\in(0,1)$ such that, for every
$0<\varepsilon\leq\varepsilon_*$,
$$
\sum_{\gamma\in\mathcal H}
\sup_{x\in\Lambda_+}e^{-(1-\varepsilon)R(\gamma x)}
\asymp
\sum_{\gamma\in\mathcal H}
e^{-(1-\varepsilon)\psi(\mu(\gamma))}
<\infty.
$$
\end{proposition}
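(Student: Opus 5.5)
The proof has two parts: a uniform comparison $R(\gamma x)=\psi(\mu(\gamma))+O(1)$, which gives the two-sided estimate $\asymp$, and then finiteness of the right-hand series. The finiteness is obtained by transferring exponential summability from $\mathcal H_{\mathrm{ret}}$ to its $n_0$-fold products.

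\textbf{Comparison.} For $\gamma\in\mathcal H$ and $x\in\Lambda_+$, the point $\gamma x$ lies in $\gamma\Lambda_+$. By \Cref{lem.uniformly positive} with $n=1$, we have $|R(\gamma x)-\psi(\mu(\gamma))|\le C$ uniformly. Hence
$$\sup_{x}e^{-(1-\varepsilon)R(\gamma x)}\asymp e^{-(1-\varepsilon)\psi(\mu(\gamma))},$$
with constants $e^{\pm C}$, since $0<1-\varepsilon<1$. Summing over $\gamma\in\mathcal H$ gives the asserted $\asymp$.

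\textbf{Finiteness for $\mathcal H_{\mathrm{ret}}$.} In the proofs of \Cref{prop.T is well def a.e. induction,prop.T is well def a.e.} we showed inductively, using \Cref{prop.expsumh0finite,prop.expsumh1finite}, that for each stage $k$ there is $\varepsilon>0$ with $\sum_{\gamma\in\mathcal H_k}e^{-(1-\varepsilon)\psi(\mu(\gamma))}<\infty$. At the final stage $k=r$ this gives some $\varepsilon_{\mathrm{ret}}>0$ with
$$\sum_{\gamma\in\mathcal H_{\mathrm{ret}}}e^{-(1-\varepsilon_{\mathrm{ret}})\psi(\mu(\gamma))}=:S<\infty.$$

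\textbf{Passing to $\mathcal H=\mathcal H_{\mathrm{ret}}^{\,n_0}$.} Write $\gamma=\gamma_1\cdots\gamma_{n_0}$ with $\gamma_j\in\mathcal H_{\mathrm{ret}}$. This expression is unique by \Cref{lem.inversebranchfreesemigroup_before}, so summing over $\mathcal H$ is the same as summing over ordered $n_0$-tuples. Next we need coarse additivity of the Cartan projection along such products, $\psi(\mu(\gamma))\ge\sum_j\psi(\mu(\gamma_j))-Cn_0$. This follows exactly as in the proof of \Cref{prop.expsumh1finite}: by \eqref{shadowH} and \Cref{lem.buseandcartan}, $\beta_x(e,\gamma')=\mu(\gamma')+O(1)$ for every product $\gamma'$ and $x\in\gamma'\Delta_0$, and the Busemann cocycle identity then splits $\beta_x(e,\gamma)$ into the individual factors. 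With this inequality the sum factorizes:
$$\sum_{\gamma\in\mathcal H}e^{-(1-\varepsilon)\psi(\mu(\gamma))}\le e^{Cn_0}S^{n_0}<\infty$$
for $\varepsilon\le\varepsilon_{\mathrm{ret}}$. Any smaller $\varepsilon$ is also covered, because $\psi(\mu(\cdot))$ is bounded below by \eqref{lb}, so decreasing $\varepsilon$ changes each term by at most a bounded factor. Set $\varepsilon_*:=\min\{\varepsilon_{\mathrm{ret}},1/2\}$.

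The main obstacle is the uniformity of the additive error when applying the shadow estimates to intermediate products. For this one must use that every nonempty product of elements of $\mathcal H_{\mathrm{ret}}$ satisfies \eqref{shadowH} with the same radius $r$, which \Cref{lem.inversebranchshadow} provides.
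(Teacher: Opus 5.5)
Your proposal is correct and matches the paper's proof essentially step for step. You take finiteness over $\mathcal H_{\mathrm{ret}}$ from the induction via \Cref{prop.expsumh0finite,prop.expsumh1finite}, factor the sum over $n_0$-fold products using the cocycle identity with \eqref{shadowH} and \Cref{lem.buseandcartan}, and obtain the two-sided comparison from the case $n=1$ of \eqref{eqn:return-Cartan-comparison}; the uniqueness of the word representation is harmless but not needed for the upper bound, where surjectivity of the tuple-to-product map suffices.
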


\begin{proof}
By \Cref{prop.expsumh0finite,prop.expsumh1finite},
there exists $\varepsilon_*\in(0,1)$ such that
$\sum_{\gamma\in\mathcal H_{\mathrm{ret}}}
e^{-(1-\varepsilon_*)\psi(\mu(\gamma))}<\infty$.
Since $\psi(\mu(\gamma))$ is uniformly bounded below,
the same convergence holds for
$0<\varepsilon\leq\varepsilon_*$.

For $\gamma_1,\ldots,\gamma_{n_0}\in\mathcal H_{\mathrm{ret}}$,
the cocycle identity, \eqref{shadowH}, and
\Cref{lem.buseandcartan} give
$$
\psi(\mu(\gamma_1\cdots\gamma_{n_0}))
=
\sum_{j=1}^{n_0}\psi(\mu(\gamma_j))+O_{n_0}(1),
$$
uniformly in the letters. Hence
$$
\sum_{\gamma\in\mathcal H}
e^{-(1-\varepsilon)\psi(\mu(\gamma))}
\ll
\left(
\sum_{\gamma\in\mathcal H_{\mathrm{ret}}}
e^{-(1-\varepsilon)\psi(\mu(\gamma))}
\right)^{n_0}
<\infty.
$$
Finally, \eqref{eqn:return-Cartan-comparison} with $n=1$
gives $R(\gamma x)=\psi(\mu(\gamma))+O(1)$ uniformly
in $\gamma$ and $x$, proving the comparison of the two sums.
\end{proof}

\begin{proposition}[Exponential moments]
\label{lem:roof-moment-branch-series}
\label{thm.exptail}
\label{Kk2}
\begin{enumerate}
\item For every $0<\varepsilon<1$,
$$
\int_{\Lambda_+}e^{\varepsilon R(x)}\,d\nu(x)
\asymp_\varepsilon
\sum_{\gamma\in\mathcal H}
e^{-(1-\varepsilon)\psi(\mu(\gamma))}.
$$

\item There exist $\varepsilon_0,C>0$ such that
$$
\int_{\Lambda_+}e^{\varepsilon_0R}\,d\nu<\infty,
\qquad
\nu(\{x\in\Lambda_+:R(x)>t\})
\leq Ce^{-\varepsilon_0t}
\quad(t\geq0).
$$

\item There exists $\varepsilon_1>0$ such that
$$
\int_{\Lambda_+}e^{\varepsilon_1\|K\|}\,d\nu<\infty.
$$
\end{enumerate}
\end{proposition}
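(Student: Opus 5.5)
The plan is to decompose $\Lambda_+$ along the branch partition $\Lambda_+=\bigsqcup_{\gamma\in\mathcal H}\gamma\Lambda_+$ and to evaluate the integral of $e^{\varepsilon R}$ one cylinder at a time. Two ingredients control each cylinder. The first is the uniform comparison $R(y)=\psi(\mu(\gamma))+O(1)$ for $y\in\gamma\Lambda_+$, which is \eqref{eqn:return-Cartan-comparison} with $n=1$. The second is a mass estimate for the cylinders,
$$\nu(\gamma\Lambda_+)=\nu(\gamma\Delta_0)\asymp e^{-\psi(\mu(\gamma))}\qquad(\gamma\in\mathcal H),$$
uniformly in $\gamma$. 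To get the mass estimate, start from conformality: $\nu(\gamma\Delta_0)=\int_{\Delta_0}e^{-\psi(\beta_{\gamma z}(e,\gamma))}\,d\nu(z)$. By \eqref{shadowH} we have $\gamma\Delta_0\subset\mathcal O_r(o,\gamma o)$, and then \Cref{lem.buseandcartan} gives $\beta_{\gamma z}(e,\gamma)=\mu(\gamma)+O(1)$. Finally, $\nu(\Lambda_+)=\nu(\Delta_0)>0$ by \Cref{prop.T is well def a.e.}, so the mass of $\Delta_0$ only contributes a fixed positive constant.

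Part (1) follows by summing over cylinders:
$$\int_{\Lambda_+}e^{\varepsilon R}\,d\nu=\sum_{\gamma\in\mathcal H}\int_{\gamma\Lambda_+}e^{\varepsilon R}\,d\nu\asymp_\varepsilon\sum_{\gamma}e^{\varepsilon\psi(\mu(\gamma))}e^{-\psi(\mu(\gamma))}.$$
The implied constants are $e^{\pm\varepsilon C}$ times the mass constants, so they depend only on $\varepsilon$.

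For part (2), take $\varepsilon_0:=\varepsilon_*$ from \Cref{prop.expsumfinalfinite}. That proposition makes the right side of (1) finite, so $\int e^{\varepsilon_0R}\,d\nu<\infty$. The tail bound then follows from Markov's inequality, $\nu(R>t)\le e^{-\varepsilon_0t}\int e^{\varepsilon_0R}\,d\nu$.

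For part (3), bound the norm of $K$ by $R$. \Cref{lem.uniformly positive} gives $K(x)\in\mathcal C_0$. Since $\psi>0$ on $\mathcal C_0\smallsetminus\{0\}$ and the unit sphere in $\mathcal C_0$ is compact, there is $c>0$ with $\psi(v)\ge c\|v\|$ for all $v\in\mathcal C_0$. Hence $\|K\|\le c^{-1}R$. Taking $\varepsilon_1:=c\varepsilon_0$ gives $e^{\varepsilon_1\|K\|}\le e^{\varepsilon_0R}$, and the bound follows from (2). Alternatively, one can combine $\|K-\mu(\gamma)\|\le C$ with $\|\mu(\gamma)\|\ll\psi(\mu(\gamma))+O(1)$, which comes from \Cref{lem.scalar-Cartan-comparison}-type positivity of $\psi$ near $\mathcal L_\rho$.

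The only step that needs care is the uniform two-sided mass estimate $\nu(\gamma\Delta_0)\asymp e^{-\psi(\mu(\gamma))}$, and in particular that the shadow radius in \eqref{shadowH} is uniform over all of $\mathcal H$. This uniformity is provided by \Cref{lem.inversebranchshadow}, since each element of $\mathcal H$ is a product of $T_0$-branches. Everything else is bookkeeping.
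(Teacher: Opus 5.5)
Your proposal is correct and follows the paper's proof essentially step for step. It uses the same cylinder decomposition with the uniform mass estimate $\nu(\gamma\Lambda_+)\asymp e^{-\psi(\mu(\gamma))}$, obtained from conformality, \eqref{shadowH} and \Cref{lem.buseandcartan}, together with $R=\psi(\mu(\gamma))+O(1)$ on $\gamma\Lambda_+$, then $\varepsilon_0=\varepsilon_*$ with Markov's inequality, and finally $\|K\|\le c^{-1}R$ from $K\in\mathcal C_0$ and positivity of $\psi$ on $\mathcal C_0\smallsetminus\{0\}$.
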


\begin{proof} By conformality, \eqref{shadowH}, and
\Cref{lem.buseandcartan},
$$
\nu(\gamma\Lambda_+)
=
\int_{\Lambda_+}
e^{-\psi(\beta_{\gamma x}(e,\gamma))}\,d\nu(x)
\asymp e^{-\psi(\mu(\gamma))},
$$
uniformly in $\gamma\in\mathcal H$, since
$\nu(\Lambda_+)=\nu(\Delta_0)>0$.
Moreover, \eqref{eqn:return-Cartan-comparison} gives
$$
R(y)=\psi(\mu(\gamma))+O(1)
\qquad(y\in\gamma\Lambda_+),
$$
with a uniform error.
Therefore, summing over the branch partition, we obtain
$$
\begin{aligned}
\int_{\Lambda_+}e^{\varepsilon R(y)}\,d\nu(y)
&=\sum_{\gamma\in\mathcal H}
\int_{\gamma\Lambda_+}e^{\varepsilon R(y)}\,d\nu(y) \asymp_\varepsilon
\sum_{\gamma\in\mathcal H}
e^{\varepsilon\psi(\mu(\gamma))}
\nu(\gamma\Lambda_+) \asymp_\varepsilon
\sum_{\gamma\in\mathcal H}
e^{-(1-\varepsilon)\psi(\mu(\gamma))}.
\end{aligned}
$$
This proves (1).

Taking $\varepsilon_0=\varepsilon_*$ and applying
\Cref{prop.expsumfinalfinite} gives the finite moment in (2).
The tail bound follows from
$\nu(\{R>t\})\leq
e^{-\varepsilon_0t}\int_{\Lambda_+}e^{\varepsilon_0R}\,d\nu$.

Finally, $K(x)\in\mathcal C_0$ by
\Cref{lem.uniformly positive}, and $\psi$ is positive on
$\mathcal C_0\smallsetminus\{0\}$.
Thus
$m_0:=\min\{\psi(v):v\in\mathcal C_0,\ \|v\|=1\}>0$,
so $R(x)=\psi(K(x))\geq m_0\|K(x)\|$.
Taking $\varepsilon_1=m_0\varepsilon_0$ and applying (2)
proves (3).
\end{proof}
\subsection{Uniform distortion and H\"older regularity}
\label{sec.holder}

We estimate the variation of $K_n$ at $\gamma x,\gamma y$
in terms of $d_{\bar\psi}(x,y)$, uniformly in $n$ and
$\gamma\in\mathcal H^n$.

\begin{definition}
\label{def: locally Holder functions 1}
Let $\alpha>0$. A map $\phi:\Sigma^+\to\mathfrak a$
or $\phi:\Sigma^+\to\mathbb R$ is
\emph{locally $\alpha$-H\"older} if there exists $C>0$ such that
$$
\|\phi(s)-\phi(t)\|
\leq C d_{\Sigma^+}(s,t)^\alpha
\qquad\text{whenever }s_1=t_1.
$$
We call $\phi$ \emph{locally H\"older} if this holds
for some $\alpha>0$. 
\end{definition}

\begin{lemma}
\label{lem:Riemannian-visual-Holder}
There exist $\theta\in(0,1]$ and $C>0$ such that,
for all $\xi,\eta\in\Lambda_\rho$,
$$
d_{\mathrm{Riem}}(\xi,\eta)
\leq C d_{\bar\psi}(\xi,\eta)^\theta.
$$
\end{lemma}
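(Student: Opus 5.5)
The plan is to reduce the statement to a comparison between the visual premetric and a ``maximal root'' visual metric in which the Riemannian distance is controlled by a power. Both $d_{\mathrm{Riem}}$ and $d_{\bar\psi}$ are continuous on the compact set $\Lambda_\rho\times\Lambda_\rho$ and vanish exactly on the diagonal. So only pairs $(\xi,\eta)$ with $d_{\bar\psi}(\xi,\eta)$ small need attention; for the remaining pairs the inequality follows from boundedness of $d_{\mathrm{Riem}}$ and a positive lower bound on $d_{\bar\psi}$.

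For close pairs, work with the hyperbolic picture through $f$. Let $z$ be the nearest-point projection of $o$ to the geodesic $[f^{-1}(\xi),f^{-1}(\eta)]$ and write $t:=d(o,z)$. By \Cref{lem.gromovproductnearest} and \Cref{lem.scalar-Cartan-comparison} applied to $\bar\psi$, we have
$$\bar\psi(\mathcal G(\xi,\eta))=\bar\psi(\mu(f(z)))+O(1)\le a\,t+b.$$
Hence $d_{\bar\psi}(\xi,\eta)\ge c\,e^{-at}$. It therefore suffices to show $d_{\mathrm{Riem}}(\xi,\eta)\le C e^{-\kappa t}$ for some $\kappa>0$, and then take $\theta:=\min\{1,\kappa/a\}$.

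For this bound, note that $z$ lies within a uniform distance of both rays $[o,f^{-1}(\xi)]$ and $[o,f^{-1}(\eta)]$. So $\xi,\eta\in\mathcal O_r(o,f(z))$ for a uniform $r$, by \Cref{lem.shadowscomparison} together with the Morse property, after moving $f(z)$ to a nearby point of $\Gamma_\rho o$ when $z$ is in the thick part. The key input is then a quantitative shadow-diameter bound: for $x=kao\in X$ with $k\in K$ and $a\in A^+$, the shadow $\mathcal O_r(o,x)$ is contained in the $d_{\mathrm{Riem}}$-ball about $kP$ of radius $\ll_r e^{-\min_{\alpha\in\Pi}\alpha(\log a)}$. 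This holds because flags in the shadow have the form $k\,n\,P$ with $n\in\check N$ of size $e^{-\alpha(\log a)}$ in each root space, which is the standard computation behind \cite[Lemma~5.7]{LO_invariant}. Combining it with the root lower bound \eqref{eqn.root_lower_f}, $\alpha(\mu(f(z)))\ge a'\,t-b'$, gives $d_{\mathrm{Riem}}(\xi,\eta)\ll e^{-a't}$, which is the required $\kappa=a'$.

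I expect the shadow-diameter estimate to be the main obstacle. It needs care when $z$ lies deep in a cusp, since then $f(z)$ need not be near an orbit point. I would handle this by working directly in $X$ with the point $f(z)$, which is legitimate because the definition of $\mathcal O_r(o,\cdot)$ and the Busemann comparison \eqref{eqn.ambient_shadow_busemann} do not use $\Gamma_\rho$. The membership $\xi,\eta\in\mathcal O_{r}(o,f(z))$ follows from the Morse lemma as in the proof of \Cref{lem.buseandcartan}, since $z$ lies on a uniform quasigeodesic from $o$ to either endpoint.
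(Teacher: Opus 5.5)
Your proposal is correct, and its second half coincides with the paper's: both get $d_{\bar\psi}(\xi,\eta)\gg e^{-a\,d(o,z)}$ from \Cref{lem.gromovproductnearest} and \Cref{lem.scalar-Cartan-comparison}. The two arguments differ in how they bound $d_{\mathrm{Riem}}(\xi,\eta)\ll e^{-\kappa\,d(o,z)}$. The paper quotes H\"older continuity of the boundary map $f$ from \cite[Corollary~B.7(2)]{CZZ_cusped} and compares the visual metric on $\Lambda(\Gamma)$ with $e^{-d(o,z)}$. You instead reprove that H\"older property from tools already in the paper: shadow containment, exponential shrinking of ambient shadows, and the root growth \eqref{eqn.root_lower_f}. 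Your route makes the lemma self-contained and gives the explicit exponent $\min\{1,\kappa/a\}$ in terms of the growth constants of $f$. The price is one extra estimate that the citation hides. Your concern about cusps is unnecessary: \Cref{lem.shadowscomparison} is stated for arbitrary $x,y\in Y$, so $\xi,\eta\in\mathcal O_r(o,f(z))$ holds directly, with no passage to orbit points.

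The shadow-diameter bound needs its own justification. It is not the content of \cite[Lemma~5.7]{LO_invariant}, which is \eqref{eqn.ambient_shadow_busemann}. Your Bruhat-coordinate description also needs care, because $\{g\in G:d(go,o)<r\}$ is not contained in the big cell.

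A clean argument uses the representations $\tau_\alpha$ from the proof of \Cref{prop.cusp_adapted_morse0}.
\begin{itemize}
\item Suppose $k'\exp(H')o\in B_X(k\exp(H)o,r)$ with $H'\in\operatorname{int}\mathfrak a^+$, and set $k_0:=k^{-1}k'$. Then $g:=\exp(-H)k_0\exp(H')$ satisfies $d(go,o)<r$, and hence $\|H'-H\|<r$.
\item Let $e_\alpha$ be a unit highest-weight vector of weight $\chi_\alpha$. Expand $\tau_\alpha(k_0)e_\alpha$ in orthonormal weight vectors and use $\|\tau_\alpha(g)\|\ll_r1$. Since every weight $\mu\neq\chi_\alpha$ satisfies $\mu\le\chi_\alpha-\alpha$ on $\mathfrak a^+$, the vector $\tau_\alpha(k_0)e_\alpha$ makes angle $O_r(e^{-\alpha(H)})$ with $\mathbb R e_\alpha$.
\item The map $k_0P\mapsto([\tau_\alpha(k_0)e_\alpha])_{\alpha\in\Pi}$ is a smooth embedding of $\mathcal F$. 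Therefore $d_{\mathrm{Riem}}(k'P,kP)\ll_r e^{-\min_{\alpha\in\Pi}\alpha(H)}$, which is exactly the bound you need.
\end{itemize}
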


\begin{proof}
For distinct $\xi,\eta\in\Lambda_\rho$, let $z$ be the
nearest-point projection of $o$ to the geodesic joining
$f^{-1}(\xi)$ and $f^{-1}(\eta)$.
By \cite[Corollary~B.7(2)]{CZZ_cusped}, the boundary map
$f$ is H\"older with respect to a visual metric on
$\Lambda(\Gamma)$ and $d_{\mathrm{Riem}}$ on $\mathcal F$.
Since the visual distance is comparable to $e^{-d(o,z)}$,
there exists $a>0$ such that
$d_{\mathrm{Riem}}(\xi,\eta)\ll e^{-a d(o,z)}$.

By \Cref{lem.gromovproductnearest,lem.scalar-Cartan-comparison},
we can choose $b\geq a$ such that
$\bar\psi(\mathcal G(\xi,\eta))\leq b\,d(o,z)+O(1)$.
Thus $d_{\bar\psi}(\xi,\eta)\gg e^{-b d(o,z)}$, and hence
$d_{\mathrm{Riem}}(\xi,\eta)
\ll d_{\bar\psi}(\xi,\eta)^{a/b}$.
Taking $\theta=a/b$ proves the lemma.
\end{proof}
\begin{remark}
A reverse H\"older estimate also holds: there exist
$\Theta\geq1$ and $C>0$ such that
$$
d_{\mathrm{Riem}}(\xi,\eta)
\geq C^{-1}d_{\bar\psi}(\xi,\eta)^\Theta
\qquad(\xi,\eta\in\Lambda_\rho).
$$
This follows from \cite[Lemma~3.11]{LO_invariant}, together
with \Cref{lem.gromovproductnearest,lem.scalar-Cartan-comparison}.
We will not use this estimate.
\end{remark}

\begin{proposition}[Uniform distortion of the return cocycle]
\label{prop.holderlocal}
\label{lem:Holder property of Birkoff sum}
There exist $c_0\in(0,1]$ and $C>0$ such that,
for every $n\geq1$, $\gamma\in\mathcal H^n$, and
$x,y\in\Lambda_+$,
\begin{equation}
\label{eqn:branch-K-distortion}
\begin{aligned}
&\|\beta_{\gamma x}(e,\gamma)-\beta_{\gamma y}(e,\gamma)\|
\leq C d_{\bar\psi}(x,y)^{c_0},\\
&\|K_n(\gamma x)-K_n(\gamma y)\|
\leq C d_{\bar\psi}(x,y)^{c_0},\\
&|R_n(\gamma x)-R_n(\gamma y)|
\leq C d_{\bar\psi}(x,y)^{c_0}.
\end{aligned}
\end{equation}
\end{proposition}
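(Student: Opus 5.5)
The plan is to reduce all three estimates to the first one, since the other two follow quickly. By \eqref{eqn:telescoping-return-cocycle}, for $\gamma\in\mathcal H^n$ we have
$$
K_n(\gamma x)=\beta_{\gamma x}(e,\gamma)+2\mathcal G(x,p_0)-2\mathcal G(\gamma x,p_0).
$$
Hence $K_n(\gamma x)-K_n(\gamma y)$ is the Busemann difference plus the two Gromov-product differences $2(\mathcal G(x,p_0)-\mathcal G(y,p_0))$ and $2(\mathcal G(\gamma y,p_0)-\mathcal G(\gamma x,p_0))$. All points involved lie in the compact set $\Delta_0$, which avoids $p_0$. On the compact set $\Delta_0\times\{p_0\}\subset\mathcal F^{(2)}$ the map $\xi\mapsto\mathcal G(\xi,p_0)$ is smooth, hence Lipschitz for $d_{\mathrm{Riem}}$. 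Combining this with \Cref{lem:Riemannian-visual-Holder} bounds the first Gromov term by $d_{\bar\psi}(x,y)^\theta$. The second is bounded by $d_{\bar\psi}(\gamma x,\gamma y)^\theta\le d_{\bar\psi}(x,y)^\theta$, using \Cref{lem:uniform-branch-contraction}. The bound on $R_n$ follows by applying $\psi$. So it remains to prove the first estimate, uniformly in $n$ and $\gamma$.

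For the Busemann term I would use \Cref{prop.holderlocal_Buse_prelim}. By \eqref{shadowH}, $\gamma\Delta_0\subset\mathcal O_r(o,\gamma o)$ for a uniform $r$. This is the wrong shadow: the lemma needs points in $\mathcal O_r(go,o)$ for $g=\gamma$. I would therefore write
$$
\beta_{\gamma x}(e,\gamma)=-\beta_{x}(e,\gamma^{-1}),
$$
using the cocycle identity and equivariance, since $\beta_{\gamma x}(e,\gamma)=\beta_x(\gamma^{-1},e)$. Now I need $x,y\in\mathcal O_{r'}(\gamma^{-1}o,o)$, i.e.\ $\Delta_0\subset\mathcal O_{r'}(\gamma^{-1}o,o)$ uniformly over $\gamma\in\mathcal H^n$. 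I would prove this exactly as in \Cref{lem.U0shadow}. Argue by contradiction: take $x_k\in\Delta_0$ outside $\mathcal O_k(\gamma_k^{-1}o,o)$. Then \Cref{lem.ambient-shadow-convergence}(3), applied to $\gamma_k^{-1}$, forces $x_k$ to converge to the attracting point of $\gamma_k^{-1}$. Using $\gamma_k^{-1}U_0\subset U_0$ and (1) of that lemma, this attracting point lies in the closed set $U_0$, which is disjoint from $\Delta_0$, a contradiction. Then \Cref{prop.holderlocal_Buse_prelim} gives
$$
\|\beta_x(e,\gamma^{-1})-\beta_y(e,\gamma^{-1})\|\le C\,d_{\mathrm{Riem}}(x,y)\le C\,d_{\bar\psi}(x,y)^\theta .
$$
Taking $c_0=\theta$ finishes the argument.

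The main obstacle is this uniform inverse-shadow statement. The subtle points are the convergence dichotomy in \Cref{lem.ambient-shadow-convergence}, which must be applied to the inverse sequence, and making sure the exceptional repelling point can be avoided by choosing a test point in $U_0$. If the prior lemma's constant depends on $r$ only through the shadow containment, everything is uniform.
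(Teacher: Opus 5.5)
Your proposal is correct and follows the paper's proof. The paper also reduces to the Busemann term via \eqref{eqn:telescoping-return-cocycle}, bounds the $\mathcal G(\cdot,p_0)$ terms by smoothness together with \Cref{lem:Riemannian-visual-Holder,lem:uniform-branch-contraction}, and handles the Busemann term by \Cref{prop.holderlocal_Buse_prelim} applied to $\gamma^{-1}$ using $\beta_{\gamma x}(e,\gamma)=-\beta_x(e,\gamma^{-1})$.

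The one difference is that the step you flag as the main obstacle is not needed. \eqref{shadowH} is not the wrong shadow: applying $\gamma^{-1}$ and using $G$-equivariance of shadows gives $\Delta_0\subset\gamma^{-1}\mathcal O_r(o,\gamma o)=\mathcal O_r(\gamma^{-1}o,o)$ directly. This is exactly how the paper argues. Your contradiction argument is valid, but it essentially re-proves \Cref{lem.inversebranchshadow}.
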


\begin{proof}
Take $c_0=\theta$ from \Cref{lem:Riemannian-visual-Holder}.
By \eqref{shadowH} and equivariance,
$x,y\in\mathcal O_r(\gamma^{-1}o,o)\cap\Lambda_\rho$.
Applying \Cref{prop.holderlocal_Buse_prelim} to
$\gamma^{-1}$ and using
$\beta_{\gamma x}(e,\gamma)
=-\beta_x(e,\gamma^{-1})$, we obtain
$$
\|\beta_{\gamma x}(e,\gamma)-\beta_{\gamma y}(e,\gamma)\|
\leq C d_{\mathrm{Riem}}(x,y)
\leq C d_{\bar\psi}(x,y)^{c_0}.
$$

The map $2\mathcal G(\,\cdot\,,p_0)$ is smooth on a
neighborhood of the compact set $\Delta_0$, so its
restriction to $\Delta_0$ is Lipschitz.
By \Cref{lem:Riemannian-visual-Holder,lem:uniform-branch-contraction},
$$
\begin{aligned}
&2\|\mathcal G(x,p_0)-\mathcal G(y,p_0)\|
+2\|\mathcal G(\gamma x,p_0)-\mathcal G(\gamma y,p_0)\|\\
&\qquad\ll
d_{\bar\psi}(x,y)^{c_0}
+d_{\bar\psi}(\gamma x,\gamma y)^{c_0}
\ll d_{\bar\psi}(x,y)^{c_0}.
\end{aligned}
$$
Combining this with the first estimate and
\eqref{eqn:telescoping-return-cocycle} proves the
estimate for $K_n$. Applying $\psi$ gives the estimate
for $R_n$.
\end{proof}

\begin{corollary}[Symbolic H\"older regularity]
\label{R Lipschitz}
For every $c\in(0,c_0]$, the map $K\circ\Psi$ is locally
$\theta_c$-H\"older, where $\theta_c:=cC_R/\log2$.
The same holds for $R\circ\Psi$.
\end{corollary}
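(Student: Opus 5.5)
The plan is to combine the uniform distortion estimate of \Cref{prop.holderlocal} (with $n=1$) with the Hölder continuity of the coding map $\Psi$ from \Cref{lem.phiholder}, applied to shifted sequences. Fix $c\in(0,c_0]$ and let $s,t\in\Sigma^+$ with $s_1=t_1=\gamma\in\mathcal H$. Write $x':=\Psi(\sigma s)$ and $y':=\Psi(\sigma t)$. By \Cref{prop.shiftandexpanding}, $\Psi(s)=\gamma x'$ and $\Psi(t)=\gamma y'$, with $x',y'\in\Lambda_+$. Then \eqref{eqn:branch-K-distortion} with $n=1$ gives
$$
\|K(\Psi(s))-K(\Psi(t))\|\le C\,d_{\bar\psi}(x',y')^{c_0}.
$$

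Next I would bound $d_{\bar\psi}(x',y')$ symbolically. If $s=t$ there is nothing to prove. Otherwise $n(\sigma s,\sigma t)=n(s,t)-1$, so $d_{\Sigma^+}(\sigma s,\sigma t)=2\,d_{\Sigma^+}(s,t)$. \Cref{lem.phiholder} then yields
$$
d_{\bar\psi}(x',y')\ll d_{\Sigma^+}(s,t)^{\alpha_0},\qquad \alpha_0=C_R/\log 2 .
$$
Combining the two bounds gives $\|K\circ\Psi(s)-K\circ\Psi(t)\|\ll d_{\Sigma^+}(s,t)^{c_0\alpha_0}$.

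To pass from the exponent $c_0$ to an arbitrary $c\le c_0$, I would use that $d_{\bar\psi}$ is bounded on $\Lambda_\rho$ by \Cref{lem.dpsidiameterfinite}. Hence $d_{\bar\psi}(x',y')^{c_0}\ll d_{\bar\psi}(x',y')^{c}$, and the argument gives the exponent $c\,C_R/\log2=\theta_c$ with a constant independent of $s,t$. (Equivalently, $d_{\Sigma^+}\le 1/2$ lets one lower a Hölder exponent directly.) The statement for $R\circ\Psi$ follows by applying the linear form $\psi$, or directly from the third line of \eqref{eqn:branch-K-distortion}.

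There is no real obstacle. The only points needing care are the identification $\Psi(s)=\gamma\Psi(\sigma s)$, which comes from the conjugacy and the branch partition, and the index shift $n(\sigma s,\sigma t)=n(s,t)-1$, which costs only the factor $2^{\alpha_0}$ in the constant.
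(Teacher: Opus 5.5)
Your proposal is correct and follows the paper's proof essentially verbatim. You apply \Cref{prop.holderlocal} with $n=1$ to the common first branch, and lower the exponent from $c_0$ to $c$ using boundedness of $d_{\bar\psi}$. You then invoke \Cref{lem.phiholder} with $d_{\Sigma^+}(\sigma s,\sigma t)=2\,d_{\Sigma^+}(s,t)$, and obtain the statement for $R\circ\Psi$ by composing with $\psi$.
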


\begin{proof}
Since $d_{\bar\psi}$ is bounded on $\Lambda_\rho$,
the estimates in \Cref{prop.holderlocal} also hold with
$c_0$ replaced by any $c\in(0,c_0]$, with a constant
depending on $c$.

Let $s\neq t$ satisfy $s_1=t_1$.
Applying \Cref{prop.holderlocal} to their common first
branch and then \Cref{lem.phiholder}, we obtain
$$
\begin{aligned}
\|K(\Psi(s))-K(\Psi(t))\|
&\ll_c
d_{\bar\psi}(\Psi(\sigma s),\Psi(\sigma t))^c\ll_c d_{\Sigma^+}(\sigma s,\sigma t)^{\theta_c}
=2^{\theta_c}d_{\Sigma^+}(s,t)^{\theta_c}.
\end{aligned}
$$
Composition with $\psi$ preserves this estimate and yields the last claim.
\end{proof}

\section{Non-arithmeticity of periodic return vectors}
\label{sec.nonari}

We prove that the periodic return vectors generate a dense
additive subgroup of $\mathfrak a$. We first identify these
vectors with Jordan projections and prove that the return
semigroup is Zariski dense. The conclusion then follows
from Benoist's non-arithmeticity theorem \cite{Benoist_properties2}.

For $n\geq1$, let $\operatorname{Fix}(T^n)\subset\Lambda_+$
denote the fixed-point set of $T^n$.

\begin{theorem}[Non-arithmeticity of return vectors]
\label{thm.denselength}
The additive subgroup generated by
$$
\{K_n(x):x\in\operatorname{Fix}(T^n),\ n\geq1\}
$$
is dense in $\mathfrak a$.
\end{theorem}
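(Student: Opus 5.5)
The plan follows the outline at the start of this section: identify periodic return vectors with Jordan projections, prove that the return semigroup is Zariski dense, and then invoke Benoist's theorem.

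\textbf{Step 1: periodic return vectors are Jordan projections.} Let $x\in\operatorname{Fix}(T^n)$, and let $\gamma\in\mathcal H^n$ be the branch word of its first $n$ symbols. Then $T^nx=\gamma^{-1}x=x$, so $\gamma x=x$. Under $\Psi$, the point $x$ corresponds to the periodic sequence repeating this word. Consequently $x=\bigcap_k\gamma^k\Delta_0$, and by \eqref{shadowH} the point $x$ is the attracting fixed point of $\gamma$. In \eqref{eqn:telescoping-return-cocycle} the $\mathcal G$-terms cancel because $\gamma^{-1}x=x$, which gives
\[
K_n(x)=\beta_x(e,\gamma).
\]
For a loxodromic element, the Busemann cocycle at its attracting fixed flag equals its Jordan projection: $\beta_{y_\gamma}(e,\gamma)=\lambda(\gamma)$. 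So it remains to check that $\gamma$ is loxodromic. Since $\mathcal H$ freely generates a semigroup, $\gamma$ has infinite order. By \eqref{eqn:return-Cartan-comparison}, $\mu(\gamma^k)=kK_n(x)+O(1)$, and $K_n(x)$ lies in the interior cone $\mathcal C_0$. Hence $\lambda(\gamma)=\lim_k\mu(\gamma^k)/k=K_n(x)\in\operatorname{int}\mathfrak a^+$. Conversely, every element of $\sH\smallsetminus\{e\}$ arises this way: it has a unique fixed point in $\Lambda_+$, namely $\Psi$ of the periodic sequence. Thus the group generated in the statement equals the group generated by $\lambda(\sH)$.

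\textbf{Step 2: Zariski density of $\sH$.} I expect this to be the main obstacle; it is the content of the later \Cref{cor.Zdensesemigroup}. My approach is as follows. Let $\mathbf H$ be the Zariski closure of the semigroup $\sH$. A Zariski closed subsemigroup of an algebraic group is a group, so $\mathbf H$ is an algebraic subgroup. I would show that $\Gamma_\rho$ is contained in a finite union of cosets of $\mathbf H$, or more directly that the limit set of $\sH$ is all of $\Lambda_\rho$ up to a $\Gamma_\rho$-translate. The key input is that the cylinders $\gamma\Lambda_+$ with $\gamma\in\sH$ cover $\Lambda_+$, which is $\nu$-conull and dense in $\Delta_0$. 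So the attracting fixed points $y_\gamma$, $\gamma\in\sH$, are dense in $\Delta_0$, and $\Delta_0$ contains an open subset of $\Lambda_\rho$.

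Let $\mathbf H^0$ denote the identity component of $\mathbf H$. The set of attracting flags of loxodromic elements of $\mathbf H^0$ lies in a proper algebraic subvariety of $\mathcal F$ unless $\mathbf H^0$ is large. Since $\Lambda_\rho$ is Zariski dense in $\mathcal F$, every nonempty open subset of $\Lambda_\rho$ is also Zariski dense, because $\Gamma_\rho$ is Zariski dense and the action is minimal on $\Lambda_\rho$. Combining this with the dense attracting points, I expect $\mathbf H^0$ to be normalized by $\Gamma_\rho$, and therefore normal in $G$. A normal subgroup containing loxodromic elements in every simple factor's direction must then be all of $G$. If this algebraic route proves delicate, I would instead pass to the standard argument that the Zariski closure of a semigroup whose limit set has nonempty interior in $\Lambda_\rho$ equals that of $\Gamma_\rho$.

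\textbf{Step 3: conclusion.} With $\sH$ Zariski dense in $G$, which is connected, semisimple, and has no compact factors, Benoist's theorem \cite{Benoist_properties2} states that the additive group generated by $\lambda(\sH)$ is dense in $\mathfrak a$. Benoist's theorem is valid for Zariski dense subsemigroups, not only subgroups. By Step 1, this yields \Cref{thm.denselength}.
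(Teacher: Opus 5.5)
\textbf{Steps 1 and 3.} Step~1 is the paper's \Cref{lem.periodisjordan}. The limit $\lambda(\gamma)=\lim_k\mu(\gamma^k)/k=K_n(x)$ already gives the identification, so you do not need the Busemann identity at $y_\gamma$. Step~3 is the same appeal to Benoist's theorem for Zariski dense semigroups.

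Your remark that every nonempty relatively open subset of $\Lambda_\rho$ is Zariski dense in $\mathcal F$ is correct: by minimality and compactness, finitely many $\Gamma_\rho$-translates of it cover $\Lambda_\rho$, and $\mathcal F$ is irreducible. This gives a valid substitute for \Cref{Zd}.

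\textbf{The gap is Step~2: Zariski density of $\sH$ is not established.}
\begin{itemize}
\item Nothing in the construction relates $\sH$ to its $\Gamma_\rho$-conjugates. So there is no reason for $\mathbf H^0$ to be normalized by $\Gamma_\rho$.
\item The ``standard argument'' you fall back on is not available in that form.
\item Your heuristic is false without a reductivity hypothesis. You claim that the attracting flags of loxodromic elements of $\mathbf H^0$ lie in a proper subvariety unless $\mathbf H^0=G$. Take $\mathbf H^0=P$. For $a\in\exp(-\operatorname{int}\mathfrak a^+)$ and $n\in N$, the element $an$ is $N$-conjugate to $a$. Hence these attracting flags fill the open cell $Nw_0P$, which is Zariski dense.
\end{itemize}
So density of attracting points in an open piece of $\Lambda_\rho$ cannot by itself force $\mathbf H=G$.

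\textbf{What is missing: the paper's two ingredients.}

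\emph{First, rule out a non-reductive closure} (\Cref{lem.noglobalfixedpt}). A non-reductive $\mathbf H^0$ lies in a proper parabolic, and $\Gamma_{\mathcal H}\cap\mathbf H^0$ has finite index in $\Gamma_{\mathcal H}$. Suppose a finite-index subgroup of $\Gamma_{\mathcal H}$ lies in $gQg^{-1}$, where $Q\supset P$ is a proper parabolic.
\begin{itemize}
\item Choose $\gamma_1,\gamma_2\in\sH$ with distinct attracting flags transverse to $gP$, replaced by a common power so that both lie in that subgroup.
\item Then $\gamma_i^{-m}gP\to y_{\gamma_i^{-1}}$. Hence both repelling flags project to $gQ\in G/Q$.
\item The repelling flags are distinct points of $\Lambda_\rho$, because two hyperbolic elements of a Fuchsian group sharing one fixed point share both. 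So they are transverse, and therefore have distinct images in $G/Q$. This is a contradiction.
\end{itemize}

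\emph{Second, treat the reductive case} (\Cref{Zd2}, \Cref{cor.Zdensesemigroup}). Let $L$ be reductive with $L^\circ$ noncompact in every simple factor; this is automatic here, since every nontrivial element of $\sH$ is loxodromic.
\begin{itemize}
\item Use the Cartan decomposition $L^\circ=K_L\exp(\mathfrak a_L^+)K_L$ with $\mathfrak a_L\subset\mathfrak a$. The attracting flags of loxodromic elements of $L^\circ$ then lie in the finitely many orbits $K_L\cdot wP$.
\item Each orbit is a compact semialgebraic submanifold whose Zariski closure has the same dimension. Zariski density therefore forces one orbit to be open, and hence equal to $\mathcal F$.
\item So $K=K_LM$, and $\operatorname{Ad}(K)H\subset\mathfrak p_L$ spans $\mathfrak p$. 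Hence $\mathfrak l\supset\mathfrak p$, and $[\mathfrak p,\mathfrak p]=\mathfrak k$ gives $\mathfrak l=\mathfrak g$.
\end{itemize}
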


\begin{lemma}
\label{Zd}
The set $\Lambda_+$ is Zariski dense in $\mathcal F$.
\end{lemma}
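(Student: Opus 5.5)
The plan is to reduce to Zariski density of the limit set $\Lambda_\rho$ in $\mathcal F$, which holds because $\Gamma_\rho$ is Zariski dense, and then show that the Zariski closure of $\Lambda_+$ contains $\Lambda_\rho$. Let $Z$ denote the Zariski closure of $\Lambda_+$ in $\mathcal F$, a real algebraic subvariety. First, $\Lambda_\rho$ is Zariski dense in $\mathcal F$: its Zariski closure is a $\Gamma_\rho$-invariant subvariety, hence invariant under the Zariski closure $G$ of $\Gamma_\rho$. Since $G$ acts transitively on $\mathcal F$, that closure is all of $\mathcal F$.

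The key step is to produce enough group translates of $\Lambda_+$ inside $Z$, or dually to show $Z\supset\Lambda_\rho$. I would use the stabilizer-type argument. Set $S:=\{g\in G: gZ= Z\}$, an algebraic subgroup. For each $\gamma\in\mathcal H$ we have $\gamma\Lambda_+\subset\Lambda_+\subset Z$, so $\gamma Z\subset Z$. Since $\gamma Z$ is a subvariety isomorphic to $Z$, Noetherianity (equal dimension and the same number of top-dimensional components, iterated) forces $\gamma Z=Z$. Hence $S\supset\langle\mathcal H\rangle$, the group generated by the return branches.

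Next, I would show $Z$ contains $\Lambda_+\cup\bigcup_{g\in\langle\mathcal H\rangle}g\Lambda_+$, and that this set is dense in enough of $\Lambda_\rho$. Concretely, $\Lambda_+$ has full $\nu$-measure in $\Delta_0$, hence is topologically dense in $\Delta_0$ (as $\nu$ has full support on $\Lambda_\rho$). Since $Z$ is closed, $Z\supset\Delta_0$. Each element of $\mathcal H$ also satisfies $\gamma^{-1}Z=Z$, so $Z\supset \gamma^{-1}\Delta_0$. I would then use that $\Delta_0$ contains a nonempty relatively open subset $W$ of $\Lambda_\rho$. Minimality of the $\Gamma_\rho$-action on $\Lambda_\rho$ gives $\Lambda_\rho=\bigcup_{g\in\Gamma_\rho}gW$ up to finitely many translates by compactness, but we only control translates by $\langle\mathcal H\rangle$, not all of $\Gamma_\rho$.

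This is the main obstacle. I would first try to avoid needing all of $\Gamma_\rho$ by a local argument. Take an analytic point of view: $\Lambda_\rho=f(\Lambda(\Gamma))$ and $W$ is an open piece of the limit set. The Zariski closure of $W$ is $\Gamma_\rho$-"locally" invariant in the sense that for any $g\in\Gamma_\rho$, the set $gW\cap W$ is relatively open in $W$. More robustly, let $Z_W$ be the Zariski closure of $W$. For any $g\in\Gamma_\rho$, the points of $\Lambda_\rho$ dynamically approximable from $W$ lie in $Z_W$. For a loxodromic $g$ whose attracting flag $y_g$ lies in $W$ and whose repelling flag lies outside the closure of a small neighborhood, $g$ maps $W$ into itself. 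The argument above then gives $gZ_W=Z_W$, so $Z_W$ is invariant under every such $g$. The elements $g\in\Gamma_\rho$ with attracting fixed point in $W$ and repelling fixed point outside $W$ generate a Zariski dense subgroup; high powers of conjugates $hg_0h^{-1}$ achieve this, and such conjugates generate a normal subgroup of $\Gamma_\rho$, whose Zariski closure is normal in $G$ with noncompact image in each factor. So $Z_W$ is $G$-invariant, hence $Z_W=\mathcal F$. Since $W\subset\Delta_0\subset Z$, we get $Z=\mathcal F$. The remaining care is the normal-subgroup step: a nontrivial normal subgroup of the non-elementary $\Gamma$ is non-elementary, so its image is Zariski dense in $G$, using that $G$ has no compact factors and the identity component of a normal algebraic subgroup of the Zariski closure is all of $G$.
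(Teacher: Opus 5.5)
Your reduction is sound up to one claim, and your proof of that claim has a genuine gap. The claim is that the Zariski closure $Z_W$ of a nonempty relatively open set $W\subset\Lambda_\rho$ (say $W=\operatorname{int}_{\Lambda_\rho}\Delta_0\subset Z$) is all of $\mathcal F$. Everything before it is fine: $\nu$ has full support, and $\Lambda_\rho$ is Zariski dense.

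The failure is in your last paragraph. You only know $g^nZ_W=Z_W$ for loxodromic $g\in\Gamma_\rho$ with $y_g\in W$ and $y_{g^{-1}}\notin\overline W$, and only for $n$ large depending on $g$. For a fixed $g_0$, the conjugate $hg_0h^{-1}$ qualifies only when $hy_{g_0}\in W$ and $hy_{g_0^{-1}}\notin\overline W$, and then only after an $h$-dependent power. This family is not stable under conjugation by $\Gamma_\rho$, so the subgroup it generates has no reason to be normal. Your appeal to normal subgroups of the non-elementary group $\Gamma$ and to the simple factors of $G$ therefore does not apply. Proving directly that this subgroup is Zariski dense would need substantial extra input, for instance Benoist's Zariski dense Schottky subsemigroups with attracting flags in a prescribed open set, and you do not supply it.

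The obstacle you identified earlier is not real: the finite cover you wrote down finishes the proof without any invariance of $Z$. By minimality of $\Gamma_\rho$ on $\Lambda_\rho$ and compactness, $\Lambda_\rho\subset g_1Z\cup\cdots\cup g_kZ$ for some $g_1,\ldots,g_k\in\Gamma_\rho$, with no restriction on the $g_i$.

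\begin{itemize}
\item The right-hand side is a real algebraic subset containing the Zariski dense set $\Lambda_\rho$, so it equals $\mathcal F$.
\item $\mathcal F$ is a connected real-analytic manifold, so a polynomial vanishing on a nonempty open subset vanishes identically. Hence every proper algebraic subset of $\mathcal F$ has empty interior.
\item A finite union of closed sets with empty interior cannot be $\mathcal F$. So some $g_iZ=\mathcal F$, i.e.\ $Z=\mathcal F$.
\end{itemize}

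This is the same dimension argument used in the proof of Lemma~\ref{Zd2}.

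With this repair your route differs from the paper's. The paper applies \cite[Theorem~1.4]{KO_NC}: divergence of $\sum_{\gamma}e^{-\psi(\mu(\gamma))}$ (Theorem~\ref{thm.divergencetype}) makes every proper subvariety $\nu$-null, and $\nu(\Lambda_+)=\nu(\Delta_0)>0$ then finishes at once. The paper's argument gives the stronger fact that every $\nu$-positive subset of $\Lambda_\rho$ is Zariski dense. The repaired version is elementary, using only minimality, full support of $\nu$, and Zariski density of $\Gamma_\rho$. It does, however, need the topological input that $\Lambda_+$ is dense in a nonempty open piece of $\Lambda_\rho$.
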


\begin{proof}
By \Cref{thm.divergencetype},
$\sum_{\gamma\in\Gamma_\rho}e^{-\psi(\mu(\gamma))}=\infty$.
Hence \cite[Theorem~1.4]{KO_NC} implies that every proper
subvariety of $\mathcal F$ is $\nu$-null.
Since $\Lambda_+$ has full $\nu$-measure in $\Delta_0$
by \Cref{prop.T is well def a.e.} and $\nu(\Delta_0)>0$,
it cannot be contained in a proper subvariety.
\end{proof}

\begin{lemma}[Periodic return vectors]
\label{lem.proximal}
\label{lem.periodisjordan}
Every nontrivial element of $\sH$ is loxodromic.
For every $n\geq1$, the fixed points of $T^n$ are precisely
the attracting flags $y_\gamma$, $\gamma\in\mathcal H^n$,
and $K_n(y_\gamma)=\lambda(\gamma)$.
\end{lemma}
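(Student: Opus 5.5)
I will prove the three assertions in order: a contraction/attracting-point argument for loxodromy, the identification of fixed points via the symbolic conjugacy, and a Busemann computation at a fixed point for the identity $K_n(y_\gamma)=\lambda(\gamma)$.

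\emph{Step 1: loxodromy.} Let $\gamma\in\mathcal H^n$ with $n\geq1$. By \Cref{lem:uniform-branch-contraction}, $\gamma$ contracts $d_{\bar\psi}$ on $\Lambda_+$ by the factor $e^{-C_Rn}<1$. Since $\gamma\Delta_0\subset\operatorname{int}_{\Lambda_\rho}(\Delta_0)$ by \Cref{lem.branchcompactinterior}, the nested sets $\gamma^k\Delta_0$ shrink to a single point $y\in\Lambda_+$ by \Cref{lem.welldefPhi}, and $y$ is fixed by $\gamma$. To get loxodromy I use the Cartan projections of powers. The word $\gamma^k$ lies in $\mathcal H^{kn}$, so \eqref{eqn:return-Cartan-comparison} and \Cref{lem.uniformly positive} give
\[
\mu(\gamma^k)=K_{kn}(x)+O(1)\in\mathcal C_0+O(1)
\qquad\text{and}\qquad
\psi(\mu(\gamma^k))\geq kn-O(1)
\]
for $x\in\gamma^k\Lambda_+$. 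Hence $\lambda(\gamma)=\lim_k\mu(\gamma^k)/k$ is a nonzero vector in $\mathcal C_0\subset\operatorname{int}\mathfrak a^+\cup\{0\}$. Therefore $\lambda(\gamma)\in\operatorname{int}\mathfrak a^+$, and $\gamma$ is loxodromic.

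\emph{Step 2: the fixed point is the attracting flag.} Next I identify $y$ with $y_\gamma$. The element $\gamma^k$ has attracting flag $y_\gamma$ and maps every flag transverse to $y_{\gamma^{-1}}$ toward $y_\gamma$. The set $\Delta_0$ meets the transverse locus because $\Lambda_+$ is Zariski dense (\Cref{Zd}). Hence $\gamma^k z\to y_\gamma$ for some $z\in\Delta_0$, and since $\gamma^k z\in\gamma^k\Delta_0$, this forces $y=y_\gamma$. As an alternative, one can argue that $y_{\gamma^{-1}}$ lies in $U_0$: by \Cref{lem.LP6.62}, $\gamma^{-1}U_0\subset U_0$, which makes $U_0$ a forward-invariant set for $\gamma^{-1}$. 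Either way, $y_\gamma\in\Lambda_+$ and $y_\gamma\in\gamma\Lambda_+$.

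\emph{Step 3: fixed points of $T^n$.} Conversely, by \Cref{prop.shiftandexpanding} the fixed points of $T^n=\sigma^n$ correspond exactly to periodic sequences with period word $\gamma\in\mathcal H^n$. The corresponding point lies in $\bigcap_k\gamma^k\Delta_0=\{y_\gamma\}$. Distinct words give distinct points, since $\Psi$ is a bijection.

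\emph{Step 4: the return vector equals the Jordan projection.} Apply \eqref{eqn:telescoping-return-cocycle} with $x=y_\gamma$ and $\gamma^{-1}y_\gamma=y_\gamma$. The $\mathcal G$-terms cancel, leaving $K_n(y_\gamma)=\beta_{y_\gamma}(e,\gamma)$. To compute this, write $\gamma=h(am)h^{-1}$ with $y_\gamma=hP$. The cocycle identity and equivariance give
\[
\beta_{hP}(e,\gamma)=\beta_{hP}(e,h)+\beta_{P}(e,am)+\beta_{hP}(\gamma h,\gamma).
\]
By equivariance, $\beta_{hP}(\gamma h,\gamma)=\beta_{hP}(h,e)$, since $\gamma$ fixes $hP$. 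So the outer terms cancel, and the total is $\beta_P(e,am)=\log a=\lambda(\gamma)$, using the Iwasawa cocycle at $k=e$. This last step is routine.

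The main obstacle is the identification in Step 2: showing that the contracted fixed point is genuinely the attracting flag and not some other fixed flag of $\gamma$. I will handle it with the transversality and density argument above, falling back on the $U_0$-invariance of \Cref{lem.LP6.62} if needed.
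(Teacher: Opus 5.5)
Your proof is correct, but it departs from the paper's route in Steps 2 and 4. The paper works directly with the periodic point $x$ of the word $\gamma$. Periodicity gives $K_{kn}(x)=kK_n(x)$, so dividing the coarse bound $\|K_{kn}(x)-\mu(\gamma^k)\|\le C$ from \eqref{eqn:return-Cartan-comparison} by $k$ yields $K_n(x)=\lambda(\gamma)$ at once. Loxodromy then follows from $K_n(x)\in\mathcal C_0$ and $\psi(K_n(x))\geq n$. The identification $x=y_\gamma$ comes from $x\in\mathcal O_r(o,\gamma^ko)$ for all $k$, which holds by \eqref{shadowH}, combined with \Cref{lem.ambient-shadow-convergence}(2).

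You argue differently in three places. First, you obtain loxodromy from the growth of $\mu(\gamma^k)$ alone, without using the periodic point. Second, you identify the nested intersection $\bigcap_k\gamma^k\Delta_0$ with $y_\gamma$ by applying the north--south dynamics of $\gamma$ on $\mathcal F$ to a point of $\Lambda_+$ transverse to $y_{\gamma^{-1}}$; \Cref{Zd} supplies that point. Third, you evaluate $K_n(y_\gamma)=\beta_{y_\gamma}(e,\gamma)=\lambda(\gamma)$ exactly through the Jordan form $\gamma=h(am)h^{-1}$.

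The paper's route gets loxodromy and the identity $K_n=\lambda$ in one stroke from estimates already in hand, with no explicit Jordan-form computation. Yours makes the identity an exact, coding-independent fact about the Busemann cocycle at an attracting fixed flag. It also spells out why the limit of $\gamma^k$ on $\Lambda_\rho$ is $y_\gamma$, a step the paper's appeal to shadow convergence treats as standard. The price is that you must establish loxodromy separately before Step 2.

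Two minor points. The citations of \Cref{lem:uniform-branch-contraction} and \Cref{lem.branchcompactinterior} in Step 1 are unnecessary; nestedness and the single-point intersection already follow from $\gamma\Delta_0\subset\Delta_0$ and \Cref{lem.welldefPhi}. The $U_0$ alternative in Step 2 is only a sketch, but your main argument does not rely on it.
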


\begin{proof}
Let $\gamma=\gamma_1\cdots\gamma_n\in\mathcal H^n$.
The periodic repetition of $(\gamma_1,\ldots,\gamma_n)$
defines a point $x\in\Lambda_+$ with $T^nx=x$ and
$\gamma x=x$.
Since $K_{kn}(x)=kK_n(x)$, applying
\eqref{eqn:return-Cartan-comparison} to $\gamma^k$ gives
$$
\|kK_n(x)-\mu(\gamma^k)\|\leq C
\qquad(k\geq1).
$$
Dividing by $k$ and letting $k\to\infty$, we obtain
$K_n(x)=\lambda(\gamma)$.
By \Cref{lem.uniformly positive}, this vector belongs to
$\mathcal C_0$ and satisfies $\psi(K_n(x))\geq n$.
Thus $\lambda(\gamma)$ belongs to
$\operatorname{int}\mathfrak a^+$, so $\gamma$ is loxodromic.
Moreover, \eqref{shadowH} gives
$x\in\mathcal O_r(o,\gamma^ko)$ for every $k\geq1$.
By \Cref{lem.ambient-shadow-convergence}(2), $x=y_\gamma$.

Conversely, every fixed point of $T^n$ has an itinerary
obtained by repeating a word of length $n$.
The preceding argument identifies it with the attracting
flag of the corresponding element of $\mathcal H^n$.
\end{proof}

\begin{lemma}
\label{lem.attractingdense}
The set
$\{y_\gamma:\gamma\in\mathcal H^n,\ n\geq1\}$
is dense in $\Lambda_+$.
\end{lemma}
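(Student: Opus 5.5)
Given $x\in\Lambda_+$ and a neighborhood of $x$ in $\Lambda_+$, we produce an attracting flag $y_\gamma$ inside it. Write $x=\Psi(s)$ with $s=(\gamma_1,\gamma_2,\ldots)\in\Sigma^+$. By \Cref{prop.shiftandexpanding}, the symbolic cylinders $\Psi([\gamma_1,\ldots,\gamma_n])=g_n\Lambda_+$, with $g_n:=\gamma_1\cdots\gamma_n$, form a neighborhood basis of $x$ in $\Lambda_+$. Alternatively, \Cref{lem.welldefPhi} and \Cref{lem.ambient-shadow-convergence} show directly that $\operatorname{diam}_{d_{\mathrm{Riem}}}(g_n\Delta_0)\to0$. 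So it suffices to find, for each $n$, some $\gamma\in\mathcal H^n$ with $y_\gamma\in g_n\Lambda_+$.

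Take $\gamma:=g_n\in\mathcal H^n$, and let $s^{(n)}$ be the periodic sequence repeating the word $(\gamma_1,\ldots,\gamma_n)$. The proof of \Cref{lem.periodisjordan} shows that $\Psi(s^{(n)})$ is a fixed point of $T^n$ equal to $y_{g_n}$. Since $s^{(n)}$ agrees with $s$ in its first $n$ entries, $y_{g_n}=\Psi(s^{(n)})\in g_n\Lambda_+$.

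Combining the two paragraphs: both $x$ and $y_{g_n}$ lie in $g_n\Delta_0$, whose diameter tends to zero, so $y_{g_n}\to x$. This proves density.

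The only step that needs any care is the identification $\Psi(s^{(n)})=y_{g_n}$. It is exactly the content of \Cref{lem.periodisjordan}: $g_n$ fixes $\Psi(s^{(n)})$, and this point lies in $\mathcal O_r(o,g_n^ko)$ for all $k$, which forces it to be the attracting fixed flag. Everything else follows directly from the coding.
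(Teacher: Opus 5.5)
Your proposal is correct and follows essentially the same route as the paper, which uses the density of periodic sequences in $\Sigma^+$, the homeomorphism $\Psi$, and \Cref{lem.periodisjordan}. You have simply made explicit why periodic sequences are dense, by repeating the first $n$ symbols of the itinerary of $x$, and why their images converge to $x$, via the shrinking cylinders $g_n\Delta_0$.
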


\begin{proof}
Periodic sequences are dense in $\Sigma^+$.
Apply the homeomorphism $\Psi$ and
\Cref{lem.periodisjordan}.
\end{proof}

Let $\Gamma_{\mathcal H}<\Gamma_\rho$ be the subgroup
generated by $\sH$.

\begin{lemma}
\label{lem.noglobalfixedpt}
No finite-index subgroup of $\Gamma_{\mathcal H}$ is
contained in a proper parabolic subgroup of $G$.
\end{lemma}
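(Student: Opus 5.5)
We argue by contradiction, using the attracting flags $y_\gamma$ and the Zariski density of $\Lambda_+$ from \Cref{Zd}. Suppose a finite-index subgroup $\Gamma'<\Gamma_{\mathcal H}$ is contained in a proper parabolic subgroup $Q<G$. Replacing $\Gamma'$ by its normal core in $\Gamma_{\mathcal H}$, we may assume $\Gamma'$ is normal in $\Gamma_{\mathcal H}$ and has index $m$. For every $\gamma\in\sH$, some power $\gamma^{k}$ with $1\le k\le m$ lies in $\Gamma'$. By \Cref{lem.proximal}, $\gamma$ is loxodromic, so $\gamma^k$ is loxodromic with the same attracting flag, $y_{\gamma^k}=y_\gamma$.

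\textbf{Step 1: the attracting flags lie in a proper subvariety.} A loxodromic element $g$ of $Q$ preserves the closed $Q$-orbits on $\mathcal F$. Its attracting flag $y_g$ is the limit of $g^j\xi$ for generic $\xi$, taken in the open Bruhat cell relative to the repelling flag. Choose a closed $Q$-orbit $\mathcal F_Q\subset\mathcal F$; because $Q$ is proper, $\mathcal F_Q$ is a proper Zariski closed subset. For instance, if $Q$ is conjugate to a standard parabolic $P_\theta$, take the Schubert variety consisting of flags whose $\theta$-partial flag is the point fixed by $Q$. Since $g$ preserves $\mathcal F_Q$ and $\mathcal F_Q$ is closed, any limit of $g^j\xi$ with $\xi\in\mathcal F_Q$ generic in $\mathcal F_Q$ stays in $\mathcal F_Q$. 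We will show that this limit is $y_g$.

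Concretely, $g$ acts on the compact $\mathcal F_Q$ with some attracting fixed point, and regularity of $\lambda(g)$ forces the unique attracting point of $g$ on all of $\mathcal F$ to be $y_g$. More precisely, by proximality in the Tits representations $\tau_\alpha$, the sequence $g^j\xi\to y_g$ holds for every $\xi$ transverse to the repelling flag $y_{g^{-1}}$. If $\mathcal F_Q$ is not contained in the non-transverse locus of $y_{g^{-1}}$, we obtain $y_g\in\mathcal F_Q$.

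\textbf{Step 2 (main obstacle): ensuring the transversality condition.} The difficulty is that $\mathcal F_Q$ might lie entirely inside the set of flags non-transverse to $y_{g^{-1}}$. To avoid this, use a cleaner formulation: $g\in Q$ means $g$ fixes a point $\zeta$ of the partial flag variety $G/P_\theta$. A regular loxodromic element has exactly finitely many fixed points on $G/P_\theta$, namely the projections of the flags $w\,y_g$ for $w$ in the Weyl group relative to the torus of $g$. Hence $\zeta$ is one of these finitely many relative positions. Then $y_g$ lies in the subvariety $\{\eta\in\mathcal F:\ \eta \text{ is in relative position } w^{-1} \text{ to } \zeta\}$, which is proper whenever $\theta\neq\emptyset$.

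Because the $\zeta$ fixed by $Q$ is common to all $\gamma^k$, and $w$ ranges over a finite set, the flags $y_\gamma$ for $\gamma\in\sH$ lie in a finite union $V$ of proper subvarieties.

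\textbf{Step 3: conclude.} By \Cref{lem.attractingdense}, the flags $y_\gamma$ are dense in $\Lambda_+$ in the manifold topology, and $V$ is closed, so $\Lambda_+\subset V$. This contradicts \Cref{Zd}, which says $\Lambda_+$ is Zariski dense in $\mathcal F$.
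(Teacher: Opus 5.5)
Your proof has a genuine gap in Step~2. The loci $\{\eta\in\mathcal F:\eta\text{ is in relative position }w^{-1}\text{ to }\zeta\}$, for $w$ ranging over the finitely many relative positions, are the Bruhat cells of $\mathcal F$ relative to $\zeta$. They partition $\mathcal F$, and the cell of transverse (generic) position is Zariski open and dense. So it is false that each locus lies in a proper subvariety. Your finite union $V$ is all of $\mathcal F$, and Step~3 gives no contradiction.

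Moreover, the transverse cell is exactly the case you cannot avoid. The only fixed point of a loxodromic $g=h(am)h^{-1}$ on $G/P_\theta$ that is transverse to $y_g=hP$ is $hw_0P_\theta$, the image of the repelling flag $y_{g^{-1}}$.

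In fact, no argument using only Zariski density of attracting flags can work. In $\SL_3(\mathbb R)$, let $\ell_0$ be a line and consider the loxodromic elements that stabilize $\ell_0$ and have it as the eigenline of their smallest eigenvalue. Their attracting flags fill the Zariski open set $\{(L\subset\Pi):\ell_0\not\subset\Pi\}$, yet all these elements lie in a proper parabolic subgroup.

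The missing idea is to work with the repelling flags and use the Fuchsian structure. Write the common fixed point as $gQ$, with $Q\supset P$ a maximal proper parabolic.

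\begin{enumerate}
\item The flags transverse to $gP$ form a nonempty Zariski open set. So \Cref{Zd,lem.attractingdense} give $\gamma_1,\gamma_2\in\sH\smallsetminus\{e\}$ with distinct attracting flags, both transverse to $gP$. After passing to a common power, both lie in the finite-index subgroup.
\item For $i=1,2$, $\gamma_i^{-m}gP\to y_{\gamma_i^{-1}}$ while $\gamma_i^{-m}gQ=gQ$. Hence both repelling flags project to $gQ$.
\item The preimages of $\gamma_1,\gamma_2$ in $\Gamma$ are hyperbolic with distinct attracting fixed points. In a Fuchsian group they therefore have distinct repelling fixed points.
\item So $y_{\gamma_1^{-1}}\ne y_{\gamma_2^{-1}}$ are distinct points of $\Lambda_\rho$, hence transverse, of the form $hP$ and $hw_0P$. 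Their images $hQ\neq hw_0Q$ in $G/Q$ are distinct because $Q$ is proper.
\end{enumerate}

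This contradicts step~2, and it is the step your argument lacks.
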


\begin{proof}
Suppose that a finite-index subgroup
$H<\Gamma_{\mathcal H}$ is contained in $gQg^{-1}$,
where $Q$ is a maximal proper parabolic subgroup
containing $P$. Then $H$ fixes $gQ\in G/Q$.

The flags transverse to $gP$ form a nonempty Zariski-open
subset of $\mathcal F$.
By \Cref{Zd,lem.attractingdense}, choose
$\gamma_1,\gamma_2\in\sH\smallsetminus\{e\}$ with distinct
attracting flags, both transverse to $gP$.
Replacing them by a common positive power, we may assume
that $\gamma_1,\gamma_2\in H$.
For $i=1,2$, transversality gives
$\gamma_i^{-m}gP\to y_{\gamma_i^{-1}}$ as $m\to\infty$.
Since $\gamma_i^{-m}gQ=gQ$, both flags
$y_{\gamma_i^{-1}}$ project to $gQ$.
The preimages of $\gamma_1,\gamma_2$ are hyperbolic. Two hyperbolic elements of a Fuchsian group
that share one fixed point share both fixed points.
Hence the distinct attracting flags of
$\gamma_1,\gamma_2$ imply that their repelling flags
are distinct.
As these repelling flags belong to $\Lambda_\rho$,  they have the form $hP$ and $hw_0P$ for some $h\in G$.
Their images $hQ$ and $hw_0Q$ in $G/Q$ are distinct because
$Q$ is proper, contradicting the preceding paragraph.
\end{proof}

\begin{lemma}
\label{Zd2}
Let $G$ be a connected semisimple real algebraic group, and let $L<G$ be a reductive
real algebraic subgroup.
Suppose that the projection of $L^\circ$ to every simple
factor of $G$ is non-compact.
If $\{y_g\in \mathcal F: g\in L^\circ, \text{loxodromic}\}$ is Zariski dense in $\mathcal F$, then $L=G$.
\end{lemma}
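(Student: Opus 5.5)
The plan is to pass from Zariski density of attracting flags to the statement that $L$ has full rank and its flag variety fills $\mathcal F$. We then conclude via the classification of reductive subgroups containing a maximal split torus and acting with a Zariski dense orbit on $G/P$.

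\textbf{Step 1.} If $g\in L^\circ$ is loxodromic in $G$, then $g$ has a Jordan decomposition inside $L^\circ$, so $g_h\in L^\circ$ is a hyperbolic element regular in $G$. Its centralizer $Z_G(g_h)$ is conjugate to $AM$. Hence $L^\circ$ contains a conjugate $hAh^{-1}$ of the full maximal split torus: $A_{g}:=Z_G(g_h)^\circ\cap\exp\mathfrak p$-part is the unique maximal split torus containing $g_h$. Moreover it lies in $L$, since $L$ is reductive and contains $g_h$, and we can take the split torus of $Z_L(g_h)$. We need $\operatorname{rank}_{\mathbb R}L=\operatorname{rank}_{\mathbb R}G$. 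I would argue this by observing that the Zariski closure of $\{g_h^t\}$ lies in $L$, and the set of flags $y_g$ ranges over a Zariski dense set. Then the span of the projections $\lambda(g)$, for $g\in L^\circ$ loxodromic, must be all of $\mathfrak a$. Otherwise all $g_h$ lie in conjugates of a proper subtorus, and by Benoist's theorem the limit cone of a Zariski dense subgroup of $L^\circ$ has nonempty interior in $\mathfrak a_L$. So the rank of $L$ equals the dimension of the span, and we want it to be $r$. If $\operatorname{rank}L<r$, then every loxodromic $g\in L^\circ$ has $g_h$ in a fixed conjugacy class of subtorus $hA'h^{-1}$ with $A'\subsetneq A$ non-generic. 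This is a contradiction, since regular elements lie in no proper subtorus's $Z$-conjugates of lower dimension generically. I would streamline this by comparing dimensions instead.

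\textbf{Step 2 (dimension count).} The attracting flags of loxodromic $g\in L^\circ$ lie in the union of $L^\circ$-orbits $L^\circ\cdot y$ of fixed points of maximal split tori of $L$. Since $L$ has finitely many conjugacy classes of maximal split tori and each has finitely many fixed flags (Weyl group orbits), these flags lie in finitely many $L^\circ$-orbits $L^\circ y_1,\dots,L^\circ y_k$ in $\mathcal F$. Zariski density then forces some orbit $L^\circ y_i$ to be Zariski dense, hence open, in $\mathcal F$. Its stabilizer $L^\circ\cap g_iPg_i^{-1}$ contains a maximal split torus of $L$ that is also maximal split in $G$, since it fixes $y_i=g_iP$ and also the opposite fixed flag. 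So $\dim L^\circ/(L^\circ\cap g_iPg_i^{-1})=\dim G/P=\dim N$.

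\textbf{Step 3.} Conjugating so that $y_i=P$ and the common torus is $A$, the open orbit gives $\mathfrak l+\mathfrak p=\mathfrak g$, so $\mathfrak l\supset$ projections covering $\check{\mathfrak n}$ modulo $\mathfrak p$. Because $\mathfrak l$ is $\operatorname{ad}\mathfrak a$-stable, it is a sum of $\mathfrak l\cap\mathfrak g_0$ and root space pieces. Then $\mathfrak g_{-\alpha}\subset\mathfrak l$ for every positive root. Reductivity of $L$ (stability under a Cartan involution compatible with $A$, after conjugation within $L$) gives $\mathfrak g_\alpha\subset\mathfrak l$ too. These root spaces generate the noncompact part of $\mathfrak g$, which is all of $\mathfrak g$ since $G$ has no compact factors (here the hypothesis on simple-factor projections is used). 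So $L^\circ=G$ by connectedness, and hence $L=G$.

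The main obstacle is Step 3's use of a Cartan involution of $L$ compatible with $A$ and $\theta$: I would invoke Mostow's theorem, which says a reductive subgroup is stable under some Cartan involution of $G$, and then conjugate inside $L$ so that the torus is $A$.
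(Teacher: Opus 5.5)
The gap is the rank equality $\operatorname{rank}_{\mathbb R}L=\operatorname{rank}_{\mathbb R}G$. Step~3 needs $\mathfrak a\subset\mathfrak l$ (``the common torus is $A$''), but in Step~2 you only assert that the stabilizer of $y_i$ contains a maximal split torus of $L$ that is also maximal split in $G$.

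Step~1 does not prove this. Containing a $G$-regular hyperbolic element $g_h$ does not put the maximal split torus of $Z_G(g_h)$ into $L$: the split torus of $Z_L(g_h)$ can be strictly smaller, and regular elements do lie in lower-dimensional split tori. For example, take $G=\SL_2(\mathbb R)\times\SL_2(\mathbb R)$ and let $L$ be the diagonal copy of $\SL_2(\mathbb R)$. Then $L$ contains $G$-loxodromic elements and both projections are noncompact, yet $\operatorname{rank}_{\mathbb R}L=1$.

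The same example shows that what Step~2 extracts from the hypothesis is too weak. On $\mathcal F=\mathbb P^1(\mathbb R)\times\mathbb P^1(\mathbb R)$, the diagonal torus of $L$ fixes $(0,0),(0,\infty),(\infty,0),(\infty,\infty)$. Their $L$-orbits, the diagonal and its complement, already cover $\mathcal F$. So the output of Step~2, an open $L^\circ$-orbit through a point fixed by a maximal split torus of $L$, holds here even though the lemma's hypothesis fails (all attracting flags lie on the diagonal) and $L\neq G$. By replacing attracting flags with arbitrary torus-fixed flags, you discard exactly the information the argument needs. The Cartan-involution compatibility you flag as the main obstacle is routine; this rank statement is the real obstacle.

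Your route can be repaired without any rank statement.
\begin{itemize}
\item Choose $i$ so that the attracting flags lying in $L^\circ y_i$ are Zariski dense. Your finiteness claim is fine, but the reason is that all maximal split tori of $L^\circ$ are conjugate and contain $G$-regular elements, so each fixes only finitely many flags.
\item Take $y_i=y_g$ for such a $g$, put $X:=\log g_h\in\mathfrak l$, and let $\mathfrak g_c$ denote the $\operatorname{ad}X$-eigenspaces. For regular $X$, the stabilizer of $y_g$ has Lie algebra $\bigoplus_{c\geq0}\mathfrak g_c$.
\item Openness of $L^\circ y_g$ gives $\mathfrak l+\bigoplus_{c\geq0}\mathfrak g_c=\mathfrak g$. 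Since $\mathfrak l$ is $\operatorname{ad}X$-stable, this forces $\mathfrak g_c\subset\mathfrak l$ for every $c<0$.
\item The Killing form pairs $\mathfrak g_c$ only with $\mathfrak g_{-c}$, and it is nondegenerate on $\mathfrak l$: use a Cartan involution preserving $\mathfrak l$, which exists by Mostow. Hence $\mathfrak g_{-c}\subset\mathfrak l$ as well.
\item The spaces $\bigoplus_{c<0}\mathfrak g_c$ and $\bigoplus_{c>0}\mathfrak g_c$ are conjugate to $\check{\mathfrak n}$ and $\mathfrak n$, which generate $\mathfrak g$ because there are no compact factors.
\end{itemize}

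The paper argues differently. It makes $L^\circ$ $\theta$-stable with $\mathfrak a_L\subset\mathfrak a$ and uses $L^\circ=K_L\exp(\mathfrak a_L^+)K_L$, where $K_L:=L^\circ\cap K$, to show $y_g\in\bigcup_wK_L\cdot wP$. Zariski density makes one of these $K_L$-orbits open, and compactness of $K_L$ then gives $K_L\cdot wP=\mathcal F$, i.e.\ $K=K_LM$. Finally, the $\operatorname{Ad}(K)$-orbit of a suitable $H\in\mathfrak a_L$ spans $\mathfrak p$, which yields $\mathfrak l=\mathfrak g$. Compactness is what makes the open-orbit step decisive in the paper; for the noncompact group $L^\circ$ an open orbit proves nothing, as the example shows.
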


\begin{proof}
By \cite{Mostow_reductive}, after conjugation we may assume
that $L^\circ$ is invariant under $\theta$.
Write
$\mathfrak l=\mathfrak k_L\oplus\mathfrak p_L$
and $K_L=L^\circ\cap K$.
After a further conjugation by $K$, choose a maximal
abelian subspace $\mathfrak a_L\subset\mathfrak p_L$
contained in $\mathfrak a$.
Then
$L^\circ=K_L\exp(\mathfrak a_L^+)K_L$
for a closed positive Weyl chamber $\mathfrak a_L^+$.
For a loxodromic $g\in L^\circ$, write
$g^n=k_n\exp(H_n)k_n'$ in this decomposition.
Let $\mathcal W\subset N_K(A)$ be a finite set of
representatives of the Weyl group.
Choose $w_n\in\mathcal W$ such that
$\operatorname{Ad}(w_n^{-1})H_n=\mu(g^n)$.
Along a subsequence, $w_n=w$ and $k_n\to k\in K_L$.
Since
$n^{-1}\mu(g^n)\to\lambda(g)\in
\operatorname{int}\mathfrak a^+$,
we obtain $y_g=kwP\in \bigcup_{w\in\mathcal W}K_L\cdot(wP)$. By the hypothesis, the union $\bigcup_{w\in\mathcal W}K_L\cdot(wP)$ is Zariski dense.

Each $K_L\cdot(wP)$ is a compact semialgebraic
submanifold, and its real Zariski closure has the same
dimension. Therefore for some $w\in \mathcal W$, $K_L. w P$ must have
has dimension $\dim\mathcal F$.
That orbit is open and closed in the connected space
$\mathcal F$, so $K_L.wP=\mathcal F$.
Thus $K_L$ acts transitively on $\mathcal F=K/M$, and
$K=K_LM$.

Write $\mathfrak g=\bigoplus_i\mathfrak g_i$, where
$\mathfrak g_i=\mathfrak k_i\oplus\mathfrak p_i$
are the simple ideals.
By the non-compactness assumption, $\mathfrak p_L$
projects nontrivially to every $\mathfrak p_i$.
We can therefore choose $X\in\mathfrak p_L$ whose
projections to all $\mathfrak p_i$ are nonzero.
Conjugating by $K_L$, we obtain $H\in\mathfrak a_L$
whose projections $H_i$ are still nonzero.
Since $M$ centralizes $H$ and $K=K_LM$,
$
\operatorname{Ad}(K)H
=\operatorname{Ad}(K_L)H
\subset\mathfrak p_L.
$

Let $V$ be the linear span of $\operatorname{Ad}(K)H$.
For each $i$, the action of $\mathfrak k_i$ on
$\mathfrak p_i$ is nontrivial and irreducible.
Choose $Z_i\in\mathfrak k_i$ with $[Z_i,H_i]\neq0$.
Since $V$ is $\mathfrak k$-invariant and distinct
simple ideals commute,
$0\neq[Z_i,H]=[Z_i,H_i]\in V\cap\mathfrak p_i$. Since $V\cap\mathfrak p_i$ is a nonzero
$\mathfrak k_i$-invariant subspace of $\mathfrak p_i$,
irreducibility gives $\mathfrak p_i\subset V$.
Thus $\mathfrak p\subset V\subset\mathfrak l$.
As $G$ has no compact factors,
$[\mathfrak p,\mathfrak p]=\mathfrak k$, so
$\mathfrak l=\mathfrak g$.
Connectedness of $G$ now gives $L=G$.
\end{proof}

\begin{corollary}
\label{cor.Zdensesemigroup}
The semigroup $\sH$ is Zariski dense in $G$.
\end{corollary}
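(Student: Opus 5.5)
Let $L$ denote the Zariski closure of $\sH$ in $G$. The plan is to show that $L$ satisfies the hypotheses of \Cref{Zd2}. Two facts about Zariski closures will be used throughout. First, the Zariski closure of a semigroup in an algebraic group is a group. So $L$ is a real algebraic subgroup of $G$, and it coincides with the Zariski closure of $\Gamma_{\mathcal H}$. Second, $L^\circ$ has finite index in $L$. Hence $\Gamma_{\mathcal H}\cap L^\circ$ is a finite-index subgroup of $\Gamma_{\mathcal H}$, and it is Zariski dense in $L^\circ$.

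\emph{Reductivity.} Let $U$ be the unipotent radical of $L^\circ$, and suppose $U\neq\{e\}$. By the Borel--Tits theorem, the normalizer of $U$ is contained in a proper parabolic subgroup $Q$ of $G$. Then $\Gamma_{\mathcal H}\cap L^\circ$ normalizes $U$, so it lies in $Q$. This contradicts \Cref{lem.noglobalfixedpt}. Hence $L^\circ$ is reductive, and therefore so is $L$.

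\emph{Non-compact projections.} Let $\pi_i$ be the projection of $G$ to a simple factor $G_i$; recall that $G$ now has no compact factors. Pick a nontrivial $\gamma\in\sH$. By \Cref{lem.periodisjordan}, $\gamma$ is loxodromic, so $\lambda(\gamma)\in\operatorname{int}\mathfrak a^+$. Some power $\gamma^k$ lies in $L^\circ$, and $\pi_i(\gamma^k)$ has nonzero Jordan projection in $\mathfrak a_i$ because $\lambda(\gamma)$ is regular. Such an element generates an unbounded subgroup. Therefore $\pi_i(L^\circ)$ is non-compact for every $i$.

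\emph{Density of attracting flags.} The set $\{y_\gamma:\gamma\in\sH\smallsetminus\{e\}\}$ is dense in $\Lambda_+$ by \Cref{lem.attractingdense}. Since $\Lambda_+$ is Zariski dense by \Cref{Zd}, so is this set of attracting flags. For each such $\gamma$, a power $\gamma^k$ lies in $L^\circ$ and is loxodromic with $y_{\gamma^k}=y_\gamma$. Hence the attracting flags of loxodromic elements of $L^\circ$ are Zariski dense in $\mathcal F$. Applying \Cref{Zd2} gives $L=G$.

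\textbf{Main obstacle.} I expect the reductivity step to be the hardest. It needs the Borel--Tits normalizer theorem in the real-algebraic setting, and it needs care to confirm that the finite-index subgroup $\Gamma_{\mathcal H}\cap L^\circ$ is the one to which \Cref{lem.noglobalfixedpt} is applied.
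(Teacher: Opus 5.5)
Your proposal is correct and follows essentially the same route as the paper: take the Zariski closure $L$ of $\sH$ (a group), deduce reductivity from \Cref{lem.noglobalfixedpt}, get non-compactness of the projections from loxodromy of the nontrivial elements of $\sH$, get Zariski density of attracting flags from \Cref{lem.attractingdense} and \Cref{Zd}, and conclude with \Cref{Zd2}. Your appeal to the Borel--Tits normalizer theorem just makes explicit the paper's one-line assertion that a nonreductive $L^\circ$ lies in a proper parabolic subgroup; since the unipotent radical is normal in all of $L$, even $\Gamma_{\mathcal H}$ itself lies in that parabolic, so the finite-index bookkeeping you were worried about causes no difficulty.
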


\begin{proof}
Let $L$ be the Zariski closure of $\sH$.
By \cite[Lemma~6.15]{BQ_book},  $L$ is an algebraic subgroup containing
$\Gamma_{\mathcal H}$.  By \Cref{lem.proximal}, every nontrivial element of
$\sH$ is loxodromic and hence the projection of $L^\circ$ to every simple
factor of $G$ is non-compact.

Since $L/L^\circ$ is finite,
$\Gamma_{\mathcal H}\cap L^\circ$ has finite index in
$\Gamma_{\mathcal H}$.
If $L^\circ$ were nonreductive, then it should be contained in a proper parabolic subgroup of $G$,
contrary to \Cref{lem.noglobalfixedpt}.
Hence $L$ is reductive.
For any $g\in \sH$, there is $n$ such that $g^n\in L^\circ$ and $y_{g}=y_{g^n}$.
By \Cref{lem.attractingdense,Zd} and \Cref{Zd2}, $L=G$.
\end{proof}

Since inversion is a biregular map of $G$,
$\sH^{-1}$ is also Zariski dense.

\begin{proof}[Proof of \Cref{thm.denselength}]
By \Cref{lem.periodisjordan},
$$
\{K_n(x):x\in\operatorname{Fix}(T^n),\ n\geq1\}
=
\{\lambda(\gamma):\gamma\in\sH\smallsetminus\{e\}\}.
$$
By \Cref{cor.Zdensesemigroup}, $\sH$ is Zariski dense.
Therefore \cite[Proposition, p.~34]{Benoist_properties2}
implies the claim; see also
\cite{OhSarkar} for a dynamical proof.
\end{proof}

\section{Quantitative non-integrability of the return cocycle}
\label{sec.nonintegrability}

We prove the quantitative non-integrability estimate for $K$
used in \Cref{prop:norm of transfer operator}.
The proof uses Zariski density of $\sH$ and the
peripheral root-balance condition
\eqref{eqn:peripheral-root-balance}.

For $C>0$, set
$\mathcal H_C:=\{\gamma\in\mathcal H:R_\gamma<C\}$,
where $R_\gamma$ is defined in \eqref{eqn:R gamma}.
By \eqref{eqn:return-Cartan-comparison},
$R_\gamma=\psi(\mu(\gamma))+O(1)$ uniformly in $\gamma$.
Thus $\mathcal H_C$ is finite by
\Cref{prop.expsumfinalfinite}.

\begin{theorem}[Quantitative non-integrability]
\label{thm:non-integrability}
There exist constants $C>1$,
$0<s_1 \leq s_2$, $s_3>0$,
$N_{\mathrm{NI}}>1$, and
$\epsilon_1,\delta_1,\eta\in(0,1)$ with the following
property.
For every unit vector $v_0\in\mathfrak a$, $b>3$,
and integer $n>N_{\mathrm{NI}}$, there exist
$\gamma_1,\gamma_2\in\mathcal H_C^n$,
$\gamma\in\mathcal H_C$, and $x_1,x_2\in\Lambda_+$
such that
$$
\begin{gathered}
x_1\in\gamma\Lambda_+,\qquad
B_{\bar\psi}\left(x_1,\frac{\epsilon_1}{b^{s_1}}\right)
\subset\gamma\Delta_0,\qquad x_2\in
B_{\bar\psi}\left(x_1,\frac{\epsilon_1}{2b^{s_1}}\right)
\smallsetminus
B_{\bar\psi}\left(x_1,
\frac{\delta_1\epsilon_1}{2b^{s_2}}\right),
\end{gathered}
$$
and
\begin{equation}
\label{eqn:non-integrability-estimate}
\begin{aligned}
\Big|&
\langle v_0,K_n(\gamma_1x_1)-K_n(\gamma_2x_1)\rangle -\langle v_0,K_n(\gamma_1x_2)-K_n(\gamma_2x_2)\rangle
\Big|
\geq\eta\,d_{\bar\psi}(x_1,x_2)^{s_3}.
\end{aligned}
\end{equation}
Moreover, writing
$\gamma_k=\gamma_{k,1}\cdots\gamma_{k,n}$ with
$\gamma_{k,i}\in\mathcal H_C$, we have
\begin{equation}
\label{eqn:inclusion of cylinder}
B_{\bar\psi}\left(\gamma_kx_1,
\frac{\epsilon_1}{2b^{s_1}}\right)
\cup
B_{\bar\psi}\left(\gamma_kx_2,
\frac{\epsilon_1}{2b^{s_1}}\right)
\subset\gamma_{k,1}\Delta_0
\qquad(k=1,2).
\end{equation}
\end{theorem}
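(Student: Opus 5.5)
We reduce the cocycle difference to a Busemann/Gromov-product difference between two limit flags, and use Zariski density plus peripheral root balance to make that difference non-degenerate in every direction $v_0$.

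\textbf{Step 1: reduction.} By \eqref{eqn:telescoping-return-cocycle}, for $\gamma_k\in\mathcal H^n$,
$$K_n(\gamma_kx)=\beta_{\gamma_kx}(e,\gamma_k)+2\mathcal G(x,p_0)-2\mathcal G(\gamma_kx,p_0).$$
The term $2\mathcal G(x,p_0)$ cancels in the four-term difference. Using $\beta_{\gamma_kx}(e,\gamma_k)=-\beta_x(e,\gamma_k^{-1})$ and \eqref{GPT}, the four-term expression in \eqref{eqn:non-integrability-estimate} equals, up to $O(e^{-cn})$, a \emph{cross-ratio-type} quantity. Concretely, take $\gamma_k$ with attracting flags $y_k:=\lim\gamma_k\Delta_0$, which exist as $n\to\infty$ by \Cref{lem:uniform-branch-contraction}, and define the scaled Busemann-type quantity
$$\Phi_{y_1,y_2}(x):=\beta_x(\cdot)\text{-terms in }(x,p_0,y_1,y_2).$$
The four-term expression then becomes $\langle v_0,\Phi_{y_1,y_2}(x_1)-\Phi_{y_1,y_2}(x_2)\rangle+O(e^{-cn})$. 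The error is controlled by \Cref{prop.holderlocal} and by the contraction of the branches $\gamma_k$ along their common tails. This is the analogue of the Chow--Sarkar/Sarkar--Winter ``local non-integrability'' computation, written with $\mathfrak a$-valued Gromov products.

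\textbf{Step 2: local non-degeneracy.} We need that, for every unit $v_0$, the function $x\mapsto\langle v_0,\Phi(x)\rangle$ is not locally constant on $\Lambda_+$ near some point. Quantitatively, it should vary by at least $d_{\bar\psi}(x_1,x_2)^{s_3}$.

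Suppose it were constant near a point, for a dense family of choices of $y_1,y_2$ coming from $\sH$. The function is real-analytic on $\mathcal F$, so by \Cref{Zd} (Zariski density of $\Lambda_+$) it would be constant on all of $\mathcal F$. Via \Cref{cor.Zdensesemigroup} and Benoist's theorem, this would force the Jordan projections $\lambda(\sH)$ into a proper hyperplane, contradicting \Cref{thm.denselength}. Compactness of the unit sphere in $\mathfrak a$ and finiteness of $\mathcal H_C$ then give uniform constants $C,\eta$.

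To get a quantitative lower bound at scale $r$ rather than mere non-constancy, I would use non-concentration of $\nu$: $\Lambda_\rho$ is not concentrated near any proper subvariety at the relevant scales. This is an analogue of \Cref{prop: non-concentration self-joining}, which the paper cites as the key input. Combining non-concentration with the analytic function having bounded-below derivative transversally gives a point $x_2$ in the annulus $B_{\bar\psi}(x_1,\epsilon_1/(2b^{s_1}))\smallsetminus B_{\bar\psi}(x_1,\delta_1\epsilon_1/(2b^{s_2}))$ satisfying the bound with some power $s_3$.

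\textbf{Step 3: the main obstacle — scales near cusps.} The annulus scales $b^{-s_1}$ and $b^{-s_2}$ differ because the contraction rates in different simple-root directions differ. This is exactly where the peripheral root-balance condition \eqref{eqn:peripheral-root-balance} enters. Near a parabolic point, the flag limit set is stretched according to $\alpha(\mu(\rho(\gamma)^n))$ for $\alpha\in\Pi$. The liminf ratio exceeding $\tfrac12$ ensures that $D_{\bar\psi}$-balls contain enough limit set in every root direction, so that non-concentration holds with exponents $s_1\le s_2$ uniform in $b$. I would prove this via \Cref{lem.regularunipotent_cartan}, with $v_i$ having all $\alpha(v_i)$ within a factor $<2$ of each other.

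\textbf{Step 4: cylinders.} Finally, choose $x_1$ deep inside a cylinder $\gamma\Delta_0$, which is possible by \Cref{lem.branchcompactinterior}, and pick $\gamma_k$ from $\mathcal H_C^n$. The inclusion \eqref{eqn:inclusion of cylinder} then follows from \Cref{lem:uniform-branch-contraction}, since $\gamma_k$ contracts and its first letter has image compactly inside $\operatorname{int}\Delta_0$.
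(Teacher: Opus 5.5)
\paragraph{Step~1.} Your reduction uses the wrong flags and introduces an error term the statement cannot absorb. Combining \eqref{eqn:telescoping-return-cocycle} with \eqref{GPT} gives an exact identity for $\gamma\in\mathcal H^n$ and $y\in\Lambda_+$: $K_n(\gamma y)=2\mathcal G(y,p_0)-2\mathcal G(y,q_\gamma)-\i\beta_{p_0}(e,\gamma)$, where $q_\gamma:=\gamma^{-1}p_0$. Hence the four-term expression equals $2\mathcal G(x_2,q_{\gamma_1})-2\mathcal G(x_1,q_{\gamma_1})-2\mathcal G(x_2,q_{\gamma_2})+2\mathcal G(x_1,q_{\gamma_2})$ exactly. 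The relevant flags are the backward points $q_{\gamma_k}\in\Lambda_b\subset U_0$. These are governed by the right ends of the words, not by forward limits of $\gamma_k\Delta_0$, and there is no $O(e^{-cn})$ term.

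Such a term would in fact be fatal. In the statement $b$ and $n$ are independent, while the required lower bound $\eta\,d_{\bar\psi}(x_1,x_2)^{s_3}$ can be as small as a constant times $b^{-s_2s_3}$. The paper writes this exact cross-ratio as a difference of values of $\log\Xi$, where $\Xi$ is a real-analytic zigzag map in Bruhat coordinates centered at $x_1$ (\Cref{lem:nh map and return map}). This is what permits a Taylor expansion at $x_1$ with constants uniform in $n$.

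\paragraph{Steps~2--3.} The quantitative core is missing, and the qualitative substitute does not work. A real-analytic function constant on a Zariski dense set need not be constant, since Zariski density controls polynomials only. The passage from constancy to Jordan projections lying in a hyperplane is also unjustified. Finally, non-constancy gives no bound uniform in $v_0$, $b$ and $n$; finiteness of $\mathcal H_C$ does not help, since $\mathcal H_C^n$ grows with $n$. Two ingredients are needed.

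\emph{A uniform lower bound for the linearization at $x_1$.} By \Cref{lem:nh map}, $(d\log\Xi_n)_e=\pi_{\mathfrak a}\circ\operatorname{Ad}_{n^{-1}}|_{\check{\mathfrak n}}$. The function $n\mapsto\langle v_0,\operatorname{Ad}_{n^{-1}}E_{-\alpha}\rangle$ is polynomial on $N$. By \Cref{lem:Hel}, its derivative at $e$ in the direction $E_\alpha$ is $-\langle v_0,[E_\alpha,E_{-\alpha}]\rangle\neq0$ whenever $\alpha(v_0)\neq0$. Zariski density of $\Lambda_b$ (not of $\Lambda_+$), \Cref{lem.LP6.78}, and subtraction of a reference branch whose backward point is near $q_*$ then give finitely many right-end words. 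Here one uses that $\pi_{\mathfrak a}|_{\check{\mathfrak n}}=0$, so the reference linearization nearly vanishes. These words give the bound $C_3/2$ uniformly.

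\emph{Control of the quadratic remainder.} One needs $x_2=g_*\exp(Y)P\in\Lambda_+$ with $|\langle Z,Y\rangle|\gtrsim\varepsilon$, for $Z$ with a definite $\mathfrak g_{-\alpha}$-component, while the quadratic Taylor remainder $M\|Y\|^2$ is $o(\varepsilon)$. This is exactly where root balance enters, and your Step~3 misplaces it. Iterating $\gamma^*$ on points of $\Lambda_+$ produces the scale $e^{-k\alpha(\lambda(\gamma^*))}$ in $\mathfrak g_{-\alpha}$. It leaves the other root components of size up to $e^{-k\min_\beta\beta(\lambda(\gamma^*))}$. So one needs $\min_\beta\beta(\lambda(\gamma^*))>\tfrac12\alpha(\lambda(\gamma^*))$, i.e.\ $\|Y\|\le\varepsilon^s$ with $s>\tfrac12$ (\Cref{prop: non-concentration self-joining}).

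This is arranged by choosing $\gamma^*\in\sH$ with $\lambda(\gamma^*)$ near the peripheral vector $v_{u_0}\in\mathcal L_{\mathcal H}$ and setting $x_1=y_{\gamma^*}$. The vector $v_{u_0}$ lies in $\mathcal L_{\mathcal H}$ because $gu^m\in\mathcal H$ for $u\in\mathsf P_0$ (\Cref{lem.principal}). It is not a statement about $D_{\bar\psi}$-balls near cusps. The exponents $s_1\le s_2$ merely convert the $\alpha$-adapted scale into $d_{\bar\psi}$-distance via $\bar\psi(\lambda(\gamma^*))/\alpha(\lambda(\gamma^*))$.
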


\subsection{Root balance in the return limit cone}

Let $\mathcal L_{\mathcal H}$ be the asymptotic cone
of $\mu(\sH)$.
Since $\sH$ is Zariski dense by
\Cref{cor.Zdensesemigroup},
\cite[Theorem~1.2(a)]{Benoist1997proprietes} gives
$$
\mathcal L_{\mathcal H}
=
\overline{\{t\lambda(\gamma):
t\geq0,\ \gamma\in\sH\}}.
$$

\begin{lemma}[Peripheral root balance]
\label{lem.principal}
There exists
$v\in\mathcal L_{\mathcal H}
\cap\operatorname{int}\mathfrak a^+$
such that
$$
\frac{\min_{\alpha\in\Pi}\alpha(v)}
     {\max_{\alpha\in\Pi}\alpha(v)}
>\frac12.
$$
\end{lemma}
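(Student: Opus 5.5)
We will produce $v$ from the peripheral vectors $v_i$: show that some peripheral direction is a limit of directions $\lambda(\gamma)$ with $\gamma\in\sH$, and then use \eqref{eqn:peripheral-root-balance} together with the openness of the strict inequality.

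\emph{Step 1: the root balance of $v_i$.} Fix a cusp type $i$ with $\Delta_i\neq\emptyset$ and a primitive generator $u_i$ of $\mathsf P_i$. By \Cref{lem.regularunipotent_cartan}, $\mu(u_i^n)=(\log n)v_i+O(1)$. Dividing \eqref{eqn:peripheral-root-balance} by $\log n$ therefore gives $\min_\alpha\alpha(v_i)/\max_\alpha\alpha(v_i)>\frac12$. We already know $v_i\in\operatorname{int}\mathfrak a^+$.

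\emph{Step 2: a sequence in $\sH$ whose Cartan direction tends to $v_i$.} Take a first-return branch that passes through a set $J_p$ with $p$ of type $i$. Its letters in $\mathcal H_0$ have the form $\gamma_p\gamma_1$ with $\gamma_1\in N_p\subset\mathsf P_i$, and $N_p$ is cofinite in $\mathsf P_i$ by \Cref{lem.tilingJp}. Fix the remaining letters of one return branch, and fix the other $n_0-1$ return branches of an element of $\mathcal H$. Replace only $\gamma_1$ by $u_i^m$, $m\to\infty$; these elements lie in $N_p$ for large $m$. This gives a sequence $g_m\in\mathcal H\subset\sH$ of the form $a\,u_i^m\,b$ with $a,b$ fixed. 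Then \Cref{lem.cptcartan} gives $\mu(g_m)=\mu(u_i^m)+O(1)=(\log m)v_i+O(1)$.

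There is a subtlety. The cocycle argument of \Cref{lem.inversebranchshadow} requires the index $i$ to be reachable within $I_0$. If no cusp type in $I_0$ other than $0$ exists, we instead use $i=0$ directly. Type-$0$ points appear in $P_{0,n}$ because $\Gamma_\rho p_0\cap\Delta_0$ is infinite, and the tiling by $\mathsf P_0$ then applies.

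\emph{Step 3: pass to the limit cone.} The directions $\mu(g_m)/\|\mu(g_m)\|$ converge to $v_i/\|v_i\|$. So $v_i$ lies in the asymptotic cone $\mathcal L_{\mathcal H}$, and $v:=v_i$ satisfies the statement. If one prefers to work with Jordan projections, take $\gamma=g_m$ for large $m$ and use \Cref{lem.periodisjordan} with \eqref{eqn:return-Cartan-comparison}: we have $\lambda(g_m)=\lim_k\mu(g_m^k)/k$, and $\mu(g_m^k)=k\mu(g_m)+O(k)$ with an $O(1)$ per-letter error independent of $m$. Hence $\lambda(g_m)/\log m\to v_i$, which again gives a vector $t\lambda(g_m)\in\mathcal L_{\mathcal H}$ arbitrarily close in direction to $v_i$. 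The strict inequality is open, so any such nearby vector also works.

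The main obstacle is Step 2. We must check that branches containing arbitrarily large peripheral powers $u_i^m$ really occur in $\mathcal H$ with the other letters held fixed. This needs two things. First, $N_p$ must be cofinite, which \Cref{lem.tilingJp} gives. Second, the type-$i$ cell must actually be visited by first returns, which follows from strong connectivity of the transition graph on $I_0$ (\Cref{lem.returngraphirreducible}). In the Jordan-projection version, we also need the $O(1)$ error in $\mu$ along products to be uniform in $m$; this comes from \eqref{shadowH} and \Cref{lem.buseandcartan}.
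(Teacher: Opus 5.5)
Your argument is correct and is essentially the paper's: you replace the peripheral factor of one letter of a return word by a large power $u^m$ (legitimate because $N_p$ is cofinite), hold the other letters fixed so that \Cref{lem.cptcartan} gives $\mu=(\log m)v+O(1)$, and finish with \eqref{eqn:peripheral-root-balance}. The paper avoids your case distinction on cusp types by always modifying the rightmost letter of the last return word, which lies in $\mathcal H_0(0,i_{r-1})$ and therefore has peripheral factor in $\mathsf P_0$; this is also the correct justification for your $i=0$ fallback, since the infinitude of $\Gamma_\rho p_0\cap\Delta_0$ does not by itself place any point in some $P_{j,n}$.
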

\begin{proof}
Recall that $\mathcal H=\mathcal H_{\mathrm{ret}}^{\,n_0}$,
where $\mathcal H_{\mathrm{ret}}$ consists of products
$\eta_1\cdots\eta_r$ satisfying
$$
\eta_j\in\mathcal H_0(i_j,i_{j-1}),\qquad
i_0=i_r=0,\qquad i_j\neq0\quad(0<j<r).
$$
The last factor has the form $\eta_r=\gamma_p h$,
where $p\in\Gamma_\rho p_0$ and
$h\in N_p\subset\mathsf P_0$.
By \eqref{eqn:H0ij},
$\gamma_pN_p\subset\mathcal H_0(0,i_{r-1})$,
and $N_p$ is cofinite in $\mathsf P_0$ by \eqref{np0}.

For every nontrivial $u\in\mathsf P_0$, the elements
$hu^m$ are distinct and hence belong to $N_p$
for all sufficiently large $m$.
Replacing the last factor of the last return word
therefore shows that, for every $g\in\mathcal H$ and
$u\in\mathsf P_0\smallsetminus\{e\}$, there exists
$m_0=m_0(g,u)$ such that
\begin{equation}
\label{eqn:iterationisinthecoding}
gu^m\in\mathcal H
\qquad(m\geq m_0).
\end{equation}

Now fix $g\in\mathcal H$ and 
$u_0\in\mathsf P_0\smallsetminus\{e\}$.
Then $u_0$ is weakly unipotent and hence,
by \Cref{lem.regularunipotent_cartan,lem.cptcartan},
$$
\mu(gu_0^m)=\mu(u_0^m)+O_g(1)
=(\log m)v_{u_0}+O(1).
$$
By \eqref{eqn:iterationisinthecoding},
$gu_0^m\in\mathcal H$ for all sufficiently large $m$.
Dividing by $\log m$ therefore gives
$v_{u_0}\in\mathcal L_{\mathcal H}$.
Moreover, $v_{u_0}\neq0$, and the Cartan asymptotic
and \eqref{eqn:peripheral-root-balance} imply
$$
\frac{\min_{\alpha\in\Pi}\alpha(v_{u_0})}
     {\max_{\alpha\in\Pi}\alpha(v_{u_0})}
=
\lim_{m\to\infty}
\frac{\min_{\alpha\in\Pi}\alpha(\mu(u_0^m))}
     {\max_{\alpha\in\Pi}\alpha(\mu(u_0^m))}
>\frac12.
$$
Thus $v_{u_0}\in\operatorname{int}\mathfrak a^+$,
and we may take $v=v_{u_0}$.
\end{proof}

For the remainder of this section, fix a vector
$v_*\in\mathcal L_{\mathcal H}\cap
\operatorname{int}\mathfrak a^+$ supplied by
\Cref{lem.principal}, and set
\begin{equation}
\label{eqn:root-balance-constant}
\theta_*:=
\frac{\min_{\alpha\in\Pi}\alpha(v_*)}
     {\max_{\alpha\in\Pi}\alpha(v_*)}
\in\left(\frac12,1\right].
\end{equation}

\subsection{Quantitative non-concentration}
Equip $\mathfrak g$ with the  inner product
$\langle X,Y\rangle:=-B(X,\theta Y)$, where $B$ is the
Killing form, and use the induced norms. Its restriction to
$\mathfrak a$ agrees with the inner product fixed above.
Consider the root decomposition
\begin{equation}
\label{eqn:restricted root decomp}
 \mathfrak g=\fa\oplus\mathfrak m\oplus
 \bigoplus_{\alpha\in\Phi^+}
 \bigl(\mathfrak g_\alpha\oplus\mathfrak g_{-\alpha}\bigr),  \qquad(\mathfrak m=\operatorname{Lie} M ).
\end{equation}
 This is an orthogonal decomposition which is $\operatorname{Ad}(M)$-invariant. For
$\kappa\in(0,1]$ and $\alpha\in\Pi$, put
$$
 W_\kappa(\mathfrak g_{-\alpha})
 :=\left\{Z\in\check{\mathfrak n}:\|Z\|=1,
   \sup_{\substack{Z'\in\mathfrak g_{-\alpha}, \|Z'\|=1}}
 |\langle Z,Z'\rangle|\geq\kappa\right\}.
$$

The following proposition gives an attracting fixed point $y_{\gamma^*}$
such that, in local Bruhat coordinates centered there,
$\Lambda_+$ escapes every hyperplane through the origin
whose normal has a definite component in a negative
simple-root space, with quantitative bounds at all
sufficiently small scales.
The powers of $d_{\bar\psi}$ match the contraction rates
in the corresponding root spaces; see
\eqref{eqn:adapted-radius-contraction}.

\begin{proposition}[Adapted non-concentration]
\label{prop: non-concentration self-joining}
Fix $s\in(\frac12,\theta_*)$. There exists a loxodromic element
$\ga^*\in\sH$ with the following property.
For every fixed $g_*\in G$ satisfying
$g_*^{-1}\ga^*g_*\in A^+M$
and $\kappa\in(0,1]$, there exist 
$c_0,\delta_0,\eta_0\in(0,1)$ such that, for every
$\alpha\in\Pi$, $Z\in W_\kappa(\mathfrak g_{-\alpha})$, and
$\varepsilon\in(0,c_0)$, there exists
$Y\in\check{\mathfrak n}$ with $g_*\exp(Y)P\in\Lambda_+$ and
\begin{equation}
\label{eqn:quantitative-non-concentration}
\begin{aligned}
 &\delta_0\varepsilon
 <d_{\bar\psi}\bigl(y_{\ga^*},g_*\exp(Y)P\bigr)^{
   \frac{\alpha(\lambda(\ga^*))}{\bar\psi(\lambda(\ga^*))}}
 <\varepsilon,\;\;  |\langle Z,Y\rangle|>\eta_0\varepsilon,
 \;  \; \|Y\|\leq\varepsilon^s.
\end{aligned}
\end{equation}

\end{proposition}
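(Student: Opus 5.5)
The plan is to choose $\gamma^*\in\sH$ whose Jordan projection points close to the root-balanced direction $v_*$ of \Cref{lem.principal}. We then use the self-similarity of $\Lambda_+$ under $\gamma^*$ to reduce the quantitative non-concentration at small scales to a qualitative non-concentration at unit scale. The qualitative input comes from Zariski density of $\Lambda_+$ (\Cref{Zd}).

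\textbf{Step 1: choice of $\gamma^*$.} By \Cref{lem.principal} and the description $\mathcal L_{\mathcal H}=\overline{\{t\lambda(\gamma):\gamma\in\sH\}}$, we pick $\gamma^*\in\sH\smallsetminus\{e\}$ with
$$
\min_\alpha\alpha(\lambda(\gamma^*))\big/\max_\alpha\alpha(\lambda(\gamma^*))>s .
$$
By \Cref{lem.proximal} it is loxodromic, and $y_{\gamma^*}\in\Lambda_+$.

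Fix $g_*$ with $g_*^{-1}\gamma^*g_*=a m\in A^+M$. In the Bruhat chart $Y\mapsto g_*\exp(Y)P$ on $\check{\mathfrak n}$, centered at $y_{\gamma^*}=g_*P$, the element $\gamma^*$ acts by
$$
Y\mapsto \operatorname{Ad}(am)^{-1}Y .
$$
This contracts $\mathfrak g_{-\beta}$ by $e^{-\beta(\lambda)}$, where $\lambda=\lambda(\gamma^*)$. Since $\gamma^*\Lambda_+\subset\Lambda_+$, the set $E:=\{Y: g_*\exp(Y)P\in\Lambda_+\}$ near $0$ satisfies $\operatorname{Ad}(am)^{-k}E\subset E$ for all $k\ge0$.

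\textbf{Step 2: unit-scale escape.} Fix a compact annulus $A_0=\{Y\in E: r_1\le d_{\bar\psi}(g_*P,g_*\exp(Y)P)\le r_2\}$, chosen as a fundamental region for the contraction. By \Cref{Zd}, $\Lambda_+$ meets every nonempty Zariski open set. I expect this to give that $E\cap A_0$ is not contained in any hyperplane $\{\langle Z,\cdot\rangle=0\}$ with $Z\in W_\kappa(\mathfrak g_{-\alpha})$; more precisely, the hyperplane should be replaced by its $\operatorname{Ad}(M)$-translates and images under bounded powers. Compactness of $W_\kappa\times M$ then yields a uniform $\eta_1>0$ with
$$
\sup_{Y\in E\cap A_0}|\langle Z',Y\rangle|\ge\eta_1 \quad\text{for all } Z' \text{ in this compact family.}
$$

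\textbf{Step 3: rescaling.} Given $\varepsilon$, choose $k$ so that $e^{-k\alpha(\lambda)}\asymp\varepsilon$, and take $Y=\operatorname{Ad}(am)^{-k}Y_0$ with $Y_0\in E\cap A_0$.

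The distance estimate uses \eqref{eqn.metrics_sph}: since $\gamma^*$ fixes $y_{\gamma^*}$ and the Busemann terms are $\approx\lambda$, we get $d_{\bar\psi}(y_{\gamma^*},\gamma^{*k}x)\asymp e^{-k\bar\psi(\lambda)}$. Hence the $d_{\bar\psi}$-power with exponent $\alpha(\lambda)/\bar\psi(\lambda)$ is $\asymp\varepsilon$, which gives the first two inequalities in \eqref{eqn:quantitative-non-concentration}.

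For the size bound,
$$
\|Y\|\ll e^{-k\min_\beta\beta(\lambda)}\le \varepsilon^{\min/\max}\le \varepsilon^{s}
$$
for small $\varepsilon$, using the ratio choice in Step 1; here $\beta$ ranges over positive roots, and non-simple roots contract faster. This is where root balance enters.

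For the inner product, $\langle Z,Y\rangle=\langle\operatorname{Ad}(am)^{-k}{}^{*}Z,Y_0\rangle$. The $\mathfrak g_{-\alpha}$-component of $Z$ gets scaled by $e^{-k\alpha(\lambda)}\asymp\varepsilon$, so the $\mathfrak g_{-\alpha}$ part contributes $\gg\kappa\eta_1\varepsilon$ after normalization.

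\textbf{Main obstacle.} The other root components of $Z$ scale differently. For $\beta$ with $\beta(\lambda)<\alpha(\lambda)$ they could dominate; those with larger $\beta(\lambda)$ are harmless. The delicate step is to handle this by first using Step 2 with a two-parameter family: normalize the rescaled covector by its largest component and apply unit-scale escape to it. If its dominant part lies off $\mathfrak g_{-\alpha}$, this produces a component of size $\ge\varepsilon$, which only helps the lower bound. The pairing is then $\gg\varepsilon$ in either case. One also needs $\|Y\|\le\varepsilon^s$ to survive, which again uses that all relevant exponents exceed $s\,\alpha(\lambda)$, i.e. root balance $>1/2$ ensuring the chosen $s$ is admissible.
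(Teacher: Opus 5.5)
Your route is the paper's. You choose $\gamma^*\in\sH$ with $\lambda(\gamma^*)$ near the root-balanced direction, work in the chart $Y\mapsto g_*\exp(Y)P$, get unit-scale escape from Zariski density, and rescale. In that chart $(\gamma^*)^k$ acts by the contraction $\operatorname{Ad}(g_*^{-1}(\gamma^*)^kg_*)=\operatorname{Ad}(a^km^k)$, not by $\operatorname{Ad}(am)^{-k}$, though you use the contracting map consistently. Your Step~1 and the distance and size estimates in Step~3 are fine.

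The gap is in Step~2, which you state only as an expectation and only for a restricted family built from $W_\kappa(\mathfrak g_{-\alpha})$ by $\operatorname{Ad}(M)$ and boundedly many powers. That is not what you use later. In the main obstacle you apply escape to $W_k/\|W_k\|$, where $W_k:=\bigl(\operatorname{Ad}(a^km^k)|_{\check{\mathfrak n}}\bigr)^*Z$ and $k\to\infty$ as $\varepsilon\to0$. These normalized covectors can concentrate on $\mathfrak g_{-\alpha'}$ for a simple root $\alpha'$ with $\alpha'(\lambda)<\alpha(\lambda)$. So they lie in no compact family obtained from $W_\kappa(\mathfrak g_{-\alpha})$ by bounded operations, and your constant $\eta_1$ does not apply to them. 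Separately, the fact that $\Lambda_+$ meets every nonempty Zariski open set does not control $E\cap A_0$ for a prescribed annulus. An annulus is not Zariski open, and $\operatorname{Ad}(am)E\subset E$ is only an inclusion, so a thin fundamental region for the contraction need not contain a spanning subset of $E$.

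The fix is the unrestricted statement, which is also the easy one. The chart is an algebraic isomorphism onto the open Bruhat cell, so $E$ is Zariski dense in $\check{\mathfrak n}$ by Lemma~\ref{Zd} and therefore spans it. Choose a basis $Y_1,\dots,Y_d\in E$; this finite set replaces $A_0$. Then choose $C>1$ with $\max_j\|Y_j\|\le C$ and $\max_j|\langle W,Y_j\rangle|\ge C^{-1}\|W\|$ for every $W\in\check{\mathfrak n}$.

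With this, the main obstacle disappears without any case analysis on which component dominates. The map $\operatorname{Ad}(a^km^k)$ preserves each $\mathfrak g_{-\beta}$ and acts there as $e^{-k\beta(\lambda)}$ times an isometry, and the root decomposition is orthogonal. Hence $\|W_k\|\ge\|(W_k)_{-\alpha}\|=e^{-k\alpha(\lambda)}\|Z_{-\alpha}\|\ge\kappa e^{-k\alpha(\lambda)}$. So some $Y=\operatorname{Ad}(a^km^k)Y_j$ satisfies $|\langle Z,Y\rangle|=|\langle W_k,Y_j\rangle|\ge C^{-1}\kappa e^{-k\alpha(\lambda)}$. This is exactly how the paper concludes.
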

\begin{proof}
Choose $s_0\in(s,\theta_*)$. By
\eqref{eqn:root-balance-constant}, for every
$\beta\in\Phi^+$ and $\alpha\in\Pi$,
$$
 \beta(v_*)
 \geq\min_{\zeta\in\Pi}\zeta(v_*)
 >s_0\max_{\zeta\in\Pi}\zeta(v_*)
 \geq s_0\alpha(v_*).
$$

 Since $\mathcal L_{\mathcal H}$ is the closure of the rays through
$\lambda(\sH)$,
we may choose $\ga^*\in\sH\smallsetminus\{e\}$ such that
\begin{equation}
\label{eqn:near-principal-root-ineq}
 \beta(\lambda(\ga^*))
 \geq s_0\alpha(\lambda(\ga^*))
 \qquad(\beta\in\Phi^+,\ \alpha\in\Pi).
\end{equation}
By \Cref{lem.proximal}, $\ga^*$ is loxodromic, and
$\bar\psi(\lambda(\ga^*))>0$ by \eqref{lb}.
Fix $g_*\in G$ with $g_*^{-1}\ga^*g_*\in A^+M$. Then
$ g_*P=y_{\ga^*}$,
$ g_*w_0P=y_{(\ga^*)^{-1}}$, and $ g_*^{-1}\ga^*g_*\in\exp(\lambda(\ga^*))M.
$
For every $k\geq0$, we have 
\begin{equation*}
\begin{aligned}
& (\ga^*)^kg_*\exp(V)P
 =g_*\exp\bigl(
   \operatorname{Ad}(g_*^{-1}(\ga^*)^kg_*)V\bigr)P
 &&(V\in\check{\mathfrak n}),\\
 &\|\operatorname{Ad}(g_*^{-1}(\ga^*)^kg_*)V\|
 =e^{-k\beta(\lambda(\ga^*))}\|V\|
 &&(V\in\mathfrak g_{-\beta},\ \beta\in\Phi^+).
\end{aligned}
\end{equation*}

Since the exponential and logarithm maps of $\check N$ are
polynomial, $V\mapsto g_*\exp(V)P$ is an algebraic isomorphism onto
the open Bruhat cell $g_*\check NP$. By \Cref{Zd}, the set
$ \{V\in\check{\mathfrak n}:g_*\exp(V)P\in\Lambda_+\}$
is Zariski dense and hence spans $\check{\mathfrak n}$. Choose a
basis $Y_1,\ldots,Y_d$ from this set and $C>1$  such that
\begin{equation}
\label{eqn:nonconcentration basis bound}
 \max_j\|Y_j\|\leq C,
 \qquad
 \max_j|\langle V,Y_j\rangle|
 \geq C^{-1}\|V\|
 \quad(V\in\check{\mathfrak n}).
\end{equation}

By \Cref{lem.periodisjordan},
$y_{\ga^*}\in\Lambda_+$, and
\eqref{eqn:powered-branch-partition} gives
$(\ga^*)^k\Lambda_+\subset\Lambda_+$ for every $k\geq0$.
By \eqref{shadowH}, \Cref{lem.buseandcartan}, and
\Cref{lem.cptcartan},
$$
 \beta_{(\ga^*)^kz}(e,(\ga^*)^k)
 =\mu((\ga^*)^k)+O(1)
 =k\lambda(\ga^*)+O(1)
$$
uniformly for $k\geq0$ and $z\in\Lambda_+$.
Applying \eqref{eqn.metrics_sph} to
$y_{\ga^*}$ and $g_*\exp(Y_j)P$, whose distance is positive,
and enlarging $C$, we obtain
\begin{equation}
\label{eqn:adapted-radius-contraction}
\begin{aligned}
 C^{-1}e^{-k\alpha(\lambda(\ga^*))}
 &\leq
 d_{\bar\psi}\bigl(y_{\ga^*},
   (\ga^*)^kg_*\exp(Y_j)P\bigr)^{
   \frac{\alpha(\lambda(\ga^*))}{\bar\psi(\lambda(\ga^*))}} \leq C e^{-k\alpha(\lambda(\ga^*))}
\end{aligned}
\end{equation}
for every $\alpha\in\Pi$, $j$, and $k\geq0$. Fix $\kappa\in(0,1]$. For
$Z\in W_\kappa(\mathfrak g_{-\alpha})$, we have
$$
 \bigl\|\operatorname{Ad}(g_*^{-1}(\ga^*)^kg_*)^*Z\bigr\|
 \geq\kappa e^{-k\alpha(\lambda(\ga^*))},
$$
where ${}^*$ denotes the adjoint in  $\check{\mathfrak n}$.
Combining this with
\eqref{eqn:nonconcentration basis bound} gives the first
estimate below, while
\eqref{eqn:near-principal-root-ineq} gives the second:
\begin{equation}
\label{eqn:nonconcentration rescaled basis}
\begin{aligned}
& \max_j\bigl|
 \langle Z,\operatorname{Ad}(g_*^{-1}(\ga^*)^kg_*)Y_j\rangle
 \bigr|\geq C^{-1}\kappa e^{-k\alpha(\lambda(\ga^*))},\\
& \max_j\|\operatorname{Ad}(g_*^{-1}(\ga^*)^kg_*)Y_j\|
 \leq C e^{-ks_0\alpha(\lambda(\ga^*))}.
\end{aligned}
\end{equation}

Since $s_0>s$, choose $c_0\in(0,1)$ so small that
$ C\left(\frac{\varepsilon}{2C}\right)^{s_0}
 \leq\varepsilon^s$ for all $0<\varepsilon<c_0$.
Given $\alpha\in\Pi$, $Z\in W_\kappa(\mathfrak g_{-\alpha})$, 
and $\varepsilon\in(0,c_0)$, choose $k\geq1$ so that
$$
 C e^{-k\alpha(\lambda(\ga^*))}
 \leq\frac{\varepsilon}{2}
 <C e^{-(k-1)\alpha(\lambda(\ga^*))}.
$$
Choose $j$ attaining the maximum in the first line of
\eqref{eqn:nonconcentration rescaled basis}, and put
$
 Y:=\operatorname{Ad}(g_*^{-1}(\ga^*)^kg_*)Y_j.
$
Then $g_*\exp(Y)P
 =(\ga^*)^kg_*\exp(Y_j)P\in\Lambda_+$.
By \eqref{eqn:adapted-radius-contraction} and the choice of $k$,
$$
 \delta_0\varepsilon
 <
 d_{\bar\psi}\bigl(y_{\ga^*},g_*\exp(Y)P\bigr)^{
   \frac{\alpha(\lambda(\ga^*))}{\bar\psi(\lambda(\ga^*))}}
 \leq\frac{\varepsilon}{2}<\varepsilon.
$$ where $\delta_0:=(2C^2)^{-1} {e^{-\max_{\beta\in\Pi}\beta(\lambda(\ga^*))}}$.
Likewise, \eqref{eqn:nonconcentration rescaled basis} gives
$$
 |\langle Z,Y\rangle|
 >
 \frac{\kappa\varepsilon}
 {2C^2e^{\alpha(\lambda(\ga^*))}}
 \geq\kappa \delta_0 \varepsilon,
 \qquad
 \|Y\|
 \leq C\left(\frac{\varepsilon}{2C}\right)^{s_0}
 \leq\varepsilon^s.
$$ Taking $\eta_0:=\kappa\delta_0\in(0,1)$ proves
\eqref{eqn:quantitative-non-concentration}.
\end{proof}

\subsection{A geometric formula for cocycle differences}
We express differences between return branches in the Bruhat coordinates associated with the attracting and repelling flags of $\ga^*$
 used in \Cref{prop: non-concentration self-joining}.

Recall that $p_0=P$. For $g\in G$, recall
$$
 [g]:=(g^+,g^-,\beta_{g^+}(e,g))\in\mathcal F^{(2)}\times\fa,
 \qquad g^+=gp_0,\quad g^-=gw_0p_0.
$$

\begin{lemma}[{\cite[Lemma~8.4]{KOW_ergodic}}]
\label{lem.zigzagmove}
For $g\in G$, $n\in N$, and $h\in\check N$,
$$
 \begin{aligned}
 [gn]&=(g^+,gn^-,\beta_{g^+}(e,g)),\\
 [gh]&=\bigl(gh^+,g^-,\beta_{g^+}(e,g)
       +2\mathcal G(gh^+,g^-)-2\mathcal G(g^+,g^-)\bigr).
 \end{aligned}
$$
\end{lemma}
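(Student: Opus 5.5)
The statement is \Cref{lem.zigzagmove}, a direct computation in Hopf coordinates; I would verify both formulas from the definitions $[g]=(gP,gw_0P,\beta_{gP}(e,g))$, the cocycle and equivariance identities for $\beta$, and the transformation rule \eqref{GPT} for $\mathcal G$.

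\emph{The $N$-move.} For $n\in N\subset P$, I would first note that $(gn)^+=gnP=gP=g^+$ and $(gn)^-=gn\,w_0P=g\,n^-$. For the $\mathfrak a$-coordinate, the cocycle identity gives
$$
\beta_{g^+}(e,gn)=\beta_{g^+}(e,g)+\beta_{g^+}(g,gn).
$$
By equivariance, $\beta_{g^+}(g,gn)=\beta_{P}(e,n)$. Since $n\in N$ fixes $P$ and is unipotent, \Cref{lem.zerobuse} (or directly $n^{-1}\in KN$ with trivial Iwasawa component) shows that this term vanishes. This gives the first formula.

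\emph{The $\check N$-move.} For $h\in\check N=w_0Nw_0^{-1}$, I have $hw_0P=w_0P$, so $(gh)^-=g^-$ and $(gh)^+=gh^+$. I would not compute the $\mathfrak a$-coordinate directly; instead I would use the identity
$$
2\mathcal G(x^+,x^-)=\beta_{x^+}(e,x)+\i\beta_{x^-}(e,x),
$$
valid for any $x$ with $(x^+,x^-)=(xP,xw_0P)$. This is the definition \eqref{eqn.Gromovproductdef}, which is independent of the representative. Apply it to $x=gh$ and to $x=g$, and subtract:
$$
\beta_{gh^+}(e,gh)=\beta_{g^+}(e,g)+2\mathcal G(gh^+,g^-)-2\mathcal G(g^+,g^-)-\i\bigl(\beta_{g^-}(e,gh)-\beta_{g^-}(e,g)\bigr).
$$
The bracketed term equals $\beta_{g^-}(g,gh)=\beta_{w_0P}(e,h)$. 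This is zero by the same argument as in the $N$-move: $h$ fixes $w_0P$ and is unipotent, so one can apply \Cref{lem.zerobuse}, or conjugate by $w_0$ and use $\beta_{w_0\xi}(w_0,w_0n)=\beta_\xi(e,n)$ together with $w_0\in K$. This gives the second formula.

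The only potential subtlety is the well-definedness of $\mathcal G$ and the sign and involution conventions in \eqref{eqn.Gromovproductdef}. Independence of the representative is already asserted in the paper, so I expect this step to be routine.
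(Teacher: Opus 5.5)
Your argument is correct and complete. The $N$-move follows from the cocycle identity, equivariance, and $\beta_P(e,n)=0$. For the $\check N$-move, subtracting the defining identity \eqref{eqn.Gromovproductdef} for $x=g$ from the one for $x=gh$ leaves the term $-\i\beta_{w_0P}(e,h)$, and this vanishes: writing $h=w_0n'w_0^{-1}$, one has $h^{-1}w_0=w_0(n')^{-1}\in KN$.

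The paper gives no proof of its own and simply quotes \cite[Lemma~8.4]{KOW_ergodic}, so your computation supplies a valid self-contained verification. One small point: \Cref{lem.zerobuse} is stated for weakly unipotent elements, which excludes $n=e$ and $h=e$. The direct Iwasawa computation you also mention covers these cases as well.
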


For the rest of this section, fix $s\in(1/2,\theta_*)$ and
$\gamma^*$ as in \Cref{prop: non-concentration self-joining}.
Choose $g_*\in G$ such that
$g_*^{-1}\gamma^*g_*\in\exp(\lambda(\gamma^*))M$, and set
\begin{equation}
\label{eqn:normalized-fixed-pair}
x_1:=g_*P=y_{\gamma^*},\qquad
q_*:=g_*w_0P=y_{(\gamma^*)^{-1}}.
\end{equation}
For every $x\in\mathcal F$ transverse to $q_*$, let
$h_x\in\check N$ be the unique element such that
$x=g_*h_xP$. In particular, $h_{x_1}=e$, and
$h_{g_*\exp(Y)P}=\exp(Y)$ for $Y\in\check{\mathfrak n}$.

For $(x,y)\in\mathcal F^{(2)}$ with $x$ transverse to $q_*$
and $y$ transverse to $x_1$, set $g_1:=g_*$ and successively
choose $g_2\in g_1N$, $g_3\in g_2\check N$,
$g_4\in g_3N$, and $g_5\in g_4\check N$ such that
$$
g_2^-=y,\qquad g_3^+=x,\qquad
g_4^-=q_*,\qquad g_5^+=x_1.
$$
Each choice is unique.
For $(h,n)\in\check N\times N$ such that
$(g_*hP,g_*nw_0P)\in\mathcal F^{(2)}$, apply this
construction with $x=g_*hP$ and $y=g_*nw_0P$.
Since $g_5$ and $g_*$ have the same two flags,
$g_*^{-1}g_5\in AM$. Define $\Xi(h,n)$ to be the unique
element of $A$ satisfying
\begin{equation}
\label{eqn:def-Xi}
g_*M=g_5\Xi(h,n)M.
\end{equation}

For fixed $n\in N$, write $\Xi_n(h):=\Xi(h,n)$.
For $\gamma\in\sH\smallsetminus\{e\}$, let $n_\gamma\in N$
be determined by
$$
q_\gamma:=\gamma^{-1}p_0=g_*n_\gamma w_0P.
$$
By \Cref{lem.LP6.62} and inverse iteration in $U_0$, the flags
$p_0$, $q_*$, and $q_\gamma$ belong to $U_0$, which is disjoint
from $\overline{\Lambda_+}$.
They are therefore transverse to every point of
$\overline{\Lambda_+}$, so $h_x$ and $\Xi(h_x,n_\gamma)$
are defined for $x\in\Lambda_+$.

\begin{lemma}
\label{lem:nh map and return map}
For $n\geq1$, $\gamma,\eta\in\mathcal H^n$, and
$x\in\Lambda_+$,
$$
\begin{aligned}
\log\Xi(h_x,n_\gamma)-\log\Xi(h_x,n_\eta)
={}K_n(\gamma x)-K_n(\eta x) -K_n(\gamma x_1)+K_n(\eta x_1).
\end{aligned}
$$
\end{lemma}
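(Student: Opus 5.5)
The plan is to compute $\log\Xi(h,n)$ explicitly in Hopf coordinates with \Cref{lem.zigzagmove}. Then I will rewrite $K_n(\gamma x)$ through \eqref{eqn:telescoping-return-cocycle} and \eqref{GPT} as a Gromov-product expression in which all terms depending on $\gamma$ alone cancel in the double difference.

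\emph{Step 1 (computing $\Xi$).} Fix $x\in\Lambda_+$ and $\gamma\in\mathcal H^n$, and put $y:=q_\gamma$, $v_*:=\beta_{x_1}(e,g_*)$. I will follow the $\mathfrak a$-coordinate along the chain $g_1,\dots,g_5$ using \Cref{lem.zigzagmove}.
\begin{itemize}
\item The $N$-moves $g_1\to g_2$ and $g_3\to g_4$ keep the plus flag, so they leave the $\mathfrak a$-coordinate unchanged.
\item The $\check N$-move $g_2\to g_3$ (plus flag $x_1\to x$, minus flag $y$) adds $2\mathcal G(x,y)-2\mathcal G(x_1,y)$.
\item The $\check N$-move $g_4\to g_5$ (plus flag $x\to x_1$, minus flag $q_*$) adds $2\mathcal G(x_1,q_*)-2\mathcal G(x,q_*)$.
\end{itemize}
Right multiplication by $\Xi\in A$ shifts the $\mathfrak a$-coordinate by $\log\Xi$. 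Comparing the Hopf coordinates of $g_*$ and $g_5\Xi$ in \eqref{eqn:def-Xi} then gives
$$\log\Xi(h_x,n_\gamma)=2\mathcal G(x_1,q_\gamma)-2\mathcal G(x,q_\gamma)+2\mathcal G(x,q_*)-2\mathcal G(x_1,q_*).$$
All the required transversalities hold because $q_*,q_\gamma,p_0\in U_0$, which is transverse to $\overline{\Lambda_+}$, as noted before the lemma. Taking the difference of the $\gamma$ and $\eta$ versions removes the $q_*$ terms.

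\emph{Step 2 (rewriting $K_n$).} By \eqref{eqn:telescoping-return-cocycle} with $T^n(\gamma x)=x$,
$$K_n(\gamma x)=\beta_{\gamma x}(e,\gamma)+2\mathcal G(x,p_0)-2\mathcal G(\gamma x,p_0).$$
Apply \eqref{GPT} with $g=\gamma^{-1}$ to the pair $(\gamma x,p_0)$:
$$2\mathcal G(x,q_\gamma)=2\mathcal G(\gamma x,p_0)+\beta_x(e,\gamma^{-1})+\i\beta_{q_\gamma}(e,\gamma^{-1}).$$
By equivariance and the cocycle identity, $\beta_{\gamma x}(e,\gamma)=-\beta_x(e,\gamma^{-1})$. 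Substituting, the Busemann terms at $x$ cancel and
$$K_n(\gamma x)=2\mathcal G(x,p_0)-2\mathcal G(x,q_\gamma)+\i\beta_{q_\gamma}(e,\gamma^{-1}).$$
The last term does not depend on $x$. The same formula holds at $x_1=y_{\gamma^*}\in\Lambda_+$, which is in $\Lambda_+$ by \Cref{lem.periodisjordan}.

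\emph{Step 3 (comparison).} Form $K_n(\gamma x)-K_n(\gamma x_1)-K_n(\eta x)+K_n(\eta x_1)$. The terms $\i\beta_{q_\gamma}(e,\gamma^{-1})$ and $\i\beta_{q_\eta}(e,\eta^{-1})$ cancel within each pair, and the $p_0$-terms cancel between $\gamma$ and $\eta$. What remains is
$$-2\mathcal G(x,q_\gamma)+2\mathcal G(x_1,q_\gamma)+2\mathcal G(x,q_\eta)-2\mathcal G(x_1,q_\eta),$$
which is exactly the difference obtained in Step 1.

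The main obstacle is the sign and ordering bookkeeping in Step 1: which flag is fixed in each move, and that $g_*=g_5\Xi$ adds $\log\Xi$ (rather than subtracting it) to the $\mathfrak a$-coordinate. I would check this carefully against \Cref{lem.zigzagmove}, including that the orientation in $\mathcal G(\cdot,\cdot)$ matches $(g^+,g^-)$ there.
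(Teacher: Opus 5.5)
Your proposal is correct and follows the paper's proof. You compute $\log\Xi(h_x,n_\gamma)=\beta_{x_1}(e,g_*)-\beta_{x_1}(e,g_5)$ along the zigzag chain via \Cref{lem.zigzagmove}, and you rewrite $K_n(\gamma x)-K_n(\gamma x_1)$ via \eqref{eqn:telescoping-return-cocycle} and \eqref{GPT}, after which the $q_*$- and $p_0$-terms cancel in the double difference exactly as in the paper; your signs check out, and the intermediate identity $K_n(\gamma x)=2\mathcal G(x,p_0)-2\mathcal G(x,q_\gamma)+\i\beta_{q_\gamma}(e,\gamma^{-1})$ simply spells out what the paper compresses into one line.
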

\begin{proof}
Apply \Cref{lem.zigzagmove} to the associated sequence for
$(x,q_\gamma)$. Since
$\log\Xi(h_x,n_\gamma)=\beta_{x_1}(e,g_*)-\beta_{x_1}(e,g_5)$,
we obtain
$$
\begin{aligned}
\log\Xi(h_x,n_\gamma)
={}&2\mathcal G(x,q_*)-2\mathcal G(x_1,q_*)-2\mathcal G(x,q_\gamma)+2\mathcal G(x_1,q_\gamma).
\end{aligned}
$$
On the other hand, \eqref{eqn:telescoping-return-cocycle}
and \eqref{GPT} give
$$
\begin{aligned}
K_n(\gamma x)-K_n(\gamma x_1)
={}&2\mathcal G(x,p_0)-2\mathcal G(x_1,p_0)-2\mathcal G(x,q_\gamma)+2\mathcal G(x_1,q_\gamma).
\end{aligned}
$$
Subtracting the corresponding formulas for $\eta$ cancels
the terms involving $q_*$ and $p_0$, respectively, and
proves the identity.
\end{proof}

Define the backward limit set
$$
\Lambda_b
:=
\overline{\{y_{\gamma^{-1}}:
\gamma\in\sH\smallsetminus\{e\}\}}.
$$
By \Cref{lem.LP6.62},
$\gamma^{-m}p_0\in U_0$ for every
$\gamma\in\sH\smallsetminus\{e\}$ and $m\geq1$.
Since $p_0$ is transverse to $y_\gamma\in\Lambda_+$,
these points converge to $y_{\gamma^{-1}}$.
Closedness of $U_0$ therefore gives $\Lambda_b\subset U_0$.
In particular, $\Lambda_b$ is disjoint from
$\overline{\Lambda_+}$.

\begin{lemma}
\label{lem:backward-limit-Zariski-density}
The set $\Lambda_b$ is Zariski dense in $\mathcal F$,
contains $p_0$, and satisfies
$\gamma^{-1}\Lambda_b\subset\Lambda_b$
for every $\gamma\in\sH$.
\end{lemma}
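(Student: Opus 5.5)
We prove the three assertions in order: invariance first, then that $p_0\in\Lambda_b$, and finally Zariski density, which is the only one needing a real argument.

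\emph{Invariance.} Fix $\gamma\in\sH$ and $\eta\in\sH\smallsetminus\{e\}$. The element $\eta\gamma$ lies in $\sH\smallsetminus\{e\}$ and is loxodromic by \Cref{lem.proximal}, so its repelling flag is $y_{(\eta\gamma)^{-1}}$. The identity $\gamma^{-1}(\eta\gamma)^{-1}\gamma=(\gamma\eta)^{-1}$ gives
$$\gamma^{-1}y_{(\eta\gamma)^{-1}}=y_{(\gamma\eta)^{-1}}.$$
As written, this relates $\gamma^{-1}$ applied to a repelling flag of some word to the repelling flag of another word, which is not yet the inclusion we want. We therefore take $\delta\in\sH\smallsetminus\{e\}$ and set $\eta:=\delta\gamma$. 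Then
$$\gamma^{-1}y_{\eta^{-1}}=\gamma^{-1}y_{\gamma^{-1}\delta^{-1}}=y_{\delta^{-1}\gamma^{-1}},$$
because conjugating by $\gamma$ turns $\gamma^{-1}\delta^{-1}$ into $\delta^{-1}\gamma^{-1}$, and $\delta^{-1}\gamma^{-1}=(\gamma\delta)^{-1}$ with $\gamma\delta\in\sH$. The same identity, applied to an arbitrary $\eta\in\sH\smallsetminus\{e\}$, shows that $\gamma^{-1}$ maps the repelling flag of $\eta$ to that of $\gamma\eta\gamma^{-1}$. That conjugate need not lie in $\sH$, so a direct orbit argument is awkward. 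Instead we use a limit characterization: for any $\eta\in\sH\smallsetminus\{e\}$ and $z\in\Lambda_+$ transverse to $y_\eta$, the points $\eta^{-m}z$ converge to $y_{\eta^{-1}}$. Hence $\Lambda_b$ equals the closure of $\{g^{-1}z: g\in\sH,\ z\in\Lambda_+\}$ minus accumulation at finite stages. More precisely, we show $\Lambda_b=\bigcap_{N}\overline{\bigcup_{|g|\ge N}g^{-1}\Lambda_+}$, using \Cref{lem.U0shadow} and \Cref{lem.ambient-shadow-convergence}(3): for long words $g$, the set $g^{-1}\Lambda_+$ has small diameter near the limit points of $g^{-1}o$, and such limits are approximated by $y_{\eta^{-1}}$ with $\eta$ the long word followed by a fixed letter. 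With this description, invariance is immediate, since $\gamma^{-1}g^{-1}=(g\gamma)^{-1}$ and $|g\gamma|\ge|g|$.

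\emph{$p_0\in\Lambda_b$.} Fix $g\in\mathcal H$ and $u\in\mathsf P_0\smallsetminus\{e\}$. By \eqref{eqn:iterationisinthecoding}, $gu^m\in\mathcal H$ for all large $m$. Then $(gu^m)^{-1}z=u^{-m}g^{-1}z$, and for fixed $z$ this converges to $p_0$, since the parabolic element $u$ attracts every point to its fixed point $p_0$. By the description of $\Lambda_b$ above, $p_0\in\Lambda_b$.

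\emph{Zariski density.} Let $V$ be the Zariski closure of $\Lambda_b$. By invariance, $\gamma^{-1}V\supset\gamma^{-1}\Lambda_b$, and every $\gamma^{-1}\Lambda_b$ lies in $V$, so $\gamma^{-1}V\subset V$ for all $\gamma\in\sH$. Since $\gamma^{-1}$ is an algebraic automorphism of $\mathcal F$ and $\gamma^{-1}V\subset V$ with both sides varieties of the same dimension and component count, we get $\gamma^{-1}V=V$. Thus $V$ is invariant under the Zariski closure of $\sH^{-1}$, which is $G$ by \Cref{cor.Zdensesemigroup} together with the remark that $\sH^{-1}$ is Zariski dense. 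Because $V$ is nonempty (it contains $p_0$) and $G$ acts transitively on $\mathcal F$, we conclude $V=\mathcal F$. We expect the main obstacle to be the invariance step, specifically justifying the limit characterization of $\Lambda_b$ cleanly through the shadow lemmas.
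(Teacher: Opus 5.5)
The gap is in the invariance step, and it carries over to your proof that $p_0\in\Lambda_b$. The description $\Lambda_b=\bigcap_N\overline{\bigcup_{|g|\ge N}g^{-1}\Lambda_+}$ is false. Every $g\in\sH$ satisfies $g\Lambda_+\subset\Lambda_+$, hence $\Lambda_+\subset g^{-1}\Lambda_+$, so the right-hand side contains $\overline{\Lambda_+}$; but $\Lambda_b\subset U_0$ is disjoint from $\overline{\Lambda_+}$. For the same reason $g^{-1}\Lambda_+$ never has small diameter: $g^{-1}$ expands the cylinder $g\Lambda_+$ onto $\Lambda_+$. The set that inverse branches map into itself (\Cref{lem.LP6.62}) and contract (\Cref{lem:shrinking fibers}) is $U_0$, not $\Lambda_+$. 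Your preliminary conjugation identity also goes the wrong way: $\gamma\,y_{(\delta\gamma)^{-1}}=y_{(\gamma\delta)^{-1}}$, not $\gamma^{-1}y_{(\delta\gamma)^{-1}}$.

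The paper takes invariance, together with Zariski density, from Benoist's theory of limit sets applied to the Zariski dense semigroup $\sH^{-1}$. A direct argument is also short. For $\gamma\in\sH$ and $\eta\in\sH\smallsetminus\{e\}$, put $y_m:=y_{(\eta^m\gamma)^{-1}}$. Then $y_m\in\Lambda_b\subset U_0$, and $y_m$ is fixed by $\gamma^{-1}\eta^{-m}$. Moreover $\eta^{-m}\to y_{\eta^{-1}}$ uniformly on the compact set $U_0$, all of whose points are transverse to $y_\eta\in\Lambda_+$. Hence $y_m=\gamma^{-1}(\eta^{-m}y_m)\to\gamma^{-1}y_{\eta^{-1}}$, so $\gamma^{-1}y_{\eta^{-1}}\in\Lambda_b$, and continuity of $\gamma^{-1}$ gives $\gamma^{-1}\Lambda_b\subset\Lambda_b$.

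Your argument for $p_0\in\Lambda_b$ relies on the false description. It would not fit a corrected description indexed by word length either, since each $gu^m$ is a single letter of $\mathcal H$. Once invariance is known, argue as the paper does. Choose $\zeta\in\Lambda_b\smallsetminus\{gp_0\}$. By \eqref{eqn:iterationisinthecoding} and invariance, $(gu^m)^{-1}\zeta=u^{-m}g^{-1}\zeta\in\Lambda_b$ for all large $m$. These points converge to $p_0$ because $g^{-1}\zeta\in\Lambda_\rho\smallsetminus\{p_0\}$, and closedness gives $p_0\in\Lambda_b$.

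Your Zariski-density step, by contrast, is correct and is a self-contained alternative to citing Benoist. The set $\{g\in G:gV\subset V\}=\bigcap_{v\in V}\{g:gv\in V\}$ is Zariski closed and contains $\sH^{-1}$, so it equals $G$; transitivity of $G$ on $\mathcal F$ then gives $V=\mathcal F$. This makes your dimension count (upgrading $\gamma^{-1}V\subset V$ to equality) unnecessary, and nonemptiness of $V$ is automatic without using $p_0$.
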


\begin{proof}
Since $\sH^{-1}$ is Zariski dense by
\Cref{cor.Zdensesemigroup},
\cite[Section~3.6]{Benoist1997proprietes}
gives the invariance and Zariski density of $\Lambda_b$.
Fix $g\in\mathcal H$ and
$u\in\mathsf P_0\smallsetminus\{e\}$.
By \eqref{eqn:iterationisinthecoding},
$gu^m\in\mathcal H$ for all sufficiently large $m$.
Choose $\zeta\in\Lambda_b\smallsetminus\{gp_0\}$.
The invariance just proved gives
$u^{-m}g^{-1}\zeta=(gu^m)^{-1}\zeta\in\Lambda_b
$
for all sufficiently large $m$.
Since $g^{-1}\zeta\neq p_0$ and $u\in \mathsf P_0$, these points converge to $p_0$.
Thus $p_0\in\Lambda_b$ by closedness.
\end{proof}

Since $\overline{\Lambda_+}$ and $\Lambda_b$ are disjoint
subsets of $\Lambda_\rho$, every pair in
$\overline{\Lambda_+}\times\Lambda_b$ is transverse.
As $x_1\in\Lambda_+$ and $q_*\in\Lambda_b$, the inverse
image of this product under
$(h,n)\mapsto(g_*hP,g_*nw_0P)$ is a compact product
contained in the domain of $\Xi$.

Choose relatively compact open neighborhoods
$\check N_1\subset\check N$ and $N_1\subset N$
of its two factors such that
$\overline{\check N_1}\times\overline{N_1}$
is still contained in that domain.
Then $h_x\in\check N_1$ for
$x\in\overline{\Lambda_+}$, and $n_\gamma\in N_1$
for $\gamma\in\sH\smallsetminus\{e\}$.
Moreover, $e\in\check N_1\cap N_1$, since
$h_{x_1}=e$ and $q_*=g_*w_0P$.

Let $\pi_\fa:\mathfrak g\to\fa$ be the projection associated to
\eqref{eqn:restricted root decomp}.

\begin{lemma}
\label{lem:nh map}
The map $\Xi$ is real analytic on its domain, $\Xi_n(e)=e$, and
\begin{equation}
\label{eqn:differential-Xi}
 \bigl(d(\log\circ\Xi_n)\bigr)_e
 =\pi_\fa\circ\operatorname{Ad}_{n^{-1}}|_{\check{\mathfrak n}}.
\end{equation}
\end{lemma}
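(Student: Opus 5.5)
We treat the three assertions in turn, starting from the construction of $\Xi$ as a composite of Bruhat-cell factorizations. Each step $g_{k+1}=g_k u_k$, with $u_k\in N$ or $\check N$, is determined by solving an equation on an open Bruhat cell. Concretely, $g_2=g_*n'$ with $n'\in N$ and $g_*n'w_0P=g_*nw_0P$, so $n'=n$. Next, $g_3=g_2h'$ with $h'\in\check N$ and $g_2h'P=g_*hP$, i.e.\ $h'P=n^{-1}hP$. Since $n^{-1}h$ lies in the open cell $\check N P$ precisely on our domain, this gives $h'=\check\pi(n^{-1}h)$, where $\check\pi:\check NMAN\to\check N$ is the real-analytic (indeed rational) projection. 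The remaining steps have the same form. Hence $g_*^{-1}g_5$ is a real-analytic function of $(h,n)$ lying in $AM$, and $\Xi(h,n)$ is its $A$-component inverted. The $A$-projection of $AM$ is analytic, so $\Xi$ is real analytic.

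For $\Xi_n(e)=e$ we take $h=e$, so $x=x_1=g_*P$. Since $g_2=g_*n\in g_*N$, we have $g_2^+=x_1$, and so $g_3=g_2$. Then $g_4\in g_3N$ with $g_4^-=q_*=g_*w_0P$, which forces $g_4=g_*$ (as $g_4^+=g_*P$ as well). Finally $g_5=g_4$, and therefore $\Xi=e$. The same conclusion follows from the formula in the proof of \Cref{lem:nh map and return map}: $\log\Xi$ vanishes at $x=x_1$.

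For the differential we use the explicit formula
\[
\log\Xi(h,n)=2\mathcal G(g_*hP,q_*)-2\mathcal G(x_1,q_*)-2\mathcal G(g_*hP,g_*nw_0P)+2\mathcal G(x_1,g_*nw_0P),
\]
obtained in the proof of \Cref{lem:nh map and return map} through \Cref{lem.zigzagmove}. By equivariance \eqref{GPT} we may replace $g_*$ by $e$, since the Busemann correction terms cancel in pairs. This yields
\[
\log\Xi_n(h)=2\mathcal G(hP,w_0P)-2\mathcal G(hP,nw_0P)+\text{const}.
\]
Applying \eqref{GPT} with $g=n^{-1}$ gives
\[
2\mathcal G(hP,nw_0P)=2\mathcal G(n^{-1}hP,w_0P)+\beta\text{-terms}.
\]
The term at $w_0P$ is constant in $h$, and the term at $n^{-1}hP$ equals $\beta_{n^{-1}hP}(e,n^{-1})$. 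Now write $n^{-1}\exp(tY)=\check n(t)\,a(t)m(t)\,n'(t)$. The Gromov product $2\mathcal G(\check n P,w_0P)$ is trivial in the first order, since $\check N\cdot w_0P$ fixes the basepoint pair up to second order; indeed $\mathcal G(\exp(tY)P,w_0P)=O(t^2)$ by \Cref{lem.zigzagmove}, because $2\mathcal G$ measures the $A$-part of the Bruhat decomposition of $\exp(tY)w_0$, which is quadratic. Consequently the derivative at $t=0$ comes only from the $A$-component $a(t)$ of $n^{-1}\exp(tY)n$ in the decomposition $\check N AM N$. This is $\pi_\fa(\operatorname{Ad}_{n^{-1}}Y)$ to first order, because the $\check N AMN$ decomposition has differential at $e$ equal to the projection of $\mathfrak g=\check{\mathfrak n}\oplus\fa\oplus\mathfrak m\oplus\mathfrak n$ onto its components.

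The main obstacle is the sign and normalization bookkeeping: the factor $2$, the involution $\i$ in $\mathcal G$, and the convention $\beta_\xi(g,h)=\sigma(g^{-1},k)-\sigma(h^{-1},k)$ all have to combine into exactly $\pi_\fa\circ\operatorname{Ad}_{n^{-1}}$ with coefficient one. If the conventions turn out to be tangled, the fallback is to compute directly with $g_2,\dots,g_5$. To first order, $g_3=g_*n\check n_1$ with $\check n_1\approx\exp(\operatorname{Ad}_{n^{-1}}Y)_{\check{\mathfrak n}}$. Then $g_4$ corrects by $N$, and $g_5$ returns to $x_1$ with an $\check N$-factor. The $A$-component of $g_*^{-1}g_5$ collects exactly the $\fa$-part of $\operatorname{Ad}_{n^{-1}}Y$, with the sign fixed by $g_*M=g_5\Xi M$.
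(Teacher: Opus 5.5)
Your analyticity argument (each $g_{k+1}$ comes from an analytic Bruhat-cell factorization) and your verification of $\Xi_n(e)=e$ are fine. The gap is in the differential.

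After the $g_*$-corrections cancel and \eqref{GPT} is applied with $n^{-1}$, what you have is the exact identity
\[
\log\Xi_n(h)=2\mathcal G(hP,w_0P)-2\mathcal G(h'P,w_0P)+\beta_{h'P}(e,n^{-1}),
\qquad n^{-1}h=h'amn'\in\check NAMN .
\]
Note that \eqref{GPT} produces $-\beta_{n^{-1}hP}(e,n^{-1})$, so this term enters $\log\Xi_n$ with a plus sign.

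You show that the two Gromov products are quadratic along $h=\exp(tY)$. Your stated reason (a Bruhat decomposition of $\exp(tY)w_0$) is not the right one. By \eqref{eqn.Gromovproductdef} with $g=\check n$ and \Cref{lem.zerobuse}, $2\mathcal G(\check nP,w_0P)=\beta_{\check nP}(e,\check n)=\sigma(\check n,e)$. This is the Iwasawa $\fa$-coordinate of $\check n\in\check N$, and it vanishes to first order at $e$ because $\check{\mathfrak n}\subset\mathfrak k\oplus\mathfrak n$.

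More seriously, your ``consequently'' does not follow. What survives is the Busemann term $\beta_{h'P}(e,n^{-1})$, and you never relate it to $a(t)$, even though this is exactly where the content of \eqref{eqn:differential-Xi} lies. It can be supplied. Since $n^{-1}=h'amn'h^{-1}$, the cocycle identity and equivariance give $\beta_{h'P}(e,n^{-1})=\sigma(h',e)+\log a-\sigma(h,e)$. Substituting, all Iwasawa terms cancel, so $\log\Xi_n(h)=\log a$ exactly.

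That exact identity is what the paper proves directly, with no Gromov products. Solving the four steps gives $g_2=g_*n$ and $g_3=g_*nh'$. Next, $g_4=g_*ha^{-1}m^{-1}$, since $amn'm^{-1}a^{-1}\in N$ and $\check NAM$ stabilizes $w_0P$. Finally $g_5=g_*a^{-1}m^{-1}$. So \eqref{eqn:def-Xi} yields $\Xi(h,n)=a$, the $A$-component of $n^{-1}h$ in $\check NAMN$, and analyticity and $\Xi_n(e)=e$ are immediate.

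Right multiplication by $n\in N$ does not change this component, so $\Xi_n(\exp tY)$ is the $A$-component of $\exp(t\operatorname{Ad}_{n^{-1}}Y)$. The differential at $e$ of the inverse of the product map $\check N\times A\times M\times N\to G$ is the projection along \eqref{eqn:restricted root decomp}. This gives $\pi_\fa\circ\operatorname{Ad}_{n^{-1}}|_{\check{\mathfrak n}}$ with coefficient one and no sign ambiguity.

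Your ``fallback'' is this computation, but you describe it only qualitatively. Writing out $g_3,g_4,g_5$ as above is the shortest way to close your argument.
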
 \begin{proof} 
Let $(h, n) \in \check N \times N$ belong to the domain of $\Xi$. 
Then $(n^{-1}hP,w_0P)\in\mathcal F^{(2)}$, and hence
$n^{-1}h\in\check NP$
Write $n^{-1}h=h'amn'$, with $h'\in\check N$, $a\in A$, $m\in M$,
and $n'\in N$. The associated sequence satisfies
$$
 g_3=g_1nh',\qquad
 g_4=g_1ha^{-1}m^{-1},\qquad
 g_5=g_1a^{-1}m^{-1}.
$$
Thus \eqref{eqn:def-Xi} gives $\Xi(h,n)=a$. The components of
this decomposition are real analytic, and the $A$-component of
$n^{-1}$ is $e$, so $\Xi_n(e)=e$.

Since right multiplication by $n$ preserves the $A$-component, for $Y \in \check{\mathfrak n}$ and $t \in \R$ close to $0$,
$\Xi_n(\exp(tY))$ is the $A$-component of
$ n^{-1}\exp(tY)n=\exp(t\operatorname{Ad}_{n^{-1}}Y)$.
The differential at $e$ of the logarithm of the $A$-component is
$\pi_\fa$, by \eqref{eqn:restricted root decomp}. This proves
\eqref{eqn:differential-Xi}.\end{proof}

By compactness,
\begin{equation}
\label{eqn:constant C1}
 C_{1}:=\sup_{n\in\overline{N_1}}\|\operatorname{Ad}_{n^{-1}}\|<\infty.
\end{equation}

\subsection{A finite family of nondegenerate adjoint directions}

\begin{lemma}
\label{lem.LP6.78}
Let $\xi\in\Lambda_b$. For every $\varepsilon>0$, there
exists a loxodromic element $\gamma_0\in\sH$ such that
$$
d_{\bar\psi}((\gamma\gamma_0)^{-1}p_0,\xi)<\varepsilon
\qquad\text{for every }\gamma\in\sH.
$$
\end{lemma}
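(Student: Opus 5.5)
The idea is to take $\gamma_0$ to be a high power of a fixed loxodromic element of $\sH$ whose repelling flag is close to $\xi$. We then use the uniform contraction of inverse branches on $U_0$ to make $(\gamma\gamma_0)^{-1}p_0=\gamma_0^{-1}(\gamma^{-1}p_0)$ close to that repelling flag, uniformly in $\gamma\in\sH$.

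\emph{Step 1 (choosing the element).} By definition $\Lambda_b$ is the closure of $\{y_{\delta^{-1}}:\delta\in\sH\smallsetminus\{e\}\}$. We choose $\delta\in\sH\smallsetminus\{e\}$ with $d_{\bar\psi}(y_{\delta^{-1}},\xi)<\varepsilon/2$; this uses continuity of $d_{\bar\psi}$ on $\Lambda_\rho\times\Lambda_\rho$. By \Cref{lem.proximal}, $\delta$ is loxodromic, and so is every positive power $\delta^m$. We will take $\gamma_0:=\delta^m$ with $m$ large. Note that $y_{\delta^{-m}}=y_{\delta^{-1}}$.

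\emph{Step 2 (uniform convergence on $U_0$).} For $\gamma\in\sH$ we have $\gamma^{-1}p_0\in U_0$, by iterating \Cref{lem.LP6.62} and using $p_0\in U_0$ (the case $\gamma=e$ is trivial). Hence it suffices to show that $\delta^{-m}\zeta\to y_{\delta^{-1}}$ uniformly for $\zeta\in U_0$. The flag $y_\delta$ lies in $\Lambda_+$, and $U_0$ is a compact subset of $\Lambda_\rho$ disjoint from $\overline{\Lambda_+}$. So every $\zeta\in U_0$ is transverse to $y_\delta$, the repelling flag of $\delta^{-1}$.

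\emph{Step 3 (the limit).} Since $\delta^{-1}$ is loxodromic, $\delta^{-m}\to y_{\delta^{-1}}$ locally uniformly on the open set of flags transverse to $y_\delta$. This is the standard North–South dynamics, seen by conjugating $\delta$ into $\exp(\lambda(\delta))M$ and using the contraction of $\operatorname{Ad}$ on $\check{\mathfrak n}$, exactly as in the proof of \Cref{prop: non-concentration self-joining}. Alternatively one can apply \Cref{lem.ambient-shadow-convergence}(1) to the sequence $\delta^{-m}$, whose repelling point is $y_\delta$. Because $U_0$ is compact and lies in that open set, the convergence is uniform on $U_0$, and uniform continuity of $d_{\bar\psi}$ converts it to $d_{\bar\psi}(\delta^{-m}\zeta,y_{\delta^{-1}})<\varepsilon/2$ for all $\zeta\in U_0$ once $m$ is large. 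The triangle inequality for the premetric needs care, so we combine the two bounds by continuity instead: choose $\delta$ so that every point sufficiently close to $y_{\delta^{-1}}$ in the Riemannian metric is within $d_{\bar\psi}$-distance $\varepsilon$ of $\xi$.

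The main (mild) obstacle is the uniformity over the infinite set $\sH$. It is resolved by reducing to points of the compact set $U_0$ through the invariance $\gamma^{-1}U_0\subset U_0$, and by using the transversality of $U_0$ to $\overline{\Lambda_+}\ni y_\delta$.
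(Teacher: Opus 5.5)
Your proof is correct and is essentially the paper's argument: you choose $\delta\in\sH$ with $y_{\delta^{-1}}$ close to $\xi$, set $\gamma_0=\delta^m$, use uniform convergence $\delta^{-m}\to y_{\delta^{-1}}$ on a compact set that is transverse to $y_\delta\in\Lambda_+$ and contains every $\gamma^{-1}p_0$, and finish by continuity of $d_{\bar\psi}$. The only difference is that your compact set is $U_0$, whose $\sH^{-1}$-invariance and containment of $p_0$ come directly from \Cref{lem.LP6.62}, whereas the paper uses $\Lambda_b$ together with \Cref{lem:backward-limit-Zariski-density}; both choices work.
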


\begin{proof}
By the definition of $\Lambda_b$, choose
$\eta\in\sH\smallsetminus\{e\}$ such that
$d_{\bar\psi}(y_{\eta^{-1}},\xi)<\varepsilon$.
Every point of the compact set $\Lambda_b$ is transverse
to $y_\eta\in\Lambda_+$. Hence
$\eta^{-m}y\to y_{\eta^{-1}}$ uniformly for $y\in\Lambda_b$.
By continuity of $d_{\bar\psi}$, we may choose $m\geq1$
such that
$\sup_{y\in\Lambda_b}
d_{\bar\psi}(\eta^{-m}y,\xi)<\varepsilon.
$
Set $\gamma_0:=\eta^m$.
Since $\gamma^{-1}p_0\in\Lambda_b$ for every
$\gamma\in\sH$ by
\Cref{lem:backward-limit-Zariski-density},
the assertion follows from
$(\gamma\gamma_0)^{-1}p_0=\eta^{-m}\gamma^{-1}p_0$.
\end{proof}

For $\beta\in\Pi$, let $H_\beta\in\mathfrak a$ be defined by
$\langle H,H_\beta\rangle=\beta(H)$ for every
$H\in\mathfrak a$.

\begin{lemma}[{\cite[Chapter~VI, Section~3, Lemma~3.1]{Hel}}]
\label{lem:Hel}
For every $\beta\in\Pi$, there are unit vectors
$E_\beta\in\mathfrak g_\beta$ and
$E_{-\beta}\in\mathfrak g_{-\beta}$ such that
$$
[E_\beta,E_{-\beta}]\in\mathbb R_{>0}H_\beta.
$$
\end{lemma}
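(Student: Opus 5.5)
The plan is to construct $E_{-\beta}$ directly from $E_\beta$ using the Cartan involution, and then identify the bracket through the invariance of the Killing form; nothing beyond the root decomposition \eqref{eqn:restricted root decomp} and the definition of $H_\beta$ is needed. In particular, simplicity of $\beta$ plays no role.

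Fix any unit vector $E_\beta\in\mathfrak g_\beta$ and set $E_{-\beta}:=-\theta E_\beta$.
\begin{itemize}
\item Since $\theta$ acts by $-1$ on $\mathfrak a$, we have $\theta\mathfrak g_\beta=\mathfrak g_{-\beta}$, so $E_{-\beta}\in\mathfrak g_{-\beta}$.
\item Since $\theta$ is an involutive automorphism preserving $B$, it is an isometry for $\langle X,Y\rangle=-B(X,\theta Y)$. Hence $\|E_{-\beta}\|=1$.
\end{itemize}

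Next, locate the bracket $Z:=[E_\beta,\theta E_\beta]$.
\begin{itemize}
\item Root additivity gives $Z\in\mathfrak g_0=\mathfrak a\oplus\mathfrak m$.
\item We have $\theta Z=[\theta E_\beta,E_\beta]=-Z$, so $Z\in\mathfrak p$.
\item Since $\mathfrak m\subset\mathfrak k$, it follows that $Z\in\mathfrak a$.
\end{itemize}

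Now compute $Z$ by pairing with $\mathfrak a$. For $H\in\mathfrak a$, the inner product restricted to $\mathfrak a$ is $\langle H,Z\rangle=-B(H,\theta Z)=B(H,Z)$, because $\theta Z=-Z$. Invariance of $B$ then gives
$$
B(H,[E_\beta,\theta E_\beta])=B([H,E_\beta],\theta E_\beta)=\beta(H)\,B(E_\beta,\theta E_\beta)=-\beta(H)\|E_\beta\|^2=-\beta(H).
$$
Hence $\langle H,Z\rangle=-\beta(H)$ for all $H\in\mathfrak a$, that is, $Z=-H_\beta$. Therefore
$$
[E_\beta,E_{-\beta}]=-Z=H_\beta\in\mathbb R_{>0}H_\beta.
$$

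The only point needing care is the sign bookkeeping. One must check that the inner product on $\mathfrak a$ coincides with $B$ there, which holds since $\theta|_{\mathfrak a}=-1$. One must also check that $B(X,\theta X)=-\|X\|^2<0$. Both facts follow immediately from the definition $\langle X,Y\rangle=-B(X,\theta Y)$, so there is no real obstacle.
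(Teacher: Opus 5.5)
Your argument is correct: with $E_{-\beta}:=-\theta E_\beta$, invariance of $B$ gives $[E_\beta,E_{-\beta}]=H_\beta$ exactly, and your sign bookkeeping checks out. The paper gives no proof of its own and only cites Helgason, whose lemma $[X,\theta X]=B(X,\theta X)H_\lambda$ is proved by this same invariance computation, so your route is essentially the cited one.
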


Fix the unit vectors $E_{\pm\beta}$ from \Cref{lem:Hel}.
Since the simple roots span $\mathfrak a^*$, compactness
of the unit sphere gives $\delta_2>0$ such that every
unit vector $v_0\in\mathfrak a$ satisfies
$|\beta(v_0)|\geq\delta_2$ for some $\beta\in\Pi$.

\begin{proposition}
\label{prop:adjoint map approximation}
There exist $C_2,C_3>0$, $j_0,k_0\in\mathbb N$, and
$\gamma_j'\in\mathcal H_{C_2}^{k_0}$
$(1\leq j\leq j_0)$ with the following property.
For every integer $k>k_0$ and 
$\gamma_j\in\mathcal H^{k-k_0}\gamma_j'$
$(1\leq j\leq j_0)$, the following hold:
\begin{enumerate}
\item\label{pro:1}
$n_{\gamma_j}\in N_1$ for every $1\leq j\leq j_0$;
\item\label{pro:2}
for every unit vector $v_0\in\mathfrak a$ and
$\beta\in\Pi$ satisfying $|\beta(v_0)|\geq\delta_2$,
there is $1\leq j\leq j_0$ such that
$
\bigl|\langle v_0,
\operatorname{Ad}_{n_{\gamma_j}^{-1}}E_{-\beta}\rangle\bigr|
\geq C_3.
$
\end{enumerate}
\end{proposition}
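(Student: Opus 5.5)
The plan is to first prove a purely algebraic nondegeneracy statement on $\Lambda_b$, and then transfer it to actual elements of $\sH$ via \Cref{lem.LP6.78}. The key observation is that $\gamma^{-1}p_0=q_\gamma=g_*n_\gamma w_0P$. So $n_\gamma$ is the Bruhat coordinate of the backward point $\gamma^{-1}p_0$ relative to the pair $(x_1,q_*)$. The relevant functional is
$$F_{\beta,v_0}(n):=\langle v_0,\operatorname{Ad}_{n^{-1}}E_{-\beta}\rangle.$$
It depends polynomially on $n\in N$, hence is real algebraic on the open cell $\{g_*nw_0P\}$, which contains $\Lambda_b$.

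\emph{Step 1: algebraic nonvanishing.} For fixed $\beta\in\Pi$ and unit $v_0$ with $|\beta(v_0)|\ge\delta_2$, I claim $F_{\beta,v_0}$ does not vanish identically on $N$. Take the one-parameter family $n=\exp(tE_\beta)$. Then
$$\operatorname{Ad}_{n^{-1}}E_{-\beta}=E_{-\beta}-t[E_\beta,E_{-\beta}]+O(t^2),$$
with the higher terms lying in root spaces orthogonal to $\mathfrak a$. By \Cref{lem:Hel}, $[E_\beta,E_{-\beta}]=cH_\beta$ with $c>0$. Since $E_{-\beta}\perp\mathfrak a$, we get $F_{\beta,v_0}(n)=-tc\beta(v_0)$, which is nonzero. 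The zero set $Z_{\beta,v_0}$ of $F_{\beta,v_0}$ is therefore a proper subvariety. Since $\Lambda_b$ is Zariski dense by \Cref{lem:backward-limit-Zariski-density}, there is $\xi\in\Lambda_b$ with $F_{\beta,v_0}(n_\xi)\neq0$, where $\xi=g_*n_\xi w_0P$.

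\emph{Step 2: uniformity by compactness.} The set of admissible pairs $(\beta,v_0)$ is compact, and by continuity each $\xi$ works on an open set of such pairs. So I can choose finitely many points $\xi_1,\dots,\xi_{j_0}\in\Lambda_b$ and a constant $C_3'>0$ such that every admissible $(\beta,v_0)$ satisfies $|F_{\beta,v_0}(n_{\xi_j})|\ge 2C_3'$ for some $j$. I also need $n_{\xi_j}\in N_1$. This holds because $\Lambda_b\subset U_0$ and, by the choice of $N_1$, its Bruhat coordinates lie in $N_1$; as $N_1$ is open, a small neighborhood of each $n_{\xi_j}$ also lies in $N_1$.

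\emph{Step 3: realizing the points by return words.} Continuity of $n\mapsto F$ on $\overline{N_1}$, uniform in $(\beta,v_0)$, gives $\varepsilon>0$ with the following property: whenever $d_{\bar\psi}(g_*nw_0P,\xi_j)<\varepsilon$, we have $n\in N_1$ and $|F_{\beta,v_0}(n)-F_{\beta,v_0}(n_{\xi_j})|<C_3'$. Here I use that the Bruhat chart is a homeomorphism near $\xi_j$ and that $d_{\bar\psi}$ induces the topology on $\Lambda_\rho$. \Cref{lem.LP6.78} then gives a loxodromic $\gamma_{0,j}\in\sH$ with
$$d_{\bar\psi}((\gamma\gamma_{0,j})^{-1}p_0,\xi_j)<\varepsilon\qquad\text{for all }\gamma\in\sH.$$
I would take $\gamma_j'$ to be a power of $\gamma_{0,j}$ (still valid, since $\eta^{m}$ works for all large $m$), with the powers chosen so that all $\gamma_j'$ have the same word length $k_0$. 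Then $\gamma_j'\in\mathcal H^{k_0}$, and I set $C_2:=1+\max_j\max_i R_{\gamma'_{j,i}}$ over the finitely many letters, so that $\gamma_j'\in\mathcal H_{C_2}^{k_0}$. For any $\gamma_j\in\mathcal H^{k-k_0}\gamma_j'$, the displayed bound gives (1) and (2) with $C_3=C_3'$.

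The main obstacle is equalizing the word lengths of the $\gamma_j'$. \Cref{lem.LP6.78} produces $\eta_j^{m_j}$ with possibly different lengths $|\eta_j|m_j$. I would resolve this by choosing the $m_j$ large, with common length $k_0=\mathrm{lcm}_j(|\eta_j|)\cdot M$ for $M$ large. This is legitimate because the proof of \Cref{lem.LP6.78} works for all sufficiently large $m$, so each $m_j$ can be taken as $k_0/|\eta_j|$.
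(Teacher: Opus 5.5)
Your proposal is correct and follows essentially the same route as the paper. It shows $n\mapsto\langle v_0,\operatorname{Ad}_{n^{-1}}E_{-\beta}\rangle$ is a nonzero polynomial via $[E_\beta,E_{-\beta}]\in\mathbb R_{>0}H_\beta$, uses Zariski density of $\Lambda_b$ to find a good backward point, applies \Cref{lem.LP6.78} to realize it uniformly under left multiplication, and uses compactness of the admissible $v_0$ to get finitely many branches.

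The only difference is the length equalization, which you flag as the main obstacle but which is immediate. The conclusion of \Cref{lem.LP6.78} holds for every left factor $\gamma\in\sH$, so the paper simply replaces $\gamma_j'$ by $\vartheta^{k_0-k_j}\gamma_j'$ for a fixed $\vartheta\in\mathcal H$. The same observation shows that any power of $\gamma_{0,j}$ works without appealing to the lemma's proof, so your lcm-of-powers device is valid but unnecessary.
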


\begin{proof}
By \Cref{lem:backward-limit-Zariski-density},
$\gamma^{-1}p_0\in\Lambda_b$ for every $\gamma\in\sH$.
Thus (1) follows from the choice of $N_1$.
Fix a unit vector $v_0\in\mathfrak a$ and $\beta\in\Pi$
with $|\beta(v_0)|\geq\delta_2$.
The function
$n\mapsto\langle v_0,\operatorname{Ad}_{n^{-1}}E_{-\beta}\rangle$
is polynomial in exponential coordinates on $N$, since
$\operatorname{Ad}_{\exp(-X)}=e^{-\operatorname{ad}X}$
is a finite sum for $X\in\mathfrak n$.
Its derivative at $e$ in the direction $E_\beta$ is
$-\langle v_0,[E_\beta,E_{-\beta}]\rangle\neq0$
by \Cref{lem:Hel}.

The map $n\mapsto g_*nw_0P$ is an algebraic isomorphism
from $N$ onto the open Bruhat cell transverse to $x_1$,
which contains $\Lambda_b$.
By \Cref{lem:backward-limit-Zariski-density}, we may
therefore choose $n\in N_1$ such that
$g_*nw_0P\in\Lambda_b$ and
$\langle v_0,\operatorname{Ad}_{n^{-1}}E_{-\beta}\rangle
\neq0$.
Continuity and \Cref{lem.LP6.78} then give
$\gamma'\in\sH\smallsetminus\{e\}$ and $c>0$ such that,
for every unit vector $v$ sufficiently close to $v_0$,
$$
\bigl|\langle v,
\operatorname{Ad}_{n_{\omega\gamma'}^{-1}}E_{-\beta}\rangle\bigr|
>c
\qquad\text{for every }\omega\in\sH.
$$

By compactness of the sets
$\{v:\|v\|=1,\ |\beta(v)|\geq\delta_2\}$, finitely many such choices give elements
$\gamma_j'\in\mathcal H^{k_j}$ and a common lower bound
$C_3>0$, valid after any left multiplication by an
element of $\sH$. Fix $\vartheta\in\mathcal H$ and put $k_0:=\max_j k_j$.
Replacing $\gamma_j'$ by
$\vartheta^{k_0-k_j}\gamma_j'$ preserves this property
and makes every word have length $k_0$.
Choose $C_2>0$ so that all letters in these finitely
many words belong to $\mathcal H_{C_2}$.
This proves (2).
\end{proof}

\subsection{Proof of \Cref{thm:non-integrability}}

Recall $x_1=g_*P$ and $q_*=g_*w_0P$ from
\eqref{eqn:normalized-fixed-pair}.
We use the constants and branches supplied by
\Cref{prop:adjoint map approximation}.

\smallskip
\noindent\emph{Step 1: infinitesimal non-integrability.}
Since $q_*=g_*w_0P$, its $N$-coordinate is $e$.
Also, $\pi_{\mathfrak a}|_{\check{\mathfrak n}}=0$.
Applying \Cref{lem.LP6.78} to $q_*\in\Lambda_b$ and
using continuity gives
$\gamma_0'\in\mathcal H^{m_0}$, for some
$m_0\in\mathbb N$, such that
\begin{equation}
\label{eqn:control of ajoint map}
\bigl\|\pi_{\mathfrak a}\circ
\operatorname{Ad}_{n_{\omega\gamma_0'}^{-1}}
|_{\check{\mathfrak n}}\bigr\|<C_3/2
\qquad\text{for every }\omega\in\sH.
\end{equation}

Let $\gamma\in\mathcal H$ be the first letter of the word
representing $\gamma^*$.
The periodic itinerary of $x_1$ gives
$\gamma^{-1}x_1\in\Lambda_+$.
Since $\Lambda_+\subset
\operatorname{int}_{\Lambda_\rho}(\Delta_0)$ by
\Cref{lem.branchcompactinterior}, we have
$x_1\in\gamma\Lambda_+\cap
\operatorname{int}_{\Lambda_\rho}(\gamma\Delta_0)$.

Fix $\gamma_{\mathrm b}\in\mathcal H$.
By \Cref{lem.branchcompactinterior},
$\gamma_{\mathrm b}^2\Delta_0$ is a compact subset of
$\operatorname{int}_{\Lambda_\rho}
(\gamma_{\mathrm b}\Delta_0)$.
We may therefore choose $r_0>0$ such that
\begin{equation}
\label{hat}
B_{\bar\psi}(x_1,r_0)\subset\gamma\Delta_0,
\qquad
B_{\bar\psi}(\gamma_{\mathrm b}^2\Delta_0,r_0)
\subset\gamma_{\mathrm b}\Delta_0.
\end{equation}
Choose $C>1$ so that $\gamma,\gamma_{\mathrm b}$ and
all letters of $\gamma_0',\gamma_1',\ldots,\gamma_{j_0}'$
belong to $\mathcal H_C$, and put
$N_{\mathrm{NI}}:=\max\{m_0,k_0\}+2$.
Fix an integer $\ell>N_{\mathrm{NI}}$ and define
\begin{equation}
\label{eqn:finite-branch-family}
\gamma_0:=\gamma_{\mathrm b}^{\,\ell-m_0}\gamma_0',
\qquad
\gamma_j:=\gamma_{\mathrm b}^{\,\ell-k_0}\gamma_j'
\quad(1\leq j\leq j_0).
\end{equation}
Every $\gamma_j$, including $\gamma_0$, belongs to
$\mathcal H_C^\ell$ and begins with $\gamma_{\mathrm b}^2$.

For $1\leq j\leq j_0$, define
$$
\BP_j(h)
:=
\log\Xi_{n_{\gamma_j}}(h)
-\log\Xi_{n_{\gamma_0}}(h)
\qquad(h\in\check N_1).
$$
Then $\BP_j(e)=0$, and
\Cref{lem:nh map and return map} gives
\begin{equation}
\label{eqn:BP and Xi}
\BP_j(h_x)
=
K_\ell(\gamma_jx)-K_\ell(\gamma_0x)
-K_\ell(\gamma_jx_1)+K_\ell(\gamma_0x_1)
\qquad(x\in\Lambda_+).
\end{equation}
By \eqref{eqn:differential-Xi},
$$
(d\BP_j)_e
=
\pi_{\mathfrak a}\circ
\bigl(\operatorname{Ad}_{n_{\gamma_j}^{-1}}
-\operatorname{Ad}_{n_{\gamma_0}^{-1}}\bigr)
|_{\check{\mathfrak n}}.
$$
Since $\pi_{\mathfrak a}$ is an orthogonal projection,
\eqref{eqn:constant C1} gives
\begin{equation}
\label{eqn:uniform upper bp}
\|(d\BP_j)_e\|\leq2C_1.
\end{equation}
By \Cref{prop:adjoint map approximation} and
\eqref{eqn:control of ajoint map}, for every unit vector
$v_0\in\mathfrak a$ and $\alpha\in\Pi$ with
$|\alpha(v_0)|\geq\delta_2$, some $1\leq j'\leq j_0$
satisfies
\begin{equation}
\label{eqn:ineq at e}
|\langle v_0,(d\BP_{j'})_eE_{-\alpha}\rangle|
\geq C_3/2.
\end{equation}

\smallskip
\noindent\emph{Step 2: finite-scale non-integrability.}
Put
$$
\begin{aligned}
s_1&:=\min\left\{1,\min_{\alpha\in\Pi}
\frac{\bar\psi(\lambda(\gamma^*))}
     {\alpha(\lambda(\gamma^*))}\right\}, \qquad s_2:=\max\left\{1,\max_{\alpha\in\Pi}
\frac{\bar\psi(\lambda(\gamma^*))}
     {\alpha(\lambda(\gamma^*))}\right\},
\qquad s_3:=1/s_1.
\end{aligned}
$$
For the fixed $s\in(1/2,\theta_*)$, let
$c_0,\delta_0,\eta_0$ be supplied by
\Cref{prop: non-concentration self-joining} for $g_*$
and $\kappa:=\min\{1,C_3/(4C_1)\}$.

Since $n_{\gamma_j}\in N_1$ for $0\leq j\leq j_0$,
analyticity of $\Xi$ on a neighborhood of
$\overline{\check N_1}\times\overline{N_1}$ gives $M>0$,
independent of $\ell$ and $j$, such that for $Y$ in a fixed neighborhood of $0$ in
$\check{\mathfrak n}$,
\begin{equation}
\label{eqn:uniform-BP-Taylor}
\|\BP_j(\exp Y)-(d\BP_j)_eY\|
\leq M\|Y\|^2.
\end{equation}

Choose $\epsilon_3\in(0,c_0)$ so small that this estimate
holds whenever $\|Y\|\leq\epsilon_3^s$, that
$2\epsilon_3^{s_1}<\min\{1,r_0\}$, and that
$M\epsilon_3^{2s-1}\leq C_3\eta_0/4$.
Set
$$
\epsilon_1:=2\epsilon_3^{s_1},
\qquad
\delta_1:=\delta_0^{s_2}\epsilon_3^{s_2-s_1},
\qquad
\eta:=\min\{1/2,C_3\eta_0/4\}.
$$
These constants are independent of $\ell$.

Fix a unit vector $v_0\in\mathfrak a$ and $b>3$, and
choose $\alpha\in\Pi$ with
$|\alpha(v_0)|\geq\delta_2$.
Let $j'$ be given by \eqref{eqn:ineq at e}, and put
$Z:=((d\BP_{j'})_e)^*v_0\in\check{\mathfrak n}$.
By \eqref{eqn:ineq at e} and
\eqref{eqn:uniform upper bp},
$$
|\langle Z,E_{-\alpha}\rangle|\geq C_3/2,
\qquad
C_3/2\leq\|Z\|\leq2C_1.
$$
Thus $Z/\|Z\|\in W_\kappa(\mathfrak g_{-\alpha})$.
Apply \Cref{prop: non-concentration self-joining} to
this unit vector with $\varepsilon=\epsilon_3/b$,
and let $Y$ be the resulting vector.
Set $x_2:=g_*\exp(Y)P$.
Then $h_{x_2}=\exp(Y)$ and $x_2\in\Lambda_+$.
Since
$s_1\leq
\bar\psi(\lambda(\gamma^*))/\alpha(\lambda(\gamma^*))
\leq s_2$ and $\varepsilon<1$, we obtain
\begin{equation}
\label{eqn:existence of points x1 x2}
\begin{aligned}
\left(\frac{\delta_0\epsilon_3}{b}\right)^{s_2}
<d_{\bar\psi}(x_1,x_2)
<\left(\frac{\epsilon_3}{b}\right)^{s_1},\qquad 
|\langle Z,Y\rangle|
\geq\frac{C_3\eta_0}{2}\frac{\epsilon_3}{b},
\qquad
\|Y\|\leq\left(\frac{\epsilon_3}{b}\right)^s.
\end{aligned}
\end{equation}

By \eqref{eqn:uniform-BP-Taylor} and
\eqref{eqn:existence of points x1 x2},
$$
\begin{aligned}
|\langle v_0,\BP_{j'}(h_{x_2})\rangle|
&\geq
\frac{C_3\eta_0}{2}\frac{\epsilon_3}{b}
-M\left(\frac{\epsilon_3}{b}\right)^{2s}\geq
\frac{C_3\eta_0}{4}\frac{\epsilon_3}{b}
\geq
\eta\,d_{\bar\psi}(x_1,x_2)^{s_3}.
\end{aligned}
$$
Here the second inequality uses $2s>1$ and the choice
of $\epsilon_3$, and the last uses $s_1s_3=1$.
Taking $\gamma_{j'}$ and $\gamma_0$ as the two branches
in the theorem, \eqref{eqn:BP and Xi} gives
\eqref{eqn:non-integrability-estimate}.
The definitions of $\epsilon_1,\delta_1$ turn
\eqref{eqn:existence of points x1 x2} into the required
annulus. Also, $\epsilon_1/b^{s_1}<r_0$, so
\eqref{hat} gives the required ball about $x_1$.

Finally, every word in \eqref{eqn:finite-branch-family}
begins with $\gamma_{\mathrm b}^2$, so
$\gamma_jx_k\in\gamma_{\mathrm b}^2\Delta_0$.
By \eqref{hat},
$$
B_{\bar\psi}\left(
\gamma_jx_k,\frac{\epsilon_1}{2b^{s_1}}\right)
\subset\gamma_{\mathrm b}\Delta_0
\qquad(0\leq j\leq j_0,\ k=1,2).
$$
Since $\gamma_{\mathrm b}$ is the first letter of both
selected branches, this proves
\eqref{eqn:inclusion of cylinder}.
\qed

\part{Spectral theory and BMS local mixing}
\label{part.thermo}

\section{Spectral theory of transfer operators}
\label{sec.spectra of complex transfer operators}

We study the transfer operators for
$(\Lambda_+,T)$, with weights determined by the roof function
$R$ and the return cocycle $K$.
We apply the Ruelle--Perron--Frobenius theory for countable
shifts and establish regularity directly with respect to
$d_{\bar\psi}$.
We obtain a positive eigenfunction and a spectral gap for
the normalized real operator, and show that the gap persists
under small complex perturbations.
Finally, we compute the second-order expansion of the
centered leading eigenvalue and prove that its negative
Hessian at the origin is positive definite.

\begin{definition}
\label{def:spectral gap}
Let $\mathcal L$ be a bounded linear operator on a complex Banach space
$X$, with spectrum $\sigma(\mathcal L)$ and spectral radius
$\rho(\mathcal L)
:=
\sup\{ |z| : z\in\sigma(\mathcal L)\}$.
We say that $\mathcal L$ has a \emph{spectral gap} if there exist
$\kappa\in\mathbb C$, a bounded rank-one projection $\mathcal P$ on
$X$, and a bounded linear operator $\mathcal R$ on $X$ such that
$$
\mathcal L=\kappa\mathcal P+\mathcal R,
\qquad
\mathcal P\mathcal R=\mathcal R\mathcal P=0,
\qquad
\rho(\mathcal R)<|\kappa|.
$$
In this case, $\kappa$ is a simple eigenvalue of $\mathcal L$,
$\rho(\mathcal L)=|\kappa|$, and we call $\kappa$ the
\emph{leading eigenvalue} of $\mathcal L$.
\end{definition}

For a topological space $X$ and 
$\mathbb K\in\{\mathbb R,\mathbb C\}$, let
$\mathcal C( X,\mathbb K)$ and $\mathcal C_b(X,\mathbb K)$  denote the spaces of continuous and bounded continuous
$\mathbb K$-valued functions on $X$, respectively.
When $\mathbb K=\mathbb R$, we suppress $\mathbb R$ from the notation. 

Recall the constant $c_0>0$ from \Cref{prop.holderlocal} and let
\begin{equation}\label{fixc}
    c\in(0,c_0]
\end{equation}
be any number in this section and in \Cref{sec.localmixingsymbolic}.

\begin{definition}[Bounded $c$-locally H\"older functions]
\label{def:bounded c locally}
For $f\in\mathcal C(\Lambda_+,\mathbb C)$ and
$\gamma\in\mathcal H$, define
\begin{align*}
|f|_{c,\gamma}
&:=\sup_{{x,y\in\gamma\Lambda_+, x\ne y}}
\frac{|f(x)-f(y)|}{d_{\bar\psi}(x,y)^c}.
\end{align*}
Set
$$
|f|_c:=\sup_{\gamma\in\mathcal H}|f|_{c,\gamma},
\qquad
\|f\|_c:=\|f\|_\infty+|f|_c,
$$
where $\|f\|_\infty:=\sup_{x\in\Lambda_+}|f(x)|$.
We define
$$
\mathcal F_c(\Lambda_+,\mathbb C)
:=
\{f\in\mathcal C(\Lambda_+,\mathbb C):\|f\|_c<\infty\}.
$$
On this space, $|\cdot|_c$ is a seminorm and
$\|\cdot\|_c$ is a norm. Equipped with $\|\cdot\|_c$, this is a complex Banach space. We write
$\mathcal F_c(\Lambda_+)$ for the corresponding real Banach space.
\end{definition}

\subsection{Thermodynamic formalism for the $-R$-potential}
\label{subsec:formalism for minus R}

By \Cref{Rinfinity2}, there exists $\epsilon_0\in (0,1)$ such that
\begin{equation}
\label{eqn:real branch summability Section15}
\sum_{\gamma\in\mathcal H}
\sup_{x\in\Lambda_+}e^{-(1-\epsilon_0)R(\gamma x)}<\infty .
\end{equation}

\begin{definition}[Real transfer operator on $\Lambda_+$]
Define $\widetilde{\mathcal L}_{-R}\colon
\mathcal C_b(\Lambda_+)\to\mathcal C_b(\Lambda_+)$ by
$$
\widetilde{\mathcal L}_{-R}H(x)
:=\sum_{x'\in T^{-1}(x)}e^{-R(x')}H(x')
=\sum_{\gamma\in\mathcal H}e^{-R(\gamma x)}H(\gamma x);
$$
by \eqref{eqn:real branch summability Section15},  the series on the right converges absolutely and uniformly on $\Lambda_+$.
\end{definition}

Setting $\lambda:=e^{-C_R}\in(0,1)$, where $C_R$ is the constant in
\eqref{eqn:contraction metric}, we have
\begin{equation}
\label{eqn:recall contraction metric}
d_{\bar\psi}(\gamma x,\gamma y)
\le \lambda^n d_{\bar\psi}(x,y)
\end{equation}
for every $n\ge1$, $\gamma\in\mathcal H^n$, and
$x,y\in\Lambda_+$.

\subsubsection{Ruelle--Perron--Frobenius theorem on
$(\Sigma^+,d_{\Sigma^+})$}

Recall from \Cref{subsec:symbolicspaceforcoding} that
$\Sigma^+=\mathcal H^{\mathbb N}$ is equipped with the metric
$d_{\Sigma^+}$ and that
$\sigma\colon\Sigma^+\to\Sigma^+$ is the one-sided shift map. Recall
also from \Cref{prop.shiftandexpanding} the homeomorphism
$\Psi\colon\Sigma^+\to\Lambda_+$
conjugating $\sigma$ to $T$. Set
$$
R^\diamond:=R\circ\Psi
\qquad\text{and}\qquad
K^\diamond:=K\circ\Psi.
$$

Fix  the H\"older exponent
\begin{equation}
\label{eqn:theta is lambda c}
\theta:=-\frac{c\log\lambda}{\log2}>0 ,
\qquad\text{equivalently}\qquad
2^{-\theta}=\lambda^{c}.
\end{equation}
Thus, for $z\ne z'$, $d_{\Sigma^+}(z,z')^{\theta}=\lambda^{c\,n(z,z')}$ where $n(z, z')$ is as in \eqref{eqn.symbolicmetric}.
For
$\mathbb K\in\{\mathbb R,\mathbb C\}$, let
 $\F_\theta(\Sigma^+,\mathbb K)$ denote the space of bounded
$\mathbb K$-valued functions on $\Sigma^+$ that are locally
$\theta$-H\"older, equipped with the norm
$$
\norm{f}_{\theta} := \norm{f}_{\infty} + \sup_{z \neq z' \in \Sigma^+, n(z, z')\ge 2} \frac{\abs{f(z) - f(z')}}{d_{\Sigma^+}(z, z')^{\theta}}.
$$

The potential
$-R^\diamond$ has the following properties:
\begin{condenum}
\item\label{(R1)}
The function $R^\diamond$ is locally $\theta$-H\"older by
\Cref{R Lipschitz}, since $\lambda=e^{-C_R}$ and hence
$\theta=cC_R/\log2$.
\item\label{(R2)} The potential $-R^\diamond$ is summable by \Cref{Rinfinity2}:
$$
\sum_{\gamma\in\mathcal H}
\exp\Bigl(\sup_{z\in[\gamma]}\bigl(-R^\diamond(z)\bigr)\Bigr)<\infty .
$$
\end{condenum}
Therefore,  the transfer operator
$
\widetilde{\mathcal L}_{-R^\diamond}\colon
\mathcal C_b(\Sigma^+)\to \mathcal C_b(\Sigma^+)$, defined by
$$ \widetilde{\mathcal L}_{-R^\diamond}F(z)
:=\sum_{\sigma z'=z}e^{-R^\diamond(z')}F(z'),
$$ is bounded.
By \Cref{prop.shiftandexpanding}, this operator is conjugate to
$\widetilde{\mathcal L}_{-R}$:
\begin{equation}
\label{eqn:transfer operator conjugation}
\widetilde{\mathcal L}_{-R^\diamond}
=\Psi^* \circ \widetilde{\mathcal L}_{-R} \circ (\Psi^{-1})^* 
\end{equation}
 where  $\Psi^*$ denotes the pull-back by $\Psi$.

We recall several notions from the thermodynamic formalism for
countable-state shifts. Let $f\colon\Sigma^+\to\mathbb R$ be a locally
H\"older summable potential. Its \emph{Gurevich pressure} is
\begin{equation*}
\pr(f)
:=\lim_{n\to\infty}\frac1n
\log\Biggl(
\sum_{\substack{\sigma^nx=x\\ x_1=\gamma}}
\exp\bigl(S_nf(x)\bigr)
\Biggr),
\qquad
S_nf:=\sum_{j=0}^{n-1}f\circ\sigma^j ,
\end{equation*}
where $\gamma\in\mathcal H$ is any fixed element.
Since the full shift is topologically mixing, the limit exists,
possibly with value $+\infty$, and is independent of the choice of
$\gamma$ \cite[Theorem~1]{Sarig_Countable}. Since $f$ is summable, the
limit is finite. The full shift also has the BIP property. A Borel probability measure $\mu$ on $\Sigma^+$ is a \emph{Gibbs state} for $f$ if
there exists $C>1$ such that for every $n\ge1$ and $z\in\Sigma^+$,
$$
C^{-1}\le
\frac{\mu([z_1,\ldots,z_n])}{\exp(S_nf(z)-n\pr(f))}
\le C.
$$
 We define the action of
$\widetilde{\mathcal L}_{-R^\diamond}^*$
on finite Borel measures on $\Sigma^+$ by duality:
$$\int F\,d\bigl(\widetilde{\mathcal L}_{-R^\diamond}^*\mu\bigr)
=
\int \widetilde{\mathcal L}_{-R^\diamond}F\,d\mu
\qquad(F\in\mathcal C_b(\Sigma^+)).$$ By \ref{(R1)} and \ref{(R2)}, we have the following;
see \cite[Corollary~2.7.5]{MU03} and \cite[Theorem~2.4 and Corollary 2.5]{Kessebohmer}.

\begin{theorem}[Real Ruelle--Perron--Frobenius theorem]
\label{thm:real Perron-Frobenius restated}
\begin{enumerate}
\item There is a unique Borel probability eigenmeasure
$\widetilde m^\diamond$ of
$\widetilde{\mathcal L}_{-R^\diamond}^*$, and the corresponding
eigenvalue is $e^{\pr(-R^\diamond)}$. Moreover
$\widetilde m^\diamond$ is a Gibbs state for $-R^\diamond$.
\item  There is a unique eigenfunction
$h^\diamond\in\mathcal F_\theta(\Sigma^+)$ of $\widetilde{\mathcal L}_{-R^\diamond}$ with eigenvalue
$e^{\pr(-R^\diamond)}$ normalized by
$\int_{\Sigma^+}h^\diamond\,d\widetilde m^\diamond=1$. Moreover, $h^\diamond$ is bounded above and 
below by positive constants.
\item The measure
$\widetilde\nu^\diamond:=h^\diamond\widetilde m^\diamond$
is the unique $\sigma$-invariant Gibbs state for $-R^\diamond$, and it
is ergodic.

\item There exist $M>0$ and  $\varrho\in(0,1)$ such that for every
$g\in \mathcal{F}_{\theta}(\Sigma^+)$ and  $n\ge0$,
\begin{equation}
\label{eqn:MU spectral gap}
\Bigl\|
e^{-n\pr(-R^\diamond)}\widetilde{\mathcal L}_{-R^\diamond}^ng
-\Bigl(\int_{\Sigma^+}g\,d\widetilde m^\diamond\Bigr)
h^\diamond
\Bigr\|_\theta
\le M\varrho^n\|g\|_\theta .
\end{equation}
\end{enumerate}
\end{theorem}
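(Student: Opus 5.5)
The plan is to treat \Cref{thm:real Perron-Frobenius restated} as a direct application of the Ruelle--Perron--Frobenius theory for topologically mixing countable Markov shifts with the BIP property. The work consists of checking the hypotheses of the cited results and translating their conclusions. The shift $(\Sigma^+,\sigma)$ is the full shift on the countable alphabet $\mathcal H$. It is therefore topologically mixing and has the big-images-and-preimages property: the image of every $1$-cylinder is all of $\Sigma^+$. The potential $-R^\diamond$ is locally $\theta$-H\"older by \ref{(R1)} and summable by \ref{(R2)}. Summability gives finiteness of $\pr(-R^\diamond)$ and boundedness of $\widetilde{\mathcal L}_{-R^\diamond}$ on $\mathcal C_b(\Sigma^+)$ and on $\mathcal F_\theta(\Sigma^+)$. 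These are exactly the hypotheses of \cite[Corollary~2.7.5]{MU03} for finitely primitive (here full) conformal-type systems, and of \cite[Theorem~2.4, Corollary~2.5]{Kessebohmer}.

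I would proceed in the following order.

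\textbf{Step 1: eigenmeasure.} Use the Schauder--Tychonoff argument from \cite{MU03}. Summability gives tightness of $\widetilde{\mathcal L}^*_{-R^\diamond}\mu/\widetilde{\mathcal L}^*_{-R^\diamond}\mu(1)$, since large $R$ on cylinders forces small mass outside finitely many letters. This produces an eigenmeasure with eigenvalue $e^{\pr(-R^\diamond)}$. Bounded distortion of $S_nR^\diamond$ on cylinders, which follows from local H\"older continuity and the geometric factor $2^{-\theta n}$, then gives the Gibbs property. Uniqueness follows because two Gibbs eigenmeasures are mutually absolutely continuous with bounded density, combined with ergodicity.

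\textbf{Step 2: eigenfunction.} Obtain $h^\diamond$ as a limit of Ces\`aro averages of $e^{-n\pr}\widetilde{\mathcal L}^n_{-R^\diamond}1$. These averages are uniformly bounded above and below by the Gibbs property; the full shift means every word extends in one step. They are equicontinuous in the $\theta$-seminorm by the distortion bound. Normalize so that $\int h^\diamond\,d\widetilde m^\diamond=1$.

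\textbf{Step 3: invariant measure.} Item (3) follows formally: $\widetilde\nu^\diamond=h^\diamond\widetilde m^\diamond$ is $\sigma$-invariant and Gibbs. Uniqueness among invariant Gibbs states and ergodicity follow from the standard argument that any invariant Gibbs measure is equivalent to $\widetilde\nu^\diamond$ with bounded density.

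\textbf{Step 4: spectral gap (main obstacle).} This is \eqref{eqn:MU spectral gap}. For a countable alphabet, quasi-compactness via Ionescu-Tulcea--Marinescu requires a Lasota--Yorke inequality
\[
|e^{-n\pr}\widetilde{\mathcal L}^ng|_\theta\le C2^{-\theta n}|g|_\theta+C\|g\|_\infty,
\]
together with compactness of the embedding of the $\theta$-H\"older ball into the sup-norm space. That compactness fails for infinite alphabets. I would avoid this by following \cite{MU03}: normalize by $h^\diamond$, so that the normalized operator fixes $1$. Then use the BIP/full-shift structure to get a Doeblin-type coupling, which says that any two points have the same preimage words. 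This yields contraction of the $\theta$-seminorm and exponential convergence in the Hilbert projective metric on a cone of positive $\theta$-H\"older functions. The resulting bound extends to all $g$ by splitting into positive parts. Concretely, I would cite \cite[Theorem~2.4.6 and Corollary~2.7.5]{MU03}, which give exactly this exponential convergence under \ref{(R1)} and \ref{(R2)}. Finally, I would note that $\varrho<1$ and $M$ depend only on $\theta$ and the distortion constants.
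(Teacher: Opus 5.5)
Your proposal is correct and takes the same route as the paper. The paper likewise notes that the full shift on $\mathcal H$ is topologically mixing with the BIP property, checks \ref{(R1)}--\ref{(R2)} for $-R^\diamond$, and cites \cite[Corollary~2.7.5]{MU03} and \cite[Theorem~2.4 and Corollary~2.5]{Kessebohmer}.

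Two small remarks on your sketch. First, the Ionescu-Tulcea--Marinescu argument does go through for countable alphabets if you take $L^1(\widetilde m^\diamond)$ rather than the sup norm as the weak norm: the Gibbs property gives $\|e^{-n\pr(-R^\diamond)}\widetilde{\mathcal L}_{-R^\diamond}^n g\|_\infty\ll\|g\|_{L^1(\widetilde m^\diamond)}+2^{-\theta n}|g|_\theta$, so your coupling/cone detour is optional. Second, the uniqueness of $h^\diamond$ in (2), which your Step~2 does not address, follows at once from (4).
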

Applying \eqref{eqn:MU spectral gap} to the real and imaginary parts
of $g$ shows that, after enlarging $M$, the same estimate holds for
every $g\in\mathcal F_\theta(\Sigma^+,\mathbb C)$.

\subsubsection{The conformal eigenmeasures}
We continue to write $\nu$ for the restriction of the
Patterson--Sullivan measure to $\Lambda_+$.
Since $\nu(\Lambda_+)>0$ and $\psi(\mathcal G(x,p_0))$ is uniformly
bounded on $\Lambda_+$, the measure
$$
d\widehat\nu(x)
:=\frac{e^{2\psi(\mathcal G(x,p_0))}}
{\int_{\Lambda_+}e^{2\psi(\mathcal G(y,p_0))}\,d\nu(y)}
\,d\nu(x)
$$
is a well-defined probability measure on $\Lambda_+$.

\begin{corollary}
\label{cor:conformal eigenmeasure Section15}
We have $\Psi_*\widetilde m^\diamond=\widehat\nu$ and
$\pr(-R^\diamond)=0$. \end{corollary}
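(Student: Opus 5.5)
The plan is to show directly that $\widehat\nu$, pulled back to $\Sigma^+$, is an eigenmeasure of $\widetilde{\mathcal L}_{-R^\diamond}^*$ with eigenvalue $1$. Then the uniqueness in \Cref{thm:real Perron-Frobenius restated}(1) gives both $\Psi_*\widetilde m^\diamond=\widehat\nu$ and $e^{\pr(-R^\diamond)}=1$. By the conjugation \eqref{eqn:transfer operator conjugation}, it suffices to prove
$$
\int_{\Lambda_+}\widetilde{\mathcal L}_{-R}H\,d\widehat\nu=\int_{\Lambda_+}H\,d\widehat\nu
\qquad(H\in\mathcal C_b(\Lambda_+)),
$$
using that $\Psi$ is a homeomorphism, so $(\Psi^{-1})_*\widehat\nu$ is a Borel probability measure on $\Sigma^+$.

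To prove this identity, write $\varphi(x):=e^{2\psi(\mathcal G(x,p_0))}$, so that $d\widehat\nu\propto\varphi\,d\nu$. Fix $\gamma\in\mathcal H$ and a branch term. For $x\in\Lambda_+$, the point $\gamma x\in\gamma\Lambda_+$ satisfies $T(\gamma x)=x$. By \Cref{def.return vector},
$$
R(\gamma x)=\psi(\beta_{\gamma x}(e,\gamma))+2\psi(\mathcal G(x,p_0))-2\psi(\mathcal G(\gamma x,p_0)),
$$
and therefore
$$
e^{-R(\gamma x)}\varphi(x)=e^{-\psi(\beta_{\gamma x}(e,\gamma))}\varphi(\gamma x).
$$
Conformality of $\nu$ gives $d(\gamma_*\nu)(y)=e^{\psi(\beta_y(e,\gamma))}d\nu(y)$. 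Equivalently, for $y=\gamma x$ we have $d\nu(\gamma x)=e^{-\psi(\beta_{\gamma x}(e,\gamma))}\,d\nu(x)$ as measures pulled back by $\gamma$. Hence
$$
\int_{\Lambda_+}e^{-R(\gamma x)}H(\gamma x)\varphi(x)\,d\nu(x)=\int_{\gamma\Lambda_+}H(y)\varphi(y)\,d\nu(y).
$$
This uses that $\gamma$ maps $\Lambda_+$ bijectively onto $\gamma\Lambda_+$, and that $\nu$ restricted there agrees with the restriction of the Patterson--Sullivan measure, since $\gamma\Lambda_+\subset\Lambda_+\subset\Delta_0$.

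Next, sum over $\gamma\in\mathcal H$. The exchange of sum and integral is justified by absolute convergence from \eqref{eqn:real branch summability Section15} and boundedness of $H$ and $\varphi$. The partition $\Lambda_+=\bigsqcup_{\gamma}\gamma\Lambda_+$ then yields the identity, after dividing by the normalizing constant.

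The only delicate point is the change of variables. One must check that the Radon--Nikodym relation holds $\nu$-a.e. on the relevant sets, which conformality provides. One must also note that $\Lambda_+$ is $\nu$-measurable with $\gamma\Lambda_+$ disjoint, which holds by construction. With the eigenmeasure identity established, $(\Psi^{-1})_*\widehat\nu$ is a probability eigenmeasure with eigenvalue $1$. Uniqueness in \Cref{thm:real Perron-Frobenius restated}(1) forces it to equal $\widetilde m^\diamond$, and the eigenvalue $e^{\pr(-R^\diamond)}$ to equal $1$. Therefore $\pr(-R^\diamond)=0$.
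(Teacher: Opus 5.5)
Your proposal is correct and follows the paper's proof. The paper records the same computation as the Radon--Nikodym identity $d\gamma_*\widehat\nu/d\widehat\nu=e^{R}$ on $\gamma\Lambda_+$, deduces $\widetilde{\mathcal L}_{-R}^*\widehat\nu=\widehat\nu$, transfers this to $\Sigma^+$ via \eqref{eqn:transfer operator conjugation}, and concludes by the uniqueness in \Cref{thm:real Perron-Frobenius restated}(1); your version just spells out the change of variables and the interchange of sum and integral that the paper leaves implicit.
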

\begin{proof}
For $\gamma\in\mathcal H$ and $x\in\gamma\Lambda_+$, conformality and
the definition of $R$ give
$
\frac{d\gamma_*\widehat\nu}{d\widehat\nu}(x)
=e^{R(x)} $ on $\gamma\La_+$.
Thus $\widetilde{\mathcal L}_{-R}^*\widehat\nu
=\widehat\nu$. By
\eqref{eqn:transfer operator conjugation},
$\Psi^*\widehat\nu$ is a  Borel probability eigenmeasure of
$\widetilde{\mathcal L}_{-R^\diamond}^*$ with eigenvalue $1$.
The uniqueness in
\Cref{thm:real Perron-Frobenius restated}(1) therefore gives
$\Psi^*\widehat\nu=\widetilde m^\diamond$ and
$e^{\pr(-R^\diamond)}=1$, proving both assertions.
\end{proof}

Since 
$e^{-n\pr(-R^\diamond)}=1$, this factor will henceforth be omitted from \eqref{eqn:MU spectral gap}.

\subsubsection{The eigenfunctions and spectral gap}
The following Lasota--Yorke inequality is the main input for establishing the $d_{\bar\psi}$-regularity of the transported eigenfunction in
\Cref{prop:eigenfunction on Lambda}.  It can be proved by a standard argument, based on \Cref{lem:Holder property of Birkoff sum}.

\begin{lemma}
\label{lem:ITM restated}
There is $C>0$ such that, for every
$f\in\mathcal F_c(\Lambda_+,\mathbb C)$ and $n\in\mathbb N$,
$$
\bigl\|\widetilde{\mathcal L}_{-R}^nf\bigr\|_c
\le C\|f\|_\infty+C\lambda^{cn}\|f\|_c .
$$
 In
particular, $\widetilde{\mathcal L}_{-R}$ preserves
$\mathcal F_c(\Lambda_+,\mathbb C)$ and
$\mathcal F_c(\Lambda_+)$.
\end{lemma}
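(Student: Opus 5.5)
The plan is the standard Lasota--Yorke argument: write $\widetilde{\mathcal L}_{-R}^n f(x)=\sum_{\gamma\in\mathcal H^n}e^{-R_n(\gamma x)}f(\gamma x)$, which follows by iterating the definition and the cocycle property $R_n=\sum_{j<n}R\circ T^j$. We bound the sup norm and the local H\"older seminorm separately. For the sup norm, $\widetilde{\mathcal L}_{-R}^n 1$ is uniformly bounded in $n$. Indeed, $\widetilde{\mathcal L}_{-R}^*\widehat\nu=\widehat\nu$ by \Cref{cor:conformal eigenmeasure Section15}. By \Cref{lem:Holder property of Birkoff sum}, $e^{-R_n(\gamma x)}\asymp e^{-R_n(\gamma y)}$ uniformly in $n$, $\gamma$, $x,y\in\Lambda_+$, because $d_{\bar\psi}$ is bounded (\Cref{lem.dpsidiameterfinite}). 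Hence
$$
\widetilde{\mathcal L}_{-R}^n1(x)\asymp\int\widetilde{\mathcal L}_{-R}^n1\,d\widehat\nu=1 .
$$
This gives $\|\widetilde{\mathcal L}_{-R}^nf\|_\infty\le C\|f\|_\infty$ with $C$ independent of $n$.

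For the seminorm, fix $\eta\in\mathcal H$ and $x,y\in\eta\Lambda_+$. In fact we will bound $|\widetilde{\mathcal L}^n_{-R}f(x)-\widetilde{\mathcal L}^n_{-R}f(y)|$ for arbitrary $x,y\in\Lambda_+$, which is stronger. Split the difference as
$$
\sum_{\gamma}\bigl(e^{-R_n(\gamma x)}-e^{-R_n(\gamma y)}\bigr)f(\gamma x)\;+\;\sum_\gamma e^{-R_n(\gamma y)}\bigl(f(\gamma x)-f(\gamma y)\bigr).
$$
\emph{First term.} By \Cref{lem:Holder property of Birkoff sum}, $|R_n(\gamma x)-R_n(\gamma y)|\le C d_{\bar\psi}(x,y)^{c_0}\le C' d_{\bar\psi}(x,y)^{c}$, using boundedness of $d_{\bar\psi}$ and $c\le c_0$. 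Then $|e^{a}-e^{b}|\le |a-b|e^{\max(a,b)}$ together with the bounded distortion above bounds this term by $C\|f\|_\infty d_{\bar\psi}(x,y)^c\,\widetilde{\mathcal L}^n_{-R}1(y)\ll\|f\|_\infty d_{\bar\psi}(x,y)^c$.

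\emph{Second term.} Both $\gamma x$ and $\gamma y$ lie in $\gamma\Lambda_+\subset\gamma_1\Lambda_+$, where $\gamma_1\in\mathcal H$ is the first letter of $\gamma$. So $|f(\gamma x)-f(\gamma y)|\le|f|_c\,d_{\bar\psi}(\gamma x,\gamma y)^c\le|f|_c\lambda^{cn}d_{\bar\psi}(x,y)^c$ by \eqref{eqn:recall contraction metric}. Summing against $e^{-R_n(\gamma y)}$ again uses the uniform bound on $\widetilde{\mathcal L}^n_{-R}1$.

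Combining the two terms gives $\|\widetilde{\mathcal L}_{-R}^nf\|_c\le C\|f\|_\infty+C\lambda^{cn}|f|_c$, which implies the stated inequality. Continuity of $\widetilde{\mathcal L}_{-R}^nf$ follows from the local uniform convergence of the series, which is guaranteed by \eqref{eqn:real branch summability Section15} (\Cref{Rinfinity2}). This gives the preservation claims, and real-valuedness is clearly preserved.

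The main point needing care is the uniform-in-$n$ bound on $\widetilde{\mathcal L}^n_{-R}1$. This relies on the distortion estimate of \Cref{lem:Holder property of Birkoff sum} holding uniformly over all $\gamma\in\mathcal H^n$ with a constant independent of $n$, combined with the invariance of $\widehat\nu$. Everything else is routine.
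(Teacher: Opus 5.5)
Your proof is correct and is exactly the standard Lasota--Yorke argument the paper has in mind; the paper only says the lemma follows by a standard argument from \Cref{lem:Holder property of Birkoff sum}. It amounts to the splitting used later in the proof of \Cref{lem:L0 action on B}, specialized to $w=0$ and $h\equiv 1$, with the uniform bound on $\widetilde{\mathcal L}_{-R}^n\mathbf 1$ obtained from the invariance of $\widehat\nu$ together with bounded distortion.
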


The following follows from \eqref{eqn:recall contraction metric}
by the same argument as in \ref{(R1)}.

\begin{lemma}
\label{lem:Psi pullback bounded}
With $\theta$ as in \eqref{eqn:theta is lambda c}, $\Psi^*$ maps
$\mathcal F_c(\Lambda_+,\mathbb C)$ boundedly into
$\mathcal{F}_{\theta}(\Sigma^+,\mathbb C)$, i.e., 
$$
\|f\circ\Psi\|_\theta\ll\|f\|_c
\qquad\text{for } f\in\mathcal F_c(\Lambda_+,\mathbb C).
$$
\end{lemma}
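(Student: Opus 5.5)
We need to prove: for f in F_c(Λ_+), ||f∘Ψ||_θ ≪ ||f||_c. Sup norm is preserved since Ψ is a bijection onto Λ_+. For the Hölder seminorm on Σ^+, we take z≠z' with n(z,z')≥2 and set N:=n(z,z'), so z and z' share their first N−1≥1 symbols; write ω=(z_1,…,z_{N−1}) and g=z_1⋯z_{N−1}∈\mathcal H^{N-1}. By \Cref{prop.shiftandexpanding}, Ψ(z)=gΨ(σ^{N-1}z) and Ψ(z')=gΨ(σ^{N-1}z'), and both points lie in z_1Λ_+, a single branch cylinder. Hence the local seminorm applies:
\[
|f(\Psi(z))-f(\Psi(z'))|\le |f|_{c,z_1}\, d_{\bar\psi}(\Psi(z),\Psi(z'))^c.
\]

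Next we bound the distance using \eqref{eqn:recall contraction metric} applied to g∈\mathcal H^{N-1} and the points Ψ(σ^{N-1}z),Ψ(σ^{N-1}z')∈Λ_+:
\[
d_{\bar\psi}(\Psi(z),\Psi(z'))\le \lambda^{N-1}\operatorname{diam}_{d_{\bar\psi}}(\Lambda_\rho),
\]
where the diameter is finite by \Cref{lem.dpsidiameterfinite}. Raising to the power c and using \eqref{eqn:theta is lambda c}, we get λ^{c(N-1)}=λ^{-c}λ^{cN}=λ^{-c}d_{\Sigma^+}(z,z')^θ. Therefore
\[
|f\circ\Psi(z)-f\circ\Psi(z')|\le \lambda^{-c}\operatorname{diam}(\Lambda_\rho)^c\,|f|_c\, d_{\Sigma^+}(z,z')^\theta,
\]
and adding the sup norm yields ||f∘Ψ||_θ ≪ ||f||_c. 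Continuity and boundedness of f∘Ψ follow from those of f and Ψ.

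There is no real obstacle here. The only point needing care is to check that the restriction n(z,z')≥2 in the definition of ||·||_θ exactly matches the requirement that Ψ(z) and Ψ(z') lie in a common branch γΛ_+, which is where |f|_{c,γ} is defined. It does, because agreement of the first symbol z_1=z'_1 places both images in z_1Λ_+.
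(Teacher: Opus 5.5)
Your proof is correct and follows the paper's route. The paper derives this lemma from the contraction estimate \eqref{eqn:recall contraction metric} by the same argument used for (R1) and \Cref{lem.phiholder}. For $n(z,z')\geq 2$, both images lie in the one-cylinder $z_1\Lambda_+$ and are at $d_{\bar\psi}$-distance at most $\lambda^{n(z,z')-1}\operatorname{diam}_{d_{\bar\psi}}(\Lambda_\rho)$; combined with $2^{-\theta}=\lambda^{c}$, this gives the bound.
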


With $h^{\diamond}$ given by \Cref{thm:real Perron-Frobenius restated},
 set
\begin{equation}
\label{eqn:hFLa}
h:=h^\diamond\circ\Psi^{-1}  : \La_+ \to \R.
\end{equation}

\begin{proposition}
\label{prop:eigenfunction on Lambda}
\begin{enumerate}
\item We have $h\in\mathcal F_c(\Lambda_+)$ and $h$ is bounded
above and below by positive constants. Moreover,
$\widetilde{\mathcal L}_{-R}h=h$, and $\widehat\nu(h)=~1$. It is
the unique eigenfunction of $\widetilde{\mathcal L}_{-R}$ in
$\mathcal F_c(\Lambda_+)$ with eigenvalue $1$ and this normalization.
\item There exist $M'>0$ and $\varrho'\in(0,1)$ such that, for every
$g\in\mathcal F_c(\Lambda_+,\mathbb C)$ and  $n\ge0$,
\begin{equation}
\label{eqn:gap on Lambda}
\Bigl\|
\widetilde{\mathcal L}_{-R}^ng
-\Bigl(\int_{\Lambda_+}g\,d\widehat\nu\Bigr)h
\Bigr\|_c
\le M'(\varrho')^n\|g\|_c .
\end{equation}
In particular, $\widetilde{\mathcal L}_{-R}$ has a spectral gap on
$\mathcal F_c(\Lambda_+,\mathbb C)$, with leading eigenvalue $1$ and
rank-one spectral projection
$g\mapsto\bigl(\int_{\Lambda_+}g\,d\widehat\nu\bigr)h$.
\end{enumerate}
\end{proposition}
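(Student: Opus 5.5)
The plan is to transport everything from the symbolic side through the conjugacy \eqref{eqn:transfer operator conjugation}, and to use the Lasota--Yorke inequality of \Cref{lem:ITM restated} to upgrade symbolic regularity to $d_{\bar\psi}$-regularity.

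\emph{Basic properties of $h$.} Positivity and boundedness of $h$ come directly from \Cref{thm:real Perron-Frobenius restated}(2), since $\Psi$ is a bijection. Conjugating gives $\widetilde{\mathcal L}_{-R}h=h$, because $\pr(-R^\diamond)=0$ by \Cref{cor:conformal eigenmeasure Section15}. The normalization $\widehat\nu(h)=\int h^\diamond\,d\widetilde m^\diamond=1$ follows from $\Psi_*\widetilde m^\diamond=\widehat\nu$. Continuity of $h$ holds because $\Psi^{-1}$ is continuous (\Cref{prop.shiftandexpanding}).

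\emph{Regularity $h\in\mathcal F_c$.} This is the point where symbolic Hölder regularity does not transfer directly, so I argue by approximation. Take $g\equiv1\in\mathcal F_c(\Lambda_+)$ and set $f_n:=\widetilde{\mathcal L}_{-R}^n1$.
\begin{itemize}
\item By \eqref{eqn:MU spectral gap} applied to $1\circ\Psi$, together with the conjugacy, $f_n\to h$ uniformly. Indeed, $\|\cdot\|_\theta$ dominates the sup norm, and $\widetilde m^\diamond(1)=1$.
\item \Cref{lem:ITM restated} gives $\|f_n\|_c\le C\|1\|_\infty+C\lambda^{cn}\le 2C$, uniformly in $n$.
\item Each seminorm $|\cdot|_{c,\gamma}$ is lower semicontinuous under pointwise convergence, so passing to the limit yields $|h|_c\le 2C$, and hence $h\in\mathcal F_c(\Lambda_+)$.
\end{itemize}
For uniqueness, suppose $h'\in\mathcal F_c$ is another eigenfunction with eigenvalue $1$ and $\widehat\nu(h')=1$. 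Then $h'\circ\Psi\in\mathcal F_\theta$ by \Cref{lem:Psi pullback bounded}, so the uniqueness in \Cref{thm:real Perron-Frobenius restated}(2) gives $h'=h$.

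\emph{Spectral gap in $\mathcal F_c$.} Set $\Pi g:=\widehat\nu(g)h$, which is bounded on $\mathcal F_c$ since $h\in\mathcal F_c$. Put $\mathcal R:=\widetilde{\mathcal L}_{-R}-\Pi$.
\begin{itemize}
\item Using $\widetilde{\mathcal L}_{-R}^*\widehat\nu=\widehat\nu$ (from the proof of \Cref{cor:conformal eigenmeasure Section15}) and $\widetilde{\mathcal L}_{-R}h=h$, one gets $\Pi\mathcal R=\mathcal R\Pi=0$, and hence $\widetilde{\mathcal L}_{-R}^n-\Pi=\mathcal R^n$.
\item The sup-norm contraction comes from the symbolic gap: \eqref{eqn:MU spectral gap} with \Cref{lem:Psi pullback bounded} gives $\|\mathcal R^ng\|_\infty\le\|(\mathcal R^ng)\circ\Psi\|_\theta\ll\varrho^n\|g\|_c$.
\item For the Hölder seminorm, split $n=k+m$ and write $\mathcal R^{n}g=\widetilde{\mathcal L}_{-R}^{k}(\mathcal R^{m}g)$. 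This identity holds because $\Pi\mathcal R^m=0$ for $m\ge1$. Applying \Cref{lem:ITM restated} to $\mathcal R^m g$ gives
$$\|\mathcal R^ng\|_c\le C\|\mathcal R^mg\|_\infty+C\lambda^{ck}\|\mathcal R^mg\|_c\ll \varrho^m\|g\|_c+\lambda^{ck}\|g\|_c.$$
Here $\|\mathcal R^m\|_c$ is bounded uniformly in $m$, using Lasota--Yorke once more together with boundedness of $\Pi$.
\item Choosing $k=m=n/2$ yields \eqref{eqn:gap on Lambda} with $\varrho'=\max\{\varrho,\lambda^c\}^{1/2}$.
\end{itemize}

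Finally, $\rho(\mathcal R)\le\varrho'<1$, and $\Pi$ is a rank-one projection. This gives the spectral gap in the sense of \Cref{def:spectral gap}, with $\kappa=1$. Complex-valued $g$ are handled by splitting into real and imaginary parts.
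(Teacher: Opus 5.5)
Your proposal is correct and follows the paper's proof essentially step for step. The paper also obtains $h\in\mathcal F_c(\Lambda_+)$ by passing to the limit in the uniform Lasota--Yorke bound for $\widetilde{\mathcal L}_{-R}^n\mathbf 1$, which converge uniformly to $h$ by the symbolic gap, and it proves \eqref{eqn:gap on Lambda} by combining sup-norm decay of $D_n=\widetilde{\mathcal L}_{-R}^ng-\widehat\nu(g)h$ (your $\mathcal R^ng$) with the Lasota--Yorke inequality along a split $N=n+m$ with $n\approx N/2$; the only cosmetic difference is that you introduce $\mathcal R$ first, whereas the paper works with $D_{n+m}=\widetilde{\mathcal L}_{-R}^mD_n$ and defines $\mathcal P,\mathcal R$ afterwards.
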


\begin{proof}
To prove (1), it suffices to show that $\norm{h}_c < \infty$; the remaining claims follow from \eqref{eqn:transfer operator conjugation}, \Cref{cor:conformal eigenmeasure Section15}, \Cref{lem:Psi pullback bounded}, and \Cref{thm:real Perron-Frobenius restated}.
Let $\mathbf1$ denote the constant function $1$ on $\Lambda_+$, and
set $u_n:=\widetilde{\mathcal L}_{-R}^n\mathbf1$ for each $n \in \N$. Since $\|\mathbf 1\|_c=1$,
\Cref{lem:ITM restated} gives
$\|u_n\|_c\le 2C$ for all $n\ge 1$.

On the other hand, by \eqref{eqn:transfer operator conjugation},
\eqref{eqn:MU spectral gap}, and \Cref{cor:conformal eigenmeasure Section15},
$$
\|u_n-h\|_\infty
=
\bigl\|
\widetilde{\mathcal L}_{-R^\diamond}^n\mathbf1-h^\diamond
\bigr\|_\infty
\le M\varrho^n\to 0.
$$ Passing to the limit in the H\"older inequalities gives
$\|h\|_c\leq2C$.

Now to prove (2), fix $g \in \F_c(\La_+, \mathbb C)$ and set
$
D_n:=\widetilde{\mathcal L}_{-R}^ng
-\widehat\nu (g) h .
$
Since $\widetilde{\mathcal L}_{-R}h=h$ by part (1), 
\begin{equation}
\label{eqn:Dn recursion}
D_{n+m}=\widetilde{\mathcal L}_{-R}^mD_n
\qquad (n,m\ge0).
\end{equation}

By \Cref{lem:Psi pullback bounded},
$g\circ\Psi\in \mathcal F_\theta(\Sigma^+,\mathbb C)$ and
$\|g\circ\Psi\|_\theta\ll\|g\|_c$. Applying
\eqref{eqn:MU spectral gap} to $g\circ\Psi$, and using  \Cref{cor:conformal eigenmeasure Section15} and \eqref{eqn:transfer operator conjugation} gives \begin{equation}
\label{eqn:Dn weak}
\|D_n\|_\infty\ll\varrho^n\|g\|_c .
\end{equation}
Here we used $\int_{\Sigma^+} g\circ\Psi\,d\widetilde m^\diamond
=\int_{\Lambda_+} g\,d\widehat\nu$. By \Cref{lem:ITM restated} and part (1), we also have
\begin{equation}
\label{eqn:Dn strong}
\|D_n\|_c
\ll \|g\|_\infty+\lambda^{cn}\|g\|_c
+\Bigl|\int g\,d\widehat\nu\Bigr|\,\|h\|_c
\ll\|g\|_c ,
\end{equation} with the case $n=0$ following directly from the definition of $D_0$.
Combining \eqref{eqn:Dn recursion} with \Cref{lem:ITM restated}, \eqref{eqn:Dn weak}, and \eqref{eqn:Dn strong},
 we obtain
$$
\|D_{n+m}\|_c
\ll \|D_n\|_\infty+\lambda^{cm}\|D_n\|_c
\ll\bigl(\varrho^n+\lambda^{cm}\bigr)\|g\|_c .
$$
For $N\ge1$, take $n=\lfloor N/2\rfloor$ and $m=N-n$.
It follows that \eqref{eqn:gap on Lambda} holds with any
$\varrho'\in\bigl(\max\{\varrho,\lambda^c\}^{1/2},1\bigr)$ and a
suitable $M'$, enlarged if necessary to include $N=0$.
Finally, define $\cal P, \cal R : \F_c(\La_+, \mathbb C) \to \F_c(\La_+, \mathbb C)$ by 
$$
\cal P g:=\widehat\nu (g) h,
\qquad
\mathcal R:=\widetilde{\mathcal L}_{-R}- \cal P,
$$
which are bounded. By \Cref{cor:conformal eigenmeasure Section15} and part~(1),
$\cal P^2= \cal P$ and $\widetilde{\mathcal L}_{-R} \cal P
=\cal P \widetilde{\mathcal L}_{-R}= \cal P.
$
Hence $\cal P\mathcal R=\mathcal R \cal P=0$ and
$\mathcal R^n=\widetilde{\mathcal L}_{-R}^n- \cal P$ for every $n\ge1$.
By \eqref{eqn:gap on Lambda},
$\|\mathcal R^n\|\le M'(\varrho')^n$, so
$\rho(\mathcal R)\le\varrho'<1$.
This proves the spectral-gap assertion in the sense of
\Cref{def:spectral gap}.
\end{proof}

Let $h\in\mathcal F_c(\Lambda_+)$ be as in \eqref{eqn:hFLa}, and
define the probability measure
\begin{equation}
\label{nuplus}
d\nu_+:=h\,d\widehat\nu
\end{equation}
on $\Lambda_+$. By \Cref{prop:eigenfunction on Lambda}, both $h$ and $h^{-1}$
belong to $\mathcal F_c(\Lambda_+)$, and multiplication by either
is bounded on $\mathcal F_c(\Lambda_+,\mathbb C)$.
Hence, by \Cref{lem:ITM restated}, the normalized operator
\begin{equation}
\label{eqn:L0 def}
\mathcal L_0H:=h^{-1}\widetilde{\mathcal L}_{-R}(hH)
\qquad
\bigl(H\in\mathcal F_c(\Lambda_+,\mathbb C)\bigr)
\end{equation}
is bounded on
$\mathcal F_c(\Lambda_+,\mathbb C)$.
 It is the
case $a=w=0$ of the family introduced in
\Cref{def:normalized transfer operators} below. By \eqref{eqn:L0 def},
\eqref{nuplus}, \Cref{cor:conformal eigenmeasure Section15}, and \Cref{prop:eigenfunction on Lambda}(1),
\begin{equation}
\label{eqn:invariance of transfer operator and measure}
\mathcal L_0\mathbf 1=\mathbf 1,
\qquad
\mathcal L_0^*\nu_+=\nu_+ .
\end{equation}

Define the operators $\cal P_0, \cal R_0 : \F_c (\La_+, \mathbb C) \to \F_c(\La_+, \mathbb C)$ by 
\begin{equation}
\label{eqn:P0}
\mathcal P_0f:=\Bigl(\int_{\Lambda_+}f\,d\nu_+\Bigr)\mathbf 1,
\qquad
\mathcal R_0:=\mathcal L_0-\mathcal P_0 .
\end{equation}

\begin{corollary}
\label{prop:L0 decomposition}
The operators $\mathcal P_0$ and $\mathcal R_0$ are bounded on
$\mathcal F_c(\Lambda_+,\mathbb C)$ and satisfy
\begin{equation}
\label{eqn:spectral gap decomposition}
\mathcal P_0\mathcal R_0=\mathcal R_0\mathcal P_0=0,
\qquad
\rho(\mathcal R_0)<1.
\end{equation}

\end{corollary}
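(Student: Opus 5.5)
The plan is to transport the spectral decomposition of \Cref{prop:eigenfunction on Lambda}(2) through the conjugation by multiplication by $h$. Write $\mathcal M_h$ for the multiplication operator $H\mapsto hH$. By \Cref{prop:eigenfunction on Lambda}(1), $h$ and $h^{-1}$ lie in $\mathcal F_c(\Lambda_+)$ and are bounded above and below by positive constants. The Leibniz-type estimate $|fg|_{c}\le\|f\|_\infty|g|_c+\|g\|_\infty|f|_c$ (applied on each cylinder $\gamma\Lambda_+$) then shows that $\mathcal M_h$ and $\mathcal M_{h^{-1}}=\mathcal M_h^{-1}$ are bounded on $\mathcal F_c(\Lambda_+,\mathbb C)$. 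By \eqref{eqn:L0 def}, $\mathcal L_0=\mathcal M_h^{-1}\widetilde{\mathcal L}_{-R}\mathcal M_h$.

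Next, identify $\mathcal P_0$ as the conjugate of the projection $\mathcal P g=\widehat\nu(g)h$ from the proof of \Cref{prop:eigenfunction on Lambda}. For $f\in\mathcal F_c$,
$$
\mathcal M_h^{-1}\mathcal P\mathcal M_h f
=h^{-1}\Bigl(\int_{\Lambda_+} hf\,d\widehat\nu\Bigr)h
=\Bigl(\int_{\Lambda_+} f\,d\nu_+\Bigr)\mathbf 1=\mathcal P_0 f,
$$
using \eqref{nuplus}. Hence $\mathcal P_0$ is bounded, being the composition of bounded operators; alternatively, $|\int f\,d\nu_+|\le\|f\|_\infty$ and $\|\mathbf 1\|_c=1$ give this directly. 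Setting $\mathcal R=\widetilde{\mathcal L}_{-R}-\mathcal P$, we get $\mathcal R_0=\mathcal M_h^{-1}\mathcal R\mathcal M_h$, which is also bounded.

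The algebraic identities follow by conjugating $\mathcal P\mathcal R=\mathcal R\mathcal P=0$, which was established in the proof of \Cref{prop:eigenfunction on Lambda}. One can also verify them directly from \eqref{eqn:invariance of transfer operator and measure}. Since $\mathcal L_0\mathbf 1=\mathbf 1$, we have $\mathcal R_0\mathcal P_0 f=\nu_+(f)(\mathcal L_0\mathbf 1-\mathbf 1)=0$. Since $\mathcal L_0^*\nu_+=\nu_+$ and $\nu_+$ is a probability measure, $\mathcal P_0\mathcal R_0 f=(\nu_+(\mathcal L_0 f)-\nu_+(f))\mathbf 1=0$.

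For the spectral radius, conjugation gives $\mathcal R_0^n=\mathcal M_h^{-1}\mathcal R^n\mathcal M_h$, so
$$
\|\mathcal R_0^n\|\le\|\mathcal M_h^{-1}\|\,\|\mathcal M_h\|\,M'(\varrho')^n
$$
by \eqref{eqn:gap on Lambda}, which implies $\rho(\mathcal R_0)\le\varrho'<1$. The only mildly nontrivial step is the boundedness of the multiplication operators with respect to the local seminorm $|\cdot|_c$. This is routine, because each seminorm $|\cdot|_{c,\gamma}$ is taken within a single cylinder.
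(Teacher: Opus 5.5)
Your proposal is correct and follows the paper's route: conjugate by multiplication by $h$, use that multiplication by $h$ and $h^{-1}$ is bounded on $\mathcal F_c(\Lambda_+,\mathbb C)$, transfer the spectral gap of \Cref{prop:eigenfunction on Lambda}(2), and identify the conjugated projection with $\mathcal P_0$ via \eqref{nuplus}. Your direct check of $\mathcal P_0\mathcal R_0=\mathcal R_0\mathcal P_0=0$ from \eqref{eqn:invariance of transfer operator and measure}, and your explicit bound on $\|\mathcal R_0^n\|$, add a little detail to what the paper leaves implicit.
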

\begin{proof} 
By \eqref{eqn:L0 def}, $\mathcal L_0$ is conjugate to
$\widetilde{\mathcal L}_{-R}$ by multiplication by $h$.
Multiplication by $h$ and by $h^{-1}$ is bounded on
$\mathcal F_c(\Lambda_+,\mathbb C)$, so the spectral gap in
\Cref{prop:eigenfunction on Lambda}(2) transfers to $\mathcal L_0$.
By \eqref{nuplus}, the conjugated spectral projection is
$$
f\mapsto
\left(\int_{\Lambda_+}hf\,d\widehat\nu\right)\mathbf1
=
\left(\int_{\Lambda_+}f\,d\nu_+\right)\mathbf1
=\mathcal P_0f.
$$
This proves the corollary.
\end{proof}

\begin{corollary}
\label{cor: T invariant measure}
The probability measure $\nu_+$ is $T$-invariant and ergodic.
\end{corollary}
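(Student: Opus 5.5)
The plan is to transport the statement to the symbolic side, where \Cref{thm:real Perron-Frobenius restated}(3) already gives the conclusion. By \eqref{nuplus} and \Cref{cor:conformal eigenmeasure Section15}, we have $\nu_+=h\,\widehat\nu=(h^\diamond\circ\Psi^{-1})\,\Psi_*\widetilde m^\diamond=\Psi_*(h^\diamond\widetilde m^\diamond)=\Psi_*\widetilde\nu^\diamond$. Since $\Psi$ is a homeomorphism conjugating $\sigma$ to $T$ (\Cref{prop.shiftandexpanding}), $T$-invariance and ergodicity of $\nu_+$ follow from the $\sigma$-invariance and ergodicity of $\widetilde\nu^\diamond$. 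The measure is a probability measure because $\widehat\nu(h)=1$ by \Cref{prop:eigenfunction on Lambda}(1).

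Alternatively, the argument can stay on $\Lambda_+$. For invariance, I would use \eqref{eqn:invariance of transfer operator and measure} together with the identity $\mathcal L_0(F\circ T\cdot H)=F\,\mathcal L_0H$, which holds because each branch satisfies $T\gamma x=x$. Taking $H=\mathbf 1$ and using $\mathcal L_0\mathbf1=\mathbf1$ gives $\int F\circ T\,d\nu_+=\int\mathcal L_0(F\circ T)\,d\nu_+=\int F\,d\nu_+$ for $F\in\mathcal C_b$, and hence for all bounded Borel $F$ by a monotone class argument.

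For ergodicity, I would argue by mixing. By \Cref{prop:L0 decomposition}, $\mathcal L_0^n f\to\int f\,d\nu_+$ in $\|\cdot\|_c$ for $f\in\mathcal F_c$. Then, for $f,g\in\mathcal F_c$,
$$\int f\cdot g\circ T^n\,d\nu_+=\int g\,\mathcal L_0^n f\,d\nu_+\to\int f\,d\nu_+\int g\,d\nu_+ .$$
The identity uses the transfer relation above and invariance. To pass to $L^2(\nu_+)$, I need density of $\mathcal F_c$. Indicators of cylinders $\gamma\Lambda_+$, $\gamma\in\mathcal H^n$, lie in $\mathcal F_c$, since they are locally constant on first-level cylinders; they generate the Borel $\sigma$-algebra because $\Psi$ is a homeomorphism onto $\Lambda_+$. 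So mixing, and hence ergodicity, follows. The only delicate point is this density, or equivalently the boundedness of cylinder indicators in $\|\cdot\|_c$. This is immediate because the seminorm $|\cdot|_{c,\gamma}$ is computed only within first-level cylinders, on which a deeper cylinder indicator is still locally constant, though it may not be constant. If that fails, I fall back on the symbolic transport above, which requires no further estimates.
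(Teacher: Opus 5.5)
Your first paragraph is exactly the paper's proof: $\nu_+=h\,\widehat\nu=\Psi_*(h^\diamond\widetilde m^\diamond)=\Psi_*\widetilde\nu^\diamond$, and invariance and ergodicity transfer from \Cref{thm:real Perron-Frobenius restated}(3) through the conjugacy $\Psi$ of \Cref{prop.shiftandexpanding}. Your alternative route, however, rests on a false claim: indicators of cylinders of depth $\geq2$ need not lie in $\mathcal F_c(\Lambda_+,\mathbb C)$. Distinct branch closures $\gamma\Delta_0$ can share a parabolic endpoint; for example, take two elements of $\mathcal H$ whose final $T_0$-letters are $\gamma_ph$ and $\gamma_phu$, where $u$ generates $\mathsf P_0$, since adjacent $\mathsf P_0$-translates of $\Delta_0$ share an endpoint. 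Such an indicator then jumps across pairs of points at $d_{\bar\psi}$-distance tending to $0$ inside a single first-level cylinder, so its $|\cdot|_c$-seminorm is infinite; local constancy gives no H\"older bound without uniform separation. The mixing argument can be repaired by using restrictions of $d_{\mathrm{Riem}}$-Lipschitz functions on $\mathcal F$ instead. These lie in $\mathcal F_c$ by \Cref{lem:Riemannian-visual-Holder}, since $c\le c_0=\theta$, and they are dense in $L^1(\nu_+)$.
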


\begin{proof}
By \eqref{nuplus}, \eqref{eqn:hFLa},
\Cref{cor:conformal eigenmeasure Section15} and
\Cref{thm:real Perron-Frobenius restated},
$$
\Psi^*\nu_+
=(h\circ\Psi)\,\Psi^*\widehat\nu
=h^\diamond\,\widetilde m^\diamond
=\widetilde\nu^\diamond ,
$$
which is $\sigma$-invariant and ergodic by
\Cref{thm:real Perron-Frobenius restated}(3). Since $\Psi$ conjugates $\sigma$ and $T$ (\Cref{prop.shiftandexpanding}), this finishes the proof.
\end{proof}

Recall from \Cref{lem.uniformly positive} that $R \ge 1$. Since $d\nu_+/d\nu$ is bounded above and below by positive constants, \Cref{thm.exptail}  implies that
$R\in L^1(\nu_+)$ and $K\in L^1(\nu_+;\mathfrak a)$.
 Hence, the \emph{$\psi$-symbolic vector}
\begin{equation}
\label{eqn:mixing direction}
\mathtt u := \mathtt u_\psi
:=\frac{\int_{\Lambda_+}K\,d\nu_+}
{\int_{\Lambda_+}R\,d\nu_+}\in\mathfrak a
\end{equation}
is well-defined, and $\psi(\mathtt u)=1$ because $R=\psi\circ K$.

\begin{theorem}
\label{thm.mixingdirectiontangent}
The symbolic direction $\uu$ satisfies
$\psi_\rho(\uu)=\psi(\uu)=1$.
Consequently, if $\psi=\psi_u$ for some
$u\in\operatorname{int}\mathcal L_\rho$, then
$$
u=\psi_u(u)\,\ut_{\psi_u}.
$$
\end{theorem}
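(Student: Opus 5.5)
The plan is to show two inequalities, $\psi_\rho(\uu)\ge1$ and $\psi_\rho(\uu)\le\psi(\uu)=1$. The upper bound is immediate, since $\psi$ is tangent to $\psi_\rho$, so $\psi_\rho\le\psi$ everywhere, and $\psi(\uu)=1$ was noted after \eqref{eqn:mixing direction}. The lower bound is the substance. We first need $\uu\in\operatorname{int}\mathcal L_\rho$, or at least $\uu\in\mathcal L_\rho$. By the Birkhoff ergodic theorem for the ergodic $T$-invariant measure $\nu_+$ (\Cref{cor: T invariant measure}), together with integrability of $K$ and $R$, we get $K_n(x)/R_n(x)\to\uu$ for $\nu_+$-a.e.\ $x$. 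By \eqref{eqn:return-Cartan-comparison}, $K_n(x)=\mu(\gamma)+O(1)$ with $\gamma\in\mathcal H^n$, $\gamma\in\Gamma_\rho$, and $R_n(x)\ge n\to\infty$. Hence $\uu$ lies in the asymptotic cone $\mathcal L_\rho$.

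For the lower bound we will use a Gibbs/entropy argument. By the Gibbs property of $\widetilde\nu^\diamond$ with $\pr(-R^\diamond)=0$ (\Cref{thm:real Perron-Frobenius restated}, \Cref{cor:conformal eigenmeasure Section15}), cylinders $[\gamma]$ with $\gamma\in\mathcal H^n$ have $\nu_+$-measure $\asymp e^{-R_n}\asymp e^{-\psi(\mu(\gamma))}$. Fix a small open cone $\mathcal C$ around $\uu$ and $\epsilon>0$. By the ergodic theorem, for large $n$ the set $E_n$ of words $\gamma\in\mathcal H^n$ with
$$
\mu(\gamma)\in\mathcal C,\qquad \bigl|\psi(\mu(\gamma))-n\textstyle\int R\,d\nu_+\bigr|\le\epsilon n
$$
carries $\nu_+$-mass at least $1/2$. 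Each such cylinder has mass at most $Ce^{-(1-\epsilon')\,n\int R}$, so $\#E_n\gg e^{(1-\epsilon')n\int R\,d\nu_+}$. These $\gamma$ are distinct elements of $\Gamma_\rho$ by \Cref{lem.inversebranchfreesemigroup_before}. Their Cartan projections lie in $\mathcal C$ with norm $\approx\|\uu\|\,n\int R\,d\nu_+$. Counting them against the defining series of $\psi_\rho$ then shows that the abscissa of convergence over the cone $\mathcal C$ is at least $(1-\epsilon'')/\|\uu\|$. Therefore $\psi_\rho(\uu)\ge\|\uu\|\tau_{\mathcal C}\ge1-\epsilon''$. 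Letting $\mathcal C$ shrink and $\epsilon\to0$ gives $\psi_\rho(\uu)\ge1$.

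The main obstacle is carrying out this counting step rigorously. The growth indicator is defined through convergence abscissae, so we must turn "exponentially many elements in a thin shell of the cone" into divergence of $\sum_{\mu(\gamma)\in\mathcal C}e^{-s\|\mu(\gamma)\|}$ for $s<(1-\epsilon'')/\|\uu\|$. This is routine once the cardinality bound holds, but it needs uniform control of the $O(1)$ errors in \eqref{eqn:return-Cartan-comparison}. The Gibbs constant must also be uniform in $n$, which holds for Gibbs states.

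For the final claim, suppose $\psi=\psi_u$. Then $\psi_u$ is tangent to $\psi_\rho$ at $\uu$ as well as at $u$, because $\psi_\rho(\uu)=\psi_u(\uu)$ and $\uu\neq0$. Since $\psi_\rho$ is strictly concave between non-collinear vectors of $\mathcal L_\rho$ \cite{Reyes-Wang,KOZ_Manhattan}, equality of $\psi_u$ and $\psi_\rho$ at two non-collinear points would force equality along the segment between them, which is impossible. So $\uu$ is a positive multiple of $u$. Normalizing by $\psi_u(\uu)=1$ then gives $u=\psi_u(u)\,\ut_{\psi_u}$.
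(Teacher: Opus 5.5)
Your proposal is correct and follows essentially the same route as the paper. You get the upper bound from $\psi_\rho\le\psi$. For the lower bound you combine the Birkhoff ergodic theorem with the Gibbs bound $\nu_+(\gamma\Lambda_+)\le Ce^{-R_n}$ to produce exponentially many distinct elements of $\sH$ whose Cartan projections lie in a thin cone around $\uu$ with controlled norm. Uniqueness of the tangent direction then comes from strict concavity of $\psi_\rho$. The differences are cosmetic: the paper localizes via the single condition $\|K_n/n-\int K\,d\nu_+\|<\varepsilon$ rather than a cone plus a $\psi$-level condition, and it does not separately check $\uu\in\mathcal L_\rho$, which in any case follows from $\psi_\rho(\uu)=1>-\infty$.
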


\begin{proof}
Setting
$\bar K:=\int_{\Lambda_+}K\,d\nu_+$ and $\bar R:=\int_{\Lambda_+}R\,d\nu_+$, 
it suffices to prove that
$\psi_\rho(\bar K)= \psi(\bar K)$.

The measure $\nu_+$ is the $T$-invariant Gibbs probability
measure for the potential $-R$, whose pressure is zero;
see \Cref{thm:real Perron-Frobenius restated},
\Cref{cor:conformal eigenmeasure Section15},
\Cref{cor: T invariant measure}.
Thus there exists $C\geq1$ such that
$$
\nu_+(\gamma\Lambda_+)
\leq C e^{-R_n(x)}
\qquad
(n\geq1,\ \gamma\in\mathcal H^n,\
x\in\gamma\Lambda_+).
$$

Fix $\varepsilon>0$ and set
$E_n:=\{x\in\Lambda_+:
\|K_n(x)/n-\bar K\|<\varepsilon\}$.
By ergodicity and the Birkhoff ergodic theorem,
$\nu_+(E_n)\to1$.
Let $\mathcal W_n\subset\mathcal H^n$ consist of the
words whose cylinders meet $E_n$, and choose
$x_\gamma\in\gamma\Lambda_+\cap E_n$ for each
$\gamma\in\mathcal W_n$.
Writing $\|\psi\|_*$ for the dual norm, we have
$R_n(x_\gamma)\geq n(\bar R-\|\psi\|_*\varepsilon)$ for all sufficiently large $n$.
Since these cylinders cover $E_n$, the Gibbs estimate
gives, for all sufficiently large $n$,
\be\label{wn}
\#\mathcal W_n
\geq \frac{1}{2C}
e^{n(\bar R-\|\psi\|_*\varepsilon)}.
\ee 
Distinct words represent distinct group elements.
Moreover, by \eqref{eqn:return-Cartan-comparison}, there is
 a uniform constant
$C_0>0$  such that
$$
\|\mu(\gamma)-n\bar K\|
\leq n\varepsilon+C_0
\qquad(\gamma\in\mathcal W_n).
$$

Let $\mathcal O$ be any open cone containing $\bar K$.
Choose $\varepsilon>0$ sufficiently small that
$B(\bar K,2\varepsilon)\subset\mathcal O$ and
$\bar R-\|\psi\|_*\varepsilon>0$.
For all sufficiently large $n$, every
$\gamma\in\mathcal W_n$ then satisfies
$\mu(\gamma)\in\mathcal O$ and
$\|\mu(\gamma)\|\leq n(\|\bar K\|+\varepsilon)+C_0$.
The bound \eqref{wn} implies that the
abscissa of convergence $\tau_{\mathcal O}$ of the
Poincar\'e series $\sum_{\gamma\in \Ga_\rho, \mu(\gamma)\in \mathcal O} e^{-s \|\mu(\ga)\|} $ satisfies
$$
\tau_{\mathcal O}
\geq
\frac{\bar R-\|\psi\|_*\varepsilon}
     {\|\bar K\|+\varepsilon}.
$$
Indeed, for any positive $s$ below this ratio, the sum
over $\mathcal W_n$ of $e^{-s\|\mu(\gamma)\|}$ tends
to infinity as $n\to\infty$, so the restricted series
diverges.
Letting $\varepsilon\to0$ and then taking the infimum
over $\mathcal O$, the definition of the growth
indicator gives
$$
\psi_\rho(\bar K)
=\|\bar K\|\inf_{\mathcal O\ni\bar K}\tau_{\mathcal O}
\geq\bar R=\psi(\bar K).
$$
The reverse inequality follows from
$\psi_\rho\leq\psi$.
Homogeneity now yields
$\psi_\rho(\uu)=\psi(\uu)=1$.

By the strict concavity of $\psi_\rho$
\cite{Reyes-Wang,KOZ_Manhattan}, there exists a unique tangent direction of $\psi$. Hence $\psi=\psi_u$ gives
$u\in\mathbb R_{>0}\ut_{\psi_u}$.
\end{proof}

\subsection{Complex transfer operators}
\label{subsec:larger Banach space}

Throughout this section, we fix an inner product
$\langle\cdot,\cdot\rangle_{\mathtt u}$ on $\mathfrak a$ for which the
decomposition
$$
\mathfrak a=\mathbb R\mathtt u\oplus\ker\psi
$$
is orthogonal and $\langle\mathtt u,\mathtt u\rangle_{\mathtt u}=1$. We
write $\|\cdot\|_{\mathtt u}$ for the associated norm, omitting the subscript when it is clear from the context.

Recall the constant $\varepsilon_0 > 0$ from \eqref{eqn:real branch summability Section15}. 

\begin{definition}[Normalized complex transfer operators]
\label{def:normalized transfer operators}
For $a\in\mathbb C$ with $\Re a\ge-\epsilon_0$ and $w\in\mathfrak a$,
define the normalized transfer operator
$\mathcal L_{a,w}:\mathcal F_c(\Lambda_+,\mathbb C)\to\mathcal F_c(\Lambda_+,\mathbb C)$
by
$$ \mathcal L_{a,w}H(x)
=\frac1{h(x)}\sum_{\gamma\in\mathcal H}
e^{-(1+a)R(\gamma x)+i\langle w,K(\gamma x)\rangle_{\mathtt u}}
h(\gamma x)H(\gamma x).
$$
The defining series converges absolutely and uniformly by
\eqref{eqn:real branch summability Section15} and the
boundedness of $h$ and $h^{-1}$.
Boundedness on $\mathcal F_c(\Lambda_+,\mathbb C)$ follows
from \Cref{prop.holderlocal},
\eqref{eqn:recall contraction metric}, and
$h,h^{-1}\in\mathcal F_c(\Lambda_+)$.
It preserves $\mathcal F_c(\Lambda_+)$ when $a\in\mathbb R$ and $w=0$.
We abbreviate
$$
\mathcal L_w:=\mathcal L_{0,w},
\qquad
\mathcal L_0:=\mathcal L_{0,0},
$$
the latter agreeing with the operator in \eqref{eqn:L0 def}.
\end{definition}
Recall from \eqref{eqn:R gamma} that
$
R_\gamma:=\sup_{x\in\gamma\Lambda_+}R(x)$ for $\gamma\in\mathcal H .$
 By \Cref{lem.uniformly positive}, $R_\gamma\geq1$, and, after enlarging
$C>0$ if necessary,
\begin{equation}
\label{eqn:one branch distortion Section15}
\left|R_\gamma-\psi(\mu(\gamma))\right|\leq C,
\qquad
R_\gamma-C\leq R(x)\leq R_\gamma
\end{equation}
for every $\gamma\in\mathcal H$ and $x\in\gamma\Lambda_+$.

The following  provides the estimates needed for the spectral analysis.
\begin{lemma}[Lasota--Yorke inequalities]
\label{lem:L0 action on B}
There exists $C>0$ such that for all $w\in\mathfrak a$,
$n\in\mathbb N$ and $f\in \mathcal F_c(\Lambda_+,\mathbb C)$,
\begin{align}
&\|\mathcal L_w^nf\|_{\infty}\le C\|f\|_{\infty},
\label{eqn:Lnsup}\\
& |\mathcal L_w^nf|_{c}
\le C\max\{\|w\|,1\}\,\|f\|_{\infty}+C\lambda^{cn}\|f\|_{c},
\label{eqn:Lnf} \\
& \|\mathcal L_w^nf\|_{c}
\le C\max\{\|w\|,1\}\,\|f\|_{\infty}+C\lambda^{cn}\|f\|_{c} .
\label{eqn:Lnc}
\end{align}
\end{lemma}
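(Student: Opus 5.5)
The plan is to expand $\mathcal L_w^n$ as a sum over words $\gamma\in\mathcal H^n$. Since $\widetilde{\mathcal L}_{-R}h=h$ and $R_n=\psi\circ K_n$, the $n$-fold iterate is
$$
\mathcal L_w^nf(x)=\frac1{h(x)}\sum_{\gamma\in\mathcal H^n}
e^{-R_n(\gamma x)+i\langle w,K_n(\gamma x)\rangle_{\mathtt u}}
h(\gamma x)f(\gamma x).
$$
For \eqref{eqn:Lnsup}, take absolute values; the unitary phase drops out. This gives $|\mathcal L_w^nf|\le \mathcal L_0^n|f|\le\|f\|_\infty\,\mathcal L_0^n\mathbf 1=\|f\|_\infty$ by \eqref{eqn:invariance of transfer operator and measure}. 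So \eqref{eqn:Lnsup} holds with $C=1$, uniformly in $w$ and $n$.

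For \eqref{eqn:Lnf}, fix $\gamma_0\in\mathcal H$ and $x,y\in\gamma_0\Lambda_+$. Then $d_{\bar\psi}(x,y)\ll1$, since $d_{\bar\psi}$ is bounded by \Cref{lem.dpsidiameterfinite}. Write each summand as $\frac{h(\gamma x)}{h(x)}e^{-R_n(\gamma x)}\cdot e^{i\langle w,K_n(\gamma x)\rangle}\cdot f(\gamma x)$. In $\sum_\gamma[\text{term}(x)-\text{term}(y)]$, telescope so that exactly one factor changes at a time.

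The first factor is the positive weight $W_\gamma(z):=h(\gamma z)e^{-R_n(\gamma z)}/h(z)$. By \Cref{prop.holderlocal}, $|R_n(\gamma x)-R_n(\gamma y)|\le Cd_{\bar\psi}(x,y)^{c_0}$ uniformly in $n$ and $\gamma\in\mathcal H^n$. Since $d_{\bar\psi}$ is bounded, this also holds with exponent $c\le c_0$. Together with $h,h^{-1}\in\mathcal F_c$, bounded, and the contraction \eqref{eqn:recall contraction metric} (which places $\gamma x,\gamma y$ in a common branch cylinder), we get
$$
|W_\gamma(x)-W_\gamma(y)|\ll W_\gamma(x)\,d_{\bar\psi}(x,y)^c.
$$
Summing over $\gamma$ and using $\sum_\gamma W_\gamma(x)=\mathcal L_0^n\mathbf1(x)=1$, this term contributes $\ll\|f\|_\infty d_{\bar\psi}(x,y)^c$.

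The phase factor is handled the same way. By \Cref{prop.holderlocal}, $|e^{i\langle w,K_n(\gamma x)\rangle}-e^{i\langle w,K_n(\gamma y)\rangle}|\le\min\{2,\|w\|Cd_{\bar\psi}(x,y)^{c_0}\}\ll\|w\|\,d_{\bar\psi}(x,y)^{c}$, which gives the term $\max\{\|w\|,1\}\|f\|_\infty$.

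The $f$ factor uses the contraction. The points $\gamma x,\gamma y$ lie in one cylinder $\gamma_1\Lambda_+$, with $\gamma_1$ the first letter of $\gamma$, so $|f(\gamma x)-f(\gamma y)|\le|f|_c\lambda^{cn}d_{\bar\psi}(x,y)^c$. Weighted by $W_\gamma$, this sums to $\lambda^{cn}|f|_c d_{\bar\psi}(x,y)^c$.

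Taking the supremum over $\gamma_0$ gives \eqref{eqn:Lnf}. Adding \eqref{eqn:Lnsup} to it gives \eqref{eqn:Lnc}.

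The main point needing care is uniformity in $n$. The distortion constant for $R_n$ and $K_n$ must not grow with $n$, which is exactly what \Cref{prop.holderlocal} provides. The weights must sum to one, which follows from normalization by $h$. One should also check that $R_n(\gamma x)$ is the Birkhoff sum along the branch. This holds because $T^j(\gamma x)$ follows the letters of $\gamma$, so the telescoped identity \eqref{eqn:telescoping-return-cocycle} applies.
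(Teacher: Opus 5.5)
Your proof is correct and is essentially the paper's argument. You expand $\mathcal L_w^n$ over words in $\mathcal H^n$, obtain the sup bound from $\mathcal L_0^n\mathbf 1=\mathbf 1$, and telescope the H\"older difference using the $n$-uniform distortion of \Cref{prop.holderlocal}, $h^{\pm1}\in\mathcal F_c(\Lambda_+)$, and the branch contraction; the paper's four-term split ($1/h$, the combined factor $e^{-R_n+i\langle w,K_n\rangle_{\mathtt u}}$, $h\circ\gamma$, and $f\circ\gamma$) is only a regrouping of your three.
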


\begin{proof} It suffices to prove \eqref{eqn:Lnsup} and \eqref{eqn:Lnf}, since
\eqref{eqn:Lnc} then follows after enlarging $C$.
Fix $w\in\mathfrak a$,
$n\in\mathbb N$, and $f\in \mathcal F_c(\Lambda_+,\mathbb C)$.  We first prove \eqref{eqn:Lnsup}. Since
$\widetilde{\mathcal L}_{-R}^nh=h$ by
\Cref{prop:eigenfunction on Lambda}(1),
$$
|\mathcal L_w^nf(x)|
\leq\frac{\|f\|_\infty}{h(x)}
\sum_{\gamma\in\mathcal H^n}e^{-R_n(\gamma x)}h(\gamma x)
=\|f\|_\infty.
$$
Next we prove \eqref{eqn:Lnf}. Let $x, y \in \ga_0 \La_+$ be distinct where $\ga_0 \in \cal H$. We have
$$
\mathcal L_w^nf (x)- \mathcal L_w^nf (y)
=\sum_{\gamma\in\mathcal H^n} \left( I_1(\ga) + I_2(\ga) + I_3(\ga) + I_4(\ga)\right)
$$
where
\begin{align*}
I_1(\ga )
&:=\Bigl(\frac1{h(x)}-\frac1{h(y)}\Bigr)
e^{-R_n(\ga x)+i\langle w,K_n(\ga x)\rangle_{\mathtt u}}
h(\ga x)f(\ga x),\\
I_2(\ga )
&:=\frac1{h(y)}
\Bigl(
e^{-R_n(\ga x)+i\langle w,K_n(\ga x)\rangle_{\mathtt u}}
-e^{-R_n(\ga y)+i\langle w,K_n(\ga y)\rangle_{\mathtt u}}
\Bigr)h(\ga x)f(\ga x),\\
I_3(\ga )
&:=\frac1{h(y)}
e^{-R_n(\ga y)+i\langle w,K_n(\ga y)\rangle_{\mathtt u}}
\bigl(h(\ga x)-h(\ga y)\bigr)f(\ga x),\\
I_4(\ga )
&:=\frac1{h(y)}
e^{-R_n(\ga y)+i\langle w,K_n(\ga y)\rangle_{\mathtt u}}
h(\ga y)\bigl(f(\ga x)-f(\ga y)\bigr).
\end{align*}
Since $h,h^{-1}\in\mathcal F_c(\Lambda_+)$, the preceding summation
bound gives
$$
\sum_{\gamma\in\mathcal H^n}|I_1(\gamma)|
\ll\|f\|_{\infty}d_{\bar\psi}(x,y)^c.
$$
Similarly, using \eqref{eqn:recall contraction metric}, 
$$
 \sum_{\ga \in \cal H^{n}} \left| I_3(\ga) \right| \ll  \norm{f}_{\infty} d_{\bar\psi}(x, y)^c.
$$
By the uniform distortion estimates for $R_n$ and $K_n$ in
\Cref{prop.holderlocal} and  the mean value estimate for the
exponential function,
$$\left|
e^{-R_n(\ga x)+i\langle w,K_n(\ga x)\rangle_{\mathtt u}}
-e^{-R_n(\ga y)+i\langle w,K_n(\ga y)\rangle_{\mathtt u}}
\right|\ll 
\max\{\|w\|,1\}
e^{-R_n(\ga x)}d_{\bar\psi}(x,y)^c.
$$
Applying the same summation argument as in the proof of
\eqref{eqn:Lnsup}, we obtain
$$ \sum_{\ga \in \cal H^n} \left| I_2(\ga) \right| \ll \max\{\norm{w},1\} \norm{f}_{\infty} d_{\bar\psi}(x, y)^c.
$$
Finally, for each $\gamma \in\mathcal H^n$,
$$
|f(\gamma x)-f(\gamma y)|
\leq\|f\|_c d_{\bar\psi}(\gamma x,\gamma y)^c
\leq\lambda^{cn}\|f\|_c d_{\bar\psi}(x,y)^c.
$$
Hence
$$\sum_{\ga \in \cal H^n} \left| I_4(\ga) \right| \ll \la^{cn} \norm{f}_{c} d_{\bar\psi}(x, y)^c.
$$
Combining these estimates and taking the supremum over all distinct
$x,y$ in a common one-cylinder proves \eqref{eqn:Lnf}.
\end{proof}

\begin{lemma}
\label{lem:spectral radius at most one}
For every $w\in\mathfrak a$, we have $\rho(\mathcal L_w)\leq1$.
\end{lemma}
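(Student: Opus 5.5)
The plan is to combine the uniform power bounds of \Cref{lem:L0 action on B} with Gelfand's spectral radius formula. Since $\rho(\mathcal L_w)=\lim_{n\to\infty}\|\mathcal L_w^n\|^{1/n}$, where the operator norm is taken on $\mathcal F_c(\Lambda_+,\mathbb C)$, it suffices to show that $\sup_{n}\|\mathcal L_w^n\|<\infty$ for each fixed $w$. A polynomially growing bound in $n$ would also be enough.

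Fix $w\in\mathfrak a$ and $f\in\mathcal F_c(\Lambda_+,\mathbb C)$. By \eqref{eqn:Lnc},
$$
\|\mathcal L_w^nf\|_c\le C\max\{\|w\|,1\}\|f\|_\infty+C\lambda^{cn}\|f\|_c\le C\bigl(\max\{\|w\|,1\}+1\bigr)\|f\|_c,
$$
using $\|f\|_\infty\le\|f\|_c$ and $\lambda<1$. The constant $C$ is independent of $n$, so
$$
\|\mathcal L_w^n\|\le C(\max\{\|w\|,1\}+1)
$$
for every $n\ge1$. Taking $n$-th roots and letting $n\to\infty$ gives $\rho(\mathcal L_w)\le1$.

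No step here is a serious obstacle, because the uniformity in $n$ of the Lasota--Yorke constants has already been established. The only point to check is that \eqref{eqn:Lnsup}, which underlies \eqref{eqn:Lnc}, is uniform in $n$. This holds because $\widetilde{\mathcal L}_{-R}^nh=h$ and $|e^{i\langle w,K_n\rangle_{\mathtt u}}|=1$, so the twisted sum is dominated by the untwisted normalized sum, which equals $1$ at each point.
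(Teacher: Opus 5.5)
Your proposal is correct and is essentially the paper's proof: it bounds $\|\mathcal L_w^n\|$ on $\mathcal F_c(\Lambda_+,\mathbb C)$ by $C(\max\{\|w\|,1\}+1)$ uniformly in $n$ using \eqref{eqn:Lnc}, then concludes with the spectral radius formula.
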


\begin{proof}
By \eqref{eqn:Lnc},
$\|\mathcal L_w^nf\|_c
\leq(C\max\{\|w\|,1\}+C)\|f\|_c$
for all $n\ge1$ and $f\in\mathcal F_c(\Lambda_+,\mathbb C)$.
The spectral-radius formula gives the assertion.
\end{proof}

\subsection{Spectral gap of complex transfer operators}
\label{subsec:spectral gap complex}

\begin{definition}
\label{def:a-function}
Let $w\in\mathfrak a$ and $0\le a<2\pi$. We call
$\langle w,K(\cdot) \rangle_{\mathtt u}$ an \emph{$a$-function} if there exists
$\zeta\in\mathcal F_c(\Lambda_+,\mathbb C)$ with $|\zeta|\equiv1$ such
that
\begin{equation}
\label{eqn:cohomologous equation}
e^{i(\langle w,K\rangle_{\mathtt u}-a)}\zeta=\zeta\circ T .
\end{equation}
\end{definition}

The argument of \cite[Proposition~2.6]{Kessebohmer} gives the following:
\begin{proposition}[{\cite[Proposition~2.6]{Kessebohmer}}]
\label{prop:lattice function}
Let $w\in\mathfrak a$ and $0\le a<2\pi$. Then $e^{ia}$ is an eigenvalue
of $\mathcal L_w$ on $\mathcal F_c(\Lambda_+,\mathbb C)$ if and only if
$\langle w,K (\cdot) \rangle_{\mathtt u}$ is an $a$-function.
\end{proposition}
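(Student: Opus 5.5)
The plan is to prove both directions directly, as in Kessebohmer's argument. The inputs are the normalization $\mathcal L_0\mathbf 1=\mathbf 1$, the invariance $\mathcal L_0^*\nu_+=\nu_+$ from \eqref{eqn:invariance of transfer operator and measure}, and the spectral gap of $\mathcal L_0$ in \Cref{prop:L0 decomposition}. Throughout, write $\phi:=\langle w,K\rangle_{\mathtt u}$, so that $\mathcal L_wH=\mathcal L_0(e^{i\phi}H)$.

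\emph{($\Leftarrow$)} Suppose $\zeta\in\mathcal F_c(\Lambda_+,\mathbb C)$ satisfies $|\zeta|\equiv1$ and \eqref{eqn:cohomologous equation}. Set $H:=\bar\zeta\in\mathcal F_c$. For $x\in\Lambda_+$ and each branch $\gamma\in\mathcal H$, evaluate \eqref{eqn:cohomologous equation} at $\gamma x$, where $T\gamma x=x$. This gives $e^{i\phi(\gamma x)}\bar\zeta(\gamma x)=e^{ia}\bar\zeta(x)$. Summing against the normalized weights and using $\mathcal L_0\mathbf 1=\mathbf 1$ yields $\mathcal L_w\bar\zeta=e^{ia}\bar\zeta$. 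Since $\bar\zeta\neq0$, $e^{ia}$ is an eigenvalue.

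\emph{($\Rightarrow$)} Suppose $\mathcal L_wH=e^{ia}H$ with $0\neq H\in\mathcal F_c$. The weights $p(\gamma,x)$ of $\mathcal L_0$ are positive and sum to one, so $|H|(x)=|\mathcal L_0(e^{i\phi}H)(x)|\le\mathcal L_0|H|(x)$ pointwise.

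Integrating against $\nu_+$ and using $\mathcal L_0^*\nu_+=\nu_+$ gives $\int(\mathcal L_0|H|-|H|)\,d\nu_+=0$ with a nonnegative integrand. The integrand is continuous, and $\nu_+$ has full support on $\Lambda_+$ because it is Gibbs and every cylinder has positive mass. Hence $\mathcal L_0|H|=|H|$ everywhere.

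Since $|H|\in\mathcal F_c(\Lambda_+)$, the spectral gap gives $|H|=\mathcal L_0^n|H|\to\nu_+(|H|)\mathbf 1$. Therefore $|H|$ is a positive constant, and after rescaling we may assume $|H|\equiv1$.

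Now the equality $|\sum_\gamma p(\gamma,x)e^{i\phi(\gamma x)}H(\gamma x)|=1$ holds with a convex combination of unit complex numbers and positive weights. This forces $e^{i\phi(\gamma x)}H(\gamma x)=e^{ia}H(x)$ for every $\gamma$ and $x$. Writing $y=\gamma x$, so that $x=Ty$, this reads $e^{i\phi(y)}H(y)=e^{ia}H(Ty)$ for all $y\in\Lambda_+$. Put $\zeta:=\bar H$; taking conjugates gives $e^{i(\phi(y)-a)}\zeta(y)=\zeta(Ty)$, which is \eqref{eqn:cohomologous equation}. Thus $\phi$ is an $a$-function.

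\textbf{Expected main obstacle.} The delicate step is upgrading the integral identity to the pointwise identity $\mathcal L_0|H|=|H|$, which requires continuity together with full support of $\nu_+$. Full support holds by \Cref{cor: T invariant measure} and the Gibbs property, since every cylinder $\gamma\Lambda_+$ has positive measure. The second ingredient is applying the spectral gap to the real function $|H|$, which requires $|H|\in\mathcal F_c$; this holds because $\bigl||H(x)|-|H(y)|\bigr|\le|H(x)-H(y)|$.
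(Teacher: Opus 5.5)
Your argument follows the standard route the paper invokes from Kessebohmer: the pointwise bound $|H|\le\mathcal L_0|H|$, integration against the invariant measure $\nu_+$, constancy of $|H|$ via the spectral gap, and equality in a strictly convex combination. The steps you flag as delicate (passing from the integral identity to $\mathcal L_0|H|=|H|$ using continuity and full support, and applying the gap to $|H|\in\mathcal F_c(\Lambda_+)$) are handled correctly.

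There is, however, a concrete error: the complex conjugations are wrong in both directions.

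In ($\Leftarrow$), evaluating \eqref{eqn:cohomologous equation} at $\gamma x$ gives $e^{i(\phi(\gamma x)-a)}\zeta(\gamma x)=\zeta(x)$, that is, $e^{i\phi(\gamma x)}\zeta(\gamma x)=e^{ia}\zeta(x)$. Summing against the weights gives $\mathcal L_w\zeta=e^{ia}\zeta$, so the eigenfunction is $\zeta$ itself. For $\bar\zeta$ you get instead $e^{-i\phi(\gamma x)}\bar\zeta(\gamma x)=e^{-ia}\bar\zeta(x)$, i.e.\ $\mathcal L_{-w}\bar\zeta=e^{-ia}\bar\zeta$. The identity you display for $\bar\zeta$ would additionally require $e^{2i(\phi-a)}\equiv1$ on $\Lambda_+$.

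In ($\Rightarrow$), the relation $e^{i\phi(y)}H(y)=e^{ia}H(Ty)$ already reads $e^{i(\phi(y)-a)}H(y)=H(Ty)$. Hence $\zeta:=H$, which after rescaling lies in $\mathcal F_c(\Lambda_+,\mathbb C)$ with $|H|\equiv1$, satisfies \eqref{eqn:cohomologous equation}. Conjugating gives $e^{-i(\phi(y)-a)}\bar H(y)=\bar H(Ty)$, which is the cohomological equation for $-w$ and $-a$, not the required one.

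Replacing $\bar\zeta$ by $\zeta$ and $\bar H$ by $H$ repairs both directions, and the rest of your proof then goes through unchanged.
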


It follows from \Cref{prop:lattice function} and
\Cref{lem:spectral radius at most one} that, if
$\langle w,K(\cdot)\rangle_{\mathtt u}$ is an $a$-function, then
\begin{equation}
\label{eqn:rho equals one}
\rho(\mathcal L_w)=1.
\end{equation}

\begin{lemma}
\label{lem:periodic obstruction}
Let $w\in\mathfrak a$ and $0\le a<2\pi$. If
$\langle w,K(\cdot) \rangle_{\mathtt u}$ is an $a$-function, then
$$
\langle w,K_n(x)\rangle_{\mathtt u}-an\in2\pi\mathbb Z
\qquad\text{for every }n\ge1\text{ and }x\in\operatorname{Fix}(T^n).
$$
\end{lemma}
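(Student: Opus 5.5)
The plan is to iterate the cohomological equation \eqref{eqn:cohomologous equation} along the orbit of a periodic point and read off the phase. Suppose that $\langle w,K(\cdot)\rangle_{\mathtt u}$ is an $a$-function, and let $\zeta\in\mathcal F_c(\Lambda_+,\mathbb C)$ with $|\zeta|\equiv1$ satisfy
$$
e^{i(\langle w,K\rangle_{\mathtt u}-a)}\zeta=\zeta\circ T .
$$
Since $T$ maps $\Lambda_+$ into itself, this identity holds at every point of $\Lambda_+$, and in particular at each point $T^jx$ of an orbit.

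First, by induction on $n$, applying the identity successively at $x,Tx,\ldots,T^{n-1}x$ and multiplying the results gives
$$
\zeta(T^nx)=\exp\Bigl(i\sum_{j=0}^{n-1}\bigl(\langle w,K(T^jx)\rangle_{\mathtt u}-a\bigr)\Bigr)\zeta(x)
=e^{i(\langle w,K_n(x)\rangle_{\mathtt u}-an)}\zeta(x).
$$
Here $K_n=\sum_{j<n}K\circ T^j$ is the Birkhoff sum from \Cref{def.return vector}, and the linearity of $\langle w,\cdot\rangle_{\mathtt u}$ turns the sum of pairings into a single pairing.

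Second, take $x\in\operatorname{Fix}(T^n)$, so that $T^nx=x$. The displayed identity becomes $\zeta(x)=e^{i(\langle w,K_n(x)\rangle_{\mathtt u}-an)}\zeta(x)$. Because $|\zeta(x)|=1$, we can divide by $\zeta(x)$ and obtain $e^{i(\langle w,K_n(x)\rangle_{\mathtt u}-an)}=1$. This is exactly the statement $\langle w,K_n(x)\rangle_{\mathtt u}-an\in2\pi\mathbb Z$.

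There is no genuine obstacle in this argument. The only points to check are that $\zeta$ is defined pointwise everywhere on $\Lambda_+$ rather than only almost everywhere, which holds because $\zeta$ is continuous, and that $K_n(x)$ is finite, which holds by \Cref{def.return vector}. The conclusion is, in effect, what later combines with \Cref{thm.denselength} to rule out $a$-functions for $w\neq0$.
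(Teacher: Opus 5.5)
Your proposal is correct and follows the paper's argument: iterate the cohomological equation to get $\zeta(T^nx)=e^{i(\langle w,K_n(x)\rangle_{\mathtt u}-an)}\zeta(x)$, then use $T^nx=x$ and $|\zeta(x)|=1$ to conclude.
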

\begin{proof}
Iterating \eqref{eqn:cohomologous equation} gives
$e^{i(\langle w,K_n(x)\rangle_{\mathtt u}-an)}\zeta(x)
=\zeta(T^nx)$.
If $T^nx=x$, then $|\zeta(x)|=1$ implies the assertion.
\end{proof}

\begin{proposition}
\label{prop:irrationality}
Let $w\in\mathfrak a\smallsetminus\{0\}$. If $\mathcal L_w$ has an
eigenvalue $e^{ia}$ with $0\le a<2\pi$, then $a/\pi\notin\mathbb Q$.
\end{proposition}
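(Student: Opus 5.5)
We argue by contradiction. Suppose $w\neq0$, $e^{ia}$ is an eigenvalue of $\mathcal L_w$, and $a/\pi=p/q$ is rational with $q\ge1$. By \Cref{prop:lattice function}, $\langle w,K(\cdot)\rangle_{\mathtt u}$ is an $a$-function. \Cref{lem:periodic obstruction} then gives
\[
\langle w,K_n(x)\rangle_{\mathtt u}\in an+2\pi\mathbb Z
\qquad(n\geq1,\ x\in\operatorname{Fix}(T^n)).
\]
Since $a=\pi p/q$, we have $an\in\frac{\pi}{q}\mathbb Z$, so every periodic return vector satisfies
\[
\langle w,K_n(x)\rangle_{\mathtt u}\in\frac{\pi}{q}\mathbb Z .
\]

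Now consider the linear functional $\ell(v):=\langle w,v\rangle_{\mathtt u}$ on $\mathfrak a$. It is nonzero because $w\neq0$ and the inner product is nondegenerate. We have just shown that $\ell$ maps the set $\{K_n(x):x\in\operatorname{Fix}(T^n),\ n\ge1\}$ into the discrete subgroup $\frac{\pi}{q}\mathbb Z\subset\mathbb R$. Since $\ell$ is additive, it also maps the additive subgroup generated by this set into $\frac{\pi}{q}\mathbb Z$. By \Cref{thm.denselength}, that subgroup is dense in $\mathfrak a$. By continuity, $\ell(\mathfrak a)$ is then contained in the closure of $\frac{\pi}{q}\mathbb Z$, which is $\frac{\pi}{q}\mathbb Z$ itself. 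But a nonzero linear functional is surjective onto $\mathbb R$, and $\mathbb R\not\subset\frac{\pi}{q}\mathbb Z$. This is the desired contradiction.

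There is no serious obstacle here: the heavy input is the non-arithmeticity result, \Cref{thm.denselength}, which is already available. The only points to check are the following.
\begin{itemize}
\item The case $a=0$ (that is, $p=0$) is covered by the same argument. It yields $\ell(\text{periodic vectors})\subset2\pi\mathbb Z$, and the contradiction follows identically.
\item Passing from generators to the generated group uses only additivity of $\ell$ together with the fact that $\frac{\pi}{q}\mathbb Z$ is a group.
\end{itemize}
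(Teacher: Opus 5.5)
Your argument is correct and is essentially the paper's proof. The paper phrases it through the character $v\mapsto e^{i\langle w,v\rangle_{\mathtt u}}$, which takes only finitely many values on the periodic return vectors, and then invokes \Cref{thm.denselength} to force $w=0$; you instead work directly with the linear functional $\langle w,\cdot\rangle_{\mathtt u}$, which lands in the discrete group $\frac{\pi}{q}\mathbb Z$ on those vectors. This is the same idea, with the density-and-continuity step made explicit.
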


\begin{proof}
Suppose $a/\pi\in\mathbb Q$. By \Cref{prop:lattice function} and
\Cref{lem:periodic obstruction}, the character
$v \in \fa \mapsto e^{i\langle w,v\rangle_{\mathtt u}}$ takes only finitely many values on periods of $T^n$.
On the other hand, by the non-arithmeticity of periods (\Cref{thm.denselength}), this implies $w = 0$, a contradiction.
\end{proof}

Write $\mathfrak a_{\mathbb C}$ for the complexification of
$\mathfrak a$, and extend $\langle\cdot,\cdot\rangle_{\mathtt u}$
complex-bilinearly to $\frak a_{\mathbb C}$.
The bound $\|K(x)\|\ll R(x)$ from \Cref{lem.uniformly positive} 
and
\eqref{eqn:real branch summability Section15} give an open
neighborhood $\widetilde{\mathcal U}\subset\mathfrak a_{\mathbb C}$
of $\mathfrak a$ on which the series below converges absolutely
and uniformly for every $H\in\mathcal F_c(\Lambda_+,\mathbb C)$.

We extend the family $\mathcal L_w$, defined for
$w\in\mathfrak a$, to
$w\in\widetilde{\mathcal U}\subset\mathfrak a_{\mathbb C}$
by
\begin{equation}
\label{eqn:complexification}
\mathcal L_wH(x)
:=\frac1{h(x)}\sum_{\gamma\in\mathcal H}
e^{-R(\gamma x)+i\langle w,K(\gamma x)\rangle_{\mathtt u}}
h(\gamma x)H(\gamma x).
\end{equation}
The argument in the proof of \Cref{lem:L0 action on B} with $n=1$
shows that $\mathcal L_w$ is bounded, after shrinking
$\widetilde{\mathcal U}$ if necessary.

We now perturb the decomposition \eqref{eqn:spectral gap decomposition} in the parameter $w$.

\begin{theorem}[{Spectral decomposition near zero}]
\label{thm:decomposition of transfer operators}
The following hold: 
\begin{itemize}
\item There exist an open neighborhood
$\mathcal U\subset\mathfrak a_{\mathbb C}$ of $0$ 
such that
$w\mapsto \mathcal L_w$
is analytic on $\mathcal U$.

\item For each $w\in\mathcal U$, 
$\mathcal L_w $ admits a spectral decomposition
\begin{equation}
\label{eqn:PR decomposition of transfer operator}
\mathcal L_w=\kappa_w\mathcal P_w+\mathcal R_w,
\end{equation}
where $\kappa_w$ is a simple eigenvalue of $\mathcal L_w$ with $\kappa_0 = 1$,
$\mathcal P_w$ is the corresponding
rank-one spectral projection, and
$\mathcal R_w$ is a bounded operator on $\mathcal F_c(\La_+, \mathbb C)
$  satisfying:
\begin{enumerate}
\item The maps $w\mapsto\kappa_w$,
$w\mapsto\mathcal P_w$, and $w\mapsto\mathcal R_w$
are analytic on $\mathcal U$.

\item There exists $\eta>0$ such that for every $w\in\mathcal U$, 
\begin{equation}
\label{eqn:PR decomposition}
\mathcal P_w\mathcal R_w=\mathcal R_w\mathcal P_w=0,
\qquad
\rho(\mathcal R_w)<|\kappa_w|-\eta.
\end{equation}

\item Setting $\mathcal O:=\mathcal U\cap\mathfrak a$,
we have
$$
|\kappa_w|<1
\qquad\text{for every }w\in\mathcal O\smallsetminus\{0\}.
$$
\end{enumerate}

\item For every $w\in\mathcal U$, $
\int_{\Lambda_+}\mathcal P_w\mathbf 1\,d\nu_+\ne0$ and thus
\begin{equation}
\label{eqn:Hw}
H_w:= \nu_+(
\mathcal P_w\mathbf 1)^{-1}
\mathcal P_w\mathbf 1
\in\mathcal F_c(\Lambda_+,\mathbb C)
\end{equation}
is well defined. Moreover, 
 $H_w$ is the unique eigenfunction of 
$\mathcal L_w$ 
with eigenvalue $\kappa_w$ satisfying
$
\int_{\Lambda_+}H_w\,d\nu_+=1,
$, $H_0=\mathbf 1$;
and the map $w \mapsto H_w$
is analytic on $\mathcal U$.

\item Setting
$A_w:=\kappa_w^{-1}\mathcal R_w$,
there exists $\vartheta\in(0,1)$ such that, for every multi-index
$\alpha$, there is a constant $C_\alpha>0$ for which
\begin{equation}
\label{eqn:uniform remainder powers Section15}
\sup_{w\in\mathcal O}
\left\|
\partial_w^\alpha\bigl(A_w^n\bigr)
\right\|
\le C_\alpha\vartheta^n
\qquad\text{for all }n\ge0
\end{equation}
where $\norm{\cdot}$ is the operator norm.
In particular, 
there exists $C>0$ such that
$
\|\mathcal R_0^n\|
\le C\vartheta^n$ for all $n \ge 0$.
\end{itemize}
\end{theorem}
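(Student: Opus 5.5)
The plan is to treat $w\mapsto\mathcal L_w$ as an analytic family of bounded operators on $\mathcal F_c(\Lambda_+,\mathbb C)$ and apply Kato's analytic perturbation theory around the unperturbed decomposition of \Cref{prop:L0 decomposition}.

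\textbf{Analyticity.} For each branch $\gamma$, the map $w\mapsto e^{i\langle w,K(\gamma x)\rangle_{\mathtt u}}$ is entire. We expand in power series: the $\alpha$-th derivative in $w$ of the branch term is bounded by $\|K(\gamma x)\|^{|\alpha|}e^{|\Im w|\,\|K(\gamma x)\|}$ times $e^{-R(\gamma x)}$. We have $\|K\|\ll R$ by \Cref{lem.uniformly positive} and $\sup_{x}R(\gamma x)\le R_\gamma$ by \eqref{eqn:one branch distortion Section15}. Together with \eqref{eqn:real branch summability Section15}, this gives summable sup-norm bounds when $\|\Im w\|$ is small.

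For the $|\cdot|_c$ seminorm we repeat the four-term splitting from the proof of \Cref{lem:L0 action on B}. The distortion estimates of \Cref{prop.holderlocal} produce an extra factor $(1+\|K\|)^{|\alpha|+1}$, which is again absorbed by $e^{\epsilon_0 R}$. Hence the Taylor series $\sum_\alpha \frac{w^\alpha}{\alpha!}\mathcal L^{(\alpha)}$ converges in operator norm on a small polydisc $\mathcal U$, so $w\mapsto\mathcal L_w$ is analytic.

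\textbf{Spectral decomposition, items (1), (2) and the $H_w$ bullet.} At $w=0$, \Cref{prop:L0 decomposition} gives the isolated simple eigenvalue $1$, with $\rho(\mathcal R_0)<1$. We take $\mathcal P_w:=\frac1{2\pi i}\oint_{|z-1|=r}(z-\mathcal L_w)^{-1}dz$, which is analytic and rank one for $w$ near $0$ by continuity of the resolvent. We set $\kappa_w$ to be the eigenvalue on its range and $\mathcal R_w:=\mathcal L_w(I-\mathcal P_w)$. Upper semicontinuity of the spectrum, after shrinking $\mathcal U$, gives $\rho(\mathcal R_w)<|\kappa_w|-\eta$ uniformly.

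Since $\mathcal P_0\mathbf 1=\mathbf 1$ has $\nu_+$-integral $1$, continuity makes $\nu_+(\mathcal P_w\mathbf 1)\neq0$. The normalization then defines $H_w$, which is analytic, and uniqueness follows from simplicity of $\kappa_w$.

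\textbf{Item (3), the main obstacle.} For real $w\in\mathcal O\smallsetminus\{0\}$ we have $|\kappa_w|\le\rho(\mathcal L_w)\le1$ by \Cref{lem:spectral radius at most one}. If $|\kappa_w|=1$, then $\kappa_w=e^{ia}$ is an eigenvalue of $\mathcal L_w$. By \Cref{prop:lattice function} and \Cref{lem:periodic obstruction}, $\langle w,K_n(x)\rangle-an\in2\pi\mathbb Z$ for all periodic $x$.

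Irrationality alone (\Cref{prop:irrationality}) does not immediately contradict this, so I would argue directly. The numbers $\langle w,\lambda(\gamma)\rangle-a\,n(\gamma)$ lie in $2\pi\mathbb Z$, where $n(\gamma)$ is the word length. The map $(\lambda(\gamma),n)\mapsto$ this value is additive along concatenations only coarsely, which is the difficulty.

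Instead I would use that $\psi(K_n)=R_n$ and $u=\bar K/\bar R$, and compare $w$ with $w'=w-t\,\mathtt u$. Alternatively, and more simply, I would use the closed subgroup $\{(v,n)\}$ generated by the period pairs $(K_n(x),n)$ in $\mathfrak a\times\mathbb Z$. By \Cref{thm.denselength}, its projection to $\mathfrak a$ is dense. The character $(v,n)\mapsto e^{i(\langle w,v\rangle-an)}$ is trivial on this subgroup. Its restriction to each coset $\mathfrak a\times\{n\}$ is then continuous and trivial on a dense set after fixing one element per $n$. This forces $e^{i\langle w,\cdot\rangle}$ to be constant on $\mathfrak a$, so $w=0$. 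Making this density-per-coset argument rigorous, possibly via a Benoist-type statement for pairs $(\lambda(\gamma),|\gamma|)$ using Zariski density of $\sH$ (\Cref{cor.Zdensesemigroup}), is the step I expect to need the most care.

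\textbf{Item (4).} We have $A_w^n=\kappa_w^{-n}\mathcal R_w^n=\frac1{2\pi i}\oint_{|z|=\rho_0}z^n\kappa_w^{-n}(z-\mathcal L_w)^{-1}(I-\mathcal P_w)\,dz$ with $\rho_0<|\kappa_w|-\eta$. Differentiating under the integral in $w$ gives derivative bounds of order $n^{|\alpha|}(\rho_0/|\kappa_w|)^n$ on compact subsets. This requires $\mathcal O$ to be shrunk to a relatively compact set on which $|\kappa_w|$ is bounded below; the polynomial factor is absorbed by choosing $\vartheta$ slightly larger.
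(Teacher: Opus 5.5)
The analyticity argument, the Riesz-projection decomposition, $H_w$, and the contour estimate in the last bullet are fine and essentially match the paper. The gap is in item~(3).

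\textbf{Why your argument for (3) fails.} Your density-per-coset step does not follow from \Cref{thm.denselength}. Let $E\subset\mathfrak a\times\mathbb Z$ be the group generated by the period pairs $(K_n(x),n)$, and let $E_0:=E\cap(\mathfrak a\times\{0\})$. Fix an element $(v_d,d)\in E$ over a generator $d$ of the image of $E$ in $\mathbb Z$.

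\begin{itemize}
\item Density of the projection of $E$ to $\mathfrak a$ only says that $\overline{p_1(E_0)}+\mathbb Z v_d$ is dense in $\mathfrak a$.
\item Each coset $E\cap(\mathfrak a\times\{kd\})=(kv_d,kd)+E_0$ projects densely only if $p_1(E_0)$ is itself dense, and nothing gives you that.
\item Concretely, $\overline{p_1(E_0)}$ could be a lattice $\Lambda_0$ with $\mathbb Z v_d$ dense in $\mathfrak a/\Lambda_0$. Then every $w$ with $\langle w,\Lambda_0\rangle\subset2\pi\mathbb Z$, paired with $a$ satisfying $ad\equiv\langle w,v_d\rangle \pmod{2\pi}$, gives a character $e^{i(\langle w,v\rangle-an)}$ trivial on $E$ with $w\neq0$.
\end{itemize}

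So non-arithmeticity alone cannot exclude unimodular eigenvalues of $\mathcal L_w$ for all real $w\neq0$. The joint statement for $(\lambda(\gamma),|\gamma|)$ you would need is strictly stronger than anything proved in the paper. The paper avoids it by handling large $\|w\|$ separately, via non-integrability.

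\textbf{How to repair it.} Item~(3) is local: $\mathcal O=\mathcal U\cap\mathfrak a$, and you may shrink $\mathcal U$.

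The paper invokes \cite[Lemma~6.12]{CS_local}. If $0$ were not isolated in the real-analytic set $\{w\in\mathcal O:|\kappa_w|=1\}$, there would be a nonconstant analytic arc $w(t)$ in it with $w(0)=0$. By \Cref{prop:irrationality}, the values $\kappa_{w(t)}$ lie in the totally disconnected set $\{1\}\cup\{e^{ia}:a/\pi\notin\mathbb Q\}$, so $\kappa_{w(t)}\equiv1$. Now $a=0$, and \Cref{thm.denselength} forces $w(t)\equiv0$, a contradiction.

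Alternatively, your subgroup idea can be salvaged locally.
\begin{itemize}
\item The annihilator $E^\perp\subset\mathfrak a\times\mathbb R/2\pi\mathbb Z$ of $E$ is a closed subgroup.
\item A one-parameter subgroup $t\mapsto(tw_0,ta_0)$ inside $E^\perp$ would give the exact relation $\langle w_0,v\rangle=a_0n$ on $E$.
\item If $w_0=0$, this contradicts the existence of periods with $n\geq1$. If $w_0\neq0$, it contradicts \Cref{thm.denselength}.
\item Hence $E^\perp$ is discrete. Since $\kappa_w=e^{ia_w}$ with $a_w\to0$ as $w\to0$, the pair $(w,a_w)$ can lie in $E^\perp$ only for $w=0$ once $\mathcal U$ is small enough.
\end{itemize}

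Either way, the conclusion is available only after shrinking $\mathcal U$, which your global formulation misses.
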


\begin{proof}
\emph{Analyticity of $\L_w$.}
For $\gamma\in\mathcal H$, let $\mathcal L_w^\gamma$ denote the
operator on $\mathcal F_c(\Lambda_+,\mathbb C)$ given by the
corresponding summand in \eqref{eqn:complexification}.
Since $K$ is bounded on $\gamma\Lambda_+$,
\Cref{prop.holderlocal} shows that $w\mapsto\mathcal L_w^\gamma$
is analytic in operator norm.

Using $\|K(x)\|\ll R(x)$ from \Cref{lem.uniformly positive}, choose
a bounded neighborhood $\mathcal U\subset\widetilde{\mathcal U}$
of $0$ so small that
$$
|\langle {\Im}(w),K(x)\rangle_{\mathtt u}|
\leq\frac{\epsilon_0}{2}R(x)
\qquad(w\in\mathcal U,\ x\in\Lambda_+).
$$
By \Cref{prop.holderlocal},
\eqref{eqn:one branch distortion Section15},
\eqref{eqn:recall contraction metric}, and
$h,h^{-1}\in\mathcal F_c(\Lambda_+)$,
$$
\sum_{\gamma\in\mathcal H}
\sup_{w\in\mathcal U}\|\mathcal L_w^\gamma\|
\ll\sum_{\gamma\in\mathcal H}
e^{-(1-\epsilon_0/2)R_\gamma}<\infty,
$$
where the last inequality follows from
\eqref{eqn:real branch summability Section15}.
Thus $\mathcal L_w=\sum_{\gamma\in\mathcal H}\mathcal L_w^\gamma$
converges uniformly in operator norm on $\mathcal U$, and
$w\mapsto\mathcal L_w$ is analytic there.

\smallskip

\emph{The decomposition and (1)--(3).} By \Cref{prop:L0 decomposition}, $1$ is a
simple isolated eigenvalue of $\mathcal L_0$, so we may fix a contour
enclosing $1$ and no other point of $\sigma(\mathcal L_0)$. Analytic 
perturbation theory \cite[Chapter~VII, Section~1]{Kat95} and \cite[Proposition 4.6]{PP_Zeta} apply:
because the unperturbed eigenvalue is simple, no branching occurs, and
one obtains, after shrinking $\cal U$ if necessary, analytic maps $w\mapsto\kappa_w \in \mathbb C$, $w\mapsto\mathcal P_w $, 
and $w\mapsto\mathcal R_w $ on $\cal U$ satisfying
\eqref{eqn:PR decomposition of transfer operator} and the first half of
\eqref{eqn:PR decomposition}, with $\mathcal P_w$ the rank-one
eigenprojection of $\kappa_w$. Hence (1) follows. By the upper semicontinuity of the spectral radius, shrinking $\cal U$ while
keeping the same contour yields the uniform spectral separation
$\rho(\mathcal R_w)<|\kappa_w|-\eta$ on $\cal U$ in (2) for some $\eta > 0$; in particular, $\kappa_w \neq 0$.

We now prove (3), by applying \cite[Lemma~6.12]{CS_local}. Let $w\in\mathcal O := \cal U \cap \fa$. By
\Cref{lem:spectral radius at most one} we have $\rho(\mathcal L_w)\le1$,
and $\kappa_w$ is an eigenvalue of $\mathcal L_w$, so
$|\kappa_w|\le1$. By \Cref{prop:irrationality}, if
 $|\kappa_w|=1$, then $\kappa_w \in \{e^{ia}:a/\pi\notin\mathbb Q\} \cup \{1 \} $, which is
totally disconnected.

To apply \cite[Lemma~6.12]{CS_local}, it remains to show that there is no 
nonconstant analytic curve $t\mapsto w(t)$ in $\mathcal O$ with
$w(0)=0$ and $\kappa_{w(t)} = 1$ for all $t$. If such a curve exists, then by \Cref{prop:lattice function} and
\Cref{lem:periodic obstruction} with $a=0$, we have for each $t$ that 
$\langle w(t),K_n(x)\rangle_{\mathtt u}\in2\pi\mathbb Z$ for every
$n\ge1$ and $x\in\operatorname{Fix}(T^n)$. By the non-arithmeticity in \Cref{thm.denselength}, this implies $w(t) = 0$ for all $t$, which is a contradiction. Hence, we can apply \cite[Lemma~6.12]{CS_local} and have that, after shrinking
$\mathcal U$ once more, $|\kappa_w|<1$ for every
$w\in\mathcal O\smallsetminus\{0\}$.

\smallskip

\emph{The eigenfunction.}
Since $\nu_+(\mathcal P_0\mathbf1)=1$, analyticity of
$\mathcal P_w$ allows us to shrink $\mathcal U$ so that
$\nu_+(\mathcal P_w\mathbf1)\neq0$ there.
Formula \eqref{eqn:Hw} therefore defines an analytic family
$H_w$ with $H_0=\mathbf1$ and $\nu_+(H_w)=1$.
Uniqueness follows because the range of $\mathcal P_w$ is the
one-dimensional eigenspace of $\kappa_w$.

\smallskip

\emph{The uniform bound
\eqref{eqn:uniform remainder powers Section15}.} 
Since $w \mapsto \kappa_w$ is analytic on $\cal U$ with $\kappa_0 = 1$, we can shrink $\cal U$ so that $0 < \abs{\kappa_w} < 2$ for $w \in \cal U$. As $\rho(\cal R_w) < \abs{\kappa_w} - \eta$ for $w \in \cal U$ and $A_w := \kappa_w^{-1} \cal R_w$, there exist constants $0<\vartheta_0<\vartheta<1$ such that for each $w \in \cal U$,
$$
\sigma(A_w)\subset\{\zeta\in\mathbb C:|\zeta|\le\vartheta_0\}.
$$

After shrinking $\mathcal U$, we may assume that
$\overline{\mathcal U}$ is compact and that $A_w$ is analytic
on a neighborhood of $\overline{\mathcal U}$, where the same
spectral bound holds. Set $\mathcal O:=\mathcal U\cap\mathfrak a$. 
The circle $|\zeta|=\vartheta$ is
therefore a resolvent contour common to all these operators, and by
Cauchy's estimates on a neighborhood of $\overline{\mathcal U}$, the quantity
$
M_\alpha:=\sup_{w\in\overline{\mathcal O},\ |\zeta|=\vartheta}
\bigl\|\partial_w^\alpha(\zeta I-A_w)^{-1}\bigr\|
$
is finite for each $\alpha$. Dunford's formula and differentiation
under the integral sign give
$$
\partial_w^\alpha A_w^n
=\frac1{2\pi i}\oint_{|\zeta|=\vartheta}
\zeta^n\,\partial_w^\alpha(\zeta I-A_w)^{-1}\,d\zeta .
$$
Thus $\|\partial_w^\alpha A_w^n\|\leq
\vartheta M_\alpha\vartheta^n$, proving
\eqref{eqn:uniform remainder powers Section15}.
\end{proof}

\subsection{Expansion of the leading eigenvalue}
\label{subsec:expansion} 
Let
$\mathcal O\subset\mathfrak a$ be the neighborhood of $0$ provided by
\Cref{thm:decomposition of transfer operators}, and let
$\mathtt u\in\mathfrak a$ be as in \eqref{eqn:mixing direction}. When
convenient, we write $w\in\mathfrak a$ in the coordinates
$(s,v)\in\mathbb R\times\ker\psi$ determined by $w=s\mathtt u+v$, and we
set
\begin{equation}
\label{eqn:tilde kappa}
\widetilde\kappa(w):=\kappa_w e^{-is\nu_+(R)} = \kappa_w e^{- i \psi(w) \nu_+ (R)},
\qquad w\in\mathcal O .
\end{equation}

\begin{proposition}
\label{cor:expansion of complex eigenvalue around 0}
There is a positive definite symmetric linear operator $S$ on
$\mathfrak a$ such that for all $w$ in a neighborhood of $0$ in $\mathfrak a$,
$$
\widetilde\kappa(w)
=1-\frac12\,\langle Sw,w\rangle_{\mathtt u}
+o\bigl(\|w\|_{\mathtt u}^{2}\bigr).
$$
\end{proposition}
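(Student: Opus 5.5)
The plan is the standard second-order perturbation computation for the leading eigenvalue of a twisted transfer operator, followed by a non-degeneracy argument from non-arithmeticity. By \Cref{thm:decomposition of transfer operators}, $w\mapsto\kappa_w$ and $w\mapsto H_w$ are analytic near $0$, with $\kappa_0=1$, $H_0=\mathbf 1$, and $\nu_+(H_w)=1$. Since $\mathcal L_0^*\nu_+=\nu_+$ by \eqref{eqn:invariance of transfer operator and measure}, integrating the relation $\mathcal L_wH_w=\kappa_wH_w$ against $\nu_+$ gives
$$
\kappa_w=\int_{\Lambda_+}\mathcal L_0\bigl(e^{i\langle w,K\rangle_{\mathtt u}}H_w\bigr)\,d\nu_+=\int_{\Lambda_+}e^{i\langle w,K\rangle_{\mathtt u}}H_w\,d\nu_+ .
$$
To absorb the factor $e^{-is\nu_+(R)}$, I center the cocycle. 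Set $K^\circ:=K-R\mathtt u$, which takes values in $\ker\psi$ and has $\nu_+$-mean zero because $\int K\,d\nu_+=\nu_+(R)\mathtt u$. Orthogonality of $\mathbb R\mathtt u\oplus\ker\psi$ gives $\langle w,K\rangle_{\mathtt u}=sR+\langle v,K^\circ\rangle_{\mathtt u}$. Writing $\Phi:=\langle w,K\rangle_{\mathtt u}-\langle w,\nu_+(K)\rangle_{\mathtt u}$, we get $\widetilde\kappa(w)=\int e^{i\Phi}H_w\,d\nu_+$. So $\widetilde\kappa$ is the leading eigenvalue of the operator with mean-zero observable $\Phi_w(x)=\langle w,\bar K(x)\rangle_{\mathtt u}$, where $\bar K:=K-\nu_+(K)$. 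By \Cref{thm.exptail}, $\bar K$ has exponential moments.

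Next I expand to second order. Differentiating the eigen-equation at $0$, the first derivative is $\partial\widetilde\kappa(0)=i\,\nu_+(\langle\cdot,\bar K\rangle)=0$. For the derivative of the eigenfunction, the equation $(I-\mathcal L_0)\partial H=i\mathcal L_0(\langle\cdot,\bar K\rangle)$ has right side of mean zero. It is therefore solvable via $\sum_{n\ge1}\mathcal L_0^n(\cdot)$, using $\|\mathcal R_0^n\|\le C\vartheta^n$. Here one must check that $\mathcal L_0(\bar K)\in\mathcal F_c$; this holds by \Cref{prop.holderlocal} and branch summability, even though $\bar K$ itself is unbounded. The second derivative then gives the Green--Kubo formula
$$
\langle Sw,w\rangle_{\mathtt u}=\int\Phi_w^2\,d\nu_+ +2\sum_{n\ge1}\int\Phi_w\cdot\Phi_w\circ T^n\,d\nu_+
=\lim_{n\to\infty}\frac1n\int\Bigl(\sum_{j<n}\Phi_w\circ T^j\Bigr)^2d\nu_+ .
$$
The series converges exponentially because $\int\Phi_w\,\Phi_w\circ T^n\,d\nu_+=\int\mathcal L_0^n(\Phi_w)\Phi_w\,d\nu_+$, and by \Cref{prop:L0 decomposition} the norm of $\mathcal L_0^{n}(\Phi_w)=\mathcal R_0^{n-1}\mathcal L_0(\Phi_w)$ decays like $\vartheta^n$. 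Taylor's theorem with the analytic remainder yields the stated expansion with $S$ symmetric and positive semidefinite, since $S$ is a limit of variances.

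The main obstacle is positive definiteness. Suppose $\langle Sw,w\rangle_{\mathtt u}=0$ for some $w\ne0$. The standard argument (e.g.\ Parry--Pollicott style, via the martingale/coboundary decomposition with the transfer operator) then shows that $\Phi_w=g-g\circ T$ with $g=\sum_{n\ge1}\mathcal L_0^n\Phi_w\in\mathcal F_c(\Lambda_+)$. Summing over a periodic orbit gives $\langle w,K_n(x)\rangle_{\mathtt u}=n\langle w,\nu_+(K)\rangle_{\mathtt u}$ for all $x\in\operatorname{Fix}(T^n)$. Consequently, $e^{it\langle w,\cdot\rangle}$ composed with periods is an $a$-function obstruction for every $t$: by \Cref{lem:periodic obstruction}-type reasoning, $tw$ with $a=t\langle w,\nu_+K\rangle$ mod $2\pi$ gives eigenvalues $e^{ia}$ of $\mathcal L_{tw}$ of modulus $1$ for all small $t$. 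This contradicts $|\kappa_{tw}|<1$ from \Cref{thm:decomposition of transfer operators}(3). Equivalently, the linear functional $v\mapsto\langle w,v\rangle_{\mathtt u}$ would be affine in $n$ on the period set, which is incompatible with the density in \Cref{thm.denselength}. I expect the delicate point to be justifying the coboundary step in $\mathcal F_c$ despite $K$ being unbounded; I would handle it by working with $\mathcal L_0\Phi_w$, which lies in $\mathcal F_c$, and verifying the identity $\nu_+$-almost everywhere and then on periodic points by continuity of $g$ on cylinders.
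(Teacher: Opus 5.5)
Your proposal is correct and is essentially the paper's argument. The paper also derives the Green--Kubo formula from the perturbed eigenfunction and rewrites it as $\int_{\Lambda_+}\bigl(\mathcal L_0(f^2)-(\mathcal L_0f)^2\bigr)\,d\nu_+$ with $f=\sum_{k\ge0}\mathcal L_0^k\Phi_w$; this equals your $\int\chi^2\,d\nu_+$ for $\chi=f-(\mathcal L_0f)\circ T$, and in the degenerate case the paper likewise extracts an everywhere-defined H\"older coboundary contradicting $|\kappa_w|<1$.

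Two minor slips. The coboundary should read $\Phi_w=g\circ T-g$, and it gives the $a$-function directly via $\zeta=e^{ig}$, since \Cref{lem:periodic obstruction} only runs from $a$-functions to periods; your alternative contradiction with \Cref{thm.denselength} is valid as stated.
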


The main point is the positive
definiteness of $-D^2\widetilde\kappa(0)$.
The analogous conclusion in
\cite[Lemma~8(3)]{Pollicott_Sharp_Inv} follows from symmetry
of a homological cocycle. That symmetry is unavailable for
our $\mathfrak a$-valued Busemann cocycle.

Before proving the proposition, we establish a regularity
lemma to handle the unbounded return cocycle.
For $w\in \frak a$, set
$$
\Phi_w(x):=\langle w,K(x)\rangle_{\mathtt u}-\psi(w) \nu_+(R)  \qquad (x\in \La_+).
$$
For every $G\in\mathcal F_c(\Lambda_+,\mathbb C)$ and $j\ge 1$, 
the series
$$
\mathcal L_0(\Phi_w^jG)(x)
:=\frac1{h(x)}\sum_{\gamma\in\mathcal H}
e^{-R(\gamma x)}h(\gamma x)\Phi_w(\gamma x)^jG(\gamma x)
$$
converges absolutely and uniformly on $\Lambda_+$.
Indeed, $|\Phi_w|\ll_w1+R$ by \Cref{lem.uniformly positive}.
Using $(1+u)^je^{-u}\ll_j e^{-(1-\epsilon_0)u}$ for $u\geq1$
and \eqref{eqn:real branch summability Section15}, we obtain
\begin{equation}
\label{eqn:centered moment summability}
\sum_{\gamma\in\mathcal H}\sup_{x\in\Lambda_+}
e^{-R(\gamma x)}|\Phi_w(\gamma x)|^j
\ll_{w,j}
\sum_{\gamma\in\mathcal H}\sup_{x\in\Lambda_+}
e^{-(1-\epsilon_0)R(\gamma x)}<\infty.
\end{equation}
The claimed convergence follows since $G$, $h$, and $h^{-1}$ are bounded.

\begin{lemma}
\label{lem:regularity of centered moments}
For every $w\in\mathfrak a$,
$G\in\mathcal F_c(\Lambda_+,\mathbb C)$, and integer $j\geq1$,
we have $\mathcal L_0(\Phi_w^jG)\in\mathcal F_c(\Lambda_+,\mathbb C)$.
In particular, $\mathcal L_0\Phi_w\in\mathcal F_c(\Lambda_+)$.
\end{lemma}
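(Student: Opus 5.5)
We must show that $F:=\mathcal L_0(\Phi_w^jG)$ has finite $\|\cdot\|_c$ norm. Boundedness is already contained in \eqref{eqn:centered moment summability}, since $G$, $h$, and $h^{-1}$ are bounded. So only the seminorm $|F|_c$ needs work. We follow the four-term decomposition from the proof of \Cref{lem:L0 action on B} with $n=1$. The one new feature is that the factor $\Phi_w^j$ is unbounded, so its variation has to be absorbed into the exponential weight. The constant $\epsilon_0$ from \eqref{eqn:real branch summability Section15} provides room for this.

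Fix $\gamma_0\in\mathcal H$ and distinct $x,y\in\gamma_0\Lambda_+$. For each $\gamma\in\mathcal H$, write the difference of the $\gamma$-summands at $x$ and $y$ as a telescoping sum of five terms. These come from varying, one at a time, the factors $h(\cdot)^{-1}$, $e^{-R(\gamma\,\cdot)}$, $h(\gamma\,\cdot)$, $\Phi_w(\gamma\,\cdot)^j$ and $G(\gamma\,\cdot)$. The terms involving $h^{-1}$, $h(\gamma\cdot)$ and $G(\gamma\cdot)$ are handled as before. Each is bounded by $d_{\bar\psi}(x,y)^c$ times $\|h\|_c$, $\|h^{-1}\|_c$ or $\|G\|_c$, using \eqref{eqn:recall contraction metric} in the latter two cases. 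The remaining factors are bounded by $e^{-R(\gamma\cdot)}|\Phi_w(\gamma\cdot)|^j\ll e^{-(1-\epsilon_0)R_\gamma}$, and this is summable by \eqref{eqn:real branch summability Section15} together with \eqref{eqn:one branch distortion Section15}.

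For the exponential term, \Cref{prop.holderlocal} with $n=1$ gives $|R(\gamma x)-R(\gamma y)|\le C d_{\bar\psi}(x,y)^{c_0}\ll d_{\bar\psi}(x,y)^c$, because $d_{\bar\psi}$ is bounded. Since this difference is uniformly bounded, the mean value theorem yields $|e^{-R(\gamma x)}-e^{-R(\gamma y)}|\ll e^{-R_\gamma}d_{\bar\psi}(x,y)^c$. Multiplying by $|\Phi_w|^j\ll_w (1+R_\gamma)^j$ leaves a summable weight, as above.

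The main step is the $\Phi_w^j$ term. The distortion bound for $K$ in \Cref{prop.holderlocal} gives $|\Phi_w(\gamma x)-\Phi_w(\gamma y)|\ll\|w\|\,d_{\bar\psi}(x,y)^c$, uniformly in $\gamma$. Writing $a^j-b^j=(a-b)\sum_{i<j} a^ib^{j-1-i}$ and using $|\Phi_w|\ll_w 1+R_\gamma$ on $\gamma\Lambda_+$, we get $|\Phi_w(\gamma x)^j-\Phi_w(\gamma y)^j|\ll_{w,j}(1+R_\gamma)^{j-1}d_{\bar\psi}(x,y)^c$. Multiplied by $e^{-R(\gamma\cdot)}\le e^{C}e^{-R_\gamma}$, this gives the summable bound $\sum_\gamma e^{-(1-\epsilon_0)R_\gamma}$.

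Summing the five estimates over $\gamma\in\mathcal H$ shows $|F|_{c,\gamma_0}\ll_{w,j}\|G\|_c$, uniformly in $\gamma_0$. Hence $F\in\mathcal F_c(\Lambda_+,\mathbb C)$. The final assertion is the case $G=\mathbf 1$, $j=1$; the result is real-valued because $\Phi_w$ and $h$ are real.
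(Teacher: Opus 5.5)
Your argument is correct, but it follows a different route from the paper. You bound the H\"older seminorm of $\mathcal L_0(\Phi_w^jG)$ directly. To do this you redo the branch-by-branch Lasota--Yorke estimate of \Cref{lem:L0 action on B} and absorb the unbounded factor $\Phi_w^j$ and its variation into the $\epsilon_0$ margin of \eqref{eqn:real branch summability Section15}. Your mean-value step uses $\min\{R(\gamma x),R(\gamma y)\}\geq R_\gamma-C$ from \eqref{eqn:one branch distortion Section15}; the boundedness of $|R(\gamma x)-R(\gamma y)|$ alone is not what gives this, but you cite the right inequality.

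The paper does no new estimate at this point. It takes the operator-norm analyticity of $w\mapsto\mathcal L_w$ on $\mathcal F_c(\Lambda_+,\mathbb C)$ from \Cref{thm:decomposition of transfer operators}. From this, $t\mapsto e^{-it\psi(w)\nu_+(R)}\mathcal L_{tw}G$ is analytic near $t=0$ with values in $\mathcal F_c$. The pointwise bound \eqref{eqn:centered moment summability} justifies termwise differentiation of the branch formula. Since point evaluations are continuous on $\mathcal F_c$, this identifies $i^j\mathcal L_0(\Phi_w^jG)$ with the $j$-th $\mathcal F_c$-valued derivative, which lies in $\mathcal F_c$.

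The paper's route is shorter, because the branch estimates you carry out were already done once in proving analyticity. It also yields the derivative identity \eqref{eqn:regularized derivatives Section15} for free, and that identity is what is used next to differentiate the eigenrelation $\widetilde{\mathcal L}_tH_t=\widetilde\kappa(t)H_t$ in $\mathcal F_c$. Your route is more elementary and self-contained. It gives an explicit bound $\|\mathcal L_0(\Phi_w^jG)\|_c\ll_{w,j}\|G\|_c$ for every $w\in\mathfrak a$ directly, without first rescaling $w$ into the neighbourhood $\mathcal U$. On its own, however, it does not identify these functions as $\mathcal F_c$-derivatives of $t\mapsto\mathcal L_{tw}G$. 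For that later use you would still need to combine your bound with the analyticity, or run a Taylor-remainder version of your estimate in $\|\cdot\|_c$.
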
 
\begin{proof}
By \Cref{thm:decomposition of transfer operators}, the map
$t\mapsto e^{-it\psi(w)\nu_+(R)}\mathcal L_{tw}G$ is analytic near
$0$ with values in $\mathcal F_c(\Lambda_+,\mathbb C)$.
Since $|e^{it\Phi_w}|=1$ for real $t$,
\eqref{eqn:centered moment summability} justifies termwise
differentiation of its branch formula, giving
\begin{equation}
\label{eqn:regularized derivatives Section15}
\left.\frac{d^j}{dt^j}\right|_{t=0}
\bigl(e^{-it\psi(w)\nu_+(R)}\mathcal L_{tw}G\bigr)
=i^j\mathcal L_0(\Phi_w^jG)\qquad(j\geq1).
\end{equation}
The left-hand side belongs to
$\mathcal F_c(\Lambda_+,\mathbb C)$ by analyticity.
 \end{proof}

\begin{proof}[Proof of \Cref{cor:expansion of complex eigenvalue around 0}]
For $w\in\mathcal O$,  recall $H_w \in \F_c(\La_+, \mathbb C)$ from 
\eqref{eqn:Hw}. By \Cref{thm:decomposition of transfer operators} the
maps $w\mapsto\kappa_w$ and $w\mapsto H_w$ are analytic on
$\mathcal O$, and $H_0=\mathbf 1$.

By positivity of the branch weights,
\eqref{eqn:invariance of transfer operator and measure}, and monotone
convergence, $\mathcal L_0$ extends to $L^1(\nu_+)$,
given almost everywhere by the same branch formula:
\begin{equation}
\label{contraction}
 \|\mathcal L_0F\|_{L^1(\nu_+)}
 \leq\int_{\Lambda_+}\mathcal L_0|F|\,d\nu_+
 =\|F\|_{L^1(\nu_+)}.
\end{equation}

\emph{The linear term vanishes.}
Differentiating the identity 
$\kappa_w = \int_{\Lambda_+}\mathcal L_w(H_w)\,d\nu_+$ at $w = 0$, it follows from  $\mathcal L_0^*\nu_+=\nu_+$ and $\int_{\La_+} H_w d \nu_+ = 1$ that 
\begin{align*}
D\kappa_0 (\cdot)
&=\int_{\Lambda_+}\bigl(D_w\mathcal L_w|_{w=0}\bigr)(H_0)\,d\nu_+
 +\int_{\Lambda_+}\mathcal L_0\bigl(D_wH_w|_{w=0}\bigr)\,d\nu_+\\
&=\int_{\Lambda_+}\mathcal L_0
 \bigl(i\langle\,\cdot\,,K\rangle_{\mathtt u}H_0\bigr)\,d\nu_+
 +D_w\Bigl|_{w=0}\Bigl(\int_{\Lambda_+}H_w\,d\nu_+\Bigr)\\
&=i\Bigl\langle\,\cdot\,,\int_{\Lambda_+}K\,d\nu_+\Bigr\rangle_{\mathtt u}
 =i\bigl\langle\,\cdot\,,\nu_+(R)\,\mathtt u\bigr\rangle_{\mathtt u},
\end{align*}
the last equality by \eqref{eqn:mixing direction}. Hence
$D\kappa_0(w)=i\nu_+(R) \psi(w)$, and therefore $(D\widetilde\kappa(0))(\cdot)=0$. By
Taylor's theorem,
\begin{equation}
\label{eq:taylor-tildekappa}
\widetilde\kappa(w)
=1+\frac12\,D^2\widetilde\kappa(0)(w,w)
+o\bigl(\|w\|_{\mathtt u}^2\bigr),
\end{equation}
so it suffices to prove that the quadratic form
$$
Q(w):=-D^2\widetilde\kappa(0)(w,w)
$$
is positive definite on $\mathfrak a$.
Writing
$Q(w)=\langle Sw,w\rangle_{\mathtt u}$ for the corresponding positive
definite symmetric operator $S$, \eqref{eq:taylor-tildekappa} gives the
asserted expansion.

\emph{Setup along a ray.} Fix
$w=s\mathtt u+v\in\mathfrak a\smallsetminus\{0\}$ in the coordinate $\R \times \ker \psi$, and write the function $\Phi := \Phi_w$ as in \Cref{lem:regularity of centered moments}.
By \eqref{eqn:mixing direction},
$\int_{\La_+}\Phi\,d\nu_+=0$.
By \Cref{Kk2} and the boundedness of $d\nu_+/d\nu$, there exists $\varepsilon_1 > 0$ such that 
\begin{equation}
\label{eqn:exponential moment}
\int_{\Lambda_+}e^{\epsilon_1|\Phi|}\,d\nu_+<\infty .
\end{equation}

For $t\in\mathbb R$ with $|t|$ small, write $\widetilde\kappa(t):=\widetilde\kappa(tw)$ and
$H_t:=H_{tw}$, and define
\begin{equation}
\label{eq:centered-operator}
\widetilde{\mathcal L}_t(F):=\mathcal L_0\bigl(e^{it\Phi}F\bigr),
\qquad F\in \mathcal F_c(\Lambda_+,\mathbb C).
\end{equation}
Then 
$\widetilde{\mathcal L}_t=e^{-it\nu_+(R)s}\mathcal L_{tw}$, and hence
$\widetilde{\mathcal L}_tH_t=\widetilde\kappa(t)H_t$. By \Cref{lem:regularity of centered moments},
$\mathcal L_0\Phi\in\mathcal F_c(\Lambda_+)$.

\emph{The covariance formula.}
The derivative identities
\eqref{eqn:regularized derivatives Section15} allow us to
differentiate the eigenrelation in $\mathcal F_c(\Lambda_+,\mathbb C)$.
Writing $\dot H_0=\left.\frac{d}{dt}\right|_{t=0}H_t$ and using
$\widetilde\kappa'(0)=0$, we obtain
\begin{equation}
\label{eq:poisson}
(I-\mathcal L_0)\dot H_0=i\mathcal L_0\Phi,
\qquad \nu_+(\dot H_0)=0.
\end{equation}
Both $\mathcal L_0\Phi$ and $\dot H_0$ have mean zero.
The spectral gap of $\mathcal L_0$ therefore gives
\begin{equation}
\label{eq:dotH}
\dot H_0
=i(1-\mathcal R_0)^{-1}\mathcal L_0\Phi
=i\sum_{n\geq1}\mathcal L_0^n\Phi,
\end{equation}
with convergence in $\mathcal F_c(\Lambda_+,\mathbb C)$.
Differentiating the eigenrelation twice and integrating against
$\nu_+$ gives
$$
\widetilde\kappa''(0)
=-\nu_+(\Phi^2)+2i\nu_+(\Phi\dot H_0).
$$
Consequently,
\begin{equation}
\label{eq:D2-formula}
Q(w)=\nu_+(\Phi^2)
+2\sum_{n\geq1}\nu_+(\Phi\,\mathcal L_0^n\Phi).
\end{equation}
The series is absolutely convergent because
$\|\mathcal L_0^n\Phi\|_\infty$ decays exponentially for $n\geq1$
and $\Phi\in L^1(\nu_+)$.

Set
$$
f:=\Phi+(1-\mathcal R_0)^{-1}\mathcal L_0\Phi
=\sum_{k\ge0}\mathcal L_0^k(\Phi).
$$
Then $f$ is real-valued and
$f-\Phi\in\mathcal F_c(\Lambda_+)$. 
Since $\Phi\in L^2(\nu_+)$ by \eqref{eqn:exponential moment} and
$\mathcal L_0f=f-\Phi$ is bounded, we have
$f,\mathcal L_0f\in L^2(\nu_+)$.
Substituting $\Phi=f-\mathcal L_0f$ and
$\sum_{k\geq1}\mathcal L_0^k\Phi=\mathcal L_0f$ into
\eqref{eq:D2-formula}, we obtain
\be 
\label{eq:jensen-form}
\begin{aligned}
Q(w)
&=\int_{\Lambda_+}(f-\mathcal L_0f)^2\,d\nu_+
 +2\int_{\Lambda_+}(f-\mathcal L_0f)(\mathcal L_0f)\,d\nu_+\\
&=\int_{\Lambda_+} f^2-(\mathcal L_0f)^2 \,d\nu_+
 =\int_{\Lambda_+} \mathcal L_0(f^2)-(\mathcal L_0f)^2 \,d\nu_+ .
\end{aligned}
\ee
In the last equality, we used $\L_0^* \nu_+ = \nu_+$.

Moreover, $\mathcal L_0f=f-\Phi\in\mathcal F_c(\Lambda_+)$.
Applying \Cref{lem:regularity of centered moments} to the first
two terms in
$$
f^2=\Phi^2+2\Phi(f-\Phi)+(f-\Phi)^2,
$$
and using the boundedness of $\mathcal L_0$ on
$\mathcal F_c(\Lambda_+)$ for the last term, gives
$\mathcal L_0(f^2)\in\mathcal F_c(\Lambda_+)$.
In particular, the integrand in \eqref{eq:jensen-form} is continuous.

\emph{Positivity.}
 Since $\L_0$ is a positive operator on $\C_b(\La_+)$ and 
$\mathcal L_0\mathbf 1=\mathbf 1$, Jensen's inequality gives
$\mathcal L_0(f^2)\ge(\mathcal L_0f)^2$ pointwise, so $Q(w)\ge0$.
Suppose $Q(w)=0$ for some $w\ne0$. Since $Q$ is a quadratic form and
$\Phi_{tw}=t\Phi_w$, we may replace $w$ by $tw$ for small $t>0$ and so
assume $w\in\mathcal O$. By \eqref{eq:jensen-form}, the full support of $\nu_+$, and the continuity of $\L_0(f^2)- (\L_0 f)^2$ on $\La_+$, we have $\L_0(f^2) = (\L_0 f)^2$ on $\La_+$. Together with the definition of $\L_0$ whose branch weights are all strictly positive, this implies that $f$ is constant on $T^{-1}(x)$ for each $x \in \La_+$.
Fix $\ga \in \cal H$ and define $\varphi : \La_+ \to \R$ by $\varphi(x) := f(\ga x)$. Then $f = \varphi \circ T$ and
$\varphi=\mathcal L_0f\in\mathcal F_c(\Lambda_+)$, and hence $\Phi = \varphi \circ T - \varphi$, i.e.,
\begin{equation}
\label{eq:coboundary}
\langle w,K(\cdot)\rangle_{\mathtt u}-\nu_+(R)s=(\varphi\circ T-\varphi)(\cdot)
\qquad\text{on }\Lambda_+ .
\end{equation}

Set $\zeta:=e^{i\varphi}$, so that $|\zeta|\equiv1$. 
Since $\varphi\in\mathcal F_c(\Lambda_+)$,
we have $\zeta\in\mathcal F_c(\Lambda_+,\mathbb C)$.
Exponentiating \eqref{eq:coboundary} now gives
$$
e^{\,i(\langle w,K\rangle_{\mathtt u}-\nu_+(R)s)}\,\zeta
=\zeta\circ T
\qquad\text{on }\Lambda_+ ,
$$
so $\langle w,K(\cdot)\rangle_{\mathtt u}$ is an $a$-function in the sense of
\Cref{def:a-function}, for the $a\in[0,2\pi)$ with
$\nu_+(R)s\in a+2\pi\mathbb Z$. Hence
$\rho(\mathcal L_w)=1$ by \eqref{eqn:rho equals one}. On the other hand, by \Cref{thm:decomposition of transfer operators}, this is impossible unless $w = 0$.
Therefore $Q(w)>0$
for every $w\ne0$, so $Q$ is positive definite, completing the proof.
\end{proof}

\section{Suspension models and factor maps}
\label{sec.localmixingsymbolic}
We construct suspension models for the translation flow on
$\Omega_\psi$ and the diagonal flow on $\Omega:=\Omega_\rho$.
Write $\widetilde\Omega:=\Lambda_\rho^{(2)}\times\mathfrak a$
and retain the projection $\pi_\psi$ and BMS measures of \Cref{ss:rep}.
The measures have the normalization \eqref{eqn:normalizedBMS},
so $m_\psi(\Omega_\psi)=1$.
Retain the splitting $\mathfrak a=\mathbb R\ut\oplus\ker\psi$
from \eqref{eqn:mixing direction} and define the following functions on $\La_+$:
$$
K^\circ(\cdot):=K(\cdot)-R(\cdot)\ut,\quad
K_n^\circ(\cdot):=\sum_{j=0}^{n-1}(K^\circ\circ T^j)(\cdot)=K_n(\cdot)-R_n(\cdot)\ut,
\quad K_0^\circ :=0
$$
where $n \in \N$.
The translations
$$
\mathcal W_t(\xi,\eta,z):=(\xi,\eta,z+t\ut),\qquad
\mathcal W_t(\xi,\eta,s):=(\xi,\eta,s+t)
$$
on $\widetilde\Omega$ and $\widetilde\Omega_\psi$ descend to the
quotients $\Omega$ and $\Omega_{\psi}$, respectively. 
For fixed $\ww\in\ker\psi$, we study correlations under the maps
$$
q\mapsto qa_{t\ut+\ell(t)\ww},
\qquad \ell(t)=0\ \text{or}\ \sqrt t.
$$

Recall from \Cref{ss:rep} that $\pi_\psi:\Omega\to\Omega_\psi$
is a trivial principal $\ker\psi$-bundle. Fix a continuous section
$\sigma:\Omega_\psi\to\Omega$. It gives the identification
$$
\Omega_\psi\times\ker\psi\to\Omega,\qquad
(q,v)\mapsto\sigma(q)a_v.
$$
By \eqref{mpsi}, for every $F\in C_c(\Omega)$,
\begin{equation}
\label{eqn:BMS product in section coordinates}
\int_\Omega F\,d\mathsf m_\psi
=\int_{\Omega_\psi}\int_{\ker\psi}
F(\sigma(q)a_v)\,d\operatorname{Leb}_{\ker\psi}(v)\,dm_\psi(q).
\end{equation}

\subsection{The contracting skew product and the suspension spaces}
\label{subsection:correlation function on shift space}

Set
$$ \Lambda_-:=U_0=
\Lambda_\rho\smallsetminus
{\mathcal B_{\bar\psi}}\left(\Delta_0,\frac1{2N_0\eta}\right).
$$

\begin{lemma}
\label{lem:shrinking fibers}
The set $\Lambda_-$ is a compact neighborhood of $p_0$ in $\Lambda_\rho$
and satisfies
$\Lambda_-\cap\overline{\Delta_0}=\emptyset$ and
$ \Lambda_b\subset\Lambda_-$.
For every $\gamma\in\mathcal H$ and $y,y'\in\Lambda_-$,
$$
\gamma^{-1}\Lambda_-\subset\Lambda_-,
\qquad
d_{\bar\psi}(\gamma^{-1}y,\gamma^{-1}y')
\leq\lambda d_{\bar\psi}(y,y'),
$$
where $\lambda$ is as in \eqref{eqn:recall contraction metric}.
\end{lemma}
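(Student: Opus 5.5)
The plan is to assemble the lemma from facts already established about $U_0$ in \Cref{sec.expandingmap} and \Cref{sec.nonintegrability}. The argument has four steps.

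\textbf{Compactness and neighborhood.} Since $\Lambda_-=U_0$ is the complement in $\Lambda_\rho$ of the set $\mathcal B_{\bar\psi}(\Delta_0,(2N_0\eta)^{-1})$, it suffices to check that this set is open in $\Lambda_\rho\smallsetminus\{p_0\}$ and has compact closure there. Openness follows from continuity of $D_{\bar\psi}$ on $\Lambda_\rho\smallsetminus\{p_0\}$. Relative compactness follows from \Cref{lem.sphereeuccompare}: $D_{\bar\psi}(\xi,\xi_0)\to\infty$ as $\xi\to p_0$, uniformly for $\xi_0\in\Delta_0$. Hence the complement $\Lambda_-$ is closed in the compact set $\Lambda_\rho$ and contains a neighborhood of $p_0$.

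\textbf{Disjointness and $\Lambda_b$.} Points of $\Delta_0$ have $D_{\bar\psi}$-distance $0$ from $\Delta_0$, so $\Lambda_-\cap\Delta_0=\emptyset$. Since $\Delta_0$ is compact, $\overline{\Delta_0}=\Delta_0$. The inclusion $\Lambda_b\subset U_0$ was shown right after the definition of $\Lambda_b$.

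\textbf{Invariance.} Each $\gamma\in\mathcal H$ is a product of first-return branches, hence a product of elements of $\mathcal H_0(i,j)$ along a path that starts and ends at the index $0$. Applying \Cref{lem.LP6.62} repeatedly, as already noted before \Cref{lem.U0shadow}, gives $\gamma^{-1}U_0\subset U_0$.

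\textbf{Contraction.} This is the main step. By \eqref{eqn.metrics_sph} applied to $\gamma^{-1}$,
$$d_{\bar\psi}(\gamma^{-1}y,\gamma^{-1}y')=d_{\bar\psi}(y,y')\,e^{-\frac12\bar\psi(\beta_{\gamma^{-1}y}(e,\gamma^{-1})+\beta_{\gamma^{-1}y'}(e,\gamma^{-1}))}.$$
The cocycle identity and $G$-equivariance of $\beta$ give $\beta_{\gamma^{-1}y}(e,\gamma^{-1})=\beta_{y}(\gamma,e)$. Since $\gamma\in\mathcal H=\mathcal H_{\mathrm{ret}}^{n_0}$ has word length $n_0$, \Cref{lem.choice of n_0}(3) applied with $\varphi=\bar\psi$ yields $\bar\psi(\beta_y(\gamma,e))\ge C_R$ for all $y\in U_0$, and likewise for $y'$. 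The exponential factor is therefore at most $e^{-C_R}=\lambda$.

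The main obstacle is sign bookkeeping: one must confirm that the Busemann term appearing in \eqref{eqn.metrics_sph} for $g=\gamma^{-1}$ is exactly the quantity controlled in \Cref{lem.choice of n_0}(3), rather than its negative or its image under $\i$. Because $\bar\psi\circ\i=\bar\psi$, any stray $\i$ is harmless, and the identity $\beta_{g\xi}(e,g)=\beta_\xi(g^{-1},e)$ settles the sign.
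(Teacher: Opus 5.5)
Your proof is correct and follows the paper's argument: compactness and disjointness come from the definition of $U_0$, invariance from iterating \Cref{lem.LP6.62}, and contraction from \eqref{eqn.metrics_sph} together with \Cref{lem.choice of n_0}(3), via $\beta_{\gamma^{-1}y}(e,\gamma^{-1})=\beta_y(\gamma,e)$. The only difference is the inclusion $\Lambda_b\subset\Lambda_-$: you cite the direct argument given after the definition of $\Lambda_b$ (the points $\gamma^{-m}p_0\in U_0$ converge to $y_{\gamma^{-1}}$), whereas the paper's proof uses the invariance of $\Lambda_-$ under $\langle\mathcal H\rangle_+^{-1}$ and Benoist's uniqueness of the minimal set; both are valid.
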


\begin{proof}
The compact neighborhood assertion and disjointness follow from
the definition. Iterating \Cref{lem.LP6.62} gives
$\gamma^{-1}\Lambda_-\subset\Lambda_-$. Since $\sH^{-1}$ is Zariski dense (\Cref{cor.Zdensesemigroup}), a nonempty $\sH^{-1}$-invariant compact set 
$\Lambda_-$ contains the unique $\sH^{-1}$-minimal set $\Lambda_b$
\cite{Benoist1997proprietes}.
The contraction estimate follows from \eqref{eqn.metrics_sph},
\Cref{lem.choice of n_0}(3), and the choice of $\lambda$ in
\eqref{eqn:recall contraction metric}.
\end{proof}

For $(x,x_-),(y,y_-)\in\Lambda_+\times\Lambda_-$ with $x$ and $y$
belonging to a common one-cylinder, define the premetric
$$
d_{\Lambda_+\times\Lambda_-}
\bigl((x,x_-),(y,y_-)\bigr)
:=d_{\bar\psi}(x,y)+d_{\bar\psi}(x_-,y_-).
$$

Let $c \in (0, c_0]$ be any number as in \eqref{fixc}.
Define $\mathcal F_c(\Lambda_-,\mathbb C)$ as in 
\Cref{def:bounded c locally}, imposing the $c$-H\"older condition on
all of $\Lambda_-$.
Similarly, define $\mathcal F_c(\Lambda_+\times\Lambda_-,\mathbb C)$
using $d_{\Lambda_+\times\Lambda_-}$ and imposing the H\"older condition
on each $(\gamma\Lambda_+)\times\Lambda_-$, $\gamma\in\mathcal H$.
In both spaces, use the norm $\|\cdot\|_c=\|\cdot\|_\infty+|\cdot|_c$.

Recall that $Tx=\gamma^{-1}x$ for $x\in\gamma\Lambda_+$.
Define
$$
\widehat T(x,x_-):=(Tx,\gamma^{-1}x_-)=(\gamma^{-1}x, \gamma^{-1} x_-)
\qquad(x\in\gamma\Lambda_+,\ x_-\in\Lambda_-).
$$
\begin{proposition}
\label{lem:measure on product space}
There is a unique $\widehat T$-invariant probability measure $\nu^\diamond$ on
$\Lambda_+\times\Lambda_-$ whose projection to $\Lambda_+$ is $\nu_+$, and it is ergodic.
Its disintegration $\{\nu_{+,x}:x\in\Lambda_+\}$ over $\nu_+$ can be chosen to be defined
at every $x\in\Lambda_+$, and there is $C_c>0$ with the following two properties. 
\begin{enumerate}
\item For every
$f\in\mathcal F_c(\Lambda_+\times\Lambda_-,\mathbb{C})$,
\begin{equation}
\label{eqn:stable disintegration Holder}
\|\bar f\|_c\leq C_c\|f\|_c,
\qquad
\bar f(x):=
\int_{\Lambda_-}f(x,x_-)\,d\nu_{+,x}(x_-).
\end{equation}
\item For $\varphi\in\mathcal F_c(\Lambda_-,\mathbb{C})$ and $x, y \in \La_+$ in a common
one-cylinder,
\begin{equation}
\label{eqn:conditional-kernel-Holder}
\left|
\int_{\Lambda_-}\varphi\,d\nu_{+,x}
-\int_{\Lambda_-}\varphi\,d\nu_{+,y}
\right|
\leq C_c\|\varphi\|_{\infty}d_{\bar\psi}(x,y)^c.
\end{equation}
\end{enumerate}
\end{proposition}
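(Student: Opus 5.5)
The plan is to build $\nu^\diamond$ as the natural extension of $(\Lambda_+,T,\nu_+)$ realized on the contracting fibers, with explicit conditional measures
$$
\nu_{+,x}:=\lim_{n\to\infty}\sum_{\gamma\in\mathcal H^n}
e^{-R_n(\gamma'x)}\frac{h(\gamma' x)}{h(x)}\,\delta_{\gamma^{-1}y_0}
$$
for suitable preimages. Concretely, fix $y_0\in\Lambda_-$. For $x\in\Lambda_+$ and $n\geq1$, the preimages of $x$ under $T^n$ are $\gamma x$, $\gamma\in\mathcal H^n$, each with weight $g_n(\gamma x):=e^{-R_n(\gamma x)}h(\gamma x)/h(x)$; these weights sum to $1$ by $\mathcal L_0\mathbf1=\mathbf1$. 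Then set
$$
\nu_{+,x}^{(n)}:=\sum_{\gamma\in\mathcal H^n}g_n(\gamma x)\,\delta_{\gamma^{-1}y_0}\cdots
$$
Here I have to get the indexing right. A point $(x,x_-)$ with past itinerary $\gamma=\gamma_1\cdots\gamma_n$, meaning $x=\gamma_1\cdots\gamma_n z$, has second coordinate $\gamma^{-1}$-images: $\widehat T^n(\gamma z,y)=(z,\gamma^{-1}y)$. So the fiber over $z$ should be $\lim_n\sum_{\gamma\in\mathcal H^n}g_n(\gamma z)\,\delta_{\gamma^{-1}y_0}$.

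Equivalently, for $\varphi\in\mathcal F_c(\Lambda_-)$, define $\int\varphi\,d\nu_{+,z}:=\lim_n\mathcal L_0^n(\Phi_n)(z)$, where $\Phi_n(w):=\varphi(\gamma^{-1}y_0)$ for $w\in\gamma\Lambda_+$, i.e.\ $\varphi$ applied to the $n$-step inverse-branch image. By \Cref{lem:shrinking fibers}, $\gamma^{-1}\Lambda_-\subset\Lambda_-$ with contraction $\lambda$, so replacing $y_0$ by any $y\in\Lambda_-$ changes $\varphi(\gamma^{-1}y)$ by at most $|\varphi|_c\lambda^{cn}\operatorname{diam}(\Lambda_-)^c$. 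Consecutive terms differ by $O(\lambda^{cn}|\varphi|_c)$ uniformly in $z$, since $\mathcal L_0$ is a sup-norm contraction. Hence the limit exists for every $z$, and Riesz representation gives probability measures $\nu_{+,z}$ defined everywhere.

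Define $\nu^\diamond:=\int\nu_{+,x}\,d\nu_+(x)$. Invariance reduces to the identity
$$
\nu_{+,z}=\sum_{\gamma\in\mathcal H}g_1(\gamma z)\,(\gamma^{-1})_*\nu_{+,\gamma z},
$$
which follows directly from the limit formula, combined with $\mathcal L_0^*\nu_+=\nu_+$. Uniqueness and ergodicity follow from the standard natural-extension argument: any invariant lift of $\nu_+$ has conditionals that are forced, by invariance and the contraction of the fibers, to equal the limit above. So its integral against $f(x,x_-)$ with $f$ H\"older equals $\lim\int\mathcal L_0^n(\ldots)\,d\nu_+$, which determines the measure. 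Ergodicity is inherited from ergodicity of $\nu_+$ (\Cref{cor: T invariant measure}), because invariant functions that are measurable with respect to the shrinking fibers depend only on $x$ modulo null sets.

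The main obstacle is the H\"older estimates (1) and (2). For (2), write $\int\varphi\,d\nu_{+,x}$ as the telescoping limit. Each summand $\Phi_n$ is constant on $n$-cylinders, so a direct Lasota--Yorke bound is not available. Instead I will use the explicit expansion
$$
\int\varphi\,d\nu_{+,x}=\sum_{\gamma\in\mathcal H^n}g_n(\gamma x)\,\varphi(\gamma^{-1}y_0)+O\bigl(\|\varphi\|_c\lambda^{cn}\bigr),
$$
and compare $x,y$ in a common one-cylinder term by term. The distortion bound $|\log g_n(\gamma x)-\log g_n(\gamma y)|\ll d_{\bar\psi}(x,y)^c$ from \Cref{prop.holderlocal} and $h\in\mathcal F_c$ yields $\sum_\gamma|g_n(\gamma x)-g_n(\gamma y)|\ll d_{\bar\psi}(x,y)^c$ uniformly in $n$. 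Letting $n\to\infty$ gives a bound in terms of $\|\varphi\|_\infty$ only, because the error term vanishes in the limit.

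For (1), split
$$
\bar f(x)-\bar f(y)=\int\bigl(f(x,\cdot)-f(y,\cdot)\bigr)\,d\nu_{+,x}+\Bigl(\int f(y,\cdot)\,d\nu_{+,x}-\int f(y,\cdot)\,d\nu_{+,y}\Bigr).
$$
The first term is at most $|f|_c\,d_{\bar\psi}(x,y)^c$. The second is handled by (2) applied to $\varphi=f(y,\cdot)$, which lies in $\mathcal F_c(\Lambda_-)$ with norm at most $\|f\|_c$; this is where we need (2) to hold with a bound depending only on $\|\varphi\|_\infty$, uniformly in $n$.
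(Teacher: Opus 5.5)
Your proposal is correct and follows essentially the paper's proof. You use the same fibre measures, obtained as limits of $\sum_{\gamma\in\mathcal H^n}p^{(n)}_\gamma(x)\,\delta_{\gamma^{-1}q_-}$, the same one-step recursion with $\mathcal L_0^*\nu_+=\nu_+$ for invariance, and the same uniqueness argument via $\widehat T^n$ and fibre contraction. Part~(2) rests on the same weight-variation bound $\sum_{\gamma}|p^{(n)}_\gamma(x)-p^{(n)}_\gamma(y)|\ll d_{\bar\psi}(x,y)^c$, and part~(1) uses the identical splitting.

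There are two small differences. You define the limit only on H\"older test functions, so your Riesz step needs $\mathcal F_c(\Lambda_-)$ to be dense in $C(\Lambda_-)$; this follows from \Cref{lem:Riemannian-visual-Holder} and Stone--Weierstrass, whereas the paper gets weak convergence directly from uniform continuity. You also prove ergodicity by showing that invariant functions factor through the projection, while the paper applies uniqueness of the invariant lift to the normalized restriction of $\nu^\diamond$ to an invariant set.
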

\begin{proof}
 We construct the conditional measures at every
$x\in\Lambda_+$.
For $\gamma\in\mathcal H^n$, put
$$
p_\gamma^{(n)}(x)
:=
e^{-R_n(\gamma x)}{h(\gamma x)}{h(x)}^{-1}
$$
where $h$ is given by \eqref{eqn:hFLa}. 
By \eqref{eqn:invariance of transfer operator and measure}, 
$\sum_{\gamma\in\mathcal H^n}p_\gamma^{(n)}(x)=1$, and for every $\gamma_1\in\mathcal H^\ell$ and
$\gamma_2\in\mathcal H^n$, 
\begin{equation}
\label{eqn:Markov weight factorization}
p_{\gamma_1\gamma_2}^{(n + \ell)}(x)
=p_{\gamma_2}^{(n)}(x)p_{\gamma_1}^{(\ell)}(\gamma_2 x).
\end{equation}

Fix $q_-\in\Lambda_-$ and define the probability measure
$$
\nu_{+,x}^{(n)}
:=
\sum_{\gamma\in\mathcal H^n}
p_\gamma^{(n)}(x)\,\delta_{\gamma^{-1}q_-}
$$
where $\delta_{\ga^{-1} q_-}$ is the Dirac mass at $\ga^{-1} q_-$.
Since $(\gamma_1\gamma_2)^{-1}q_-, \gamma_2^{-1}q_-\in \gamma_2^{-1}\Lambda_-$, by
\Cref{lem:shrinking fibers}, their distance is at most
$\lambda^n\operatorname{diam}_{d_{\bar\psi}}(\Lambda_-)$.
For every $\varphi\in C(\Lambda_-,\mathbb C)$,
\eqref{eqn:Markov weight factorization},
\Cref{lem:Riemannian-visual-Holder}, and uniform continuity show
that $\nu_{+,x}^{(n)}(\varphi)$ is Cauchy uniformly in $x$.
Thus $\nu_{+,x}^{(n)}$ converges weakly to a Borel probability measure
$\nu_{+,x}$.
 By \eqref{eqn:Markov weight factorization}, we have
$\nu_{+,x}^{(n+k)}
=\sum_{\gamma\in\mathcal H^k}p_\gamma^{(k)}(x)
(\gamma^{-1})_*\nu_{+,\gamma x}^{(n)}$.
Letting $n\to\infty$ gives that for every $k\geq1$,
\begin{equation}
\label{eqn:stable kernel recursion}
\nu_{+,x}
=
\sum_{\gamma\in\mathcal H^k}
p_\gamma^{(k)}(x)(\gamma^{-1})_*\nu_{+,\gamma x}.
\end{equation}
Define a Borel probability measure on $\La_+\times \La_-$:
$$
 d\nu^\diamond(x,x_-):=d\nu_+(x)\,d\nu_{+,x}(x_-).
$$ We first show the $\widehat T$-invariance of $\nu^{\diamond}$.
For a bounded Borel function $F : \La_+ \times \La_- \to \mathbb C$, we have from $\L_0^* \nu_+ = \nu_+$, as extended in \eqref{contraction}, and \eqref{eqn:stable kernel recursion} that
\begin{align*}
 \int F\circ\widehat T\,d\nu^\diamond
 &=\int_{\Lambda_+}\sum_{\gamma\in\mathcal H}p_\gamma^{(1)}(x)
   \int_{\Lambda_-}F(x,\gamma^{-1}x_-)\,d\nu_{+,\gamma x}(x_-)
   \,d\nu_+(x)=\int F\,d\nu^\diamond.
\end{align*} 
It is clear from the definition that $\nu^{\diamond}$ projects to $\nu_{+}$ on $\La_+$.

To prove uniqueness, let $\eta$ be any $\widehat T$-invariant
probability measure projecting to $\nu_+$. For a bounded Borel
function $a:\Lambda_+\to\mathbb C$ and $\varphi\in \C(\Lambda_-)$,
put $F(x,x_-):=a(x)\varphi(x_-)$. Since $\eta$ is $\widehat T$-invariant and $\varphi$ is uniformly continuous,  \Cref{lem:shrinking fibers} gives
$$
 \int F\,d\eta
 =\lim_{n\to\infty}
   \int_{\Lambda_+}F\bigl(\widehat T^n(x,q_-)\bigr)\,d\nu_+(x).
$$
The right-hand side is independent of $\eta$. Since these product
functions determine a probability measure, the invariant lift is
unique.
Finally, let $E$ be an invariant set with $\nu^\diamond(E)>0$.
The normalized restriction
$\nu^\diamond|_E/\nu^\diamond(E)$ projects to a $T$-invariant
probability measure absolutely continuous with respect to $\nu_+$.
By \Cref{cor: T invariant measure}, this projection equals $\nu_+$.
Uniqueness of the invariant lift gives
$\nu^\diamond|_E/\nu^\diamond(E)=\nu^\diamond$, hence
$\nu^\diamond(E)=1$. Thus $\nu^\diamond$ is ergodic.

To prove (2), let $x,y\in\Lambda_+$ belong to a common one-cylinder.
Applying the estimates for $I_1,I_2,I_3$ in the proof of
\Cref{lem:L0 action on B} with $w=0$ and $f=\mathbf1$, we obtain
$$
\sum_{\gamma\in\mathcal H^n}
|p_\gamma^{(n)}(x)-p_\gamma^{(n)}(y)|
\leq C_c d_{\bar\psi}(x,y)^c
\qquad(n\geq1);
$$
hence $\left|\nu_{+,x}^{(n)}(\varphi)-\nu_{+,y}^{(n)}(\varphi)\right|
\leq C_c \|\varphi\|_{\infty} d_{\bar\psi}(x,y)^c$.
Letting $n\to\infty$ proves (2).

We now prove (1). 
For $f\in\mathcal F_c(\Lambda_+\times\Lambda_-,\mathbb C)$, define
$\bar f$ by \eqref{eqn:stable disintegration Holder}. Then  $\|\bar f\|_\infty\leq\|f\|_\infty$ is clear. For
$x$ and $y$ in a common one-cylinder, write
\begin{align*}
\bar f(x)-\bar f(y)
&=\int_{\Lambda_-}\bigl(f(x,x_-)-f(y,x_-)\bigr)\,d\nu_{+,x}(x_-)\\
&\quad+\left(\int_{\Lambda_-}f(y,x_-)\,d\nu_{+,x}(x_-)
-\int_{\Lambda_-}f(y,x_-)\,d\nu_{+,y}(x_-)\right).\nonumber
\end{align*}
The first integral is bounded in absolute value by
$|f|_cd_{\bar\psi}(x,y)^c$. The remaining difference is bounded by $C_c\|f\|_\infty d_{\bar\psi}(x,y)^c$, by applying (2) to  $f(y,\cdot)$. Together with $\|\bar f\|_\infty\leq\|f\|_\infty$, this proves (1)
after enlarging $C_c$.

\end{proof}

Let
$$
\widetilde\Lambda_{\mathfrak a}
:=\Lambda_+\times\Lambda_-\times\mathbb R\times\ker\psi,
\qquad
\widetilde\Lambda_\psi
:=\Lambda_+\times\Lambda_-\times\mathbb R.
$$
Define the skew-product maps
$\zeta\colon\widetilde\Lambda_{\mathfrak a}\to\widetilde\Lambda_{\mathfrak a}$ and
$\zeta_\psi\colon\widetilde\Lambda_\psi\to\widetilde\Lambda_\psi$ by
\begin{align}
\label{eqn:zeta action}
&\zeta(x,x_-,s,v)
:=
(\widehat T(x,x_-),s-R(x),v-K^\circ(x)),\\
&\zeta_\psi(x,x_-,s)
:=
(\widehat T(x,x_-),s-R(x)).
\nonumber
\end{align}

Let 
$$
\begin{aligned}
&\Lambda_{\mathfrak a}
:=
\{(x,x_-,s,v)\in\widetilde\Lambda_{\mathfrak a}: 0\leq s<R(x)\},\\
& \Lambda_\psi
:=
\{(x,x_-,s)\in\widetilde\Lambda_\psi: 0\leq s<R(x)\}.
\end{aligned}
$$
For $t\geq0$, define the semiflow $T_t$ on $\Lambda_{\mathfrak a}$ by
\begin{equation}
\label{eqn:defn T flow}
T_t(x,x_-,s,v)
:=\zeta^{n}(x,x_-,s+t,v)=
\bigl(
\widehat T^n(x,x_-),
s+t-R_n(x),
v-K_n^\circ(x)
\bigr),
\end{equation}
where $n=n(t,x,s)$ is the unique nonnegative integer such that
$R_n(x)\leq s+t<R_{n+1}(x)$.
Such an integer exists because $R\geq1$ by
\Cref{lem.uniformly positive}.
This descends to the semiflow on $\La_\psi$ via the projection $
\pi\colon\Lambda_{\mathfrak a}\to\Lambda_\psi$, which we also denote by $T_t$.
For $v\in\ker\psi$, define
$$
(x,x_-,s,v_0)a_v:=(x,x_-,s,v_0+v).
$$
These translations commute with $T_t$. On the product  spaces, define
\be
\label{eqn:def M+}
d\widetilde{\mathtt M}_\psi
 := 
\frac{1}{\nu_+(R)} d\nu^\diamond\,ds\,d\operatorname{Leb}_{\ker\psi}(v), \qquad
d\widetilde M_\psi
:= \frac{1}{\nu_+(R)}  d\nu^\diamond\,ds.
\ee
 Since $\nu^{\diamond}$ is $\widehat T$-invariant (\Cref{lem:measure on product space}), $\widetilde{\mathtt M}_{\psi}$ and $\widetilde M_{\psi}$ are invariant under $\zeta$ and $\zeta_\psi$,
respectively.  Consider the following $T_t$-invariant measures for $t\ge 0$:
$$
\mathtt M_{\psi} := \widetilde{\mathtt M}_{\psi} |_{\La_{\fa}}, \qquad M_{\psi} = \widetilde{M}_{\psi}|_{\La_{\psi}}.
$$

\begin{lemma}
\label{Mpsi_finite}
The measure $M_\psi$ is an ergodic probability measure for $\{T_t:t\geq0\}$.
\end{lemma}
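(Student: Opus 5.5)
The plan has three parts: compute the total mass directly, check invariance under the semiflow, and deduce ergodicity from ergodicity of $(\nu^\diamond,\widehat T)$ via the standard suspension correspondence.

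\emph{Total mass.} By \eqref{eqn:def M+} and Fubini,
$$
M_\psi(\Lambda_\psi)=\frac{1}{\nu_+(R)}\int_{\Lambda_+\times\Lambda_-}R(x)\,d\nu^\diamond(x,x_-)
=\frac{1}{\nu_+(R)}\int_{\Lambda_+}R\,d\nu_+=1 .
$$
The middle equality holds because $\nu^\diamond$ projects to $\nu_+$ (\Cref{lem:measure on product space}). The integral is finite since $R\in L^1(\nu_+)$, by \Cref{thm.exptail} and the boundedness of $d\nu_+/d\nu$, and $\nu_+(R)\ge1$ because $R\ge1$.

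\emph{Invariance.} I will use that $\Lambda_\psi$ is a fundamental domain for the $\zeta_\psi$-action on $\widetilde\Lambda_\psi$. Every orbit meets it exactly once, since $R\ge1$ makes the Birkhoff sums $R_n(x)$ increase to $\infty$ and $\widehat T$ is surjective onto a conull set. The map $T_t$ is then the translation $s\mapsto s+t$, which preserves $\widetilde M_\psi$ and commutes with $\zeta_\psi$, followed by reduction to the fundamental domain by a power of $\zeta_\psi$. Since $\widetilde M_\psi$ is $\zeta_\psi$-invariant, a standard tiling argument on each piece $\{R_n(x)\le s+t<R_{n+1}(x)\}$ gives $(T_t)_*M_\psi=M_\psi$. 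Alternatively, for $0\le t<1\le R$ one can check invariance directly on cylinder sets and then use the semigroup property.

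\emph{Ergodicity.} Let $E\subset\Lambda_\psi$ be measurable with $T_t^{-1}E=E$ for all $t\ge0$, and let $\widetilde E$ be its $\zeta_\psi$-saturation in $\widetilde\Lambda_\psi$. Then $\widetilde E$ is invariant under $\zeta_\psi$ and under every translation $s\mapsto s+t$, $t\ge0$. Invariance under all positive translations together with $\zeta_\psi$ yields invariance under all real translations modulo null sets. The argument: the measure of $\widetilde E\,\triangle\,(\widetilde E+t)$, restricted to a fundamental domain, vanishes, and negative shifts follow by applying $\zeta_\psi$-invariance. Hence $\widetilde E=B\times\mathbb R$ up to null sets for some $B\subset\Lambda_+\times\Lambda_-$. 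This $B$ is $\widehat T$-invariant, since $\zeta_\psi$ acts as $\widehat T$ in the first two coordinates. By ergodicity of $\nu^\diamond$ (\Cref{lem:measure on product space}), $\nu^\diamond(B)\in\{0,1\}$, so $M_\psi(E)\in\{0,1\}$.

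The main obstacle is making the passage from forward-invariance of $E$ under the semiflow to translation-invariance of $\widetilde E$ in $s$ rigorous modulo null sets, given that $T_t$ is only a semiflow and $\widehat T$ need not be invertible. I would handle this by working with $L^2$ functions instead of sets. Take $\phi$ invariant under $T_t$, lift it to $\widetilde\phi$ on $\widetilde\Lambda_\psi$, invariant under $\zeta_\psi$ and under $s\mapsto s+t$ for $t\ge0$. Since $s\mapsto\widetilde\phi(x,x_-,s)$ is then nondecreasing-shift-invariant for a.e.\ $(x,x_-)$, Lebesgue differentiation forces it to be a.e.\ constant on $[0,\infty)$, and that constant is a $\widehat T$-invariant function of $(x,x_-)$.
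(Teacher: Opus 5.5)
Your proposal is correct and follows the paper's proof: the total mass is computed exactly as you do, using that $\nu^\diamond$ projects to $\nu_+$, and ergodicity is obtained by observing that every $T_t$-invariant set is the suspension of a $\widehat T$-invariant set and then invoking the ergodicity of $\nu^\diamond$ from \Cref{lem:measure on product space}. The paper takes $T_t$-invariance of $M_\psi$ for granted from the $\zeta_\psi$-invariance of $\widetilde M_\psi$. Your direct check for $0\le t<1\le R$ plus the semigroup property, and your function-level argument on $\{s\ge0\}$ (which sidesteps the imprecise ``fundamental domain'' phrasing for the non-invertible map $\zeta_\psi$), simply supply details the paper leaves implicit.
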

\begin{proof}
Since $\nu^\diamond$ projects to $\nu_+$,
$M_\psi(\Lambda_\psi)
=\frac{1}{\nu_+(R)}
\int_{\Lambda_+\times\Lambda_-}R(x)\,d\nu^\diamond(x,x_-)=1$. Every measurable set invariant under all $T_t$, $t\geq0$, is the suspension of a $\widehat T$-invariant set.
Thus ergodicity follows from \Cref{lem:measure on product space}.
\end{proof}

\subsection{The factor maps}
To relate the symbolic suspensions to the geometric flows on $\Omega$ and $\Omega_\psi$,
define 
\begin{align}
\label{eqn:LaUpsi}
&\widetilde\Phi:
\widetilde \Lambda_{\mathfrak a}\to\widetilde\Omega, & \widetilde\Phi(x,x_-,s,v):=
(x,x_-,s\ut+v+2\mathcal G(x,p_0)),\\
\label{eqn:Phi psi lift}
&\widetilde\Phi_\psi:
\widetilde \Lambda_\psi \to\widetilde\Omega_\psi,
&\widetilde\Phi_\psi(x,x_-,s)
:=
(x,x_-,s+2\psi(\mathcal G(x,p_0))).
\end{align}
Restricting to the suspension domains and composing
 with the projections
$\widetilde\Omega\to\Omega$ and $\widetilde\Omega_\psi\to\Omega_\psi$
yields  continuous maps $$
\Phi:\Lambda_{\mathfrak a}\to\Omega,
\qquad
\Phi_\psi:\Lambda_\psi\to\Omega_\psi.
$$
By \eqref{eqn:telescoping-return-cocycle} and
\eqref{eqn:zeta action}, these maps satisfy 
\begin{equation}
\label{eqn:factor intertwining identities}
\Phi\circ T_t=\mathcal W_t\circ\Phi,
\qquad
\Phi_\psi\circ T_t=\mathcal W_t\circ\Phi_\psi,
\qquad
\Phi(za_v)=\Phi(z)a_v.
\end{equation}

In particular,
$$
\Phi\bigl((T_tz)a_{\ell(t)\ww}\bigr)
=\Phi(z)a_{t\ut+\ell(t)\ww}.
$$

\begin{proposition}
\label{prop:factor map R}
We have $(\Phi_\psi)_*M_\psi=m_\psi$. 
\end{proposition}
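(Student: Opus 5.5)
The plan is to show that $(\Phi_\psi)_*M_\psi$ is a $\mathcal W_t$-invariant probability measure on $\Omega_\psi$ that is absolutely continuous with respect to $m_\psi$. Ergodicity of $m_\psi$ (\Cref{koe}) then forces equality. By \eqref{eqn:factor intertwining identities} and the $T_t$-invariance of $M_\psi$ (\Cref{Mpsi_finite}), the pushforward is $\mathcal W_t$-invariant for $t\ge0$. Invariance for negative times follows because $\mathcal W_t$ is invertible. Since $M_\psi$ is a probability measure, so is its pushforward. The remaining and main step is absolute continuity.

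For absolute continuity, we compare densities on a fundamental region. Restricted to $\Lambda_+\times\Lambda_-\times\mathbb R$, the lift $\widetilde\Phi_\psi$ is injective. It is a translation in the third coordinate by $2\psi(\mathcal G(x,p_0))$, which is bounded on $\Delta_0$. Hence it pushes $\nu^\diamond\otimes ds$ forward to a measure with the same $(x,x_-)$-marginal structure. The lifted BMS measure on $\widetilde\Omega_\psi$ restricted to $\Lambda_+\times\Lambda_-\times\mathbb R$ is $e^{2\psi(\mathcal G(\xi,\eta))}d\nu(\xi)\,d\nu_{\psi\circ\i}(\eta)\,ds$. The weight is bounded above and below because $\Lambda_+\subset\Delta_0$ and $\Lambda_-$ are disjoint compacts in $\Lambda_\rho$, and $d\nu_+/d\nu$ is bounded above and below.

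The delicate point is that $\nu^\diamond$ need not be a product measure. We therefore show that its fibre measures $\nu_{+,x}$ are absolutely continuous with respect to $\nu_{\psi\circ\i}|_{\Lambda_-}$. The alternative is to avoid this entirely by proving that $\widetilde M_\psi$ projects to a $\mathcal W$-invariant measure whose $\Omega_\psi$-image is supported on $\Gamma_\rho$-orbits meeting the section $\{(\xi,\eta,s):\xi\in\Lambda_+,\eta\in\Lambda_-\}$. I would try the following route first. Mixing of $m_\psi$ together with uniqueness of the invariant lift in \Cref{lem:measure on product space} gives the following: the measure induced by $m_\psi$ on the cross-section $\widetilde\Phi_\psi(\Lambda_+\times\Lambda_-\times\{0\})$, with first-return map conjugate to $\widehat T$, is a $\widehat T$-invariant measure. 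Its projection to $\Lambda_+$ is absolutely continuous with respect to $\nu$, and by ergodicity in \Cref{cor: T invariant measure} it therefore equals $\nu_+$. Uniqueness then identifies it with $\nu^\diamond$ up to normalization.

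Concretely, the steps are as follows. (i) Check that $\Phi_\psi(\Lambda_\psi)$ has full $m_\psi$-measure. This uses that $\Lambda_+$ has full $\nu$-measure in $\Delta_0$, that $\Gamma_\rho$-translates of $\mathsf P_0$-orbits of $\Delta_0$ together with the flow cover a conull set, and that every forward orbit returns to $\Delta_0$. (ii) Show that $\Phi_\psi$ is injective a.e. on $\Lambda_\psi$, via the disjointness of branch images and the first-return construction. (iii) Use the Ambrose--Kakutani correspondence: $m_\psi$ is the suspension of its induced cross-section measure, which (by the above) is $\nu^\diamond$ with roof $R$. Since the two suspensions have the same roof, matching normalizations gives $(\Phi_\psi)_*M_\psi=m_\psi$. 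I expect step (i), showing that the section is a.e. hit and that the return times are exactly $R$, to be the main obstacle. It relies on \eqref{eqn:telescoping-return-cocycle} and the identity $s\mapsto s+2\psi(\mathcal G(x,p_0))$ matching $\gamma$-equivariance in the action on $\widetilde\Omega_\psi$.
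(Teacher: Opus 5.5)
Your reduction is sound as far as it goes. By \eqref{eqn:factor intertwining identities} and \Cref{Mpsi_finite}, $(\Phi_\psi)_*M_\psi$ is a $\mathcal W_t$-invariant probability measure, and if it were absolutely continuous with respect to $m_\psi$, ergodicity (\Cref{koe}) would force equality. But absolute continuity is the whole content of the proposition, and you never prove it. Bounding the BMS density on $\Lambda_+\times\Lambda_-\times\mathbb R$ controls only the $(x,s)$-directions. You would still need $\nu_{+,x}\ll\nu_{\psi\circ\i}|_{\Lambda_-}$ for $\nu_+$-a.e.\ $x$, and $\nu_{+,x}$ is known only as a weak limit of the atomic measures $\sum_{\gamma\in\mathcal H^n}p^{(n)}_\gamma(x)\,\delta_{\gamma^{-1}q_-}$. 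You announce this step and then replace it with the cross-section argument.

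That cross-section argument rests on claims that fail: (ii), and the assertion in (i)/(iii) that the first return to $\Sigma:=\Phi_\psi(\Lambda_+\times\Lambda_-\times\{0\})$ is $\widehat T$ with return time $R$.

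\emph{Why they fail.} Recall that $T=T_{\mathrm{ret}}^{\,n_0}$ and $\mathcal H=\mathcal H_{\mathrm{ret}}^{\,n_0}$, and nothing forces $n_0=1$. Take $x\in\gamma_1\cdots\gamma_{n_0}\Lambda_+$ with $\gamma_j\in\mathcal H_{\mathrm{ret}}$. Then $\gamma_1^{-1}x=T_{\mathrm{ret}}x\in\Lambda_+$. Also $\gamma_1^{-1}x_-\in\Lambda_-$, because $\Lambda_-=U_0$ is mapped into itself by the inverse of every nonempty product of first-return branches. The formula for the $\Gamma_\rho$-action on $\widetilde\Omega_\psi$ then gives
\[
\Phi_\psi(x,x_-,s)=\Phi_\psi\bigl(\gamma_1^{-1}x,\gamma_1^{-1}x_-,\,s-\psi(K^{\mathrm{ret}}(x))\bigr),
\qquad
K^{\mathrm{ret}}(x):=\beta_x(e,\gamma_1)+2\mathcal G(\gamma_1^{-1}x,p_0)-2\mathcal G(x,p_0).
\]
By \eqref{shadowH}, \Cref{lem.buseandcartan} and \eqref{Gbound}, $\psi(K^{\mathrm{ret}})=\psi(\mu(\gamma_1))+O(1)$. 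Consider the positive-measure set where the first $n_0+1$ letters of $x$ in $\mathcal H_{\mathrm{ret}}$ all have large $\psi\circ\mu$. There, both points lie in $\Lambda_\psi$ for every $s$ in a nonempty subinterval of $[0,R(x))$.

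\emph{Consequences.} The set $\Sigma$ is already hit at time $\psi(K^{\mathrm{ret}}(x))<R(x)$. So $\widehat T$ is not the first-return map, and $\Phi_\psi$ is not injective on a set of positive $M_\psi$-measure. Nothing in your outline excludes hits of $\Sigma$ by elements of $\Gamma_\rho$ unrelated to the coding either. Ambrose--Kakutani and the uniqueness of the invariant lift are therefore applied to the wrong return structure.

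\emph{How the paper avoids this.} The paper needs neither injectivity nor absolute continuity in the $x_-$-direction. Since $x_-$ parametrizes stable leaves, \eqref{eqn:stable convergence base flow} makes Birkhoff averages of $f\circ\Phi_\psi$ insensitive to it. It therefore suffices to place an $M_\psi$-generic point and an $m_\psi$-generic point on a common stable leaf. Doing so uses only injectivity of $\Phi_\psi$ on some positive-measure set (\Cref{lem:bijection}) and the easy comparison of $(x,s)$-marginals (\Cref{lem:same stable leaf}). Ergodicity of $M_\psi$ and $m_\psi$ then gives $M_\psi(f\circ\Phi_\psi)=m_\psi(f)$.
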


We first record two elementary measure-theoretic facts needed in the proof.

\begin{lemma}
\label{lem:bijection}
There is a Borel set $U\subset\Lambda_\psi$ such that
$m_\psi(\Phi_\psi(U))>0$  and
$
\Phi_\psi
$ is injective on $U$.
\end{lemma}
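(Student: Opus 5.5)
Choose $U$ as a small product box inside one branch cylinder, on which $\Phi_\psi$ is a Hopf-coordinate chart that no element of $\Gamma_\rho$ can fold. Fix $\gamma_0\in\mathcal H$ and a compact piece $A\subset\gamma_0\Lambda_+$ of positive $\nu_+$-measure, contained in $\operatorname{int}_{\Lambda_\rho}(\gamma_0\Delta_0)$ (possible by \Cref{lem.branchcompactinterior}). Since $\nu^\diamond$ projects to $\nu_+$, we can choose a Borel set $B\subset\Lambda_-$ with $\nu^\diamond(A\times B)>0$. Take
$$U:=\{(x,x_-,s):x\in A,\ x_-\in B,\ s\in[0,\epsilon)\},$$
with $\epsilon<1\le\min R$ (using \Cref{lem.uniformly positive}); then $U\subset\Lambda_\psi$. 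The lift $\widetilde\Phi_\psi$ from \eqref{eqn:Phi psi lift} is injective on $U$, since it preserves $(x,x_-)$ and shifts $s$ by a function of $x$ alone.

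For injectivity of $\Phi_\psi$ on $U$, suppose $g\widetilde\Phi_\psi(z)=\widetilde\Phi_\psi(z')$ for some $g\in\Gamma_\rho$ and $z,z'\in U$. Then $gx=x'$ and $gx_-=x'_-$ with $x,x'\in A\subset\Delta_0$ and $x_-,x'_-\in\Lambda_-$. The $\mathbb R$-coordinate of the action changes by $\psi(\beta_x(g^{-1},e))$, while $s,s'<\epsilon$ and $2\psi(\mathcal G(\cdot,p_0))$ is bounded on $\Delta_0$. Hence $|\psi(\beta_x(g^{-1},e))|$ is uniformly bounded. Proper discontinuity of the $\Gamma_\rho$-action on $\widetilde\Omega_\psi$ \cite{KOW_indicators}, applied to the relatively compact image of $U$ (relatively compact because $\Lambda_-$ and $\Delta_0$ are compact and disjoint, hence transverse), shows that only finitely many $g$ can occur. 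Shrinking $A$ and $B$ to small neighborhoods of a generic point then excludes every $g\neq e$: each nontrivial $g$ among these finitely many either moves $x$ off $A$ or moves $x_-$ off $B$. Points fixed by a nontrivial $g$ form a null set, because $\nu$ is atomless and each nontrivial element fixes finitely many limit points. This makes $\Phi_\psi$ injective on $U$.

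For positivity, note that $\widetilde m_\psi$ restricted to $\widetilde\Phi_\psi(U)$ is $e^{2\psi(\mathcal G)}\,d\nu_\psi\,d\nu_{\psi\circ\i}\,ds$ over $A\times B\times$(an interval). Since $\Gamma_\rho$ acts injectively here, $m_\psi(\Phi_\psi(U))=\widetilde m_\psi(\widetilde\Phi_\psi(U))>0$ provided $\nu_\psi(A)\,\nu_{\psi\circ\i}(B)>0$. We have $\nu_\psi(A)>0$ because $\nu_+\sim\nu$ on $\Lambda_+$.

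The main obstacle is $\nu_{\psi\circ\i}(B)>0$, since $B$ was chosen using $\nu^\diamond$, not $\nu_{\psi\circ\i}$. To get around this, pick $B$ as a neighborhood of a point of $\Lambda_b\subset\operatorname{supp}\nu_{\psi\circ\i}$, which is all of $\Lambda_\rho$ because $\nu_{\psi\circ\i}$ has full support (\Cref{thm.ps atomless 0} and minimality). Every nonempty open set then has positive $\nu_{\psi\circ\i}$-measure. It remains to check $\nu^\diamond(A\times B)>0$. From the construction of $\nu_{+,x}$ as weak limits of Dirac masses at points $\gamma^{-1}q_-\to\Lambda_b$, the support of each $\nu_{+,x}$ lies in $\Lambda_b$. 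Choosing $B$ open around a support point of $\nu_{+,x}$ for a positive-measure set of $x$ gives $\nu^\diamond(A\times B)>0$, and $\Lambda_b\cap B\neq\emptyset$ gives $\nu_{\psi\circ\i}(B)>0$.
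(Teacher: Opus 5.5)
Your proof is correct, but it localizes where the paper does not. The paper notes that $\widetilde\Phi_\psi$ is an embedding and that the whole image $\widetilde\Phi_\psi(\Lambda_\psi)$ already has positive $\widetilde m_\psi$-measure, since $R\geq1$, $\nu(\Lambda_+)>0$, and $\nu_{\psi\circ\i}(\Lambda_-)>0$. Because $\Gamma_\rho$ is torsion-free, its properly discontinuous action on $\widetilde\Omega_\psi$ is free, so the quotient map is a covering. Some evenly covered open set $O$ then meets the image in positive measure, and the paper takes $U:=\widetilde\Phi_\psi^{-1}(O)\cap\Lambda_\psi$.

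You instead fix an explicit box and use proper discontinuity to reduce to finitely many $g$. You then eliminate them by shrinking around a point with trivial stabilizer. This replaces the appeal to torsion-freeness by a genericity argument using atomlessness of $\nu$, at the cost of more bookkeeping.

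Your last paragraph solves a problem that is not there. The lemma asks only for $m_\psi(\Phi_\psi(U))>0$, not $M_\psi(U)>0$, so $\nu^\diamond$ and the supports of the $\nu_{+,x}$ play no role. You may simply take $B=\Lambda_-$. It has positive $\nu_{\psi\circ\i}$-measure, because it is a neighborhood of $p_0$ by \Cref{lem:shrinking fibers} and $\nu_{\psi\circ\i}$ has full support on $\Lambda_\rho$.

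Then shrink only $A$, around a point $x_0\in\operatorname{supp}(\nu|_A)$ that is fixed by no nontrivial element of $\Gamma_\rho$. For the finitely many relevant $g\neq e$, the condition $gx_0\neq x_0$ already gives $gA\cap A=\emptyset$ after shrinking, and the choice of $x_0$ in the support keeps $\nu(A)>0$. This also spares you from checking that your chosen point of $\Lambda_b$ lies in the interior of $\Lambda_-$. Your version tacitly needs that in order to fit a relatively open $B$ inside $\Lambda_-$.
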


\begin{proof} 
The map $\widetilde\Phi_\psi$ 
 is an embedding by \eqref{eqn:Phi psi lift}. Since $\inf_{x \in \La_+} R(x) > 0$ by 
\Cref{lem.uniformly positive}, we have $\widetilde m_{\psi}( \widetilde\Phi_\psi(\La_{\psi})) > 0$.
Since $\Gamma_\rho$ is torsion-free, there exists an open $O \subset \widetilde \Omega_{\psi}$ on which $\widetilde \Omega_{\psi} \to \Omega_{\psi}$ is injective and $\widetilde m_\psi(O \cap \widetilde \Phi_{\psi}(\La_\psi))>0 $. Setting $
 U:= \widetilde\Phi_\psi^{-1}(O) \cap \La_{\psi}$  finishes the proof. 
\end{proof}

Here stable leaves are taken in $\Omega_\psi$, rather than the
auxiliary flow $\Omega_{\bar\psi}$ of \Cref{sec.recur}: two points lie on
the same stable leaf if they admit lifts $(x,x_-,s)$ and $(x,y_-,s)$
with the same positive endpoint and $\mathbb R$-coordinate.
By \cite[Proposition~8.6]{KOW_ergodic}, there exists a metric $d$ on $\Omega_{\psi}$ compatible with the topology of $\Omega_\psi$ such that 
\begin{equation}
\label{eqn:stable convergence base flow}
d(\mathcal W_tq,\mathcal W_tq')\to0
\qquad(t\to+\infty).
\end{equation}
whenever $q, q' \in \Omega_{\psi}$ lie on the same stable leaf.

\begin{lemma}
\label{lem:same stable leaf}
Let $Q\subset\Lambda_\psi$ be $M_\psi$-conull and
$Q'\subset\Omega_\psi$ be $m_\psi$-conull.
Then $\Phi_\psi(Q)$ and $Q'$ meet a common stable leaf of $\Omega_\psi$.
\end{lemma}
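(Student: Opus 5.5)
We reduce everything to the positive endpoint. Stable leaves of $\Omega_\psi$ are determined by the pair $(\xi,s)$ of positive endpoint and $\mathbb R$-coordinate of a lift, up to the $\Gamma_\rho$-action. It therefore suffices to find one point $(x,s)\in\Lambda_+\times\mathbb R$ with two properties. First, for some $x_-\in\Lambda_-$, the triple $(x,x_-,s-2\psi(\mathcal G(x,p_0)))$ lies in $Q$ with $0\le s-2\psi(\mathcal G(x,p_0))<R(x)$. Second, for some $\eta$ with $(x,\eta)\in\Lambda_\rho^{(2)}$, the triple $(x,\eta,s)$ projects into $Q'$. By \eqref{eqn:Phi psi lift}, both points then lie on the stable leaf through $(x,\cdot,s)$.

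\emph{The symbolic side.} Disintegrate $M_\psi$ using $d\nu^\diamond=d\nu_+\,d\nu_{+,x}$ from \Cref{lem:measure on product space}, with Lebesgue measure in $s$. Since $Q$ is $M_\psi$-conull, Fubini gives a set $E\subset\Lambda_+\times\mathbb R$ that is $(\nu_+\otimes ds)$-conull inside $\{0\le s'<R(x)\}$. For each $(x,s')\in E$ there is some $x_-$ with $(x,x_-,s')\in Q$. Shifting $s'$ by $2\psi(\mathcal G(x,p_0))$, which is continuous in $x$, produces a set $E_1$ of pairs $(x,s)$. The set $E_1$ has positive, indeed full, $\nu_+\otimes ds$ measure inside a region of positive measure.

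\emph{The geometric side.} Lift $Q'$ to a $\Gamma_\rho$-invariant $\widetilde m_\psi$-conull set $\widetilde Q'\subset\widetilde\Omega_\psi$. The measure $\widetilde m_\psi$ is equivalent to $\nu_\psi\otimes\nu_{\psi\circ\i}\otimes ds$ on $\Lambda_\rho^{(2)}\times\mathbb R$, because the density $e^{2\psi(\mathcal G)}$ is positive. By Fubini, for $(\nu_\psi\otimes ds)$-almost every $(\xi,s)$, the fiber $\{\eta:(\xi,\eta,s)\in\widetilde Q'\}$ is $\nu_{\psi\circ\i}$-conull. It is in particular nonempty, since $\nu_{\psi\circ\i}$ is atomless and $\{\eta:(\xi,\eta)\notin\mathcal F^{(2)}\}\cap\Lambda_\rho=\{\xi\}$. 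Call this conull set $E_2\subset\Lambda_\rho\times\mathbb R$.

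\emph{Combining the two sides.} We need $E_1\cap E_2\neq\emptyset$. The measure $\nu_+=h\,\widehat\nu$ is equivalent to $\nu|_{\Lambda_+}$, with densities bounded above and below by \Cref{prop:eigenfunction on Lambda} and the definition of $\widehat\nu$. Hence $\nu_+\otimes ds$ is absolutely continuous with respect to $\nu_\psi\otimes ds$ on $\Lambda_+\times\mathbb R$. So $E_1$, which has positive $\nu_+\otimes ds$ measure, has positive $\nu_\psi\otimes ds$ measure. Since $E_2$ is conull for $\nu_\psi\otimes ds$, the intersection is nonempty, and any $(x,s)$ in it gives the common stable leaf.

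The main obstacle is bookkeeping the measurability in the Fubini steps. The sets $\Phi_\psi(Q)$ and the fibers need not be Borel. I would handle this by first shrinking $Q$ and $\widetilde Q'$ to Borel conull subsets, and then working with the Borel sets $E_1,E_2$, which are analytic projections and hence universally measurable. No quotient injectivity issue arises, because everything is done at the level of lifts.
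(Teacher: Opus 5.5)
Your proof is correct, and it takes a slightly different route from the paper's.

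\textbf{The paper's route.} The paper works on the symbolic side. It takes the set $U\subset\Lambda_\psi$ from \Cref{lem:bijection}, on which $\Phi_\psi$ is injective, and pulls $m_\psi|_{\Phi_\psi(U)}$ back to a measure $m$ on $\Lambda_\psi$. It then uses the local product structure of the BMS measure, together with $\nu_+\sim\nu|_{\Lambda_+}$, to show that the projection of $m$ to $(x,s)$ is absolutely continuous with respect to the projection of $M_\psi$. This produces the $Q'$-witness inside $\Phi_\psi(U)$, i.e.\ of the form $\Phi_\psi(x,x_-',s)$ with $x_-'\in\Lambda_-$.

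\textbf{Your route.} You instead lift $Q'$ to the $\Gamma_\rho$-invariant conull set $\widetilde Q'\subset\widetilde\Omega_\psi$. You then apply Fubini directly in Hopf coordinates, using the positive density $e^{2\psi(\mathcal G)}$ and the fact that the diagonal is null.

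\textbf{Comparison.} Your approach bypasses \Cref{lem:bijection}, and with it the torsion-freeness and injectivity bookkeeping. The price is that your witness $(x,\eta,s)$ has an arbitrary backward endpoint $\eta$, not necessarily in $\Lambda_-$. That costs nothing, because the stable leaf depends only on the pair $(x,s)$. Both arguments rest on the same two inputs: the product form of $\widetilde m_\psi$, and the disintegration $d\nu^\diamond=d\nu_+\,d\nu_{+,x}$ with $\nu_+$ equivalent to $\nu|_{\Lambda_+}$.

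\textbf{Measurability.} Your worry here is more than needed. You never use measurability of $\Phi_\psi(Q)$. For $E$ you can simply take the Borel set of $(x,s')$ at which $\nu_{+,x}$ gives full measure to the fiber of (a Borel version of) $Q$. The analogous Fubini set works for $E_2$.
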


\begin{proof} 
Let $U$ be as in \Cref{lem:bijection} and set $V:=\Phi_\psi(U)$. 
Also set  $m:=((\Phi_\psi|_U)^{-1})_*(m_\psi|_V)$.
For $p(x,x_-,s):=(x,s)$, the local product formula for BMS measure gives
 $p_* m\ll d\nu ds$. Since $\nu_+$ and $\nu|_{\La_+}$ are equivalent, it follows from \Cref{lem:measure on product space} that
$p_*m\ll p_*M_\psi$.
Since $Q$ is $M_\psi$-conull, by \Cref{lem:measure on product space},
$\nu_{+, x} \left( \{x_- : (x, x_-, s ) \in Q \} \right) = 1$ for $p_* M_{\psi}$-a.e. $(x, s)$  and hence for $p_*m$-a.e. $(x,s)$.
Since $\Phi_{\psi}^{-1}(Q' \cap V)$ is $m$-conull, there exists $(x, x_-', s) \in \Phi_{\psi}^{-1}(Q') \cap U$ such that  
$\nu_{+, x} \left( \{x_- : (x, x_-, s ) \in Q \} \right) = 1.$
Choose $x_-$ with $(x,x_-,s)\in Q$.
By \eqref{eqn:Phi psi lift},
$\Phi_\psi(x,x_-,s)$ and
$\Phi_\psi(x,x_-',s)\in Q'$ lie on the same stable leaf in $\Omega_{\psi}$.
\end{proof}

\begin{proof}[Proof of \Cref{prop:factor map R}]
Let $f\in\mathcal C_c(\Omega_\psi)$.  By \Cref{Mpsi_finite}, \Cref{koe}, and the Birkhoff ergodic theorem, there exist  conull
sets $Q'\subset\Omega_\psi$ and $Q\subset\Lambda_\psi$ on which,
respectively,
$$\lim_{T\to\infty}\frac1T\int_0^T
f(\mathcal W_tq)\,dt
=m_\psi(f), \quad \lim_{T\to\infty}\frac1T\int_0^T
f(\Phi_\psi(T_tz))\,dt
=M_\psi(f\circ\Phi_\psi).
$$
By \Cref{lem:same stable leaf}, choose $z\in Q$ and $q\in Q'$
such that $\Phi_\psi(z)$ and $q$ lie on the same stable leaf.
Then by \eqref{eqn:factor intertwining identities}, the uniform continuity of $f$, and \eqref{eqn:stable convergence base flow}, we have 
$$
\frac1T\int_0^T
f(\mathcal W_tq)-f(\mathcal W_t\Phi_\psi(z)) \,dt
\to0.
$$
It follows that
$m_\psi(f)=M_\psi(f\circ\Phi_\psi).$
Since this holds for every $f\in\mathcal C_c(\Omega_\psi)$,  $(\Phi_\psi)_*M_\psi = m_\psi$. 
\end{proof}

\begin{theorem}
\label{cor:factor map} We have $\Phi_*\mathtt M_\psi=\mathsf m_\psi.$
Moreover, for every compact set $Q\subset\Omega$ and $a\geq0$, there is $C_{Q,a}>0$ such that
$$
\|s\ut+v\|\leq C_{Q,a}R(x)+C_{Q,a}
$$
whenever $(x,x_-,s,v)\in\widetilde\Lambda_{\mathfrak a}$,
$-a\leq s<R(x)$, and $[\widetilde\Phi(x,x_-,s,v)]\in Q$.
Here $[\cdot]$ denotes projection to $\Omega$.
\end{theorem}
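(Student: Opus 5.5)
The plan is to reduce the measure identity to \Cref{prop:factor map R} by using the product structures on both sides. I would then derive the support bound from the explicit formula for $\widetilde\Phi$, together with a height-type comparison in the cusp geometry.

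\emph{Step 1: the measure identity.} The maps $\Phi$, $\Phi_\psi$ and the bundle projections fit into a commutative diagram. Write $\pi:\Lambda_{\mathfrak a}\to\Lambda_\psi$ for the symbolic projection and $\pi_\psi:\Omega\to\Omega_\psi$ for the geometric one. The $\mathbb R$-coordinate of $\widetilde\Phi(x,x_-,s,v)$ is $\psi(s\ut+v+2\mathcal G(x,p_0))=s+2\psi(\mathcal G(x,p_0))$, because $\psi(\ut)=1$ and $v\in\ker\psi$. Hence $\pi_\psi\circ\Phi=\Phi_\psi\circ\pi$. Moreover $\Phi$ is $\ker\psi$-equivariant by \eqref{eqn:factor intertwining identities}, and $\mathtt M_\psi=M_\psi\otimes\operatorname{Leb}_{\ker\psi}$ in the coordinates $(x,x_-,s;v)$. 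I would fix the section $\sigma$ and write $\Phi(z,v)=\sigma(\Phi_\psi(z))a_{v+\tau(z)}$ for a measurable $\tau:\Lambda_\psi\to\ker\psi$. Then for $F\in C_c(\Omega)$,
\[
\int F\circ\Phi\,d\mathtt M_\psi=\int_{\Lambda_\psi}\int_{\ker\psi}F(\sigma(\Phi_\psi z)a_{v+\tau(z)})\,dv\,dM_\psi(z).
\]
Translation invariance of Lebesgue measure removes $\tau$. \Cref{prop:factor map R} then turns the outer integral into an integral against $m_\psi$, and \eqref{eqn:BMS product in section coordinates} identifies the result with $\mathsf m_\psi(F)$. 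Fubini is justified because the inner integral over $v$ is a compactly supported continuous function of $\Phi_\psi z$, since $\sigma$ is continuous and $F$ has compact support.

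\emph{Step 2: the support bound.} Suppose $[\widetilde\Phi(x,x_-,s,v)]\in Q$. Then there is $g\in\Gamma_\rho$ with $g\cdot(x,x_-,s\ut+v+2\mathcal G(x,p_0))$ in a fixed compact set $\widetilde Q\subset\widetilde\Omega$. By the Hopf formula, the $\mathfrak a$-coordinate of this point is $s\ut+v+2\mathcal G(x,p_0)+\beta_x(g^{-1},e)$. Also $2\mathcal G(x,p_0)$ is bounded on $\Delta_0$, and $(gx,gx_-)$ ranges over a compact subset of $\Lambda_\rho^{(2)}$. It therefore suffices to show $\|\beta_x(e,g)\|\le C R(x)+C$ for every such $g$, i.e.\ $\|s\ut+v\|\ll\|\beta_x(e,g)\|+1$.

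I would bound $\beta_x(e,g)$ using the geometry of the flow line. Since $gx$ and $gx_-$ stay in a compact transverse set, the basepoint $g^{-1}o$ lies within bounded distance of the Weyl cone / Morse quasigeodesic joining $x_-$ to $x$, and so $\beta_x(e,g)=\mu(g)+O(1)$ or $-\i\mu(g)+O(1)$, according to which side of $o$ the point $g^{-1}o$ lies. Its $\psi$-value then controls its norm by \Cref{lem.scalar-Cartan-comparison}, so $\|\beta_x(e,g)\|\ll|\psi(\beta_x(e,g))|+1=|s+O(1)|+1$, using the $\mathbb R$-coordinate bound. Together with $-a\le s<R(x)$ this gives $\|\beta_x(e,g)\|\le C(R(x)+a)+C$, and hence the claimed bound on $\|s\ut+v\|$.

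\emph{Main obstacle.} The delicate point is the claim that $g^{-1}o$ lies uniformly near the quasigeodesic from $f^{-1}(x_-)$ to $f^{-1}(x)$ with comparable Busemann value. Here $x_-\in\Lambda_-$ and $x\in\Delta_0$ are uniformly separated, so $o$ lies boundedly near this quasigeodesic, and \Cref{lem.gromovproductnearest}, \Cref{lem.DKO_additive} and \Cref{lem.buseandcartan} should give the comparison. Boundedness of the $\psi$-coordinate is what ties $s$ to the distance travelled. The constant may depend on $a$ only through the lower bound $s\ge-a$.
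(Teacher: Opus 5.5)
Your proposal is correct and follows the paper's proof. For the measure identity you use the section $\sigma$, translation invariance in $\ker\psi$, \Cref{prop:factor map R} and \eqref{eqn:BMS product in section coordinates}; for the support bound you use the dichotomy $\beta_x(g^{-1},e)=\mu(g)+O(1)$ or $-\mu(g^{-1})+O(1)$, combined with positivity of $\psi$ on $\mathcal L_\rho\smallsetminus\{0\}$ and the bound $|\psi(\beta_x(g^{-1},e))|=|s|+O(1)$. The only difference is that the paper takes the dichotomy from the proof of \cite[Proposition~9.7]{KOW_indicators}, whereas you derive it directly from \Cref{lem.gromovproductnearest}, \Cref{lem.DKO_additive} and \Cref{lem.buseandcartan}. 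One notational slip: wherever you write $\beta_x(e,g)$, the quantity should be $\beta_x(g^{-1},e)=\beta_x(g^{-1}o,o)$, which is what your Hopf-coordinate computation produces.
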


\begin{proof} In the bundle trivialization determined by the section $\sigma : \Omega_{\psi} \to \Omega$, there is
a unique continuous map $\tau_\sigma:\Lambda_\psi\to\ker\psi$
such that $\Phi(z,0)=\sigma(\Phi_\psi(z))a_{\tau_\sigma(z)}$.  
\Cref{prop:factor map R} and
\eqref{eqn:BMS product in section coordinates} give,
for $F\in\mathcal C_c(\Omega)$,
$$\begin{aligned}
\int_{\Lambda_{\mathfrak a}}F(\Phi(z,v))
\,d\mathtt M_\psi(z,v)
=
\int_{\Lambda_\psi}\int_{\ker\psi}
F\left(\sigma(\Phi_\psi(z))a_v\right)
\,dv\,dM_\psi(z)\\ =
\int_{\Omega_\psi}\int_{\ker\psi}
F\left(\sigma(q)a_v\right)
\,dv \,dm_\psi(q)
= \mathsf m_\psi(F),
\end{aligned}$$
where $dv= d\Leb_{\ker \psi}(v)$. This proves
$\Phi_*\mathtt M_\psi=\mathsf m_\psi$.
We now prove the remaining claim. Fix a compact subset $Q \subset \Omega$ and $a \ge 0$. We fix a compact subset $\widetilde Q \subset \widetilde \Omega$ whose image under the projection to $\Omega$ contains $Q$. Let $(x, x_-, s, v) \in \widetilde \La_{\fa}$ be such that $-a \le s < R(x)$ and $\left[ \widetilde \Phi (x, x_-, s, v) \right] \in Q$. Then there exists $\ga \in \Ga_{\rho}$ so that
$$
\gamma\widetilde\Phi(x,x_-,s,v) = (\ga x, \ga x_-, s \mathtt u + v + 2 \cal G(x, p_0) + \beta_{x}(\ga^{-1}, e)) \in \widetilde Q.
$$
Since $\cal G(x, p_0)$ is uniformly bounded by \Cref{lem.gromovproductnearest} and $\widetilde Q$ is compact, this implies that $s \mathtt u + v + \beta_{x}(\ga^{-1}, e)$ is uniformly bounded, depending only on $Q$.

By the proof of
\cite[Proposition~9.7]{KOW_indicators} and compactness of $Q$, either
$$
\beta_x(\gamma^{-1},e)=\mu(\gamma)+O_Q(1)
\quad\text{or}\quad
\beta_x(\gamma^{-1},e)=-\mu(\gamma^{-1})+O_Q(1).
$$

By \eqref{lb}, 
$\psi>0$ on $\mathcal L_\rho\smallsetminus\{0\}$, so
$$
\norm{s \mathtt u + v} \approx_{Q} \norm{\beta_{x}(\ga^{-1}, e)}  \ll_Q 1+ |\psi(\beta_{x}(\ga^{-1}, e))| \approx_Q  |s|  \le a + R(x).
$$
This completes the proof.
\end{proof}

\section{BMS local mixing and the correlation expansion}
\label{sec.local mixing}
In this section, we prove local mixing for
$(\Omega,\mathcal W_t,\mathsf m_\psi)$.
All spaces of smooth functions in this section are understood
to consist of complex-valued functions.
The factor map
$\Phi:(\Lambda_{\mathfrak a},T_t,\mathtt M_\psi)
\to(\Omega,\mathcal W_t,\mathsf m_\psi)$
from \Cref{cor:factor map}
allows us to express correlations on $\Omega$ in terms of the suspension
flow $T_t$ on $\Lambda_{\mathfrak a}$.
The principal technical difficulty is that $\Phi$ involves
an unbounded $\fa$-coordinate: the pullback of a compactly supported
function on $\Omega$ need not be compactly supported on
$\La_\fa$. 
Set 
$$
\begin{gathered}
r:=\operatorname{rank}_{\mathbb R}G=\dim\mathfrak a, \qquad
c_R:=\left(\int_{\Lambda_+}R\,d\nu_+\right)^{1/2}.\end{gathered} 
$$
Throughout this section, we omit the subscripts in $\langle\cdot,\cdot\rangle_\ut$ and $\|\cdot\|_\ut$. We use the orthogonal splitting $\mathfrak a=\mathbb R \mathtt u \oplus\ker\psi$,
and write elements of $\mathfrak a$ as pairs $(s,v)$ accordingly.
Fix an orthonormal basis for $\langle\cdot,\cdot\rangle$ containing $\mathtt u$.
Let $S$ be the matrix in this basis of the positive definite symmetric
operator in \Cref{cor:expansion of complex eigenvalue around 0}.

Write $dz:=d\operatorname{Leb}_{\mathfrak a}(z)=ds\,d\Leb_{\ker \psi}(v)$, as in \Cref{subsection:PSBMS}. 
For $F\in L^1( dz;\mathbb C)$, our Fourier convention is 
$$
\widehat F(\xi):=\int_{\mathfrak a}
e^{i\langle\xi,z\rangle} F(z)\,dz.
$$
Frequency integrals use the dual Lebesgue measure $d\xi$, characterized
by the inversion formula
$$
F(z)=\frac1{(2\pi)^r}\int_{\mathfrak a}
e^{-i\langle\xi,z\rangle}\widehat F(\xi)\,d\xi
\qquad(F\in C_c^\infty(\mathfrak a)).
$$

Let $\mathcal G$ be the centered Gaussian probability density
on $\mathfrak a$ with covariance $S$, defined by
$$
\mathcal G(z):=
\frac
1{\int_{\mathfrak a} e^{-\frac12 y^tS^{-1}y}\,dy} {e^{-\frac12 z^tS^{-1}z}},
\qquad z\in\mathfrak a.
$$
Then
$\int_{\mathfrak a}\mathcal G(z)\,dz=1$,
$\int_{\mathfrak a}z\,\mathcal G(z)\,dz=0$, $\int_{\mathfrak a}zz^t\,\mathcal G(z)\,dz=S$
and its Fourier transform is
$$
\widehat{\mathcal G}(\xi)=e^{-\frac12\xi^tS\xi},
\qquad \xi\in\mathfrak a.
$$

We sometimes write $\mathcal G(s,v):=\mathcal G(s\mathtt u+v)$
for $s\in\mathbb R$ and $v\in\ker\psi$.
Fix a basis $X_1,\ldots,X_{\dim G}$ of left-invariant vector fields
on $G$, regarded also as vector fields on $\Gamma_\rho\backslash G$.
For  $f\in\mathcal C_c^\infty(\Gamma_\rho\backslash G)$, $k\in\mathbb Z_{\geq0}$ and $\beta\in(0,1]$, set
$$
\|f\|_{k,\beta}
:=
\max_{|\alpha|\leq k}
\left(\|X^\alpha f\|_\infty+|X^\alpha f|_\beta\right),
$$
where
$X^\alpha:=X_1^{\alpha_1}\cdots X_{\dim G}^{\alpha_{\dim G}}$ and $|g|_\beta
:=
\sup_{x\neq y}
\frac{|g(x)-g(y)|}{d_{\mathrm{Riem}}(x,y)^\beta}$. Here $d_{\mathrm{Riem}}$ is the quotient metric induced by a fixed
left $G$-invariant and right $K$-invariant Riemannian metric on $G$. We also write $d_{\mathrm{Riem}}$ for the induced quotient metric on $\Gamma_\rho\backslash G/M$.

For a compact subset $\mathcal K \subset \Ga_{\rho} \ba G$, 
denote by $\mathcal C_{\mathcal K}^{\infty}(\Gamma_\rho\backslash G)^M$ the space of smooth $M$-invariant functions whose supports are contained in $\mathcal K$.
For simplicity, we write $\mathcal C_{\mathcal K}^{\infty}:=\mathcal C_{\mathcal K}^{\infty}(\Gamma_\rho\backslash G)^M$.

\begin{theorem}[BMS local mixing]
\label{bmslocal}
The following assertions hold.
\begin{enumerate}
\item There exist bilinear forms  
$\mathfrak c_j:
\mathcal C_c^\infty(\Gamma_\rho\backslash G)^M
\times
\mathcal C_c^\infty(\Gamma_\rho\backslash G)^M
\to\mathbb C$, $ j\geq0$,
such that, for every compact set
$\mathcal K\subset\Gamma_\rho\backslash G$ and
$L\in\mathbb Z_{\geq0}$, there exist $k\in\mathbb N$ and
$\beta\in(0,1)$ for which
$$
|\mathfrak c_j(f_1,f_2)|
\ll_{\mathcal K,L}
\|f_1\|_{k,\beta}\|f_2\|_{k,\beta}
\qquad(0\leq j\leq L)
$$
and the following expansion holds for all
$f_1,f_2\in\mathcal C_{\mathcal K}^\infty$ as $t\to\infty$:
$$
\begin{aligned}
&t^{\frac{r-1}{2}}
\int_{\Gamma_\rho\backslash G}
f_1(xa_{t \mathtt u})f_2(x)\,d\mathsf m_\psi(x)=
\sum_{j=0}^L\mathfrak c_j(f_1,f_2)t^{-j}
+
O_{\mathcal K,L}
\left(
\|f_1\|_{k,\beta}\|f_2\|_{k,\beta}t^{-(L+1)}
\right).
\end{aligned}
$$
Moreover,
$$\mathfrak c_0(f_1,f_2)
=
c_R^r\left(
\int_{\mathbb R}\mathcal G(c_Rs\mathtt u)\,ds
\right)
\mathsf m_\psi(f_1)\mathsf m_\psi(f_2).
$$

\item
There exists $a>0$ such that,
for every  compact set
$\mathcal K\subset\Gamma_\rho\backslash G$,
there exist $k \in \N$,  $\beta\in(0,1)$, and 
$C,T>0$ with the following property.
For all $f_1, f_2\in\mathcal C_{\mathcal K}^\infty$, $t\geq T$, and 
$\ww\in\ker\psi$,
$$
t^{\frac{r-1}{2}}
\int_{\Gamma_\rho\backslash G}
f_1(xa_{t \mathtt u +\sqrt t\,\ww})f_2(x)\,d\mathsf m_\psi(x)
=\mathfrak c_{\ww}\mathsf m_\psi(f_1)\mathsf m_\psi(f_2)
+\mathcal E_{f_1,f_2}(t,\ww),
$$
where 
$\mathfrak c_{\ww}
=
c_R^r\int_{\mathbb R}
\mathcal G\bigl(c_R(s\mathtt u+\ww)\bigr)\,ds$
and
\begin{equation}
\label{eqn:global weighted error}
|\mathcal E_{f_1,f_2}(t,\ww)|
\leq
C\|f_1\|_{k,\beta}\|f_2\|_{k,\beta}\,
t^{-\frac12}
\exp\bigl(-a \cdot \min\{\|\ww\|^2,\|\ww\|\}\bigr).
\end{equation}
\end{enumerate}
\end{theorem}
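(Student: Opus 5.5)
Via \Cref{cor:factor map} we transfer the correlation to the suspension. Since $\Phi_*\mathtt M_\psi=\mathsf m_\psi$ and $\Phi\circ T_t=\mathcal W_t\circ\Phi$, $\Phi(za_v)=\Phi(z)a_v$, the correlation becomes
$$\int_{\Lambda_{\mathfrak a}}F_1\bigl((T_tz)a_{\ell(t)\ww}\bigr)F_2(z)\,d\mathtt M_\psi(z),\qquad F_i:=f_i\circ\Phi .$$
The support bound in \Cref{cor:factor map} confines $F_i$ to $\{\|s\ut+v\|\le C(R(x)+1)\}$. We regard $F_i(x,x_-,s,\cdot)$ as functions of the full vector $z=s\ut+v\in\mathfrak a$ (extending by the identification in \eqref{eqn:zeta action}) and take partial Fourier transforms $\widehat F_i(x,x_-,\xi)$ in $z$. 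Smoothness of $f_i$ along $A$ gives decay $|\widehat F_i(\cdot,\xi)|\ll \|f_i\|_{k,\beta}(1+R(x))^{r}(1+\|\xi\|)^{-k}$, together with the analogous bounds for $\xi$-derivatives (an extra factor $(1+R)^{|\alpha|}$) and for H\"older seminorms in $(x,x_-)$. These are integrable thanks to the exponential moments of \Cref{thm.exptail}. Unfolding the suspension as a sum over the number $n$ of returns and applying Fourier inversion gives
$$\frac{1}{(2\pi)^r\nu_+(R)}\sum_{n\ge0}\int_{\mathfrak a}e^{-i\langle\xi,t\ut+\ell(t)\ww\rangle}\int_{\Lambda_+\times\Lambda_-}\widehat F_1(\widehat T^n(\cdot),\xi)\,e^{i\langle\xi,K_n\rangle}\,\overline{\widehat F_2}(\cdot,\xi)\,d\nu^\diamond\,d\xi .$$
We then replace $\widehat F_1$ by its stable average using \Cref{lem:measure on product space} and the contraction of \Cref{lem:shrinking fibers} (an exponentially small error in $n$). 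This turns the inner integral into $\int \mathcal L_\xi^n(\cdots)\,\cdot\,d\nu_+$, with $\mathcal L_\xi$ as in \Cref{def:normalized transfer operators}.

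The frequency integral splits into $\|\xi\|\ge\delta$ and $\|\xi\|<\delta$. For $\|\xi\|$ large, the decay of $\widehat F_i$ makes the tail negligible. For $\delta\le\|\xi\|\le \xi_0$ with $\xi_0$ large, compactness and $\rho(\mathcal L_\xi)<1$ (from \Cref{prop:lattice function} and \Cref{thm.denselength}) give exponential decay in $n$. For intermediate-to-large $\|\xi\|$ we need a Dolgopyat-type bound $\|\mathcal L_\xi^{n}\|\ll \|\xi\|^{C}\varrho^{n/\log\|\xi\|}$, or at least summability in $n$ with polynomial loss in $\|\xi\|$. We would derive this from the quantitative non-integrability of \Cref{thm:non-integrability} (the role of \Cref{prop:norm of transfer operator}), using the Lasota--Yorke bound \Cref{lem:L0 action on B} to run the standard cancellation argument on the cylinders of \eqref{eqn:inclusion of cylinder}. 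Together these make the contribution from $\|\xi\|\ge\delta$ equal to $O(t^{-N})$ for every $N$. We expect this high-frequency step to be the main obstacle: the cylinders have countably many branches with unbounded roof, so the Dolgopyat contraction has to be combined with the exponential tail of $R$ via \Cref{prop.expsumfinalfinite}.

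Near $\xi=0$ we use \Cref{thm:decomposition of transfer operators}, writing $\mathcal L_\xi^n=\kappa_\xi^n\mathcal P_\xi+\kappa_\xi^nA_\xi^n$. The $A^n_\xi$ part is $O(\vartheta^n)$ with all derivatives, by \eqref{eqn:uniform remainder powers Section15}, and contributes a smooth function of $\xi$ summed over $n$; integrating it against $e^{-i\langle\xi,t\ut\rangle}$ after multiplication by $\kappa^n_\xi$ still yields rapidly decaying terms. In the main part, $\kappa_\xi^n e^{-i n\psi(\xi)\nu_+(R)}=\widetilde\kappa(\xi)^n$, and \Cref{cor:expansion of complex eigenvalue around 0} (with analyticity giving a full Taylor expansion) lets us substitute $\xi=\zeta/\sqrt n$ and expand $\widetilde\kappa(\zeta/\sqrt n)^n=e^{-\frac12\zeta^tS\zeta}(1+\sum_j n^{-j/2}P_j(\zeta))$, as in \cite{DNP}. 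We Taylor-expand the coefficient functions $\nu_+(\widehat F_2\,\mathcal P_\xi\widehat F_1)$ at $0$. Each term becomes a derivative of the Gaussian $\mathcal G$ evaluated at $(t\ut+\ell(t)\ww-n\nu_+(R)\ut)/\sqrt n$ times $n^{-r/2}$. Summing over $n$ by Euler--Maclaurin around $n\approx t/\nu_+(R)$ integrates out the $\ut$-direction. This produces the prefactor $t^{-(r-1)/2}$, the constant $c_R^r\int\mathcal G(c_R(s\ut+\ww))\,ds$, and the integer-power expansion; the half-integer powers cancel by parity, as in \cite{DNP}. The leading coefficient reduces to $\mathtt M_\psi(F_1)\mathtt M_\psi(F_2)=\mathsf m_\psi(f_1)\mathsf m_\psi(f_2)$.

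For part (2) we carry out the same computation uniformly in $\ww$. The Gaussian factor $e^{-c\|\ww\|^2}$ arises in the range $\|\ww\|\ll\sqrt t$. For larger $\|\ww\|$ the displacement is controlled by a large-deviation (Cram\'er-type) bound obtained from the analytic extension of $\kappa_\xi$ to complex $\xi$ in \Cref{thm:decomposition of transfer operators} (contour shift), which yields $e^{-a\|\ww\|}$. The $t^{-1/2}$ remainder comes from the first correction term.
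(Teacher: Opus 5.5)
Your architecture is the paper's: factor map, unfolding over returns, Fourier inversion, stable projection, non-integrability at high frequency, the DNP expansion of $\kappa_\xi^n$ near $0$, and Gaussian sums. However, two steps fail as stated.

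First, the Fourier decay claim is false. The functions you transform must be cut off to the fundamental domain $0\le s<R(x)$: the $A$-orbit returns to $\mathcal K$, so $f_i\circ\Phi(x,x_-,\cdot)$ is not compactly supported on $\mathfrak a$. The sharp factor $\mathbf 1_{[0,R(x))}(s)$ kills the claimed decay $(1+\|\xi\|)^{-k}$ in the $\ut$-direction, and smoothness of $f_i$ along $A$ does not help. The paper keeps the sharp cutoff in $F_1$ and uses only boundedness of $\widehat F_1$. It replaces $F_2$ by a function $F_2^\sharp$, smooth in $z$, with $F_2=F_2^\sharp+F_2^\sharp\circ\zeta$, so all frequency decay comes from $\widehat{F_2^\sharp}$.

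Second, nothing in your argument produces decay in $t$ for the non-Gaussian pieces. The bounds $\rho(\mathcal L_\xi)<1$ and \Cref{prop:norm of transfer operator} give decay in the return count $n$, not in $t$. The truncation error of $\widetilde\kappa(\zeta/\sqrt n)^n$, once bounded in absolute value, is of size $n^{-r/2-(J+1)/2}$ after integration. Summed over all $n\ge0$, these pieces are $O(1)$.

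The missing idea is to discard $n\le t^{1-\epsilon_1}$ before any Fourier analysis. A nonzero term with such $n$ forces $R_{\lfloor t^{1-\epsilon_1}\rfloor+1}(x)>t-O(1)$. By \Cref{lem:N return time} and the exponential moment of $R$, this event has $\nu_+$-measure $\ll te^{-ct^{\epsilon_1}}$. Only for $n>t^{1-\epsilon_1}$ does $e^{-\beta_1n\|\xi\|^{-\beta_2}}$, integrated against $\|\xi\|^{-m}$, give $t^{-(1-\epsilon_1)(m-r)/\beta_2}$. Likewise, only then do the remainders in \eqref{eqn:kappawn} sum to $O(t^{-((r-1)/2+L+1)})$, by the choice of $\epsilon_1$ in \eqref{eqn:beta-choiceofe1}.

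A smaller point on the middle frequencies: on this countable-branch space, absence of unimodular eigenvalues gives $\rho(\mathcal L_\xi)<1$ only together with quasi-compactness, which the paper never proves. It applies \Cref{prop:norm of transfer operator} on all of $\{\|\xi\|\ge b_0\}$ instead.

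Two further steps need repair.

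Averaging $\widehat F_1$ alone over the stable fibre does not produce $\int\mathcal L_\xi^n(\cdots)\,d\nu_+$. Since $\widehat T^n(x,x_-)=(T^nx,g^{-1}x_-)$ with $g\in\mathcal H^n$ determined by $x$, the function $\widehat F_1\circ\widehat T^n$ is not close to any function of $T^nx$ alone. The paper uses $\widehat T$-invariance to compose both factors with $\widehat T^{k_t}$, where $k_t\asymp\log t$. It then replaces both by their $k_t$-step stable projections at cost $\tau^{-k_t}$ (\Cref{lem:reduction of La-}). Finally, it absorbs the $R$-growth of the projected $\widehat{F_2^\sharp}$ by one extra application of $\mathcal L_\xi$ (\Cref{lem:Lkttv}).

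For part (2), the high-order error of the main computation is uniform in $\ww$ but has no $\ww$-decay. It therefore gives the target bound only for $\|\ww\|^2\lesssim\log t$, not for $\|\ww\|\ll\sqrt t$. A contour shift inside the Fourier integral would additionally need the high-frequency decay at complex frequencies, which is not available. The paper instead proves a Chernoff bound for the whole correlation, $O\bigl(t^{(r-1)/2+1}e^{-a_0\min\{\|\ww\|^2,\|\ww\|\}}\bigr)$, from $\nu_+(e^{\langle\theta,K_n\rangle})=\nu_+(\mathcal L_{-i\theta}^n\mathbf1)$ (\Cref{lem:rough transverse correlation bound}). It then takes the geometric mean of this bound with the $\ww$-uniform error $O(t^{-(L+1)})$, choosing $L\ge(r-1)/2+1$.
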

\begin{remark}[More general transverse displacements]
Let $\ell:[1,\infty)\to(0,\infty)$ be any function.
Since the constants in \Cref{bmslocal}(2) are independent
of $\ww$, we may apply that theorem with $\ww$ replaced by
$\ell(t)\ww/\sqrt t$. This gives, for all sufficiently large $t$,
$$
\begin{aligned}
&t^{\frac{r-1}{2}}
\int_{\Gamma_\rho\backslash G}
f_1(xa_{t\mathtt u+\ell(t)\ww})f_2(x)\,
d\mathsf m_\psi(x)=
\mathfrak c_{\ell(t)\ww/\sqrt t}\,
\mathsf m_\psi(f_1)\mathsf m_\psi(f_2)
+\mathcal E_{f_1,f_2}
\left(t,\frac{\ell(t)}{\sqrt t}\ww\right),
\end{aligned}
$$
with the remainder bounded by
\eqref{eqn:global weighted error} evaluated at
$\ell(t)\ww/\sqrt t$.
In particular, if $\ell(t)^2/t\to q\in[0,\infty)$, then
$$
\lim_{t\to\infty}
t^{\frac{r-1}{2}}
\int_{\Gamma_\rho\backslash G}
f_1(xa_{t\mathtt u+\ell(t)\ww})f_2(x)\,
d\mathsf m_\psi(x)
=
\mathfrak c_{\sqrt q\,\ww}\,
\mathsf m_\psi(f_1)\mathsf m_\psi(f_2),
$$
by continuity of $v\mapsto \mathfrak c_v$.
If $\ell(t)^2/t\to\infty$, the limit is zero for
every $\ww\neq0$.
This is analogous to the more general transverse local
mixing statement in \cite[Theorem~1.3]{CS_local}.
\end{remark}

We first prepare the estimates needed for the proof of \Cref{bmslocal}.
After introducing the pullback functions and fixing the parameters,
we establish return-time tail bounds and a rough transverse correlation
bound in \Cref{subsec:estimatesonthereturnmap}.
The transfer-operator decay in \Cref{subsec:transfer-decay}, together
with the Fourier and stable-projection estimates in
\Cref{subsubsection:auxiliary estimates}, controls frequencies away from
zero and reduces the remaining contribution to the expanding map.
We then obtain the limiting Taylor coefficients in
\Cref{subsec:normalized-expectations} and the Gaussian sum estimates in
\Cref{subsec:uniformrenewalsums}.
The proof of \Cref{bmslocal} is given in \Cref{subsec:proof-bmslocal}.

\subsection{Pullbacks of compactly supported functions}
\label{subsec:pullbacks}

For the remainder of this section we fix 
$$\text{ a compact subset }\mathcal K\subset\Gamma_\rho\backslash G,\; L\in\mathbb Z_{\geq0}, \;\text{and}\; f_1,f_2\in C_{\mathcal K}^\infty$$
as in \Cref{bmslocal}.
Unless otherwise indicated, all implicit constants depend only on
$\mathcal K$, $L$. 

In the following, we abuse the notation and write $\Phi : \widetilde \La_{\fa} \to \Omega$ for the composition of $\widetilde \Phi : \widetilde \La_{\fa} \to \widetilde \Omega$ and the projection $\widetilde \Omega \to \Omega$. We pull $f_1$ and $f_2$ back to the suspension $\widetilde \La_{\fa}$ and replace the second
pullback by a function smooth in the $\mathfrak a$-variable.
More precisely, define $F_i : \widetilde \La_{\fa} \to \mathbb C$, $i = 1, 2$, by 
\begin{equation}
\label{eqn:Fi}
F_i(x,x_-,s,v)
:=f_i\bigl(\Phi(x,x_-,s, v)
\bigr)\mathbf 1_{[0,R(x))}(s).
\end{equation}
Fix a function  $\zeta_0\in\mathcal C^\infty(\mathbb R,[0,1])$ such that
$\zeta_0(s)=0$ for $s\leq- \inf_{x \in \La_+} \frac{R(x)}{10}$ and
$\zeta_0(s)=1$ for $s\geq0$. Define $F_2^{\sharp} : \widetilde \La_{\fa} \to \mathbb C$ by 
\begin{equation}
\label{eqn:F2 sharp}
F_2^\sharp(x,x_-,s,v)
:=\zeta_0(s)f_2\bigl( \Phi(x,x_-,s,  v)
\bigr)\bigl(1-\zeta_0(s-R(x))\bigr).
\end{equation}
Then $F_2^\sharp$ is smooth in the $\mathfrak a$-variable,
is nonzero only if 
$-\inf_{x \in \La_+} \frac{R(x)}{10}\leq s\leq R(x)$, and satisfies
\begin{equation}
\label{eqn:F2 decomposition}
F_2=F_2^\sharp+F_2^\sharp\circ\zeta
\quad\text{on }\Lambda_{\mathfrak a},
\end{equation}
where $\zeta$ is defined in \eqref{eqn:zeta action}, since $\Phi\circ\zeta=\Phi$ by \eqref{eqn:LaUpsi} and
\eqref{eqn:telescoping-return-cocycle}, and the cutoff factors
on $\Lambda_{\mathfrak a}$ are
$1-\zeta_0(s-R(x))$ and $\zeta_0(s-R(x))$, respectively.

By \Cref{cor:factor map},
there is $C_1=C_1(\mathcal K)>0$ such that
\begin{equation}
\label{eqn:C1 new}
\supp F_1\cup\supp F_2\cup\supp F_2^\sharp
\subset
\{(x,x_-,z):\|z\|\leq C_1R(x)+C_1\}.
\end{equation}

Note that
by \eqref{eqn:LaUpsi}, the map
$(x,x_-)\mapsto\Phi(x,x_-,0)$ extends smoothly to a neighborhood
of the compact set $\overline{\Lambda_+}\times\Lambda_-$
in $\mathcal F^{(2)}$.
Using \Cref{lem:Riemannian-visual-Holder} and
$\|\operatorname{Ad}(a_{-z})\|\ll e^{C_2\|z\|}$, we obtain
 $C_2>0$ and $\eta\in(0,1]$ such that
\begin{equation}
\label{eqn:C3 Riem}
d_{\rm Riem}(\Phi(x,x_-,z),\Phi(y,y_-,z))
\ll e^{C_2\|z\|}
\bigl(d_{\bar\psi}(x,y)^\eta+d_{\bar\psi}(x_-,y_-)^\eta\bigr)
\end{equation}
for all $(x,x_-),(y,y_-)\in\Lambda_+\times\Lambda_-$
and $z\in\mathfrak a$.

We also introduce the stable projections used below.
Recall the conditional measures $\nu_{+,x}$ from
\Cref{lem:measure on product space}.
For a Borel function
$H:\Lambda_+\times\Lambda_-\to\mathbb C$ and $k\geq0$,
define its \emph{$k$-step stable projection} by
\begin{equation}
\label{eqn:def stable projection}
(\boldsymbol\pi_kH)(x)
:=
\int_{\Lambda_-}
H\bigl(\widehat T^k(x,x_-)\bigr)\,d\nu_{+,x}(x_-),
\qquad x\in\Lambda_+,
\end{equation}
whenever the integral converges absolutely.

\subsection{Parameters and the local spectral expansion}
\label{subsec:constants}
Recall $\lambda\in(0,1)$, $c_0>0$, and $\epsilon_0>0$ from
\Cref{lem:shrinking fibers}, \Cref{prop.holderlocal}, and
\eqref{eqn:real branch summability Section15}, respectively;
in particular, $\int_{\Lambda_+}e^{\epsilon_0R}\,d\nu_+<\infty$.
With $C_1,\eta,C_2>0$ as in
\eqref{eqn:C1 new} and \eqref{eqn:C3 Riem},
choose $\beta,\epsilon_1\in(0,1)$ such that
\begin{equation}\label{eqn:beta-choiceofe1}
8\beta C_1C_2<\epsilon_0,
\qquad
\epsilon_1<\frac14
\left(\frac{r-1}{2}+L+\frac32\right)^{-1},
\end{equation}
and set
\begin{equation}\label{eqn:kappa}
c_1:=\frac12\min\left\{\beta\eta,c_0,{\epsilon_0}/{8}\right\},
\qquad
\tau:=\lambda^{-\beta\eta}>1,
\qquad
J:=2L+5.
\end{equation}
We apply the results of \Cref{sec.spectra of complex transfer operators,sec.localmixingsymbolic} with $c=c_1$ in the estimates below.

For $t>1$, set
\begin{equation}
\label{eqn:kt}
k_t:=
\left\lceil
\left(\frac{r-1}{2}+L+2J\right)
\frac{\log t}{\beta\eta|\log\lambda|}
\right\rceil.
\end{equation}
Then $\tau^{-k_t}:=\lambda^{\beta\eta k_t}
\leq t^{-((r-1)/2+L+2J)}$.

Finally, choose $b_0>0$ such that
\Cref{thm:decomposition of transfer operators} and
\Cref{cor:expansion of complex eigenvalue around 0} hold on
a neighborhood of $[-b_0,b_0]^r$, and 
for every $z\in(-b_0,b_0)^r$,
\be
\label{eqn:kappa25-kappafw}
\left(\frac{(r-1)/2+L+2J}{\beta\eta|\log\lambda|}\right)
\bigl|\log|\kappa_z|\bigr|
<\frac{1-\epsilon_1}{10}, \qquad
|\kappa_z|\leq\widehat{\mathcal G}(z/\sqrt2).
\ee

These choices are possible because $\kappa_0=1$ and
$$
\log|\kappa_z|=-\frac12 z^tSz+o(\|z\|^2)
$$
by \eqref{eqn:tilde kappa} and
\Cref{cor:expansion of complex eigenvalue around 0}.
Let $\mathcal O:=(-b_0,b_0)^r$.
By analyticity and
\Cref{cor:expansion of complex eigenvalue around 0},
$\frac{d}{ds}\log|\kappa_{s\xi}|
=-s\xi^tS\xi+O(s^2\|\xi\|^3)$.
Since $S$ is positive definite, decreasing $b_0$ if necessary
makes this derivative nonpositive for $\xi\in\mathcal O$
and $0\leq s\leq1$.
Thus $s\mapsto\log|\kappa_{s\xi}|$ is nonincreasing, and hence
\begin{equation}
\label{eqn:radial eigenvalue modulus Section18}
|\kappa_{s\xi}|\geq|\kappa_\xi|
\qquad(\xi\in\mathcal O,\ 0\leq s\leq1).
\end{equation}

We use the symmetrized tensor product $\otimes$ and write
$*$ for full tensor contraction in the fixed orthonormal coordinates.
Apply \cite[Lemma~A.1]{DNP} to $\widetilde\kappa$ with its parameters
$K=2L+3$ and $J=3$; the latter is distinct from our $J=2L+5$.
We now record the expansion of $\kappa_\xi^n$ that
produces the Gaussian terms after Fourier inversion.
After decreasing $b_0$ further if necessary, the following
expansion and remainder estimate hold uniformly for
$\xi\in[-b_0,b_0]^r$ and all sufficiently large $n$:
\begin{equation}
\label{eqn:kappawn}
\begin{aligned}
\kappa_\xi^n
&=e^{in c_R^{2}\xi_1}\widehat{\mathcal G}(\sqrt n\,\xi)
\sum_{k=0}^{2L+4}\sum_{j=3k}^{2L+4+2k}
n^k A_{j,k}*\xi^{\otimes j}+O\left(
\widehat{\mathcal G}(\sqrt{n/2}\,\xi)n^{-(L+2)}
(1+\|\sqrt n\,\xi\|^{L_0})\varphi(\xi)\right),
\end{aligned}
\end{equation}
where $A_{j,k}$ are fixed symmetric tensors, $A_{0,0}=1$,
$A_{j,0}=0$ for $j>0$, $L_0\in\mathbb N$, and $\varphi$ is bounded
on $\mathcal O$ with $\varphi(\xi)\to0$ as $\xi\to0$.

All these choices are independent of $t$.
For each $\xi\in\mathfrak a$, the operator $\mathcal L_\xi$
extends to a contraction on $L^1(\nu_+)$, given almost everywhere
by the same branch formula. Indeed,
$|\mathcal L_\xi F|\leq\mathcal L_0|F|$ and \eqref{contraction}
give $\|\mathcal L_\xi F\|_{L^1(\nu_+)}
\leq\|F\|_{L^1(\nu_+)}$.

\subsection{Return decomposition and tail estimates}
\label{subsec:estimatesonthereturnmap}
We express the correlations as finite sums indexed by $n$,
the number of iterations of the return map $T$. 
The return-time tail bound in \Cref{lem:N return time}
will allow us to discard terms with $n\leq t^{1-\epsilon_1}$
in the return decomposition.
The rough correlation bound in
\Cref{lem:rough transverse correlation bound}
provides the transverse decay needed for the integrable
remainder in \Cref{bmslocal}(2).

Let $\ell$ be either the zero function or $t\mapsto\sqrt t$
on $[0,\infty)$. 
 For $t \ge 0$ and $\ww \in \ker \psi$, set 
$$
C_t^\ell(f_1,f_2;\ww)
:=\int_{\Gamma_\rho\backslash G}
f_1(xa_{t\ut+\ell(t)\ww})f_2(x)\,d\mathsf m_\psi(x).
$$
Put $t_+:=2t/\inf R+2$. If $x\in\Lambda_+$, $0\leq s<R(x)$, and
$R_n(x)\leq s+t$ for some $n\geq1$, then
$(n-1)\inf R\leq R_n(x)-R(x)<t$.
Hence $n<t/\inf R+1=t_+/2$.

By \Cref{cor:factor map}, \eqref{eqn:factor intertwining identities},
and \eqref{eqn:defn T flow},
\begin{equation}
\label{eqn:return decomposition Section18}
\begin{aligned}
C_t^\ell(f_1,f_2;\ww)
&=\sum_{0\leq n<t_+}\int_{\Lambda_{\mathfrak a}}
(F_1\circ\zeta^n)(x,x_-,s+t,v+\ell(t)\ww) F_2(x,x_-,s,v)\,d\mathtt M_\psi.
\end{aligned}
\end{equation}

We first estimate the total measure of $N$-step branches
$\gamma\Lambda_+$ for which $R_\gamma>t$.
By \Cref{lem.uniformly positive}, there exists $C_3>0$
such that, for every $N\geq1$, $\gamma\in\mathcal H^N$, and
$x\in\gamma\Lambda_+$,
\begin{equation}
\label{eqn:C3}
R_\gamma-C_3\leq R_N(x)\leq R_\gamma.
\end{equation}

\begin{lemma}
\label{lem:N return time}
There exists $C_4>0$ such that for every $N\geq1$ and $t>0$,
$$
\sum_{{\gamma\in\mathcal H^N,\, R_\gamma>t}}
\nu_+(\gamma\Lambda_+)\leq C_4Ne^{-\epsilon_0t/N}.
$$
\end{lemma}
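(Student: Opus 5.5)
The plan is to reduce the tail over $N$-step branches to the one-step exponential moment by a pigeonhole argument on the letters. Let $\gamma=\gamma_1\cdots\gamma_N\in\mathcal H^N$ with $R_\gamma>t$, and pick $x\in\gamma\Lambda_+$ with $R_N(x)>t-$ (any small slack; by definition of the supremum we may take $R_N(x)>t$ if we choose $x$ near the supremum). Since $R_N(x)=\sum_{j=0}^{N-1}R(T^jx)$, some index $j\in\{0,\dots,N-1\}$ satisfies $R(T^jx)>t/N$. Because $T^jx$ lies in the one-cylinder $\gamma_{j+1}\Lambda_+$ and $R$ varies by at most a uniform constant on one-cylinders (by \eqref{eqn:one branch distortion Section15}, $R_{\gamma_{j+1}}-C\le R\le R_{\gamma_{j+1}}$ there), we conclude that $R(T^jy)>t/N-C$ for every $y\in\gamma\Lambda_+$. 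Hence each such cylinder $\gamma\Lambda_+$ is contained in $\bigcup_{j=0}^{N-1}T^{-j}\{R>t/N-C\}$.

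Since the cylinders $\gamma\Lambda_+$, $\gamma\in\mathcal H^N$, are pairwise disjoint by \eqref{eqn:powered-branch-partition}, summing gives
$$\sum_{\gamma\in\mathcal H^N,\,R_\gamma>t}\nu_+(\gamma\Lambda_+)\le\nu_+\Bigl(\bigcup_{j=0}^{N-1}T^{-j}\{R>t/N-C\}\Bigr)\le\sum_{j=0}^{N-1}\nu_+(T^{-j}\{R>t/N-C\}).$$
By $T$-invariance of $\nu_+$ (\Cref{cor: T invariant measure}) each term equals $\nu_+(\{R>t/N-C\})$. Finally, since $\int e^{\epsilon_0R}\,d\nu_+<\infty$ (recorded in \Cref{subsec:constants}, from \Cref{thm.exptail} and boundedness of $d\nu_+/d\nu$), Markov's inequality gives $\nu_+(\{R>s\})\le e^{-\epsilon_0 s}\int e^{\epsilon_0R}\,d\nu_+$, so the total is at most $N e^{\epsilon_0C}\bigl(\int e^{\epsilon_0R}d\nu_+\bigr)e^{-\epsilon_0t/N}$, and we take $C_4$ to be the constant in front.

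The only delicate point is the passage from a single point with large Birkhoff sum to the whole cylinder; this is handled by the uniform one-branch distortion, so there is no real obstacle. If one prefers to avoid choosing $x$ near the supremum, note directly that $R_\gamma\le\sum_j R_{\gamma_{j}}$ (sup of a sum is at most the sum of sups), so some letter has $R_{\gamma_{j+1}}>t/N$, which yields the same inclusion with the constant $C$.
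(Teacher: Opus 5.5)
Your argument is correct and is essentially the paper's proof: a union bound over the $N$ positions in the word, $T$-invariance of $\nu_+$, and Markov's inequality with $\int_{\Lambda_+}e^{\epsilon_0R}\,d\nu_+<\infty$. The only difference is the order of steps. The paper first uses the $N$-step distortion \eqref{eqn:C3} to place every cylinder $\gamma\Lambda_+$ with $R_\gamma>t$ inside $\{R_N>t-C_3\}$, and then pigeonholes pointwise. You pigeonhole on a single letter and then transfer to the whole cylinder via the one-step distortion \eqref{eqn:one branch distortion Section15}, which only changes $C_4$ by a harmless factor $e^{\epsilon_0C}$.
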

\begin{proof}
By \eqref{eqn:C3}, the definition of $R_N$, and $T$-invariance
of $\nu_+$,
\begin{align*}
&\sum_{{\gamma\in\mathcal H^N,\, R_\gamma>t}} \nu_+(\gamma\Lambda_+)
\leq \nu_+(\{R_N>t-C_3\})\\
&\leq \sum_{j=0}^{N-1} \nu_+\bigl(T^{-j}\{R>(t-C_3)/N\}\bigr) =N\nu_+(\{R>(t-C_3)/N\})\\
&\leq Ne^{-\epsilon_0(t-C_3)/N}
\int_{\Lambda_+}e^{\epsilon_0R}\,d\nu_+.
\end{align*}
The integral is finite by the choice of $\epsilon_0$ and hence the claim follows.
\end{proof}

The next bound gives transverse decay but does not yet give decay in
$t$. In the proof of \Cref{bmslocal}(2), we combine it with a sufficiently
high-order approximation to obtain the required integrable error.

\begin{lemma}
\label{lem:rough transverse correlation bound} There exists $a_0>0$  such that, for $\ell(t)=\sqrt t$, for all $\ww\in\ker\psi$ and sufficiently large $t$,
$$
t^{(r - 1) /2}|C_t^{\ell}(f_1,f_2;\ww)|
\ll \|f_1\|_\infty\|f_2\|_\infty\,
t^{(r-1)/2+1}e^{-a_0\min\{\|\ww\|^2,\|\ww\|\}}.
$$
\end{lemma}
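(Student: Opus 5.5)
We start from the return decomposition \eqref{eqn:return decomposition Section18} with $\ell(t)=\sqrt t$. By \eqref{eqn:C1 new}, the $n$-th term is nonzero only if $\|(s,v)\|\leq C_1R(x)+C_1$ and $\zeta^n(x,x_-,s+t,v+\sqrt t\,\ww)$ lies in the support of $F_1$. Using \eqref{eqn:zeta action}, the second condition says that $(x',x_-',s+t-R_n(x),\,v+\sqrt t\,\ww-K^\circ_n(x))$ has norm at most $C_1R(T^nx)+C_1$. Combining the two conditions with the triangle inequality, a nonzero contribution forces
$$
\sqrt t\,\|\ww\|\leq \|K^\circ_n(x)\|+C_1\bigl(R(x)+R(T^nx)\bigr)+2C_1 .
$$
The integrand is bounded by $\|f_1\|_\infty\|f_2\|_\infty$. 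The $v$-integral ranges over a set of volume $\ll (1+R(x))^{r-1}$, and the $s$-integral over an interval of length $\leq R(x)$. Since $d\nu^\diamond$ projects to $\nu_+$, each term with index $n<t_+$ is therefore bounded by
$$
\|f_1\|_\infty\|f_2\|_\infty\int_{E_n}(1+R)^r\,d\nu_+ ,
$$
where $E_n$ is the set of $x$ satisfying the displayed inequality. There are $O(t)$ values of $n$, which accounts for the factor $t^{(r-1)/2+1}$ after multiplying by $t^{(r-1)/2}$. It remains to prove a bound $\int_{E_n}(1+R)^r\,d\nu_+\ll e^{-a_0\min\{\|\ww\|^2,\|\ww\|\}}$ that is uniform in $n<t_+$ and in large $t$.

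We split $E_n$ into two pieces. On the set where $C_1(R(x)+R(T^nx))+2C_1\geq \sqrt t\|\ww\|/2$, we have $R(x)$ or $R(T^nx)\gg \sqrt t\|\ww\|$. Using Cauchy--Schwarz, $T$-invariance of $\nu_+$, and the exponential moment of $R$ from \Cref{thm.exptail}, this piece contributes $\ll e^{-c\sqrt t\|\ww\|}\leq e^{-c\|\ww\|}$ once $t\geq1$. The other piece is where $\|K^\circ_n(x)\|\geq \sqrt t\|\ww\|/2$, and here we need a large-deviation bound for the Birkhoff sum $K^\circ_n$ of the mean-zero, exponentially integrable cocycle $K^\circ$. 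The goal is
$$
\nu_+\bigl(\|K^\circ_n\|\geq u\bigr)\ll \exp\bigl(-c\min\{u^2/n,\,u\}\bigr)\qquad(n\geq1,\ u>0).
$$
With $n\ll t$ and $u=\sqrt t\|\ww\|/2$, this gives $\exp(-c'\min\{\|\ww\|^2,\sqrt t\|\ww\|\})\leq \exp(-c'\min\{\|\ww\|^2,\|\ww\|\})$. Cauchy--Schwarz then absorbs the weight $(1+R)^r$, at the cost of halving the exponent.

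The main obstacle is this Bernstein-type bound. We would prove it by exponential Chebyshev applied to each component $\langle e,K^\circ_n\rangle$, with $e$ running over a fixed basis of $\ker\psi$. The identity $\mathcal L_0^n(e^{i\langle \xi,K_n\rangle}\cdot)=\mathcal L_\xi^n$ extends analytically to imaginary $\xi$ in the neighborhood $\mathcal U$ of \Cref{thm:decomposition of transfer operators}, and this gives
$$
\int_{\Lambda_+} e^{\theta\langle e,K^\circ_n\rangle}\,d\nu_+=\int_{\Lambda_+}\mathcal L^n_{-i\theta e}\mathbf 1\,d\nu_+ ,
$$
where the centering $-R\ut$ is absorbed because $e\in\ker\psi$, so that $\langle e,\ut\rangle=0$. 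By the spectral decomposition this equals $\kappa_{-i\theta e}^n(1+O(\vartheta^n))$ for $|\theta|\leq\theta_0$. Analyticity, $\kappa_0=1$, and the vanishing of the linear term (shown in the proof of \Cref{cor:expansion of complex eigenvalue around 0}) give $|\kappa_{-i\theta e}|\leq e^{C\theta^2}$. Optimizing over $\theta\in[0,\theta_0]$, namely $\theta=\min\{u/(2Cn),\theta_0\}$, yields $\exp(-c\min\{u^2/n,u\})$. The only delicate point is justifying the identity and the bound on $\mathcal U$ uniformly in $n$. For $n$ bounded, the bound follows directly from the exponential moment of $\|K\|$ in \Cref{thm.exptail} together with Hölder's inequality.
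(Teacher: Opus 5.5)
Your proposal is correct and is essentially the paper's proof. Both arguments use the same support bound from \eqref{eqn:C1 new}, which controls $\sqrt t\,\|\ww\|$ by the cocycle term plus $R(x)+R(T^nx)$. Both also use the same moment bound $\nu_+\bigl(e^{\langle\theta,K_n\rangle}\bigr)=\nu_+\bigl(\mathcal L_{-i\theta}^{\,n}\mathbf 1\bigr)\le Ce^{Cn\|\theta\|^2}$ for small $\theta\in\ker\psi$, coming from $\kappa_{-i\theta}=1+O(\|\theta\|^2)$ and the spectral decomposition, followed by an optimized exponential Chebyshev inequality.

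The paper avoids splitting $E_n$ and decomposing along a basis of $\ker\psi$. It inserts instead the single weight $e^{q(\langle w,K_n\rangle+D(R+R\circ T^n))}$ with $w=\ww/\|\ww\|$ and applies Cauchy--Schwarz twice.

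A minor point: the spectral decomposition gives $\kappa_w^n\nu_+(\mathcal P_w\mathbf 1)+O\bigl((|\kappa_w|\vartheta)^n\bigr)$ rather than $\kappa_w^n(1+O(\vartheta^n))$. This still yields the needed bound $\ll|\kappa_w|^n$ uniformly for all $n\ge0$, so bounded $n$ need no separate treatment.
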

\begin{proof} 
Apply \Cref{thm:decomposition of transfer operators} on the
fixed space $\mathcal F_{c_0}(\Lambda_+,\mathbb C)$. By analyticity and $\nu_+(K^\circ)=0$, we have
$\kappa_{-i\theta}=1+O(\|\theta\|^2)$ for
$\theta\in\ker\psi$ near $0$. The  spectral decomposition, with uniformly bounded remainder
powers on a smaller complex neighborhood of $0$, therefore gives 
$\delta,C>0$ such that for every $n \ge 0$, $\theta\in\ker\psi,$ and $\|\theta\|\leq\delta$,
\begin{equation}
\label{eqn:transverse exponential moment Section18}
\nu_+\bigl(e^{\langle\theta,K_n\rangle}\bigr)
=\nu_+(\mathcal L_{-i\theta}^{\,n}\mathbf1)
\leq Ce^{Cn\|\theta\|^2}.
\end{equation}

Suppose $\ww\ne0$ and set $w := \ww / \norm{\ww}$. For $\ell(t)=\sqrt t$, the $F_1$-factor in the $n$th integrand
of \eqref{eqn:return decomposition Section18} is evaluated at
$\bigl(\widehat T^n(x,x_-),\,s+t-R_n(x),\,v'\bigr)$ for $v':=v+\sqrt t\,\ww-K_n^\circ(x)$. On the support of this integrand, \eqref{eqn:C1 new} gives
$\|v\|+\|v'\|
\leq C_1\bigl(2+R(x)+R(T^nx)\bigr)$.
Taking the inner product with $w$ in
$\sqrt t\,\ww=K_n^\circ(x)+v'-v$, we obtain
$$
\sqrt t\,\|\ww\|
\leq \langle w,K_n(x)\rangle
+D\bigl(1+R(x)+R(T^nx)\bigr),
$$
where $D>0$ depends only on $\mathcal K$.
The volume of the initial $(s,v)$-support is $O((1+R(x))^r)$.

Choose $q_0\in (0,1)$ with $2q_0\leq\delta$ and
$4Dq_0\leq\epsilon_0/2$. For $0\leq q\leq q_0$, two
applications of Cauchy--Schwarz, $T$-invariance of $\nu_+$, and
\eqref{eqn:transverse exponential moment Section18} give
\begin{align*}
&\nu_+\bigl((1+R)^r
 e^{q\langle w,K_n\rangle +qD(R+R\circ T^n)}\bigr)\\
&\quad\leq
\nu_+\bigl(e^{2q\langle w,K_n\rangle }\bigr)^{1/2}
\nu_+\bigl((1+R)^{4r}e^{4qDR}\bigr)^{1/4}
\nu_+(e^{4qDR})^{1/4}
\ll e^{Cnq^2}.
\end{align*}
Here, the last two factors are uniformly bounded by the exponential
moment $\nu_+(e^{\epsilon_0R})<\infty$.
Summing over $0\leq n<t_+$ and absorbing the factor $e^{qD}$
into the implied constant yields
$$
|C_t^{\ell}(f_1,f_2;\ww)|
\ll\|f_1\|_\infty\|f_2\|_\infty\,
t\exp(-q\sqrt t\,\|\ww\|+Ctq^2).
$$
Taking $q=\min\left\{\frac{\|\ww\|}{2C\sqrt t},q_0\right\}$
bounds the exponent above by $-c\min\{\|\ww\|^2,q_0\sqrt t\,\|\ww\|\}$,
 with $c>0$ uniform. For $t\geq q_0^{-2}$,
this proves the assertion. The case $\ww=0$ follows from the same
support and moment bounds with $q=0$.
\end{proof}

\subsection{Decay of transfer operators}
\label{subsec:transfer-decay}
To control the Fourier integral outside $\mathcal O=(-b_0,b_0)^r$ fixed in \Cref{subsec:constants}, we need decay
estimates with constants independent of the frequency parameter $w\in \frak a$.  We derive these estimates from the quantitative non-integrability
established in \Cref{thm:non-integrability}. In the next subsection (\Cref{lem:Lkttv}), we establish the regularity and norm bounds
needed to apply this decay estimate to the functions arising
from the stable-projection reduction.

Throughout this subsection, we abbreviate
$\F(\La_+,\mathbb{C}):=\F_{c_1}(\La_+,\mathbb{C})$. For each $w\in \fa\smallsetminus\{0\}$ we equip this
space with the norm
\begin{equation}
\label{eqn:w norm}
\|H\|_{w}:=\max\left\{\|H\|_{\infty},\ \frac{C_0\,|H|_{c_1}}{\|w\|+1}\right\},
\qquad H\in \F(\La_+,\mathbb{C}),
\end{equation}
where $C_0>0$ will be chosen below so that
\eqref{eqn:estimate of n0 holder norm} holds.
The main result of this subsection is the following exponential decay estimate.

\begin{proposition}[Transfer-operator decay away from zero]
\label{prop:norm of transfer operator}
There exist $C,\beta_1,\beta_2>0$ such that for every $w\in \Wd$ with
$\|w\|\geq b_0$, $n\in \mathbb{N}$ and  $H\in \F(\La_+,\mathbb{C})$,
\begin{equation*}
\|\mathcal{L}^n_wH\|_{\infty}
\leq C e^{-\beta_1 n\|w\|^{-\beta_2}} \|H\|_{w}.
\end{equation*}
\end{proposition}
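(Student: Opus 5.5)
The plan is a Dolgopyat-type cancellation argument in the sup-norm form used for polynomial (rather than exponential) decay, as in the countable-shift setting of Li--Pan and of Dolgopyat--N\'andori--P\`ene. The key point is that the decay rate only needs to be $e^{-\beta_1 n\|w\|^{-\beta_2}}$, so it suffices to prove one contraction step of the following form: for a block length $m\asymp \log(\|w\|+1)$ and a suitable weighted cone, $\|\mathcal L_w^{m}H\|_w\le (1-\|w\|^{-\beta_2'})\|H\|_w$ whenever $\|w\|\ge b_0$. Iterating over $n/m$ blocks then gives the claim after adjusting $\beta_2$. For bounded $\|w\|$ (between $b_0$ and a large constant $W_0$) we instead use compactness together with \Cref{thm:decomposition of transfer operators}(3) outside $\mathcal O$. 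There, $\rho(\mathcal L_w)<1$ for each $w\ne0$: an eigenvalue of modulus one would make $\langle w,K\rangle$ an $a$-function, contradicting \Cref{prop:irrationality} and \Cref{lem:periodic obstruction} via \Cref{thm.denselength}. Upper semicontinuity of the spectral radius then gives a uniform rate on the compact annulus $b_0\le\|w\|\le W_0$.

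The first step is the Lasota--Yorke control in the $w$-norm. \Cref{lem:L0 action on B} gives $|\mathcal L_w^nH|_{c_1}\le C(\|w\|+1)\|H\|_\infty+C\lambda^{c_1n}|H|_{c_1}$. With $C_0$ small this yields $\|\mathcal L_w^nH\|_w\le C\|H\|_w$, and for $n\ge n_1$ it gives $C_0|\mathcal L_w^nH|_{c_1}\le \frac12(\|w\|+1)\|H\|_w$; this is the choice of $C_0$ referred to in \eqref{eqn:estimate of n0 holder norm}. Consequently, after $n_1$ steps, $H$ and $|H|$ are controlled by a positive function $V$ satisfying the log-H\"older cone condition $|V(x)-V(y)|\le C(\|w\|+1)V(x)d_{\bar\psi}(x,y)^{c_1}$ within each cylinder. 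This is the usual invariant cone, and it is preserved by $\mathcal L_0$ up to the constants from \Cref{prop.holderlocal}.

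The second step is the cancellation lemma, for which we use \Cref{thm:non-integrability} with $v_0=w/\|w\|$, $b\asymp\|w\|$ and $n=m$. This supplies branches $\gamma_1,\gamma_2\in\mathcal H_C^m$ and a point $x_1$ with a companion $x_2$ in the annulus of radii $\asymp\|w\|^{-s_1}$ and $\asymp\|w\|^{-s_2}$, such that the phase difference $\langle w,K_m(\gamma_1\cdot)-K_m(\gamma_2\cdot)\rangle$ varies by at least $\eta\|w\|\,d_{\bar\psi}(x_1,x_2)^{s_3}$ between $x_1$ and $x_2$. We choose the parameters (the constant $\epsilon_1$ in $b$) so that this variation lies in $[\epsilon,\pi-\epsilon]$ on a ball $B$ of radius $\asymp\|w\|^{-s_2}$. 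On $B$ the two terms $e^{-R_m(\gamma_k x)+i\langle w,K_m(\gamma_kx)\rangle}h H(\gamma_k x)$ then cannot be aligned at both centres, while the cone condition makes $|H(\gamma_k\cdot)|$ nearly constant on $B$, because the contracted ball has size $\lambda^m\|w\|^{-s_2}\ll\|w\|^{-1}$ once $m\ge C\log\|w\|$. Hence on one of two sub-balls we get $|\mathcal L_w^m H|\le (1-\delta)\mathcal L_0^m V$. Since $\mathcal H_C$ is finite, the weights of $\gamma_1,\gamma_2$ are bounded below by $e^{-Cm}\ge \|w\|^{-C'}$.

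The third step converts this local gain into a global one. We apply $\mathcal L_0^{m'}$ with $m'\asymp\log\|w\|$ and use the doubling property of $\nu$ (\Cref{prop.LP5.14}) together with the Gibbs property to show that the gain region carries mass $\ge\|w\|^{-C''}$ under every inverse image. Mixing of $\mathcal L_0$ (the spectral gap \Cref{prop:L0 decomposition}) then spreads the deficit, so that $\|\mathcal L_w^{m+m'}H\|_\infty\le(1-\|w\|^{-\beta_2'})\|H\|_w$, and likewise for the new cone function. Here the point $x_1$ and the ball $B$ must be transported to an arbitrary base point, which can be done by left composition with a fixed finite branch, using \eqref{eqn:inclusion of cylinder}.

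I expect the main obstacle to be the third step. Because the alphabet is countable and $R$ is unbounded, the lower bound on the mass of the cancellation region and the preservation of the cone must use only the finite set $\mathcal H_C$, and all constants have to be polynomial in $\|w\|$. This is what forces the weak rate $n\|w\|^{-\beta_2}$ rather than exponential decay.
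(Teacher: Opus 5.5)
Your block strategy is the paper's: a contraction $\|\mathcal L_w^{m}H\|_w\le(1-D\|w\|^{-a})\|H\|_w$ with $m\asymp\log\|w\|$, obtained from \Cref{thm:non-integrability}, a deficit on a ball of polynomially small $\nu_+$-mass, and the spectral gap of $\mathcal L_0$ to spread it.

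\textbf{The gap: the range $b_0\le\|w\|\le W_0$.} Your argument there does not go through.
\begin{itemize}
\item \Cref{thm:decomposition of transfer operators}(3) only concerns $w\in\mathcal O$, so it says nothing outside $\mathcal O$.
\item \Cref{prop:irrationality} does not exclude unimodular eigenvalues of $\mathcal L_w$. It only shows that such an eigenvalue $e^{ia}$ has $a/\pi\notin\mathbb Q$. For irrational $a/\pi$, the obstruction in \Cref{lem:periodic obstruction} is $\langle w,K_n(x)\rangle-an\in2\pi\mathbb Z$, which couples $K_n(x)$ with the word length $n$. The density in \Cref{thm.denselength} concerns the group generated by the $K_n(x)$ alone, so it gives no contradiction. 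The multiple-of-$q$ trick behind \Cref{prop:irrationality} works only for rational $a/\pi$.
\item Even if there were no unimodular eigenvalues, deducing $\rho(\mathcal L_w)<1$ needs quasi-compactness. The Lasota--Yorke inequality of \Cref{lem:L0 action on B} does not give this, because its weak norm is $\|\cdot\|_\infty$. The unit ball of $\mathcal F_c(\Lambda_+,\mathbb C)$ is not precompact in sup norm for a countable alphabet: indicators of distinct one-cylinders are at mutual sup-distance $1$.
\end{itemize}
The repair is to drop the split. \Cref{thm:non-integrability} holds for every unit $v_0$ and every $b>3$, so the cancellation argument runs uniformly on all of $\|w\|\ge b_0$ once you use $\|3w/b_0\|\ge3$ as the large parameter. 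This is what \Cref{lem:v norm of transfer operators} does.

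\textbf{A second problem: the choice $b\asymp\|w\|$ is too small.} The only available upper bound for the variation of $\langle w,K_m(\gamma_1\cdot)-K_m(\gamma_2\cdot)\rangle$ between $x_1$ and $x_2$ comes from \eqref{eqn:branch-K-distortion}. It is $\ll\|w\|\,b^{-s_1c_1}$, which grows like $\|w\|^{1-s_1c_1}$ when $b\asymp\|w\|$, since $s_1\le1$ and $c_1\le 1/2$. So adjusting the constant in $b$ cannot rule out wrap-around modulo $2\pi$.
\begin{itemize}
\item You need $b=\|3w/b_0\|^{s_4}$ with $s_1s_4c_1>1$, as in \eqref{eqn:choice of s4}.
\item With this $b$, the lower bound from \eqref{eqn:non-integrability-estimate} is only of order $\|w\|^{1-s_4s_2s_3}$. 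The usable window is therefore $[\|w\|^{-\beta},\pi/2]$, not $[\epsilon,\pi-\epsilon]$ with a fixed $\epsilon$.
\item The near-maximality threshold for $|H|$ and $|\mathcal L_w^{n_0}H|$ (the paper's $\|3w/b_0\|^{-a_{1,2}}$) must then be chosen after this exponent. \Cref{lem:angular-clustering} turns it into a phase-alignment bound finer than the non-integrability lower bound.
\end{itemize}
These changes cost only polynomial factors, which your target rate already allows.

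With these corrections, the cone function $V$ and the transport to an arbitrary base point are unnecessary. Since you renormalize in $\|\cdot\|_w$ after each block, you may take $V\equiv1$. Applying the spectral gap of $\mathcal L_0$ to $|H|$ then converts the $L^1(\nu_+)$ deficit from \Cref{cor.PS_polyuniform} into the sup-norm bound, exactly as in Steps~1--2 of the paper's proof.
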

Absorbing the term $\lambda^{c_1n}\|H\|_\infty$ in
\eqref{eqn:Lnf}, we obtain $C_{0,1}>0$ such that
\begin{equation}
\label{eqn:Lasota-Yorke}
|\L_w^nH|_{c_1}
\leq C_{0,1}(\|w\|+1)\|H\|_\infty
+C_{0,1}\lambda^{c_1n}|H|_{c_1}
\end{equation}
for every $w\in\Wd$, $n\in\mathbb N$, and
$H\in\F(\Lambda_+,\mathbb C)$. We  use the following two conventions throughout this subsection. First, for $t>0$, we write $\L^{t}_{w}:=\L^{\lfloor t\rfloor}_{w}$.
Second, we denote again by $\|\cdot\|_{w}$ the operator norm on
$\F(\La_+,\mathbb{C})$ induced by \eqref{eqn:w norm}. 

For $z\in\mathbb C\smallsetminus\{0\}$, let
$\arg z:=\{\vartheta\in\mathbb R:z=|z|e^{i\vartheta}\}$
and let $\Arg z$ be its unique element in $(-\pi,\pi]$.
Write $\Log$ for the principal logarithm on
$\mathbb C\smallsetminus(-\infty,0]$.

Fix $H\in\mathcal F(\Lambda_+,\mathbb C)$.
On a cylinder $\gamma\Lambda_+$, $\ga \in \sH$,  where $H$ does not vanish,
a branch of $\arg H$ is a function
$\varphi_\gamma:\gamma\Lambda_+\to\mathbb R$ satisfying
$\varphi_\gamma(y)\in\arg H(y)$ for every $y\in\gamma\Lambda_+$.
Set \begin{equation}
\label{den:N0H}
N_0(H):=\min\left\{n\in\N :|H|_{c_1}\lambda^{nc_1}
\diam_{d_{\bar\psi}}(\Lambda_+)^{c_1}\leq 1/8\right\}.
\end{equation}
\begin{lemma}
\label{lem:holder-branch-arg}
Let $n\geq N_0(H)$ and $\gamma\in\mathcal H^n$ satisfy
$\inf_{\gamma\Lambda_+}|H|\geq1/4$, and fix $x_\gamma\in\gamma\Lambda_+$.
There is a branch $\varphi_\gamma$ of $\arg H$ on $\gamma\Lambda_+$ such that
\begin{enumerate}
\item $\varphi_\gamma(x_\gamma)=\Arg (H(x_\gamma))$;
\item for every $y,y'\in\gamma\Lambda_+$,
$|\varphi_\gamma(y)-\varphi_\gamma(y')|
\leq8|H|_{c_1}d_{\bar\psi}(y,y')^{c_1}$.
\end{enumerate}
\end{lemma}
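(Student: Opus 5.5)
The plan is to first show that $H$ oscillates very little on the cylinder $\gamma\Lambda_+$, and then to build the branch of $\arg H$ locally around the base point $x_\gamma$ using the principal logarithm of a ratio.

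\emph{Oscillation bound.} Since $\gamma\in\mathcal H^n$, \eqref{eqn:recall contraction metric} gives $\operatorname{diam}_{d_{\bar\psi}}(\gamma\Lambda_+)\le\lambda^n\operatorname{diam}_{d_{\bar\psi}}(\Lambda_+)$. The cylinder $\gamma\Lambda_+$ lies inside a single one-cylinder, so the seminorm $|H|_{c_1}$ applies to any two of its points. For $y,y'\in\gamma\Lambda_+$, the choice $n\ge N_0(H)$ in \eqref{den:N0H} then yields
$$|H(y)-H(y')|\le |H|_{c_1}\lambda^{nc_1}\operatorname{diam}_{d_{\bar\psi}}(\Lambda_+)^{c_1}\le 1/8 .$$

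\emph{Definition of the branch.} Set
$$\varphi_\gamma(y):=\Arg H(x_\gamma)+\Im\Log\bigl(H(y)/H(x_\gamma)\bigr).$$
We must check that the ratio lies in the domain of $\Log$. Because $|H(x_\gamma)|\ge1/4$ and $|H(y)-H(x_\gamma)|\le1/8$, we get $|H(y)/H(x_\gamma)-1|\le1/2$. The ratio therefore lies in the disc $D$ of radius $1/2$ about $1$, which is contained in $\mathbb C\smallsetminus(-\infty,0]$. Exponentiating shows $\varphi_\gamma(y)\in\arg H(y)$, so $\varphi_\gamma$ is a branch of $\arg H$. Property (1) holds because $\Log 1=0$.

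\emph{Lipschitz bound, property (2).} On $D$ we have $|(\Log)'(z)|=1/|z|\le2$, and $D$ is convex, so $\Log$ is $2$-Lipschitz on $D$. Hence
$$|\varphi_\gamma(y)-\varphi_\gamma(y')|\le 2\left|\frac{H(y)-H(y')}{H(x_\gamma)}\right|\le 8|H(y)-H(y')|\le 8|H|_{c_1}d_{\bar\psi}(y,y')^{c_1}.$$

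There is no real obstacle in this argument. The only point requiring care is to confirm that the whole cylinder $\gamma\Lambda_+$ sits inside one one-cylinder, so that the seminorm $|H|_{c_1}$ controls all the differences used above, together with the choice of disc in which the ratio stays inside the domain of $\Log$.
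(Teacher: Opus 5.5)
Your proposal is correct and follows the paper's proof essentially verbatim. You use the same branch $\varphi_\gamma(y)=\Arg H(x_\gamma)+\Im\Log\bigl(H(y)/H(x_\gamma)\bigr)$, the same oscillation bound $\le 1/8$ from the branch contraction and the choice $n\ge N_0(H)$, and the same $2$-Lipschitz bound for $\Log$ on the disc $|z-1|\le 1/2$, which together give the constant $8$.
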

\begin{proof}
Put $a:=H(x_\gamma)$. By \eqref{eqn:recall contraction metric} and
\eqref{den:N0H},
$$
|H(y)-a|\leq|H|_{c_1}\lambda^{nc_1}\diam_{d_{\bar\psi}}(\Lambda_+)^{c_1}
\leq 1/8\leq {|a|}/{2}
\qquad(y\in\gamma\Lambda_+).
$$
Thus $\varphi_\gamma(y):=\Arg (a) +\Im (\Log\bigl(H(y)/a\bigr))$
is a branch with the required normalization.
The segment joining $H(y)/a$ and $H(y')/a$ lies in
$\{z:|z-1|\leq1/2\}$, where $|(\Log)'(z)|\leq2$.
Integrating along this segment gives
$$
|\varphi_\gamma(y)-\varphi_\gamma(y')|
\leq {2}{|a|^{-1}}|H(y)-H(y')|
\leq8|H|_{c_1}d_{\bar\psi}(y,y')^{c_1}.
$$
\end{proof}

The following elementary lemma will be used in the proof of
\Cref{lem:v norm of transfer operators} to control the angular
separation of summands whose weighted sum has modulus close to one.
\begin{lemma}
\label{lem:angular-clustering}
Let $(x_j)$ be a countable set of positive numbers $x_j$ and $\sum_jx_j\leq1$, let $y_j\in\mathbb R$, and let $\eta\in(0,1)$. If
$\abs{\sum_j x_j e^{i y_j}} \;>\; 1 - \eta $,
then for all $j,k$, 
$  d_{{\mathbb S}^1}(y_j, y_k) \;<\; \frac{\pi}{\sqrt{2}}
  \left( \sqrt{\frac{\eta}{x_j}} + \sqrt{\frac{\eta}{x_k}} \right)$
  where 
$d_{{\mathbb S}^1}(u,v):=\inf_{m\in\mathbb Z}|u-v+2\pi m|$ denotes the angular
distance in $\mathbb S^1=\mathbb R/(2\pi\mathbb Z)$.
\end{lemma}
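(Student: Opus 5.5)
The plan is a variance computation: compare $\sum_jx_je^{iy_j}$ with its own direction and convert the defect $\eta$ into a bound on how far each $y_j$ can lie from a common reference angle. Write $S:=\sum_jx_je^{iy_j}$, which is nonzero since $|S|>1-\eta>0$, and choose $\theta\in\mathbb R$ with $S=|S|e^{i\theta}$.

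Then
$$
1-\eta<|S|=\sum_jx_j\cos(y_j-\theta)
=\sum_jx_j-\sum_jx_j\bigl(1-\cos(y_j-\theta)\bigr),
$$
and $\sum_jx_j\le1$ gives $\sum_jx_j\bigl(1-\cos(y_j-\theta)\bigr)<\eta$. Every summand is nonnegative, so each one separately satisfies
$$
x_j\bigl(1-\cos(y_j-\theta)\bigr)<\eta .
$$

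We convert this cosine bound into an angular bound. Write $\delta_j:=d_{\mathbb S^1}(y_j,\theta)\in[0,\pi]$. Then $1-\cos(y_j-\theta)=2\sin^2(\delta_j/2)$. Since $\sin u\ge 2u/\pi$ for $u\in[0,\pi/2]$, we get
$$
1-\cos(y_j-\theta)\ge 2\Bigl(\frac{\delta_j}{\pi}\Bigr)^2 .
$$
Therefore $2x_j\delta_j^2/\pi^2<\eta$, that is, $\delta_j<\frac{\pi}{\sqrt2}\sqrt{\eta/x_j}$.

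Finally, the triangle inequality for $d_{\mathbb S^1}$ gives $d_{\mathbb S^1}(y_j,y_k)\le\delta_j+\delta_k$, which is the stated bound. There is no real obstacle; the only point needing care is the constant, which comes from the concavity bound $\sin u\ge 2u/\pi$ on $[0,\pi/2]$. That bound is exactly what produces the factor $\pi/\sqrt2$.
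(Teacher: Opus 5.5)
Your proposal is correct and matches the paper's proof: you rotate by the argument $\theta$ of the sum to get $\sum_j x_j\bigl(1-\cos(y_j-\theta)\bigr)<\eta$, bound each term using $1-\cos u\ge 2u^2/\pi^2$ on $|u|\le\pi$, and finish with the triangle inequality on $\mathbb S^1$. The one addition is that you justify that cosine inequality via $\sin u\ge 2u/\pi$ on $[0,\pi/2]$, which the paper simply cites.
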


\begin{proof}
If $\theta$ is an argument of $\sum_jx_je^{iy_j}$, then
$\sum_jx_j(1-\cos(y_j-\theta))=\sum_jx_j-\Bigl|\sum_jx_je^{iy_j}\Bigr|<\eta$.
Use $1-\cos u\geq2u^2/\pi^2$ for $|u|\leq\pi$ and the triangle
inequality on $\mathbb S^1$.
\end{proof}

The following is the main ingredient in the proof of
\Cref{prop:norm of transfer operator}.
\begin{lemma}
\label{lem:v norm of transfer operators}
There exist $a_1>0$, $D_1>1$, and $D_2>0$ such that,
for every $w\in\Wd$ with $\|w\|\geq b_0$ and every
$H\in\mathcal F(\Lambda_+,\mathbb C)$,
\begin{equation}
\label{eqn:estimate v norm}
\left\|
\mathcal L_w^{\,\lfloor D_1\log(3\|w\|/b_0)\rfloor}H
\right\|_w
\leq
\left(1-D_2\|w\|^{-a_1}\right)\|H\|_w.
\end{equation}
\end{lemma}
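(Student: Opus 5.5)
This is a Dolgopyat-type contraction estimate. The plan is to argue by contradiction using the quantitative non-integrability of \Cref{thm:non-integrability}, in the style of the Anosov and Li--Pan arguments. Throughout, fix $m:=\lfloor D_1\log(3\|w\|/b_0)\rfloor$ and normalize $\|H\|_w=1$.

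\textbf{Step 1: the H\"older part.} We first bound the H\"older seminorm. By \eqref{eqn:Lasota-Yorke},
$$
|\mathcal L_w^mH|_{c_1}\le C_{0,1}(\|w\|+1)+C_{0,1}\lambda^{c_1m}(\|w\|+1)/C_0 .
$$
If $C_0$ is chosen large compared with $C_{0,1}$, and $D_1$ is large enough that $\lambda^{c_1m}\ll\|w\|^{-1}$, then
$$
\frac{C_0|\mathcal L_w^mH|_{c_1}}{\|w\|+1}\le \tfrac12 .
$$
This is the origin of the constraint \eqref{eqn:estimate of n0 holder norm} on $C_0$. Consequently it suffices to prove
$$
\|\mathcal L_w^mH\|_\infty\le 1-D_2\|w\|^{-a_1}.
$$

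\textbf{Step 2: reduction to angular alignment.} Suppose instead that $|\mathcal L_w^mH(x)|>1-\epsilon$ for some $x$, where $\epsilon:=D_2\|w\|^{-a_1}$. Expanding $\mathcal L_w^m=\mathcal L_w^{n}\mathcal L_w^{m-n}$, choose the splitting so that $n$ exceeds $N_{\rm NI}$ and $N_0(\cdot)$; this is possible because $|H|_{c_1}\le(\|w\|+1)/C_0$, so $N_0\ll\log\|w\|$.

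The branches of $\mathcal L_w^n$ carry positive weights $p^{(n)}_\gamma(x)$ that sum to $1$ (they are the normalized weights from \Cref{lem:measure on product space}). The sup bound \eqref{eqn:Lnsup} gives $|G|\le 1$ for $G:=\mathcal L_w^{m-n}H$. Hence near-maximality forces two things for every branch $\gamma\in\mathcal H_C^n$: its weight is bounded below by $\gg e^{-Cn}$, which is polynomial in $\|w\|$, and $|G(\gamma x)|$ is close to $1$.

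Applying \Cref{lem:angular-clustering} then shows that the phases
$$
\langle w,K_n(\gamma x)\rangle_{\mathtt u}+\arg G(\gamma x)
$$
are pairwise $O(\sqrt{\epsilon e^{Cn}})$-close on $\mathbb S^1$. This holds at the point $x$ and also, by \Cref{prop.holderlocal} and the H\"older bound from Step 1, at nearby points. The continuous branch of $\arg G$ needed here is supplied by \Cref{lem:holder-branch-arg}.

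\textbf{Step 3: the contradiction from non-integrability (main obstacle).} Apply \Cref{thm:non-integrability} with $v_0=w/\|w\|$ and $b\asymp\|w\|$. This produces $x_1,x_2$ at distance roughly between $b^{-s_2}$ and $b^{-s_1}$, inside a common cylinder of the preceding $m-n$ iterates, together with branches $\gamma_1,\gamma_2$ whose images satisfy \eqref{eqn:inclusion of cylinder}.

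Form the double difference of the phases for $\gamma_1,\gamma_2$ at $x_1,x_2$. The $\arg G$ terms contribute at most
$$
8|G|_{c_1}\,d(\gamma_kx_1,\gamma_kx_2)^{c_1}\ll \|w\|\,\lambda^{c_1n}\,b^{-c_1s_1},
$$
which becomes small once $n\asymp\log\|w\|$ is large; this is where $D_1$ is fixed. On the other hand, the non-integrability estimate says the double difference is at least
$$
\|w\|\,\eta\,d_{\bar\psi}(x_1,x_2)^{s_3}\gtrsim \|w\|^{1-s_2s_3}.
$$

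Here lies the obstacle: matching the exponents. The lower bound must be of order one modulo $2\pi$ yet smaller than $\pi$, while remaining larger than both the $\sqrt{\epsilon e^{Cn}}$ clustering error and the H\"older error. I would handle this by choosing $b=\|w\|^{\theta}$ with $\theta$ tuned so that $s_3/s_1$-scaling gives $\|w\|\,d^{s_3}\in[\delta,1]$. If the double difference overshoots $2\pi$, I would instead shrink $x_2$ toward $x_1$ along the annulus. Once the scales are matched, choosing $a_1$ large and $D_2$ small makes $\sqrt{\epsilon e^{Cn}}$ negligible, which yields the contradiction and hence \eqref{eqn:estimate v norm}.
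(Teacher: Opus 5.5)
Your overall architecture matches the ingredients the paper uses: Lasota--Yorke reduction to the sup norm, angular clustering via \Cref{lem:angular-clustering}, H\"older branches of $\arg$, and a contradiction with \Cref{thm:non-integrability}. However, the contradiction in Steps~2--3 does not close.

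\textbf{The main gap: getting from $x$ to $x_1,x_2$.} You assume $|\mathcal L_w^mH(x)|>1-\epsilon$ at a single point $x$. This gives phase alignment at $x$. The cylinder-wise H\"older bound then extends it only to a ball of polynomially small radius, roughly $(\epsilon/\|w\|)^{1/c_1}$, around $x$. But \Cref{thm:non-integrability} gives you no control over where $x_1,x_2$ lie; in its proof $x_1=y_{\gamma^*}$ is a fixed point independent of $w$, $b$, $n$. So you have no information that $|\mathcal L_w^nG|$ is nearly maximal at $x_1$ and $x_2$, and the double-difference argument has nothing to contradict.

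The paper reverses the logic and adds a transport step. Either $|H|$ already has a definite deficit, $|H(x)|<1-\frac12\|3w/b_0\|^{-a_{1,2}}$, at some point whose ball of radius $\asymp\|3w/b_0\|^{-\beta'}$ lies in a one-cylinder. Or $|H|$ is nearly $1$ at all such points, in particular at the $\gamma_jx_i$. In the second case, the clustering and non-integrability argument at $x_1,x_2$ forces $\mathcal L_w^{n_0}H$ to have such a deficit at $x_1$ or $x_2$.

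A deficit at one location is then made global using positivity and the spectral gap of $\mathcal L_0$. First, \Cref{cor.PS_polyuniform} turns it into an $L^1$ bound $\nu_+(|H|)\le 1-D\|3w/b_0\|^{-a}$. Then $\|\mathcal L_w^nH\|_\infty\le\|\mathcal L_0^n|H|\|_\infty\le\nu_+(|H|)+\|\mathcal R_0^n|H|\|_\infty$, together with $\||H|\|_{c_1}\ll\|w\|$ and \Cref{prop:L0 decomposition}, gives the uniform sup bound after $O(\log\|w\|)$ further iterates. Some such mixing step is indispensable, because non-integrability is only available near one point.

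\textbf{Two further corrections.}

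First, in Step~1 the bound is $C_0C_{0,1}+C_{0,1}\lambda^{c_1m}$. So $C_0$ must be \emph{small} ($C_{0,1}C_0\le 1/4$), not large.

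Second, the scale matching in Step~3 cannot work as stated. The only upper control on the double difference is \eqref{eqn:branch-K-distortion}, which gives a bound $\ll\|w\|\,d_{\bar\psi}(x_1,x_2)^{c_1}$. Since $s_3=1/s_1\ge 1>c_1$, forcing $\|w\|\,d_{\bar\psi}(x_1,x_2)^{s_3}\asymp 1$ makes this upper bound blow up, so you lose control modulo $2\pi$. With $b\asymp\|w\|$ the bound already exceeds $\pi$. Nor can you slide $x_2$ toward $x_1$: the theorem produces one point per $b$, and $\Lambda_+$ is totally disconnected (\Cref{prop.shiftandexpanding}).

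The fix is the paper's choice $b=\|3w/b_0\|^{s_4}$ with $s_1s_4c_1>1$. Then by \eqref{eqn:choice of s4} the double difference is below $\pi/2$, so angular distance equals absolute difference. You accept the polynomially small lower bound $\asymp\|3w/b_0\|^{1-s_4s_2s_3}$. You then beat it by taking the deficit exponent large enough that the clustering error and the $\arg$-branch error decay faster; both exponents grow with $a_{1,2}$.
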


\begin{proof}
Choose $C_0>0$ so that $C_{0,1}C_0\leq1/4$, and fix
$D_{1,1}>0$ sufficiently large that
$C_{0,1}\lambda^{c_1\lfloor D_{1,1}\log3\rfloor}\leq1/4$.
Now fix $w\in\Wd$ with $\|w\|\geq b_0$ and
$H\in\mathcal F(\Lambda_+,\mathbb C)$ with $\|H\|_w\leq1$. By \eqref{eqn:w norm}, $\|H\|_\infty\leq1$ and
$|H|_{c_1}\leq(\|w\|+1)/C_0$.
Thus \eqref{eqn:Lasota-Yorke} gives, for every
$n\geq\lfloor D_{1,1}\log(3\|w\|/b_0)\rfloor$,  \begin{equation}
\label{eqn:estimate of n0 holder norm}
\frac{C_0|\L_w^nH|_{c_1}}{\|w\|+1}
\leq C_{0,1}C_0+C_{0,1}\lambda^{c_1n}
\leq\frac12.\end{equation}
Also, $|\L_w^nH|\leq\L_0^n|H|\leq1$ by
\eqref{eqn:invariance of transfer operator and measure}.
It remains to find $a_1,D_2>0$, independent of $w$
and $H$, such that
\begin{equation}\label{supbdd}
\|\L_w^nH\|_\infty\leq1-D_2\|w\|^{-a_1}
\end{equation} for $n=\lfloor D_1\log(3\|w\|/b_0)\rfloor$, with $D_1>0$
to be chosen below.

Let $a_{1,2}>0$ be a constant to be specified in Step~3.
We divide the proof into three steps. The constants $\beta'$
in Step~1 and $D_{1,2}$ in Step~2 are chosen after
$a_{1,2}$ is fixed.

\textbf{Step 1.} 
Denote the constants $C$, $\epsilon_1$, and $\eta$ in
\Cref{thm:non-integrability} by $C_{\mathrm{NI}}$,
$\epsilon_{\mathrm{NI}}$, and $\eta_{\mathrm{NI}}$, respectively,
so as to distinguish them from the constants fixed in
\Cref{subsec:constants}; retain $s_1,s_2,s_3,\delta_1$ from that theorem.
We use that theorem with $\langle\cdot,\cdot\rangle_\ut$, decreasing
$\eta_{\mathrm{NI}}$ if necessary. Indeed, let $\langle\cdot,\cdot\rangle_{\mathrm K}$ denote
the Killing-form inner product on $\mathfrak a$ used in
\Cref{thm:non-integrability}. For each $v\in\mathfrak a$ with $\|v\|_{\mathtt u}=1$,
choose $v^\flat$ so that
$\langle v^\flat,Y\rangle_{\mathrm K}
=\langle v,Y\rangle_{\mathtt u}$ for all $Y\in\mathfrak a$.
Apply the theorem to $v^\flat/\|v^\flat\|_{\mathrm K}$; the quantity
$\|v^\flat\|_{\mathrm K}$ has a positive lower bound independent of $v$ with
$\|v\|_{\mathtt u}=1$. 
Choose $s_4>1$ such that $s_1s_4c_1 > 1$ and for any $w'\in \frak{a}$ with $\|w'\|\geq b_0$, any $n\in \N$, any $\ga_1,\ga_2\in \mathcal{H}^n$ and $x_1,x_2\in \La_+$ with $d_{\bar\psi}(x_1,x_2)\leq \frac{\epsilon_{\mathrm{NI}}}{2\|3w'/b_0\|^{s_1s_4}}$, we have
\be
\label{eqn:choice of s4}
\begin{aligned}
&\left|\left(\langle w',K_n(\gamma_1 x_1)\rangle-\langle w',K_n(\gamma_2 x_1) \rangle\right)-\left(\langle w',K_n(\gamma_1 x_2)\rangle-\langle w',K_n(\gamma_2 x_2) \rangle\right)\right|\\
& \leq  2\|w'\|\cdot C_K\cdot d_{\bar\psi}(x_1,x_2)^{c_1}\leq \frac{b_0}{3}2^{1-c_1}C_K\epsilon_{\mathrm{NI}}^{c_1}\cdot \|3w'/b_0\|^{1-s_1s_4c_1}<\frac{\pi}{2},
\end{aligned}
\ee
where $C_K>0$ is the constant in \eqref{eqn:branch-K-distortion}, with $c_0$ replaced by $c_1$ using the finite $d_{\bar\psi}$-diameter of $\Lambda_+$.
We may increase $s_4$ further as required in Step~3.
Once $a_{1,2}$ is fixed in Step~3, choose $\beta'$ sufficiently large
that
\begin{equation}
\label{eqn:inequality beta}
\beta' > \max\left\{s_1s_4, \frac{a_{1,2}+1}{c_1}\right\}.
\end{equation}

Suppose that there exist $\gamma\in\mathcal H_{C_{\mathrm{NI}}}$
and $x\in\gamma\Lambda_+$ such that
\begin{equation}
\label{eqn:local decrease}
\begin{gathered}
B_{\bar\psi}\left(
x,\frac{\epsilon_{\mathrm{NI}}}
{2\|3w/b_0\|^{\beta'}}
\right)\subset\gamma\Delta_0,\quad 
|H(x)|<1-\frac{1}{2\|3w/b_0\|^{a_{1,2}}}.
\end{gathered}
\end{equation}
We first prove that \eqref{eqn:local decrease} implies
\begin{equation}\label{lone}
\|H\|_{L^1(\nu_+)}
\leq1-\frac{D_{2,1}}{\|3w/b_0\|^{a_{1,1}}}
\end{equation}
for some $a_{1,1},D_{2,1}>0$ independent of $w$ and $H$.
By \eqref{eqn:w norm}, $|H|_{c_1}\leq(\|w\|+1)/C_0$.
For every $x'\in\Lambda_+$ in the ball in \eqref{eqn:local decrease},
\begin{equation}
\label{eqn:difference}
\begin{aligned}
&|H(x')|
\leq |H(x)|+|H|_{c_1}d_{\bar\psi}(x,x')^{c_1}\\
&\leq1-\frac{1}{2\|3w/b_0\|^{a_{1,2}}}
+\frac{\|w\|+1}{C_0}
\left(\frac{\epsilon_{\mathrm{NI}}}
{2\|3w/b_0\|^{\beta'}}\right)^{c_1}\leq1-\frac{1}{4\|3w/b_0\|^{a_{1,2}}}.
\end{aligned}
\end{equation}

Here $x'$ lies in $\gamma\Lambda_+$ by
\eqref{eqn:powered-branch-partition}, and the last inequality holds
for all $\|w\|\geq b_0$ after increasing $\beta'$ in
\eqref{eqn:inequality beta}.
By \Cref{cor.PS_polyuniform},  \eqref{dedef},
\Cref{prop.T is well def a.e.}, and \eqref{nuplus}, 
$$
\nu_+\left(B_{\bar\psi}\left(x,\frac{\epsilon_{\mathrm{NI}}}{2\|3w/b_0\|^{\beta'}}\right)\right)
\geq\frac{D_{2,3}}{\|3w/b_0\|^{\beta'a_{1,3}}}
$$
for some $a_{1,3}>1$ and $D_{2,3}\in(0,1)$.
Combining this with \eqref{eqn:difference} and $\|H\|_\infty\leq1$
gives \eqref{lone}, with
$$
a_{1,1}:=a_{1,2}+\beta'a_{1,3},\qquad D_{2,1}:=D_{2,3}/4.
$$

\textbf{Step 2.}
Assuming \eqref{eqn:local decrease}, we use the spectral gap of $\L_0$
to turn \eqref{lone} into the uniform supremum-norm bound \eqref{supbdd}.
By \eqref{eqn:w norm},
$\||H|\|_{c_1}\leq\|H\|_{c_1}\ll\|3w/b_0\|$.
Thus \Cref{prop:L0 decomposition} and \eqref{lone} give 
$C>0$ and $\vartheta\in(0,1)$, independent of $w$, $H$, and $n$,
such that, for every $n\geq1$,
\begin{align*}
\|\L_w^nH\|_\infty
&\leq\|\L_0^n|H|\|_\infty \leq\nu_+(|H|)+\|\mathcal R_0^n|H|\|_\infty \leq1-\frac{D_{2,1}}{\|3w/b_0\|^{a_{1,1}}}
+C\vartheta^n\|3w/b_0\|.
\end{align*}
Since $\|3w/b_0\|\geq3$, we can choose $D_{1,2}>0$ sufficiently
large that, for
$n=\lfloor D_{1,2}\log\|3w/b_0\|\rfloor\geq1$, the last term
is strictly less than
$D_{2,1}/(2\|3w/b_0\|^{a_{1,1}})$ for every $\|w\|\geq b_0$.
Consequently,
$$
\|\L_w^{\lfloor D_{1,2}\log\|3w/b_0\|\rfloor}H\|_\infty
<1-\frac{D_{2,1}}{2\|3w/b_0\|^{a_{1,1}}}.
$$
The same bound holds for all further iterates, since $\L_w$
contracts the supremum norm.

\textbf{Step 3.} We now show that \eqref{eqn:local decrease} holds for either
$H$ or an iterate $\L_w^{n_0}H$. Step~2 then gives the required
supremum-norm bound \eqref{supbdd}.
Set $C_h:=\inf_{x,y\in\Lambda_+}h(x)/h(y)>0$, choose $a_{1,6}>0$ so that
$C_h3^{a_{1,6}}\geq2$, and put $a_{1,7}:=a_{1,2}/2$.

It suffices to find $\gamma\in\mathcal H_{C_{\mathrm{NI}}}$ and
$x\in\gamma\Lambda_+$ satisfying the ball inclusion in \eqref{eqn:local decrease}
such that either $H$ satisfies \eqref{eqn:local decrease} at $x$, or
\begin{equation}
\label{eqn:decay of norm}
|\L_w^{n_0}H(x)|<1-\frac{1}{2\|3w/b_0\|^{a_{1,2}}},
\qquad
n_0:=\left\lfloor\frac{a_{1,7}-a_{1,6}}{C_{\mathrm{NI}}}\log\|3w/b_0\|\right\rfloor.
\end{equation}

Choose $a_{1,2}$ sufficiently large that, for all 
$\|w\|\geq b_0$, we have 
$n_0>N_{\mathrm{NI}}$, $n_0\geq\lfloor D_{1,1}\log\|3w/b_0\|\rfloor$, and
\begin{equation}
\label{eqn:choice of n0}
\frac{\|w\|+1}{C_0}\lambda^{n_0c_1}
\diam_{d_{\bar\psi}}(\Lambda_+)^{c_1}\leq\frac18.
\end{equation}

In the former case, Step~2 applies to $H$. In the latter, it applies to
$\L_w^{n_0}H$, since \eqref{eqn:estimate of n0 holder norm} gives
$\|\L_w^{n_0}H\|_w\leq1$.

Applying \Cref{thm:non-integrability} to the unit vector $\frac{w}{\|w\|}\in \fa$, $b=\|3w/b_0\|^{s_4}$, and $n_0$, we obtain 
$\ga_1,\ga_2\in \mathcal{H}^{n_0}_{C_{\mathrm{NI}}}$ as the two inverse branches, and two points $x_1,x_2\in \La_+$ satisfying
$$
x_2\in \Lambda_+\cap \left(
B_{\bar\psi}\left(x_1, \frac{\epsilon_{\mathrm{NI}}}{2\|3w/b_0\|^{s_4s_1}} \right)
\smallsetminus
B_{\bar\psi}\left(x_1,\frac{\delta_1\epsilon_{\mathrm{NI}}}{2\|3w/b_0\|^{s_4s_2}}\right)
\right).
$$

We also need the ball inclusion in \eqref{eqn:local decrease} at these points.
In the proof of \Cref{thm:non-integrability}, $x_1$ is fixed in the
interior of a one-cylinder by \eqref{hat}, and $x_2\to x_1$ uniformly
as $b\to\infty$ by \eqref{eqn:existence of points x1 x2}.
For $s_4$ sufficiently large, both points lie in a fixed compact
subset of that interior. Increasing $\beta'$ then gives the ball
condition at both points. For $\gamma_jx_i$, $i,j\in\{1,2\}$, the same
condition follows directly from \eqref{eqn:inclusion of cylinder}
and $\beta'>s_1s_4$. These requirements on $s_4$ are independent of $a_{1,2}$;
we now fix $s_4$.

Suppose that no pair $(\gamma,x)$ as above satisfies
\eqref{eqn:local decrease} for $H$.
We show that \eqref{eqn:decay of norm} holds at $x_1$ or $x_2$.
Assume to the contrary that it holds at neither point.  
Put $F:=|H|$. Omitting terms for which $H(\gamma x_1)=0$, write
$$
(\L_w^{n_0}H)(x_1)
=
\sum_{\gamma\in\mathcal H^{n_0}}
\frac{e^{-R_{n_0}(\gamma x_1)}F(\gamma x_1)h(\gamma x_1)}
     {h(x_1)}
e^{i\langle w,K_{n_0}(\gamma x_1)\rangle+i\Arg H(\gamma x_1)}.
$$
The weights are positive and sum to
$\L_0^{n_0}F(x_1)\leq1$.

Set $\eta:=\|3w/b_0\|^{-a_{1,2}}$.
Since \eqref{eqn:decay of norm} fails at $x_1$,
$|\L_w^{n_0}H(x_1)|\geq1-\eta/2>1-\eta$.
Thus \Cref{lem:angular-clustering} applies, and any two terms
with weights at least $\|3w/b_0\|^{-a_{1,7}}$
have angular distance at most

$$
\sqrt2\pi\left(\|3w/b_0\|^{-a_{1,2}}\|3w/b_0\|^{a_{1,7}}\right)^{1/2}
=\frac{\sqrt2\pi}{\|3w/b_0\|^{a_{1,8}}},\qquad a_{1,8}:=a_{1,2}/4.
$$
By this assumption and the ball inclusions at $\gamma_jx_i$,
$F(\gamma_jx_i)
\geq1-\frac{1}{2\|3w/b_0\|^{a_{1,2}}}
>\frac12$
for all $i,j\in\{1,2\}$. We claim that
\begin{equation}
\label{eqn:weight lower bound}
\frac{e^{-R_{n_0}(\gamma_jx_i)}h(\gamma_jx_i)}{h(x_i)}
\geq\frac{2}{\|3w/b_0\|^{a_{1,7}}}
\qquad(i,j\in\{1,2\}).
\end{equation}
Indeed, since $\gamma_j\in\mathcal H_{C_{\mathrm{NI}}}^{n_0}$, we have
$$
\frac{e^{-R_{n_0}(\gamma_jx_i)}h(\gamma_jx_i)}{h(x_i)}
\geq C_he^{-n_0C_{\mathrm{NI}}}
\geq\frac{C_h}{\|3w/b_0\|^{a_{1,7}-a_{1,6}}}.
$$

The choice $C_h3^{a_{1,6}}\geq2$ proves
\eqref{eqn:weight lower bound}. Including the factor $F(\gamma_jx_i)$,
the weights of the selected terms are at least $\|3w/b_0\|^{-a_{1,7}}$.
Thus 
\begin{equation}
\label{eqn:difference x1}
\begin{aligned}
d_{{\mathbb S}^1} (
&\langle w,K_{n_0}(\gamma_1x_1)\rangle+\Arg H(\gamma_1x_1), \langle w,K_{n_0}(\gamma_2x_1)\rangle+\Arg H(\gamma_2x_1))
\leq\frac{\sqrt2\pi}{\|3w/b_0\|^{a_{1,8}}}.
\end{aligned}
\end{equation}

The same argument at $x_2$ gives 
\begin{equation}
\label{eqn:difference x2}
\begin{aligned}
d_{{\mathbb S}^1} (
&\langle w,K_{n_0}(\gamma_1x_2)\rangle+\Arg H(\gamma_1x_2), \langle w,K_{n_0}(\gamma_2x_2)\rangle+\Arg H(\gamma_2x_2) )
 \leq\frac{\sqrt2\pi}{\|3w/b_0\|^{a_{1,8}}}.
\end{aligned}
\end{equation}

Fix $j\in\{1,2\}$. By \eqref{eqn:choice of n0} and
$|H(\gamma_jx_1)|\geq1/2$, we have $n_0\geq N_0(H)$ as in
\eqref{den:N0H} and $\inf_{\gamma_j\Lambda_+}|H|\geq3/8$.
Thus \Cref{lem:holder-branch-arg} gives a branch
$\varphi_{\gamma_j}:\gamma_j\Lambda_+\to\mathbb R$ satisfying
\begin{equation}
\label{eqn:difference branch}
\left|\varphi_{\gamma_j}(\gamma_j x_1)-\varphi_{\gamma_j}(\gamma_j x_2)\right|
\leq 8|H|_{c_1}d_{\bar\psi}(\gamma_j x_1,\gamma_j x_2)^{c_1}
\leq\frac{D_{2,7}}{\|3w/b_0\|^{a_{1,9}}},
\end{equation}
where we use \eqref{eqn:recall contraction metric} and the estimate $|H|_{c_1}\leq(\|w\|+1)/C_0\ll\|3w/b_0\|$, absorbing the
factor $\lambda^{-c_1}$ from the integer part in $n_0$ into $D_{2,7}$, and where
$a_{1,9}:=-1-c_1(a_{1,7}-a_{1,6})(\log \lambda)/C_{\mathrm{NI}}>0$.

Replacing $\Arg H$ by $\varphi_{\gamma_j}$ does not change
the angular distances in \eqref{eqn:difference x1} and
\eqref{eqn:difference x2}. The triangle inequality for
$d_{{\mathbb S}^1}$ and \eqref{eqn:difference branch} therefore give
\begin{equation}
\label{eqn:difference upper bound}
\begin{aligned}
&d_{{\mathbb S}^1} (
\langle w,K_{n_0}(\gamma_1x_1)-K_{n_0}(\gamma_2x_1)\rangle,\langle w,K_{n_0}(\gamma_1x_2)-K_{n_0}(\gamma_2x_2)\rangle)\\
&\leq
\frac{4\pi}{\|3w/b_0\|^{a_{1,8}}}
+\frac{2D_{2,7}}{\|3w/b_0\|^{a_{1,9}}}
\leq\frac{D_{2,8}}{\|3w/b_0\|^{a_{1,10}}}.
\end{aligned}
\end{equation}
where $a_{1,10}=\min\{a_{1,8},a_{1,9}\}
=\min\{a_{1,2}/4,\;-1-c_1(a_{1,2}/2-a_{1,6})(\log \lambda)/C_{\mathrm{NI}}\}$.

By \eqref{eqn:choice of s4}, the two arguments of
$d_{\mathbb S^1}$ in \eqref{eqn:difference upper bound}
differ by less than $\pi/2$ in absolute value.
Hence their angular distance equals their absolute difference.
 Thus \eqref{eqn:difference upper bound} gives
\begin{align*}
&\left|\left(\langle w,K_{n_0}(\gamma_1 x_1)\rangle-\langle w,K_{n_0}(\gamma_2 x_1) \rangle\right)-\left(\langle w,K_{n_0}(\gamma_1 x_2)\rangle-\langle w,K_{n_0}(\gamma_2 x_2) \rangle\right)\right|
\leq \frac{D_{2,8}}{\|3w/b_0\|^{a_{1,10}}},\nonumber
\end{align*}

On the other hand, \eqref{eqn:non-integrability-estimate} gives 
\begin{align*}
&\left|\left(\langle w,K_{n_0}(\gamma_1 x_1)\rangle-\langle w,K_{n_0}(\gamma_2 x_1) \rangle\right)-\left(\langle w,K_{n_0}(\gamma_1 x_2)\rangle-\langle w,K_{n_0}(\gamma_2 x_2) \rangle\right)\right|\\
&\geq \eta_{\mathrm{NI}} \|w\|\left(\frac{\delta_1 \epsilon_{\mathrm{NI}}}{2\|3w/b_0\|^{s_4s_2}}\right)^{s_3}.\nonumber
\end{align*}

The lower bound is a fixed positive multiple of
$\|3w/b_0\|^{1-s_4s_2s_3}$.
The constant $D_{2,8}$ can be chosen independently of $a_{1,2}$,
whereas $a_{1,10}\to\infty$ as $a_{1,2}\to\infty$.
Since $\|3w/b_0\|\geq3$, we may therefore fix $a_{1,2}$
sufficiently large that the upper bound is strictly smaller
than the lower bound for every $\|w\|\geq b_0$.
Now choose $\beta'$ and $D_{1,2}$ as in Steps~1 and~2.
In either case, Step~2 gives
$$
\|\L_w^nH\|_\infty
<
1-\frac{D_{2,1}}{2\|3w/b_0\|^{a_{1,1}}}
\qquad
\left(
n\geq n_0+\lfloor D_{1,2}\log\|3w/b_0\|\rfloor
\right).
$$

Choose $D_{1,3}\geq D_{1,1} +1$ with
$D_{1,3}\geq D_{1,2}+(a_{1,7}-a_{1,6})/C_{\mathrm{NI}}$.
Since $D_{2,1}<1/4$, this bound and
\eqref{eqn:estimate of n0 holder norm} prove
\eqref{eqn:estimate v norm}, with
$a_1:=a_{1,1}$, $D_1:=D_{1,3}$, and
$D_2:=\frac{D_{2,1}}4\left(\frac{b_0}{3}\right)^{a_1}$.
\end{proof}

\begin{proof}[Proof of \Cref{prop:norm of transfer operator}]
Fix any $w\in \fa\smallsetminus\{0\}$ with $\|w\|\geq b_0$,  $n\in \mathbb{N}$, and any $H\in \mathcal{F}(\La_+,\mathbb{C})$ with $\|H\|_w=1$. 
Let $a_1>0,D_1>1,D_2>0$ be the constants from \Cref{lem:v norm of transfer operators}. Set $n_0=\lfloor D_1\log\|3w/b_0\|\rfloor\geq1$,
where the last inequality follows from
$D_1\log3\geq1$ and $\|3w/b_0\|\geq3$.

Write $n=kn_0+l$ with $0\leq l<n_0$. Using \Cref{lem:v norm of transfer operators}, $1-u\leq e^{-u}$ for $u\geq0$, and the fact that $\|\L_w\|_{\infty}\leq 1$, we obtain 
$$
\begin{aligned}
&\|\L_w^nH\|_\infty
\leq\|\L_w^{kn_0}H\|_\infty
\leq\|\L_w^{kn_0}H\|_w\\
&\leq\left(1-\frac{D_2}{\|w\|^{a_1}}\right)^k
\leq e^{-kD_2/\|w\|^{a_1}} \leq e^{D_2b_0^{-a_1}}
e^{-\frac{D_2n}{n_0\|w\|^{a_1}}}  \leq Ce^{-\beta_1n\|w\|^{-(a_1+1)}},
\end{aligned}
$$
where we used $n_0\ll\|w\|$ for $\|w\|\geq b_0$.
This proves the proposition with $\beta_2:=a_1+1$.
\end{proof}

\subsection{Fourier and stable-projection estimates}
\label{subsubsection:auxiliary estimates}
We now estimate the Fourier transforms of the pullback functions and
their stable projections. These estimates justify Fourier inversion,
allow us to apply \Cref{prop:norm of transfer operator}, and control the error introduced by averaging over the
contracting $\Lambda_-$ coordinate after applying $\widehat T^k$.
 To streamline the arguments, we introduce $$
\cal S:=\{F_1,\,\partial^{\alpha}_{\fa}\Fn:\alpha \,\,\text{a multi-index}\}, \;\; 
C_\infty(F_1):=\|f_1\|_\infty,\;\;
C_\infty(\partial_\fa^\alpha\Fn):=\|f_2\|_{C^{|\alpha|}}.
$$
By \eqref{eqn:Fi}, \eqref{eqn:F2 sharp}, and the product rule,
$\|H\|_\infty\ll C_\infty(H)$ for $H\in\mathcal S$.
Constants in estimates involving derivatives may depend on their orders.
Derivatives of $\widehat F$ are taken in $\xi$; derivatives of $F$
are taken in its last variable $z$.
For $F\in\{F_1,\Fn\}$, write $f=f_1$ or $f_2$, respectively.

\subsubsection{Bounds for Fourier transforms}
Since $\Fn(x,x_-,\cdot)$ is smooth and compactly supported,
integration by parts $|\alpha|$ times gives the following identity.

\begin{lemma}
\label{lem:Fourierder}
Let $\alpha$ be a multi-index and let $(x,x_-)\in \La_+\times \La_-$.  Then, for
every $\xi\in \fa$ with $\xi^{\alpha}\neq 0$,
\begin{equation*}
\widehat{\Fn}(x,x_-,\xi)=\frac{i^{|\alpha|}}{\xi^{\alpha}}\widehat{\partial^{\alpha}_{\fa}\Fn}(x,x_-,\xi).
\end{equation*}
\end{lemma}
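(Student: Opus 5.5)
The identity is one-dimensional integration by parts, carried out one coordinate at a time. Fix $(x,x_-)\in\Lambda_+\times\Lambda_-$ and set $g(z):=\Fn(x,x_-,z)$ for $z\in\mathfrak a$. By \eqref{eqn:F2 sharp}, $g$ is the product of the smooth cutoff $\zeta_0(s)\bigl(1-\zeta_0(s-R(x))\bigr)$ with $z\mapsto f_2(\Phi(x,x_-,z))$. By \eqref{eqn:LaUpsi}, the latter is the composition of $f_2$ with the map $z\mapsto[(x,x_-,z+2\mathcal G(x,p_0))]$, which is right translation by $a_z$ of a fixed point in $\Gamma_\rho\backslash G/M$. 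Hence $g\in\mathcal C^\infty(\mathfrak a)$, and its derivatives $\partial^\alpha_{\mathfrak a}g$ are given by applying the left-invariant fields of $\mathfrak a$ to $f_2$, multiplied by derivatives of the cutoff. Moreover $g$ has compact support: the cutoff restricts $s$ to $[-\inf R/10,R(x)]$, and \eqref{eqn:C1 new} gives $\|z\|\le C_1R(x)+C_1$ on the support.

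Next I compute in the fixed orthonormal coordinates $z=(z_1,\dots,z_r)$, with $\xi^\alpha=\prod_j\xi_j^{\alpha_j}$. For a single coordinate $j$ with $\xi_j\neq0$, use $\partial_{z_j}e^{i\langle\xi,z\rangle}=i\xi_je^{i\langle\xi,z\rangle}$ and integrate by parts in $z_j$. The boundary terms vanish by compact support, and this gives
$$\widehat g(\xi)=\frac{1}{i\xi_j}\int e^{i\langle\xi,z\rangle}\,(-\partial_{z_j}g)(z)\,dz=\frac{i}{\xi_j}\widehat{\partial_{z_j}g}(\xi).$$
Each $\partial^\beta g$ is again smooth and compactly supported, so I iterate this $\alpha_j$ times in each coordinate $j$. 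This yields $\widehat g(\xi)=i^{|\alpha|}\xi^{-\alpha}\widehat{\partial^\alpha g}(\xi)$ whenever $\xi^\alpha\neq0$, which is the claim. If some $\alpha_j=0$, that coordinate is simply not used, so the hypothesis $\xi^\alpha\ne0$ is exactly what the computation needs.

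There is no real obstacle here. The only point that needs care is the regularity and compact support of $g$ in $z$, and both are recorded above.
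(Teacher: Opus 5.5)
Your proposal is correct and is the same argument as the paper's, which simply notes that $F_2^\sharp(x,x_-,\cdot)$ is smooth and compactly supported and then integrates by parts $|\alpha|$ times. Your extra checks supply exactly the details the paper leaves implicit: smoothness via the orbit map $z\mapsto\Phi(x,x_-,z)$, compact support via the cutoff and \eqref{eqn:C1 new}, and the sign identity $-1/i=i$.
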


\begin{lemma}[Supremum norm]
\label{lem:hatFsup}
\begin{enumerate}
\item Let $H\in \cal S$.  Then, for all $(x,x_-)\in \La_+\times \La_-$, all
$\xi\in \fa$ and every integer $k\ge 0$,
\begin{align}
\label{eqn:inffa}
&|\widehat{H}(x,x_-,\xi)|\ll C_\infty(H) (R(x)+1)^r,\\
&\bigl|\bigl(\boldsymbol\pi_{k}\widehat{H}(\cdot,\xi)\bigr)(x)\bigr|
\ll C_\infty(H) (R(T^{k}x)+1)^r.\nonumber
\end{align}

\item For all $(x,x_-)\in \La_+\times \La_-$, all $\xi\in \fa\smallsetminus\{0\}$,  $m\in \mathbb{N}$ and integers $k\ge 0$,
\begin{equation}
\label{eqn:HS2sup}
|\widehat{\Fn}(x,x_-,\xi)|\ll_m \|\xi\|^{- m}  \|f_2\|_{C^m} (R(x)+1)^{r},
\end{equation}
$$
\bigl|\bigl(\boldsymbol{\pi}_{k}\widehat{\Fn}(\cdot,\xi)\bigr)(x)\bigr|\ll_m \|\xi\|^{- m}  \|f_2\|_{C^m} (R(T^{k}x)+1)^{r}.
$$

\item For  $F\in\{F_1,\Fn\}$, every multi-index $\alpha$, all $(x,x_-)\in \La_+\times \La_-$,
$\xi\in \fa$ and $k\ge 0$,
$$
\begin{aligned}
& |\paral\widehat F(x,x_-,\xi)|
\ll \|f\|_{\infty} (R(x)+1)^{r+|\alpha|},\\
&\bigl|\bigl(\boldsymbol{\pi}_{k}\paral\widehat F(\cdot,\xi)\bigr)(x)\bigr|
\ll \|f\|_{\infty} (R(T^{k}x)+1)^{r+|\alpha|}.
\end{aligned}
$$
\end{enumerate}
\end{lemma}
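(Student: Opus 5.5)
The plan is to bound the Fourier transform directly by the $L^1$-norm in the $\mathfrak a$-variable, using the support bound \eqref{eqn:C1 new}. Fix $(x,x_-)$. For any $H\in\mathcal S$, $H(x,x_-,\cdot)$ is supported in the ball $\{\|z\|\le C_1R(x)+C_1\}$ and is bounded by $\ll C_\infty(H)$. Hence
$$
|\widehat H(x,x_-,\xi)|\le\int_{\|z\|\le C_1(R(x)+1)}|H(x,x_-,z)|\,dz\ll C_\infty(H)(R(x)+1)^r,
$$
uniformly in $\xi$. This is \eqref{eqn:inffa}. For $F\in\{F_1,\Fn\}$, differentiating under the integral gives $\partial^\alpha_\xi\widehat F(x,x_-,\xi)=\int (iz)^\alpha e^{i\langle\xi,z\rangle}F(x,x_-,z)\,dz$. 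On the support, $|z^\alpha|\ll(R(x)+1)^{|\alpha|}$, which yields the first bound of (3) with $\|f\|_\infty$.

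For (2), apply \Cref{lem:Fourierder}. Given $\xi\ne0$, choose a coordinate $j$ with $|\xi_j|\ge\|\xi\|/\sqrt r$ and take $\alpha=m e_j$. Then
$$
|\widehat{\Fn}(x,x_-,\xi)|=|\xi_j|^{-m}\,|\widehat{\partial^{\alpha}_{\mathfrak a}\Fn}(x,x_-,\xi)|.
$$
Bound the right-hand factor by part (1) with $C_\infty(\partial^\alpha_{\mathfrak a}\Fn)=\|f_2\|_{C^m}$. The support of $\partial^\alpha_{\mathfrak a}\Fn$ lies in that of $\Fn$, so \eqref{eqn:C1 new} still applies. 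One point needs checking here: the derivative falls on both the cutoff $\zeta_0$ and on $f_2\circ\Phi$. Since $\Phi$ is right translation $a_z$ in the $\mathfrak a$-coordinate, the $z$-derivatives are left-invariant derivatives of $f_2$ along $\mathfrak a$. So the product rule gives the bound $\ll\|f_2\|_{C^m}$, which is the convention already recorded before the lemma.

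The stable-projection bounds follow by integrating the pointwise bounds. By \eqref{eqn:def stable projection}, $(\boldsymbol\pi_k\widehat H(\cdot,\xi))(x)=\int\widehat H(\widehat T^k(x,x_-),\xi)\,d\nu_{+,x}(x_-)$. The first coordinate of $\widehat T^k(x,x_-)$ is $T^kx$ for every $x_-$, so the integrand is bounded by the pointwise bound evaluated at $R(T^kx)$. Since $\nu_{+,x}$ is a probability measure, the same bound holds for the projection, with no dependence on $x_-$. Measurability and absolute convergence are immediate from the uniform bound.

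The main obstacle is making sure the constant in \eqref{eqn:C1 new} and the derivative bounds are uniform in $(x,x_-)$. This is supplied by \Cref{cor:factor map} with $a=\inf R/10$, together with the compactness of $\mathcal K$. Beyond that the proof is routine bookkeeping.
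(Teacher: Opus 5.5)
Your proposal is correct and follows the paper's proof: you bound $\widehat H$ by the $L^1$ norm over the ball of radius $C_1(R(x)+1)$ from \eqref{eqn:C1 new}, and you get part (2) from \Cref{lem:Fourierder} with $\alpha=me_j$ along a dominant coordinate. Part (3) follows by differentiating under the Fourier integral, and the stable-projection bounds follow by integrating against the probability measure $\nu_{+,x}$, using that the first coordinate of $\widehat T^k(x,x_-)$ is $T^kx$.
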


\begin{proof}
By \eqref{eqn:C1 new}, the support of every $H(x,x_-,\cdot)$,
$H\in\mathcal S$, lies in the ball of radius $C_1(R(x)+1)$.
Hence
$$
|\widehat H(x,x_-,\xi)|\leq\int_\fa|H(x,x_-,z)|\,dz
\ll C_\infty(H)(R(x)+1)^r.
$$

Integrating this bound against the probability measure in
\eqref{eqn:def stable projection} proves (1).
For (2), choose $j$ with $|\xi_j|\geq\|\xi\|/\sqrt r$ and apply
\Cref{lem:Fourierder} with $\alpha=me_j$, followed by (1).

For (3), differentiation under the Fourier integral gives
\begin{equation}
\label{eqn:partialF1}
\partial_{\fa}^\alpha\widehat F(x,x_-,\xi)
=i^{|\alpha|}\int_\fa e^{i\langle\xi,z\rangle}
z^\alpha F(x,x_-,z)\,dz.
\end{equation}
On the support, $|z^\alpha|\ll_\alpha(R(x)+1)^{|\alpha|}$ by
\eqref{eqn:C1 new}. Apply the argument for (1).
\end{proof}

\begin{lemma}[$L^1$-norm]
\label{lem:L1norm}
For all $(x,x_-)\in \La_+\times \La_-$,
\begin{equation}
\label{eqn:L1fa}
\|\widehat{\Fn}(x,x_-,\cdot)\|_{L^1(\fa)}\ll  \|f_2\|_{C^{r+1}} (R(x)+1)^{r}.
\end{equation}
\end{lemma}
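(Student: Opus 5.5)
The plan is to split the frequency integral into the region $\|\xi\|\leq 1$ and its complement, using the pointwise bound \eqref{eqn:inffa} near the origin and the decay bound \eqref{eqn:HS2sup} away from it. Fix $(x,x_-)\in\Lambda_+\times\Lambda_-$. On the unit ball $\{\xi\in\mathfrak a:\|\xi\|\leq1\}$, \Cref{lem:hatFsup}(1) applied to $H=\Fn$ gives
$|\widehat{\Fn}(x,x_-,\xi)|\ll\|f_2\|_\infty(R(x)+1)^r$.
Since the ball has finite $d\xi$-measure, this part of the integral is $\ll\|f_2\|_\infty(R(x)+1)^r\leq\|f_2\|_{C^{r+1}}(R(x)+1)^r$.

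For $\|\xi\|\geq1$, we apply \Cref{lem:hatFsup}(2) with $m=r+1$, which gives
$|\widehat{\Fn}(x,x_-,\xi)|\ll\|\xi\|^{-(r+1)}\|f_2\|_{C^{r+1}}(R(x)+1)^r$.
Because $\dim\mathfrak a=r$, the function $\|\xi\|^{-(r+1)}$ is integrable on $\{\|\xi\|\geq1\}$. Polar coordinates give $\int_1^\infty\rho^{-(r+1)}\rho^{r-1}\,d\rho<\infty$. Adding the two contributions yields \eqref{eqn:L1fa}, with an implied constant depending only on $r$, $\mathcal K$, and the fixed cutoff $\zeta_0$.

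The argument has no real obstacle. The one point to check is that \eqref{eqn:HS2sup} requires control of $\|f_2\|_{C^{r+1}}$ uniformly in the base point. This holds because \Cref{lem:Fourierder} uses $\partial_{\mathfrak a}^\alpha\Fn$. By the product rule and the fact that $\zeta_0$ is fixed, this derivative is bounded by $C_\infty(\partial_{\mathfrak a}^\alpha\Fn)=\|f_2\|_{C^{|\alpha|}}$. The support bound \eqref{eqn:C1 new} also holds for every derivative, since differentiation in $z$ does not enlarge the support.

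We should also note that the $\mathfrak a$-derivatives of $\Fn$ are directional derivatives along right translation by $A$. By \eqref{eqn:LaUpsi}, these are left-invariant vector fields applied to $f_2$. This justifies bounding them by the $C^{|\alpha|}$ norm of $f_2$.
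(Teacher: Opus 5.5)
Your proposal is correct and follows the paper's proof exactly: split the frequency integral at $\|\xi\|=1$, bound the integrand by \eqref{eqn:inffa} on the unit ball and by \eqref{eqn:HS2sup} with $m=r+1$ outside it, and use $r+1>r=\dim\mathfrak a$ for integrability. Your side remarks on bounding $C_\infty(\partial_{\mathfrak a}^\alpha\Fn)$ through left-invariant vector fields along $A$ and the fixed cutoff $\zeta_0$ are consistent with how the paper sets up $\mathcal S$.
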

\begin{proof}
On $\|\xi\|\leq 1$ we bound the integrand by
\eqref{eqn:inffa}, and on $\|\xi\|\geq 1$ by \eqref{eqn:HS2sup} with $m=r+1$,
which gives
\begin{equation*}
\|\widehat{\Fn}(x,x_-,\cdot)\|_{L^1(\fa)}
\ll \|f_2\|_{C^{r+1}} (R(x)+1)^{r}
\left(1+\int_{\|\xi\|\geq 1}\|\xi\|^{-(r+1)}\,d\xi\right).
\end{equation*}
The last integral is finite because $r+1>r=\dim\fa$.
\end{proof}
\begin{lemma}[H\"older norm]
\label{lem:x-x'}
Let $x\in \La_+$, let $x_-,x_-'\in \La_-$, and let $\xi\in \fa$.
\begin{enumerate}
\item For every $H\in \cal S$,
\begin{equation*}
|\widehat{H}(x,x_-,\xi)-\widehat{H}(x,x_-',\xi)|  \ll C(H) e^{\beta C_1 C_2R(x)} (R(x)+1)^r d_{\bar\psi}(x_-,x_-')^{\beta\eta},
\end{equation*}
where $C(F_1):=\|f_1\|_{0,\beta}$ and $
C(\partial_{\fa}^{\alpha}\Fn):=\|f_2\|_{|\alpha|,\beta}$.
\item For $F\in\{F_1,\Fn\}$ and every multi-index $\alpha$,
\begin{equation*}
|\paral\widehat F(x,x_-,\xi)-\paral\widehat F(x,x_-',\xi)| 
\ll \|f\|_{0,\beta} e^{\beta C_1 C_2R(x)} (R(x)+1)^{|\alpha|+r} d_{\bar\psi}(x_-,x_-')^{\beta\eta}.
\end{equation*}
\end{enumerate}
\end{lemma}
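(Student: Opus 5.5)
The plan is to write the difference of Fourier transforms as a single integral over $\mathfrak a$ and bound the integrand pointwise. For $H\in\mathcal S$, or for $z^\alpha F$ in part (2) via \eqref{eqn:partialF1}, we have
$$
\widehat H(x,x_-,\xi)-\widehat H(x,x_-',\xi)
=\int_{\mathfrak a}e^{i\langle\xi,z\rangle}
\bigl(H(x,x_-,z)-H(x,x_-',z)\bigr)\,dz .
$$
Since $|e^{i\langle\xi,z\rangle}|=1$, it suffices to show two things. First, the integrand vanishes outside the ball $\|z\|\le C_1(R(x)+1)$. Second, on that ball it is bounded by $C(H)\,e^{\beta C_1C_2R(x)}d_{\bar\psi}(x_-,x_-')^{\beta\eta}$, up to a constant. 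Integrating over a ball of volume $\ll(R(x)+1)^r$ then gives the claim. In part (2) the extra factor $|z^\alpha|\ll(R(x)+1)^{|\alpha|}$ produces the power $r+|\alpha|$.

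The support statement follows from \eqref{eqn:C1 new}, applied at both $x_-$ and $x_-'$. The cutoffs $\mathbf 1_{[0,R(x))}(s)$, $\zeta_0(s)$ and $1-\zeta_0(s-R(x))$ depend only on $(x,s)$. So they are identical for the two points and factor out of the difference.

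The pointwise bound reduces to a H\"older estimate for $f$, or for the relevant derivative $X^\alpha f_2$. Here $\partial^\alpha_{\mathfrak a}$ acting on $f_2\circ\Phi$ is a right $\mathfrak a$-derivative, hence a combination of the left-invariant fields $X_j$; derivatives landing on the cutoff $\zeta_0$ are bounded and independent of $x_-$. The H\"older seminorm $|\cdot|_\beta$ gives
$$
|g(\Phi(x,x_-,z))-g(\Phi(x,x_-',z))|\le |g|_\beta\,d_{\mathrm{Riem}}\bigl(\Phi(x,x_-,z),\Phi(x,x_-',z)\bigr)^\beta .
$$
By \eqref{eqn:C3 Riem} with $y=x$, the distance is $\ll e^{C_2\|z\|}d_{\bar\psi}(x_-,x_-')^\eta$. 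Raising to the power $\beta$ and using $\|z\|\le C_1(R(x)+1)$ gives the factor $e^{\beta C_2C_1(R(x)+1)}\ll e^{\beta C_1C_2R(x)}$.

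We should also cover the case where the Riemannian distance is large, so that the H\"older bound is not obviously the efficient one. In that case we instead use $2\|g\|_\infty$, which is controlled by the same norm $\|f\|_{k,\beta}$ after enlarging the constant. This is harmless, so the estimate holds in all cases.

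The only delicate point is the identification of $\partial^\alpha_{\mathfrak a}(f_2\circ\Phi)$ with $(X^\alpha f_2)\circ\Phi$. This holds because $\Phi$ is equivariant for right translation by $a_z$ by \eqref{eqn:factor intertwining identities} and \eqref{eqn:LaUpsi}. The combinatorics of the product rule are routine.
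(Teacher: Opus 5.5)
Your proposal is correct and follows essentially the same route as the paper: the cutoffs are independent of $x_-$, so \eqref{eqn:C3 Riem} combined with the $\beta$-H\"older seminorm gives the pointwise bound $C(H)e^{\beta C_2\|z\|}d_{\bar\psi}(x_-,x_-')^{\beta\eta}$, \eqref{eqn:C1 new} confines the integration to a ball of volume $\ll(R(x)+1)^r$, and \eqref{eqn:partialF1} supplies the extra factor $(R(x)+1)^{|\alpha|}$ in part (2). The separate large-distance case is unnecessary, since $|g(a)-g(b)|\le|g|_\beta\, d_{\mathrm{Riem}}(a,b)^\beta$ holds for every pair of points.
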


\begin{proof}

The cutoff factors in \eqref{eqn:Fi} and \eqref{eqn:F2 sharp} do not
depend on $x_-$. Thus \eqref{eqn:C3 Riem} and the product rule give
$$
|H(x,x_-,z)-H(x,x_-',z)|
\ll C(H)e^{\beta C_2\|z\|}d_{\bar\psi}(x_-,x_-')^{\beta\eta}.
$$

If this difference is nonzero, \eqref{eqn:C1 new} gives
$\|z\|\leq C_1(R(x)+1)$. Integrating over this ball proves (1).
For (2), use \eqref{eqn:partialF1}; the extra factor $z^\alpha$ contributes
at most $O_\alpha((R(x)+1)^{|\alpha|})$.
\end{proof}

\begin{lemma}[Stable projection error]
\label{lem:Fj-H}
Let $k \ge 0$ be an integer and $(x, x_-) \in \La_+ \times \La_-$.
\begin{enumerate}
\item For every $H \in \cal S$ and $\xi \in \fa$,
\begin{align*}
&\bigl| \widehat{H}\bigl(\hat{T}^{k}(x, x_-), \xi\bigr) - \bigl(\boldsymbol{\pi}_{k}\widehat{H}(\cdot, \xi)\bigr)(x) \bigr| \ll C(H) e^{\beta C_1 C_2 R(T^{k}x)}\tau^{-k} (R(T^{k}x) + 1)^r.
\end{align*}

\item For every integer $m > r$ and $\xi \in \fa \smallsetminus \cal O$,
\begin{align*}
&\bigl| \widehat{\Fn}\bigl(\hat{T}^{k}(x, x_-), \xi\bigr) - \bigl(\boldsymbol{\pi}_{k}\widehat{\Fn}(\cdot, \xi)\bigr)(x) \bigr| \ll_m \|\xi\|^{-m} \|f_2\|_{m,\beta} e^{\beta C_1 C_2 R(T^{k}x)} \tau^{-k} (R(T^{k}x) + 1)^r.
\end{align*}

\item For $F\in\{F_1,\Fn\}$, every multi-index $\alpha$ and $\xi \in \fa$,
\begin{align*}
&\bigl| \paral\widehat F\bigl(\hat{T}^{k}(x, x_-), \xi\bigr) - \bigl(\boldsymbol{\pi}_{k}\paral\widehat F(\cdot, \xi)\bigr)(x) \bigr| \ll \|f\|_{0,\beta} e^{\beta C_1 C_2 R(T^{k}x)} \tau^{-k} (R(T^{k}x) + 1)^{|\alpha|+r}.
\end{align*}
\end{enumerate}

\end{lemma}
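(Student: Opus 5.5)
The plan is to write the stable projection as an average of the same function over the fibre and then use the fibre contraction. By \eqref{eqn:def stable projection},
$$
\bigl(\boldsymbol\pi_k\widehat H(\cdot,\xi)\bigr)(x)
=\int_{\Lambda_-}\widehat H\bigl(\widehat T^k(x,y_-),\xi\bigr)\,d\nu_{+,x}(y_-),
$$
and $\nu_{+,x}$ is a probability measure. Hence the left-hand side of (1) is bounded by
$$
\sup_{y_-\in\Lambda_-}\bigl|\widehat H(\widehat T^k(x,x_-),\xi)-\widehat H(\widehat T^k(x,y_-),\xi)\bigr|.
$$
Write $\widehat T^k(x,x_-)=(T^kx,\gamma^{-1}x_-)$ and $\widehat T^k(x,y_-)=(T^kx,\gamma^{-1}y_-)$, where $\gamma\in\mathcal H^k$ is the branch with $x\in\gamma\Lambda_+$. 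The two points share the first coordinate $T^kx$ and differ only in the $\Lambda_-$-coordinate. This is exactly the situation of \Cref{lem:x-x'}, applied at the base point $T^kx$.

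First, iterating \Cref{lem:shrinking fibers} gives
$$
d_{\bar\psi}(\gamma^{-1}x_-,\gamma^{-1}y_-)\le\lambda^k\operatorname{diam}_{d_{\bar\psi}}(\Lambda_-).
$$
The diameter is finite by \Cref{lem.dpsidiameterfinite}. Raising to the power $\beta\eta$ produces $\lambda^{\beta\eta k}=\tau^{-k}$ by \eqref{eqn:kappa}.

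Second, \Cref{lem:x-x'}(1) with $x$ replaced by $T^kx$ gives the bound
$$
C(H)\,e^{\beta C_1C_2R(T^kx)}(R(T^kx)+1)^r\tau^{-k}.
$$
This is (1). Part (3) follows identically from \Cref{lem:x-x'}(2), which contributes the extra factor $(R(T^kx)+1)^{|\alpha|}$.

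For (2) I will first trade powers of $\|\xi\|$ for derivatives. Since $\xi\notin\mathcal O$, some coordinate satisfies $|\xi_j|\ge\|\xi\|/\sqrt r$, and $\|\xi\|\gg b_0$. \Cref{lem:Fourierder} with $\alpha=me_j$ writes
$$
\widehat{\Fn}=i^m\xi_j^{-m}\,\widehat{\partial^{me_j}_{\mathfrak a}\Fn}
$$
pointwise in $(x,x_-)$. The factor $\xi_j^{-m}$ does not depend on $(x,x_-)$, so it passes through both the difference and the stable projection $\boldsymbol\pi_k$. Applying part (1) to $H=\partial^{me_j}_{\mathfrak a}\Fn\in\mathcal S$, with $C(H)=\|f_2\|_{m,\beta}$, gives the claim. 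The hypothesis $m>r$ is not needed for this step; it is included only for later use.

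The only point needing care is that \Cref{lem:x-x'} is used with base point $T^kx$, so every $R$-dependence becomes $R(T^kx)$ as stated. I should also check that $\widehat{\partial^\alpha\Fn}$ is defined at the fibre points, which holds because $\gamma^{-1}\Lambda_-\subset\Lambda_-$. I do not expect a serious obstacle: the estimate is a direct combination of fibre contraction with the Hölder estimate of \Cref{lem:x-x'}.
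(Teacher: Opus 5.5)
Your proposal is correct and follows the paper's proof essentially verbatim. The paper also averages the fibre estimate of \Cref{lem:x-x'} (at base point $T^kx$) against $\nu_{+,x}$, using the $\lambda^k$ contraction from \Cref{lem:shrinking fibers} and $\lambda^{\beta\eta k}=\tau^{-k}$ for (1) and (3), and obtains (2) from \Cref{lem:Fourierder} with $\alpha=me_j$ followed by (1); your remark that $m>r$ plays no role here is accurate.
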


\begin{proof}
By \Cref{lem:shrinking fibers}, two points of a stable fiber have images
under $\widehat T^{k}$ at stable distance at most
$\lambda^{k}\diam_{d_{\bar\psi}}(\Lambda_-)$.
Integrate the bounds of \Cref{lem:x-x'} against $\nu_{+,x}$, using
\eqref{eqn:def stable projection} and
$\lambda^{\beta\eta k}=\tau^{-k}$ from \eqref{eqn:kappa}.
This proves (1) and (3).
For (2), choose $j$ with $|\xi_j|\geq\|\xi\|/\sqrt r$ and apply
\Cref{lem:Fourierder} with $\alpha=me_j$, followed by (1) and the definition of $C(H)$.
\end{proof}

We record a direct estimate for the branch formula after composing with
inverse branches. For 
$\xi\in\mathfrak a$, a continuous function $F$, and a multi-index $\alpha$,
we have
\begin{equation}
\label{eqn:one-step branch sum Section18}
\|\mathcal L_\xi(K^\alpha F)\|_\infty
+\frac{|\mathcal L_\xi(K^\alpha F)|_{c_1}}{1+\|\xi\|}
\ll_\alpha\sum_{\gamma\in\mathcal H}e^{-R_\gamma}(1+R_\gamma)^{|\alpha|}
\|F\circ\gamma\|_{c_1}
\end{equation}
whenever the sum is finite. Here $K^\alpha$ is the coordinate monomial
in the fixed basis of $\mathfrak a$. To prove this, apply
\Cref{prop.holderlocal}, \eqref{eqn:one branch distortion Section15},
and \eqref{eqn:recall contraction metric} to each summand, using
$h,h^{-1}\in\mathcal F_{c_1}(\Lambda_+)$ and $\|K\|\ll R$ (\Cref{lem.uniformly positive}). The resulting series converges in
$\mathcal F_{c_1}(\Lambda_+,\mathbb C)$.

Using the stable projection of \eqref{eqn:def stable projection}, define
\be\label{eqn:tildeHjt}
\begin{aligned}
H_{1,t}(x,\xi)&:=\boldsymbol\pi_{k_t}(\widehat F_1(\cdot,\xi))(x), \qquad 
H_{2,t}(x,\xi) :=\boldsymbol\pi_{k_t}(\widehat{\Fn}(\cdot,\xi))(x),\\
\widetilde H_{j,t}(x,\xi)&:=e^{i\langle\xi,K_{k_t}(x)\rangle}
H_{j,t}(x,\xi)\qquad(j=1,2).
\end{aligned}
\ee
After cancellation of the exponential factors involving $K_k$, the remaining stable
average may still grow with $R(x)$. One further application of
$\mathcal L_\xi$, using
\eqref{eqn:one-step branch sum Section18}, gives the norm bounds
needed both for the large-frequency contribution and for the normalized
expectations in \Cref{lem:ANF1F2}.

\begin{lemma}
\label{lem:Lkttv} For every integer $k\geq0$ and $\xi\in\mathfrak a$, set
$$
H_{k,\xi}:=
\mathcal L_\xi^{k+1}
\left(
e^{-i\langle\xi,K_k\rangle}
\boldsymbol\pi_k(\widehat{\Fn}(\cdot,-\xi))
\right).
$$
Then $H_{k,\xi}\in\mathcal F_{c_1}(\Lambda_+,\mathbb C)$.
Moreover, the following estimates hold with implied constants
independent of $k$.

\begin{enumerate}
\item
For every $\xi\in\mathfrak a$ with $\|\xi\|\geq b_0$
and $m\in\mathbb N$,
$$
\| H_{k,\xi}
\|_\xi
\ll_m
\|\xi\|^{-m}\|f_2\|_{m,\beta}.
$$

\item
For every $\xi\in\mathcal O$ and multi-index $\alpha$,
$$
\left\|
\partial_\fa^\alpha H_{k,\xi}
\right\|_{c_1}
\ll_\alpha \|f_2\|_{0,\beta}.
$$
\end{enumerate}
For $k=k_t$, the input to $\mathcal L_\xi^{k+1}$ is
$\widetilde H_{2,t}(\cdot,-\xi)$ by \eqref{eqn:tildeHjt}.
\end{lemma}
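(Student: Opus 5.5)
The plan is to split off one application of $\mathcal L_\xi$ and reduce everything to the one-step estimate \eqref{eqn:one-step branch sum Section18}, followed by the already established bounds on powers of $\mathcal L_\xi$. Set $G_{k,\xi}:=e^{-i\langle\xi,K_k\rangle}\boldsymbol\pi_k(\widehat{\Fn}(\cdot,-\xi))$, so that $H_{k,\xi}=\mathcal L_\xi^{k}\bigl(\mathcal L_\xi G_{k,\xi}\bigr)$.

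The first observation is an exact cancellation on inverse branches. For $\gamma\in\mathcal H^{k+1}$, the twist $e^{i\langle\xi,K_{k+1}(\gamma x)\rangle}$ combines with $e^{-i\langle\xi,K_k(\gamma x)\rangle}$ to leave $e^{i\langle\xi,K(T^k\gamma x)\rangle}$. Hence
\[
H_{k,\xi}=\mathcal L_\xi^{k}\bigl(\mathcal L_\xi G_{k,\xi}\bigr)
\]
can be rewritten as $\mathcal L_0^{k}$ applied, branchwise, to a single twisted step. More usefully, consider the $(k+1)$-fold branch sum. Write $\gamma=\gamma'\gamma''$ with $\gamma''\in\mathcal H$ the innermost letter. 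The function $\boldsymbol\pi_k(\widehat{\Fn}(\cdot,-\xi))(\gamma x)$ only depends, through $T^k\gamma x=\gamma''x$, on a quantity with support radius $C_1(R(\gamma''x)+1)$ in \Cref{lem:hatFsup}. One therefore obtains a representation of the form $\mathcal L_0$-type averages, with $|\text{weights}|\le$ positive weights, of $e^{i\langle\xi,K(\gamma''x)\rangle}\,\Theta_k(\gamma x)$, where
\[
\Theta_k(y):=\boldsymbol\pi_k(\widehat{\Fn}(\cdot,-\xi))(y)\,e^{-i\langle\xi,K_k(y)\rangle}e^{i\langle \xi,K_k(y)\rangle}.
\]
In words: the phases from the first $k$ steps cancel, and only the last step $\gamma''$ carries an unbounded $R$.

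For the sup-norm bounds, I would use \Cref{lem:hatFsup}(1),(2), which bound $\boldsymbol\pi_k\widehat{\Fn}$ by $\|\xi\|^{-m}\|f_2\|_{C^m}(R(T^kx)+1)^r$. Summing against $e^{-R_{\gamma''}}$ converges by \eqref{eqn:real branch summability Section15}, and the outer $k$ steps have total weight $\le1$ by \eqref{eqn:invariance of transfer operator and measure}. This gives $\|H_{k,\xi}\|_\infty\ll_m\|\xi\|^{-m}\|f_2\|_{m,\beta}$ uniformly in $k$.

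For the H\"older seminorm, it suffices to show $|H_{k,\xi}|_{c_1}\ll(1+\|\xi\|)\|\xi\|^{-m}\|f_2\|_{m,\beta}$, since the $\|\cdot\|_\xi$-norm divides by $\|\xi\|+1$. I would decompose the difference at $x,y$ in a common cylinder as in the proof of \Cref{lem:L0 action on B}, into terms $I_1$ through $I_4$. The weight and phase variations $I_1,I_2,I_3$ are controlled by \Cref{prop.holderlocal} with a factor $\max\{\|\xi\|,1\}$ and the sup bound above. The term $I_4$ needs a H\"older estimate of $x\mapsto\boldsymbol\pi_k(\widehat{\Fn}(\cdot,\xi))(x)\,e^{-i\langle\xi,K_k(x)\rangle}$ along branches $\gamma x,\gamma y$.

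This last point is the main obstacle. Here I would write $\boldsymbol\pi_k\widehat{\Fn}(\cdot,\xi)(x)=\int\widehat{\Fn}(\widehat T^k(x,x_-),\xi)\,d\nu_{+,x}$. Changing $x$ to $y$ produces three effects. The first is the change of the conditional kernel, controlled by \eqref{eqn:conditional-kernel-Holder} together with the sup bound. The second is the change of $T^kx$ in the first coordinate, via the $x$-H\"older dependence of $\Fn$ through \eqref{eqn:C3 Riem}, costing $e^{\beta C_1C_2R}$ integrated against $e^{-R}$; this is fine by $8\beta C_1C_2<\epsilon_0$. The third is the change of the $\mathfrak a$-coordinate $z\mapsto z$ shifted by $K_k(\gamma x)-K_k(\gamma y)$, which is $O(d^{c_1})$ by \eqref{eqn:branch-K-distortion}; in Fourier this becomes exactly the phase $e^{i\langle\xi,K_k\rangle}$ we multiplied out. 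The purpose of the prefactor $e^{-i\langle\xi,K_k\rangle}$ is precisely to cancel this shift, so no $k$-dependent growth appears. This yields $\|\mathcal L_\xi G_{k,\xi}\|_{c_1}\ll(1+\|\xi\|)\|\xi\|^{-m}\|f_2\|_{m,\beta}$. Applying \eqref{eqn:Lnc} to $\mathcal L_\xi^k$ then gives (1), absorbing $\lambda^{c_1k}$.

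For (2), with $\xi\in\mathcal O$, differentiating under the branch sum brings down polynomial factors in $K_{k+1}$ and $K_k$. Their difference is again a single-step factor $K(\gamma''x)$, bounded by $R_{\gamma''}$. The remaining derivatives fall on $\widehat{\Fn}$, which is handled by \Cref{lem:hatFsup}(3) and \Cref{lem:Fj-H}(3). Thus the same argument applies, using \eqref{eqn:one-step branch sum Section18} with multi-indices, together with the uniform bounds $\|\partial^\alpha(\mathcal L_\xi^k)\|$ on $\mathcal O$ coming from \eqref{eqn:uniform remainder powers Section15} and $|\kappa_\xi|\le1$.
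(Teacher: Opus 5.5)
Your sup-norm bound is correct, and you identify the right cancellation: the combined phase on a $(k+1)$-step branch is $e^{i\langle\xi,K(\gamma''x)\rangle}$. But your actual H\"older estimate for (1) routes through an intermediate bound that is false. For a one-letter branch $\delta\in\mathcal H$ one has $K_k(\delta x)=K(\delta x)+K_{k-1}(x)$. The one-step case of \eqref{eqn:stable kernel recursion} gives $\mathcal L_0\boldsymbol\pi_k=\boldsymbol\pi_{k-1}$. Hence, with $G_{k,\xi}=e^{-i\langle\xi,K_k\rangle}\boldsymbol\pi_k(\widehat{\Fn}(\cdot,-\xi))$,
\[
\mathcal L_\xi G_{k,\xi}=e^{-i\langle\xi,K_{k-1}\rangle}\,\boldsymbol\pi_{k-1}\bigl(\widehat{\Fn}(\cdot,-\xi)\bigr)=G_{k-1,\xi}.
\]
So one application of $\mathcal L_\xi$ strips only one letter of the phase, and your intermediate function is of the same type as the input. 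It contains the forward sum $K_{k-1}$ and depends on $T^{k-1}x$. It is only controlled by $\|\xi\|^{-m}\|f_2\|_{C^m}(1+R\circ T^{k-1})^r$, which is unbounded on $\Lambda_+$. It also has no H\"older bound on one-cylinders: \eqref{eqn:branch-K-distortion} controls $K_k(\gamma x)-K_k(\gamma y)$ only for $\gamma\in\mathcal H^k$, and for a single letter the forward difference $K_{k-1}(x)-K_{k-1}(y)$ is not bounded by $d_{\bar\psi}(x,y)^{c_1}$. Your ``third effect'' does not rescue this, since $\boldsymbol\pi_k$ involves no $\mathfrak a$-shift. Thus $\|\mathcal L_\xi G_{k,\xi}\|_{c_1}$ is not controlled, and \eqref{eqn:Lnc} cannot then be applied to $\mathcal L_\xi^k$. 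The missing idea is to let the $k$ inner steps act first. There the phases cancel exactly and the stable kernel recursion collapses the projection:
\[
\mathcal L_\xi^k G_{k,\xi}=\mathcal L_0^k\boldsymbol\pi_k\bigl(\widehat{\Fn}(\cdot,-\xi)\bigr)=\boldsymbol\pi_0\bigl(\widehat{\Fn}(\cdot,-\xi)\bigr).
\]
Hence $H_{k,\xi}=\mathcal L_\xi\bigl(\boldsymbol\pi_0(\widehat{\Fn}(\cdot,-\xi))\bigr)$ is independent of $k$. The single outer step then absorbs the growth in $R$. One proves a branchwise bound $\|[\boldsymbol\pi_0\widehat{\Fn}(\cdot,-\xi)]\circ\gamma\|_{c_1}\ll e^{\beta C_1C_2R_\gamma}(1+R_\gamma)^r$ times the appropriate norm of $f_2$, using \Cref{prop.holderlocal} for the cutoffs, \eqref{eqn:C3 Riem} for $f_2$, and \eqref{eqn:conditional-kernel-Holder} for the kernels. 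This is fed into \eqref{eqn:one-step branch sum Section18}, using $\beta C_1C_2<\epsilon_0$. A direct $I_1$--$I_4$ estimate of the full $(k+1)$-fold sum with the combined phase would also work, but that is not what you carry out.

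Part (2) has an additional gap in uniformity. You invoke bounds on $\|\partial^\alpha(\mathcal L_\xi^k)\|$ that are uniform in $k$, citing \eqref{eqn:uniform remainder powers Section15} and $|\kappa_\xi|\le1$; no such bounds exist. That estimate concerns only $A_w^n=(\kappa_w^{-1}\mathcal R_w)^n$. Since $\mathcal L_\xi^k=\kappa_\xi^k\mathcal P_\xi+\mathcal R_\xi^k$ and $D\kappa_0=i\nu_+(R)\psi\neq0$, derivatives of $\kappa_\xi^k$ grow like $k^{|\alpha|}$. Likewise, differentiating the input $e^{-i\langle\xi,K_k\rangle}$ separately brings down $K_k$, which has size $k$. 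Uniformity in $k$ appears only after the exact cancellation above. One then differentiates the $k$-independent expression $\mathcal L_\xi\bigl(\boldsymbol\pi_0(\widehat{\Fn}(\cdot,-\xi))\bigr)$ by the product rule. Only factors $(iK)^{\alpha_1}$ from the one outer step and derivatives $\partial_\fa^{\alpha_2}\widehat{\Fn}$ appear, and each is summable against $e^{-(1-\beta C_1C_2)R_\gamma}$ by \eqref{eqn:one-step branch sum Section18} and \eqref{eqn:real branch summability Section15}.
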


\begin{proof}
For a Borel function $H$ bounded on each stable fiber,
\eqref{eqn:def stable projection} and
\eqref{eqn:stable kernel recursion} give
$$
\mathcal L_0^k(\boldsymbol\pi_kH)(x)
=
\sum_{\gamma\in\mathcal H^k}p_\gamma^{(k)}(x)
\int_{\Lambda_-}
H(x,\gamma^{-1}x_-)\,d\nu_{+,\gamma x}(x_-)
=
\boldsymbol\pi_0H(x).
$$
The series converges absolutely because its sum of absolute
values is at most
$\sup_{y_-\in\Lambda_-}|H(x,y_-)|$.
The factors $e^{i\langle\xi,K_k\rangle}$ and
$e^{-i\langle\xi,K_k\rangle}$ cancel in the $k$-step
branch formula, giving
\begin{equation}
\label{eqn:Lkt formula}
\begin{aligned}
&\mathcal L_\xi^k
\left(
e^{-i\langle\xi,K_k\rangle} 
\boldsymbol\pi_k(\widehat{\Fn}(\cdot,-\xi))
\right)=
\mathcal L_0^k
\boldsymbol\pi_k(\widehat{\Fn}(\cdot,-\xi))
=
\boldsymbol\pi_0(\widehat{\Fn}(\cdot,-\xi)).
\end{aligned}
\end{equation}
Thus both assertions reduce to estimates for
$\mathcal L_\xi\bigl(
\boldsymbol\pi_0(\widehat{\Fn}(\cdot,-\xi))
\bigr)$, independently of $k$. 
Fix an inverse branch $\gamma\in\mathcal H$. We estimate
$\bigl(\boldsymbol\pi_0\widehat{\Fn}(\cdot,-\xi)\bigr)\circ\gamma$.
 For $x,y$ in a common
one-cylinder, \Cref{prop.holderlocal} gives
$|R(\gamma x)-R(\gamma y)|
\ll d_{\bar\psi}(x,y)^{c_1}$.
Use this bound for the cutoff factors in \eqref{eqn:F2 sharp},
and \eqref{eqn:C3 Riem} with
\eqref{eqn:recall contraction metric} for the derivatives of
$f_2$. Since $c_1\leq\min\{c_0,\beta\eta\}$,
the product rule and \eqref{eqn:C1 new} give, for
$H=\partial_\fa^\delta\Fn$,
$$
\begin{aligned}
&|\widehat H(\gamma x,x_-,\xi)
-\widehat H(\gamma y,x_-,\xi)| \ll_\delta C(H)e^{\beta C_1C_2R_\gamma}
(1+R_\gamma)^r d_{\bar\psi}(x,y)^{c_1}.
\end{aligned}
$$
For the conditional measures,
\eqref{eqn:conditional-kernel-Holder} applied at
$\gamma x,\gamma y$ bounds their contribution by
$$
C\|\widehat H(\gamma y,\cdot,\xi)\|_\infty
d_{\bar\psi}(\gamma x,\gamma y)^{c_1}.
$$
Together with \Cref{lem:hatFsup}(1) and
\eqref{eqn:recall contraction metric}, this gives
\begin{equation}
\label{eqn:projected branch Holder Section18}
\|[\boldsymbol\pi_0(\widehat H(\cdot,\xi))]\circ\gamma\|_{c_1}
\ll_\delta
C(H)e^{\beta C_1C_2R_\gamma}(1+R_\gamma)^r.
\end{equation}
The same argument and \eqref{eqn:partialF1} give
\begin{equation}
\label{eqn:projected branch derivatives Section18}
\begin{aligned}
&\|[\boldsymbol\pi_0(
\partial_\fa^\alpha\widehat{\Fn}(\cdot,-\xi))]
\circ\gamma\|_{c_1} \ll_\alpha
\|f_2\|_{0,\beta}e^{\beta C_1C_2R_\gamma}
(1+R_\gamma)^{r+|\alpha|}.
\end{aligned}
\end{equation}

For (1), choose $j$ with $|\xi_j|\geq\|\xi\|/\sqrt r$
and apply \Cref{lem:Fourierder} with $\delta=me_j$.
By \eqref{eqn:projected branch Holder Section18} and
\eqref{eqn:one-step branch sum Section18},
\begin{align*}
&\left\|
\mathcal L_\xi\bigl(
\boldsymbol\pi_0(\widehat{\Fn}(\cdot,-\xi))
\bigr)
\right\|_\xi\ll_m
\|\xi\|^{-m}\|f_2\|_{m,\beta}
\sum_{\gamma\in\mathcal H}
e^{-(1-\beta C_1C_2)R_\gamma}(1+R_\gamma)^r\ll_m
\|\xi\|^{-m}\|f_2\|_{m,\beta}.
\end{align*}
The last inequality follows from
$\beta C_1C_2<\epsilon_0$ in \eqref{eqn:beta-choiceofe1}
and \eqref{eqn:real branch summability Section15},
absorbing the polynomial factor into the remaining exponential
margin. Together with \eqref{eqn:Lkt formula}, this proves (1).

For (2), differentiating the Fourier transform at $-\xi$
contributes the factor $(-1)^{|\alpha_2|}$. The product rule gives
\begin{align*}
&\partial_\fa^\alpha
\left[
\mathcal L_\xi\bigl(
\boldsymbol\pi_0(\widehat{\Fn}(\cdot,-\xi))
\bigr)
\right]=
\sum_{\alpha_1+\alpha_2=\alpha}
\binom{\alpha}{\alpha_1}
\mathcal L_\xi\left(
(iK)^{\alpha_1}(-1)^{|\alpha_2|}
\boldsymbol\pi_0(
\partial_\fa^{\alpha_2}\widehat{\Fn}(\cdot,-\xi))
\right).
\end{align*}
Apply \eqref{eqn:one-step branch sum Section18} and
\eqref{eqn:projected branch derivatives Section18}.
Since $\mathcal O$ is bounded,
\begin{align*}
&\left\|
\partial_\fa^\alpha
\left[
\mathcal L_\xi\bigl(
\boldsymbol\pi_0(\widehat{\Fn}(\cdot,-\xi))
\bigr)
\right]
\right\|_{c_1} \ll_\alpha
\|f_2\|_{0,\beta}
\sum_{\gamma\in\mathcal H}
e^{-(1-\beta C_1C_2)R_\gamma}
(1+R_\gamma)^{r+|\alpha|}\ll_\alpha\|f_2\|_{0,\beta},
\end{align*}
using \eqref{eqn:real branch summability Section15} as above.
These summable bounds, also at one higher derivative order,
justify differentiation in
$\mathcal F_{c_1}(\Lambda_+,\mathbb C)$.
Equation \eqref{eqn:Lkt formula} now proves (2). The estimates in (1) and in (2) with $\alpha=0$ also show that
$H_{k,\xi}\in\mathcal F_{c_1}(\Lambda_+,\mathbb C)$
for every $\xi\in\mathfrak a$.
\end{proof}

\subsubsection{Reduction to the expanding map}
The following lemma quantifies the error introduced by stable projection,
both outside $\mathcal O$ and for derivatives of the normalized
expectations inside $\mathcal O$.
We write $(N)$ for the $N$th derivative with respect to $\xi$,
viewed as a symmetric $N$-linear form on $\mathfrak a$.
For $\xi\in\mathfrak a$, put
\begin{equation}
\label{eqn:Theta}
\Theta_n(\xi,x):=
e^{-i\langle\xi,t\ut+\ell(t)\ww\rangle}
e^{i\langle\xi,K_n(x)\rangle}.
\end{equation}

\begin{lemma}
\label{lem:reduction of La-}
Let $k\geq0$ be an integer.
\begin{enumerate}
\item For every $n\in \mathbb{N}$ and $t>1$,
\begin{align*}
&\int_{\fa\smallsetminus \cal O} \int_{\La_+\times \La_-}  \Theta_n(\xi,x) \widehat{F}_1(\hat{T}^n(x,x_-),\xi)  \widehat{\Fn}(x,x_-,-\xi)\, d\nu^{\diamond}(x,x_-)\,d\xi\\
&\quad=\int_{\fa\smallsetminus \cal O} \int_{\La_+} \Theta_n(\xi,T^k x) \bigl(\boldsymbol{\pi}_{k}\widehat{F}_1(\cdot,\xi)\bigr)(T^nx) \bigl(\boldsymbol{\pi}_{k}\widehat{\Fn}(\cdot,-\xi)\bigr)(x)\, d\nu_+(x)\,d\xi\\
&\qquad\quad+O\bigl(\bigl(\|f_1\|_{0,\beta} \|f_2\|_{C^{r+1}}+\|f_1\|_{\infty} \|f_2\|_{r+1,\beta}\bigr) \tau^{-k}\bigr).
\end{align*}
\item For every $n\in\mathbb N$, $N\in\mathbb Z_{\geq0}$,
and $\xi\in\mathcal O$,
\begin{align*}
&\left[
\kappa_\xi^{-n}
\int_{\Lambda_+\times\Lambda_-}
e^{i\langle\xi,K_n(x)\rangle}
\widehat F_1(\widehat T^n(x,x_-),\xi)
\widehat{\Fn}(x,x_-,-\xi)\,
d\nu^\diamond(x,x_-)
\right]^{(N)}\\
&\quad=
\left[
\kappa_\xi^{-n}
\int_{\Lambda_+}
e^{i\langle\xi,K_n(T^kx)\rangle}
\bigl(\boldsymbol\pi_k\widehat F_1(\cdot,\xi)\bigr)(T^nx)
\bigl(\boldsymbol\pi_k\widehat{\Fn}(\cdot,-\xi)\bigr)(x)\,
d\nu_+(x)
\right]^{(N)}\\
&\qquad+
O_N\left(
\|f_1\|_{0,\beta}\|f_2\|_{0,\beta}
\tau^{-k}n^N|\kappa_\xi^{-n}|
\right),
\end{align*}
where the derivatives are taken with respect to $\xi$.
\end{enumerate}
\end{lemma}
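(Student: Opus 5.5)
We prove both parts by the same two-step replacement: first replace $\widehat F_1(\widehat T^n(x,x_-),\xi)$ by its stable projection, then replace $\widehat{\Fn}(x,x_-,-\xi)$ by its stable projection. Each replacement costs an error controlled by \Cref{lem:Fj-H}. To move from integrals over $\Lambda_+\times\Lambda_-$ to integrals over $\Lambda_+$, we use the $\widehat T$-invariance of $\nu^\diamond$ together with the disintegration $d\nu^\diamond=d\nu_+\,d\nu_{+,x}$ from \Cref{lem:measure on product space}. Since $\nu^\diamond$ is $\widehat T^k$-invariant and $K_n$ depends only on $x$, we first rewrite the left-hand side with $(x,x_-)$ replaced by $\widehat T^k(x,x_-)$. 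The phase becomes $\Theta_n(\xi,T^kx)$ (respectively $e^{i\langle\xi,K_n(T^kx)\rangle}$), and the two Fourier factors become
\[
\widehat F_1(\widehat T^{n+k}(x,x_-),\xi),\qquad \widehat{\Fn}(\widehat T^{k}(x,x_-),-\xi).
\]

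\emph{Replacing the second factor.} We replace $\widehat{\Fn}(\widehat T^k(x,x_-),-\xi)$ by $(\boldsymbol\pi_k\widehat{\Fn}(\cdot,-\xi))(x)$. In (1), \Cref{lem:Fj-H}(2) with $m=r+1$ bounds the error by
\[
\|\xi\|^{-r-1}\|f_2\|_{r+1,\beta}\,e^{\beta C_1C_2R(T^kx)}\tau^{-k}(R(T^kx)+1)^r .
\]
We multiply this by the sup bound \eqref{eqn:inffa} for $\widehat F_1$, which gives $\|f_1\|_\infty(R(T^{n+k}x)+1)^r$. The resulting product is then integrated: $\|\xi\|^{-r-1}$ is integrable on $\mathfrak a\smallsetminus\mathcal O$, and the $x$-integral is handled by Cauchy--Schwarz and $T$-invariance of $\nu_+$. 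This reduces the bound to finiteness of $\nu_+(e^{2\beta C_1C_2R}(1+R)^{2r})$, which holds since $8\beta C_1C_2<\epsilon_0$ by \eqref{eqn:beta-choiceofe1}.

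\emph{Replacing the first factor.} After the first replacement the second factor depends only on $x$. We integrate the $x_-$-variable against $\nu_{+,x}$ using the kernel recursion \eqref{eqn:stable kernel recursion}. The error from replacing $\widehat F_1(\widehat T^{n+k}(x,x_-),\xi)$ by $(\boldsymbol\pi_k\widehat F_1(\cdot,\xi))(T^nx)$ is bounded by \Cref{lem:Fj-H}(1), applied at the point $\widehat T^n(x,x_-)$ with $k$ steps. This gives $\|f_1\|_{0,\beta}e^{\beta C_1C_2R(T^{n+k}x)}\tau^{-k}(\dots)^r$. For the second factor we use the $L^1_\xi$ bound of \Cref{lem:L1norm} for $\widehat{\Fn}$, or the projected sup bound with $\|\xi\|^{-r-1}$ decay from \Cref{lem:hatFsup}(2); this contributes $\|f_2\|_{C^{r+1}}$ and yields the error term $\|f_1\|_{0,\beta}\|f_2\|_{C^{r+1}}\tau^{-k}$. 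The $R$-moments are handled exactly as before.

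\emph{Part (2).} For (2) we differentiate before estimating. By Leibniz, the $N$th derivative of $\kappa_\xi^{-n}e^{i\langle\xi,K_n\rangle}AB$ splits into terms of the form
\[
\partial^{\alpha_0}\kappa_\xi^{-n}\cdot (iK_n)^{\alpha_1}\cdot\partial^{\alpha_2}A\cdot\partial^{\alpha_3}B .
\]
On the bounded set $\mathcal O$, analyticity and $\kappa\neq0$ give $|\partial^{\alpha_0}\kappa_\xi^{-n}|\ll n^{|\alpha_0|}|\kappa_\xi^{-n}|$. We also have $\|K_n\|^{|\alpha_1|}\ll (R_n)^{|\alpha_1|}\ll n^{|\alpha_1|}\max_{j<n}(R\circ T^j)^{|\alpha_1|}$. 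Applying the same two replacements with \Cref{lem:Fj-H}(3) to $\partial^{\alpha_2}\widehat F_1$ and $\partial^{\alpha_3}\widehat{\Fn}$ bounds each difference term by $\tau^{-k}$ times polynomial-exponential moments of $R$ at the times $T^kx$ and $T^{n+k}x$. We expect the bound $n^N$ rather than $n^{|\alpha_0|}$ for the $K_n$ factor, via $\|K_n\|\le\sum_j\|K\circ T^j\|$, Hölder's inequality, and $T$-invariance, so each $\int\|K\circ T^j\|^{p}\cdots$ is uniformly bounded. Together with the choice $8\beta C_1C_2<\epsilon_0$, this gives the error $O_N(\|f_1\|_{0,\beta}\|f_2\|_{0,\beta}\tau^{-k}n^N|\kappa_\xi^{-n}|)$.

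The main obstacle is bookkeeping the products of unbounded weights $e^{\beta C_1C_2R}(1+R)^{r+|\alpha|}$, evaluated at several times $x,T^kx,T^{n+k}x$, together with the powers of $\|K_n\|$. We will control these with Hölder's inequality with three or four exponents and $T$-invariance of $\nu_+$, so that everything reduces to the single exponential moment $\int e^{\epsilon_0R}d\nu_+<\infty$. The margin in \eqref{eqn:beta-choiceofe1} was chosen precisely to make this work.
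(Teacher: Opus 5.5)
Your proposal is correct and is essentially the paper's proof. You rewrite via $\widehat T^k$-invariance of $\nu^\diamond$ and replace both Fourier factors by their stable projections, with errors controlled by \Cref{lem:Fj-H}, \Cref{lem:hatFsup}, \Cref{lem:L1norm}, Cauchy--Schwarz and $T$-invariance of $\nu_+$. In (2) you combine the product rule with $|\partial^q\kappa_\xi^{-n}|\ll n^q|\kappa_\xi|^{-n}$ and a Minkowski-type bound $\|K_n\circ T^k\|_{L^p(\nu_+)}\le n\|K\|_{L^p(\nu_+)}$; it is this bound, not your pointwise $\max_{j<n}R\circ T^j$ estimate, that yields exactly $n^N$.

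The remaining differences are cosmetic. You perform the two replacements in the opposite order. No appeal to the kernel recursion \eqref{eqn:stable kernel recursion} is needed, since once both factors are projected the integrand depends only on $x$ and $\nu^\diamond$ projects to $\nu_+$.
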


\begin{proof}
For (1), fix $n\in \mathbb{N}$, $t>1$, and $k\geq0$, and recall
$\Theta_n(\xi,x)$ of \eqref{eqn:Theta}.  By the $\hat{T}$-invariance of
$\nu^{\diamond}$, the left-hand side equals
\begin{equation*}
\int_{\fa\smallsetminus \cal O}\int_{\La_+\times \La_-}\Theta_n(\xi,T^{k}x) \widehat{F}_1(\hat{T}^{n+k}(x,x_-),\xi) \widehat{\Fn}(\hat{T}^{k}(x,x_-),-\xi)\, d\nu^{\diamond}\,d\xi.
\end{equation*}

Replacing each of the two factors by its stable projection, we obtain the
asserted main term together with the two errors
$$
\begin{aligned}
E_1&:=\int_{\fa\smallsetminus \cal O}\int_{\La_+\times \La_-}\Theta_n(\xi,T^{k}x)\Bigl(\widehat{F}_1(\hat{T}^{k+n}(x,x_-),\xi)-\bigl(\boldsymbol{\pi}_{k}\widehat{F}_1(\cdot,\xi)\bigr)(T^nx)\Bigr)\\
&\qquad\qquad\cdot\widehat{\Fn}(\hat{T}^{k}(x,x_-),-\xi)\,d\nu^{\diamond}\,d\xi,\\
E_2&:=\int_{\fa\smallsetminus \cal O}\int_{\La_+\times \La_-}\Theta_n(\xi,T^{k}x)\bigl(\boldsymbol{\pi}_{k}\widehat{F}_1(\cdot,\xi)\bigr)(T^{n}x)\\
&\qquad\qquad\cdot \Bigl(\widehat{\Fn}(\hat{T}^{k}(x,x_-),-\xi)-\bigl(\boldsymbol{\pi}_{k}\widehat{\Fn}(\cdot,-\xi)\bigr)(x)\Bigr)\,d\nu^{\diamond}\,d\xi.
\end{aligned}
$$

It therefore suffices to show that
\begin{equation*}
\label{eqn:E1}
|E_1|\ll \|f_1\|_{0,\beta} \|f_2\|_{C^{r+1}} \tau^{-k},\quad 
|E_2|\ll \|f_1\|_{\infty} \|f_2\|_{r+1,\beta} \tau^{-k}.
\end{equation*}
Since $|\Theta_n|\equiv 1$, \Cref{lem:Fj-H}(1), Fubini's theorem and
\eqref{eqn:L1fa} give
\begin{align*}
|E_1| &\ll \|f_1\|_{0,\beta} \|f_2\|_{C^{r+1}} \tau^{-k}\int_{\La_+} e^{\beta C_1C_2R(T^{k+n}x)} (R(T^{k+n}x)+1)^r (R(T^{k}x)+1)^r\,d\nu_+(x)\\
&\ll \|f_1\|_{0,\beta} \|f_2\|_{C^{r+1}} \tau^{-k} \left(\int_{\La_+} e^{2\beta C_1 C_2 R(x)} (R(x)+1)^{2r} d\nu_+\int_{\La_+} (R(x)+1)^{2r} d\nu_+\right)^{1/2}\\
&\ll \|f_1\|_{0,\beta} \|f_2\|_{C^{r+1}} \tau^{-k},
\end{align*}
where we used Cauchy--Schwarz, $T$-invariance of $\nu_+$, the
exponential moment fixed in \Cref{subsec:constants}, and
$2\beta C_1C_2<\epsilon_0$ from \eqref{eqn:beta-choiceofe1}.  The
estimate for $E_2$ follows by the same argument, applying
\eqref{eqn:inffa} to $\boldsymbol{\pi}_{k}\widehat{F}_1$ and
\Cref{lem:Fj-H}(2) to
$\widehat{\Fn}-\boldsymbol{\pi}_{k}\widehat{\Fn}$ with $m=r+1$, which
produces $\|f_1\|_{\infty}\|f_2\|_{r+1,\beta}$ in place of
$\|f_1\|_{0,\beta}\|f_2\|_{C^{r+1}}$.

For (2), apply the product rule to the two error products
appearing in $E_1$ and $E_2$.
If $j\in\{0,\ldots,N\}$ derivatives with respect to $\xi$
are applied to $e^{i\langle\xi,K_n\circ T^k\rangle}$,
the corresponding integral is bounded by
$\tau^{-k}\int_{\Lambda_+\times\Lambda_-}
|AB|\,\|K_n\circ T^k\|^j\,d\nu^\diamond$,
where $A$ is a derivative of a projection error multiplied
by $\tau^k$, and $B$ is a derivative of the other Fourier
factor or its projection.
By \Cref{lem:hatFsup}(3), \Cref{lem:Fj-H}(3),
$T$-invariance, and the exponential moment of $R$,
$\|A\|_{L^4(\nu^\diamond)}
\|B\|_{L^4(\nu^\diamond)}
\ll_N \|f_1\|_{0,\beta}\|f_2\|_{0,\beta}$,
since $4\beta C_1C_2<\epsilon_0$ by
\eqref{eqn:beta-choiceofe1}.
For $j\geq1$, $\|K\|\ll R$ (\Cref{lem.uniformly positive}) gives
$$
\|K_n\circ T^k\|_{L^{2j}(\nu_+)}
\leq n\|K\|_{L^{2j}(\nu_+)}
\ll n\|R\|_{L^{2j}(\nu_+)}.
$$
H\"older's inequality therefore yields
$$
\begin{aligned}
\tau^{-k}\int |AB|\,\|K_n\circ T^k\|^j\,d\nu^\diamond  &\leq
\tau^{-k}\|A\|_{L^4(\nu^\diamond)}
\|B\|_{L^4(\nu^\diamond)}
\|K_n\circ T^k\|_{L^{2j}(\nu_+)}^j\\ & \ll_N
\|f_1\|_{0,\beta}\|f_2\|_{0,\beta}\tau^{-k}n^j.
\end{aligned}
$$

Finally, \Cref{thm:decomposition of transfer operators}
gives analyticity and nonvanishing of $\kappa_\xi$ on
a neighborhood of $\overline{\mathcal O}$, and hence
$\|D^q(\kappa_\xi^{-n})\|
\ll_q n^q|\kappa_\xi|^{-n}$ for all
$q\geq0$. 
If $q$ derivatives are applied to $\kappa_\xi^{-n}$,
then $j+q\leq N$. The product rule therefore gives
the error bound
$O_N\left(
\|f_1\|_{0,\beta}\|f_2\|_{0,\beta}
\tau^{-k}n^N|\kappa_\xi|^{-n}
\right)$.
This proves (2).
\end{proof}

\subsection{Convergence of the normalized expectations}
\label{subsec:normalized-expectations}
Near zero frequency, we separate the leading eigenvalue from the
Fourier-transformed return correlations. We show that the derivatives
of the resulting normalized expectations converge, providing the
coefficients for the correlation expansion. Their zeroth-order limit
will identify the leading mixing constant.

For $0\leq N\leq J$, $n\geq1$, and $\xi\in\mathcal O$, set
\begin{equation}
\label{eqn:ANn def}
\mathcal A_{N,n}(\widehat F_1,\widehat{\Fn};\xi)
:=
\left[
\kappa_\xi^{-n}
\int_{\Lambda_+\times\Lambda_-}
e^{i\langle\xi,K_n(x)\rangle}
\widehat F_1(\widehat T^n(x,x_-),\xi)
\widehat{\Fn}(x,x_-,-\xi)\,
d\nu^\diamond(x,x_-)
\right]^{(N)}.
\end{equation}

We omit $\xi$ when $\xi=0$ and write
\begin{equation}
\label{eqn:AN def}
\mathcal A_N(\widehat F_1,\widehat{\Fn})
:=\lim_{n\to\infty}\mathcal A_{N,n}(\widehat F_1,\widehat{\Fn})
\end{equation}
for the limits whose existence is established in \Cref{lem:ANF1F2}(1).
We suppress the two function arguments when no confusion can arise.
\begin{lemma}
\label{lem:ANF1F2}
\begin{enumerate}
\item For each $N\in \{0,\dots, J\}$ the limit \eqref{eqn:AN def} exists and
\begin{equation}
\label{eqn:upper bound AN}
\left|\cal A_N(\widehat{F}_1,\,\widehat{\Fn})\right|\ll \|f_1\|_{0,\beta} \|f_2\|_{0,\beta}.
\end{equation}
Moreover, for every $\bar{L}\in \mathbb{N}$ and $n\in \mathbb{N}$,
\begin{equation}
\label{eqn:AN rate}
\left|\cal A_N(\widehat{F}_1,\,\widehat{\Fn})-\cal A_{N,n}(\widehat{F}_1,\,\widehat{\Fn})\right|\ll_{\bar{L}} n^{-\bar{L}} \|f_1\|_{0,\beta} \|f_2\|_{0,\beta}.
\end{equation}

\item For every $\xi\in \cal O$ and $n\in \mathbb{N}$,
\begin{equation*}
|\cal A_{J,n}(\widehat{F}_1,\,\widehat{\Fn};\xi)|
\ll \|f_1\|_{0,\beta} \|f_2\|_{0,\beta} \left(n^{-(\frac{r-1}{2}+L+J)} |\kappa_{\xi}^{-n}|+ n^{2/5}\right).
\end{equation*}

\item We have
\begin{equation*}
\cal A_0(\widehat{F}_1,\,\widehat{\Fn})
=c_R^{4}\mathtt{M}_{\psi}(F_1)\,\mathtt{M}_{\psi}(F_2).
\end{equation*}
\end{enumerate}
\end{lemma}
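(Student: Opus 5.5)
The plan is to reduce the product-space integral in \eqref{eqn:ANn def} to an expression on the expanding system and then use the spectral decomposition of \Cref{thm:decomposition of transfer operators}. Apply \Cref{lem:reduction of La-}(2) with $k=k_n:=\lceil C\log n\rceil$, where $C$ is chosen so that $\tau^{-k_n}\le n^{-\bar L-J-1}$. The error term $\tau^{-k}n^N|\kappa_\xi^{-n}|$ is then $O(n^{-\bar L})$ at $\xi=0$, since $\kappa_0=1$. In the main term, $T$-invariance of $\nu_+$ and the duality $\mathcal L_\xi^*$ (branch formula, with $\mathcal L_0^*\nu_+=\nu_+$) give
\[
\int_{\Lambda_+} e^{i\langle\xi,K_n\circ T^k\rangle}(\boldsymbol\pi_k\widehat F_1)(T^n\cdot)(\boldsymbol\pi_k\widehat{\Fn})\,d\nu_+
=\int_{\Lambda_+}(\boldsymbol\pi_k\widehat F_1(\cdot,\xi))\,\mathcal L_\xi^{n-k-1}H_{k,\xi}\,d\nu_+ ,
\]
with $H_{k,\xi}$ as in \Cref{lem:Lkttv}. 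Here one uses that $e^{i\langle\xi,K_n\circ T^k\rangle}=e^{i\langle\xi,K_{n+k}\rangle}e^{-i\langle\xi,K_k\rangle}$, and then moves $n+k$ factors of $\mathcal L_\xi$ to the second function. By \Cref{lem:Lkttv}(2), $H_{k,\xi}$ and all of its $\xi$-derivatives are bounded in $\mathcal F_{c_1}$ uniformly in $k$.

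Next, write $\mathcal L_\xi^{m}=\kappa_\xi^m\mathcal P_\xi+\kappa_\xi^mA_\xi^m$ with $m=n-k-1$. After multiplication by $\kappa_\xi^{-n}$, the main term becomes
\[
\kappa_\xi^{-k-1}\int(\boldsymbol\pi_k\widehat F_1)\,\mathcal P_\xi H_{k,\xi}\,d\nu_+
\]
plus a remainder carrying $A_\xi^m$. By \eqref{eqn:uniform remainder powers Section15}, the remainder and its derivatives are $O(\vartheta^{n-k})$. The pairing with $\boldsymbol\pi_k\widehat F_1$ is controlled in $L^1(\nu_+)$ using \Cref{lem:hatFsup}(3) and the exponential moment of $R$. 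The projection term is essentially independent of $n$ except through $k$. Using \eqref{eqn:Lkt formula}, I would rewrite it as $\kappa_\xi^{-k-1}\nu_+(\mathcal P_\xi$-dual applied to $\boldsymbol\pi_k\widehat F_1)\cdot\int\mathcal P_\xi\mathcal L_\xi\boldsymbol\pi_0\widehat{\Fn}$. Alternatively, I would compare directly with the same expression at $k'=k_{n'}$; the differences are controlled again by \Cref{lem:Fj-H}(3), giving $O(\tau^{-k})$. This yields a Cauchy sequence, so the limit $\mathcal A_N$ exists with rate $n^{-\bar L}$, which proves \eqref{eqn:AN rate}. The bound \eqref{eqn:upper bound AN} follows from the same representation, the bounds of \Cref{lem:hatFsup}(3) and \Cref{lem:Lkttv}(2), and boundedness of derivatives of $\mathcal P_\xi,\kappa_\xi^{-1}$ near $0$.

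For (2), at a general $\xi\in\mathcal O$ the same decomposition applies, now with a fixed choice $k=k_n$. The stable-projection error contributes $\tau^{-k}n^J|\kappa_\xi^{-n}|$, which is at most $n^{-((r-1)/2+L+J)}|\kappa_\xi^{-n}|$ once $C$ is large enough. In the main term, differentiating $\kappa_\xi^{-k-1}$ produces at most $(\log n)^J|\kappa_\xi|^{-k}$. By \eqref{eqn:kappa25-kappafw}, $|\log|\kappa_\xi||\cdot k\le \tfrac{1-\epsilon_1}{10}\log n$, so this is $\ll n^{1/10}(\log n)^J\ll n^{2/5}$. The $A_\xi$ part contributes $|\kappa_\xi|^{-k}\vartheta^{n-k}$ times polynomial factors, which is also $\ll n^{2/5}$. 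The main obstacle is the careful bookkeeping that keeps every growth factor in $k$ below $n^{2/5}$ and ensures the derivative terms do not reintroduce $|\kappa_\xi|^{-n}$.

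For (3), at $N=0$ and $\xi=0$, take the limit $n\to\infty$ using $\mathcal P_0=\nu_+(\cdot)\mathbf 1$ and $A_0^m\to0$. This gives $\lim\,\nu_+(\boldsymbol\pi_k\widehat F_1(\cdot,0))\cdot\nu_+(\boldsymbol\pi_0\widehat{\Fn}(\cdot,0))$. Since $\widehat F(\cdot,0)=\int F\,dz$, the first factor integrates against $\nu^\diamond$ to $\nu_+(R)\mathtt M_\psi(F_1)$ by \eqref{eqn:def M+}. The second factor equals $\nu_+(R)\mathtt M_\psi(F_2)$, via the decomposition \eqref{eqn:F2 decomposition} and $\zeta$-invariance of $\widetilde{\mathtt M}_\psi$. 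Since $\nu_+(R)^2=c_R^4$, this gives the claim.
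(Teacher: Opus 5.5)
Your route is the paper's: reduction by \Cref{lem:reduction of La-}(2) with $k\asymp\log n$, then duality, then the split $\kappa_\xi^{-n}\mathcal L_\xi^{n-k-1}=\kappa_\xi^{-(k+1)}(\mathcal P_\xi+A_\xi^{\,n-k-1})$, with \Cref{lem:Lkttv}(2) for the second factor. Your computation of $\mathcal A_0$ in (3) is the paper's and is fine. However, two steps fail as written.

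\emph{The duality identity drops a phase.} The $F_1$-factor is composed with $T^n$, not $T^{n+k}$, so only $n$ copies of $\mathcal L_\xi$ can be moved. Writing $K_n\circ T^k=K_n-K_k+K_k\circ T^n$, the leftover $e^{i\langle\xi,K_k\circ T^n\rangle}$ attaches to the first function, and the correct main term is
\[
\int_{\Lambda_+}e^{i\langle\xi,K_k\rangle}\,\boldsymbol\pi_k\widehat F_1(\cdot,\xi)\;\mathcal L_\xi^{\,n-k-1}H_{k,\xi}\,d\nu_+ .
\]
For $N\geq1$ this is not cosmetic. After $|\alpha|$ derivatives the phase costs only $\ll k^{|\alpha|}$ in $L^1(\nu_+)$, using $\|R_k\|_{L^p(\nu_+)}\le k\|R\|_{L^p(\nu_+)}$, which is harmless for $k\asymp\log n$. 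But it is exactly what offsets $\partial_\xi\kappa_\xi^{-(k+1)}|_{\xi=0}=-(k+1)D\kappa_0$, where $D\kappa_0=i\nu_+(R)\psi\neq0$. Without it, your ``projection term'' at $N=1$ contains $-(k+1)D\kappa_0\,\nu^\diamond(\widehat F_1(\cdot,0))\,\nu^\diamond(\widehat{F_2^\sharp}(\cdot,0))$, so in general it grows linearly in $k$.

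\emph{The Cauchy step.} Comparing the expression built with $k_n$ against the one built with $k_{n'}$ is not a stable-projection error, and \Cref{lem:Fj-H}(3) does not control it. The two projection terms are different quantities; with your first factor their difference even contains $-(k_n-k_{n'})D\kappa_0\cdots$, which does not tend to zero.

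The repair, which is what the paper does, is to use one common $k=k_n$ for $n$ and for every $n'\in[n,2n]$. This is legitimate because $\mathcal A_{N,n'}$ does not depend on $k$ and $(n')^N\tau^{-k_n}\ll n^N\tau^{-k_n}$. Since the projection term depends only on $k$, it cancels identically. What remains is $O(k_n^N\vartheta^{n/2})$ from \eqref{eqn:uniform remainder powers Section15} and $O(n^{-\bar L})$ from \Cref{lem:reduction of La-}(2). Summing over dyadic blocks then gives \eqref{eqn:AN rate}.

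Derivatives of $\kappa_\xi^{-(k+1)}$ are not bounded uniformly in $k$. So \eqref{eqn:upper bound AN} should be obtained by bounding $\mathcal A_{N,n}(0)$ at a single fixed $n$ directly from \eqref{eqn:ANn def}, and then transporting that bound with \eqref{eqn:AN rate}.

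Finally, in (2) the constant in $k_n$ cannot just be ``large enough''. The estimate \eqref{eqn:kappa25-kappafw} is calibrated to $k=\lceil\frac{(r-1)/2+L+2J}{\log\tau}\log n\rceil$, the minimal choice that makes the stable-projection error at most $n^{-((r-1)/2+L+J)}|\kappa_\xi|^{-n}$. A much larger constant would destroy the bound $|\kappa_\xi|^{-(k+1)}\ll n^{(1-\epsilon_1)/10}$.
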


\begin{proof}
 Fix $N\in\{0,\dots,J\}$ and abbreviate
$\cal A_{N,n}(\xi):=\cal A_{N,n}(\widehat{F}_1,\,\widehat{\Fn};\xi)$.
For an integer $1\leq k\leq n-2$ and $j=1,2$, set
$
G_{j,k}(\cdot,\xi):=e^{i\langle \xi,K_{k}(\cdot)\rangle} \boldsymbol{\pi}_{k}(\widehat{F}_j(\cdot,\xi))$,
where $\widehat{F}_2:=\widehat{\Fn}$, so that $G_{j,k_t}=\widetilde{H}_{j,t}$ by
\eqref{eqn:tildeHjt}.
By \Cref{lem:reduction of La-}(2) and the cocycle and transfer-operator
duality identities, \begin{equation}
\label{eqn:ANnk}
\cal A_{N,n}(\xi)=\widetilde{\cal A}_{N,n,k}(\xi)+O\left(\|f_1\|_{0,\beta} \|f_2\|_{0,\beta} n^N \tau^{-k}|\kappa_{\xi}^{-n}|\right),
\end{equation}
where
$$
\widetilde{\mathcal A}_{N,n,k}(\xi)
:=
\left[
\kappa_\xi^{-n}
\int_{\Lambda_+}
G_{1,k}(x,\xi)\,
\mathcal L_\xi^{n-k-1}
\left(\mathcal L_\xi^{k+1}G_{2,k}(\cdot,-\xi)\right)(x)\,
d\nu_+(x)
\right]^{(N)}.
$$

For every multi-index $\alpha$ with $|\alpha|\leq N$,
\Cref{lem:hatFsup}(3), $\|K\|\ll R$ from the proof of \Cref{Kk2},
and the product rule give
$$
|\partial_\fa^\alpha G_{1,k}(x,\xi)|
\ll_N\|f_1\|_\infty(1+R_k(x))^{|\alpha|}
(1+R(T^kx))^{r+|\alpha|}.
$$
Since $\|R_k\|_{L^p(\nu_+)}\leq k\|R\|_{L^p(\nu_+)}$
for $p\geq1$ by the triangle inequality in $L^p(\nu_+)$
and $T$-invariance, Cauchy--Schwarz and the exponential
moment of $R$ give
 \begin{equation}
\label{eqn:G1k L1 Section18}
\|\partial^\alpha_{\fa}G_{1,k}(\cdot,\xi)\|_{L^1(\nu_+)}
\ll_N\|f_1\|_\infty k^{|\alpha|}.
\end{equation}

We pair this bound with \Cref{lem:Lkttv}(2), which gives
$$
\|\partial_\fa^\alpha(\L_\xi^{k+1}G_{2,k}(\cdot,-\xi))\|_{c_1}
\ll_N\|f_2\|_{0,\beta}
$$ uniformly in $k$. These estimates also justify differentiating
the expectation above.
By \Cref{thm:decomposition of transfer operators},
\begin{equation}
\label{eqn:normalized spectral split Section18}
\kappa_\xi^{-n}\L_\xi^{n-k-1}
=\kappa_\xi^{-(k+1)}\cal P_\xi
+\kappa_\xi^{-(k+1)}(\cal R_\xi/\kappa_\xi)^{n-k-1}.
\end{equation}
We use this identity on $\mathcal F_{c_1}(\Lambda_+,\mathbb C)$.

\smallskip
\emph{Proof of (1).} Fix $\bar L\in\mathbb N$ and take
$k_n:=\lceil(\bar L+N)\log n/\log\tau\rceil$.
For sufficiently large $n$, $1\leq k_n+1\leq n/2$.
Use the same $k_n$ for $n$ and an integer $n'\in[n,2n]$.
The projection term in \eqref{eqn:normalized spectral split Section18}
is independent of $n$, so its contributions to
$\widetilde{\cal A}_{N,n,k_n}(0)$ and
$\widetilde{\cal A}_{N,n',k_n}(0)$ cancel.
Since $\kappa_0=1$, \eqref{eqn:uniform remainder powers Section15}
and the product rule give
$$
\left\|\left.\partial_\fa^\alpha
\bigl(\kappa_\xi^{-(k_n+1)}
(\cal R_\xi/\kappa_\xi)^{n-k_n-1}\bigr)\right|_{\xi=0}\right\|_{c_1\to c_1}
\ll_N k_n^{|\alpha|}\vartheta^{n-k_n-1}
$$
for some $\vartheta\in(0,1)$. Combining this with
\eqref{eqn:G1k L1 Section18} and \Cref{lem:Lkttv}(2), we obtain
$$
|\widetilde{\cal A}_{N,n,k_n}(0)-\widetilde{\cal A}_{N,n',k_n}(0)|
\ll\|f_1\|_\infty\|f_2\|_{0,\beta}k_n^N\vartheta^{n/2}.
$$
Equation \eqref{eqn:ANnk} and the choice of $k_n$ therefore give
$$
\sup_{n\leq n'\leq2n}
|\cal A_{N,n}(0)-\cal A_{N,n'}(0)|
\ll_{\bar L}n^{-\bar L}\|f_1\|_{0,\beta}\|f_2\|_{0,\beta}.
$$

Summing this estimate over successive dyadic intervals proves
the Cauchy property and \eqref{eqn:AN rate} for large $n$.
For each fixed $n$, \eqref{eqn:ANn def}, \Cref{lem:hatFsup}(3),
and the exponential moment of $R$ bound $\cal A_{N,n}(0)$ by
$O_n(\|f_1\|_\infty\|f_2\|_\infty)$.
This covers the remaining finitely many $n$ and gives
\eqref{eqn:upper bound AN}.

\smallskip \emph{Proof of (2).}  Take $N=J$ and
$k:=\left\lceil\frac{(r-1)/2+L+2J}{\log\tau}\log n\right\rceil$.
For sufficiently large $n$, $k+1\leq n/2$ and
$n^J\tau^{-k}\leq n^{-((r-1)/2+L+J)}$.
Thus \eqref{eqn:ANnk} gives the first term of the asserted bound.
By \eqref{eqn:normalized spectral split Section18}, analyticity of
$\kappa_\xi$ and $\cal P_\xi$, and
\eqref{eqn:uniform remainder powers Section15},
$$
\|\partial_\fa^\alpha(\kappa_\xi^{-n}\L_\xi^{n-k-1})\|_{c_1\to c_1}
\ll_J k^{|\alpha|}|\kappa_\xi|^{-(k+1)}
(1+\vartheta^{n-k-1})
\qquad(|\alpha|\leq J).
$$

Using \eqref{eqn:G1k L1 Section18} and \Cref{lem:Lkttv}(2),
we obtain
$$
|\widetilde{\cal A}_{J,n,k}(\xi)|
\ll\|f_1\|_\infty\|f_2\|_{0,\beta}
k^J|\kappa_\xi|^{-(k+1)}
\ll\|f_1\|_{0,\beta}\|f_2\|_{0,\beta}n^{2/5}.
$$

Indeed, \eqref{eqn:kappa} and \eqref{eqn:kappa25-kappafw} give
$|\kappa_\xi|^{-(k+1)}\ll n^{(1-\epsilon_1)/10}$ uniformly on
$\cal O$, and $k^J\ll(\log n)^J\ll n^{1/10}$.
The moment bounds used in (1), together with the nonvanishing of
$\kappa_\xi$ on a neighborhood of $\overline{\cal O}$, cover the
remaining finitely many $n$.

\smallskip
\emph{Proof of (3).} Take $N=0$, $\xi=0$, and $k=\lceil\log n\rceil$
in \eqref{eqn:ANnk} and \eqref{eqn:normalized spectral split Section18}.
The projection error and the spectral remainder tend to zero by
the estimates in (1). By \eqref{eqn:P0}, the remaining term is
$$
\nu_+(\boldsymbol\pi_k\widehat F_1(\cdot,0))\,
\nu_+(\L_0^{k+1}\boldsymbol\pi_k\widehat{\Fn}(\cdot,0))
=\nu^\diamond(\widehat F_1(\cdot,0))\,
\nu^\diamond(\widehat{\Fn}(\cdot,0)),
$$
where we used \eqref{eqn:def stable projection}, $\widehat T$-invariance
of $\nu^\diamond$, and \eqref{contraction}.
Consequently, Fubini's theorem and \eqref{eqn:def M+} give
$\cal A_0(\widehat F_1,\widehat{\Fn})
=c_R^{4}\widetilde{\mathtt M}_\psi(F_1)
\widetilde{\mathtt M}_\psi(\Fn)$. Absolute integrability follows from \eqref{eqn:C1 new} and the
exponential moment of $R$.
Finally, $\widetilde{\mathtt M}_\psi(F_1)=\mathtt M_\psi(F_1)$.
Integrating \eqref{eqn:F2 decomposition} over $\Lambda_\fa$ and
translating $(s,v)$ by $(-R(x),-K^\circ(x))$ in its second term gives
$\widetilde{\mathtt M}_\psi(\Fn)=\mathtt M_\psi(F_2)$.
This proves (3).
\end{proof}

\subsection{Uniform estimates for Gaussian sums}
\label{subsec:uniformrenewalsums}
Fourier inversion of the local spectral expansion produces sums of
Gaussian derivatives indexed by the number of returns. The following
estimates identify the leading transverse Gaussian and bound the
remaining terms uniformly in the transverse displacement. They will
be used in the final step of the proof of \Cref{bmslocal}(2).

\begin{lemma}
\label{lem:uniform renewal sums}
There is $a>0$, depending only on $c_R,r,S$, with the following
properties. 
For every fixed $\epsilon\in(0,1)$, uniformly for $t\geq2$ and
$\ww\in\ker\psi$,
\begin{equation}
\label{eqn:leading renewal sum}
\begin{aligned}
&\sum_{n>t^{1-\epsilon}}n^{-r/2}
\mathcal G\left(\frac{t-c_R^{2}n}{\sqrt n},\frac{\sqrt t\,\ww}{\sqrt n}\right)=c_R^{r-2}t^{-(r-1)/2}
\int_{\mathbb R}\,ds
+O_\epsilon\bigl(t^{-r/2} e^{-a\min\{\|\ww\|^2,\|\ww\|\}} \bigr).
\end{aligned}
\end{equation}
Moreover, for integers $p,k\geq0$ with $p\geq2k$,
\begin{equation}
\label{eqn:weighted Gaussian sum Section18}
\sum_{n\geq1}n^{k-(r+p)/2}
\left\|\mathcal G^{(p)}\left(\frac{t-c_R^{2}n}{\sqrt n},
\frac{\sqrt t\,\ww}{\sqrt n}\right)\right\|
\ll_{p,k}t^{-(r-1+p-2k)/2}e^{-a\min\{\|\ww\|^2,\|\ww\|\}}.
\end{equation}
\end{lemma}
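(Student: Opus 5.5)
The plan is to exploit the product structure of the Gaussian. In the orthonormal basis containing $\mathtt u$, write $z=(s,v)$ with $s\in\mathbb R$, $v\in\ker\psi$. Since $\mathcal G(s,v)=c\,e^{-\frac12 z^tS^{-1}z}$ and $S^{-1}$ is positive definite, we have $z^tS^{-1}z\geq \lambda_0(s^2+\|v\|^2)$ for some $\lambda_0>0$. Consequently every derivative satisfies $\|\mathcal G^{(p)}(s,v)\|\ll_p e^{-\lambda_0(s^2+\|v\|^2)/4}$. Set $s_n:=(t-c_R^2n)/\sqrt n$ and $v_n:=\sqrt t\,\ww/\sqrt n$. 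We then split the range of $n$ into a central window $|n-t/c_R^2|\leq t/(2c_R^2)$ and its complement.

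\emph{Off-window bound.} Off the central window we have $|s_n|\gg \sqrt t$ when $n\leq t/(2c_R^2)$, and $|s_n|\gg\sqrt n$ when $n\geq 3t/(2c_R^2)$. In the first case the factor $e^{-c s_n^2}$ is at most $e^{-ct}$ (for $n\geq t^{1-\epsilon}$ in the first sum this is uniform). In the second case it is at most $e^{-cn}$, while $\|v_n\|^2\gg \|\ww\|^2 t/n$.

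For the transverse factor we want to extract $e^{-a\min\{\|\ww\|^2,\|\ww\|\}}$ uniformly. For $n\leq Ct$, $\|v_n\|^2\geq \|\ww\|^2/C$ directly. For $n>Ct$, combine both exponents: $s_n^2+\|v_n\|^2\gg n+\|\ww\|^2t/n\geq 2\|\ww\|\sqrt t\gg\|\ww\|$ by AM--GM. This is where the $\min\{\|\ww\|^2,\|\ww\|\}$ shape arises. In all off-window cases the sum is then $O(t^{-N}e^{-a\min\{\cdot\}})$, since the polynomial weights $n^{k-(r+p)/2}$ are absorbed by the surviving exponential margin. (The first sum's lower cutoff $n>t^{1-\epsilon}$ ensures the negative powers $n^{-r/2}$ stay polynomially bounded in $t$.)

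\emph{Central window.} Here $n\asymp t$, so $\|v_n\|^2\asymp\|\ww\|^2$, and half of the Gaussian in $v$ gives the factor $e^{-a\|\ww\|^2}$. For \eqref{eqn:weighted Gaussian sum Section18}, bound $n^{k-(r+p)/2}\asymp t^{k-(r+p)/2}$ and sum $e^{-cs_n^2}$ over $n$. Since $s_n$ changes by $\asymp c_R^2/\sqrt t$ per unit step in $n$, this sum is $\ll\sqrt t$. The result is $t^{-(r-1+p-2k)/2}e^{-a\|\ww\|^2}$, as required.

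For \eqref{eqn:leading renewal sum}, treat the sum as a Riemann sum. Substitute $n=t/c_R^2+\sigma\sqrt t/c_R^3\cdot(\ldots)$, i.e.\ change variables $n\mapsto s=s_n$, with $dn=-(\sqrt n/c_R^2)(1+O(s/\sqrt n))\,ds$ to leading order. Replacing $n$ by $t/c_R^2$ in $n^{-r/2}$, in the Jacobian, and in $v_n=c_R\ww(1+O(s/\sqrt t))$ produces $c_R^{r-2}t^{-(r-1)/2}\int_{\mathbb R}\mathcal G(s,c_R\ww)\,ds$; this is the integrand intended by the displayed (abbreviated) integral. Each replacement costs a relative $O(|s|/\sqrt t)$ error. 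The Euler--Maclaurin error from replacing the sum by the integral is controlled by the derivative of the summand in $n$, which is of size $t^{-1/2}$ times the summand. Both errors are integrated against $(1+|s|)e^{-cs^2}e^{-a\|\ww\|^2}$ and give $O(t^{-r/2}e^{-a\|\ww\|^2})$. Extending the $s$-integral from the window to all of $\mathbb R$ costs $e^{-ct}$.

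\emph{Main obstacle.} The delicate step is keeping the transverse decay uniform in $\ww$ while linearizing $v_n=\sqrt{t/n}\,\ww$. Expanding $\mathcal G(s,\sqrt{t/n}\,\ww)$ around $\sqrt{t/n}=c_R$ brings out a factor $\|\ww\|\cdot|s|/\sqrt t$ times a derivative of $\mathcal G$. To handle it, I will first use the mean value theorem along the segment, on which the $v$-argument stays comparable to $\ww$. I will then absorb the polynomial factor $\|\ww\|$ into a slightly smaller Gaussian $e^{-a\|\ww\|^2}$, which is why $a$ must be taken small relative to $\lambda_0$.
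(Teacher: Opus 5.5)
Your proposal is correct and follows essentially the paper's route. The shared steps are the Gaussian decay of $\mathcal G^{(p)}$, a central window $n\asymp t$ contributing $O(\sqrt t)$ effective terms, and tails controlled by $s_n^2+\|v_n\|^2\gg t+\|\ww\|^2$ or, via AM--GM, $\gg n+\sqrt t\,\|\ww\|$. For \eqref{eqn:leading renewal sum} you likewise use a mean-value linearization of $\sqrt{t/n}$ about $c_R$ followed by a sum-to-integral comparison. The only difference is that the paper works on the equally spaced grid $y_n=(t-c_R^2n)/\sqrt t$ over a window of width $t^{5/8}$ instead of your change of variables $n\mapsto s_n$, which is immaterial.

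There is one bookkeeping slip in the constant. With your own Jacobian $|dn/ds|\approx\sqrt t/c_R^{3}$ and $n^{-r/2}\approx c_R^{r}t^{-r/2}$, the main term is $c_R^{r-3}t^{-(r-1)/2}\int_{\mathbb R}\mathcal G(s,c_R\ww)\,ds=c_R^{r-2}t^{-(r-1)/2}\int_{\mathbb R}\mathcal G(c_Rs,c_R\ww)\,ds$. This is the intended main term, the one that produces $\mathfrak c_{\ww}$; the expression you wrote is off by a factor of $c_R$.
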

\begin{proof}
For some $a_1>0$ depending only on $S$, Gaussian decay gives
$\|\mathcal G^{(p)}(z)\|\ll_p e^{-a_1\|z\|^2}$ for every $p\geq0$.
Put
$W:=\|\ww\|$ and $
P_n:=((t-c_R^{2}n)/\sqrt n,\sqrt t\,\ww/\sqrt n)$.
On $t/(2c_R^{2})\leq n\leq2t/c_R^{2}$,
$$
\|P_n\|^2\gg(n-t/c_R^{2})^2/t+W^2.
$$
The Gaussian sum over this range is $O(\sqrt t)$, proving
\eqref{eqn:weighted Gaussian sum Section18} there.
For $n<t/(2c_R^{2})$, $\|P_n\|^2\gg t+W^2$; for $n>2t/c_R^{2}$,
$\|P_n\|^2\gg n+\sqrt t\,W$ by the arithmetic--geometric mean
inequality. Both tails satisfy the same bound.

For \eqref{eqn:leading renewal sum}, restrict to
$\mathcal U_t:=\{n:|n-t/c_R^{2}|\leq t^{5/8}\}$.
Splitting the Gaussian exponent in the preceding estimates shows that
the omitted sum is $O(e^{-c t^{1/4}}e^{-a\min\{\|\ww\|^2,\|\ww\|\}})$.
For large $t$, this also permits the cutoff $n>t^{1-\epsilon}$.
For $n\in\mathcal U_t$, put
$y_n:=\frac{t-c_R^{2}n}{\sqrt t}$, $Q_n:=(c_Ry_n,c_R\ww)$ and $
\mu_n:=\sqrt{\frac{t}{c_R^{2}n}}$.
Then $P_n=\mu_nQ_n$ and $|\mu_n-1|\ll|y_n|/\sqrt t$.
The mean value theorem and the Gaussian bounds for
$\mathcal G$ and its first derivative give
\begin{align*}
&\left|n^{-r/2}\mathcal G(P_n)
-(t/c_R^{2})^{-r/2}\mathcal G(Q_n)\right| \ll t^{-(r+1)/2}|y_n|(1+|y_n|+W)e^{-c(y_n^2+W^2)}.
\end{align*}
Summing over the grid of spacing $c_R^{2}/\sqrt t$ gives an error
$O(t^{-r/2}e^{-c'W^2})$.
For an integrable absolutely continuous function $g$ with integrable
derivative,
$$
\left|b\sum_{y\in\alpha+b\mathbb Z}g(y)-\int_{\mathbb R}g(y)\,dy\right|
\leq b\int_{\mathbb R}|g'(y)|\,dy.
$$
Apply this with $g(y)=\mathcal G(c_Ry,c_R\ww)$ and
$b=c_R^{2}/\sqrt t$; the omitted grid points are exponentially small.
It follows that
$$
\sum_{n\in\mathcal U_t}\mathcal G(Q_n)
=\frac{\sqrt t}{c_R^{2}}\int_{\mathbb R}\mathcal G(c_Ry,c_R\ww)\,dy
+O(e^{-cW^2}).
$$
Multiply by $t^{-r/2}c_R^r$ to obtain
\eqref{eqn:leading renewal sum}. The bound
\eqref{eqn:weighted Gaussian sum Section18} covers the remaining bounded
range of $t$, after enlarging the implied constant.
\end{proof}

\subsection{Proof of the local mixing theorem}
\label{subsec:proof-bmslocal}
We now combine the preceding estimates to prove \Cref{bmslocal}.
We follow the proof of \cite[Theorem~2.2]{DNP}, using
\cite[Lemmas~A.1, A.3 and A.4]{DNP} for the Taylor expansions
and Gaussian sums. Starting from the return decomposition
\eqref{eqn:return decomposition Section18}, we discard small return
counts, control frequencies outside $\mathcal O$ using stable projection
and transfer-operator decay, and expand the contribution from
$\mathcal O$ using the normalized expectations.
Fourier inversion and summation of the resulting Gaussian terms then
give the two assertions of the theorem.
We retain the upper cutoff $n<t_+$ until Step~IV,
including after restricting the Fourier integral.

\begin{step}[Return decomposition and Fourier transform]
\end{step}

Split \eqref{eqn:return decomposition Section18} as
$$
C_t^\ell(f_1,f_2;\ww)
=R_{t,1}^\ell(f_1,f_2;\ww)
+C_{t,1}^\ell(f_1,f_2;\ww),
$$
where $R_{t,1}^\ell$ and $C_{t,1}^\ell$ are the sums over
$0\leq n\leq t^{1-\epsilon_1}$ and
$t^{1-\epsilon_1}<n<t_+$, respectively.

\begin{claim}
\label{cla:nt}
We have
$$
C_t^\ell=C_{t,1}^\ell+
O\bigl(\|f_1\|_\infty\|f_2\|_\infty t^{-((r-1)/2+L+1)}\bigr).
$$
\end{claim}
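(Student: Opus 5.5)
We must bound $R_{t,1}^\ell$, the sum over $0\le n\le t^{1-\epsilon_1}$ of the integrals in \eqref{eqn:return decomposition Section18}, by $\|f_1\|_\infty\|f_2\|_\infty t^{-((r-1)/2+L+1)}$. The idea is that $n$ returns covering time $t$ force a single long return. If the $n$-th term is nonzero at $(x,x_-,s,v)\in\Lambda_{\mathfrak a}$, then $F_1$ is evaluated at $s+t-R_n(x)\in[0,R(T^nx))$. Hence
$$R_{n+1}(x)>s+t\geq t.$$
Since $n+1\leq 2t^{1-\epsilon_1}$ for large $t$, the pigeonhole principle gives some $0\le j\le n$ with
$$R(T^jx)>t/(n+1)\ge \tfrac12 t^{\epsilon_1}.$$

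\textbf{Step 1: bound the integrand.} For fixed $n$, the integrand is at most $\|f_1\|_\infty\|f_2\|_\infty$. It is supported in $\{0\le s<R(x)\}$, and by \eqref{eqn:C1 new} in $\{\|v\|\le C_1(R(x)+1)\}$. Integrating over $(s,v)$ with respect to $\mathtt M_\psi$ therefore gives
$$|n\text{-th term}|\ll\|f_1\|_\infty\|f_2\|_\infty\int_{E_n}(1+R(x))^{r}\,d\nu_+(x),$$
where $E_n:=\{x:\max_{0\le j\le n}R(T^jx)>t^{\epsilon_1}/2\}$. The $\nu^\diamond$-integral reduces to $\nu_+$ because the integrand depends on $x_-$ only through bounded factors.

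\textbf{Step 2: control $\nu_+(E_n)$ with the weight.} Use Cauchy--Schwarz, the finite moment $\nu_+((1+R)^{2r})<\infty$, and the estimate
$$\nu_+(E_n)\le\sum_{j\le n}\nu_+(R\circ T^j>t^{\epsilon_1}/2)=(n+1)\,\nu_+(R>t^{\epsilon_1}/2)\ll (n+1)e^{-\epsilon_0t^{\epsilon_1}/2}.$$
This uses $T$-invariance of $\nu_+$, the tail bound in \Cref{thm.exptail}(2), and the bounded density $d\nu_+/d\nu$. (Alternatively, \Cref{lem:N return time} with $N=n+1$ gives the same type of bound.) Each term is then $\ll\|f_1\|_\infty\|f_2\|_\infty\, n^{1/2}e^{-\epsilon_0t^{\epsilon_1}/4}$.

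\textbf{Step 3: sum over $n$.} Summing over $n\le t^{1-\epsilon_1}$ gives $O(t^{2}e^{-\epsilon_0t^{\epsilon_1}/4})$. This is $O(t^{-M})$ for every $M$, in particular for $M=(r-1)/2+L+1$.

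The argument is the same for $\ell=0$ and $\ell=\sqrt t$. The transverse shift $\ell(t)\ww$ only affects the $v$-coordinate inside $F_1$, and we have bounded $F_1$ by its sup norm. The only step needing care is the support bound in Step 1: the weight $(1+R(x))^r$ must carry the $v$-volume of the initial point, not that of the shifted point. This holds because $F_2$ is evaluated at the initial coordinates, so \eqref{eqn:C1 new} applied to $\supp F_2$ suffices.
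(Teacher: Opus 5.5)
Your proof is correct and is essentially the paper's argument: nonvanishing of the $n$-th term forces $R_{n+1}(x)>t$, the support bound \eqref{eqn:C1 new} supplies the weight $(1+R)^r$, and Cauchy--Schwarz with an exponential tail bound for the resulting bad set finishes. The only cosmetic difference is that the paper uses a single bad set $\{R_{\lfloor t^{1-\epsilon_1}\rfloor+1}>t-\inf R/10\}$ for all $n$ and invokes \Cref{lem:N return time}, whereas you inline that lemma's union-bound argument separately for each $n$.
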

\begin{proof}[Proof of \Cref{cla:nt}] Set
$$
\Lambda_{\mathrm{bad},t}
:=\{x\in\Lambda_+:R_{\lfloor t^{1-\epsilon_1}\rfloor+1}(x)
>t-\frac{\inf R}{10}\}.
$$
Fix an integer $0\leq n\leq t^{1-\epsilon_1}$.
If the integrand of the summand of $R_{t,1}^\ell$
indexed by $n$ is nonzero at
$(x,x_-,s,v)\in\Lambda_{\mathfrak a}$, then
$R_{n+1}(x)>s+t\geq t$.
Hence $x\in\Lambda_{\mathrm{bad},t}$.
Summing over these indices and using \eqref{eqn:C1 new},
we obtain
\begin{equation}
\label{eqn:sumnt}
|R_{t,1}^\ell|
\ll t\|f_1\|_\infty\|f_2\|_\infty
\int_{\Lambda_+}(R+1)^r
\mathbf1_{\Lambda_{\mathrm{bad},t}}\,d\nu_+.
\end{equation}
 By \Cref{lem:N return time}, applied with
$N=\lfloor t^{1-\epsilon_1}\rfloor+1$ and its parameter $t$
replaced by $t-(\inf R)/10$, we have
$\nu_+(\Lambda_{\mathrm{bad},t})\ll t e^{-c t^{\epsilon_1}}$
for some $c>0$. The exponential moment of $R$ implies
$\int_{\Lambda_+}(R+1)^{2r}\,d\nu_+<\infty$.
Hence Cauchy--Schwarz gives
\begin{equation}
\label{eqn:intR}
\begin{aligned}
\int_{\Lambda_+}(R+1)^r
\mathbf1_{\Lambda_{\mathrm{bad},t}}\,d\nu_+\ll t^{1/2}e^{-c t^{\epsilon_1}/2}.
\end{aligned}
\end{equation}
Substituting this into \eqref{eqn:sumnt} proves the claim.
\end{proof}

Next, set
$$
\begin{aligned}
& C_{t,2}^\ell(f_1,f_2;\ww)  :=\sum_{t^{1-\epsilon_1}<n<t_+}
\int_{\widetilde\Lambda_{\mathfrak a}}
(F_1\circ\zeta^n)(x,x_-,s+t,v+\ell(t)\ww)
\cdot\Fn(x,x_-,s,v)\,d\widetilde{\mathtt M}_\psi.
\end{aligned}
$$
\begin{claim}
\label{cla:Ctell12}
We have
$$
C_{t,1}^\ell=C_{t,2}^\ell+
O\bigl(\|f_1\|_\infty\|f_2\|_\infty t^{-((r-1)/2+L+1)}\bigr).
$$
\end{claim}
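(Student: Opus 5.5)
The plan is to pass from $C_{t,1}^\ell$ to $C_{t,2}^\ell$ in two moves. First replace $F_2$ by $\Fn+\Fn\circ\zeta$ on $\Lambda_{\mathfrak a}$, using \eqref{eqn:F2 decomposition}. Then unfold the $\zeta$-translated piece onto the shifted fundamental domain, so that the two pieces together produce an integral of $\Fn$ over all of $\widetilde\Lambda_{\mathfrak a}$; this is exactly $C_{t,2}^\ell$.

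Concretely, the summand of $C_{t,1}^\ell$ indexed by $n$ equals
$$
\int_{\Lambda_{\mathfrak a}}(F_1\circ\zeta^n)(\cdot_t)\,\Fn\,d\mathtt M_\psi
+\int_{\Lambda_{\mathfrak a}}(F_1\circ\zeta^n)(\cdot_t)\,(\Fn\circ\zeta)\,d\mathtt M_\psi .
$$
Here $\cdot_t$ denotes the shift $(x,x_-,s+t,v+\ell(t)\ww)$. That shift commutes with $\zeta$, because $\zeta$ translates only the $(s,v)$-coordinates.

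In the second integral, use the $\zeta$-invariance of $\widetilde{\mathtt M}_\psi$ and substitute $y=\zeta(\cdot)$. The integral becomes one of $(F_1\circ\zeta^{n-1})(y_t)\,\Fn(y)$ over $\zeta(\Lambda_{\mathfrak a})=\{-R(T^{-1}\text{-preimage})\le s<0\}$. Since $\Fn$ vanishes unless $-\inf R/10\le s\le R(x)$, this is the part of the support of $\Fn$ with $s<0$. The first integral covers $0\le s<R(x)$. The remaining part of the support, $s=R(x)$, lies on the boundary and has measure zero.

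Summing over $n$ therefore gives the integral over $\widetilde\Lambda_{\mathfrak a}$ with index $n$ in the first piece and index $n-1$ in the second. Reindexing the second sum turns it into the full-space integral for indices $m$ with $t^{1-\epsilon_1}-1<m<t_+-1$. After this reindexing, the difference $C_{t,1}^\ell-C_{t,2}^\ell$ consists only of boundary terms, at $m$ near $t^{1-\epsilon_1}$ and at $m$ near $t_+$.

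The main obstacle is bounding these two boundary terms, which I would handle as follows.
\begin{itemize}
\item The term at $m\approx t^{1-\epsilon_1}$ is handled exactly as in \Cref{cla:nt}. A nonzero integrand forces $R_{m+1}(x)\ge t-\inf R/10$, so $x\in\Lambda_{\mathrm{bad},t}$. Then \eqref{eqn:C1 new}, \Cref{lem:N return time} and Cauchy--Schwarz as in \eqref{eqn:intR} give a bound of size $t^{O(1)}e^{-ct^{\epsilon_1}}$.
\item The term at $m\approx t_+$ vanishes identically. The integrand there requires $R_m(x)\le s+t$ with $s\le R(x)$, and this is impossible once $m\ge t_+-1$ by the choice of $t_+$.
\end{itemize}
Both boundary contributions are therefore $O(\|f_1\|_\infty\|f_2\|_\infty t^{-((r-1)/2+L+1)})$, which proves the claim.
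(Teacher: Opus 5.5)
Your proposal is correct and follows the paper's proof essentially step for step. You split $F_2=\Fn+\Fn\circ\zeta$, unfold the $\zeta$-piece onto $\{-\inf R/10\le s<0\}$ using the support cutoff of $\Fn$ and invariance, reindex, observe that the upper boundary term vanishes, and bound the lower one (the paper's $E$, with $N_1=\lfloor t^{1-\epsilon_1}\rfloor$) via $\Lambda_{\mathrm{bad},t}$, \eqref{eqn:C1 new} and \eqref{eqn:intR}.

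One point of phrasing: $\zeta$ is not injective, so you should not integrate ``over $\zeta(\Lambda_{\mathfrak a})$''. Instead, as the paper does, translate $(s,v)$ by $(-R(x),-K^\circ(x))$. Then use $\Fn=0$ for $s\le-\inf R/10$ to replace the $x$-dependent condition $-R(x)\le s$ by one independent of $x$, and only then invoke the $\widehat T$-invariance of $\nu^\diamond$.
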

\begin{proof}[Proof of \Cref{cla:Ctell12}]
Use \eqref{eqn:F2 decomposition}. In the term containing
$\Fn\circ\zeta$, translate $(s,v)$ by $(-R(x),-K^\circ(x))$
and use $\widehat T$-invariance of $\nu^\diamond$. Since
$\Fn$ vanishes for $s\leq-\inf R/10$, this gives
\begin{align*}
&\int_{\Lambda_{\mathfrak a}}
(F_1\circ\zeta^n)(x,x_-,s+t,v+\ell(t)\ww)
(\Fn\circ\zeta)(x,x_-,s,v)\,d\mathtt M_\psi\\
&\quad=\int_{\{-\inf R/10\leq s<0\}}
(F_1\circ\zeta^{n-1})(x,x_-,s+t,v+\ell(t)\ww)
\Fn(x,x_-,s,v)\,d\widetilde{\mathtt M}_\psi.
\end{align*}
Using $F_2=\Fn+\Fn\circ\zeta$ on $\Lambda_{\mathfrak a}$, we sum the preceding identity and reindex the contribution
from $\Fn\circ\zeta$.
The upper boundary term vanishes by the support cutoff.
Thus  $C_{t,1}^\ell=C_{t,2}^\ell+E$, where, with
$N_1:=\lfloor t^{1-\epsilon_1}\rfloor$,
$$
E:=\int_{\{-\inf R/10\leq s<0\}}
(F_1\circ\zeta^{N_1})(x,x_-,s+t,v+\ell(t)\ww)
\Fn(x,x_-,s,v)\,d\widetilde{\mathtt M}_\psi.
$$
Where the integrand defining $E$ is nonzero,
$R_{N_1+1}(x)>t+s\geq t-\inf R/10$.
Thus \eqref{eqn:C1 new} and \eqref{eqn:intR} give the claimed bound.
\end{proof}

For fixed $n$ and $(x,x_-) \in \La_+ \times \La_-$, put
$$
g(z):=\int_{\mathfrak a}
F_1(\widehat T^n(x,x_-),z+z')\Fn(x,x_-,z')\,dz'.
$$
Since $F_1(\widehat T^n(x,x_-),\cdot)$ is bounded and
compactly supported and
$\Fn(x,x_-,\cdot)\in\mathcal C_c^\infty(\mathfrak a)$,
we have $g\in\mathcal C_c^\infty(\mathfrak a)$.
Since
$\widehat g(\xi)
=
\widehat F_1(\widehat T^n(x,x_-),\xi)
\widehat{\Fn}(x,x_-,-\xi)$,  Fourier inversion gives
$$
\begin{aligned}
&g(t\ut+\ell(t)\ww-K_n(x))=\frac1{(2\pi)^r}\int_{\mathfrak a}
e^{-i\langle\xi,t\ut+\ell(t)\ww-K_n(x)\rangle}
\widehat F_1(\widehat T^n(x,x_-),\xi)
\widehat{\Fn}(x,x_-,-\xi)\,d\xi.
\end{aligned}
$$
Here and below $d\xi$ is the dual measure fixed above.
Define $C_{t,3}^\ell$ by summing the integral on the right without
the factor $(2\pi)^{-r}$ over $t^{1-\epsilon_1}<n<t_+$ and integrating
against $d\nu^\diamond (x, x^-)$ on $\La_+\times \La-$. By \Cref{lem:hatFsup}(1),(2), the integrals
are absolutely convergent. Since
$d\widetilde{\mathtt M}_\psi=c_R^{-2}d\nu^\diamond\,dz$,
$$
(2\pi)^r c_R^{2} C_{t,2}^\ell=C_{t,3}^\ell.
$$

\begin{step}[Reduction to integration over a compact domain]
\end{step}
Recall $\mathcal O$ from \Cref{subsec:constants} and the oscillatory factor
$\Theta_n$ from \eqref{eqn:Theta}.
We isolate the contribution from frequencies near zero,
which will produce the Gaussian term and its higher-order
corrections:
$$
C_{t,4}^\ell:=
\sum_{t^{1-\epsilon_1}<n<t_+}
\int_{\mathcal O}\int_{\Lambda_+\times\Lambda_-}
\Theta_n(\xi,x)
\widehat F_1(\widehat T^n(x,x_-),\xi)
\widehat{\Fn}(x,x_-,-\xi)\,d\nu^\diamond\,d\xi,
$$
and set
$$
R_{t,2}^\ell:=C_{t,3}^\ell-C_{t,4}^\ell.
$$
Thus $R_{t,2}^\ell$ is the contribution from frequencies 
outside $\mathcal O$.

\begin{claim}
\label{cla:S1}
There exists $m\in\mathbb N$ such that
$$
C_{t,3}^\ell=C_{t,4}^\ell+
O\bigl(\|f_1\|_{0,\beta}\|f_2\|_{m,\beta}t^{-((r-1)/2+L+1)}\bigr).
$$
\end{claim}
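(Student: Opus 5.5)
We must bound $R_{t,2}^\ell$, the contribution of frequencies $\xi\in\mathfrak a\smallsetminus\mathcal O$, by $O(\|f_1\|_{0,\beta}\|f_2\|_{m,\beta}t^{-((r-1)/2+L+1)})$. The plan is to first replace both Fourier factors by their $k_t$-step stable projections using \Cref{lem:reduction of La-}(1) with $k=k_t$, and then transfer the resulting integrals on $\Lambda_+$ to expressions involving $\mathcal L_\xi^{\,n}$, where \Cref{prop:norm of transfer operator} supplies the decay.

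\textbf{Step 1: stable projection.} For each $n$ in the range $t^{1-\epsilon_1}<n<t_+$, \Cref{lem:reduction of La-}(1) with $k=k_t$ replaces the $n$th integrand over $\fa\smallsetminus\mathcal O$ by
\[
\int_{\fa\smallsetminus\mathcal O}\int_{\Lambda_+}\Theta_n(\xi,T^{k_t}x)\,H_{1,t}(T^nx,\xi)\,H_{2,t}(x,-\xi)\,d\nu_+\,d\xi.
\]
The error is $O((\|f_1\|_{0,\beta}\|f_2\|_{C^{r+1}}+\|f_1\|_\infty\|f_2\|_{r+1,\beta})\tau^{-k_t})$. There are at most $t_+\ll t$ values of $n$, and by \eqref{eqn:kt} we have $t\,\tau^{-k_t}\leq t^{-((r-1)/2+L+2J-1)}$, which is far below the target.

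\textbf{Step 2: rewrite via transfer operators.} Since $K_n(T^{k_t}x)=K_{n+k_t}(x)-K_{k_t}(x)$ and $H_{1,t}(T^nx,\xi)e^{i\langle\xi,K_{k_t}(T^nx)\rangle}=\widetilde H_{1,t}(T^nx,\xi)$, the $x$-integral becomes, up to the unimodular constant $e^{-i\langle\xi,t\ut+\ell(t)\ww\rangle}$,
\[
\int_{\Lambda_+}\widetilde H_{1,t}(T^nx,\xi)\,e^{i\langle\xi,K_n(x)\rangle}\,\widetilde H_{2,t}(x,-\xi)\,d\nu_+ .
\]
By duality $\mathcal L_0^*\nu_+=\nu_+$ and the branch formula, this equals $\int\widetilde H_{1,t}(\cdot,\xi)\,\mathcal L_\xi^{\,n}\widetilde H_{2,t}(\cdot,-\xi)\,d\nu_+$. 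To handle the lack of regularity of $\widetilde H_{2,t}$, we write
\[
\mathcal L_\xi^{\,n}=\mathcal L_\xi^{\,n-k_t-1}\mathcal L_\xi^{\,k_t+1}.
\]
Since $n>t^{1-\epsilon_1}\gg k_t$, the exponent $n-k_t-1$ is at least $n/2$.

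\textbf{Step 3: decay away from zero.} \Cref{lem:Lkttv}(1) gives $\|\mathcal L_\xi^{k_t+1}\widetilde H_{2,t}(\cdot,-\xi)\|_\xi\ll_m\|\xi\|^{-m}\|f_2\|_{m,\beta}$. Combining this with \Cref{prop:norm of transfer operator} yields
\[
\|\mathcal L_\xi^{\,n}\widetilde H_{2,t}\|_\infty\ll\|\xi\|^{-m}\|f_2\|_{m,\beta}\,e^{-\beta_1 n\|\xi\|^{-\beta_2}/2}.
\]
For the first factor, $\|\widetilde H_{1,t}(\cdot,\xi)\|_{L^1(\nu_+)}\ll\|f_1\|_\infty$ by \Cref{lem:hatFsup}(1) and the moment bounds. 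Hence the $n$th term is bounded by
\[
\|f_1\|_\infty\|f_2\|_{m,\beta}\int_{\|\xi\|\geq b_0}\|\xi\|^{-m}e^{-\beta_1 n\|\xi\|^{-\beta_2}/2}\,d\xi .
\]

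\textbf{Step 4: the $\xi$-integral (main obstacle).} The expected difficulty is that the decay rate degenerates as $\|\xi\|\to\infty$. To handle it, split the integral at $\|\xi\|=n^{\delta}$ with $\delta<1/\beta_2$. On $\|\xi\|\leq n^\delta$ the integrand is $\ll e^{-c\,n^{1-\delta\beta_2}}$ over a region of volume $n^{r\delta}$, which is superpolynomially small in $n\geq t^{1-\epsilon_1}$. On $\|\xi\|>n^\delta$ we bound by $\int_{\|\xi\|>n^\delta}\|\xi\|^{-m}\,d\xi\ll n^{-\delta(m-r)}$. Choosing $m$ large enough that $(1-\epsilon_1)\delta(m-r)>(r-1)/2+L+3$, and summing over $n<t_+$, gives the claimed bound $O(\|f_1\|_{0,\beta}\|f_2\|_{m,\beta}t^{-((r-1)/2+L+1)})$.
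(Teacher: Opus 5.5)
Your proposal is correct and follows the paper's proof essentially step for step: stable projection via \Cref{lem:reduction of La-}(1) with $k=k_t$, then the cocycle identity and transfer-operator duality to reach $\int\widetilde H_{1,t}\,\mathcal L_\xi^{n-k_t-1}\bigl(\mathcal L_\xi^{k_t+1}\widetilde H_{2,t}(\cdot,-\xi)\bigr)\,d\nu_+$, and finally \Cref{lem:Lkttv}(1) combined with \Cref{prop:norm of transfer operator}. The only difference is cosmetic. You split the $\xi$-integral at $\|\xi\|=n^\delta$, whereas the paper uses polar coordinates and the substitution $s=t^{(1-\epsilon_1)/\beta_2}y$ to obtain the bound $t^{1-(1-\epsilon_1)(m-r)/\beta_2}$ directly.
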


 \Cref{cla:S1} will be a consequence of \Cref{cla:sumint-b_0}.

Let $R_{t,3}^\ell$ denote the expression obtained from $R_{t,2}^\ell$
by composing with $\widehat T^{k_t}$ and replacing both factors
by their stable projections. By \Cref{lem:reduction of La-}(1), this replacement introduces
an error of
$O(\|f_1\|_{0,\beta}\|f_2\|_{r+1,\beta}\tau^{-k_t})$
for each $n$ in the summation range. Summing over $O(t)$ summands and using \eqref{eqn:kt} gives
\begin{equation}
\label{eqn:R34}
R_{t,2}^\ell=R_{t,3}^\ell+
O\bigl(\|f_1\|_{0,\beta}\|f_2\|_{r+1,\beta}t^{-((r-1)/2+L+1)}\bigr).
\end{equation}

The transfer-operator decay obtained from quantitative
non-integrability, combined with the Fourier decay of
$\Fn$, gives the following estimate:
\begin{claim}
\label{cla:sumint-b_0}
For sufficiently large $m\in\mathbb N$,
$$
|R_{t,3}^\ell|
\ll\|f_1\|_\infty\|f_2\|_{m,\beta}t^{-((r-1)/2+L+1)}.
$$
\end{claim}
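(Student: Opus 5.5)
The plan is to rewrite each summand of $R_{t,3}^\ell$ as a transfer-operator expression on $\Lambda_+$ and then invoke \Cref{prop:norm of transfer operator}. For fixed $n$ with $t^{1-\epsilon_1}<n<t_+$ and $\xi\notin\mathcal O$, the $\Lambda_+$-integrand is
$$\Theta_n(\xi,T^{k_t}x)\,H_{1,t}(T^nx,\xi)\,H_{2,t}(x,-\xi).$$
We use the cocycle identity $K_n(T^{k_t}x)=K_{n+k_t}(x)-K_{k_t}(x)$ together with \eqref{eqn:tildeHjt}. This turns the integrand into $e^{-i\langle\xi,t\ut+\ell(t)\ww\rangle}\,\widetilde H_{1,t}(T^nx,\xi)\,e^{-i\langle\xi,K_{k_t}(x)\rangle}H_{2,t}(x,-\xi)$, up to the way the $K_{k_t}$ factors are distributed. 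By transfer-operator duality, $\nu_+(G\circ T^m\cdot F)=\nu_+(G\,\mathcal L_0^mF)$, where the unimodular cocycle weights are absorbed into $\mathcal L_\xi$. The $x$-integral then becomes
$$\int_{\Lambda_+}\widetilde H_{1,t}(\cdot,\xi)\,\mathcal L_\xi^{\,n-k_t-1}H_{k_t,\xi}\,d\nu_+ .$$
Here $H_{k_t,\xi}=\mathcal L_\xi^{k_t+1}\bigl(e^{-i\langle\xi,K_{k_t}\rangle}\boldsymbol\pi_{k_t}\widehat{\Fn}(\cdot,-\xi)\bigr)$ is the function of \Cref{lem:Lkttv}. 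Since $n>t^{1-\epsilon_1}\gg k_t\sim\log t$, the exponent $n-k_t-1$ is positive. The bookkeeping of which index the phase attaches to should mirror the computation behind \eqref{eqn:ANnk}.

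The $L^\infty\times L^1$ estimate goes as follows. First, \Cref{prop:norm of transfer operator} combined with \Cref{lem:Lkttv}(1) gives
$$\|\mathcal L_\xi^{n-k_t-1}H_{k_t,\xi}\|_\infty\ll_m e^{-\beta_1(n-k_t-1)\|\xi\|^{-\beta_2}}\,\|\xi\|^{-m}\|f_2\|_{m,\beta}.$$
Second, \Cref{lem:hatFsup}(1) and $\|K\|\ll R$ yield $\|\widetilde H_{1,t}(\cdot,\xi)\|_{L^1(\nu_+)}\ll\|f_1\|_\infty$, using the exponential moment of $R$ and $T$-invariance. So the $n$-th term is at most $\|f_1\|_\infty\|f_2\|_{m,\beta}\int_{\|\xi\|\ge b_0}\|\xi\|^{-m}e^{-\beta_1 n\|\xi\|^{-\beta_2}/2}\,d\xi$ for large $t$.

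We split this integral at $\|\xi\|=n^{\delta}$ with $\delta<1/\beta_2$. For $\|\xi\|\le n^\delta$ the exponential is at most $e^{-c n^{1-\delta\beta_2}}$, which is superpolynomially small in $n$, hence in $t$, since $n>t^{1-\epsilon_1}$. For $\|\xi\|>n^\delta$ the tail is $\ll n^{-\delta(m-r)}$. Summing over $O(t)$ values of $n$ gives $O(t\cdot t^{-(1-\epsilon_1)\delta(m-r)})$. Choosing $m$ large makes this $\ll t^{-((r-1)/2+L+1)}$.

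The main obstacle is the first step. The phase must be arranged exactly so that the $\mathcal L_\xi$-power acts on the function controlled by \Cref{lem:Lkttv}, and the norm $\|\cdot\|_\xi$ required by \Cref{prop:norm of transfer operator} must be available, not just a sup bound. \Cref{lem:Lkttv}(1) supplies precisely that $\|\cdot\|_\xi$ bound. Everything else is integrability bookkeeping.
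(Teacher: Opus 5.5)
Your proposal is correct and follows the paper's proof essentially step for step. As in the paper, the cocycle identity and transfer-operator duality turn each summand into $\int_{\Lambda_+}\widetilde H_{1,t}(\cdot,\xi)\,\mathcal L_\xi^{\,n-k_t-1}H_{k_t,\xi}\,d\nu_+$, after which \Cref{prop:norm of transfer operator}, \Cref{lem:Lkttv}(1), and the $L^1(\nu_+)$ bound on $\widetilde H_{1,t}$ reduce everything to the $\xi$-integral; your split at $\|\xi\|=n^{\delta}$ is only a cosmetic variant of the paper's rescaling $s=t^{(1-\epsilon_1)/\beta_2}y$ and gives the same polynomial decay once $m$ is large.
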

\begin{proof}[Proof of \Cref{cla:sumint-b_0}]
The cocycle identity
$K_n\circ T^{k_t}=K_n-K_{k_t}+K_{k_t}\circ T^n$
and transfer-operator duality give
\begin{equation}
\label{eqn:intFH}
\begin{aligned}
R_{t,3}^\ell
&=\sum_{t^{1-\epsilon_1}<n<t_+}
\int_{\mathfrak a\smallsetminus\mathcal O}
e^{-i\langle\xi,t\ut+\ell(t)\ww\rangle}\cdot\int_{\Lambda_+}
\widetilde H_{1,t}(x,\xi)
\mathcal L_\xi^{\,n-k_t-1}
\bigl(\mathcal L_\xi^{\,k_t+1}
\widetilde H_{2,t}(\cdot,-\xi)\bigr)(x)
\,d\nu_+(x)\,d\xi.
\end{aligned}
\end{equation}

Let $m>r$ be an integer, to be chosen below.
Set $D_1:=\|f_1\|_\infty\|f_2\|_{m,\beta}$, and let
$\beta_1,\beta_2>0$ be as in \Cref{prop:norm of transfer operator}.
 For $\xi\in\mathfrak a\smallsetminus\mathcal O$, put
$B_{t,\xi}:=\mathcal L_\xi^{k_t+1}
\widetilde H_{2,t}(\cdot,-\xi)$.
Since $k_t=O(\log t)$ and $n>t^{1-\epsilon_1}$, we have
$n-k_t-1\geq n/2$ throughout the summation
range for sufficiently large $t$.
By \Cref{prop:norm of transfer operator} and
\Cref{lem:Lkttv}(1),
\begin{align*}
\|\mathcal L_\xi^{n-k_t-1}B_{t,\xi}\|_\infty
&\ll e^{-\beta_1n/(2\|\xi\|^{\beta_2})}
\|B_{t,\xi}\|_\xi\ll e^{-\beta_1n/(2\|\xi\|^{\beta_2})}
\|\xi\|^{-m}\|f_2\|_{m,\beta}.
\end{align*}
Moreover, \eqref{eqn:inffa}, $T$-invariance of $\nu_+$,
and the exponential moment of $R$ give
$$
\int_{\Lambda_+}|\widetilde H_{1,t}(x,\xi)|\,d\nu_+(x)
\ll \|f_1\|_\infty
\int_{\Lambda_+}(R\circ T^{k_t}+1)^r\,d\nu_+
\ll\|f_1\|_\infty.
$$
Applying these bounds to \eqref{eqn:intFH}, we obtain
$$
|R_{t,3}^\ell|
\ll D_1\sum_{t^{1-\epsilon_1}<n<t_+}
\int_{\mathfrak a\smallsetminus\mathcal O}
e^{-\beta_1 n/(2\|\xi\|^{\beta_2})}\|\xi\|^{-m}\,d\xi.
$$
There are $O(t)$ summands, and
$\mathfrak a\smallsetminus\mathcal O
\subset\{\xi:\|\xi\|\geq b_0\}$.
Using $n>t^{1-\epsilon_1}$ and polar coordinates therefore gives
$$
|R_{t,3}^\ell|
\ll D_1t\int_{b_0}^\infty
e^{-ct^{1-\epsilon_1}s^{-\beta_2}}s^{r-1-m}\,ds.
$$
With the substitution $s=t^{(1-\epsilon_1)/\beta_2}y$,
the integral on the right becomes
$$
t^{-(1-\epsilon_1)(m-r)/\beta_2}
\int_{b_0t^{-(1-\epsilon_1)/\beta_2}}^\infty
e^{-cy^{-\beta_2}}y^{r-1-m}\,dy.
$$
The last integral is bounded uniformly in $t$: the exponential
ensures integrability near zero, while $m>r$ ensures integrability
at infinity. Hence
$
|R_{t,3}^\ell|
\ll D_1t^{1-(1-\epsilon_1)(m-r)/\beta_2}.
$.
Choosing $m$ to be any number larger than $\max\left\{r+1,
\left\lceil r+\frac{\beta_2}{1-\epsilon_1}
\left(\frac{r-1}{2}+L+2\right)\right\rceil\right\}$ gives the required bound.
\end{proof}
Combining \eqref{eqn:R34} and \Cref{cla:sumint-b_0}
proves \Cref{cla:S1}.

\begin{step}[Expansion of the normalized expectations]
\end{step}
Recall the normalized derivatives $\mathcal A_{N,n}$ and their limits
$\mathcal A_N$ from \eqref{eqn:ANn def} and \eqref{eqn:AN def}.
We suppress their two function arguments in this step.
Set $D_2:=\|f_1\|_{0,\beta}\|f_2\|_{0,\beta}$ where $\beta$ is fixed in \eqref{eqn:beta-choiceofe1}.
Taylor's formula at $\xi=0$ gives, for every $n\geq1$
and $\xi\in\mathcal O$,
$$
\begin{aligned}
& \mathcal A_{0,n}(\xi)=\kappa_\xi^{-n}
\int_{\Lambda_+\times\Lambda_-}
e^{i\langle\xi,K_n(x)\rangle}
\widehat F_1(\widehat T^n(x,x_-),\xi)
\widehat{\Fn}(x,x_-,-\xi)\,
d\nu^\diamond(x,x_-)\\
&\quad=
\sum_{N=0}^{J-1}
\frac{\mathcal A_{N,n}(0)*\xi^{\otimes N}}{N!}
+O\left(
\sup_{0\leq s\leq1}|\mathcal A_{J,n}(s\xi)|
\,\|\xi\|^J
\right).
\end{aligned}
$$

By \Cref{lem:ANF1F2}(1),(2) and
\eqref{eqn:radial eigenvalue modulus Section18}, this equals
\begin{equation}
\label{eqn:sumAN}
\sum_{N=0}^{J-1}\frac{\mathcal A_N*\xi^{\otimes N}}{N!}
+O\left(D_2\left(
n^{-((r-1)/2+L+J)}|\kappa_\xi|^{-n}
+n^{-\bar L}+n^{2/5}\|\xi\|^J\right)\right),
\end{equation}
where $\bar L\geq (r-1)/2+L+5/2$ is a fixed integer.
Multiplying \eqref{eqn:sumAN} by $\kappa_\xi^n$ and
substituting the expansion \eqref{eqn:kappawn}, we obtain,
using \eqref{eqn:kappa25-kappafw} and
\eqref{eqn:upper bound AN},
$$
\begin{aligned}
&\int_{\Lambda_+\times\Lambda_-}
e^{i\langle\xi,K_n(x)\rangle}
\widehat F_1(\widehat T^n(x,x_-),\xi)
\widehat{\Fn}(x,x_-,-\xi)\,
d\nu^\diamond(x,x_-)\\
&\quad=
e^{in c_R^2\xi_1}\widehat{\mathcal G}(\sqrt n\,\xi)
\sum_{N=0}^{J-1}\sum_{k=0}^{2L+4}
\sum_{j=3k}^{2L+4+2k}
\frac{n^k}{N!}
(\mathcal A_N\otimes A_{j,k})*\xi^{\otimes(N+j)}
+E_n(\xi),
\end{aligned}
$$
where, by the substitution $\xi\mapsto\xi/\sqrt n$ and $J=2L+5$,
$$
\int_{\mathcal O}|E_n(\xi)|\,d\xi
\ll D_2\left(n^{-((r-1)/2+L+J)}+n^{-((r-1)/2+L+5/2)}\right).
$$
Summing and using \eqref{eqn:beta-choiceofe1}, we obtain
$$
C_{t,4}^\ell=C_{t,5}^\ell+
O\bigl(D_2t^{-((r-1)/2+L+1)}\bigr),
$$
where
$$
C_{t,5}^\ell:=\sum_{N=0}^{J-1}\frac1{N!}
\sum_{k=0}^{2L+4}\sum_{j=3k}^{2L+4+2k}
\sum_{t^{1-\epsilon_1}<n<t_+}\mathcal I_n^{N,k,j}
$$
and
$$
\mathcal I_n^{N,k,j}:=n^k\int_{\mathcal O}
e^{-i\langle\xi,(t-nc_R^{2})\ut+\ell(t)\ww\rangle}
\widehat{\mathcal G}(\sqrt n\,\xi)
(\mathcal A_N\otimes A_{j,k})*\xi^{\otimes(N+j)}\,d\xi.
$$

\begin{step}[Fourier inversion]
\end{step}
After the change of variables $\eta=\sqrt n\,\xi$ in the integral defining $\mathcal I_n^{N,k,j}$,
Gaussian decay and \eqref{eqn:upper bound AN} allow us
to extend the integration domain from $\sqrt n\,\mathcal O$
to $\mathfrak a$, with error $O(D_2e^{-cn})$.
Fourier inversion then gives
$$
\begin{aligned}
\mathcal I_n^{N,k,j}
&=(2\pi)^r i^{N+j}n^{k-(r+N+j)/2}
\mathcal G^{(N+j)}\left(\frac{t-nc_R^{2}}{\sqrt n},
\frac{\ell(t)\ww}{\sqrt n}\right)
*(\mathcal A_N\otimes A_{j,k})+O(D_2e^{-cn}).
\end{aligned}
$$

For $n\geq t_+$, we have $c_R^2n\geq2t$, and hence
$\frac{|t-nc_R^2|}{\sqrt n}
\geq \frac{c_R^2}{2}\sqrt n$.
The Gaussian and its derivatives therefore decay
exponentially in $n$ on this range.
Thus the terms with $n\geq t_+$ contribute
$O(D_2e^{-ct})$, and we may extend the sum to all
$n>t^{1-\epsilon_1}$. Consequently,
$$
C_{t,5}^\ell=C_{t,6}^\ell+
O\bigl(D_2t^{-((r-1)/2+L+1)}\bigr),
$$
where
\begin{equation}
\label{eqn:Ct6}
\begin{aligned}
C_{t,6}^\ell
& := (2\pi)^r\sum_{N=0}^{J-1}\sum_{k=0}^{2L+4}
\sum_{j=3k}^{2L+4+2k}\frac{i^{N+j}}{N!}
\sum_{n>t^{1-\epsilon_1}}n^{k-(r+N+j)/2}\\
&\qquad\cdot\mathcal G^{(N+j)}\left(\frac{t-nc_R^{2}}{\sqrt n},
\frac{\ell(t)\ww}{\sqrt n}\right)
*(\mathcal A_N\otimes A_{j,k}).
\end{aligned}
\end{equation}
Combining Steps~I--IV, for some integer $m\geq r+1$ we have
\begin{equation}
\label{eqn:unweighted approximant error Section18}
\left|t^{(r-1)/2}C_t^\ell-
\frac{t^{(r-1)/2}}{(2\pi)^r c_R^{2}}C_{t,6}^\ell\right|
\ll\|f_1\|_{0,\beta}\|f_2\|_{m,\beta}t^{-(L+1)},
\end{equation}
uniformly in $\ww\in\ker\psi$.

\begin{step}[Summing over $n$] 
\end{step}
\emph{Case 1: $\ell(t)=0$.}
For an integer $p\geq0$, write
$\mathcal G^{(p)}:=D^p\mathcal G$ for the $p$-th total
derivative, with $\mathcal G^{(0)}=\mathcal G$.
For $\sigma\in\mathbb R$, define
$$
h_{p,\sigma}(z_1,z_2)
:=z_2^\sigma D^p\mathcal G
\left(\frac{c_Rz_1}{\sqrt{z_2}}\mathtt u\right),
\qquad
(z_1,z_2)\in\mathbb R\times(0,\infty).
$$

Set
$$
\begin{aligned}
\mathcal V:=\left\{(N,k,j,q)\in\mathbb Z_{\geq0}^4:
\begin{matrix} 
\ N\leq J-1,\ k\leq2L+4, \\
3k\leq j\leq2L+4+2k,
\ N+j-2k+q\leq2L
\end{matrix} \right\}.
\end{aligned}
$$
Apply \cite[Lemma~A.3]{DNP} with $Q=2L+1$ to each Gaussian sum.
The lower cutoff can be changed to any fixed positive multiple of $t$
below $t/c_R^{2}$ at an exponentially small cost. By
\cite[Lemma~A.4]{DNP}, the terms with $N+j+q$ odd vanish.
Thus
\begin{equation}
\label{eqn:CtF1F2 final}
\begin{aligned}
C_{t,6}^{\ell}
={}&(2\pi)^r\sum_{(N,k,j,q)\in\mathcal V}
\frac{i^{N+j}}{N!q!}
\frac{(\sqrt t/c_R)^{-((r-1)+N+j-2k+q)}}{c_R^{q+1}}\\
&\cdot\left(\int_{\mathbb R}
\partial_2^q h_{N+j,k-(N+j+r)/2}(s,1)(-s)^q\,ds\right)
*(\mathcal A_N\otimes A_{j,k}) +O\bigl(D_2t^{-((r-1)/2+L+1)}\bigr).
\end{aligned}
\end{equation}
The remaining omitted terms have $N+j-2k+q\geq2L+2$ and are
absorbed into the error. After dividing by $(2\pi)^r c_R^2$ and multiplying by
$t^{(r-1)/2}$, define, for $0\leq m\leq L$,
$$
\begin{aligned}
\mathfrak c_m(f_1,f_2)
:={}&
\sum_{\substack{(N,k,j,q)\in\mathcal V\\
N+j-2k+q=2m}}
\frac{i^{N+j}c_R^{r-4+N+j-2k}}{N!\,q!}\\
&\quad\cdot
\left(
\int_{\mathbb R}
\partial_2^q h_{N+j,k-(N+j+r)/2}(s,1)
(-s)^q\,ds
\right)
*(\mathcal A_N\otimes A_{j,k}).
\end{aligned}
$$
With these definitions, \eqref{eqn:CtF1F2 final} becomes
$$
\frac{t^{(r-1)/2}}{(2\pi)^r c_R^2}\,C_{t,6}^{\ell}
=
\sum_{m=0}^{L}\mathfrak c_m(f_1,f_2)t^{-m}
+O\bigl(D_2t^{-(L+1)}\bigr).
$$
Combining this with
\eqref{eqn:unweighted approximant error Section18}
gives the correlation expansion in \Cref{bmslocal}(1).
Uniqueness of the coefficients of an asymptotic expansion makes
these definitions compatible as $\mathcal K$ and $L$ vary.
They therefore define bilinear forms on
$\mathcal C_c^\infty(\Gamma_\rho\backslash G)^M
\times\mathcal C_c^\infty(\Gamma_\rho\backslash G)^M$.

Since each defining sum is finite, \eqref{eqn:upper bound AN} gives
$$
|\mathfrak c_m(f_1,f_2)|
\ll_{\mathcal K,L}
\|f_1\|_{0,\beta}\|f_2\|_{0,\beta}
\qquad(0\leq m\leq L)
$$ for $f_1,f_2\in\mathcal C_{\mathcal K}^{\infty}$.
This implies the coefficient bounds in \Cref{bmslocal}(1).

By \Cref{lem:ANF1F2}(3),
$\mathcal A_0=c_R^4\mathtt M_\psi(F_1)\mathtt M_\psi(F_2)$.
The term $N=k=j=q=0$ in \eqref{eqn:CtF1F2 final} is therefore
$$
(2\pi)^r c_R^{r+2}t^{-(r-1)/2}
\left(\int_{\mathbb R}\mathcal G(c_Rs,0)\,ds\right)
\mathtt M_\psi(F_1)\mathtt M_\psi(F_2).
$$
Since $\mathtt M_\psi(F_i)=\mathsf m_\psi(f_i)$ by
\Cref{cor:factor map}, division by $(2\pi)^r c_R^{2}$ proves the
stated leading coefficient and completes part~(1).

\emph{Case 2: $\ell(t)=\sqrt t$.}
For this, carry out Steps~I--IV with
$L=\lceil (r-1)/2+1\rceil$ and put
$$
P_t(\ww):=\frac{t^{(r-1)/2}}{(2\pi)^r c_R^{2}}C_{t,6}^{\ell}(f_1,f_2;\ww).
$$
Since $N+j-2k\geq1$ unless $N=k=j=0$,
\eqref{eqn:weighted Gaussian sum Section18} bounds every nonleading
term by $O\bigl(D_2t^{-1/2}e^{-a_1\min\{\|\ww\|^2,\|\ww\|\}}\bigr)$.
Applying \Cref{lem:uniform renewal sums} to the leading term and
using \Cref{lem:ANF1F2}(3), we obtain
\begin{equation}
\label{eqn:CtF1F2sq}
P_t(\ww)=\mathfrak c_{\ww}\mathsf m_\psi(f_1)\mathsf m_\psi(f_2)
+O\bigl(D_2t^{-1/2}e^{-a_1\min\{\|\ww\|^2,\|\ww\|\}}\bigr),
\end{equation}
where $a_1>0$ is uniform.
In particular, $|P_t(\ww)|\ll D_2e^{-a_1\min\{\|\ww\|^2,\|\ww\|\}}$.
Choose $a_2:=\min\{a_0,a_1\}$. By
\Cref{lem:rough transverse correlation bound}, the triangle inequality
gives
$$
|t^{(r-1)/2}C_t^{\ell}-P_t(\ww)|
\ll D_2t^{(r-1)/2+1}e^{-a_2\min\{\|\ww\|^2,\|\ww\|\}}.
$$
Take the geometric mean of this bound and
\eqref{eqn:unweighted approximant error Section18}. Since
$L\geq (r-1)/2+1$, we obtain
\begin{align*}
|t^{(r-1)/2}C_t^{\ell}-P_t(\ww)|
&\ll\|f_1\|_{0,\beta}\|f_2\|_{m,\beta}
t^{((r-1)/2-L)/2}e^{-(a_2/2)\min\{\|\ww\|^2,\|\ww\|\}}\\
&\ll\|f_1\|_{m,\beta}\|f_2\|_{m,\beta}
t^{-1/2}e^{-(a_2/2)\min\{\|\ww\|^2,\|\ww\|\}}.
\end{align*}
Combining this with \eqref{eqn:CtF1F2sq} proves
\eqref{eqn:global weighted error} with $a=a_2/2$.
This completes the proof of \Cref{bmslocal}. \qed

\section{Haar local mixing and orbital counting}
\label{sec.haar mixing}

We deduce Haar local mixing and orbital counting from BMS local
mixing, using the arguments of Edwards--Lee--Oh
\cite{ELO_anosov}. 
\subsection{Measure normalizations}
\label{subsec.BR}
 Throughout this part,
$\mathsf m_\psi^{\BMS}$ denotes the BMS measure induced by
\eqref{bmsdef}.
Let $m_o$ be the $K$-invariant probability measure on $\mathcal F$,
which is $(G,\varrho)$-conformal, where
$\varrho=\sum_{\alpha\in\Phi^+}\dim(\mathfrak g_\alpha)\alpha$.
Define Haar and Burger--Roblin measures in Hopf coordinates by
$$
\begin{aligned}
&d\widetilde{\mathsf m}^{\op{Haar}}(g)
=e^{\varrho(\beta_{g^+}(e,g))+\varrho(\beta_{g^-}(e,g))}
\,dm_o(g^+)\,dm_o(g^-)\,d\Leb_{\mathfrak a},\\
&d\widetilde{\mathsf m}_\psi^{\BR}(g)
=e^{\varrho(\beta_{g^+}(e,g))+\psi(\beta_{g^-}(e,g))}
\,dm_o(g^+)\,d\nu_\psi(g^-)\,d\Leb_{\mathfrak a},\\
&d\widetilde{\mathsf m}_\psi^{\BR_*}(g)
=e^{\psi(\beta_{g^+}(e,g))+\varrho(\beta_{g^-}(e,g))}
\,d\nu_\psi(g^+)\,dm_o(g^-)\,d\Leb_{\mathfrak a}.
\end{aligned}
$$
We lift these measures from $G/M$ to $G$ using Haar probability
measure on $M$, and denote their descents to
$\Gamma_\rho\backslash G$ by the corresponding untilded symbols.
For $\psi=\psi_u$, write
$m_u^{\BR}:=\mathsf m_{\psi_u}^{\BR}$ and
$m_u^{\BR_*}:=\mathsf m_{\psi_u}^{\BR_*}$.
Uniqueness of tangent forms gives
$\psi_{\i(u)}=\psi_u\circ\i$.
Thus the Haar local mixing limit involves
$m_{\i(u)}^{\BR}$ and $m_u^{\BR_*}$ in the notation
of the introduction.

After rescaling the BMS measure to $\mathsf m_\psi^{\BMS}$,
write the coefficient in \Cref{bmslocal}(2) as
$c_\psi e^{-Q_\psi(w)/2}$, where $c_\psi>0$ and
$w\in\ker\psi$.

\subsection{Local mixing}
\begin{proof}[Proof of \Cref{thm:local-mixing-morse}]
Fix $u\in\operatorname{int}\mathcal L_\rho$.
By \cite[Theorem~10.1]{CZZ_relative}, $\psi_u>0$ on
$\mathcal L_\rho\smallsetminus\{0\}$, so \Cref{bmslocal}
applies with $\psi=\psi_u$.
Since $u=\psi_\rho(u)\ut_{\psi_u}$ by
\Cref{thm.mixingdirectiontangent}, rescaling gives
\Cref{bmslocal_intro}, with
$Q_u=\psi_\rho(u)^{-1}Q_{\psi_u}$.
\Cref{m1} follows from
\cite[Proposition~6.8]{ELO_anosov}.
The convergence is locally uniform in $w$ by the uniform
BMS asymptotic and the horospherical contraction estimates
in the proofs of \cite[Propositions~5.3 and~6.1]{ELO_anosov}.
Indeed, for every compact $E\subset V_u$ and every
simple root $\alpha$,
$\inf_{w\in E}\alpha(tu+\sqrt t\,w)
\geq t\alpha(u)-O_E(\sqrt t)\to \infty$
so these contraction estimates are uniform on $E$.

For the transverse bound, the upper comparisons in
\cite[Propositions~5.3, 6.1, and~6.8]{ELO_anosov}
reduce the estimate to BMS correlations at the same
$A$-parameter whenever $tu+\sqrt t\,w\in\mathfrak a^+$.
For fixed compact supports, domination by fixed smooth
majorants and \Cref{bmslocal_intro}(2) give the asserted
decay, with the stated dependence on
$\|f_1\|_\infty\|f_2\|_\infty$.
Finally, the rescaling and measure normalizations give
$\kappa_u c_u
=\psi_\rho(u)^{-(r-1)/2}c_{\psi_u}$.
\end{proof}

\begin{proof}[Proof of \Cref{cor.ergodicdichotomy}]
The BMS local mixing asymptotic implies $u$-balancedness
in the sense of \cite{BLLO}.
The conclusion follows from \cite[Theorem~1.4]{BLLO}.
\end{proof}

\subsection{Equidistribution of expanding horospheres}

Our local mixing theorem also gives equidistribution of
translates of maximal horospheres.
Let $\check N$ be the maximal horospherical subgroup
satisfying $a_{tu}na_{-tu}\to e$ as $t\to\infty$
for $n\in\check N$ and $u\in\operatorname{int}\mathfrak a^+$. For $x=[g]\in\Gamma_\rho\backslash G$,
the measure $\mu^{\mathrm{PS}}_{x\check N,u}$ on
$\check N$ is defined by
$d\mu^{\mathrm{PS}}_{x\check N,u}(n)
=e^{\psi_u(\beta_{gnP}(e,gn))}\,d\nu_{\psi_u}(gnP)$.
 We normalize Haar measure $dn$ on $\check N$
compatibly with the measures in \Cref{m1}, as in
\cite[Section~4]{ELO_anosov}.
Applying \cite[Proposition~6.1]{ELO_anosov} to
\Cref{bmslocal_intro} gives the following.

\begin{corollary}
\label{cor:horospherical-equidistribution}
For every $u\in\operatorname{int}\mathcal L_\rho$,
$x\in \Ga_\rho\ba G$, $f\in C_c(\Ga_\rho\ba G)^M$, and
$\phi\in C_c(\check N)$, we have
$$
\begin{aligned}
&\lim_{t\to\infty}
t^{\frac{r-1}{2}}e^{t(\varrho-\psi_u)(u)}
\int_{\check N}
f(xn a_{tu})\phi(n)\,dn =
\kappa_u c_u\,
m_{\i(u)}^{\mathrm{BR}}(f)\,
\mu^{\mathrm{PS}}_{x\check N,u}(\phi),
\end{aligned}
$$
where $\kappa_u$ and $c_u$ are as in \Cref{m1}.
\end{corollary}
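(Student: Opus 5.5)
The corollary should follow by feeding \Cref{bmslocal_intro} (equivalently \Cref{bmslocal}) into the horospherical transfer argument of \cite[Proposition~6.1]{ELO_anosov}, taking $w=0$. I would start by reducing to $f\in C_c^\infty(\Gamma_\rho\backslash G)^M$ and $\phi\in C_c^\infty(\check N)$. This is enough: if $f$ is approximated from above and below by smooth functions with the same support constraints, the normalized integrals are sandwiched, and both $m_{\i(u)}^{\BR}$ and $\mu^{\mathrm{PS}}_{x\check N,u}$ are Radon measures.

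Next I would thicken the horospherical piece. Using the product structure $\check N\times MA\times N$ near $e$, choose a small neighborhood $U_\epsilon=U^{MA}_\epsilon U^N_\epsilon$ of $e$ in $P$. Set $\Phi_\epsilon(xn p):=\phi(n)\rho_\epsilon(p)$, where $\rho_\epsilon$ is a bump function on $P$. Under $a_{tu}$, the $N$-direction is contracted and the $\check N$-direction is expanded, so
$$\int f(y a_{tu})\Phi_\epsilon(y)\,dy=\int_{\check N}\int_P f(xnp\,a_{tu})\phi(n)\rho_\epsilon(p)\,dp\,dn.$$
Uniform continuity of $f$, together with $a_{-tu}pa_{tu}$ staying within $O(\epsilon)$ for $p\in U_\epsilon$, shows that the left side equals $\bigl(\int f(xna_{tu})\phi(n)\,dn\bigr)$ up to a factor $1+O(\epsilon)$, after accounting for the $A$-component of $p$. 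That $A$-component shifts $tu$ by a bounded vector, and this costs a factor $e^{(\varrho-\psi_u)(O(\epsilon))}$, which is $1+O(\epsilon)$.

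I would then evaluate the left side with \Cref{m1} at $w=0$. This gives $\kappa_u c_u\, m_{\i(u)}^{\BR}(f)\, m_u^{\BR_*}(\Phi_\epsilon)\, t^{-(r-1)/2}e^{-t(\varrho-\psi_u)(u)}(1+o(1))$. Computing $m_u^{\BR_*}(\Phi_\epsilon)$ in Hopf coordinates, with the $\nu_{\psi_u}$ factor in the $g^+$ variable, should give $\mu^{\mathrm{PS}}_{x\check N,u}(\phi)$ times $\int\rho_\epsilon$ up to $1+O(\epsilon)$, once Haar measure is normalized as in \cite[Section~4]{ELO_anosov}. This identification is where I expect the main obstacle. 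It requires checking that $d\mu^{\mathrm{PS}}_{x\check N,u}$ is exactly the $g^+$-slice of $m^{\BR_*}$ along $x\check N$, with density $e^{\psi_u(\beta_{gnP}(e,gn))}$, and that the $m_o$ factor in $g^-$ combines with $dp$ consistently. This is the computation carried out in \cite[Proposition~6.1]{ELO_anosov}, and I would follow it verbatim.

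Finally, I would divide by $\int\rho_\epsilon$, let $t\to\infty$, and then let $\epsilon\to0$. The local uniformity in \Cref{m1} over compact sets of $w$ covers the bounded perturbations of the $A$-parameter, which yields the stated limit.
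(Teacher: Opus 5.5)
Your route differs from the paper's. The paper does no thickening: it applies \cite[Proposition~6.1]{ELO_anosov} directly to the BMS asymptotic of \Cref{bmslocal_intro}. Deducing the horospherical limit instead from the Haar asymptotic of \Cref{m1}, by thickening $x\check N$ in the $P$-direction, is a legitimate alternative. \Cref{m1} is already established, so there is no circularity. Your identification of the $g^+$-slice of $m_u^{\BR_*}$ on the box $x\check N U_\epsilon$ with $\mu^{\mathrm{PS}}_{x\check N,u}$ is also the right computation.

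The step linking $J(t):=\int f(ya_{tu})\Phi_\epsilon(y)\,dy$ to $I(t):=\int_{\check N}f(xna_{tu})\phi(n)\,dn$, however, fails as written. Uniform continuity only gives $|f(xnpa_{tu})-f(xna_{tu})|\le\omega_f(C\epsilon)$, where $\omega_f$ is the modulus of continuity of $f$. This is an additive error of size $\omega_f(C\epsilon)\|\phi\|_{L^1}\int\rho_\epsilon$, and it does not decay in $t$. After multiplication by the normalizing factor $t^{\frac{r-1}{2}}e^{t(\varrho-\psi_u)(u)}$, which is unbounded whenever $r\ge 2$, it is not controlled. So you cannot conclude $J(t)=(1+O(\epsilon))\bigl(\int\rho_\epsilon\bigr)I(t)$. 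Likewise, treating the $A$-component of $p$ as a shift of $tu$ that ``costs a factor $e^{(\varrho-\psi_u)(O(\epsilon))}$'' presupposes the asymptotics of $I$ at shifted parameters, which is what you are trying to prove. The local uniformity in $w$ of \Cref{m1} controls $J$, not $I$.

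The missing ingredient is the monotone sandwich. By linearity, reduce to $f,\phi\ge0$. Choose an $\operatorname{Ad}(M)$-invariant neighborhood $U'_\epsilon$ of $e$ containing $a_{-tu}U_\epsilon^{-1}a_{tu}$ for all $t\ge0$; this is possible because conjugation by $a_{-tu}$ fixes $MA$ and contracts $N$. Set $f^+_\epsilon(y):=\sup_{q\in\overline{U'_\epsilon}}f(yq)$ and $f^-_\epsilon(y):=\inf_{q\in\overline{U'_\epsilon}}f(yq)$. These are continuous, compactly supported and $M$-invariant, and for $p\in U_\epsilon$ they satisfy $f^-_\epsilon(xnpa_{tu})\le f(xna_{tu})\le f^+_\epsilon(xnpa_{tu})$. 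Integrating against $\Phi_\epsilon$ therefore places $\bigl(\int\rho_\epsilon\bigr)I(t)$ between the thickened correlations of $f^-_\epsilon$ and $f^+_\epsilon$ with $\Phi_\epsilon$. Apply \Cref{m1} with $w=0$ to the pairs $(f^\pm_\epsilon,\Phi_\epsilon)$ and divide by $\int\rho_\epsilon$. Then let $\epsilon\to0$, using $m^{\BR}_{\i(u)}(f^\pm_\epsilon)\to m^{\BR}_{\i(u)}(f)$ by dominated convergence and $m_u^{\BR_*}(\Phi_\epsilon)/\int\rho_\epsilon\to\mu^{\mathrm{PS}}_{x\check N,u}(\phi)$; this yields the corollary. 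The $A$- and $N$-components of $p$ are absorbed together into $U'_\epsilon$, so neither a time shift nor local uniformity in $w$ is needed.

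Two smaller points remain. First, \Cref{m1} is stated on $X_\rho=\Gamma_\rho\backslash G/M$, so $\Phi_\epsilon$ must be right $M$-invariant. You can average $\rho_\epsilon$ over $M$, which does not change the pairing with the $M$-invariant $f^\pm_\epsilon$. Second, the map $(n,p)\mapsto xnp$ need not be injective on $\operatorname{supp}\phi\times U_\epsilon$ when $x\check N$ has nontrivial stabilizer in $\Gamma_\rho$. A partition of unity on $\phi$ handles this.
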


\subsection{Orbital counting in Riemannian balls}
\label{sec.orbital-counting}

Let $\langle\cdot,\cdot\rangle$ and $\|\cdot\|$ denote
the inner product and norm on $\mathfrak a$ induced by
the Killing form.
By \cite{Quint2002divergence,CZZ_transverse,CZZ_relative},
there is a unique unit vector
$u_\rho\in\operatorname{int}\mathcal L_\rho$ such that
$$
\delta_\rho
=\max_{\|v\|=1}\psi_\rho(v)
=\psi_\rho(u_\rho).
$$
Differentiability of $\psi_\rho$
\cite{CZZ_transverse,CZZ_relative} gives
$\psi_{u_\rho}(v)=\delta_\rho\langle v,u_\rho\rangle$.
In particular,
$\mathbb R u_\rho\perp\ker\psi_{u_\rho}$. For $Q\subset K$, write $Q[P]:=\{qP:q\in Q\}$. Let $\nu_{u_\rho}:=\nu_{\psi_{u_\rho}}$.

\begin{corollary}[Bisector counting in Riemannian balls]
\label{cor.countingbisector}
There exists $c_\rho>0$ such that, for all Borel sets
$Q_1,Q_2\subset K$ satisfying
$\nu_{u_\rho}(\partial(Q_1[P]))
=\nu_{u_\rho}(\partial(Q_2^{-1}w_0[P]))=0$,
$$
\lim_{T\to\infty}e^{-\delta_\rho T}
\#\left\{
\gamma\in\Gamma_\rho\cap Q_1MA^+Q_2:
\|\mu(\gamma)\|<T
\right\}
=
c_\rho\,
\nu_{u_\rho}(Q_1[P])
\nu_{u_\rho}(Q_2^{-1}w_0[P]).
$$
\end{corollary}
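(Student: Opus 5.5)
The plan is to follow the Edwards--Lee--Oh counting scheme \cite{ELO_anosov}, taking \Cref{m1} (Haar local mixing with the uniform transverse bound) as the main input. First I will thicken the counting function. For small $\epsilon>0$, choose symmetric neighborhoods $\mathcal O_\epsilon$ of $e$ in $G$, and let $\chi_\epsilon\ge 0$ be a bump function supported in $\mathcal O_\epsilon$ with $\int\chi_\epsilon=1$. Averaging the indicator of $\{g\in Q_1MA^+Q_2:\|\mu(g)\|<T\}$ over $\Gamma_\rho$ gives the function
$$
F_T(x,y)=\sum_{\gamma}\mathbf 1_{B_T}(x^{-1}\gamma y)
$$
on $\Gamma_\rho\backslash G\times\Gamma_\rho\backslash G$. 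Integrating it against $\chi_\epsilon\otimes\chi_\epsilon$ yields the identity
$$
\langle F_T,\chi_\epsilon\otimes\chi_\epsilon\rangle=\int_{B_T}\langle g\chi_\epsilon,\chi_\epsilon\rangle\,dg .
$$
The usual well-roundedness argument then sandwiches the actual count between the smoothed counts for slightly enlarged and shrunken $Q_1^{\pm},Q_2^{\pm}$ and radii $T\pm\epsilon$. This uses \Cref{lem.cptcartan} and the null-boundary hypotheses on $Q_1[P]$ and $Q_2^{-1}w_0[P]$.

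Next I evaluate the integral using the $KA^+K$ decomposition. Write
$$
dg=\prod_{\alpha\in\Phi^+}\sinh(\alpha(v))^{\dim\mathfrak g_\alpha}\,dk_1\,dv\,dk_2,
$$
which is $\asymp e^{\varrho(v)}\,dk_1\,dv\,dk_2$ away from the walls. The $K$-integrals localize through the horospherical/BR equidistribution in \Cref{cor:horospherical-equidistribution} and \cite[Prop.~6.8]{ELO_anosov}. As $v\to\infty$ in a fixed compact-direction region, this gives
$$
\int_{Q_1}\int_{Q_2}\langle k_1a_vk_2\chi_\epsilon,\chi_\epsilon\rangle\,dk_1dk_2\sim \kappa_u c_u\,t^{-(r-1)/2}e^{(\psi_u-\varrho)(v)}e^{-Q_u(w)/2}\,\nu_u(Q_1[P])\,\nu_{\i u}(Q_2^{-1}w_0[P])\,(\text{$\chi_\epsilon$ terms}).
$$
Here $v=tu+\sqrt t\,w$ is written in the splitting $\mathfrak a=\mathbb R u\oplus V_u$ with $u=u_\rho$, and the $\chi_\epsilon$ terms tend to $1$ as $\epsilon\to0$. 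The factors $e^{\varrho(v)}$ and $e^{-\varrho(v)}$ cancel, leaving a main integrand $e^{\psi_{u_\rho}(v)}=e^{\delta_\rho\langle v,u_\rho\rangle}$ over $\{\|v\|<T\}\cap\mathfrak a^+$.

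Then I integrate over the ball. By the orthogonality $\mathbb Ru_\rho\perp\ker\psi_{u_\rho}$, we have $\|v\|^2=s^2+\|w'\|^2$ with $v=su_\rho+w'$. The region $\|v\|<T$ is therefore $s<\sqrt{T^2-\|w'\|^2}\approx T-\|w'\|^2/(2T)$. Substituting $w'=\sqrt s\,w$, the Gaussian factor $e^{-Q(w)/2}$ combines with $e^{-\delta_\rho\|w'\|^2/(2T)}$, and the resulting $w'$-integral over $\ker\psi$ (dimension $r-1$) produces a factor $s^{(r-1)/2}$. This exactly cancels the normalization $t^{-(r-1)/2}$, and the remaining $s$-integral gives $\delta_\rho^{-1}e^{\delta_\rho T}$ times an explicit Gaussian integral. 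That integral is the constant $c_\rho$. Sending $\epsilon\to0$ completes the argument, and \Cref{thm.counting_intro} follows with $Q_1=Q_2=K$.

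The main obstacle is the dominated-convergence step. The asymptotic holds only locally uniformly in $w$, while the ball integral ranges over all $w'$, including $v$ near the walls of $\mathfrak a^+$ and directions far from $u_\rho$. I would first try to dominate using the uniform transverse bound $e^{-b\min\{Q(w),\sqrt{Q(w)}\}}$ from \Cref{m1}, together with $\psi_{u_\rho}(v)\le\delta_\rho\|v\|$. Directions $v$ with angle bounded away from $u_\rho$ then contribute $o(e^{\delta_\rho T})$, because $\psi_\rho(v)<\delta_\rho\|v\|$ strictly off $u_\rho$. I would handle this region with a crude upper count from the growth indicator: the abscissa of convergence is below $\delta_\rho$ in cones avoiding $u_\rho$, by strict concavity of $\psi_\rho$.
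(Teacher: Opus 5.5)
Your proposal is essentially the paper's proof. The paper runs the Edwards--Lee--Oh counting argument \cite[Corollary~9.21 and proof of Theorem~1.8]{ELO_anosov} with $H=K$, $\mathfrak b=\mathfrak a$ and the Killing norm, using \Cref{m1} in direction $u_\rho$. This is exactly your thickening, $KA^+K$ integration, Gaussian ball computation via $\mathbb R u_\rho\perp\ker\psi_{u_\rho}$, and domination by the uniform transverse bound (with a growth-indicator count away from $u_\rho$).

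One point to make explicit: \Cref{m1} is only available for $M$-invariant functions. In the $KA^+K$ integration the $K$-variables therefore range over $Q_1M$ and $MQ_2$, which is what $\mathbf 1_{Q_1MA^+Q_2}$ produces, and this lets you replace the translated bumps by their $M$-averages. That is why $M$ appears in the bisector.
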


\begin{proof}
Apply the counting argument of
\cite[Corollary~9.21 and the proof of Theorem~1.8]{ELO_anosov}
with $H=K$, $\mathfrak b=\mathfrak a$, and the Killing-form
norm, using \Cref{m1} in direction $u_\rho$.
\end{proof}

Taking $Q_1=Q_2=K$ in \Cref{cor.countingbisector} gives
the orbital asymptotic at $o=K$ as stated in \Cref{thm.counting_intro}.
Conjugating $\Gamma_\rho$ gives the result at every
$o\in G/K$.

\end{document}